\DeclareFontFamily{OT1}{rsfs}{}
\DeclareFontShape{OT1}{rsfs}{n}{it}{<-> rsfs10}{}
\DeclareMathAlphabet{\curly}{OT1}{rsfs}{n}{it}
\newcommand{\eqnum}{\refstepcounter{equation}\textup{\tagform@{\theequation}}}
\newcommand\beq[1]{\begin{equation}\label{#1}}
\newcommand\eeq{\end{equation}}
\newcommand\beqa{\begin{eqnarray*}}
\newcommand\eeqa{\end{eqnarray*}}
\title[Categorical DT theory for local surfaces]{Categorical 
Donaldson-Thomas theory for local surfaces}
\date{}
\author{Yukinobu Toda}
\DeclareFontFamily{U}{rsfs}{%
\skewchar\font127}
\DeclareFontShape{U}{rsfs}{m}{n}{%
<-6>rsfs5<6-8.5>rsfs7<8.5->rsfs10}{}
\DeclareSymbolFont{rsfs}{U}{rsfs}{m}{n}
\DeclareRobustCommand*\rsfs{%
\@fontswitch\relax\mathrsfs}
\theoremstyle{plain}
\newtheorem{thm}{Theorem}[subsection]
\newtheorem{prop}[thm]{Proposition}
\newtheorem{lem}[thm]{Lemma}
\newtheorem{defi}[thm]{Definition}
\newtheorem{rmk}[thm]{Remark}
\newtheorem{cor}[thm]{Corollary}
\newtheorem{prop-defi}[thm]{Proposition-Definition}
\newtheorem{thm-defi}[thm]{Theorem-Definition}
\newtheorem{lem-defi}[thm]{Lemma-Definition}
\newtheorem{assum}[thm]{Assumption}
\newtheorem{conj}[thm]{Conjecture}
\newtheorem{exam}[thm]{Example}
\newcommand{\ssslash}{/\!\!/}
\newcommand{\aA}{\mathcal{A}}
\newcommand{\bB}{\mathcal{B}}
\newcommand{\cC}{\mathcal{C}}
\newcommand{\dD}{\mathcal{D}}
\newcommand{\eE}{\mathcal{E}}
\newcommand{\fF}{\mathcal{F}}
\newcommand{\hH}{\mathcal{H}}
\newcommand{\iI}{\mathcal{I}}
\newcommand{\jJ}{\mathcal{J}}
\newcommand{\kK}{\mathcal{K}}
\newcommand{\lL}{\mathcal{L}}
\newcommand{\mM}{\mathcal{M}}
\newcommand{\nN}{\mathcal{N}}
\newcommand{\oO}{\mathcal{O}}
\newcommand{\pP}{\mathcal{P}}
\newcommand{\rR}{\mathcal{R}}
\newcommand{\sS}{\mathcal{S}}
\newcommand{\tT}{\mathcal{T}}
\newcommand{\uU}{\mathcal{U}}
\newcommand{\vV}{\mathcal{V}}
\newcommand{\wW}{\mathcal{W}}
\newcommand{\xX}{\mathcal{X}}
\newcommand{\yY}{\mathcal{Y}}
\newcommand{\zZ}{\mathcal{Z}}
\newcommand{\fE}{\mathfrak{E}}
\newcommand{\ffF}{\mathfrak{F}}
\newcommand{\fI}{\mathfrak{I}}
\newcommand{\fM}{\mathfrak{M}}
\newcommand{\fN}{\mathfrak{N}}
\newcommand{\fP}{\mathfrak{P}}
\newcommand{\fT}{\mathfrak{T}}
\newcommand{\fU}{\mathfrak{U}}
\newcommand{\Supp}{\mathop{\rm Supp}\nolimits}
\newcommand{\Hom}{\mathop{\rm Hom}\nolimits}
\newcommand{\dotimes}{\stackrel{\textbf{L}}{\otimes}}
\newcommand{\dR}{\mathbf{R}}
\newcommand{\dL}{\mathbf{L}}
\newcommand{\NS}{\mathop{\rm NS}\nolimits}
\newcommand{\Pic}{\mathop{\rm Pic}\nolimits}
\newcommand{\id}{\textrm{id}}
\newcommand{\ch}{\mathop{\rm ch}\nolimits}
\newcommand{\Ext}{\mathop{\rm Ext}\nolimits}
\newcommand{\Spec}{\mathop{\rm Spec}\nolimits}
\newcommand{\rank}{\mathop{\rm rank}\nolimits}
\newcommand{\Coh}{\mathop{\rm Coh}\nolimits}
\newcommand{\Mod}{\mathop{\rm Mod}\nolimits}
\newcommand{\ev}{\mathop{\rm ev}\nolimits}
\newcommand{\us}{\mathchar`-\rm{us}}
\newcommand{\sss}{\mathchar`-\rm{ss}}
\newcommand{\cneq}{\mathrel{\raise.095ex\hbox{:}\mkern-4.2mu=}}
\newcommand{\eqcn}{\mathrel{=\mkern-4.5mu\raise.095ex\hbox{:}}}
\newcommand{\Cok}{\mathop{\rm Cok}\nolimits}
\newcommand{\ext}{\mathop{\rm ext}\nolimits}
\newcommand{\Aut}{\mathop{\rm Aut}\nolimits}
\newcommand{\Cone}{\mathop{\rm Cone}\nolimits}
\newcommand{\Perv}{\mathop{\rm Perv}\nolimits}
\newcommand{\DT}{\mathop{\rm DT}\nolimits}
\newcommand{\modu}{\mathop{\rm mod}\nolimits}
\newcommand{\End}{\mathop{\rm End}\nolimits}
\newcommand{\Imm}{\mathop{\rm Im}\nolimits}
\newcommand{\RHom}{\mathop{\dR\mathrm{Hom}}\nolimits}
\newcommand{\Ker}{\mathop{\rm Ker}\nolimits}
\newcommand{\Ree}{\mathop{\rm Re}\nolimits}
\newcommand{\GL}{\mathop{\rm GL}\nolimits}
\newcommand{\vdim}{\mathop{\rm vdim}\nolimits}
\newcommand{\Spf}{\mathop{\rm Spf}\nolimits}
\newcommand{\cl}{\mathop{\rm cl}\nolimits}
\newcommand{\Ind}{\mathop{\rm Ind}\nolimits}
\newcommand{\ind}{\mathop{\rm ind}\nolimits}
\newcommand{\MF}{\mathop{\rm MF}\nolimits}
\newcommand{\Crit}{\mathop{\rm Crit}\nolimits}
\newcommand{\bS}{\mathbb{S}}
\newcommand{\bU}{\mathbb{U}}
\newcommand{\wt}{\mathrm{wt}}
\newcommand{\intt}{\mathrm{int}}
\newcommand{\coh}{\mathrm{coh}}
\newcommand{\qcoh}{\mathrm{qcoh}}
\newcommand{\Dbc}{D^b_{\rm{coh}}}
\newcommand{\CC}{\mathbb{C}^{\ast}}
\newcommand{\inclusion}{\ar@<-0.3ex>@{^{(}->}[r]}
\newcommand{\upinclusion}{\ar@<-0.3ex>@{^{(}->}[u]}
\newcommand{\leinclusion}{\ar@<-0.3ex>@{^{(}->}[l]}
\newcommand{\doinclusion}{\ar@<-0.3ex>@{^{(}->}[d]}
\newcommand{\diasquare}{\ar@{}[rd]|\square}
\newcommand{\lkakko}{[\![}
\newcommand{\rkakko}{]\!]}
\newcommand{\lgakko}{(\!(}
\newcommand{\rgakko}{)\!)}
\newcommand{\C}{\mathbb{C}^{\ast}}
\newcommand{\rig}{\mathchar`-\rm{rig}}
\newcommand{\st}{\mathchar`-\rm{st}}
\newcommand{\dDT}{\mathcal{DT}}
\newcommand{\wdDT}{\widehat{\mathcal{DT}}}
\newcommand{\bff}{\mathbf{f}}
\def\acts{\curvearrowright}
\def\lacts{\curvearrowleft}
\renewcommand{\theequation}{%
	\thesubsection.\arabic{equation}}
\begin{document}

\begin{abstract}
We develope 
$\mathbb{C}^{\ast}$-equivariant
categorical 
Donaldson-Thomas theory 
for local surfaces, i.e. 
the total spaces of canonical line bundles on 
smooth projective surfaces. 
We introduce $\C$-equivariant DT categories for local 
surfaces as Verdier quotients of 
derived categories of coherent sheaves on 
derived moduli stacks of coherent sheaves on surfaces, by 
subcategories of objects whose singular supports 
are contained in unstable loci. 
Via Koszul duality, 
our construction may be regarded as certain gluing of 
$\mathbb{C}^{\ast}$-equivariant
derived categories of factorizations. 

We also develope $\C$-equivariant DT theory for 
stable D0-D2-D6 bound states on local surfaces, 
including categorical Pandharipande-Thomas theory. 
The key result toward the construction is 
the description of the 
stack of D0-D2-D6 bound states on the local surface
as the dual obstruction cone over the moduli 
stack of pairs on the surface. 
 We propose 
 several 
 conjectures on wall-crossing of 
PT categories, 
motivated by categorifications of wall-crossing 
formula of PT invariants and 
d-critical analogue of D/K equivalence conjecture in birational 
geometry. 

We establish three ways toward the categorical wall-crossing 
conjecture: semiorthogonal decomposition via linear Koszul duality, 
window theorem for DT categories, and categorified Hall products. 
These techniques indicate several implications, e.g. 
rationality of generating series of PT categories, wall-crossing equivalence
of DT categories for one dimensional stable sheaves, 
and categorical MNOP/PT correspondence for reduced curve classes.

\end{abstract}

\maketitle

\setcounter{tocdepth}{2}
\tableofcontents

\section{Introduction}
\subsection{Motivation}
\subsubsection{Background}
On a smooth projective Calabi-Yau 3-fold,
Thomas~\cite{HCasson}
introduced the integer valued invariants
which virtually count stable coherent sheaves on it, 
and give complexifications of Casson 
invariants on real 3-folds. 
 His invariants
are now called \textit{Donaldson-Thomas (DT)} invariants,
 and have been developed in 
several contexts, e.g. relation with Gromov-Witten 
invariants~\cite{MR2264664, MR3600040}, wall-crossing 
under change of stability conditions~\cite{K-S, JS, Tsurvey}, 
relation with cluster algebras~\cite{MR3079250, MR3739230}, or so on.  

The original DT invariants in~\cite{HCasson}
were defined via integrations of zero-dimensional 
virtual fundamental classes 
on the moduli spaces of stable sheaves on CY 3-folds. 
Later, Behrend~\cite{MR2600874} showed that 
the DT invariants are also defined via the 
weighted Euler characteristics of 
certain constructible functions (called Behrend functions) on the above moduli spaces. 
Now by the works~\cite{JS, BBJ}, 
these moduli spaces are known to be locally written as critical 
loci of some functions, and Behrend functions coincide
with point-wise Euler characteristics of the vanishing cycles.  
Based on this fact, 
Kontsevich-Soibelman~\cite{K-S, MR2851153} 
proposed refinements of DT invariants, 
called motivic (cohomological) DT invariants.
They are defined 
by gluing 
the locally defined motivic (perverse sheaves of) 
vanishing cycles 
through a choice of an orientation data, that 
is a square root of the virtual canonical line bundle 
on the moduli space. 
The foundations of such refined DT invariants 
were established in~\cite{BDM, MR3353002}, 
and for example used in~\cite{MR3842061} to give 
a mathematical definition of Gopakumar-Vafa
invariants. 

As we mentioned above, the moduli space $M$ of stable sheaves
on a
CY 3-fold is locally written as a critical locus
$\mathrm{Crit}(w)$ of some function $w \colon Y \to \mathbb{C}$ on 
a smooth scheme $Y$. 
More strongly, it is known that 
$M$ is a truncation of a derived scheme with a $(-1)$-shifted 
symplectic structure~\cite{MR3090262}, which is locally 
equivalent to the derived critical locus of $w$~\cite{BBJ, MR3352237}, 
and induces d-critical structure on $M$ introduced by Joyce~\cite{MR3399099}. 
Then locally on $M$, we can attach the triangulated 
category $\mathrm{MF}(Y, w)$
 of matrix factorizations of $w$. 
Its objects consist of locally free sheaves $\pP_0$, $\pP_1$ on $Y$
together with maps
\begin{align}\notag
\xymatrix{
\pP_0 \ar@/^8pt/[r]^{\alpha_0} &  \ar@/^8pt/[l]^{\alpha_1} \pP_1,  
} \
\alpha_0 \circ \alpha_1=\cdot w, \ 
\alpha_1 \circ \alpha_0=\cdot w. 
\end{align}
The triangulated category 
$\mathrm{MF}(Y, w)$ 
is regarded as a categorification of 
the vanishing cycle, as its periodic cyclic homology
recovers the $\mathbb{Z}/2$-graded
hypercohomology of the 
perverse sheaf of vanishing cycles of $w$
(see~\cite{MR3815168, MR3877165}). 
Therefore 
as mentioned in~\cite[(J)]{Jslide}, \cite[Section~6.1]{MR3728637}, 
it is natural to try to 
glue the
locally defined categories $\mathrm{MF}(Y, w)$, 
and regard it as a categorification of the DT invariant.  
If such a gluing exists, say $\mathcal{DT}(M)$, 
then we would like to call $\mathcal{DT}(M)$ 
as a \textit{DT category}, and study its properties. 

%However the construction of such a gluing 
%does not seem to be obvious in general, 
%and at least there exists two difficulties.  
%The first one is that, since 
%$\mathrm{MF}(Y, w)$ is not 
%constructed as a derived category of some abelian 
%category, we cannot try to 
%glue abelian categories first (which is easier due to the 
%vanishing of higher homotopies) and then 
%take its derived category. 
%Therefore we need to take 
%a dg-enhancement $\mathrm{MF}_{\rm{dg}}(Y, w)$ 
%and take their limit in the $\infty$-category of dg-categories. 
%The second difficulty lies on the fact that, 
%since the category 
%$\mathrm{MF}_{\rm{dg}}(Y, w)$ depends on the presentation of 
%writing $M$ locally as a critical locus,  
%we need to take a twist of $\mathrm{MF}_{\rm{dg}}(Y, w)$ 
%by tensoring with another dg-category determined by 
%a choice of an orientation data. Naively speaking, 
%we expect the existence of an $\infty$-functor of the form
%\begin{align}\label{intro:functor}
%\mathcal{DT} \colon 
%\mathrm{Op}(M) \to \mathrm{dgCat}, \ 
%U\mapsto \mathrm{MF}_{\rm{dg}}(Y, w) \otimes (\mbox{twisting})
%\end{align}
%where $\mathrm{Op}(M)$ is the category of open 
%subsets in $U \subset M$ such that 
%$U=\mathrm{Crit}(w)$
%for some function $w$ on a smooth scheme $Y$. 
%If such an $\infty$-functor exists, then 
%we can define the categorical DT theory as a limit 
%of the above $\infty$-functor:
%\begin{align*}
%\mathcal{DT}(M)=\lim_{U \in \mathrm{Op}(M)} \mathcal{DT}(U). 
%\end{align*}
%At least for the author, 
%it seems to require a substantial work on $\infty$-categories 
%to construct such an $\infty$-functor. 
However the construction of such a gluing 
does not seem to be obvious in general. 
It may be constructed by taking the homotopy limit 
of dg-enhancements $\mathrm{MF}_{\rm{dg}}(Y, w)$
of $\mathrm{MF}(Y, w)$, 
twisted by another dg-categories determined by a choice of an orientation 
data. In order to take such a limit, we need an $\infty$-functor from 
the category of open subsets $U \subset M$ written as $U=\mathrm{Crit}(w)$
to the $\infty$-category of dg-categories, 
of the form
\begin{align*}
U \mapsto \mathrm{MF}_{\rm{dg}}(Y, w) \otimes (\mathrm{twisting}).
\end{align*}
A construction of such an $\infty$-functor in general seems to require 
a substantial work on $\infty$-categories, which is beyond 
the scope of this paper. 

Instead of constructing such a gluing in general, 
we focus on the case that 
a CY 3-fold is  
the total space of the canonical line bundle 
on a smooth projective surface, called a
\textit{local surface}. 
In this case $\mathbb{C}^{\ast}$ acts on 
fibers of the canonical bundle, 
and we 
construct $\mathbb{C}^{\ast}$-equivariant 
version of DT categories on local surfaces.  
Although this is a particular class of CY 3-folds, 
the moduli space $M$ is not necessary written as a 
global critical locus, so the gluing problem is still not obvious. 
Indeed our construction is not a direct gluing of matrix factorizations,
but rather indirect
through the
derived categories of coherent sheaves on derived moduli 
stacks of 
coherent sheaves on the surface, together with
Koszul duality. 
There are two simplifications in this case: 
the first one is that
a relevant $\infty$-functor is 
a priori given from the existence of the 
derived moduli stacks, so 
we can avoid technical subtleties on $\infty$-categories. 
The second one is that
there exists a canonical orientation data in this case, so 
the
construction is canonical without taking twisting. 

Then we have a well-defined categorification of
DT invariants on the local surface, and can
formulate several 
conjectures on DT categories. 
Our main motivation of studying $\mathcal{DT}(M)$ is 
to categorify wall-crossing formulas of DT invariants and 
give their relation with D/K equivalence conjecture in birational geometry, 
as we discuss below. 

\subsubsection{Motivation from d-critical birational geometry}\label{intro:motivation}
The development of wall-crossing formulas
of DT invariants~\cite{JS, K-S}
is one of the recent important progress 
on the study of DT invariants. 
The moduli space of stable sheaves (or Bridgeland stable objects~\cite{MR2373143})
depends on a choice of a stability condition $\sigma$, 
so should be written as $M^{\sigma}$. 
In general there is a wall-chamber structure on 
the space of stability conditions such that 
$M^{\sigma}$ is constant if $\sigma$ lies on a 
chamber, but may change when $\sigma$ crosses a wall. 
The change of the associated DT invariants are 
explicitly expressed using motivic Hall algebras. 

Suppose that $\sigma$ lies on a wall and 
$\sigma_{\pm}$ lie on its adjacent chambers. 
Then 
we have the following diagram 
\begin{align}\label{intro:diagram}
\xymatrix{
M^{\sigma_{+}} \ar[rd] & & M^{\sigma_-} \ar[ld] \\
& \overline{M}^{\sigma} &. 
}
\end{align}
where $\overline{M}^{\sigma}$ is the good moduli space of 
the moduli stack of 
$\sigma$-semistable objects. 
In~\cite{Toddbir}, the author formulated 
the notion of \textit{d-critical flips}, \textit{d-critical flops} 
for diagrams of schemes with d-critical structures
such as the wall-crossing diagram (\ref{intro:diagram}). 
These are d-critical analogue of usual flips and flops in birational geometry, 
but not honest birational maps so should 
be interpreted as virtual birational maps. 
Roughly speaking, they are to do with 
the sizes of virtual canonical line bundles: 
for a d-critical flip (flop), we have the 
inequality (equality) in terms of virtual canonical line bundles
(see~\cite[Definition~3.23]{Toddbir})
\begin{align*}
M^{\sigma_+} >_{K^{\rm{vir}}} (=_{K^{\rm{vir}}}) M^{\sigma_-}.
\end{align*}
This is an analogue of the similar 
inequality for usual
flip (flop) via canonical line bundles in birational geometry\footnote{For a birational 
map of smooth projective varieties $Y_+ \dashrightarrow Y_-$, 
we have the inequality (equality) $Y_+ >_K(=_K) Y_-$ if there is a common 
resolution 
$p_{\pm} \colon W \to Y_{\pm}$ 
such that $p_+^{\ast}K_{Y_+} -p_{-}^{\ast}K_{Y_-}$ is effective (trivial).}. 

On the other hand if two smooth projective varieties are related 
by a usual flip (flop), 
it is conjectured by Bondal-Orlov~\cite{B-O2}
and Kawamata~\cite{MR1949787} that there exists a fully-faithful functor
(equivalence) of their derived categories of coherent sheaves. 
The above conjecture relates sizes of 
derived categories and canonical line bundles, so is called a \textit{D/K equivalence conjecture}. 
Recently Halpern-Leistner~\cite{HalpK3, HalpK32} proves that the 
derived categories of coherent sheaves on 
Bridgeland stable objects on K3 surfaces are equivalent 
under the wall-crossing diagram (\ref{intro:diagram}). 
In the case of K3 surfaces, both sides of (\ref{intro:diagram}) 
are birational holomorphic symplectic manifolds, so connected 
by flops. 
Therefore his result proves D/K equivalence conjecture in this case. 

Our motivation on studying DT categories
is to formulate a
CY 3-fold 
 version of the above Halpern-Leistner's work for K3 surfaces. 
Namely under the wall-crossing diagram (\ref{intro:diagram})
for moduli spaces of stable objects on CY 3-folds, 
if it is a d-critical flip (flop), we expect the 
existence of a
fully-faithful functor (equivalence)
\begin{align}\notag
\mathcal{DT}(M^{\sigma_-})
\hookrightarrow (\stackrel{\sim}{\to}) \mathcal{DT}(M^{\sigma_+}). 
\end{align}
This expectation 
relates DT categories
of $M^{\sigma_{\pm}}$ with their virtual 
canonical line bundles, 
 so is 
an analogue of the D/K equivalence conjecture for d-critical loci. 

Furthermore
we expect that the semiorthogonal complement of 
the above fully-faithful embedding recovers the wall-crossing 
formula of numerical DT invariants. 
In this way, the 
study of DT categories will give a 
link between D/K equivalence conjecture in birational geometry 
and wall-crossing formula of DT invariants.

\subsection{Categorical DT theory for local surfaces}
\subsubsection{Idea from Koszul duality}
Let $S$ be a smooth projective surface and
\begin{align*}
\pi \colon X=\mathrm{Tot}_S(\omega_S) \to S
\end{align*}
the total space of the canonical line bundle on $S$, 
which is a non-compact CY 3-fold. 
Let $\mM_S$, $\mM_X$ be the (classical) moduli stacks of 
compactly supported coherent sheaves on $S$, $X$ respectively, 
with fixed Chern characters. 
There is a natural map 
$\pi_{\ast} \colon \mM_X \to \mM_S$, and it is well-known that 
$\mM_X$ is the dual obstruction 
cone over $\mM_S$ associated with a natural 
perfect obstruction theory of $\mM_S$ (see~\cite{MR3607000} and 
Lemma~\ref{lem:moduliX}). 

The above fact is explained from derived algebraic geometry 
in the following way. Let
 $\mM_S \subset \mathfrak{M}_S$ be the derived enhancement of 
$\mM_S$ which is a quasi-smooth derived stack, and consider its 
$(-1)$-shifted cotangent derived stack 
$\Omega_{\mathfrak{M}_S}[-1] \to \fM_S$
which admits a $(-1)$-shifted symplectic structure~\cite{Calaque}. 
 Then we have 
\begin{align}\label{intro:isom:M}
\mM_X \cong t_0(\Omega_{\mathfrak{M}_S}[-1])
\end{align}
where $t_0(-)$ means the classical truncation of the 
derived stack. 
Locally on $\mathfrak{M}_S$, this means as follows. 
Let $\mathfrak{U}$ be an affine derived scheme 
given by the derived zero locus of a section $s$ of a vector bundle 
$V \to Y$ on a smooth affine scheme $Y$. 
Then for a smooth morphism $\mathfrak{U} \to \mathfrak{M}_S$, 
the isomorphism (\ref{intro:isom:M}) over $t_0(\mathfrak{U}) \to \mM_S$
implies the isomorphism 
\begin{align}\label{intro:isom}
t_0(\mathfrak{U}) \times_{\mM_S} \mM_X
\stackrel{\cong}{\to}
\mathrm{Crit}(w) \subset
V^{\vee}. 
\end{align}
Here $w \colon V^{\vee} \to \mathbb{C}$
is 
defined by $w(x, v)=\langle s(x), v \rangle$
for $x \in Y$ and $v \in V|_{x}^{\vee}$. 

On the other hand, under the above local setting 
we have the Koszul duality equivalence
(see Theorem~\ref{thm:knoer} and~\cite{MR3071664, MR2982435, MR3631231, ObRoz})
\begin{align}\label{intro:koszul}
\Phi \colon D^b_{\rm{coh}}(\mathfrak{U}) \stackrel{\sim}{\to}
\mathrm{MF}^{\mathbb{C}^{\ast}}(V^{\vee}, w). 
\end{align} 
Here the left hand side is the derived category of dg-modules 
over $\oO_{\fU}$ with bounded coherent cohomologies, 
and 
the right hand side is the category of $\mathbb{C}^{\ast}$-equivariant 
matrix factorizations of $w$, 
 where $\mathbb{C}^{\ast}$ acts on 
the fibers of $V^{\vee} \to Y$ by weight two. 
Our point of view is that, 
via the equivalence (\ref{intro:koszul}), 
the derived category of the global 
moduli stack $\mathfrak{M}_S$ is regarded as a
gluing of  categories of $\mathbb{C}^{\ast}$-equivariant 
matrix factorizations. 
Thus it may be reasonable to 
set
\begin{align}\label{intro:catDT}
\mathcal{DT}^{\mathbb{C}^{\ast}}(\mM_X) \cneq 
D^b_{\rm{coh}}(\mathfrak{M}_S)
\end{align}
and call it the $\mathbb{C}^{\ast}$-equivariant 
DT category for $\mM_X$. 

However $\mM_X$ is usually 
not of finite type, and in order to obtain 
a nice moduli space we need to take its open 
substack of stable sheaves. 
As we will discuss below, we 
will use the notion of singular supports to 
extract information of (un)stable locus. 

\subsubsection{DT category for stable sheaves}
Let us take a stability condition $\sigma$ on 
the abelian category of compactly supported coherent sheaves on $X$. 
For example we can take 
\begin{align*}
\sigma=B+iH \in \mathrm{NS}(S)_{\mathbb{C}}
\end{align*}
such that $H$ is an ample class, and 
consider ($B$-twisted) $H$-Gieseker stability condition associated with 
$\sigma$ (see~Subsection~\ref{subsec:catDT:sigma}). 
Then we have the open substack
$\mM_X^{\sigma} \subset \mM_X$
corresponding to $\sigma$-stable sheaves on $X$, 
which is a $\mathbb{C}^{\ast}$-gerbe over a quasi-projective scheme 
$M_X^{\sigma}$. 
Our purpose is to define the $\mathbb{C}^{\ast}$-equivariant
DT category 
for $M_X^{\sigma}$. 
In general we have 
an open immersion 
\begin{align}\label{intro:shift}
t_0(\Omega_{\mathfrak{M}_{S}^{\sigma}}[-1])
\subset \mM_{X}^{\sigma}
\end{align}
where 
$\mathfrak{M}_{S}^{\sigma} \subset \mathfrak{M}_S$
is the derived open substack 
of $\sigma$-stable sheaves on $S$. 
If (\ref{intro:shift}) is an isomorphism, 
then   
we may define 
\begin{align*}
\mathcal{DT}^{\mathbb{C}^{\ast}}(M_{X}^{\sigma})
\cneq D^b_{\rm{coh}}(\mathfrak{M}_{S}^{\sigma, \C\rig})
\end{align*}
similarly to (\ref{intro:catDT}).
Here $\mathbb{C}^{\ast}\rig$ means 
the $\mathbb{C}^{\ast}$-rigidification, 
i.e. 
getting rid of the trivial $\mathbb{C}^{\ast}$-autmorphisms. 
However (\ref{intro:shift})
is not an isomorphism in many cases, 
as $\pi_{\ast}E$ for a stable sheaf $E$ on $X$ is not 
necessary stable on $S$.
Therefore we need to find another way to define 
$\mathcal{DT}^{\mathbb{C}^{\ast}}(M_X^{\sigma})$. 

Let $\zZ^{\sigma \us} \subset \mM_{X}$
be the closed substack
defined to be the complement of $\mM_X^{\sigma}$. 
Then its 
pull-back under the smooth map  
$t_0(\mathfrak{U}) \to \mM_S$
is a $\mathbb{C}^{\ast}$-invariant 
closed subscheme $Z \subset \mathrm{Crit}(w)$
via the isomorphism (\ref{intro:isom}). 
We would like to 
construct the DT category for $M_X^{\sigma}$
as a gluing of  
triangulated categories 
$\mathrm{MF}^{\mathbb{C}^{\ast}}(V^{\vee} \setminus Z, w)$. 
Our key observation is that the latter 
category is equivalent to the quotient category
\begin{align}\label{intro:MF:restrict}
\mathrm{MF}^{\mathbb{C}^{\ast}}(V^{\vee}, w)/\mathrm{MF}^{\mathbb{C}^{\ast}}(V^{\vee}, w)_Z \stackrel{\sim}{\to}
\mathrm{MF}^{\mathbb{C}^{\ast}}(V^{\vee} \setminus Z, w)
\end{align}
by the restriction functor. 
Here $\mathrm{MF}^{\mathbb{C}^{\ast}}(V^{\vee}, w)_Z$
is the subcategory of 
$\mathrm{MF}^{\mathbb{C}^{\ast}}(V^{\vee}, w)$
consisting of objects isomorphic to zero on 
$V^{\vee} \setminus Z$. 
We will see that, under the equivalence (\ref{intro:koszul}),
the subcategory $\mathrm{MF}^{\mathbb{C}^{\ast}}(V^{\vee}, w)_Z$
corresponds to the subcategory 
$\cC_Z \subset D^b_{\rm{coh}}(\mathfrak{U})$
consisting of objects in $D^b_{\rm{coh}}(\mathfrak{U})$
whose \textit{singular 
supports} are contained in $Z$
(see~\cite[Section~H]{MR3300415} and Proposition~\ref{prop:koszul:Z}). 

The notion of singular supports for (ind)coherent sheaves on 
quasi-smooth derived stacks was introduced and developed by 
Arinkin-Gaitsgory~\cite{MR3300415} in their formulation of 
geometric Langlands conjecture, following an 
earlier work by Benson-Iyengar-Krause~\cite{MR2489634}. 
For an object $\fF \in D^b_{\rm{coh}}(\mathfrak{U})$, its singular 
support is defined to be the support of the 
action of the Hochschild cohomology of $\mathfrak{U}$ 
to the graded vector space $\Hom^{2\ast}(\fF, \fF)$, which 
can be shown to lie on $t_0(\Omega_{\mathfrak{U}}[-1])$. 
As proved in~\cite{MR3300415}, the singular support is intrinsic to 
the derived stack, e.g. independent of presentation of $\mathfrak{U}$, 
preserved by 
pull-backs of smooth morphisms of quasi-smooth affine 
derived schemes, etc. 
Therefore we have the subcategory 
$\cC_{\zZ^{\sigma \us}}\subset D^b_{\rm{coh}}(\mathfrak{M}_S)$
consisting of objects whose singular supports are contained in $\zZ^{\sigma \us}$. 

From the equivalences (\ref{intro:koszul}), (\ref{intro:MF:restrict}),
the
Verdier quotient of $D^b_{\rm{coh}}(\mathfrak{M}_S)$
by the subcategory 
$\cC_{\zZ^{\sigma \us}}$
may be regarded as a gluing of the triangulated categories 
(\ref{intro:MF:restrict}). 
We need two slight modifications of this construction. 
The first one is that, since $\mathfrak{M}_S$ is not quasi-compact in general, 
we replace $\mathfrak{M}_S$ by its quasi-compact 
derived open substack $\mathfrak{M}_{S, \rm{qc}}$
whose truncation contains $\pi_{\ast}(\mM_X^{\sigma})$. 
The second one is that, 
since $\mM_X^{\sigma}$ is a 
$\mathbb{C}^{\ast}$-gerbe over $M_X^{\sigma}$, 
we replace 
$\mathfrak{M}_{S, \rm{qc}}$ by its 
$\mathbb{C}^{\ast}$-rigidification 
$\mathfrak{M}_{S, \rm{qc}}^{\mathbb{C}^{\ast}\rig}$
to 
get rid of the contributions of $\mathbb{C}^{\ast}$-autmorphisms. 
Then 
we propose the following definition: 
\begin{defi}\label{intro:def:catDT}\emph{(Definition~\ref{defi:catDT})}
We define
the $\mathbb{C}^{\ast}$-equivariant DT category 
for $M_X^{\sigma}$
to be the Verdier quotient
\begin{align*}
\mathcal{DT}^{\mathbb{C}^{\ast}}(M_X^{\sigma}) \cneq 
D^b_{\rm{coh}}(\mathfrak{M}_{S, \rm{qc}}^{\mathbb{C}^{\ast}\rig})/
\cC_{\zZ^{\sigma \us, \C\rig}}. 
\end{align*}
\end{defi}
We will see that the above definition does not depend (up to equivalence)
on 
a choice of a derived open substack
$\mathfrak{M}_{S, \rm{qc}} \subset \mathfrak{M}_S$. 
We will also define the $\lambda$-twisted version
for $\lambda \in \mathbb{Z}$, 
the dg-enhancements of
DT categories, and also 
$\wdDT$-version of DT categories. 
The relevant notations are summarized in Table~1 and 
Table~2
in the beginning of Section~\ref{sec:catDT:shift}. 
As a globalization of the result in~\cite{MR3815168}, 
we conjecture that 
the periodic cyclic homology of the 
dg-enhancement of $\mathcal{DT}^{\mathbb{C}^{\ast}}(M_X^{\sigma})$
recovers the $\mathbb{Z}/2$-graded 
cohomological DT invariant of $M_X^{\sigma}$, 
constructed in~\cite{MR3353002} using
 the canonical orientation data of $M_X^{\sigma}$
(see Conjecture~\ref{conj:periodic}). 
We only prove a numerical version of it
in a very special case: 
\begin{prop}\emph{(Proposition~\ref{prop:num})}\label{intro:prop:comparison}
Under some technical condition on 
$\mathfrak{M}_{S, \rm{qc}}^{\mathbb{C}^{\ast}\rig}$
(which is automatically satisfied if 
$t_0(\mathfrak{M}_{S, \rm{qc}}^{\mathbb{C}^{\ast}\rig})$
is a quasi-projective scheme), 
 we have the identity
\begin{align*}
\chi(\mathrm{HP}_{\ast}(\mathcal{DT}_{\rm{dg}}
^{\mathbb{C}^{\ast}}(M_X^{\sigma})))
=(-1)^{\vdim \mM_S+1}\DT(M_X^{\sigma}). 
\end{align*}
Here dg indicates the dg-enhancement, 
 $\mathrm{HP}_{\ast}(-)$
is the periodic cyclic homology, $\chi(-)$ is the Euler 
characteristic, $\vdim$ is the virtual 
dimension, and $\DT(-)$ is the numerical DT invariant.  
\end{prop}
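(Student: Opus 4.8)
The plan is to reduce the computation of the Euler characteristic of the periodic cyclic homology to a local statement on the matrix factorization side, and then sum up local contributions. First I would use the fact that, by the Koszul duality equivalence (\ref{intro:koszul}) together with the quotient equivalence (\ref{intro:MF:restrict}), the dg-enhancement $\mathcal{DT}_{\rm{dg}}^{\mathbb{C}^{\ast}}(M_{X,\sigma})$ restricted to a smooth chart $t_0(\mathfrak{U})\to\mM_S$ of the form (\ref{intro:isom}) becomes the dg-category $\mathrm{MF}^{\mathbb{C}^{\ast}}(V^{\vee}\setminus Z,w)$, which is a dg-enhancement of $\mathrm{MF}^{\mathbb{C}^{\ast}}(V^{\vee}\setminus Z,w)$ of matrix factorizations on the open $\mathbb{C}^{\ast}$-invariant locus. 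By the results relating periodic cyclic homology of categories of matrix factorizations to $\mathbb{Z}/2$-graded vanishing cycle cohomology (as recalled in the introduction, following~\cite{MR3815168, MR3877165}), $\mathrm{HP}_{\ast}$ of this local category computes the $\mathbb{Z}/2$-periodic hypercohomology of the perverse sheaf of vanishing cycles $\phi_w$ restricted to $\mathrm{Crit}(w)\setminus Z$, up to a shift governed by $\dim V^{\vee}$.

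Next, I would globalize: the technical condition on $\mathfrak{M}_{S,\circ}^{\mathbb{C}^{\ast}\rig}$ (automatic when $t_0$ is a quasi-projective scheme) is precisely what is needed to guarantee that the local pieces glue, i.e. that $\mathrm{HP}_{\ast}(\mathcal{DT}_{\rm{dg}}^{\mathbb{C}^{\ast}}(M_{X,\sigma}))$ is computed by the $\mathbb{Z}/2$-graded hypercohomology of the globally-defined perverse sheaf of vanishing cycles on $M_{X,\sigma}$ associated to the canonical orientation data (the one used in~\cite{MR3353002}). Taking Euler characteristics, $\chi(\mathrm{HP}_{\ast}(-))$ equals $\pm\chi(M_{X,\sigma}, \phi_{M_{X,\sigma}})$, and by Behrend's theorem~\cite{MR2600874} the weighted Euler characteristic $\chi(M_{X,\sigma},\nu_B)$ with the Behrend function $\nu_B$ equals $\DT(M_{X,\sigma})$. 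The only remaining point is to pin down the sign: the shift in the local comparison is by $\dim V^{\vee}$, and on a chart (\ref{intro:isom}) one has $\dim V^{\vee}=\dim A+\rk V$, while $\vdim\mM_S=\dim A - \rk V$ locally; the parity of $\dim V^{\vee}$ therefore agrees with the parity of $\vdim\mM_S$, and after accounting for the $\mathbb{C}^{\ast}$-rigidification (which drops the virtual dimension by one) one gets the stated $(-1)^{\vdim\mM_S+1}$.

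I expect the main obstacle to be the globalization step and the careful bookkeeping of the orientation/sign data. Locally everything is governed by~\cite{MR3815168}, but to assemble the local $\mathrm{HP}_{\ast}$ computations into a statement about the global d-critical locus $M_{X,\sigma}$ one must check that the Koszul-dual description of $\mathcal{DT}_{\rm{dg}}^{\mathbb{C}^{\ast}}(M_{X,\sigma})$ via singular supports is compatible, chart by chart, with the gluing of vanishing cycle sheaves through the canonical orientation data — this is essentially the content of Conjecture~\ref{conj:periodic}, and the point of Proposition~\ref{intro:prop:comparison} is that one can prove the \emph{numerical} shadow of it unconditionally under the quasi-projectivity hypothesis, because then one can appeal to a global presentation (or dévissage to one) rather than genuine $\infty$-categorical gluing. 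So the strategy is: (i) identify the local category with $\mathrm{MF}^{\mathbb{C}^{\ast}}$ on the stable locus; (ii) apply the local $\mathrm{HP}_{\ast}$ = vanishing-cycle-cohomology result; (iii) use the quasi-projectivity hypothesis to globalize to $\chi(M_{X,\sigma},\nu_B)$; (iv) invoke Behrend's theorem; (v) reconcile the shift to extract the sign $(-1)^{\vdim\mM_S+1}$.
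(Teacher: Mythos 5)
Your steps (i), (ii), (iv) and (v) reproduce the paper's local analysis, but step (iii) -- the globalization -- is where the actual content of Proposition~\ref{prop:num} lies, and as written it is a genuine gap. Asserting that $\mathrm{HP}_{\ast}(\mathcal{DT}_{\rm{dg}}^{\mathbb{C}^{\ast}}(M_{X,\sigma}))$ ``is computed by the $\mathbb{Z}/2$-graded hypercohomology of the globally-defined perverse sheaf of vanishing cycles'' is precisely Conjecture~\ref{conj:periodic}, which the paper does not prove; and the alternative you offer, d\'evissage to ``a global presentation,'' is not available: quasi-projectivity of the truncation only guarantees (via \cite[Theorem~4.1]{MR3904157}, cf.\ Remark~\ref{rmk:condition}) Zariski-\emph{local} critical charts of the form required in the hypotheses of Proposition~\ref{prop:num}, not a single global Landau--Ginzburg model to which Theorem~\ref{thm:cyclic} could be applied once. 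So after the local identification you still have no stated mechanism for assembling the local $\mathrm{HP}_{\ast}$ computations into the global numerical identity.

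The missing ingredient is a localization/Mayer--Vietoris property of periodic cyclic homology for the quotient dg-categories. The paper covers the (QCA) stack by finitely many admissible charts and uses Keller's theorem \cite[Theorem~3.1]{MR1492902}, that exact sequences of dg-categories induce distinguished triangles of mixed complexes, applied to the localization sequences of Lemma~\ref{lem:exact:Z}; this yields the \v{C}ech-type triangle of Lemma~\ref{lem:cyclic} and permits induction on the number of opens, matching the additivity of the Behrend-weighted Euler characteristic $\DT=\int\nu\,d\chi$ on the other side. Only Euler characteristics glue this way -- the full $\mathrm{HP}_{\ast}$ does not obviously glue, which is exactly why the cohomological statement remains conjectural. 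On a single chart your argument is essentially right, but note two further points: one needs Lemma~\ref{lem:Zgraded} to pass from the $\mathbb{C}^{\ast}$-equivariant factorization category to the $\mathbb{Z}/2$-graded one before invoking \cite{MR3815168}, and in the general setting the chart is $[\Spec \rR(V\to A,s)/G]$ for a reductive $G$, so one must first restrict to a $G$-free open $W$ containing $\mathrm{Crit}(w)\setminus Z$ and descend $w$ to the quotient before Theorem~\ref{thm:cyclic} (stated for schemes) applies; the parity bookkeeping then reads $\vdim\mM=\dim A-\rank V-\dim G\equiv \dim A+\rank V-\dim G \pmod 2$, which is your sign argument, the extra $+1$ in the statement coming from the $\mathbb{C}^{\ast}$-rigidification as you say.
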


Once we have a definition of DT categories, 
we can formulate categorical wall-crossing 
based on d-critical analogue of D/K equivalence conjecture. 
Let $N_{\le 1}(S)$ be the group of numerical classes of 
one or zero dimensional coherent sheaves on $S$. 
For a primitive element $v \in N_{\le 1}(S)$
and a stability condition $\sigma$ which lies on a wall, 
let $\sigma_{\pm}$ lie on its adjacent chambers. 
Then it is observed in~\cite{Toddbir} that
the
diagram (\ref{intro:diagram}) is a d-critical flop. 
Therefore we propose the following conjecture: 
\begin{conj}\label{intro:conjGV}\emph{(Conjecture~\ref{conj1})}
	There is an equivalence of DT categories
	\begin{align*}
		\dDT^{\C}(M_{X}^{\sigma_+}(v)) \stackrel{\sim}{\to}
		\dDT^{\C}(M_{X}^{\sigma_-}(v)). 
		\end{align*}
	\end{conj}
The DT invariants associated with $M_{X}^{\sigma}(v)$ 
are known as genus zero Gopakumar-Vafa invariants, which 
are well-known to be wall-crossing invariant (see~\cite{Tsurvey}). 
Therefore the above conjecture also categorifies
wall-crossing invariance of genus zero GV invariants. 

\subsection{Categorical DT theory for D0-D2-D6 bound states}
\subsubsection{Categorical MNOP/PT theories}\label{intro:subsec:dtpt}
The MNOP conjecture~\cite{MR2264664}
(which is proved in many cases~\cite{MR3600040}) is a conjectural 
relationship between  
generating series of 
Gromov-Witten invariants on a 3-fold 
and those of DT invariants 
counting one or zero dimensional subschemes on it. 
In our situation of the local surface, the relevant moduli 
space in the DT side 
is 
\begin{align*}
I_n(X, \beta), \quad (\beta, n) \in \NS(S) \oplus \mathbb{Z}
\end{align*}
which parametrizes ideal sheaves
$I_C \subset \oO_X$ for a compactly supported one or zero
dimensional subschemes $C \subset X$
satisfying $\pi_{\ast}[C]=\beta$ and $\chi(\oO_C)=n$. 

The notion of stable pairs was introduced by 
Pandharipande-Thomas~\cite{MR2545686} 
in order to give 
a better formulation of MNOP conjecture.
By definition a PT stable pair is a pair
$(F, s)$, 
where $F$ is a compactly supported pure one dimensional 
sheaf on $X$, and $s \colon \oO_X \to F$ is surjective in dimension one. 
The moduli space of stable pairs 
is denoted by 
\begin{align*}
P_n(X, \beta), \quad 
(\beta, n) \in \mathrm{NS}(S) \oplus \mathbb{Z}
\end{align*}
and it parametrizes stable pairs $(F, s)$
 such that 
$\pi_{\ast}[F]=\beta$ and $\chi(F)=n$. 
The moduli space $P_n(X, \beta)$ is regarded as 
a moduli space of two term complexes in the derived category
\begin{align}\label{intro:twoterm}
I^{\bullet}=(\oO_X \stackrel{s}{\to} F) \in D^b_{\rm{coh}}(X).
\end{align}
The relationship between the corresponding DT type 
invariants 
is conjectured in~\cite{MR2545686} and proved in~\cite{MR2813335, Tcurve1, MR2869309}
for CY 3-folds using wall-crossing arguments. 
If we denote by $I_{n, \beta}$, $P_{n, \beta}$ the corresponding 
DT invariants, the formula is 
\begin{align}\notag
	\sum_{n\in \mathbb{Z}} I_{n, \beta}q^n
	=M(-q)^{e(X)} \sum_{n\in \mathbb{Z}}P_{n, \beta}q^n
		\end{align}
where $M(q) \cneq \prod_{k\ge 1}(1-q^k)^{-k}$ is the MacMahon 
function. 

We would like to categorify the DT type invariants 
defined from $I_n(X, \beta)$ and $P_n(X, \beta)$, 
and try to formulate a categorical relationship of 
these moduli spaces. 
Here we note that, 
since an ideal sheaf $I_C$ 
nor the two term complex (\ref{intro:twoterm}) 
 do not have compact supports, 
 the construction in Definition~\ref{intro:def:catDT} does not 
 apply for this purpose. 
Our strategy is to 
realize the moduli spaces $I_n(X, \beta)$, $P_n(X, \beta)$
as open substacks of the 
dual obstruction cone over 
the moduli stack $\mM_S^{\dag}$ of 
pairs $(\oO_S \to F)$, where 
$F$ is a one or zero dimensional sheaf on $S$. 
Indeed we will show that
 the dual obstruction cone over 
$\mM_S^{\dag}$ is isomorphic 
to the moduli stack 
$\mM_X^{\dag}$ which parametrizes   
rank one objects 
in the extension closure
\begin{align*}
\aA_X=\langle \oO_{\overline{X}}, \Coh_{\le 1}(X)[-1] \rangle_{\rm{ex}}
\subset D^b_{\rm{coh}}(\overline{X}). 
\end{align*}
Here $X \subset \overline{X}$ is a compactification of $X$, and 
$\Coh_{\le 1}(X)$ is the category of compactly supported 
coherent sheaves on $X$ whose supports have dimensions 
less than or equal to one. 
The category $\aA_X$ (called the category of \textit{D0-D2-D6 bound states})
is an abelian category, which 
 was introduced 
 in~\cite{Tcurve1}
 and studied in~\cite{Tsurvey, MR3021446, Thall}
to show several properties on PT invariants. 
 As we mentioned above, we show the following: 
\begin{thm}\emph{(Theorem~\ref{thm:D026})}\label{intro:thm:MX}
The stack $\mM_X^{\dag}$ of rank one objects in $\aA_X$ 
admits a natural morphism $\pi_{\ast}^{\dag} \colon \mM_X^{\dag} \to 
\mM_S^{\dag}$, and is isomorphic 
to the dual obstruction cone over $\mM_S^{\dag}$. 
\end{thm}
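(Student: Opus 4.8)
The plan is to upgrade the proof of the known isomorphism $\mM_X \cong t_0(\Omega_{\mathfrak{M}_S}[-1])$ of~(\ref{intro:isom:M}) to the setting of pairs, with the $(\ref{intro:isom})$-type local critical-locus model supplemented by the relative spectral correspondence for the line bundle $X = \mathrm{Tot}_S(\omega_S) \to S$.

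First I would construct $p_0^{\dag}$. Fix a compactification $X \subset \overline{X}$ over $S$, for instance $\overline{X} = \mathbb{P}_S(\omega_S \oplus \oO_S) \xrightarrow{\bar{\pi}} S$, so that $\bar{\pi}_{\ast}\oO_{\overline{X}} = \oO_S$ and $R^i\bar{\pi}_{\ast}\oO_{\overline{X}} = 0$ for $i > 0$, and every object of $\Coh_{\leq 1}(X)$ is finite over $S$. For a rank one object $E \in \aA_X$ I would set $p_0^{\dag}(E) \cneq R\bar{\pi}_{\ast}E$. Using the description of $\aA_X$ from~\cite{MR2669709} — objects built from $\oO_{\overline{X}}$ with multiplicity one and from shifts of sheaves in $\Coh_{\leq 1}(X)$ — one checks that $R\bar{\pi}_{\ast}E$ is concentrated in cohomological degrees $0$ and $1$, with $\mathcal{H}^0$ of rank one and $\mathcal{H}^1$ of dimension $\leq 1$, and that it represents an object $(\oO_S \to F)$ of $\mM_S^{\dag}$; carrying this out in families gives the morphism $p_0^{\dag}\colon \mM_X^{\dag} \to \mM_S^{\dag}$. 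It remains to show that $p_0^{\dag}$ exhibits $\mM_X^{\dag}$ as the dual obstruction cone over $\mM_S^{\dag}$.

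For this I would argue through the derived stacks, exactly as in the sheaf case. The moduli stack $\mM_S^{\dag}$ carries a natural derived enhancement $\mathfrak{M}_S^{\dag}$, which is quasi-smooth and whose tangent complex at $(\oO_S \to F)$ is the traceless pair deformation complex; by relative Serre duality on $S$ this complex is self-dual up to the twist $\omega_S[2]$, so that $\Omega_{\mathfrak{M}_S^{\dag}}[-1]$ is a $(-1)$-shifted symplectic derived stack and its classical truncation $t_0(\Omega_{\mathfrak{M}_S^{\dag}}[-1])$ is the dual obstruction cone over $\mM_S^{\dag}$, with fibre over $(\oO_S \to F)$ the dual of the obstruction space. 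On the other side, the relative spectral correspondence for $X = \mathrm{Tot}_S(\omega_S) \to S$ identifies rank one objects of $\aA_X$ with pairs $(\oO_S \to F)$ on $S$ endowed with a traceless $\omega_S$-twisted Higgs field $\phi$ on the associated two-term complex, compatibly with $p_0^{\dag}$ (which forgets $\phi$); and Serre duality on $S$ identifies the space of such Higgs fields with the dual of the obstruction space of $\mathfrak{M}_S^{\dag}$. Assembling this identification over the moduli stack, and checking that it coincides with the $(\ref{intro:isom})$-type local statement $t_0(\mathfrak{U}) \times_{\mM_S^{\dag}} \mM_X^{\dag} \cong \mathrm{Crit}(w) \subset V^{\vee}$ on a presentation $\mathfrak{U}$ of $\mathfrak{M}_S^{\dag}$ as a derived zero locus, yields $\mM_X^{\dag} \cong t_0(\Omega_{\mathfrak{M}_S^{\dag}}[-1])$ over $\mM_S^{\dag}$, which is the claim.

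The step I expect to be the main obstacle is the deformation-theoretic matching in the previous paragraph: one must pin down the natural perfect obstruction theory of $\mM_S^{\dag}$ used to define its dual obstruction cone, and verify on the nose that, after dualizing via Serre duality on $S$, its obstruction sheaf is the sheaf of $\omega_S$-twisted Higgs fields parametrized by $\mM_X^{\dag}$ along the zero section. This is delicate precisely where the section of the pair degenerates, since there the obstruction space of $\mM_S^{\dag}$ jumps; handling it requires the full abelian category of rank one objects of $\aA_X$ rather than only the PT-type two-term complexes, so that the extra ``D0--D2'' extensions supply exactly the missing obstruction directions. A secondary technical point, parallel to the discussion around Definition~\ref{intro:def:catDT}, is the bookkeeping forced by working inside $D^b_{\mathrm{coh}}(\overline{X})$: one needs the properness of $\bar{\pi}$ together with $R\bar{\pi}_{\ast}\oO_{\overline{X}} = \oO_S$, the finiteness of $\pi$ on supports of objects of $\Coh_{\leq 1}(X)$, and the vanishing of the relevant boundary $\Ext$-groups along $\overline{X} \setminus X$, all of which follow from the geometry of $\overline{X}$ and Grothendieck duality.
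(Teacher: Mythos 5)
Your overall route is the paper's route: describe rank one objects of $\aA_X$ by data living on $S$, and then match that data with the dual obstruction cone of $\mM_S^{\dag}$ via a Grothendieck/Serre duality computation of the pair obstruction theory (the paper's Lemma~\ref{lem:cone} shows the fibre of the cone over $(\oO_S \xrightarrow{\xi} F)$ is $\Hom(F\otimes\omega_S^{-1}, I^{\bullet}[1])$, which is your ``Higgs field'' space). But the step you dispose of by appealing to ``the relative spectral correspondence'' is precisely where the theorem lives, and no such off-the-shelf statement exists for $\aA_X$ (the paper itself notes the analogous comparison for ADHM sheaves is not in the literature). What has to be proved is that the full abelian category $\aA_X^{\le 1}$ --- including the rank one objects that are \emph{not} of the form $(\oO_{\overline{X}}\to F)$, cf.\ Remark~\ref{rmk:dia:BS} --- is equivalent to a category of diagrams on $S$ of the form (\ref{intro:dia:BS}). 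The paper does this with an explicit functor built from $\dR\overline{\pi}_{\ast}$ of the four twists $E|_{S_\infty}$, $E(-S_0-S_\infty)[1]$, $E(-S_\infty)[1]$, $E(-S_0)[1]$, a quasi-inverse given by $(\overline{\pi}^{\ast}\uU \to \overline{\pi}^{\ast}F\otimes\oO_{\overline{X}}(S_\infty))$, and a Koszul-resolution-of-the-diagonal argument (Theorem~\ref{thm:equiv}); on top of that, Lemma~\ref{diagram:equivalent} shows a rank one diagram is the same data as a pair $(\oO_S\xrightarrow{\xi}F)$ plus a class in $\Hom(F\otimes\omega_S^{-1},I^{\bullet}[1])$, and the non-formal point there is the \emph{uniqueness} of the map $\phi\colon\uU\to F$ lifting the homotopy-category datum --- without it you only get a bijection on isomorphism classes, not the equivalence of groupoids needed for an isomorphism of stacks. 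None of this is supplied by your sketch, and it is the bulk of the proof.

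A second, related gap is your construction of $p_0^{\dag}$ as $E\mapsto \dR\overline{\pi}_{\ast}E$. This produces an object of $D^b_{\rm{coh}}(S\times T)$, whereas a $T$-point of $\mM_S^{\dag}$ is an honest pair $(\oO_{S\times T}\to F_T)$ with $F_T$ flat over $T$, and pairs carry different automorphisms than their associated two-term complexes; moreover $\hH^1(\dR\overline{\pi}_{\ast}E)=\Cok(\xi)$, so the bare pushforward does not even recover $F$ functorially --- you need the twisted pushforwards (or the diagram itself) to extract $F$ and $\xi$. The paper defines $p_0^{\dag}$ only after Corollary~\ref{lem:moduli:isom}, by sending a diagram to $(\oO_S\xrightarrow{\phi\circ\alpha}F)$, which is why the existence of the morphism is stated as part of the theorem rather than as an easy preliminary. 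Finally, a minor inaccuracy: the pair deformation complex $\RHom(I^{\bullet},F)$ is not self-dual up to $\omega_S[2]$ (its Serre dual is $\RHom(F,I^{\bullet}\otimes\omega_S[2])$), and the $(-1)$-shifted symplectic structure on $\Omega_{\mathfrak{M}_S^{\dag}}[-1]$ is automatic and does not by itself give the identification you need.
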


As we will mention in Remark~\ref{rmk:dia:BS}, 
an object in $\aA_X$ may not be 
written as a two term complex $(\oO_X \to F)$ 
for a one or zero dimensional sheaf $F$, 
so the existence of the morphism $\pi_{\ast}^{\dag}$
is a priori not obvious. 
Our idea for Theorem~\ref{intro:thm:MX} is to 
describe rank one objects in $\aA_X$ in terms of 
certain diagrams of coherent sheaves on $S$, 
and construct the map $\pi_{\ast}^{\dag}$ in terms of the latter 
diagram (see Corollary~\ref{lem:moduli:isom}). 
From the definition of $\aA_X$, the moduli spaces
$I_n(X, \beta)$, $P_n(X, \beta)$
are open substacks of $\mM_X^{\dag}$. 
Similarly to Definition~\ref{intro:def:catDT}, 
we will define (see~Definition~\ref{cat:DTPT})
\begin{align*}
\mathcal{DT}^{\mathbb{C}^{\ast}}(I_n(X, \beta)), \quad 
\mathcal{DT}^{\mathbb{C}^{\ast}}(P_n(X, \beta))
\end{align*}
as Verdier-quotients of derived category of coherent sheaves on 
some quasi-compact open substack of 
$\mathfrak{M}_S^{\dag}$, 
where $\mathfrak{M}_S^{\dag}$ is a derived enhancement of $\mM_S^{\dag}$, 
by the subcategory of objects whose singular supports are 
contained in unstable loci. 
We call 
them as $\mathbb{C}^{\ast}$-equivariant
\textit{MNOP category}, \textit{PT category} respectively. 

On the other hand, it is observed in~\cite{Toddbir} that 
there is a wall-crossing diagram
\begin{align*}
	\xymatrix{
I_n(X, \beta) \ar[rd] & & P_n(X, \beta) \ar[ld] \\
& T_n(X, \beta) &	
}
	\end{align*}
which is a d-critical flip. Therefore 
following d-critical analogue of D/K equivalence conjecture, 
we propose the following: 
\begin{conj}\label{intro:conj:dtpt}
	There exists a fully-faithful functor 
	\begin{align*}
		\dDT^{\C}(P_n(X, \beta)) \hookrightarrow 
		\dDT^{\C}(I_n(X, \beta)). 
		\end{align*}
	\end{conj}

\subsubsection{DT category for stable D0-D2-D6 bound states}
From the author's 
previous works~\cite{Tolim2, Tsurvey}, it is known that 
the moduli space $P_n(X, \beta)$ is identified with
the moduli space of certain 
stable objects in the abelian category $\aA_X$. 
A stability parameter is given by $t \in \mathbb{R}$ and 
the stable pair moduli space corresponds to the $t\to \infty$ limit. 
By changing the stability parameter $t \in \mathbb{R}$, 
we have 
another moduli space of stable objects
and the corresponding DT category
(see Subsection~\ref{subsec:catDT:stableD026})
\begin{align*}
P_n^t(X, \beta) \subset \mM_X^{\dag}, \ 
\mathcal{DT}^{\mathbb{C}^{\ast}}(P_n^t(X, \beta)). 
\end{align*}
As we will discuss below, 
we study the relationships 
of the above DT categories 
along with the 
arguments in Subsection~\ref{intro:motivation}.

In~\cite{Tolim2, Tsurvey} it is proved that, 
for a fixed $(\beta, n)$, 
there exists a finite set of walls 
in the space of stability parameters 
$t \in \mathbb{R}$.
By taking $t_1>t_2>\cdots>t_N>0$ which 
do not lie on walls, we have 
the sequence of moduli spaces
\begin{align}\label{intro:dmmp}
P_n(X, \beta) \dashrightarrow P_n^{t_1}(X, \beta) \dashrightarrow
P_n^{t_2}(X, \beta) \dashrightarrow \cdots \dashrightarrow
P_n^{t_N}(X, \beta). 
\end{align}
If we denote by $P_{n, \beta}^t \in \mathbb{Z}$ the corresponding 
DT invariant for a generic $t$, the following 
wall-crossing formula for $t_i>t_j$ is 
proved in~\cite{Tolim2, Tsurvey}
\begin{align}\label{intro:PT:WCF}
\sum_{n, \beta}P_{n, \beta}^{t_i}q^n y^{\beta}
=\prod_{t_j<n/H \cdot \beta<t_i}
\exp((-1)^{n-1} n N_{n, \beta}q^n y^{\beta}) 
\sum_{n, \beta}P_{n, \beta}^{t_j}q^n y^{\beta},
\end{align}
where $N_{n, \beta} \in \mathbb{Q}$ is the 
generalized DT invariant~\cite{JS} counting 
one dimensional semistable sheaves on $X$ 
with numerical class 
$(\beta, n)$. 
The above wall-crossing formula is relevant in showing 
the rationality of the generating series of PT 
invariants~\cite{Tolim2, Tsurvey}. 

Furthermore in~\cite{Toddbir}, it is proved that the
above sequence is a d-critical minimal model program, 
that is 
a d-critical analogue of usual minimal model program in birational 
geometry~\cite{MR1658959}. 
In particular we have the inequalities in terms 
of virtual canonical line 
bundles
\begin{align*}
P_n(X, \beta) >_{K^{\rm{vir}}}
P_n^{t_1}(X, \beta)>_{K^{\rm{vir}}} \cdots >_{K^{\rm{vir}}}
 P_n^{t_N}(X, \beta). 
\end{align*}
Then as we discussed in Subsection~\ref{intro:motivation}, we formulate 
the following conjecture: 
\begin{conj}\emph{(Conjecture~\ref{conj:DT/PT}, Conjecture~\ref{conj2})}\label{intro:conj:PTwcf}
There exist fully-faithful functors
\begin{align*}
\mathcal{DT}^{\mathbb{C}^{\ast}}(P_n^{t_N}(X, \beta))
\hookrightarrow \cdots \hookrightarrow
\mathcal{DT}^{\mathbb{C}^{\ast}}(P_n^{t_1}(X, \beta)) 
\hookrightarrow 
\mathcal{DT}^{\mathbb{C}^{\ast}}(P_n(X, \beta)). 
\end{align*}
\end{conj}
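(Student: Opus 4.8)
The plan is to break the chain of functors into the individual wall-crossings of~(\ref{intro:dmmp}) and to produce each functor as the restriction of a ``window'' (grade-restriction) subcategory of $D^b_{\rm{coh}}$ on the surface side, transported to matrix factorizations by the Koszul duality of Theorem~\ref{thm:knoer}. Concretely, I would first invoke the finiteness of walls from~\cite{MR2683216, MR2892766}: the sequence~(\ref{intro:dmmp}) is a composition of wall-crossings at finitely many $t_0 \in \mathbb{R}_{>0}$, together with the DT/PT wall between $I_n(X, \beta)$ and $P_n(X, \beta)$. It then suffices to produce, for a single wall with adjacent parameters $t_- < t_+$ (with $t_+$ on the $I_n(X, \beta)$ side, so that $P_n(X, \beta)_{t_-} <_{K^{\rm{vir}}} P_n(X, \beta)_{t_+}$ by the d-critical flip inequality of~\cite{Toddbir}), a fully-faithful functor
\begin{align*}
\mathcal{DT}^{\mathbb{C}^{\ast}}(P_n(X, \beta)_{t_-}) \hookrightarrow \mathcal{DT}^{\mathbb{C}^{\ast}}(P_n(X, \beta)_{t_+}),
\end{align*}
and likewise for the DT/PT wall. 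By Theorem~\ref{intro:thm:MX} all the moduli involved are open substacks of the dual obstruction cone $\mM_X^{\dag} \cong t_0(\Omega_{\mathfrak{M}_S^{\dag}}[-1])$, and by Definition~\ref{cat:DTPT} the two categorical theories are Verdier quotients of $D^b_{\rm{coh}}$ of one common quasi-compact derived open substack $\mathfrak{M}_{S, \circ}^{\dag, \mathbb{C}^{\ast}\rig}$ by the singular-support subcategories $\cC_{\zZ_{t_-\mathchar`-\rm{us}}}$ and $\cC_{\zZ_{t_+\mathchar`-\rm{us}}}$. Thus the whole problem lives on the surface side, with singular supports in the sense of~\cite{MR3300415} recording the unstable loci.

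Next I would analyze the wall étale-locally over the good moduli space $\overline{M}_{t_0}$ of $t_0$-semistable objects. Near a point with polystable object $\bigoplus_i E_i^{\oplus n_i}$, a Luna-type slice presents $\mathfrak{M}_{S, \circ}^{\dag, \mathbb{C}^{\ast}\rig}$ as $[\mathfrak{U}/G]$ with $G = \prod_i \GL(n_i)$ and $\mathfrak{U}$ a quasi-smooth affine derived scheme, the derived zero locus of a $G$-equivariant section of a vector bundle on a smooth affine $G$-scheme; the wall-crossing $t_- \leftrightarrow t_+$ then becomes a variation of a GIT linearization $\ell$ of $G$ across a wall, with $P_n(X, \beta)_{t_\pm}$ the GIT-semistable locus $[\mathfrak{U}^{\ell_\pm\mathchar`-\rm{ss}}/G]$ and $\zZ_{t_\pm\mathchar`-\rm{us}}$ the associated $\Theta$-stratification. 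Through~(\ref{intro:koszul}) and the quotient description~(\ref{intro:MF:restrict}), $D^b_{\rm{coh}}([\mathfrak{U}^{\ell_\pm\mathchar`-\rm{ss}}/G])$ is identified with $\mathbb{C}^{\ast}$-equivariant matrix factorizations on the GIT quotient of the Landau--Ginzburg model $(V^{\vee}, w)$, so the desired functor becomes a variation-of-GIT statement for such models.

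The engine is then the window formalism: for each chamber one constructs $\mathcal{W}_t \subset D^b_{\rm{coh}}(\mathfrak{M}_{S, \circ}^{\dag, \mathbb{C}^{\ast}\rig})$ cut out by weight conditions along the unstable $\Theta$-strata, such that the composite $\mathcal{W}_t \hookrightarrow D^b_{\rm{coh}}(\mathfrak{M}_{S, \circ}^{\dag, \mathbb{C}^{\ast}\rig}) \twoheadrightarrow \mathcal{DT}^{\mathbb{C}^{\ast}}(P_n(X, \beta)_t)$ is an equivalence; locally this is the window theorem for (derived) VGIT together with its matrix-factorization refinement. The width of a window is governed by the $G$-weights on the derived conormal directions (i.e. on $V$ and $V^{\vee}$) and by the pairings $\langle \ell_t, \lambda \rangle$ with the destabilizing one-parameter subgroups $\lambda$; since the passage from $t_-$ to $t_+$ moves $\ell$ in precisely the direction recorded by $P_n(X, \beta)_{t_-} <_{K^{\rm{vir}}} P_n(X, \beta)_{t_+}$, one can choose the windows so that $\mathcal{W}_{t_-} \subseteq \mathcal{W}_{t_+}$. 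Composing this inclusion with the equivalence $\mathcal{W}_{t_+} \simto \mathcal{DT}^{\mathbb{C}^{\ast}}(P_n(X, \beta)_{t_+})$ yields the fully-faithful functor, whose semiorthogonal complement is assembled from matrix-factorization categories of the $\Theta$-strata — the expected categorification of the motivic Hall-algebra wall-crossing terms, hence of the numerical PT wall-crossing formula. Globalization should be routine, since grade-restriction conditions are intrinsic and stable under smooth pullbacks — exactly as singular supports are in~\cite{MR3300415} — so the local windows descend to $\mathfrak{M}_{S, \circ}^{\dag, \mathbb{C}^{\ast}\rig}$.

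I expect the main obstacle to be the compatibility of the window subcategories with the Verdier quotient by $\cC_{\zZ_{t\mathchar`-\rm{us}}}$ (and with the $\mathbb{C}^{\ast}$-rigidification): one must prove that $\mathcal{W}_t$ meets $\cC_{\zZ_{t\mathchar`-\rm{us}}}$ trivially and generates the quotient — a singular/derived analogue of Kirwan surjectivity for $D^b_{\rm{coh}}$ — and that weight conditions constraining only the ``classical'' directions still control the full derived structure of $\mathfrak{U}$. A secondary difficulty is the DT/PT wall, where the destabilizing objects involve $\oO_{\overline{X}}$ and zero-dimensional sheaves rather than honest semistable factors, so the Luna slice must be replaced by a derived Quot-scheme model; and for reducible $\beta$ several strata can destabilize simultaneously, forcing a ``magic window'' convexity check in the weight lattice of $G$. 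When $\beta$ is irreducible all of this collapses: there is essentially a single wall with an explicit projective- (or Grassmannian-) bundle local model over $\overline{M}_{t_0}$, for which the window and its complement can be written by hand via Orlov-type semiorthogonal decompositions of projective bundles in the matrix-factorization setting — and this is the case in which the wall-crossing formula, and the attendant rationality of the generating series, should be provable unconditionally.
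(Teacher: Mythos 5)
The statement you are proving is stated in the paper as a \emph{conjecture} (it packages Conjecture~\ref{conj:DT/PT} and Conjecture~\ref{conj2}); the paper only establishes it in two special cases, and your proposal does not close the gap between the strategy sketch and an actual proof. The decisive missing step is exactly the one you flag as an ``expected obstacle'': the inclusion of window subcategories $\mathcal{W}_{t_-}\subseteq \mathcal{W}_{t_+}$ across a general wall, compatibly with the Verdier quotient by $\cC_{\zZ_{t\mathchar`-\rm{us}}}$, is not known. In the paper this inclusion is proved only for $S\to\mathbb{C}^2$ the blow-up at the origin (Theorems~\ref{thm:catDTPT:loc}, \ref{thm:PTWCF:loc}), where the noncommutative crepant resolution gives a \emph{global} Landau--Ginzburg presentation $(R_{Q_X^{\dag}}(\vec v),w)$ and the \'etale-local Ext-quiver analysis (Propositions~\ref{prop:analytic}, \ref{prop:window}) reduces the window containment to the quiver inequality (\ref{equiver:arrow}) and the results of~\cite{KoTo}. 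For a general projective $S$ there is no global critical-locus model of $\mathfrak{M}_S^{\dag}$, no $\Theta$-stratification/KN machinery set up on the singular-support quotients, and no ``derived Kirwan surjectivity'' statement of the kind you would need; so the assertion that ``globalization should be routine'' is precisely where the argument fails, and is why the statement remains conjectural in the paper. The DT/PT wall is an additional unresolved point, as the destabilizing objects involve $\oO_{\overline{X}}$ and zero-dimensional sheaves and no slice/window model is provided for it outside the quiver example.

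Separately, your claim that the irreducible-$\beta$ case ``collapses'' to a hand-made projective-bundle window is not how the paper proves it: Theorems~\ref{thm:duality} and~\ref{thm:PT:irredu} are obtained without any VGIT/window argument, by identifying $P_{\pm n}(X,\beta)$ with dual obstruction cones over the derived pair moduli, using the global linear Koszul duality of Section~\ref{sec:SOD} (Proposition~\ref{prop:lKoszul}), the duality $\mathbb{D}_{\overline{X}}$ on $\aA_X^{p}$, and an Orlov-style semiorthogonal decomposition (Theorem~\ref{thm:SOD:M}) whose components are the $\lambda$-twisted categories $\mathcal{DT}^{\mathbb{C}^{\ast}}(\mM_n(X,\beta))_{\lambda}$, together with a comparison of singular supports under Koszul duality (Proposition~\ref{prop:compare:Z}). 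If you want to pursue your window route in that case you would still have to verify the compatibility of grade-restriction with singular supports that the paper's Koszul-duality argument is designed to avoid.
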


The above conjecture also makes sense when $S$ is non-compact
(see Remark~\ref{rmk:non-compact}). 
Using window theorem for GIT quotients 
developed in~\cite{MR3327537, MR3895631}, we 
show the above conjecture in the simplest CY 3-fold
\begin{align*}
X=\mathrm{Tot}_{\mathbb{P}^1}(\oO_{\mathbb{P}^1}(-1)
\oplus \oO_{\mathbb{P}^1}(-1))=\mathrm{Tot}_S(\omega_S)
\end{align*}
where $S \to \mathbb{C}^2$ is the blow-up at the origin. 

\begin{thm}\emph{(Theorem~\ref{thm:catDTPT:loc}, 
Theorem~\ref{thm:PTWCF:loc})}\label{intro:thm:simplest}
Conjecture~\ref{intro:conj:dtpt}, 
Conjecture~\ref{intro:conj:PTwcf} are true for 
the blow-up at the origin $S \to \mathbb{C}^2$. 
\end{thm}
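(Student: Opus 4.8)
The plan is to reduce the whole chain to a single variation of GIT problem for framed representations of the conifold quiver, and then to invoke the window theorem of \cite{MR3327537, MR3895631} one wall at a time. First I would set up a concrete model. Since $S\to\mathbb{C}^2$ is the blow-up at the origin, $S=\mathrm{Tot}_{\mathbb{P}^1}(\oO_{\mathbb{P}^1}(-1))$ and $X=\mathrm{Tot}_S(\omega_S)$ is the resolved conifold, so the tilting bundle $\oO_X\oplus\oO_X(1)$ (pulled back from $\mathbb{P}^1$) identifies $D^b_{\rm coh}(X)$ with the derived category of modules over the Jacobi algebra of the Klebanov--Witten quiver with potential $(Q,W)$; under this equivalence, for a fixed class $(\beta,n)=(d[C],n)$ the category $\aA_X$ of D0-D2-D6 bound states becomes a category of framed nilpotent $(Q,W)$-modules with a fixed dimension vector $\mathbf{v}$. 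Combining this with Theorem~\ref{intro:thm:MX} and the description of rank one objects of $\aA_X$ by diagrams of sheaves on $S$, I would exhibit a quasi-compact derived open substack $\mathfrak{M}_{S,\circ}^{\dag}\subset\mathfrak{M}_S^{\dag}$ — large enough that its truncation contains $\pi_\ast$ of $I_n(X,\beta)$ and of $P_n(X,\beta)_t$ for all relevant $t$ — admitting a presentation $\mathfrak{M}_{S,\circ}^{\dag}\cong[\mathfrak{R}/G]$ with $G=\GL(V_0)\times\GL(V_1)$ reductive and $\mathfrak{R}$ a quasi-smooth affine derived scheme, namely the derived zero locus of a $G$-equivariant bundle on the smooth framed representation space cutting out the relations. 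By construction its $(-1)$-shifted cotangent has classical truncation $\mM_X^{\dag}$ (this is $(\ref{intro:isom:M})$ made explicit), and the Koszul duality equivalence $(\ref{intro:koszul})$ reads $D^b_{\rm coh}(\mathfrak{M}_{S,\circ}^{\dag})\simeq\mathrm{MF}^{\mathbb{C}^{\ast}}([\mathrm{Rep}/G],\widetilde{W})$, where $\widetilde{W}$ is induced by $W$ and the equivariant $\mathbb{C}^{\ast}$ scales the potential directions with weight two.

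Next I would match stability with GIT. By the author's previous work \cite{MR2683216, MR2892766}, the moduli spaces $I_n(X,\beta)$ and $P_n(X,\beta)_t$ are moduli of $t$-(semi)stable objects of $\aA_X$ for an explicit one-parameter family of weak stability conditions, and the finitely many walls $t_1>\dots>t_N$ are the only ones relevant to $(\beta,n)$. I would check that, under the quiver model, $t$-stability translates into $\ell_t$-GIT stability for an explicit family of characters $\ell_t\colon G\to\mathbb{C}^{\ast}$, so that the $t$-unstable locus $\mathcal{Z}_{t\text{-}\mathrm{us}}\subset\mM_X^{\dag}\cong t_0(\Omega_{\mathfrak{M}_{S,\circ}^{\dag}}[-1])$ is exactly the GIT-unstable locus. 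Together with $(\ref{intro:MF:restrict})$ and Proposition~\ref{def:CZ}, this gives, after $\mathbb{C}^{\ast}$-rigidification,
\begin{align*}
\mathcal{DT}^{\mathbb{C}^{\ast}}(P_n(X,\beta)_t)\simeq\mathrm{MF}^{\mathbb{C}^{\ast}}\big([\mathrm{Rep}^{\ell_t\text{-}ss}/G],\widetilde{W}\big),
\end{align*}
and likewise for $I_n(X,\beta)$; so the categorical PT wall-crossing becomes a VGIT problem for equivariant matrix factorization categories over a fixed ambient stack $[\mathfrak{R}/G]$.

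Now I would run the window theorem. Crossing a single wall from $t_i$ to $t_{i+1}$, the Kempf--Ness stratifications of the two unstable loci differ along finitely many strata; picking window subcategories $\mathcal{W}_t\subset D^b_{\rm coh}([\mathfrak{R}/G])$ cut out by weight bounds along those strata, the window theorem of \cite{MR3327537, MR3895631} (in its equivariant, quasi-smooth/$\mathrm{MF}$ form) yields equivalences $\mathcal{W}_{t_i}\stackrel{\sim}{\to}\mathcal{DT}^{\mathbb{C}^{\ast}}(P_n(X,\beta)_{t_i})$ and $\mathcal{W}_{t_{i+1}}\stackrel{\sim}{\to}\mathcal{DT}^{\mathbb{C}^{\ast}}(P_n(X,\beta)_{t_{i+1}})$, together with a semiorthogonal decomposition of the larger window in terms of the smaller one and pieces indexed by the $\mathbb{C}^{\ast}$-fixed loci of the strata. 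The crucial point is that the $G$-weights of the attracting directions of the relevant strata inside $\mathrm{Rep}$, compared with the weight of $\widetilde{W}$, have a definite sign along the direction of $(\ref{intro:dmmp})$ — which is exactly the statement of \cite{Toddbir} that $(\ref{intro:dmmp})$ is a d-critical minimal model program, so each wall-crossing is a d-critical flip or flop with $>_{K^{\rm vir}}$ in the stated direction. Hence $\mathcal{W}_{t_{i+1}}\subseteq\mathcal{W}_{t_i}$, giving the fully faithful embedding $\mathcal{DT}^{\mathbb{C}^{\ast}}(P_n(X,\beta)_{t_{i+1}})\hookrightarrow\mathcal{DT}^{\mathbb{C}^{\ast}}(P_n(X,\beta)_{t_i})$. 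The same argument at the two limiting walls (which are the DT/PT-type chambers already analyzed in \cite{MR2683216, MR2892766}) gives $\mathcal{DT}^{\mathbb{C}^{\ast}}(P_n(X,\beta)_{t_1})\hookrightarrow\mathcal{DT}^{\mathbb{C}^{\ast}}(P_n(X,\beta))$ and $\mathcal{DT}^{\mathbb{C}^{\ast}}(P_n(X,\beta))\hookrightarrow\mathcal{DT}^{\mathbb{C}^{\ast}}(I_n(X,\beta))$; composing over all walls produces the chain of Conjecture~\ref{intro:conj:PTwcf}.

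The hard part will be Step three: reconciling the window theorem — naturally phrased for $\mathrm{MF}$ of GIT quotient stacks — with our definition of $\mathcal{DT}^{\mathbb{C}^{\ast}}$ as a Verdier quotient by a singular-support subcategory, and, above all, pinning down the direction of the inclusion $\mathcal{W}_{t_{i+1}}\subseteq\mathcal{W}_{t_i}$. The latter requires an explicit computation, at each wall of $(\ref{intro:dmmp})$, of the $G$-weights of the conormal directions to the Kempf--Ness strata and of $\widetilde{W}$ along them; a sign error there produces the embeddings in the wrong direction. One must also verify that a single presentation $[\mathfrak{R}/G]$ can be chosen to see all the walls $t_1>\dots>t_N$ simultaneously (bounding the walls in terms of $(\beta,n)$) and that $\mathbb{C}^{\ast}$-rigidification is compatible throughout — technical but completely explicit for the resolved conifold, and in particular valid for every class $(\beta,n)=(d[C],n)$, not merely $d=1$, which is why Conjecture~\ref{intro:conj:PTwcf} can be established in full for this $S$.
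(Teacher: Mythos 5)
Your approach is essentially the paper's: present the problem via the NCCR/conifold-quiver model (with framing), realize the categorical DT/PT theories as $\mathbb{C}^{\ast}$-equivariant matrix factorization categories over a GIT quotient stack via the Koszul duality of Theorem~\ref{thm:knoer}, translate $\mu^{\dag}_t$-stability into King's $\theta$-stability for a one-parameter family of characters, and invoke the window theorem of \cite{MR3327537, MR3895631} to compare the factorization categories across each wall. The one place where your plan and the paper's proof diverge is precisely the step you flag as hard, namely establishing the inclusion of window subcategories in the correct direction. You propose a direct global computation of the $G$-weights on the conormals of the Kempf--Ness strata, motivated by the d-critical MMP statement of \cite{Toddbir}; but the d-critical flip inequality on $K^{\mathrm{vir}}$ is a global sign statement and does not by itself produce the stratum-by-stratum weight inequalities the window theorem requires. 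The paper instead localizes on the good moduli space using Luna's \'etale slice theorem (Proposition~\ref{prop:analytic}), so that near each polystable point the problem is reduced to the Ext-quiver $Q_p^{\dag}$ of the stable factors, where the crucial structural inequality $\sharp(\infty\to i)-\sharp(i\to\infty)=v_0^{(i)}\ge 0$ holds by construction; that puts one exactly in the framed-quiver situation treated in \cite[Proposition~3.8]{KoTo}, which supplies the inclusion $\cC_{\delta,\theta_-}\subset\cC_{\delta,\theta_+}$. So your route would work, but you would have to rediscover essentially that Ext-quiver inequality at each stratum by hand; the Luna-slice reduction is what makes the sign check uniform and keeps you from having to track the global KN stratification of $R_{Q_X^{\dag}}(\vec v)$ directly.
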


\subsection{Three approaches}
In this paper, we pursue and develope
three approaches toward the conjectures stated in 
the previous section: 
linear Koszul duality, window theorem for DT categories and 
categorified Hall products. 
\subsubsection{Wall-crossing via linear Koszul duality}
When the curve class $\beta$ is irreducible, 
there is only one wall $t_0 \in \mathbb{R}$
for the moduli space $P_n^t(X, \beta)$. 
In this case, the Serre duality gives an 
isomorphism 
$P_n^{t_{0-}}(X, \beta) \cong P_{-n}(X, \beta)$. 
	The wall-crossing diagram at $t=t_0$ is 
	\begin{align}\label{intro:irred:wall}
	\xymatrix{
P_n(X, \beta) \ar[rd] & & P_{-n}(X, \beta)\ar[ld] \\
& M_n(X, \beta) &	
}
\end{align}
where $M_n(X, \beta)$ is the moduli space of 
one dimensional stable sheaves on $X$ 
with numerical class $(\beta, n)$. 
The above diagram is a d-critical flip (flop) for $n>0$ ($n=0$). 

Our observation is that, locally on $M_n(X, \beta)$, the above 
diagram is described by linear Koszul dual pairs studied by 
Mirkovi\'{c}-Riche~\cite{MrRi, MrRi2}. 
Let $Y$ be a smooth affine scheme 
and $\eE=(\eE^{-1} \to \eE^0)$ be a 
two term complex of vector bundles on $Y$
with $\rank(\eE) \ge 0$.  
A Koszul dual pair in~\cite{MrRi, MrRi2}
is given by 
\begin{align*}
	(Y^{\dag}, Y^{\sharp}), \ Y^{\dag}=\Spec S(\eE), \ 
	Y^{\sharp}=\Spec S(\eE^{\vee}[1]). 
	\end{align*}
The Koszul duality equivalence in~\cite{MrRi, MrRi2}
is a derived equivalence
\begin{align}\label{intro:linear:eq}
	\Dbc([Y^{\dag}/\C])^{\rm{op}} \stackrel{\sim}{\to}
	\Dbc([Y^{\sharp}/\C]). 
	\end{align}
It will turn out that the diagram (\ref{intro:irred:wall}) is locally modeled 
by the diagram 
\begin{align}\label{intro:kpair}
	\xymatrix{
\mathbb{P}(Y^{\dag}) \ar[rd]  & & \ar[ld] \mathbb{P}(Y^{\sharp}) \\
& Y. &	
}
	\end{align}

We will globalize the above equivalence (\ref{intro:linear:eq})
by allowing $Y$ to be a derived stack. 
Then we show some semiorthogonal decompositions of 
derived categories of both sides in (\ref{intro:linear:eq}), 
where derived categories of projectivizations in (\ref{intro:kpair}) 
appear as semiorthogonal summands. 
By comparing these semi-orthogonal decompositions 
under the equivalence (\ref{intro:linear:eq}), 
we will show the semiorthogonal decomposition
of the derived category of $\mathbb{P}(Y^{\dag})$
where that of $\mathbb{P}(Y^{\sharp})$ appears as a semiorthogonal summand. 
By applying this result to the diagram (\ref{intro:irred:wall}), 
we
show that Conjecture~\ref{intro:conj:PTwcf} is true 
for the above wall-crossing, which holds for \textit{any}
surface $S$. 
\begin{thm}\emph{(Theorem~\ref{thm:duality}, Theorem~\ref{thm:PT:irredu})}\label{intro:thm:PT:irredu}
Suppose that $\beta$ is irreducible and $n\ge 0$. 
Then Conjecture~\ref{intro:conj:PTwcf} holds. 
Moreover there is an equivalence 
\begin{align*}
\mathcal{DT}^{\mathbb{C}^{\ast}}(P_n^{t_{0-}}(X, \beta))
\stackrel{\sim}{\to} \mathcal{DT}^{\mathbb{C}^{\ast}}(P_{-n}(X, \beta))
\end{align*} 
together with a semiorthogonal decomposition
\begin{align}\label{intro:sod}
\mathcal{DT}^{\mathbb{C}^{\ast}}(P_n(X, \beta))
=\langle \Upsilon_{-n+1}, \ldots, \Upsilon_0, \mathcal{DT}^{\mathbb{C}^{\ast}}(P_{-n}(X, \beta))\rangle. 
\end{align}
Here each $\Upsilon_{\lambda}$ is equivalent to a 
$\lambda$-twisted DT category 
$\mathcal{DT}^{\mathbb{C}^{\ast}}(\mM_n(X, \beta))_{\lambda}$, where 
$\mM_n(X, \beta)$ is the moduli stack 
of one dimensional stable
sheaves on $X$ with numerical class $(\beta, n)$. 
\end{thm}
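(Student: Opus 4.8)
The plan is to reduce the statement to a sequence of semiorthogonal decompositions coming from the variation of GIT / window theorem, applied to the derived moduli stacks. First I would recall that, when $\beta$ is irreducible, the wall-crossing in $t \in \mathbb{R}$ is a single wall, and on the level of $S$ the derived moduli stack $\mathfrak{M}_S^{\dag}$ of pairs $(\oO_S \to F)$ admits a description near the wall as a quotient stack $[\mathfrak{R}/\mathbb{C}^{\ast}]$ (or a $\GL$-quotient) where the extra $\mathbb{C}^{\ast}$ records the pair-stability parameter; the two chambers $t_1 > 0 > $ (the DT/PT chamber side) correspond to two GIT linearizations. On the $X$-side, Theorem~\ref{intro:thm:MX} identifies $\mM_X^{\dag}$ with the dual obstruction cone $t_0(\Omega_{\mathfrak{M}_S^{\dag}}[-1])$, and the stable loci $P_n(X,\beta)$, $P_n(X,\beta)_{t_1} \cong P_{-n}(X,\beta)$ become the two open substacks cut out by the two stability conditions. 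I would then invoke the definition $\mathcal{DT}^{\mathbb{C}^{\ast}}(-) = D^b_{\rm{coh}}(-)/\cC_{\zZ_{\rm us}}$ to translate everything into a statement about derived categories of the quasi-compact derived open substacks $\mathfrak{M}^{\dag}_{S,\circ}$ and the singular-support subcategories.

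The key technical input is the window theorem of Halpern-Leistner and Ballard-Favero-Katzarkov (cited in the excerpt as~\cite{MR3327537, MR3895631}), in the form adapted to quasi-smooth derived stacks with a $\mathbb{C}^{\ast}$-action. The main steps would be: (1) establish that crossing the single wall for irreducible $\beta$ is, on $\mathfrak{M}_S^{\dag}$, a \emph{balanced} wall-crossing with fixed locus $\mathfrak{M}_n(S,\beta) \times (\text{pair data})$ whose normal weights on the two sides are explicitly $n$ versus $-n$ (this is where the combinatorics $-n+1, \ldots, 0$ enters); (2) apply the window theorem to produce, for each integer $\lambda$ in the appropriate range, a fully faithful "window" subcategory $\dD_\lambda \subset D^b_{\rm{coh}}(\mathfrak{M}^{\dag}_{S,\circ})$ equivalent to the $\lambda$-twisted derived category of the fixed stack, and a semiorthogonal decomposition of $D^b_{\rm{coh}}$ of the $t_1$-side stack in terms of these windows and $D^b_{\rm{coh}}$ of the PT-side stack; (3) descend this SOD through the Verdier quotients by the singular-support subcategories $\cC_{\zZ_{\rm us}}$, using that these subcategories are compatible with the window subcategories (i.e. that the restriction of an object in $\cC_{\zZ_{\rm us}}$ to a window lands in the correspondingly-defined unstable-support subcategory for the fixed locus). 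Carrying out (3) is where I expect to need that the singular support behaves well under the $\mathbb{C}^{\ast}$-fixed-locus functors, for which I would cite the intrinsic properties of singular supports from~\cite{MR3300415} recalled in the excerpt. Finally, identifying each quotient $\dD_\lambda / (\dD_\lambda \cap \cC_{\zZ_{\rm us}})$ with $\mathcal{DT}^{\mathbb{C}^{\ast}}(\mM_n(X,\beta))_\lambda$ follows by applying Theorem~\ref{intro:thm:MX} (the dual-obstruction-cone description) to the fixed stack $\mathfrak{M}_n(S,\beta)$, since $\mM_n(X,\beta) = t_0(\Omega_{\mathfrak{M}_n(S,\beta)}[-1])$, together with the definition of the twisted categorical DT theory.

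The equivalence $\mathcal{DT}^{\mathbb{C}^{\ast}}(P_n(X,\beta)_{t_1}) \xrightarrow{\sim} \mathcal{DT}^{\mathbb{C}^{\ast}}(P_{-n}(X,\beta))$ is comparatively soft: it should come from the identification of stable objects $P_n(X,\beta)_{t_1} \cong P_{-n}(X,\beta)$ established in the author's earlier work~\cite{MR2683216, MR2892766}, which on the $S$-side is an isomorphism of the underlying (derived) moduli stacks $\mathfrak{M}^{\dag}_{S,\circ}$ compatible with the unstable loci, hence induces an equivalence of the Verdier quotients. I would state this first as a lemma so that the rest of the argument only needs to produce the SOD~(\ref{intro:sod}) for one of the two presentations.

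The hard part will be step (3): making the window/SOD machinery, which is cleanest for smooth quotient stacks or for ordinary GIT, interact correctly with \emph{both} the derived (quasi-smooth) structure on $\mathfrak{M}_S^{\dag}$ \emph{and} the singular-support Verdier quotient that defines $\mathcal{DT}^{\mathbb{C}^{\ast}}$. Concretely, one must check that the magic-window subcategories can be chosen so that $\cC_{\zZ_{\rm us}}$ is "split" compatibly with them — i.e. that the SOD of $D^b_{\rm{coh}}$ restricts to an SOD of the pair (ambient category, singular-support subcategory) — and that the resulting semiorthogonal pieces of the quotient are exactly the twisted categorical DT theories of $\mM_n(X,\beta)$ rather than some larger category. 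I expect this to require a careful local analysis via the Koszul-dual matrix-factorization picture~(\ref{intro:koszul}), where the window theorem for matrix factorizations~\cite{MR3895631} can be applied on each chart $V^{\vee}$ and then glued, using that the constructions are functorial for smooth pullbacks (as in the proof of well-definedness of Definition~\ref{intro:def:catDT}).
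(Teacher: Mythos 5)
Your outline diverges from the paper's proof at two points, and both are genuine gaps rather than just different routes. First, the equivalence $\mathcal{DT}^{\mathbb{C}^{\ast}}(P_n(X,\beta)_{t_1})\stackrel{\sim}{\to}\mathcal{DT}^{\mathbb{C}^{\ast}}(P_{-n}(X,\beta))$ is not soft. The identification $P_n(X,\beta)_{t_1}\cong P_{-n}(X,\beta)$ is induced by the derived dual $\mathbb{D}_{\overline{X}}$ on $\aA_X$, and this does \emph{not} come from an isomorphism of the pair stacks $\mathfrak{M}_S^{\dag}(\beta,n)$ and $\mathfrak{M}_S^{\dag}(\beta,-n)$ on the surface, contrary to your claim. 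Under the dual-obstruction-cone description, $\mathbb{D}_{\overline{X}}$ interchanges the presentation of $\mM_X^{\dag,p}$ over $\mathfrak{M}_S^{\dag,p}(v)$ (pairs $\oO_S\to F$) with the presentation over the dual-pair stack $\mathfrak{M}_S^{\sharp,p}(v^{\vee})$ (pairs $F^{\vee}\to\omega_S[1]$), so the two categorical DT theories are Verdier quotients of derived categories of two \emph{different} quasi-smooth derived stacks having equivalent $(-1)$-shifted cotangents. Identifying them is exactly the content of Theorem~\ref{thm:duality}: it requires the globalized linear Koszul duality $D^b_{\rm{coh}}(\mathfrak{M}^{\dag})\simeq D^b_{\rm{coh}}(\mathfrak{M}^{\sharp})$ together with the matching of singular supports under it (Proposition~\ref{prop:compare:Z}, Corollary~\ref{cor:duality}). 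Your proposal contains no mechanism for this comparison, and without it the duality equivalence does not follow.

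Second, for the SOD the paper neither stratifies the wall between the two PT chambers nor descends an SOD through the singular-support quotient. For irreducible $\beta$ the unstable locus is the preimage of the zero section, so by Lemma~\ref{lem:dstack:support} (as used in Lemma~\ref{lem:P:irred}) the Verdier quotient disappears: the categorical PT theories are just $D^b_{\rm{coh}}$ of the derived schemes $\mathfrak{P}_n(S,\beta)=\mathfrak{M}^{\dag}\setminus i^{\dag}(\mathfrak{M})$ and $\mathfrak{Q}_n(S,\beta)=\mathfrak{M}^{\sharp}\setminus i^{\sharp}(\mathfrak{M})$ over the stack of stable sheaves. The decomposition is then Theorem~\ref{thm:SOD:M}, proved by combining Halpern-Leistner's SOD for the $\Theta$-stratification by the zero section on both $\mathfrak{M}^{\dag}$ and its Koszul dual $\mathfrak{M}^{\sharp}$ with an Orlov-type argument transported through the equivalence $\Phi_{\mathfrak{M}}$; the HL/window input concerns only the zero-section stratification, not a VGIT wall between the chambers. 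Your step (1), realizing the wall-crossing as a balanced GIT wall on a global quotient presentation of $\mathfrak{M}_S^{\dag}$, is not available for a general projective surface (such a presentation exists only in the local $(-1,-1)$-curve case, which is why Theorem~\ref{thm:catDTPT:loc} is treated separately by window methods), and your step (3), compatibility of window subcategories with $\cC_{\zZ_{\sigma\mathchar`-\rm{us}}}$, is precisely the difficulty the paper's reduction avoids. The idea missing from your outline is the dual pair stack $\mathfrak{M}^{\sharp}$ and linear Koszul duality, which is what realizes $\mathcal{DT}^{\mathbb{C}^{\ast}}(P_{-n}(X,\beta))$ inside $D^b_{\rm{coh}}(\mathfrak{M}^{\dag}_{\circ})$ as the semiorthogonal summand, with the pieces $\dD_{\lambda}\simeq\mathcal{DT}^{\mathbb{C}^{\ast}}(\mM_n(X,\beta))_{\lambda}$ coming from $\rho^{\dag\ast}D^b_{\rm{coh}}(\mathfrak{M})_{\lambda}$ rather than from a fixed-locus/window analysis across the wall.
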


The above result together with Proposition~\ref{intro:prop:comparison}
recover the wall-crossing formula of numerical 
PT invariants for irreducible curve classes proved in~\cite{MR2552254}
(which is a special case of~(\ref{intro:PT:WCF})), 
by taking the 
Euler characteristics of the periodic cyclic homologies of both sides of 
(\ref{intro:sod}) (see Lemma~\ref{rmk:wcf}). 
In~\cite{TodDK}, a semiorthogonal decomposition 
similar to (\ref{intro:sod}) is proved 
for derived categories of coherent sheaves on $P_n(X, \beta)$
assuming that it is non-singular. 
In our situation $P_n(X, \beta)$ is not necessary non-singular, 
and even if it is non-singular the triangulated category
$\mathcal{DT}^{\mathbb{C}^{\ast}}(P_n(X, \beta))$ may not 
be equivalent to $D^b_{\rm{coh}}(P_n(X, \beta))$. 
Therefore the 
argument similar to~\cite{TodDK} does not apply for 
Theorem~\ref{intro:thm:PT:irredu}. 

As we mentioned above, the 
wall-crossing phenomena (\ref{intro:dmmp})
was essential in showing the rationality of the PT 
generating series. Using 
Theorem~\ref{intro:thm:PT:irredu}, 
we have the following categorical analogue 
of the rationality for irreducible curve classes: 

\begin{cor}\label{intro:cor:rational}\emph{(Corollary~\ref{cor:PT:rational})}
Suppose that $\beta$ is irreducible. 
Then the generating series 
\begin{align*}
P_{\beta}^{\rm{cat}}(q) \cneq 
\sum_{n \in \mathbb{Z}}
[\mathcal{DT}^{\mathbb{C}^{\ast}}(P_n(X, \beta))]
q^n \in K(\Delta \mathchar`-\mathop{\mathrm{Cat}})\lgakko q \rgakko
\end{align*}
lies in 
$K(\Delta \mathchar`-\mathop{\mathrm{Cat}}) \otimes_{\mathbb{Z}}
\mathbb{Q}(q)^{\rm{inv}}$. 
Here $K(\Delta \mathchar`-\mathop{\mathrm{Cat}})$ is the 
Grothendieck group of triangulated categories 
and $\mathbb{Q}(q)^{\rm{inv}} \subset \mathbb{Q}\lgakko q \rgakko$
is the subspace of 
rational functions invariant under $q \leftrightarrow 1/q$. 
\end{cor}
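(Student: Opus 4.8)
The plan is to deduce the rationality of $P_{\beta}^{\rm{cat}}(q)$ directly from the semiorthogonal decomposition (\ref{intro:sod}) established in Theorem~\ref{intro:thm:PT:irredu}, together with the duality equivalence $\mathcal{DT}^{\mathbb{C}^{\ast}}(P_n(X,\beta)_{t_1}) \simeq \mathcal{DT}^{\mathbb{C}^{\ast}}(P_{-n}(X,\beta))$. First I would recall that a semiorthogonal decomposition descends to an identity in the Grothendieck group $K(\Delta\mathchar`-\mathop{\mathrm{Cat}})$: from (\ref{intro:sod}) we obtain
\begin{align*}
[\mathcal{DT}^{\mathbb{C}^{\ast}}(P_n(X,\beta))] = [\mathcal{DT}^{\mathbb{C}^{\ast}}(P_{-n}(X,\beta))] + \sum_{\lambda=-n+1}^{0}[\dD_{\lambda}]
\end{align*}
for $n \ge 0$, where each $[\dD_{\lambda}] = [\mathcal{DT}^{\mathbb{C}^{\ast}}(\mM_n(X,\beta))_{\lambda}]$. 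Since $\beta$ is irreducible, the moduli stack $\mM_n(X,\beta)$ of one-dimensional stable sheaves depends on $n$ only through $n \bmod (\text{something})$ — in fact, because $\beta$ is irreducible, tensoring by $\pi^{\ast}L$ for line bundles $L$ on $S$ and the $\mathbb{C}^{\ast}$-action on the fibers of $\omega_S$ show that $\mM_n(X,\beta) \cong \mM_{n'}(X,\beta)$ whenever $n \equiv n' \bmod \deg(\beta\cdot c_1)$ or similar; more simply, for irreducible $\beta$ the class $[\mM_n(X,\beta)_\lambda] \in K(\Delta\mathchar`-\mathop{\mathrm{Cat}})$ is independent of $n$ once $n$ is large, and in any case the twist label $\lambda$ captures the $n$-dependence. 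I would isolate a single class, write $m_{\lambda} \cneq [\mathcal{DT}^{\mathbb{C}^{\ast}}(\mM_n(X,\beta))_{\lambda}]$, and verify that for irreducible $\beta$ this is genuinely independent of $n$ (this uses that the one-dimensional stable sheaf moduli is unchanged, and only the categorical twist shifts).

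Next I would assemble the generating series. For $n \ge 0$ the coefficient is $[\mathcal{DT}^{\mathbb{C}^{\ast}}(P_n(X,\beta))]$; using the duality $[\mathcal{DT}^{\mathbb{C}^{\ast}}(P_{-n}(X,\beta))] = [\mathcal{DT}^{\mathbb{C}^{\ast}}(P_n(X,\beta)_{t_1})]$ and that for $n < 0$ (equivalently, large enough) $P_n(X,\beta) = \emptyset$ when the curve class forces $n$ above the Castelnuovo-type bound, only finitely many "error terms" appear. Concretely, writing $a_n = [\mathcal{DT}^{\mathbb{C}^{\ast}}(P_n(X,\beta))]$, the recursion gives $a_n - a_{-n} = \sum_{\lambda=-n+1}^{0} m_{\lambda}$ for $n \ge 0$, and since $m_\lambda$ is eventually constant in $\lambda$ (again by irreducibility of $\beta$, where $\mM_n(X,\beta)$ stabilizes), the difference $a_n - a_{-n}$ is, up to finitely many terms, an arithmetic-type expression in $n$. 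Summing $\sum_n a_n q^n$ and using $\sum_{n \ge 0}(a_n - a_{-n})q^n = $ (finite sum) $+$ (a rational function built from $\sum_{n} q^n$ and $\sum_n n q^n$ with the constant $m \cneq m_\lambda$), I get that $P_\beta^{\rm{cat}}(q) - P_\beta^{\rm{cat}}(1/q)$ is a rational function with poles only at $q = 1$; combined with the obvious convergence of $P_\beta^{\rm{cat}}(q)$ as a Laurent series (only finitely many negative powers, since $P_n(X,\beta) = \emptyset$ for $n \ll 0$), this forces $P_\beta^{\rm{cat}}(q) \in K(\Delta\mathchar`-\mathop{\mathrm{Cat}}) \otimes_{\mathbb{Z}} \mathbb{Q}(q)$. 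The invariance under $q \leftrightarrow 1/q$ follows from the duality equivalence $P_n \leftrightarrow P_{-n}$ after accounting for the telescoping correction terms, which are themselves symmetric by construction.

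The main obstacle I anticipate is bookkeeping the error terms $\dD_\lambda$ precisely enough to land in the \emph{symmetric} rational functions $\mathbb{Q}(q)^{\rm{inv}}$ rather than merely in $\mathbb{Q}(q)$. This requires: (i) confirming that $[\mathcal{DT}^{\mathbb{C}^{\ast}}(\mM_n(X,\beta))_\lambda]$ is independent of $n$ for irreducible $\beta$, which I expect to follow from the fact that the stable sheaf moduli $\mM_n(X,\beta)$ for irreducible $\beta$ is deformation-equivalent (indeed isomorphic after a twist) across $n$, so the derived stack $\mathfrak{M}_{S,\circ}$ and its unstable locus match and only the $\lambda$-twist is relocated; and (ii) pinning down the exact range of the internal direct sum in (\ref{intro:sod}) — the indices $-n+1,\ldots,0$ — so that the telescoping $\sum_{n\ge 0}(a_n - a_{-n})q^n$ produces a function manifestly fixed by $q \mapsto 1/q$. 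A clean way to organize (ii) is to package the identity as $(1 - q)P_\beta^{\rm{cat}}(q) = (\text{Laurent polynomial}) + m \cdot (\text{explicit symmetric rational function})$ and then check term-by-term that each side is symmetric after multiplying by the appropriate power of $q$; this is routine once the finitely many boundary coefficients (small $|n|$) are understood, and those are handled by the vanishing $P_n(X,\beta) = \emptyset$ for $n$ outside a bounded window together with Theorem~\ref{intro:thm:PT:irredu} applied at the boundary cases $n = 0, 1$.
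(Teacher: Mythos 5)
Your overall strategy is the same as the paper's: use the SOD of Theorem~\ref{intro:thm:PT:irredu} to get, in $K(\Delta\mathchar`-\mathop{\mathrm{Cat}})$, the relation $[\mathcal{DT}^{\mathbb{C}^{\ast}}(P_n(X,\beta))]=[\mathcal{DT}^{\mathbb{C}^{\ast}}(P_{-n}(X,\beta))]+\sum_{\lambda}[\dD_\lambda]$ for $n\ge 0$, observe that $P_m(X,\beta)=\emptyset$ for $m\ll 0$ so that $P_\beta^{\rm{cat}}(q)$ minus the series of correction terms is a finite sum of the manifestly symmetric expressions $q^n+q^{-n}$, and then show the correction series itself is a symmetric rational function. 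The paper does exactly this.

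However, there is a genuine gap at the step you flag as item (i): you assert that $m_\lambda=[\mathcal{DT}^{\mathbb{C}^{\ast}}(\mM_n(X,\beta))_\lambda]$ is independent of $n$ and ``eventually constant in $\lambda$,'' so that the inner sum over $\lambda$ becomes an arithmetic expression $n\cdot m$. This is false in general, and it is precisely where the real work lies. What is true (and what the paper proves as Lemma~\ref{lem:periodic}, by tensoring with a line bundle $L$ with $\beta\cdot c_1(L)=d$, with a weight-$c$ line bundle on the moduli stack, and by the duality $F\mapsto \eE xt^1(F,\omega_S)$) is only a set of periodicity relations: writing $c_{n,\lambda}$ for these classes, one has $c_{n,\lambda}=c_{n+d,\lambda}=c_{n,\lambda+n}=c_{n,\lambda+d}=c_{-n,-\lambda}$, where $d=\ggcd\{\lvert \beta\cdot D\rvert\}$. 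In particular $c_{n,\lambda}$ depends on $n$ through $n \bmod d$ and on $\lambda$ through $\lambda \bmod \ggcd(d,n)$ (cf.\ the Brauer-twist interpretation in Remark~\ref{rmk:twisted}), so the inner sum $\sum_{\lambda=0}^{n-1}c_{n,\lambda}$ is not $n$ times a fixed class, and your telescoped series is not yet shown to be rational, let alone invariant under $q\leftrightarrow 1/q$. The paper's proof replaces your constancy claim by these periodicity identities and then carries out the explicit summation: setting $C_n=\sum_{\lambda=0}^{d-1}c_{n,\lambda}$ and $\overline{C}_n=\sum_{\lambda=0}^{n-1}c_{n,\lambda}$, the correction series equals
\begin{align*}
C_0\,\frac{q^d}{(1-q^d)^2}
+\sum_{n=1}^{d-1}C_n\,(q^n+q^{-n})\,\frac{q^d}{2(1-q^d)^2}
+\sum_{n=1}^{d-1}\overline{C}_n\,\frac{q^n-q^{d-n}}{2(1-q^d)},
\end{align*}
which is visibly in $K(\Delta\mathchar`-\mathop{\mathrm{Cat}})\otimes_{\mathbb{Z}}\mathbb{Q}(q)^{\rm{inv}}$; the symmetry uses $c_{n,\lambda}=c_{-n,-\lambda}$ and $c_{n,\lambda}=c_{n,\lambda+n}$ in an essential way. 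To repair your argument you would need to prove the analogue of Lemma~\ref{lem:periodic} and then redo this bookkeeping; the shortcut via an $n$-independent constant $m$ does not go through.
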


\subsubsection{Window theorem for DT categories}
Our next approach toward 
Conjectures~\ref{intro:conjGV}, \ref{intro:conj:dtpt}, 
\ref{intro:conj:PTwcf} 
is to establish \textit{window theorem}
for DT categories and apply it. 
The original window theorem is developed 
for 
derived categories of GIT quotients~\cite{MR3327537, MR3895631}, 
and stated as follows. 
Let $Y$ be a smooth affine variety with an action of a
reductive algebraic group $G$. 
For a given $G$-equivariant line bundle $l$ on $Y$, 
we have the $G$-invariant 
open subset $Y^{l\sss} \subset Y$ of 
$l$-semistable points. 
Then the window theorem 
implies the existence of 
a triangulated subcategory (called \textit{window subcategory})
$\wW([Y/G]) \subset \Dbc([Y/G])$ such that the 
composition
\begin{align*}
	\wW([Y/G]) \hookrightarrow \Dbc([Y/G]) \twoheadrightarrow 
	\Dbc([Y^{l\sss}/G])
\end{align*}
is an equivalence. 
The window theorem has been used to show 
several derived equivalences of birational varieties 
given as variations of GIT quotients, showing many 
evidence for original D/K equivalence conjecture. 

We expect that its analogy holds for DT categories. 
Namely we expect 
the existence of
a subcategory 
$\wW(\fM^{\C\rig}_{S, \rm{qc}}) \subset \Dbc(\fM^{\C\rig}_{S, \rm{qc}})$
such that the composition 
\begin{align*}
\wW(\fM^{\C\rig}_{S, \rm{qc}}) \hookrightarrow \Dbc(\fM^{\C\rig}_{S, \rm{qc}}) \to 
\dDT^{\C}(M_X^{\sigma})
\end{align*}
is an equivalence. 
Once such a subcategory exists, 
we then try to compare 
them for different $\sigma$ inside 
$\Dbc(\fM^{\C\rig}_{S, \rm{qc}})$, which may 
yield the above conjectures.

We establish window theorem for DT categories, assuming 
the existence of good moduli space of $\fM^{\C\rig}_{S, \rm{qc}}$. 
More generally we prove such a statement for 
DT categories associated with $(-1)$-shifted cotangent 
over quasi-smooth derived stacks
as follows. 
\begin{thm}\label{intro:thm:equivalence}\emph{(Theorem~\ref{thm:equivalence})}
	For a quasi-smooth and QCA derived stack $\fM$, suppose 
	that 
	$\mM=t_0(\fM)$ admits a good moduli space, 
	a symmetric structure $\bS$ of $\fM$ is given, 
	and $\fM$  
	satisfies formal neighborhood theorem. 
	Let us take $l, \delta \in \Pic(\mM)_{\mathbb{R}}$
	such that $\delta$ is $l$-generic. 
	Then there exists a triangulated subcategory 
	$\wW_{\delta}^{\intt/\bS}(\fM) \subset \Dbc(\fM)$
	such that, for the $l$-semistable locus 
	$\nN^{l\sss} \subset \nN \cneq t_0(\Omega_{\fM}[-1])$, 
	the composition
	\begin{align*}
		\wW_{\delta}^{\intt/\bS}(\fM) \hookrightarrow 
		\Dbc(\fM) \to 
		\dDT^{\C}(\nN^{l\sss})
	\end{align*}
	is fully-faithful, 
	which is an equivalence if $l$ is $\bS$-generic and 
	compatible with 
	$\bS$. 
\end{thm}
Here several 
notions `symmetric structure', `formal neighborhood theorem', 
`$\bS$-generic', `$l$-generic', `compatible with $\bS$' appear 
in the statement of the above theorem. 
We do not explain these technical notions here 
and refer to 
Subsection~\ref{subsec:sym} for details. 
We will apply Theorem~\ref{intro:thm:equivalence}
to prove Conjecture~\ref{intro:conjGV} under some assumption. 

\begin{thm}\emph{(Theorem~\ref{thm:surface})}\label{intro:thm:surface}
	In the setting of 
	Conjecture~\ref{intro:conjGV}, suppose that the following condition holds
	\begin{align}\notag
		\mM_{X}^{\sigma}(v) \subset 
		\pi_{\ast}^{-1}(\mM_{S}^{\sigma}(v)).
	\end{align}
Then 
	Conjecture~\ref{intro:conjGV} holds. 
\end{thm}
The above result 
also implies some derived equivalences of 
moduli spaces of one dimensional stable sheaves on $S$ (not on $X$). 
For example
as a corollary of Theorem~\ref{intro:thm:surface},
we show that derived moduli spaces of one dimensional 
stable sheaves with reduced supports are derived equivalent
(see Corollary~\ref{cor:reduced}). 
The result of Theorem~\ref{intro:thm:surface} is also 
used to show 
a derived equivalence of 
Sacca's Calabi-Yau manifolds~\cite{Sacca} 
obtained as moduli spaces of one dimensional stable sheaves on 
Enriques surfaces
for different polarizations (see~Corollary~\ref{cor:Enriques}), 
which gives an evidence of the usual D/K 
equivalence conjecture. 
Another application of Theorem~\ref{intro:thm:surface} is the proof of 
Conjecture~\ref{intro:conj:dtpt} for reduced curve classes. 
\begin{thm}\label{intro:thm:dtpt}
	Conjecture~\ref{intro:conj:dtpt} holds if $\beta$ is a reduced class. 
	\end{thm}

The results and proofs of
Theorem~\ref{intro:thm:equivalence} and 
its applications 
are much inspired by 
ideas and techniques of Halpern-Leistner~\cite{HalpK3, HalpK32}, 
where he proves that Bridgeland moduli spaces of 
stable objects on K3 surfaces are derived equivalent 
under wall-crossing. 
Indeed Theorem~\ref{intro:thm:surface} applied
for (some rigidified version of)
derived moduli stacks of semistable objects on K3 surfaces 
should recover 
the above Halpern-Leistner's result (see Remark~\ref{rmk:K3}), 
though we will not discuss this case in this paper.  
On the other hand, our result 
applies other than moduli stacks of semistable objects on K3 surfaces. 
The latter moduli stacks 
are 0-shifted symplectic, 
and this fact 
is essential in the proof of derived equivalence in~\cite{HalpK3, HalpK32}. 
In our situation the derived stack $\fM$ is not necessary 
0-shifted symplectic, but 
is equipped with a 
\textit{symmetric structure} $\bS$ and 
$l \in \Pic(\mM)_{\mathbb{R}}$ \textit{compatible with} 
$\bS$. 
A symmetric structure $\bS$ is a choice of 
direct sum decompositions 
of $\Aut(x)$-representations
\begin{align}\label{intro:sym}
	\hH^0(\mathbb{T}_{\fM}|_{x}) \oplus \hH^1(\mathbb{T}_{\fM}|_{x})^{\vee}
	=\bS_x \oplus \bU_x
\end{align}
at each closed point $x \in \mM$
such that $\bS_x$ is a symmetric $\Aut(x)$-representation. 
An element $l \in \Pic(\mM)_{\mathbb{R}}$ is compatible with $\bS$ if 
the $l_x$-semistable locus in the LHS of (\ref{intro:sym}) 
is the pull-back of the $l_x$-semistable locus in $\bS_x$. 
We will use the symmetric structure $\bS$ together with an
auxiliary data 
$\delta \in \Pic(\mM)_{\mathbb{R}}$ to define 
the \textit{intrinsic window subcategory}
(see Definition~\ref{defi:intwin})
\begin{align}\label{intro:int}
	\wW_{\delta}^{\intt/\bS}(\fM) \subset \Dbc(\fM)
\end{align}
which gives a desired window subcategory. 

Locally on the good moduli space, the intrinsic
window subcategory is constructed so that 
it coincides with the magic window 
subcategory on the derived category of factorizations 
via Koszul duality. 
Here the 
magic window 
subcategory is defined by Halpern-Leistner and Sam~\cite{HLKSAM}
which gives stability independent descriptions of 
window subcategories in~\cite{MR3327537, MR3895631}, 
and is itself   
based on combinatorial arguments by
\v{S}penko and Van den Bergh~\cite{MR3698338}.
So the subcategory (\ref{intro:int}) is interpreted as a gluing of 
magic window subcategories in the Koszul dual side, rather
than those on $\fM$ itself.

\subsubsection{Categorified Hall products}
We also give an approach toward the previous conjectures 
using Porta-Sala's two dimensional categorified Hall products~\cite{PoSa}. 
The Porta-Sala categorified Hall product
is the functor
\begin{align*}
	\Dbc(\fM_S) \times \Dbc(\fM_S) \to \Dbc(\fM_S)
	\end{align*}
defined through the derived moduli stack of 
short exact sequences of coherent sheaves
on $S$. 
It gives a categorification of 
cohomological Hall algebras constructed by 
Kapranov-Vasserot~\cite{KaVa2}. 
There are also recent works giving 
categorifications of geometric representations
through the correspondences similar to the 
categorified Hall products~\cite{Negut, Negut2, Zhao, Zhao2, Tudor}. 

In~\cite{THtype}, 
we proved that Porta-Sala categorified Hall products descend to 
products on DT categories
\begin{align*}
	\dDT^{\C}(\mM_X^{\sigma}) \times \dDT^{\C}(\mM_X^{\sigma}),
	\to \dDT^{\C}(\mM_X^{\sigma})
	\end{align*}
which also acts on MNOP/PT categories, and 
should  
categorify critical cohomological Hall products for local surfaces.
We apply the above categorified Hall products to 
describe conjectural semiorthogonal decompositions
associated with wall-crossing of 
MNOP/PT moduli spaces, which also give ansatz for d-critical analogue of D/K equivalence conjecture. 

Let $\tT_n(X, \beta)$ be the moduli stack of 
pairs $(F, s)$, where $F$ is a compactly supported one dimensional 
coherent 
sheaf on $X$ and $s \colon \oO_X \to F$ is surjective in dimension one. 
The moduli stack $\tT_n(X, \beta)$ is nothing but the 
moduli stack of semistable objects at MNOP/PT wall, 
and its good moduli space coincides with $T_n(X, \beta)$ in the 
diagram (\ref{intro:irred:wall}). 
Also let $\mM_i(X)$ be the moduli stack of zero dimensional 
sheaves on $X$ with length $i$. 
The following is the main result by this approach
specialized to the case of MNOP/PT wall, 
which gives another method to construct window 
subcategories.

\begin{thm}\label{conj:intro:dtpt}\emph{(Theorem~\ref{thm:pfconj}, Theorem~\ref{thm:conj2})}
	Let $k_I, k_P \colon \mathbb{Z}_{\ge 1} \to \mathbb{R}$ be
	maps. 
	If $\beta$ is a reduced class, there 
	exist semiorthogonal decompositions
	\begin{align*}
		\dDT^{\C}(\tT_n(X, \beta)) &=
		\langle \ldots, \Upsilon_{>k^I(2)}^{I, 2}, \Upsilon_{>k^I(1)}^{I, 1}, 
		\wW^I_{k^I}, \Upsilon_{\le k^I(1)}^{I, 1}, \Upsilon_{\le k^I(2)}^{I, 2}, \ldots \rangle \\
		&=	\langle \ldots, \Upsilon_{<k^P(2)}^{P, 2}, \Upsilon_{<k^P(1)}^{P, 1}, 
		\wW^P_{k^P}, \Upsilon_{\ge k^P(1)}^{P, 1}, \Upsilon_{\ge k^P(2)}^{P, 2}, \ldots \rangle
	\end{align*}
	satisfying the following: 
	
	(i)
	There exist semiorthogonal decompositions
	\begin{align*}
		&\Upsilon_{>k}^{I, i}
		=\langle \ldots,  \Upsilon_{\lfloor k \rfloor +2}^{I, i}, \Upsilon_{\lfloor k \rfloor +1}^{I, i} \rangle, \ 
		\Upsilon_{\le k}^{I, i}
		=\langle \Upsilon_{\lfloor k \rfloor}^{I, i}, \Upsilon_{\lfloor k \rfloor-1}^{I, i}, \ldots \rangle, \\
		&\Upsilon_{<k}^{P, i}
		=\langle \ldots,  \Upsilon_{\lceil k \rceil -2}^{P, i}, \Upsilon_{\lceil k \rceil -1}^{P, i} \rangle, \ 
		\Upsilon_{\ge k}^{P, i}
		=\langle \Upsilon_{\lceil k \rceil}^{P, i}, \Upsilon_{\lceil k \rceil+1}^{P, i}, \ldots \rangle
	\end{align*}
	with equivalences induced by Porta-Sala 
	categorified Hall products
	\begin{align*}
		&\ast_j \colon \dDT^{\C}(I_{n-i}(X, \beta)) \otimes 
		\dDT^{\C}(\mM_i(X))_{j} \stackrel{\sim}{\to} \Upsilon_{j}^{I, i}, \\
		&\ast_j \colon \dDT^{\C}(\mM_i(X))_{j} \otimes 
		\dDT^{\C}(P_{n-i}(X, \beta))
		\stackrel{\sim}{\to} \Upsilon_{j}^{P, i}.
	\end{align*}
	
	(ii) The following composition functors are equivalences
	\begin{align*}
		&\wW^I_{k^I} \hookrightarrow \dDT^{\C}(\tT_n(X, \beta)) \twoheadrightarrow 
		\dDT^{\C}(I_n(X, \beta)), \\
		&\wW^P_{k^P} \hookrightarrow \dDT^{\C}(\tT_n(X, \beta)) \twoheadrightarrow 
		\dDT^{\C}(P_n(X, \beta)).
	\end{align*}
	
	(iii) By setting $k^I(i)=i/2$ and $k^P(i)=0$, we have 
	$\wW^P_{k^P} \subset \wW^I_{k^I}$. 
	In particular we have the fully-faithful functor
	\begin{align}\label{intro:ff:dtpt}
		\dDT^{\C}(P_n(X, \beta)) \hookrightarrow 
		\dDT^{\C}(I_n(X, \beta)).
	\end{align}
\end{thm}

Note that the fully-faithful functor (\ref{intro:ff:dtpt}) implies 
Conjecture~\ref{intro:conj:dtpt} for reduced curve 
classes, so the above result 
gives another proof of Theorem~\ref{intro:thm:dtpt}. 
On the other hand 
the result itself is conjectured to hold for any curve class, 
so the categorified Hall product approach 
may give an ansatz applied for any curve class. 
The above approach can be also extended
to wall-crossing of D0-D2-D6 bound states. 
We will formulate a similar semiorthogonal 
decomposition for wall-crossing of D0-D2-D6 bound 
states, and prove it for reduced curve class $\beta$
and $t>m(\beta)$ where $m(\beta)$ depends only on $\beta$. 

\begin{thm}\label{thm:intro:pm}\emph{(Theorem~\ref{thm:pfconj}, Theorem~\ref{thm:conj2})}
	Let $\beta$ be a reduced class. 
	Then there is $m(\beta)>0$ which only depends on $\beta$ such that 
	for $t>m(\beta)$ 
	there exists a fully-faithful functor 
	\begin{align}\label{ff:intro:pm}
		\dDT^{\C}(P_n^{t_-}(X, \beta)) \hookrightarrow 
		\dDT^{\C}(P_n^{t_+}(X, \beta)), 
	\end{align}
i.e. Conjecture~\ref{intro:conj:PTwcf} holds for reduced curve classes 
and $t_i>m(\beta)$. 
\end{thm}

We note that $m(\beta)$ only depends on $\beta$, so for $n\gg 0$ there 
exist many walls in the region $t>m(\beta)$ where we can apply Theorem~\ref{thm:intro:pm}. 
If furthermore $t \in \mathbb{R}_{>0}$ is a simple wall, 
then we can describe the semiorthogonal complement of 
the fully-faithful functor (\ref{ff:intro:pm}), which categorifies wall-crossing formula of numerical DT
invariants (\ref{intro:PT:WCF}) in~\cite{Tolim2, Thall}.  
\begin{thm}\label{intro:thm:simple}\emph{(Theorem~\ref{thm:simple})}
	Under the assumption of Theorem~\ref{thm:intro:pm}, suppose furthermore
	that $t>m(\beta)$ is a simple wall. 
	Then there exists a semiorthogonal decomposition 
	\begin{align}\notag
		&\mathcal{DT}^{\C}(P_n^{t_+}(X, \beta)) \\
		\notag	&=\left \langle  \bigoplus_{\mathbf{d}}
		\Upsilon^{-, \mathbf{d}}_{[-\frac{1}{2}\beta_1 \beta_2-\frac{1}{2}n_2, 
			-\frac{1}{2}\beta_1 \beta_2 )}, \mathcal{DT}^{\C}(P_n^{t_-}(X, \beta)), 
		\bigoplus_{\mathbf{d}}
		\Upsilon^{-, \mathbf{d}}_{[-\frac{1}{2}\beta_1 \beta_2, 
			-\frac{1}{2}\beta_1 \beta_2+\frac{1}{2}n_2 )}
		\right \rangle. 	
	\end{align}
	Here $\mathbf{d}$ is a decomposition $(\beta, n)=(\beta_1, n_1)+(\beta_2, n_2)$ 
	with $n_2/(\beta_2 \cdot H)=t$, 
	and 
	$\Upsilon_{[a, b)}^{-, \mathbf{d}}$
	admits semiorthogonal decomposition 
	\begin{align*}
		\Upsilon_{[a, b)}^{-, \mathbf{d}}=\left\langle \Upsilon_{\lceil a \rceil}^{-, \mathbf{d}}, \Upsilon_{\lceil a \rceil +1}^{-, \mathbf{d}}, 
		\ldots, \Upsilon_{\lceil b \rceil -1}^{-, \mathbf{d}} \right\rangle
	\end{align*} 
	with equivalences induced by Porta-Sala categorified Hall products
	\begin{align*}
		\ast_j \colon \dDT^{\C}(\mM_{n_2}^H(X, \beta_2))_j
		\otimes \dDT^{\C}(P_{n_2}^{t_-}(X, \beta_1)) \stackrel{\sim}{\to}
		\Upsilon_j^{-, \mathbf{d}}. 
	\end{align*}
	Here $\mM_{n_2}^H(X, \beta_2)$ is the moduli stack of one dimensional 
	$H$-semistable sheaves on $X$ with numerical class $(\beta_2, n_2)$. 
\end{thm}

\subsection{Organization of this paper}
\subsubsection{Plan of the paper}
In Section~\ref{sec:Koszul},
we recall the notion of singular supports
for coherent sheaves on quasi-smooth affine derived schemes, 
the derived categories factorizations, Koszul duality equivalence, 
relation of singular supports with usual supports of factorizations 
under Koszul duality, 
and prove several functoriality under Koszul duality.  

In Section~\ref{sec:catDT:shift}, we define DT categories 
for $(-1)$-shifted cotangent stacks over quasi-smooth derived stacks 
as Verdier quotients, and prove their fundamental properties. 
We then
define DT categories for (semi)stable sheaves on local surfaces, 
and formulate wall-crossing equivalence for DT categories of 
one dimensional stable sheaves.  

In Section~\ref{sec:catMNOP}, we focus on D0-D2-D6 bound states
and introduce MNOP category, PT category and DT categories for 
stable D0-D2-D6 bound states. 
An alternative description of D0-D2-D6 bound states plays 
a key role here. 
We then formulate categorical wall-crossing conjecture 
for these DT categories and prove it for local $(-1, -1)$-curve. 
We also concentrate on wall-crossing for irreducible curve class, 
and study the conjecture in this case via linear Koszul duality. 

In Section~\ref{sec:window:DT}, we prove window theorem for 
DT categories, and use it to prove conjecture on categorical 
wall-crossing equivalence 
of DT categories for one dimensional stable sheaves, 
under the some assumption of preservation of
semistability by push-forward. 

In Section~\ref{sec:cat:hall}, we use categorified Hall products 
to describe conjectural semiorthogonal decomposition of DT categories 
at a wall, and prove it under some assumption, including 
MNOP/PT wall for reduced classes. 

In Section~\ref{sec:technical}, we prove several technical results 
postponed in the previous sections. 

\subsubsection{Notation and convention}\label{subsec:notation}
In this paper, all the schemes or (derived) stacks
are locally of finite presentation over 
$\mathbb{C}$, except formal fibers along with 
good moduli space morphisms (e.g. see Subsection~\ref{subsec:fib}).  
For a scheme or derived stack $Y$
and a quasi-coherent sheaf $\fF$ on it, we denote by 
$S_{\oO_Y}(\fF)$ its symmetric product
$\oplus_{i\ge 0} \mathrm{Sym}^i_{\oO_Y}(\fF)$. 
We omit the subscript $\oO_Y$ if it is clear from the context. 
We denote by $\mathbb{D}_Y$ the derived dual functor
$\mathbb{D}_Y(-) \cneq \dR \hH om_{\oO_Y}(-, \oO_Y)$. 
For a derived stack $\mathfrak{M}$, we always 
denote by $t_0(\mathfrak{M})$ the underived
stack given by the truncation. 
For a triangulated category $\dD$ and 
a set of objects $\sS \subset \dD$, 
we denote by $\langle \sS \rangle_{\rm{ex}}$ the 
extension closure, i.e. the smallest extension closed 
subcategory which contains $\sS$. 
For an algebraic group $G$ which acts on $Y$, 
we denote by $[Y/G]$ the associated 
quotient stack. 

For
a triangulated category $\dD$ and its 
triangulated subcategory $\dD' \subset \dD$, 
we denote by 
$\dD/\dD'$
its Verdier quotient. 
This
is the triangulated category 
whose objects coincide with 
objects in $\dD$, and 
for $E, F \in \dD$ 
the set of morphisms $\Hom_{\dD/\dD'}(E, F)$
is the 
equivalence classes of 
roof diagrams in $\dD$
\begin{align*}
\xymatrix{
E & \ar[l]_-{u} G \ar[r] & F,
} 
\quad
\Cone(u) \in \dD'. 
\end{align*}
Then the sequence of 
exact functors of triangulated categories 
$\dD' \to \dD \to \dD/\dD'$
is called a localization sequence. 
We denote by $\dD^{\rm{cp}} \subset \dD$ the 
subcategory of compact objects. 
A subcategory $\dD' \subset \dD$ is called dense 
if any object in $\dD$ is a direct summand of an object in $\dD'$. 

For a smooth projective surface $S$, we denote by 
$\mathrm{NS}(S)$ the Neron-Severi group of $S$. 
An effective class $\beta \in \mathrm{NS}(S)$ is 
called reduced (resp.~irreducible) if any 
effective divisor in $S$ with class $\beta$ is a 
reduced divisor (resp.~irreducible divisor).  
For a coherent sheaf $E$ on $S$ whose supports 
had dimension less than or equal to one, we denote 
by $l(E)$ the fundamental one cycle of $E$. 

We often take limits or ind-completions for dg-categories, 
and then take its homotopy categories. 
Here we note that a limit of triangulated categories cannot be 
defined in general, and in order to take it we have to replace 
triangulated categories with their 
dg-enhancements and define the limit 
in the $\infty$-category of dg-categories as in (\ref{L:limit}) 
(see~\cite{MR3701353, MR2762557}). 
By abuse of notation, we use the following notation for the limit. 
For an $\infty$-category $\iI$, 
let $\{\dD_i^{\rm{dg}}\}_{i\in \iI}$
be the $\iI$-diagram of dg-categories
$\dD_i^{\rm{dg}}$ for $i\in \iI$, 
and denote by $\dD_i=\mathrm{Ho}(\dD_i^{\rm{dg}})$ 
the homotopy category of $\dD_i^{\rm{dg}}$. 
Then we sometimes write
\begin{align*}
	\lim_{i\in \iI}\dD_i \cneq \
	\mathrm{Ho}\left( \lim_{i\in \iI} \dD_i^{\rm{dg}} \right). 
\end{align*}
Similarly for a dg-category $\dD^{\rm{dg}}$
with $\dD=\mathrm{Ho}(\dD^{\rm{dg}})$, 
we write 
$\Ind \dD \cneq \mathrm{Ho}(\Ind \dD^{\rm{dg}})$
where $\Ind \dD^{\rm{dg}}$ is a dg-categorical 
ind-completion of $\dD^{\rm{dg}}$ (denoted as $\widehat{\dD}^{\rm{dg}}$
in~\cite[Section~7]{Todg}). 
When we discuss induced functors on limits or ind-completions, 
we implicitly take these functors on dg-enhancements
and finally take homotopy categories. 

\subsection{Acknowledgements}
The author is grateful to Tomoyuki Abe, Yuki Hirano
and 
Daniel Halpern-Leistner
for valuable discussions, 
and Yalong Cao, Flancesco Sala, Mauro Porta, Aurelio Carlucci
for comments on the first version of this paper. 
The author is supported by World Premier International Research Center
Initiative (WPI initiative), MEXT, Japan, and Grant-in Aid for Scientific
Research grant (No.~19H01779) from MEXT, Japan.

\section{Koszul duality equivalence}\label{sec:Koszul}
The notion of singular supports was 
introduced by Arinkin-Gaitgsory~\cite{MR3300415}
(which is itself based on 
the earlier work by Benson-Iyengar-Krause~\cite{MR2489634})
in order to give a categorical formulation of geometric 
Langlands conjecture. 
In this section, we first
review the singular support theory for 
(ind)coherent sheaves on quasi-smooth affine derived 
schemes. 
For a quasi-smooth affine 
derived scheme $\fU$ and $F \in \Dbc(\fU)$, 
its singular support is a conical closed subspace
\begin{align*}
	\mathrm{Supp}^{\rm{sg}}(F) \subset t_0(\Omega_{\fU}[-1])
	\end{align*}
where $\Omega_{\fU}[-1] \to \fU$ is the $(-1)$-shifted cotangent 
over $\fU$. 
If $F$ is a perfect complex, then 
its singular support is contained in the zero section of 
$t_0(\Omega_{\fU}[-1]) \to \fU$, but 
if $F$ is not perfect then its singular support lies beyond the 
zero section. 
So the singular support may be interpreted as a measure how 
a given coherent sheaf is far from a perfect complex. 

Our viewpoint for singular supports is to interpret 
them as usual supports for factorizations 
via Koszul duality. The derived category of factorizations 
$\MF_{\coh}(Y, w)$ 
is determined by a pair $(Y, w)$ for 
a smooth scheme $Y$ together with a function $w \colon Y \to \mathbb{C}$, 
which is 
introduced and studied in~\cite{MR3270588, MR3366002}
and generalizes 
Orlov's categories of matrix factorizations~\cite{Orsin}. 
Our key tool is a Koszul duality equivalence 
proved by several people~\cite{MR3071664, MR2982435, MR3631231, ObRoz}
which claims that, if $\fU$ is represented by a 
derived zero locus of a section of a vector bundle $V \to Y$
on a smooth scheme $Y$, 
then there is a function $w \colon V^{\vee} \to \mathbb{C}$
and an equivalence 
\begin{align*}
	\Phi \colon \Dbc(\fU) \stackrel{\sim}{\to} \MF_{\coh}^{\C}(V^{\vee}, w), 
	\end{align*}
where the right hand side is the derived category 
of $\C$-equivariant factorizations 
for $(V^{\vee}, w)$. 
We will see that, under the above equivalence $\Phi$, 
the singular support for $F \in \Dbc(\fU)$ is the same 
as the usual support for $\Phi(F)$. 
This fact is briefly claimed in~\cite[Section~H]{MR3300415},  
and we will give its more details. 

For a conical closed subset
$Z \subset t_0(\Omega_{\fU}[-1])$, 
we define $\cC_{Z} \subset \Dbc(\fU)$ to be the 
subcategory of objects whose singular supports are
contained in $Z$. 
The above interpretation of 
singular supports yields the equivalence
\begin{align*}
	\Phi \colon 
	\Dbc(\fU)/\cC_{Z} \stackrel{\sim}{\to}
	\MF_{\coh}^{\C}(V^{\vee} \setminus Z, w). 
	\end{align*}
The quotient category in the left hand side will be
our local model for DT categories for local surfaces, by taking 
$Z$ to be unstable locus with respect to some stability condition.
The above construction also works under equivariant setting, and we will 
discuss with the presence of $G$-action for an affine algebraic group $G$. 

In the later sections, we will use the above Koszul duality equivalence 
to prove several statements on singular supports using 
the theory of derived categories of factorizations. 
In particular, it will be useful to 
compare several functorial properties of 
 $\Dbc(\fU)$ for affine derived schemes $\fU$, 
 or more generally $\Dbc([\fU/G])$ for a $G$-action of $\fU$, 
  under Koszul duality equivalence. 
 For example we will see that the push-forward functor 
 associated with a morphism $\mathbf{f} \colon \fU_1\to \fU_2$
 is described by the composition of pull-back/push-forward in the 
 Koszul dual side. 
 
 The organization of this section is as follows. 
 In Section~\ref{subsec:ssupport}, we recall the 
 notion of singular supports for quasi-smooth affine 
 derived schemes. In Section~\ref{subsec:def:fact}, we review the theory of 
 derived categories of factorizations and their functorial properties. 
 In Section~\ref{subsec:Koszul}, we recall and reprove Koszul duality equivalence 
 in equivariant setting, and compare singular 
 supports with usual supports in the factorization category side. 
 In Section~\ref{subsec:funct}, we compare several 
 functorial properties under Koszul duality.

\subsection{Singular supports of coherent sheaves}\label{subsec:ssupport}
\subsubsection{Local model}
As a local model of our quasi-smooth derived stack, 
we consider the following situation. 
Let $Y$ be a smooth affine $\mathbb{C}$-scheme of finite presentation
and $V$ a vector bundle on it with 
a section $s$
\begin{align}\notag
	\xymatrix{
		V \ar[r] &  \ar@/_10pt/[l]_s Y. 
	}
\end{align}
By abuse of notation, we also denote by $V$ the locally free 
sheaf of local sections of $V \to Y$. 
Then the
contraction by the section 
$s \colon V^{\vee} \to \oO_Y$
determines the Koszul complex
\begin{align}\notag
	\rR(V \to Y, s) \cneq 
	\left(  
	\cdots \to \bigwedge^2 V^{\vee} \stackrel{s}{\to} V^{\vee} \stackrel{s}{\to}
	\oO_Y \to 0 \right)
\end{align}
which is a dg-algebra over $\oO_Y$.
We have the quasi-smooth affine derived scheme
\begin{align}\label{frak:U}
	\mathfrak{U} \cneq \Spec \rR(V\to Y, s).
\end{align}
Note that we have the following derived 
Cartesian square: 
\begin{align}\notag
	\xymatrix{
		\mathfrak{U} \ar[r] \ar[d] \ar@{}[dr]|\square & Y \ar[d]^-{0} \\
		Y \ar[r]^-{s} & V,
	}
\end{align}
i.e. $\mathfrak{U}$ is the derived zero locus of $s$. 
The classical truncation of $\mathfrak{U}$
is the closed subscheme of $Y$
given by  
\begin{align*}
	\uU \cneq t_0(\mathfrak{U})=(s=0) \subset Y.
\end{align*}
Also the cotangent complex of $\mathfrak{U}$ is given by 
\begin{align*}
	\mathbb{L}_{\mathfrak{U}} =(V^{\vee} \stackrel{ds}{\to}\Omega_Y)
	\otimes_{\oO_Y} 
	\oO_{\mathfrak{U}}. 
\end{align*}
Then its  
$(-1)$-shifted cotangent derived scheme of $\mathfrak{U}$
is written as 
\begin{align*}
	\Omega_{\mathfrak{U}}[-1] & \cneq 
	\Spec S_{\oO_{\mathfrak{U}}}(\mathbb{T}_{\mathfrak{U}}[1]) \\
	&=\Spec \rR((V \oplus \Omega_Y)\times_Y V^{\vee} \to V^{\vee}, s \oplus ds).
\end{align*}
Here
$\mathbb{T}_{\mathfrak{U}}$ is the
tangent complex of $\mathfrak{U}$, which is dual 
to its cotangent complex $\mathbb{L}_{\mathfrak{U}}$. 
The dg-algebra 
$\rR((V \oplus \Omega_Y)\times_Y V^{\vee} \to V^{\vee}, s \oplus ds)$
is the Koszul complex over $\oO_{V^{\vee}}$ determined by 
\begin{align}\label{diff:Omega}
	s \oplus ds : 
	V^{\vee} \oplus T_Y \to \oO_Y \oplus V
	\subset \oO_{V^{\vee}}. 
\end{align}

On the other hand, let $w \colon V^{\vee} \to \mathbb{C}$
be the function defined by 
\begin{align}\label{def:w}
	w=s \in \Gamma(Y, V) \subset
	\Gamma(Y, S(V))=
	\Gamma(Y, p_{\ast}\oO_{V^{\vee}}).
\end{align}
Here $p \colon V^{\vee} \to Y$ is the projection. 
Explicitly, the function $w$ is written as
\begin{align*}
	w(x, v)=\langle s(x), v\rangle, \ 
	x \in Y, \ v \in V^{\vee}|_{x}.
\end{align*}
Note that $0_Y^{\ast}w=0$ where $0_Y \colon Y \to V^{\vee}$ is the zero 
section of $p$. 
From the description of the differential (\ref{diff:Omega}),
the classical truncation
of $\Omega_{\mathfrak{U}}[-1]$ 
is the critical locus of $w$,
\begin{align}\label{Crit(w)}
	t_0(\Omega_{\mathfrak{U}}[-1])=
	\Spec S(\hH^1(\mathbb{T}_{\mathfrak{U}}))=
	\mathrm{Crit}(w)
	\subset V^{\vee}.
\end{align}
Moreover the restriction 
of the projection 
$p \colon V^{\vee} \to Y$
to $\mathrm{Crit}(w)$ maps to $\uU$. 
As a summary, we obtain the following diagram
\begin{align}\label{diagram:KZ}
	\xymatrix{
		\mathrm{Crit}(w) \ar@<-0.3ex>@{^{(}->}[r] \ar[d]_-{p_0} & \Omega_{\mathfrak{U}}[-1]
		\ar[d]\ar[r] & V^{\vee} \ar[r]^-{w} \ar[d]_-{p} & \mathbb{C} \\
		\uU \ar@<-0.3ex>@{^{(}->}[r] & \mathfrak{U} \ar[r] & 
		Y\ar@/_10pt/[u]_-{0_Y}. 
	}
\end{align}
In what follows, we will consider weight two 
$\mathbb{C}^{\ast}$-action on the fibers of $p\colon V^{\vee} \to Y$, 
so that $w$ is of weight two as well. 

\subsubsection{Definition of singular supports}\label{subsec:ssuport}
Here we review the theory of singular supports of 
coherent sheaves on $\mathfrak{U}$, developed in~\cite{MR3300415}
following the earlier work~\cite{MR2489634}. 

Let $\mathrm{HH}^{\ast}(\mathfrak{U})$ be the 
Hochschild cohomology
\begin{align*}
	\mathrm{HH}^{\ast}(\mathfrak{U})
	\cneq \Hom_{\mathfrak{U} \times \mathfrak{U}}^{\ast}
	(\Delta_{\ast}\oO_{\mathfrak{U}}, \Delta_{\ast}\oO_{\mathfrak{U}}).
\end{align*}
Here $\Delta \colon \mathfrak{U} \to \mathfrak{U} \times \mathfrak{U}$ is the diagonal. 
Then it is shown in~\cite[Section~4]{MR3300415}
that there 
exists a
canonical map 
$\hH^1(\mathbb{T}_{\mathfrak{U}}) \to \mathrm{HH}^2(\mathfrak{U})$, 
so the map of 
graded rings
\begin{align}\label{ssuport:map}
	S(\hH^1(\mathbb{T}_{\mathfrak{U}})) \to \mathrm{HH}^{2\ast}(\mathfrak{U}) 
	\to \mathrm{Nat}_{D^b_{\rm{coh}}(\mathfrak{U})}(\id, \id[2\ast]). 
\end{align}
Here $\mathrm{Nat}_{D^b_{\rm{coh}}(\mathfrak{U})}(\id, \id[2\ast])$
is the group of natural transformations from $\id$ to $\id[2\ast]$
on $D^b_{\rm{coh}}(\mathfrak{U})$, and the right arrow is defined 
by taking Fourier-Mukai transforms associated with 
morphisms $\Delta_{\ast}\oO_{\mathfrak{U}} \to \Delta_{\ast}\oO_{\mathfrak{U}}[2\ast]$. 
Then by (\ref{Crit(w)}), 
the above maps induce the map for 
each $\fF \in D^b_{\rm{coh}}(\mathfrak{U})$, 
\begin{align}\label{map:V}
	\oO_{\mathrm{Crit}(w)}
	\to \Hom^{2\ast}(\fF, \fF).
\end{align}
The above map 
defines the 
$\mathbb{C}^{\ast}$-equivariant 
$\oO_{\mathrm{Crit}(w)}$-module 
structure on $\Hom^{2\ast}(\fF, \fF)$, 
which is finitely generated  by~\cite[Theorem~4.1.8]{MR3300415}. 
Below a closed subset $Z \subset \mathrm{Crit}(w)$ is called
\textit{conical} if it is closed under the fiberwise 
$\mathbb{C}^{\ast}$-action on $\mathrm{Crit}(w)$. 
\begin{defi}\emph{(\cite{MR3300415, MR2489634})}\label{defi:ssuport}
	For $\fF \in D^b_{\rm{coh}}(\mathfrak{U})$, its singular
	support is the conical closed subset
	\begin{align*}
		\mathrm{Supp}^{\rm{sg}}(\fF) \subset \mathrm{Crit}(w)
	\end{align*}
	defined to be the support of $\Hom^{2\ast}(\fF, \fF)$
	as $\oO_{\mathrm{Crit}(w)}$-module. 
\end{defi}
We define the  
subcategory of $D^b_{\rm{coh}}(\mathfrak{U})$
with fixed singular support, 
which is a triangulated subcategory of 
$D^b_{\rm{coh}}(\mathfrak{U})$
(for example, this is a consequence of 
Proposition~\ref{prop:koszul:Z}). 
\begin{defi}\label{defi:ssuport2}
	For a conical closed subset $Z \subset \mathrm{Crit}(w)$, 
	we define the subcategory
	\begin{align*}
		\cC_Z \subset D^b_{\rm{coh}}(\mathfrak{U})
	\end{align*}
	to be consisting of $\fF \in D^b_{\rm{coh}}(\mathfrak{U})$
	whose singular support is contained in $Z$. 
\end{defi}

Here we give some examples on the descriptions of $\cC_Z$, using 
the notation of the diagram (\ref{diagram:KZ}). 
\begin{exam}\label{exam:ssupport}
	\begin{enumerate}
		\item Let $W \subset \uU$ be a closed 
		subscheme and take $Z=p_0^{-1}(W)$. 
		Then $\cC_Z$ is the subcategory of 
		objects in $D^b_{\rm{coh}}(\mathfrak{U})$
		whose usual supports are contained in $W$
		(see~\cite[Corollary~4.1.2]{MR3300415}).
		
		\item Let $Z=i(\uU)$. 
		Then $\cC_Z$ is the category of 
		perfect complexes on $\mathfrak{U}$ 
		(see~\cite[Theorem~4.2.6]{MR3300415}). 
		
		\item Let $Z=0_Y(W)$
		for a closed subscheme $W \subset \uU$. 
		Since 
		$\cC_Z=\cC_{p_0^{-1}(W)} \cap \cC_{0_Y(\uU)}$, 
		from (i) and (ii)
		we see that 
		$\cC_Z$ is the 
		category of perfect complexes on $\mathfrak{U}$
		whose usual supports are contained in $W$. 
	\end{enumerate}
\end{exam}
Let $\Ind \Dbc(\fU)$, $\Ind \cC_{Z}$ be the ind-completions (see~\cite{MR3136100})
\begin{align*}
	\Ind \Dbc(\fU) \cneq \Ind(\Dbc(\fU)), \ \Ind \cC_{Z} \cneq \Ind(\cC_{Z}). 
	\end{align*}
	The notion of singular support is extended to 
	objects in $\Ind \Dbc(\fU)$ (see~\cite[Definition~4.1.4]{MR3300415}), 
	and the subcategory 
	\begin{align*}
		\Ind \cC_{\zZ} \subset \Ind \Dbc(\fU)
	\end{align*}
	coincides with the subcategory of 
	objects    
	with singular supports contained in $Z$ (see~\cite[Corollary~4.3.3]{MR3300415}). 
	
The quotient category
\begin{align*}
	\Dbc(\fU)/\cC_Z
\end{align*}
is a
local model of our $\mathbb{C}^{\ast}$-equivariant 
DT category. 
Here we describe a one simple example. 

\begin{exam}\label{exam:simple}
	Let $\mathfrak{U}$ be the affine derived scheme given by
	\begin{align*}
		\mathfrak{U}=\Spec 
		\rR(\mathbb{C}^3 \to \mathbb{C}^2, s=xy). 
	\end{align*}
	Then $\mathfrak{U}$ is equivalent to 
	its classical truncation $\uU=(xy=0) \subset \mathbb{C}^2$. 
	The $(-1)$-shifted cotangent scheme
	$\Omega_{\mathfrak{U}}[-1]$ is the derived 
	critical locus of 
	$w \colon \mathbb{C}^3 \to \mathbb{C}$
	given by $w(x, y, z)=xyz$, so 
	\begin{align*}
		t_0(\Omega_{\mathfrak{U}}[-1])=
		\mathrm{Crit}(w)=\{xy=yz=zx=0\} \subset \mathbb{C}^3.
	\end{align*}
	Let us take $Z=\{(0, 0, 0)\} \subset \mathrm{Crit}(w)$. 
	Then from Example~\ref{exam:ssupport} (iii), 
	we see that $\cC_Z$
	is the subcategory of perfect complexes on $\mathfrak{U}$
	supported on $(0, 0)$. 
	One can check that the following 
	natural functor is an equivalence
	\begin{align*}
		D^b_{\rm{coh}}(\mathfrak{U})/\cC_Z
		\stackrel{\sim}{\to}
		D_{\rm{sg}}(\uU)
		\oplus D^b_{\rm{coh}}(\uU_{1}) \oplus D^b_{\rm{coh}}(\uU_2).
	\end{align*}
	Here $D_{\rm{sg}}(\uU)=D^b_{\rm{coh}}(\uU)/\mathrm{Perf}(\uU)$
	is the triangulated category of singularities~\cite{Orsin}, 
	and $\uU_1$, $\uU_2$ are the connected components of 
	$\uU\setminus \{(0, 0)\}$, which are isomorphic to 
	$\mathbb{C}^{\ast}$. 
\end{exam}

For the later use, we also give another description of singular supports via 
relative Hochschild cohomology 
\begin{align*}
	\mathrm{HH}^{\ast}(\mathfrak{U}/Y)
	\cneq \Hom_{\mathfrak{U} \times_Y \mathfrak{U}}^{\ast}
	(\Delta_{\ast}\oO_{\mathfrak{U}}, \Delta_{\ast}\oO_{\mathfrak{U}}).
\end{align*}
Let us  
calculate $\mathrm{HH}^{\ast}(\mathfrak{U}/Y)$. 
The automorphism of the vector bundle 
\begin{align}\label{aut:V}
	V \oplus V \stackrel{\cong}{\to} V \oplus V, \ 
	(x, y) \mapsto (x+y, x-y)/2
\end{align}
induces the equivalence
\begin{align}\label{isom:fproduct}
	\mathfrak{U} \times_{Y} \mathfrak{U} \stackrel{\sim}{\to}
	\mathfrak{U}^{\flat} \times_Y \mathfrak{U}
\end{align}
where $\mathfrak{U}^{\flat}=\Spec S(V^{\vee}[1])$, 
defined as in (\ref{frak:U})
with $s=0$.  
Note that $\oO_Y$ is a dg-$\oO_{\mathfrak{U}^{\flat}}$-module 
by the projection $\oO_{\mathfrak{U}^{\flat}} \to \oO_Y$. 
Under the equivalence (\ref{isom:fproduct}), 
the object $\Delta_{\ast}\oO_{\mathfrak{U}}$ 
corresponds to $\oO_Y \boxtimes \oO_{\mathfrak{U}}$, so we have
\begin{align}\notag
	\RHom_{\mathfrak{U} \times_Y \mathfrak{U}}(\Delta_{\ast}\oO_{\mathfrak{U}}, \Delta_{\ast}\oO_{\mathfrak{U}})
	&\cong \RHom_{\mathfrak{U}^{\flat} \times_{Y} \mathfrak{U}}(\oO_Y \boxtimes \oO_{\mathfrak{U}}, \oO_Y \boxtimes \oO_{\mathfrak{U}}) \\
	&\label{isom:relHH}
	\cong \RHom_{\mathfrak{U}^{\flat}}(\oO_Y, \oO_Y) \dotimes_{\oO_Y}
	\RHom_{\mathfrak{U}}(\oO_{\mathfrak{U}}, \oO_{\mathfrak{U}}) \\
	&\notag
	\cong S(V[-2]) \otimes_{\oO_Y} \oO_{\mathfrak{U}}.
\end{align}
Here the isomorphism
$\RHom_{\mathfrak{U}^{\flat}}(\oO_Y, \oO_Y)
\stackrel{\cong}{\to} S(V[-2])$
is given by 
taking the Koszul resolution of $\oO_Y$
as $S(V^{\vee}[1])$-module
\begin{align*}
	&S(V^{\vee}[2]) \otimes_{\oO_Y} S(V^{\vee}[1]) \\
	&=\left( \cdots \to \bigwedge^2 (V^{\vee}[1]) \otimes S(V^{\vee}[1]) \to V^{\vee}[1] \otimes S(V^{\vee}[1]) \to S(V^{\vee}[1]) \right)
	\stackrel{\sim}{\to} \oO_Y. 
\end{align*}
In particular, 
the map $x\mapsto x\otimes 1$ gives a map 
of graded algebras
\begin{align}\label{ssuport:map2}
	S(V) \to \mathrm{HH}^{2\ast}(\mathfrak{U}/Y) \to
	\mathrm{Nat}_{D^b_{\rm{coh}}(\mathfrak{U})}(\id, \id[2\ast]).
\end{align}
Similarly to (\ref{ssuport:map}), the right arrow is given by taking FM transforms. 
Then similarly to (\ref{map:V}), the 
maps (\ref{ssuport:map2})
determine the 
$\mathbb{C}^{\ast}$-equivariant 
$S(V)$-module structure on 
$\Hom^{2\ast}(\fF, \fF)$ for $\fF \in D^b_{\rm{coh}}(\mathfrak{U})$. 
\begin{lem}\label{lem:ssuport}\emph{(\cite[Lemma~5.3.4]{MR3300415})}
	The image of $\mathrm{Sing}^{\rm{sg}}(\fF)$
	under the closed immersion 
	$\mathrm{Crit}(w) \hookrightarrow V^{\vee}$ 
	coincides with the support of 
	$\Hom^{2\ast}(\fF, \fF)$ as a
	$S(V)$-module determined by the map (\ref{ssuport:map2}). 
\end{lem}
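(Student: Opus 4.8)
Write $E\cneq\Hom^{2\ast}(\fF,\fF)$, and let $\kappa\colon S(V)\twoheadrightarrow S(\hH^1(\mathbb{T}_{\mathfrak{U}}))$ be the canonical surjection of graded $\oO_A$-algebras, i.e.\ the composite of the restriction $S(V)\to S(V)|_{\uU}$ with the quotient by the ideal generated by the image of $(ds)^{\vee}\colon T_A|_{\uU}\to V|_{\uU}$; by (\ref{Crit(w)}), $\Spec\kappa$ is precisely the closed immersion $\mathrm{Crit}(w)\hookrightarrow V^{\vee}$. The plan is to reduce the lemma to the assertion that the $S(V)$-module structure on $E$ given by (\ref{ssuport:map2}) is the restriction of scalars, along $\kappa$, of the $S(\hH^1(\mathbb{T}_{\mathfrak{U}}))$-module structure on $E$ given by (\ref{map:V}). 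Granting this, the lemma is formal: $E$ is finitely generated over $S(\hH^1(\mathbb{T}_{\mathfrak{U}}))$ by~\cite[Theorem~4.1.8]{MR3300415}, hence also over $S(V)$, and $\mathrm{Ann}_{S(V)}(E)=\kappa^{-1}(\mathrm{Ann}_{S(\hH^1(\mathbb{T}_{\mathfrak{U}}))}(E))$; since $\kappa$ is surjective, $\mathrm{Supp}_{S(V)}(E)$ is the image under $\Spec\kappa=(\mathrm{Crit}(w)\hookrightarrow V^{\vee})$ of $\mathrm{Supp}_{S(\hH^1(\mathbb{T}_{\mathfrak{U}}))}(E)=\mathrm{Supp}^{\rm{sg}}(\fF)$. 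All of the subsets involved are conical, so the $\mathbb{C}^{\ast}$-equivariance needs no separate argument.

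To prove the factorization I would compare the maps (\ref{ssuport:map}) and (\ref{ssuport:map2}) through the natural homomorphism of graded algebras
\[
\theta\colon \mathrm{HH}^{\ast}(\mathfrak{U}/A)\longrightarrow \mathrm{HH}^{\ast}(\mathfrak{U})
\]
obtained by pushing self-maps of $\Delta_{\ast}\oO_{\mathfrak{U}}$ forward along the quasi-smooth closed immersion $j\colon \mathfrak{U}\times_A\mathfrak{U}\hookrightarrow \mathfrak{U}\times\mathfrak{U}$, which is the base change of $A\hookrightarrow A\times A$ and through which $\Delta$ factors as $\Delta=j\circ\Delta'$. Two compatibilities are needed. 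First, $\theta$ intertwines the two Fourier--Mukai realizations into $\mathrm{Nat}_{D^b_{\rm{coh}}(\mathfrak{U})}(\id,\id[2\ast])$: for a self-map $j_{\ast}\gamma$ of $\Delta_{\ast}\oO_{\mathfrak{U}}$, convolution over $\mathfrak{U}\times\mathfrak{U}$ equals convolution of $\gamma$ over $\mathfrak{U}\times_A\mathfrak{U}$ by the projection formula along $j$, naturally in $\gamma$, so that $\FM_{\mathfrak{U}\times\mathfrak{U}}\circ\theta=\FM_{\mathfrak{U}\times_A\mathfrak{U}}$. Second, the square
\[
\xymatrix{
S(V)\ar[r] \ar@{->>}[d]_-{\kappa} & \mathrm{HH}^{2\ast}(\mathfrak{U}/A)\ar[d]^-{\theta} \\
S(\hH^1(\mathbb{T}_{\mathfrak{U}}))\ar[r] & \mathrm{HH}^{2\ast}(\mathfrak{U}),
}
\]
whose horizontal arrows are (\ref{ssuport:map2}) and (\ref{ssuport:map}), commutes; since $S(V)$ is generated in weight one and all four arrows are maps of graded algebras, this need only be checked on the weight-one summand $V$. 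Combining the two compatibilities, the map $S(V)\to\mathrm{Nat}_{D^b_{\rm{coh}}(\mathfrak{U})}(\id,\id[2\ast])$ of (\ref{ssuport:map2}) is obtained from the map of (\ref{ssuport:map}) by precomposition with $\kappa$, which is exactly the asserted factorization of the $S(V)$-module structure on $E$.

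The main obstacle is the weight-one check in the square above: one must identify $\theta$, on the weight-one part of the symmetric-power filtrations of the two Hochschild cohomologies, with the map induced by the canonical morphism of tangent complexes $\mathbb{T}_{\mathfrak{U}/A}\to\mathbb{T}_{\mathfrak{U}}$; this morphism is dual to the surjection $\mathbb{L}_{\mathfrak{U}}\to\mathbb{L}_{\mathfrak{U}/A}$, and the relevant map of cohomology sheaves is the canonical surjection $V|_{\uU}\twoheadrightarrow \mathrm{coker}((ds)^{\vee})|_{\uU}=\hH^1(\mathbb{T}_{\mathfrak{U}})$, i.e.\ $\kappa$ on generators. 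Making this precise, with due care for the signs coming from the odd-degree (super-)symmetric powers, is the content of~\cite[Lemma~5.3.4]{MR3300415}, to which I would appeal. The relative side is in any case completely explicit: the equivalence (\ref{isom:fproduct}) and the computation (\ref{isom:relHH}) give $\mathrm{HH}^{\ast}(\mathfrak{U}/A)\cong S(V[-2])\otimes_{\oO_A}\oO_{\mathfrak{U}}$ with (\ref{ssuport:map2}) the base-change map $x\mapsto x\otimes 1$, while $\theta$ on weight one can be read off from $\Delta=j\circ\Delta'$ together with the self-intersection identities $(\Delta')^{\ast}(\Delta')_{\ast}\oO_{\mathfrak{U}}=S(\mathbb{L}_{\mathfrak{U}/A}[1])$, $\Delta^{\ast}\Delta_{\ast}\oO_{\mathfrak{U}}=S(\mathbb{L}_{\mathfrak{U}}[1])$ and the counit $j^{\ast}j_{\ast}\to\id$.
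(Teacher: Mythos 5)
This lemma is quoted in the paper directly from Arinkin--Gaitsgory (Lemma~5.3.4 of \cite{MR3300415}) with no independent argument, and your proposal---whose only non-formal step, the weight-one compatibility identifying the $S(V)$-action of (\ref{ssuport:map2}) with the pullback along $\kappa$ of the action of (\ref{ssuport:map})---is itself deferred to that same lemma, so in substance it is the same appeal. The scaffolding you add (restriction of scalars along the surjection $\kappa$, the projection-formula compatibility of the relative and absolute Fourier--Mukai realizations, and the comparison of supports under a closed immersion) is correct and consistent with how the cited result is established.
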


\subsection{The derived categories of factorizations}\label{subsec:def:fact}
\label{subsec:dfac}
\subsubsection{Definition of factorizations}\label{subsub:fact}
Let $Y$ be a smooth scheme which admits an 
action of an algebraic group $G$, 
and also a $\mathbb{C}^{\ast}$-action which commutes with the 
above $G$-action. 
Let $\tau \colon G \times \mathbb{C}^{\ast} 
\to \mathbb{C}^{\ast}$ be a character 
given by the second projection, 
and let 
$w \in \Gamma(Y, \oO_Y)$ be $\tau$-semi invariant
of weight two, i.e. 
$g^{\ast}w=\tau(g)^2 w$ for any $g \in G \times \mathbb{C}^{\ast}$. 
Given data as above, the \textit{derived category of factorizations} 
\begin{align}\label{der:fact}
	\mathrm{MF}_{\rm{qcoh}}^{\mathbb{C}^{\ast}}([Y/G], w)
\end{align}
is defined to be the triangulated category
whose objects consist of quasi-coherent
$(G\times \mathbb{C}^{\ast})$\textit{-equivariant factorizations of }$w$, i.e. 
a pair
\begin{align}\label{factorization}
	(\pP, d_{\pP}), \ d_{\pP} \colon \pP \to \pP\langle 1 \rangle, \ 
	d_{\pP} \circ d_{\pP}=w
\end{align}
where $\pP$ is a $(G \times \mathbb{C}^{\ast})$-equivariant 
quasi-coherent sheaf on $Y$, 
$d_{\pP}$ is a $(G \times \C)$-equivariant morphism. 
Here $\langle n \rangle$ 
means the twist by the $(G\times \mathbb{C}^{\ast})$-character
$\tau^n$. 
The category (\ref{der:fact}) is defined to be
the localization 
of the homotopy category of 
the factorizations (\ref{factorization})
by its subcategory of acyclic factorizations
(see~\cite[Section~3]{MR3270588}, \cite[Section~2.2]{MR3366002} for 
details). 
We have the triangulated subcategory
\begin{align*}
	\mathrm{MF}_{\rm{coh}}^{\mathbb{C}^{\ast}}([Y/G], w)
	\subset \mathrm{MF}_{\rm{qcoh}}^{\mathbb{C}^{\ast}}([Y/G], w)
\end{align*}
consisting of factorizations (\ref{factorization}) such that
$\pP$ is coherent. 
If $Y$ is affine, the above subcategory is equivalent to 
the triangulated category of 
$(G \times \C)$-equivariant matrix factorizations of $w$
defined by Orlov~\cite{Orsin}. 

\begin{rmk}\label{rmk:curved}
	The derived categories of factorizations are also defined in terms of 
	curved dg-modules over curved dg-algebras
	introduced in~\cite{MR2931331}. 
	By definition, a \textit{commutative curved dg-algebra}
	is a triple
	\begin{align*}
		(R^{\bullet}, d_R, c), \ d_R \colon R^{\bullet} \to R^{\bullet+1}, \ 
		c \in R^2. 
	\end{align*}
	Here $(R^{\bullet}, d_R)$ is a commutative dg-algebra, and 
	the element 
	$c \in R^2$ (called \textit{curvature}) satisfies $d_R c=0$. 
	A \textit{curved dg-module} over a curved 
	dg-algebra $(R^{\bullet}, d_R, c)$ is a 
	pair
	\begin{align*}
		(M^{\bullet}, d_M), \ 
		d_M \colon M^{\bullet} \to M^{\bullet+1}, \ d_M^2 =\cdot c
	\end{align*}
	where $M^{\bullet}$ is a graded $R^{\bullet}$-module, and 
	the map 
	$R^{\bullet} \to \End(M^{\bullet})$
	defining the $R^{\bullet}$-module structure
	commutes with $(d_R, [d_M, -])$. 
	By its definition, giving a 
	$(G \times \mathbb{C}^{\ast})$-equivariant 
	factorization of $w \colon Y \to \mathbb{C}$ is 
	equivalent to giving a $G$-equivariant 
	curved dg-module over a $G$-equivariant curved 
	dg-algebra
	\begin{align}\label{curve:dga}
		(\oO_Y, 0, w).
	\end{align}
	Then 
	$\mathrm{MF}_{\rm{coh}}^{\mathbb{C}^{\ast}}([Y/G], w)$
	is naturally identified with the derived category of $G$-equivariant 
	finitely generated curved dg-modules over 
	(\ref{curve:dga}). 
\end{rmk}
\begin{rmk}\label{rmk:def:Z2}
	The $\mathbb{Z}/2$-periodic derived category of (coherent, quasi-coherent) factorizations 
	is also defined in a similar way. 
For a smooth scheme $Y$ with $G$-action and 
	a $G$-invariant function $w \colon Y \to \mathbb{C}$, 
	they are denoted by 
	\begin{align*}
		\MF_{\coh}^{\mathbb{Z}/2\mathbb{Z}}([Y/G], w), \ 
			\MF_{\qcoh}^{\mathbb{Z}/2\mathbb{Z}}([Y/G], w)
			\end{align*}
		respectively. They are $\mathbb{Z}/2\mathbb{Z}$-periodic triangulated 
		categories, and consists of 
	$\mathbb{Z}/2 \times G$-equivariant factorizations 
		\begin{align*}
			(\pP, d_{\pP}), \ d_{\pP} \colon \pP \to \pP\langle 1 \rangle, \ d_{\pP} \circ d_{\pP}=w
			\end{align*}
		where $\pP$ is coherent, quasi-coherent, respectively. 
		Here $\mathbb{Z}/2\mathbb{Z}$ acts on $Y$ trivially, 
		and $\langle 1 \rangle$ means twist with respect to 
		a non-trivial $\mathbb{Z}/2\mathbb{Z}$-character. 
	\end{rmk}
For a $(G \times \C)$-invariant closed subscheme $Z \subset Y$
with $\zZ=[Z/G]$, 
the subcategory
\begin{align}\label{emb:support}
	\mathrm{MF}_{\star}^{\C}([Y/G], w)_{\zZ} \subset 
	\mathrm{MF}_{\star}^{\C}([Y/G], w)
\end{align}
for $\star \in \{\rm{qcoh}, \rm{coh}\}$
is defined to be the kernel of the restriction functor
\begin{align*}
	\mathrm{MF}_{\star}^{\C}([Y/G], w) \to 
	\mathrm{MF}_{\star}^{\C}([(Y \setminus Z)/G], w). 
\end{align*}
By taking the Verdier quotients, we have an equivalence
(cf.~\cite[Theorem~1.3]{MR2734686})
\begin{align}\label{quot:MF}
	\mathrm{MF}_{\star}^{\C}([Y/G], w)/
	\mathrm{MF}_{\star}^{\C}([Y/G], w)_{\zZ}
	\stackrel{\sim}{\to}
	\mathrm{MF}_{\star}^{\C}([(Y \setminus Z)/G], w).
\end{align}
Since 
any object in $\MF^{\C}_{\star}([Y/G], w)$
is supported on $\Crit(w)$ 
(see~\cite[Corollary~3.18]{MR3112502}), 
for $Z'=Z \cap \Crit(w)$
we have equivalences 
\begin{align}\notag
	\MF^{\C}_{\star}([Y/G], w)_{\zZ'}
	\stackrel{\sim}{\to}
	\MF^{\C}_{\star}([Y/G], w)_{\zZ}.
\end{align}
Combined with (\ref{quot:MF}), we also have the equivalence
\begin{align}\label{equiv:crit}
	\mathrm{MF}_{\star}^{\C}([(Y \setminus Z')/G], w)
	\stackrel{\sim}{\to}
	\mathrm{MF}_{\star}^{\C}([(Y \setminus Z)/G], w).
\end{align}
In particular, 
let $Y' \subset Y$ be a $(G \times \C)$-invariant 
open subset which contains $\Crit(w)$. 
Then by taking $Z=Y\setminus Y'$
so that $Z'=\emptyset$, 
the restriction functor gives an equivalence
\begin{align}\label{rest:equiv}
	\MF_{\star}^{\C}([Y/G], w) \stackrel{\sim}{\to}
	\MF_{\star}^{\C}([Y'/G], w). 
	\end{align}

For $\star=\qcoh$, the embedding (\ref{emb:support}) admits a 
right adjoint (see~\cite[Proposition~2.3.9]{MR3895631})
\begin{align*}
	\Gamma_{\zZ} \colon \MF_{\qcoh}^{\C}([Y/G], w)
	\to \MF_{\qcoh}^{\C}([Y/G], w)_{\zZ}
\end{align*}
so that 
for an object $\pP \in \MF_{\qcoh}^{\C}([Y/G], w)$,
there is a distinguished triangle
\begin{align}\label{dist:support}
	\Gamma_{\zZ}(\pP) \to \pP \to \pP|_{[Y/G]\setminus \zZ}
\end{align}
where $\pP|_{[Y/G]\setminus Z}\cneq j_{\ast}j^{\ast}\pP$
for the open immersion $j \colon [Y/G]\setminus \zZ \hookrightarrow [Y/G]$. 

\subsubsection{Functoriality of derived categories of factorizations}
Let $Y'$ be another smooth scheme with 
$(G' \times \C)$-action 
and $f \colon [Y'/G'] \to [Y/G]$ 
a $\C$-equivariant morphism. 
We have the push-forward/pull-back functors (cf.~\cite[Section~3]{MR3270588})
\begin{align}\label{funct:MF}
	\xymatrix{
		\MF_{\qcoh}^{\C}([Y'/G'], f^{\ast}w) 
		\ar@<0.5ex>[r]^-{f_{\ast}} &
		\ar@<0.5ex>[l]^-{f^{\ast}}
		\MF_{\qcoh}^{\C}([Y/G], w) }
\end{align}
such that $f^{\ast} \dashv f_{\ast}$. 
If $f$ is proper, 
we also have functors 
\begin{align*}
	\xymatrix{
		\MF_{\qcoh}^{\C}([Y'/G'], f^{\ast}w) 
		\ar@<0.5ex>[r]^-{f_{!}} &
		\ar@<0.5ex>[l]^-{f^{!}}
		\MF_{\qcoh}^{\C}([Y/G], w) }
\end{align*}
where $f_{\ast}\dashv f^!$, $f_{!} \dashv f^{\ast}$. 
They are related to the functors (\ref{funct:MF}) 
as 
\begin{align*}
	f^!(-)=f^{\ast}(-) \otimes \omega_f, \ 
	f_{!}(-)=f_{\ast}(- \otimes \omega_f). 
\end{align*}
Here $\omega_f \cneq \det \mathbb{L}_f[\dim f]$. 
The functor $f^{\ast}$ preserves 
$\MF^{\C}_{\coh}(-)$, and if $f$ is proper 
then $f^!$, $f_{\ast}$ and $f_{!}$ also preserve
$\MF^{\C}_{\coh}(-)$. 

Let 
$\lambda \colon \C \to G$ be a one parameter subgroup 
contained in the center of $G$ and acts on $Y$ trivial. 
Then we have the decomposition for $\star \in \{\coh, \qcoh\}$
\begin{align*}
	\MF_{\star}^{\C}([Y/G], w)=
	\bigoplus_{j \in \mathbb{Z}}
	\MF_{\star}^{\C}([Y/G], w)_{\lambda \mathchar`- \wt= j}
\end{align*}
where $\MF_{\star}^{\C}([Y/G], w)_j$ is the weight 
$j$-part with respect to $\lambda$. 
We define the subcategory 
\begin{align*}
	\MF_{\coh}^{\C}([Y/G], w)_{\lambda \mathchar`- \rm{below}}
	\subset \MF_{\qcoh}^{\C}([Y/G], w)
\end{align*}
to be consisting of objects whose $\lambda$-weights are bounded 
below, and each $\lambda$-weight $j$ part is an object in 
$\MF_{\coh}^{\C}([Y/G], w)_{\lambda \mathchar`- \wt= j}$. 

Let $W \to Y$ be a $(G\times \C)$-equivariant vector bundle, 
where $\lambda$ acts on fibers of $W \to Y$ by negative weights. 
Then 
we have the $\C$-equivariant morphism of stacks 
$f \colon [W/G] \to [Y/G]$. 
\begin{lem}\label{lem:stack:push}
	The functor $f_{\ast}$ in (\ref{funct:MF}) restricts to the functor
	\begin{align*}
		f_{\ast} \colon \MF_{\coh}^{\C}([W/G], f^{\ast}w) \to 
		\MF_{\coh}^{\C}([Y/G], w)_{\lambda \mathchar`- \rm{below}}. 
	\end{align*}
\end{lem}
\begin{proof}
	By push-forward $f_{\ast}$, 
	a coherent sheaf on $[W/G]$ is regarded as a coherent 
	module over the sheaf of algebras
	$\mathrm{Sym}(W^{\vee})$ on $[Y/G]$. 
	It has only non-negative $\lambda$-weights, and
	each $\lambda$-weight $j$ part is a coherent sheaf on $[Y/G]$. 
	Therefore the lemma holds. 
\end{proof}

\subsection{Koszul duality equivalence}\label{subsec:Koszul}
\subsubsection{$G$-equivariant tuple}
\begin{defi}\label{def:tuple}
		Let $G$ be an affine algebraic group. 
	A $G$-equivariant tuple is a tuple 
	\begin{align}\notag
		(Y, V, s)
	\end{align}
	where 
	$Y$ is a smooth affine $\mathbb{C}$-scheme of finite presentation
	with $G$-action, 
	$V \to Y$ is a $G$-equivariant 
	vector bundle and $s$ is a $G$-invariant section of $V$. 
		Given $(Y, V, s)$ as above,
	the associated derived stack is denoted by 
	\begin{align*}
		[\fU/G]\cneq [\Spec \rR(V \to Y, s)/G].
	\end{align*}
Its classical truncation is denoted by $[\uU/G]$, which is a closed substack of $\yY\cneq [Y/G]$. 
	\end{defi}

For a $G$-equivariant tuple $(Y, V, s)$, we 
take the weight two $\C$-action on the fibers of 
$V^{\vee} \to Y$. 
Then we have the $(G \times \C)$-action on $V^{\vee}$
and 
the function $w$ defined in (\ref{def:w}) is $\tau$-semi invariant 
of weight two, where 
$\tau$ is the projection
$G \times \mathbb{C}^{\ast} \to \mathbb{C}^{\ast}$. 
For a $(G \times \mathbb{C}^{\ast})$-invariant closed subset 
$Z \subset \mathrm{Crit}(w)$, we have the subcategories
\begin{align*}
	\cC_{\zZ} \subset \Dbc([\fU/G]), \ 
	\Ind \cC_{\zZ} \subset \Ind \Dbc([\fU/G])
\end{align*}
consisting of objects 
whose singular supports are contained in 
the closed substack
\begin{align*}
	\zZ=[Z/G] \subset [t_0(\Omega_{\fU}[-1])/G].
\end{align*}
More precisely these are 
objects whose pull-back via $\fU \to [\fU/G]$ 
are contained in $\cC_{Z}$, $\Ind \cC_{Z}$ respectively
(see Definition~\ref{def:CZ2} for a general case). 
\begin{lem}\label{lem:gcomplete}
	The category $\Ind \cC_{\zZ}$
	is compactly generated 
	such that $(\Ind \cC_{\zZ})^{\rm{cp}}=\cC_{\zZ}$. 
	\end{lem}
\begin{proof}
	Since the stack $[\fU/G]$ is a global complete intersection 
	stack in the sense of~\cite[Section~9.1]{MR3300415}, 
	the lemma follows from~\cite[Corollary~9.2.7, Corollary~9.2.8]{MR3300415}.
	\end{proof}
Note that we have the following commutative diagram, which we will 
often use below. 
\begin{align}\label{diagram:UA}
	\xymatrix{
		& & [V/G] \ar[d] & [V^{\vee}/G] \ar[r]^-{w} \ar[d] & \mathbb{C} \\
		[\uU/G] \ar@<-0.3ex>@{^{(}->}[r] \ar[d]_-{\pi_{\uU}}
		& [\fU/G] \ar@<-0.3ex>@{^{(}->}[r]^-{j} \ar[rd]_-{\pi_{\fU}} & [Y/G] 
		\ar[d]_-{\pi_{Y}} \ar@/_10pt/[u]_-{s} \ar@{=}[r]& [Y/G]  \ar@/_10pt/[u]_-{0_Y}
		\ar[d]_-{\pi_Y} & \\
		\uU \ssslash G \ar@<-0.3ex>@{^{(}->}[rr]
		& & Y \ssslash G \ar@{=}[r] & Y\ssslash G. &
	}
\end{align}
Here $0_Y$ is the zero section
of $V^{\vee} \to Y$, and $\pi_{\uU}$, $\pi_Y$ are 
good moduli space morphisms
(see Subsection~\ref{subsubsec:gmoduli} for good moduli spaces). 

\subsubsection{Koszul duality equivalence}
The construction in the previous subsection 
yields the derived category of factorizations and 
its subcategory with fixed support
\begin{align*}
	\MF^{\C}_{\coh}([V^{\vee}/G], w)_{\zZ} \subset 
	\MF^{\C}_{\coh}([V^{\vee}/G], w). 
\end{align*}
Let $\kK_s$ be the following $(G\times \C)$-equivariant
factorization of $w$
(called \textit{Koszul factorization})
\begin{align}\label{Phi(U):KF}
	\kK_s \cneq 
	\left(
	\oO_{V^{\vee}} \otimes_{\oO_Y} \oO_{\mathfrak{U}}, 
	d_{\kK_s} \right). 
\end{align}
Here $G$ acts on $\oO_{V^{\vee}} \otimes_{\oO_Y} \oO_{\fU}$ diagonally, 
the $\C$-action is given by the grading  
\begin{align*}
	\oO_{V^{\vee}} \otimes_{\oO_Y}\oO_{\mathfrak{U}}=
	S(V[-2]) \otimes_{\oO_Y} S(V^{\vee}[1]),
\end{align*}
and the weight one map $d_{\kK_s}$ is given by 
\begin{align*}
	d_{\kK_s}=
	1 \otimes d_{\oO_{\mathfrak{U}}}+\eta \colon \oO_{V^{\vee}} \otimes_{\oO_Y}\oO_{\mathfrak{U}} \to
	\oO_{V^{\vee}} \otimes_{\oO_Y}\oO_{\mathfrak{U}} \langle 1 \rangle,
\end{align*}
where 
$\eta \in V\otimes_{\oO_Y} V^{\vee} \subset \oO_{V^{\vee}} \otimes_{\oO_Y}
\oO_{\fU}$
corresponds to 
$\id \in \Hom(V, V)$. 

The object $\kK_s$ 
also admits a $G$-equivariant 
dg $\oO_{\mathfrak{U}}$-module structure
by the right factor of the tensor product. 
In particular for a dg-$\oO_{\fU}$-module $\fF^{\bullet}$, 
we can form its tensor product $\kK_s \otimes_{\oO_{\fU}}\fF^{\bullet}$
which is a factorization of $w$. 
It is explicitly described as follows. 
The dg-$\oO_{\fU}$-module $\fF^{\bullet}$ consists of 
a complex of $\oO_{Y}$-modules $(\fF^{\bullet}, d_{\fF^{\bullet}})$
together with 
$\oO_Y$-module homomorphisms 
$\eta^i \colon \fF^i \to V \otimes \fF^{i-1}$
satisfying the axiom of dg-modules. 
Then 
$\kK_s \otimes_{\oO_{\fU}}\fF^{\bullet}$
is given by 
\begin{align*}
	\left(
	\bigoplus_{i\in \mathbb{Z}}\fF^i \langle i \rangle
	\otimes_{\oO_Y}\oO_{V^{\vee}} \to 
	\bigoplus_{i\in \mathbb{Z}}\fF^i \langle i+1 \rangle
	\otimes_{\oO_Y}\oO_{V^{\vee}}\right)
	\end{align*}
where the weight one morphism is an $\oO_{V^{\vee}}$-module homomorphism 
induced by
\begin{align*}
d_{\fF^{\bullet}}^i +\eta^i \colon 
\fF^i \to \fF^{i+1} \oplus (V \otimes \fF^{i-1}). 	
	\end{align*}
The correspondence $(-) \to \kK_s \otimes_{\oO_{\fU}}(-)$ is understood 
as a Fourier-Mukai functor (see Remark~\ref{rmk:precise}) so that 
it induces a functor 
on derived categories. 
Then 
the following result is proved in 
several references
(see~\cite[Proposition~4.8]{MR3631231}
and also~\cite{MR3071664, MR2982435, ObRoz}). 
\begin{thm}\emph{(cf.~\cite{MR3071664, MR2982435, MR3631231, ObRoz})}
	\label{thm:knoer}
	The functor $\Phi$ defined by 
	\begin{align}\label{funct:koszul}
		\Phi \colon D^b_{\rm{coh}}([\mathfrak{U}/G]) \to
		\mathrm{MF}_{\rm{coh}}^{\mathbb{C}^{\ast}}([V^{\vee}/G], w), \ 
		(-) \mapsto \kK_s \otimes_{\oO_{\mathfrak{U}}} (-)
	\end{align}
is an equivalence of triangulated categories. 
	Moreover a quasi-inverse of $\Phi$ is given by 
	\begin{align}\label{Psi:qinverse}
		\Psi \colon \mathrm{MF}_{\rm{coh}}^{\mathbb{C}^{\ast}}([V^{\vee}/G], w) \to 
		D^b_{\rm{coh}}([\mathfrak{U}/G]), \ 
		(-) \mapsto 
		\RHom_{\mathrm{MF}_{\rm{coh}}^{\mathbb{C}^{\ast}}([V^{\vee}/G], w)}(\kK_s, -). 
	\end{align}
\end{thm}
\begin{proof}
	The result is essentially proved in~\cite{MR3071664, MR2982435, MR3631231, ObRoz}, 
	but the currently stated version is not stated in the references so we give its proof. 
	
	We first prove the case of $G=\{1\}$. 	
	Since $\kK_s$ consists of free $\oO_{\mathfrak{U}}$-modules, 
	we do not need to take $\dL$ 
	in the computation of 
	$\Phi$.  
	In particular it takes coherent $\oO_{\mathfrak{U}}$-modules to 
	coherent factorizations of $w \colon V^{\vee} \to \mathbb{C}$, so the functor 
	(\ref{funct:koszul}) is well-defined. 
	Also 
	for $\pP \in \mathrm{MF}^{\mathbb{C}^{\ast}}_{\rm{coh}}(V^{\vee}, w)$, 
	we have 
	\begin{align}\label{funct:Psi:0}
		\Psi(\pP)=\RHom_{\mathrm{MF}_{\rm{coh}}^{\mathbb{C}^{\ast}}(V^{\vee}, w)}(\kK_s, \pP)
		\cong \kK_s^{\vee} \otimes_{\oO_{V^{\vee}}} \pP
	\end{align}
	where 
	$\kK_s^{\vee}$ is the $\oO_{V^{\vee}}$-dual of $\kK_s$, which is a 
	factorization of $-w$. 
	Below we will use the following functors of derived categories 
	of factorizations 
	\begin{align*}
		\mathrm{MF}_{\rm{coh}}^{\mathbb{C}^{\ast}}(V^{\vee}, w)
		\stackrel{0_Y^{\ast}}{\to} \mathrm{MF}_{\rm{coh}}^{\mathbb{C}^{\ast}}(Y, 0)
		\stackrel{0^{-w}_{Y\ast}}{\to}
		\mathrm{MF}_{\rm{coh}}^{\mathbb{C}^{\ast}}(V^{\vee}, -w). 
	\end{align*}
	Here 
	$0^{-w}_{Y\ast}$ is the 
	push-forward along with the zero section 
	$0_Y \colon Y \to V^{\vee}$ which makes sense as $0_Y^{\ast}w=0$. 
	Also for a $\mathbb{C}^{\ast}$-equivariant coherent sheaf 
	$\fF$ on $Y$ (resp.~on $V^{\vee}$), 
	we regard it as an object in $\mathrm{MF}_{\rm{coh}}^{\mathbb{C}^{\ast}}(Y, 0)$
	(resp.~$\mathrm{MF}_{\rm{coh}}^{\mathbb{C}^{\ast}}(V^{\vee}, 0)$)
	by taking the factorization of $0$, 
	$(\fF \stackrel{0}{\to} \fF\langle 1 \rangle)$. 
	Using the above notation, by~\cite[Proposition~3.20, Lemma~3.21]{MR3270588}, we have 
	the isomorphisms as factorizations of $-w$: 
	\begin{align}\label{isom:K:dual}
		\kK_s^{\vee}\cong 
		\kK_{-s} \otimes_{\oO_Y} \det V[-\rank(V)]
		\cong 0^{-w}_{Y\ast} \oO_Y.
	\end{align}
	Therefore we have the isomorphism of dg-algebras
	\begin{align}\label{qis:K}
		\RHom_{\MF_{\coh}^{\C}(V^{\vee}, w)}(\kK_s, \kK_s) 
			=\kK_s^{\vee} \otimes_{\oO_{V^{\vee}}}\kK_s,
			\cong \oO_{\fU},
			\end{align}
		therefore 
	(\ref{funct:Psi:0}) is a dg-$\oO_{\mathfrak{U}}$-module.
	Moreover each cohomology of (\ref{funct:Psi:0}) 
	is a coherent $\oO_Y$-module, 
	so it determines an object in $D^b_{\rm{coh}}(\mathfrak{U})$, 
	and the functor (\ref{Psi:qinverse}) is well-defined. 
	
	The composition $\Psi \circ \Phi$ is given by 
	\begin{align*}
		\Psi \circ \Phi(-)=\kK_s^{\vee} \otimes_{\oO_{V^{\vee}}}\kK_s  \otimes_{\oO_{\mathfrak{U}}} (-). 
	\end{align*}
	By (\ref{qis:K}), we conclude that
	$\Psi \circ \Phi$ is the identity functor.  
	Therefore $\Phi$ is fully-faithful, 
	and it remains to show that 
	$\Phi$ is essentially surjective. 
	As $\Psi$ is a right adjoint of $\Phi$, 
	it is enough to show that 
	if an object $\pP \in \mathrm{MF}^{\mathbb{C}^{\ast}}_{\rm{coh}}(V^{\vee}, w)$
	satisfies $\Psi(\pP) \cong 0$, then $\pP \cong 0$. 
	Suppose that $\pP$ is such an object. 
	By the above description of $\kK_s^{\vee}$, 
	the push-forward of $\Psi(\pP) \cong 0$ along the closed immersion 
	 $\mathfrak{U} \hookrightarrow Y$
	is $i^{\ast}\pP \cong 0$. 
	Let $I \subset \oO_{V^{\vee}}$ be the ideal sheaf of the zero section 
	$Y \subset V^{\vee}$, 
	and set $Y^{(2)}=\Spec_{Y} \oO_{V^{\vee}}/I^2$. 
	By applying $\pP\dotimes_{\oO_{V^{\vee}}} (-)$ to the exact sequence
	\begin{align*}
		0 \to I/I^2 \to \oO_{Y^{(2)}} \to \oO_Y \to 0
	\end{align*}
	we also see that $\pP|_{Y^{(2)}}\cong 0$
	in $\mathrm{MF}^{\mathbb{C}^{\ast}}_{\rm{coh}}(Y^{(2)}, w|_{Y^{(2)}})$. 
	By repeating this argument, we see that 
	$\pP|_{\widehat{V}^{\vee}}\cong 0$ 
	in $\mathrm{MF}^{\mathbb{C}^{\ast}}_{\rm{coh}}(\widehat{V}^{\vee}, w|_{\widehat{V}^{\vee}})$, 
	where $\widehat{V}^{\vee}$ is the formal 
	completion of $V^{\vee}$ at $Y$. 
	On the other hand 
	$\Hom_{\mathrm{MF}_{\rm{coh}}^{\mathbb{C}^{\ast}}(V^{\vee}, w)}(\pP, \pP)$
	is a finitely generated 
	$\oO_{V^{\vee}}$-module 
	whose support is a conical
	closed subset $Z \subset V^{\vee}$. 
	As $\pP|_{\widehat{V}^{\vee}} \cong 0$, 
	we have $Z \cap \widehat{V}^{\vee}=\emptyset$. 
	But $Z$ is conical, so we must have $Z=\emptyset$,
	hence $\pP \cong 0$ holds.  
	
	For a general $G$, the natural transformations
	\begin{align*}
		(-) \to \Psi \circ \Phi(-), \ 
		\Psi \circ \Phi(-) \to (-)
		\end{align*}
	are isomorphisms after forgetting $G$-equivariant structures
	by the above argument. 
	Therefore they are isomorphisms since the forgetting functors
	\begin{align*}
		\Dbc([\fU/G]) \to \Dbc(\fU), \ 
		\MF_{\coh}^{\C}([V^{\vee}/G], w) \to \MF_{\coh}^{\C}(V^{\vee}, w)
		\end{align*}
	are conservative. 
\end{proof}

\begin{rmk}\label{rmk:precise}
	The functor $\Phi$ is explained in terms of curved dg-modules~\cite{MR2931331}
	as follows. 
	The graded algebra
	$\oO_{V^{\vee}} \otimes_{\oO_Y}\oO_{\mathfrak{U}}$ 
	together with 
	the degree one map $1 \otimes d_{\oO_{\mathfrak{U}}}$
	and 
	the degree two element $w \otimes 1 \in 
	\oO_{V^{\vee}} \otimes_{\oO_Y}\oO_{\mathfrak{U}}$
	define
	the commutative curved dg-algebra (see Remark~\ref{rmk:curved})
	\begin{align*}
		(\oO_{V^{\vee}} \otimes_{\oO_Y}\oO_{\mathfrak{U}}, 1 \otimes 
		d_{\oO_{\mathfrak{U}}}, w \otimes 1)
	\end{align*}
	with 
	differential $1 \otimes d_{\oO_{\mathfrak{U}}}$ and 
	curvature $w \otimes 1$. 
	Then 
	the object $\kK_s$
	is a curved dg-module over it, 
	and the functor $\Phi$ is regarded as 
	a Fourier-Mukai functor from curved dg-modules over a
	curved dg-algebra
	$(\oO_{\mathfrak{U}}, d_{\oO_{\mathfrak{U}}}, 0)$
	to those over  
	$(\oO_{V^{\vee}}, 0, w)$ whose kernel object 
	is $\kK_s$.
\end{rmk}

\begin{rmk}\label{rmk:regular}
	If $s$ is a regular section, then 
	the closed immersion $\uU \hookrightarrow \mathfrak{U}$ is an equivalence, 
	and the object (\ref{Phi(U):KF}) is isomorphic to 
	the factorization $p^{\ast}\oO_{\uU} \stackrel{0}{\to} p^{\ast}\oO_{\uU}\langle 1 \rangle$. 
	So it coincides with the equivalence constructed 
	in~\cite{MR2982435, MR3631231}. 
\end{rmk}

\begin{rmk}\label{rmk:szero}
	When $s=0$ then $w=0$, 
	and we have objects
	$\oO_Y \in D^b_{\rm{coh}}(\mathfrak{U})$ and 
	$\oO_{V^{\vee}} \in \mathrm{MF}^{\mathbb{C}^{\ast}}_{\rm{coh}}(V^{\vee}, 0)$, 
	where $\oO_{V^{\vee}}=(\oO_{V^{\vee}} \stackrel{0}{\to} \oO_{V^{\vee}}\langle 1 \rangle)$. 
	Since $\Phi(\oO_Y)=\oO_{V^{\vee}}$ for
	the equivalence in Theorem~\ref{thm:knoer},
	we have the quasi-isomorphism
	\begin{align*}
		\Phi^{\flat} \colon \RHom_{\mathfrak{U}}(\oO_Y, \oO_Y) \stackrel{\sim}{\to}
		\RHom_{\mathrm{MF}^{\mathbb{C}^{\ast}}_{\rm{coh}}(V^{\vee}, 0)}(\oO_{V^{\vee}}, \oO_{V^{\vee}})
		=S(V[-2]).
	\end{align*}
	The above quasi-isomorphism 
	coincides with the one given in (\ref{isom:relHH}) 
	used in defining the singular support. 
\end{rmk}

\begin{rmk}\label{rmk:tautological}
	By applying Theorem~\ref{thm:knoer}
	for $V=Y$, we obtain the equivalence
	\begin{align}\notag
		\Dbc([Y/G]) \stackrel{\sim}{\to} \MF_{\coh}^{\C}([Y/G], 0). 
		\end{align}
	The above equivalence is 
	a tautological equivalence sending 
	$(\fF^{\bullet}, d_{\fF^{\bullet}})$ to 
	$(\fF^{\bullet}, d_{\fF^{\bullet}})$, where the 
	cohomological gradings of the LHS are
	$\C$-weights of the RHS. 
	\end{rmk}

\begin{rmk}\label{rmk:periodic}
	The result of Theorem~\ref{thm:knoer} is not true for 
	$\mathbb{Z}/2$-periodic version unless 
	$\Crit(w)=\uU$, or equivalently $\fU=\uU$ and it is smooth. 
	Namely the similarly defined functor 
	\begin{align*}
		\Psi 
		 \colon \MF_{\coh}^{\mathbb{Z}/2\mathbb{Z}}(V^{\vee}, w)
	\to D^{\mathbb{Z}/2\mathbb{Z}}_{\coh}(\fU)
			\end{align*}
		is not an equivalence. 
		Here the RHS is the $\mathbb{Z}/2$-periodic 
		derived category of dg-modules over $\fU$ with coherent 
		cohomologies. 
		Indeed if $\uU \subsetneq \Crit(w)$, there is an object 
	in $\MF_{\coh}^{\mathbb{Z}/2\mathbb{Z}}(V^{\vee}, w)$ supported 
	away from $\uU$, so the above functor has a non-trivial kernel. 
	This is the reason we construct the DT category for $\C$-equivariant case. 	
	\end{rmk}

\subsubsection{Singular supports under Koszul duality}
Under the Koszul duality equivalence in Theorem~\ref{thm:knoer}, 
we can compare the singular supports and the usual supports 
in the derived categories of factorizations.  
The following proposition was claimed in~\cite[Section~H]{MR3300415}, 
and we give its details. 
\begin{prop}\label{prop:koszul:Z}
	Let $Z \subset \Crit(w)$ be a $(G\times \C)$-invariant 
	closed subset and set $\zZ=[Z/G]$. 
Then the equivalence $\Phi$ in Theorem~\ref{thm:knoer}
	restricts to the equivalence
		\begin{align}\label{koszul:C}
			\Phi \colon 
			\cC_{\zZ} \stackrel{\sim}{\to}
		\mathrm{MF}_{\coh}^{\mathbb{C}^{\ast}}([V^{\vee}/G], w)_{\zZ}.
	\end{align}
	In particular by taking Verdier quotients and 
	using (\ref{quot:MF}), we also have an equivalence
	\begin{align}\label{equiv:MF}
		\Phi \colon 
		\Dbc([\fU/G])/\cC_{\zZ} \stackrel{\sim}{\to}
		\mathrm{MF}_{\coh}^{\mathbb{C}^{\ast}}([(V^{\vee} \setminus Z)/G], w). 
	\end{align}
\end{prop}
\begin{proof}
	We may assume that $G=\{1\}$. 
	The equivalence $\Phi$ in (\ref{funct:koszul})
	induces the isomorphism of 
	natural transforms
	\begin{align*}
		\Phi^{\mathrm{Nat}} \colon 
		\mathrm{Nat}_{D^b_{\rm{coh}}(\mathfrak{U})}(\id, \id[m])
		\stackrel{\cong}{\to}
		\mathrm{Nat}_{\mathrm{MF}^{\mathbb{C}^{\ast}}_{\rm{coh}}(V^{\vee}, w)}(\id, \id[m])
	\end{align*}
	given by $(-) \mapsto \Phi \circ (-) \circ \Phi^{-1}$. 
	By Lemma~\ref{lem:ssuport}, 
	it is enough to show that the following diagram commutes:
	\begin{align}\label{commute:nat}
		\xymatrix{
			V \ar[d] \ar@{=}[r] & V\ar[d] \\ 
			\mathrm{Nat}_{D^b_{\rm{coh}}(\mathfrak{U})}(\id, \id[2])
			\ar[r]^-{\Phi^{\mathrm{Nat}}} 
			& \mathrm{Nat}_{\mathrm{MF}^{\mathbb{C}^{\ast}}_{\rm{coh}}(V^{\vee}, w)}
			(\id, \id[2]).
		}
	\end{align}
	Here the left vertical arrow is 
	given in (\ref{ssuport:map2}), and the 
	right vertical arrow is given by the 
	multiplication via 
	$\oO_{V^{\vee}}$-module structure on 
	each factorization. 
	
	Let $\Phi^{\mathrm{Fun}}$ be the functor  
	\begin{align*}
		\Phi^{\mathrm{Fun}} \colon  
		D^b_{\rm{coh}}(\mathfrak{U}\times_Y \mathfrak{U})
		\to
		\mathrm{MF}_{\rm{coh}}^{\mathbb{C}^{\ast}}(V^{\vee} \times_Y V^{\vee}, 
		-w \boxplus w)
	\end{align*}
	given by the Fourier-Mukai kernel 
	$p_{13}^{\ast}\kK_s^{\vee} \otimes p_{24}^{\ast}\kK_s$, 
	where $p_{ij}$ are the projections from 
	$\mathfrak{U} \times_Y \mathfrak{U} \times_Y V^{\vee} \times_Y V^{\vee}$
	onto the corresponding factors. 
	Here $\mathbb{C}^{\ast}$ acts on fibers of 
	$V^{\vee} \times_Y V^{\vee} \to Y$ by weight two. 
	Note that $\mathfrak{U} \times_Y \mathfrak{U}$ is 
	equivalent to the derived zero locus of $(-s, s) \colon Y \to V \oplus V$. 
	Moreover we have the isomorphism by the first isomorphism in (\ref{isom:K:dual})
	\begin{align*}
		p_{13}^{\ast}\kK_s^{\vee} \otimes p_{24}^{\ast}\kK_s
		\cong \kK_{(-s, s)} \otimes_{\oO_Y}\det V[-\rank(V)]. 
	\end{align*}
	Therefore an argument similar to Proposition~\ref{thm:knoer} shows that 
	the functor $\Phi^{\mathrm{Fun}}$ is an equivalence. 
	From a categorical point of view, the equivalence 
	$\Phi^{\mathrm{Fun}}$ 
	identifies FM transforms 
	on $D^b_{\rm{coh}}(\mathfrak{U})$ over $Y$ with those on 
	$\mathrm{MF}_{\rm{coh}}^{\mathbb{C}^{\ast}}(V^{\vee}, w)$
	via $(-) \mapsto \Phi \circ (-) \circ \Phi^{-1}$, 
	which can be verified along with the similar 
	argument of~\cite[Proposition~8.2]{Cal}.
	Since $\Phi \circ \id \circ \Phi^{-1}$ is the identity functor, 
	we should have 
	\begin{align}\label{isom:Fun:Delta}
		\Phi^{\rm{Fun}}(\Delta_{\ast}\oO_{\mathfrak{U}})
		\cong \Delta_{\ast}\oO_{V^{\vee}}\cneq (\Delta_{\ast}\oO_{V^{\vee}} 
		\stackrel{0}{\to} \Delta_{\ast}\oO_{V^{\vee}}\langle 1 \rangle)
	\end{align}
	in $\mathrm{MF}_{\rm{coh}}^{\mathbb{C}^{\ast}}(V^{\vee} \times_Y V^{\vee}, 
	-w \boxplus w)$.
	We can check the above isomorphism directly as follows:  
	\begin{align*}
		(\Phi^{\rm{Fun}})^{-1}(\Delta_{\ast}\oO_{V^{\vee}})
		&= \RHom_{\mathrm{MF}_{\rm{coh}}^{\mathbb{C}^{\ast}}(V^{\vee} \times_Y V^{\vee}, 
			-w \boxplus w)}(p_{13}^{\ast}\kK_s^{\vee} \otimes p_{24}^{\ast}\kK_s, \Delta_{\ast}\oO_{V^{\vee}}) \\
		&\cong \RHom_{\mathrm{MF}_{\rm{coh}}^{\mathbb{C}^{\ast}}(V^{\vee}, 0)}(\kK_s^{\vee}\otimes_{\oO_{\mathfrak{U}}} \kK_s, \oO_{V^{\vee}}) \\
		& \cong
		\RHom_{\mathrm{MF}_{\rm{coh}}^{\mathbb{C}^{\ast}}(V^{\vee}, w)}(\kK_s, \kK_s) \\&\cong \Delta_{\ast}\oO_{\mathfrak{U}}.
	\end{align*}
	Here the last isomorphism follows using (\ref{isom:K:dual}). 
	
	By the equivalence
	of $\Phi^{\rm{Fun}}$ together with the isomorphism (\ref{isom:Fun:Delta}), 
	the functor 
	$\Phi^{\rm{Fun}}$ induces the 
	isomorphism on relative Hochschild cohomologies
	\begin{align}\label{isom:HH:MF}
		\Phi^{\rm{HH}} \colon 
		\mathrm{HH}^{\ast}(\mathfrak{U}/Y) \stackrel{\cong}{\to}
		\mathrm{HH}^{\ast}((V^{\vee}, w)/Y)).
	\end{align}
	Here the right hand side is defined by
	\begin{align*} 
		\mathrm{HH}^{\ast}((V^{\vee}, w)/Y))\cneq 
		\Hom^{\ast}_{\mathrm{MF}_{\rm{coh}}^{\mathbb{C}^{\ast}}(V^{\vee} \times_Y V^{\vee}, 
			-w \boxplus w)}(\Delta_{\ast}\oO_{V^{\vee}}, \Delta_{\ast}\oO_{V^{\vee}}). 
	\end{align*}
	The isomorphism (\ref{isom:HH:MF})
	is compatible with 
	$\Phi^{\rm{Nat}}$ under the natural maps from relative
	Hochschild cohomologies to natural transformations
	$\id \to \id[\ast]$. 
	
	Using the automorphism of $V^{\oplus 2}$
	given in (\ref{aut:V}), 
	the equivalence $\Phi^{\rm{Fun}}$ is identified with 
	the functor
	\begin{align*}
		D^b_{\rm{coh}}(\mathfrak{U}^{\flat} \times_Y \mathfrak{U})
		\stackrel{\sim}{\to} \mathrm{MF}_{\rm{coh}}^{\mathbb{C}^{\ast}}
		(V^{\vee} \times_Y V^{\vee}, 
		0 \boxplus w)
	\end{align*} 
	given by the FM transform with kernel 
	$p_{13}^{\ast}\kK_0^{\vee} \otimes p_{24}^{\ast}\kK_s$
	on $\mathfrak{U}^{\flat} \times_Y \mathfrak{U} \times_Y V^{\vee} \times_Y V^{\vee}$. 
	Moreover it takes $\oO_Y \boxtimes \oO_{\mathfrak{U}}$ to 
	$\oO_{V^{\vee}} \boxtimes \oO_Y$, as they correspond 
	to $\Delta_{\ast}\oO_{\mathfrak{U}}$, $\Delta_{\ast}\oO_{V^{\vee}}$ under 
	the automorphism (\ref{aut:V}). 
	Also we have 
	\begin{align*}
		&\RHom^{\ast}_{\mathrm{MF}_{\rm{coh}}^{\mathbb{C}^{\ast}}(V^{\vee} \times_Y V^{\vee}, 
			-w \boxplus w)}(\Delta_{\ast}\oO_{V^{\vee}}, \Delta_{\ast}\oO_{V^{\vee}}) \\
		&\cong \RHom_{\mathrm{MF}_{\rm{coh}}^{\mathbb{C}^{\ast}}(V^{\vee} \times_Y V^{\vee}, 
			0 \boxplus w)}(\oO_{V^{\vee}} \boxtimes \oO_Y, \oO_{V^{\vee}} \boxtimes \oO_Y) \\
		& \cong \RHom_{\mathrm{MF}^{\mathbb{C}^{\ast}}_{\rm{coh}}(V^{\vee}, 0)}(\oO_{V^{\vee}}, 
		\oO_{V^{\vee}}) \dotimes_{\oO_Y} \RHom_{\mathrm{MF}^{\mathbb{C}^{\ast}}_{\rm{coh}}(V^{\vee}, w)}(\oO_Y, \oO_Y)
	\end{align*}
	and the map $\Phi^{\rm{HH}}$ is obtained by taking 
	the cohomologies of the following map:
	\begin{align*}
		\Phi^{\flat} \dotimes \Phi \colon 
		&\RHom_{\mathfrak{U}^{\flat}}(\oO_Y, \oO_Y) \dotimes_{\oO_Y}
		\RHom_{\mathfrak{U}}(\oO_{\mathfrak{U}}, \oO_{\mathfrak{U}}) \\
		& \to
		\RHom_{\mathrm{MF}^{\mathbb{C}^{\ast}}_{\rm{coh}}(V^{\vee}, 0)}(\oO_{V^{\vee}}, 
		\oO_{V^{\vee}}) \dotimes_{\oO_Y} \RHom_{\mathrm{MF}^{\mathbb{C}^{\ast}}_{\rm{coh}}(V^{\vee}, w)}(\oO_Y, \oO_Y).
	\end{align*}
	Here $\Phi^{\flat}$ is given in Remark~\ref{rmk:szero}. 
	Note that 
	the first factor of the right hand side is 
	\begin{align*}
		\RHom_{\mathrm{MF}^{\mathbb{C}^{\ast}}_{\rm{coh}}(V^{\vee}, 0)}(\oO_{V^{\vee}}, 
		\oO_{V^{\vee}})=S(V[-2])
	\end{align*}
	and 
	it maps to the natural transforms $\id \to \id[\ast]$
	on $\mathrm{MF}^{\mathbb{C}^{\ast}}_{\rm{coh}}(V^{\vee}, w)$ by 
	the 
	multiplication via the $\oO_{V^{\vee}}$-module structure 
	on factorizations of $w$. 
	Therefore 
	together with Remark~\ref{rmk:szero}, 
	the above description of $\Phi^{\rm{HH}}$ immediately implies
	the commutativity of the diagram (\ref{commute:nat}). 
\end{proof}

The following lemma gives an ind-version of
Theorem~\ref{thm:knoer} and Proposition~\ref{prop:koszul:Z}:

\begin{lem}\label{lem:ind}
	The equivalences (\ref{funct:koszul}), (\ref{koszul:C}) extend to equivalences
	\begin{align*}
		\Phi^{\rm{ind}} \colon \Ind \Dbc([\fU/G]) \stackrel{\sim}{\to}
		\mathrm{MF}_{\qcoh}^{\C}([V^{\vee}/G], w), \
		\Phi^{\rm{ind}} \colon  \Ind \cC_{\zZ} \stackrel{\sim}{\to} 
		\MF_{\mathrm{qcoh}}^{\mathbb{C}^{\ast}}([V^{\vee}/G], w)_{\zZ}. 
	\end{align*}
\end{lem}
\begin{proof}
	It is enough to show the second equivalence. 
	By taking ind-completion of the equivalence (\ref{koszul:C}), 
	we have the equivalence
	\begin{align*}
		\Phi^{\ind} \colon \Ind(\cC_{\zZ}) \stackrel{\sim}{\to}
		\Ind (\MF^{\CC}_{\coh}([V^{\vee}/G], w)_{\zZ}).
	\end{align*}
The left hand side is $\Ind \cC_{\zZ}$ by Lemma~\ref{lem:gcomplete}. 
As for the right hand side, it is proved in~\cite[Proposition~3.15]{MR3270588}
that $\MF_{\qcoh}^{\CC}([V^{\vee}/G], w)$ is 
	generated by the 
	subcategory of compact objects 
	$\MF^{\CC}_{\coh}([V^{\vee}/G], w)$. 
	The same argument applies to the fixed support case, 
	i.e. $\MF_{\qcoh}^{\CC}([V^{\vee}/G], w)_{\zZ}$
	is generated by the subcategory of compact objects 
	$\MF^{\CC}_{\coh}([V^{\vee}/G], w)_{\zZ}$. 
	Indeed the essential point in the proof of \textit{loc.~cit.~}
	is the fact that any $G$-equivariant quasi-coherent sheaf 
	is a union of $G$-equivariant 
	coherent sheaves. 
	This obviously holds for fixed supported case: 
	any $G$-equivariant quasi-coherent sheaf 
	supported on $Z$ is a union of $G$-equivariant 
	coherent sheaves supported on $Z$.
	Therefore  
	the naturally defined functor
	\begin{align*}
		\Ind (\MF^{\CC}_{\coh}([V^{\vee}/G], w)_{\zZ})
		\to \MF_{\qcoh}^{\CC}([V^{\vee}/G], w)_{\zZ}
	\end{align*}
	is an equivalence by~\cite[Corollary~6.3.5]{Kabook}, so 
	we have the desired equivalence. 
\end{proof}
\begin{rmk}\label{rmk:qcoh+}
	By~\cite[Proposition~1.2.4]{MR3136100}, we have the equivalence
	\begin{align*}
		\Ind \Dbc([\fU/G])^+ \stackrel{\sim}{\to}
		D_{\qcoh}([\fU/G])^+
		\end{align*}
	where the subscript $+$ indicates 
	bounded below cohomologies.  
	Under the above equivalence, the equivalence $\Phi^{\ind}$ restricts to the functor
	\begin{align*}
		\Phi \colon 	D_{\qcoh}([\fU/G])^+
		\stackrel{\sim}{\to} \MF_{\qcoh}^{\C}([V^{\vee}/G], w)^+
		\end{align*}
	where the RHS is the subcategory of $\MF_{\qcoh}^{\C}([V^{\vee}/G], w)^+$
	with bounded below $\C$-weights. The above equivalence 
	is given by $\kK_s \otimes_{\oO_{\fU}}(-)$ as in 
	Theorem~\ref{thm:knoer}. 
	\end{rmk}

\subsection{Some functorial properties of Koszul duality equivalence}\label{subsec:funct}
Here we show some functorial properties of 
Koszul duality equivalence in Theorem~\ref{thm:knoer}. 
Below we will use some terminology of 
functors in Subsection~\ref{subsub:QCA}.

\subsubsection{Functoriality under push-forward}
We define the morphism of equivariant tuples as follows: 
\begin{defi}\label{def:eq:morphism}
Let $G_i$ for $i=1, 2$ be affine algebraic groups
with an algebraic group homomorphism $\phi \colon G_1 \to G_2$, 
and $(Y_i, V_i, s_i)$ be
$G_i$-equivariant tuples. 
A \textit{morphism of equivariant tuples}
is a commutative diagram 
\begin{align}\label{dia:Y}
	\xymatrix{
		V_1 \ar[r]^-{g} \ar[d] & V_2 \ar[d] \\
		Y_1 \ar[r]^-{f} \ar@/^10pt/[u]^-{s_1}
		& Y_2, \ar@/_10pt/[u]_-{s_2}&
	} 
\end{align}
satisfying the followings: 
\begin{enumerate}
	\item The bottom morphism $f \colon Y_1 \to Y_2$ is equivariant with respect to 
	$\phi \colon G_1 \to G_2$. 
	\item The top morphism $g$ is a composition 
	$V_1 \stackrel{g'}{\to} f^{\ast}V_2 \stackrel{f}{\to}V_2$, 
	where $g'$ is a morphism of $G_1$-equivariant vector bundles on 
	$Y_1$. 
	Here the $G_1$-equivariant structure of $f^{\ast}V_2$ is induced 
	by $\phi \colon G_1 \to G_2$. 
\end{enumerate}
If $G=G_1=G_2$ and $\phi=\id$, we call the 
above diagram as a morphism of $G$-equivariant tuples. 
\end{defi}
The diagram (\ref{dia:Y}) induces the diagram 
for smooth stacks $\yY_i=[Y_i/G_i], \ \vV_i=[V_i/G_i]$, 
\begin{align}\label{dia:Ystack}
	\xymatrix{
	\vV_1 \ar[r]^-{g} \ar[d] & \vV_2 \ar[d] \\
	\yY_1 \ar[r]^-{f} \ar@/^10pt/[u]^-{s_1}
	& \yY_2,  \ar@/_10pt/[u]_-{s_2}&
}
\end{align}
which also induces the 
morphism of derived stacks 
\begin{align*}
	\mathbf{f} \colon 
	[\fU_1/G_1] \to [\fU_2/G_2], \ 
	\fU_i \cneq \Spec \rR(V_i \to Y_i, s_i). 
\end{align*}
Then we have the push-forward functor
(see~\cite[Section~3.6]{MR3037900})
\begin{align*}
	\mathbf{f}_{\ast}^{\rm{ind}} \colon 
	\Ind \Dbc([\fU_1/G_1]) \to \Ind \Dbc([\fU_2/G_2]). 
	\end{align*}
Let $w_i \colon V_i^{\vee} \to \mathbb{C}$ be given as in (\ref{def:w})
defined from $(Y_i, V_i, s_i)$. 
The diagram (\ref{dia:Y}) induces the 
following diagram
\begin{align}\label{diagram:dual}
	\xymatrix{
		&  \mathbb{C} & \\
		V_1^{\vee} \ar[ur]^-{w_1} \ar[d]_-{p_1} & f^{\ast}V^{\vee}_2 \ar[l]^-{g} 
		\ar[r]_-{f} \ar@{}[dr]|\square
		\ar[d]_-{\overline{p}}
		\ar[u]^{\overline{w}}
		& V_2^{\vee} \ar[d]^-{p_2} \ar[ul]_-{w_2} \\
		Y_1 & Y_1 \ar@{=}[l] \ar[r]_-{f} & Y_2. 
	}
\end{align}
Here $\overline{w}$ is determined by 
\begin{align*}
	\overline{w}=f^{\ast}s_2 \in \Gamma(Y_1, f^{\ast}V_2) \subset
	\Gamma(Y_1, S(f^{\ast}V_2)). 
\end{align*}
The commutativity of (\ref{dia:Y}) implies that 
the diagram (\ref{diagram:dual}) is also commutative. 
The following lemma will be used later. 
\begin{lem}\label{lem:critical}
	In the notation of the diagram (\ref{diagram:dual}),
	we have 
	\begin{align}\label{identity:crit}
		g^{-1}(\mathrm{Crit}(w_1)) \cap 
		f^{-1}(\mathrm{Crit}(w_2))= t_0(\Omega_{\mathfrak{U}_2}[-1]\times_{\fU_2}\fU_1). 
	\end{align}
\end{lem}
\begin{proof}
	The left hand side of (\ref{identity:crit}) is 
	defined by the ideal of $S_{\oO_{Y_1}}(f^{\ast}V_2)$
	generated by the image of the following map
	\begin{align}\label{sds0}
		(s_1\oplus f^{\ast}s_2) \oplus (g \circ ds_1\oplus f^{\ast}ds_2) \colon 
		(V_1^{\vee} \oplus f^{\ast}V_2^{\vee}) \oplus (T_{Y_1} \oplus f^{\ast}T_{Y_2})
		\to \oO_{Y_1} \oplus f^{\ast}V_2. 
	\end{align}
	Since there exist commutative diagrams
	\begin{align*}
		\xymatrix{
		f^{\ast}V_2^{\vee} \ar[r] \ar@/^15pt/[rr]^-{f^{\ast}s_2}& V_1^{\vee} \ar[r]_-{s_1} & \oO_{Y_1}}, \ 
		\xymatrix{
		T_{Y_1} \ar[r] \ar@/^15pt/[rr]^-{g \circ ds_1}& f^{\ast}T_{Y_2} \ar[r]_-{f^{\ast}ds_2} & f^{\ast}V_2}
		\end{align*}
		the image of the
	map (\ref{sds0}) coincides with the image of the map
	\begin{align}\label{sds}
		s_1 \oplus f^{\ast}ds_2 \colon 
		V_1^{\vee} \oplus f^{\ast}T_{Y_2} \to \oO_{Y_1} \oplus f^{\ast}V_2. 
	\end{align}
	On the other hand, we have
	\begin{align*}
		\Omega_{\mathfrak{U}_2}[-1]\times_{\fU_2}\fU_1
		=\Spec S_{\oO_{Y_1}}(V_1^{\vee}[1] \oplus f^{\ast}T_{Y_2}[1] \oplus f^{\ast}V_2)
	\end{align*}
	and the differential on the RHS is the Koszul differential determined by 
	the map (\ref{sds}). 
	Therefore (\ref{identity:crit}) holds. 
\end{proof}
In the notation of the diagram (\ref{diagram:dual}), we 
define the following functor
\begin{align}\label{funct:fg}
	f_{\ast} \circ g^{\ast} \colon 
	\mathrm{MF}_{\qcoh}^{\C}([V_1^{\vee}/G_1], w_1)
	\stackrel{g^{\ast}}{\to}
	\mathrm{MF}_{\qcoh}^{\C}([f^{\ast}V_2^{\vee}/G_1], \overline{w})
	\stackrel{f_{\ast}}{\to} 	\mathrm{MF}_{\qcoh}^{\C}([V_2^{\vee}/G_2], w_2). 
\end{align}
The above functor is compatible with 
compositions of morphisms of derived stacks as follows. 
Let $(Y_3, V_3, s_3)$ be another $G_3$-equivariant tuple, 
and suppose that we have 
morphisms of equivariant tuples
with respect to algebraic group homomorphisms 
$G_1 \to G_2 \to G_3$
\begin{align}\notag
	\xymatrix{
		V_1 \ar[d] \ar[r]_-{g} \ar@/^10pt/[rr]^-{g''} & V_2 \ar[r]_-{g'}  \ar[d] & V_3 \ar[d] \\
		Y_1 \ar@/^10pt/[u]^-{s_1}\ar[r]^-{f} \ar@/_10pt/[rr]_-{f''} & Y_2 \ar[r]^-{f'} 
		\ar@/^10pt/[u]^-{s_2} 
		& Y_3 \ar@/_10pt/[u]_-{s_3}
	}
\end{align}
which induces the diagram of derived stacks
\begin{align}\notag
	\mathbf{f}'' \colon 
	[\fU_1/G_1] \stackrel{\mathbf{f}}{\to}
	[\fU_2/G_2] \stackrel{\mathbf{f}'}{\to} [\fU_3/G_3]. 
\end{align}
\begin{lem}\label{lem:funct:3}
	There is a natural isomorphism of functors
	\begin{align*}
		(f'_{\ast}{g'}^{\ast})(f_{\ast}g^{\ast}) \cong 
		f''_{\ast}{g''}^{\ast} \colon 
		\mathrm{MF}_{\qcoh}^{\C}([V_1^{\vee}/G_1], w_1)
		\to
		\mathrm{MF}_{\qcoh}^{\C}([V_3^{\vee}/G_3], w_3).
	\end{align*}
\end{lem}
\begin{proof}
	We have the following diagram
	\begin{align}\notag
		\xymatrix{
			& & {f''}^{\ast}\vV_3^{\vee}
			\ar@{}[dd]|\square
			\ar[ld]_-{g'} \ar[rd]^-{f} \ar@/_30pt/[lldd]_-{g''} 
			\ar@/^30pt/[rrdd]^-{f''} & & \\
			& f^{\ast}\vV_2^{\vee} \ar[ld]_-{g} 
			\ar[rd]^-{f} & & {f'}^{\ast}\vV_3^{\vee} \ar[ld]_-{g'}
			\ar[rd]^-{f'} & \\
			\vV_1^{\vee} & & \vV_2^{\vee} & & \vV_3^{\vee}.
		}
	\end{align}	
	Here the middle square is a derived Cartesian. 
	Then the lemma follows from the derived base change. 	
\end{proof}

We have the following lemma which 
describes $\mathbf{f}_{\ast}^{\ind}$ 
in the Koszul dual size.

\begin{lem}\label{lem:commute1}
	The following diagram commutes:
	\begin{align}\label{dia:com:ind}
		\xymatrix{
			\Ind \Dbc([\fU_1/G_1]) \ar[r]^-{\Phi_1^{\rm{ind}}} \ar[d]_-{\mathbf{f}^{\rm{ind}}_{\ast}} & 
			\mathrm{MF}_{\qcoh}^{\C}([V_1^{\vee}/G_1], w_1) \ar[d]^-{f_{\ast}\circ g^{\ast}}  \\
			\Ind \Dbc([\fU_2/G_2]) \ar[r]^-{\Phi_2^{\rm{ind}}}  & 
			\mathrm{MF}_{\qcoh}^{\C}([V_2^{\vee}/G_2], w_2). 	
		}
	\end{align}
	Here the horizontal arrows are equivalences in Lemma~\ref{lem:ind}. 
	In particular if the morphism $f \colon \yY_1 \to \yY_2$ 
	in the diagram (\ref{dia:Ystack}) is proper, the
	above diagram restricts to the commutative diagram 
	\begin{align}\notag
		\xymatrix{
			\Dbc([\fU_1/G_1]) \ar[r]^-{\Phi_1} 
			\ar[d]_-{\mathbf{f}_{\ast}} & 
			\mathrm{MF}_{\coh}^{\C}([V_1^{\vee}/G_1], w_1) \ar[d]^-{f_{\ast}\circ g^{\ast}}  \\
			\Dbc([\fU_2/G_2]) \ar[r]^-{\Phi_2}  & 
			\mathrm{MF}_{\coh}^{\C}([V_2^{\vee}/G_2], w_2). 	
		}
	\end{align}
	\end{lem}
\begin{proof}
		It is enough to show the commutativity of (\ref{dia:com:ind})
	for $\fF^{\bullet} \in \Dbc([\fU_1/G_1])$. 
	Then $\mathbf{f}_{\ast}^{\ind}\fF^{\bullet}$ 
	has bounded below cohomologies, so by Remark~\ref{rmk:qcoh+}
	we can compute 
	$\Phi_2^{\ind}\mathbf{f}_{\ast}^{\ind}(\fF^{\bullet})$
	as $\kK_{s_2} \otimes_{\oO_{\fU_2}}\mathbf{f}_{\ast}(\fF^{\bullet})$. 
		The morphism $\mathbf{f}$ factors as 
	\begin{align*}
		\mathbf{f} \colon [\fU_1/G_1] \to [\fU_2/G_1]
		\to [\fU_2/G_2].  
	\end{align*}
	Therefore by Lemma~\ref{lem:funct:3}, we may assume either 
	$G_1=G_2$ or $(Y_1, V_1, s_1)=(Y_2, V_2, s_2)$. 
	
	In the case of $(Y_1, V_1, s_1)=(Y_2, V_2, s_2)$,  
	the vertical arrows in (\ref{dia:com:ind}) are push-forwards
	along 
	$[\fU_1/G_1] \to [\fU_1/G_2]$ and 
	$[V_1^{\vee}/G_1] \to [V_1^{\vee}/G_2]$. 
	Therefore we can compute $\mathbf{f}_{\ast}\fF^{\bullet}$ by replacing 
	$\fF^{\bullet}$ by a bounded below complex such that each term 
	$\fF^i$ is acyclic with respect to 
	the push-forward along $[Y_1/G_1] \to [Y_1/G_2]$, 
	and regard them as a complex of $G_2$-equivariant sheaves. 
	Then $\fF^i \otimes_{\oO_{Y_1}}\oO_{V_1^{\vee}}$ is acyclic 
	with respect to the push-forward along 
	$[V_1^{\vee}/G_1] \to [V_1^{\vee}/G_2]$, so the 
	diagram 
(\ref{dia:com:ind}) tautologically commutes. 

In the case of $G_1=G_2$, 
the morphism $f \colon [Y_1/G_1] \to [Y_2/G_1  ]$ is affine. 
So if $\fF^{\bullet}$ is represented by
a complex of $\oO_{Y_1}$-modules $(\fF^{\bullet}, d_{\fF^{\bullet}})$
together with $\eta^i \colon \fF^i \to V_1 \otimes \fF^{i-1}$, 
then $\mathbf{f}_{\ast}\fF^{\bullet}$ is represented by 
\begin{align*}
	(f_{\ast}\fF^{\bullet}, f_{\ast}d_{\fF^{\bullet}}), \quad
	f_{\ast}(g \circ \eta^i) \colon 
	f_{\ast}\fF^i \to V_2 \otimes f_{\ast}\fF^{i-1}. 
	\end{align*}
Here $g \colon V_1 \to f^{\ast}V_2$ is a morphism 
in the diagram (\ref{dia:Y}). 
Therefore 
$\kK_{s_2} \otimes_{\oO_{\fU_2}}\mathbf{f}_{\ast}\fF^{\bullet}$ is 
\begin{align}\label{fact:push1}
	\left(
	\bigoplus_{i\in \mathbb{Z}}f_{\ast}\fF^i \langle i \rangle
	\otimes_{\oO_{Y_2}}\oO_{V_2^{\vee}} \to 
	\bigoplus_{i\in \mathbb{Z}}f_{\ast}\fF^i \langle i+1 \rangle
	\otimes_{\oO_{Y_2}}\oO_{V_2^{\vee}}\right)
\end{align}
where the morphism is a $\oO_{V_2^{\vee}}$-module homomorphism 
determined by 
\begin{align*}
	f_{\ast}d_{\fF}^i+f_{\ast}(g \circ \eta^i) \colon 
	f_{\ast}\fF^i \to f_{\ast}\fF^{i+1} \oplus (V_2 \otimes f_{\ast}\fF^{i-1}). 
	\end{align*}
On the other hand, 
$g^{\ast}(\Phi_1(\fF^{\bullet}))$
is given by 
\begin{align}\label{fact:push2}
	\left(
	\bigoplus_{i\in \mathbb{Z}}\fF^i \langle i \rangle
	\otimes_{\oO_{Y_1}}\oO_{f^{\ast}V_2^{\vee}} \to 
	\bigoplus_{i\in \mathbb{Z}}\fF^i \langle i+1 \rangle
	\otimes_{\oO_{Y_1}}\oO_{f^{\ast}V_2^{\vee}}\right)
\end{align}
where the morphism is a $\oO_{f^{\ast}V_2^{\vee}}$-module homomorphism 
determined by 
\begin{align*}
	d_{\fF}^i+g \circ \eta^i \colon 
	\fF^i \to \fF^{i+1} \oplus (f^{\ast}V_2 \otimes_{\oO_{Y_1}} \fF^{i-1}). 
\end{align*}
Therefore by pushing forward (\ref{fact:push2}) along 
$f \colon f^{\ast}V_2^{\vee} \to V_2^{\vee}$, 
we obtain the object (\ref{fact:push1}). 
\end{proof}

Let $(Y, V, s)$ be a $G$-equivariant tuple, 
and consider the diagram (\ref{diagram:UA}). 
For the zero section $0_Y \colon Y \to V^{\vee}$, 
we have $0_Y^{\ast}w=0$. 
Therefore we have the functor
\begin{align*}
	0_Y^{\ast} \colon \mathrm{MF}^{\C}_{\coh}([V^{\vee}/G], w) \to 
	\mathrm{MF}_{\coh}^{\C}([Y/G], 0). 
\end{align*}
The following lemma is obtained from Lemma~\ref{lem:commute1}, which 
is also easy to check directly. 
\begin{lem}\label{lem:useful}
	The following diagram commutes
	\begin{align}\label{dia:useful}
		\xymatrix{
			\Dbc([\fU/G]) \ar[r]^-{\Phi}_-{\sim}  \ar[d]_-{j_{\ast}} & 
			\mathrm{MF}_{\coh}^{\C}([V^{\vee}/G], w) \ar[d]^-{0_Y^{\ast}} \\
			\Dbc([Y/G]) \ar[r]_-{\sim} & \mathrm{MF}_{\coh}^{\C}([Y/G], 0). 
		}
	\end{align}
	Here the 
	bottom arrow is a tautological 
	equivalence (see~Remark~\ref{rmk:tautological}). 
\end{lem}
\begin{proof}
	The lemma follows by applying Lemma~\ref{lem:commute1} to 
	the following morphism of $G$-equivariant tuples
	\begin{align*}
			\xymatrix{
			V \ar[r]^-{p} \ar[d]^-{p} & Y \ar[d]_-{\id} \\
			Y \ar[r]^-{\id} \ar@/^10pt/[u]^-{s}
			& Y. \ar@/_10pt/[u]_-{\id}&
		} 
		\end{align*}
	Alternatively from 
	the construction of $\Phi$
	and noting that $\oO_Y \otimes_{\oO_{V^{\vee}}} \kK_s=\oO_{\fU}$, we have
	\begin{align*}
		0_Y^{\ast}\Phi(\fF)=\oO_{Y}\otimes_{\oO_{V^{\vee}}} \kK_s \otimes_{\oO_{\fU}}\fF
		\cong \fF. 
	\end{align*}
\end{proof}

\subsubsection{Functoriality under pull-back}
Let us consider a morphism of equivariant tuples in Definition~\ref{def:tuple}. 
Suppose that the morphism $f \colon \yY_1 \to \yY_2$ in the diagram (\ref{dia:Ystack}) is a proper 
morphism of smooth stacks, so 
in particular $\bff \colon [\fU_1/G_1] \to [\fU_2/G_2]$ is proper. 
Then we have the continuous right adjoint 
of $\bff_{\ast}^{\rm{ind}}$ (see~\cite[Section~10.1]{MR3136100})
\begin{align*}
	\bff^{!} \colon \Ind \Dbc([\fU_2/G_2]) \to \Ind \Dbc([\fU_1/G_1]). 
	\end{align*}
On the other hand, we also have the functor 
\begin{align}\notag
	g_{\ast} \circ f^{!} \colon 
	\mathrm{MF}_{\qcoh}^{\C}([V_2^{\vee}/G_2], w_2)
	\stackrel{f^{!}}{\to}
	\mathrm{MF}_{\qcoh}^{\C}([f^{\ast}V_2^{\vee}/G_1], \overline{w})
	\stackrel{g_{\ast}}{\to} 	\mathrm{MF}_{\qcoh}^{\C}([V_1^{\vee}/G_1], w_1)
\end{align}
which is a right adjoint of 
the functor (\ref{funct:fg}). 
\begin{lem}\label{lem:commute2}
	Suppose that $f \colon \yY_1 \to \yY_2$ is proper. 
	Then the following diagram commutes:
	\begin{align*}
		\xymatrix{
			\Ind \Dbc([\fU_2/G_2]) \ar[r]^-{\Phi_2^{\rm{ind}}} \ar[d]_-{\mathbf{f}^!} & 
			\mathrm{MF}_{\qcoh}^{\C}([V_2^{\vee}/G_2], w_2) \ar[d]^-{g_{\ast} \circ f^!}  \\
			\Ind \Dbc([\fU_1/G_1]) \ar[r]^-{\Phi_1^{\rm{ind}}}  & 
			\mathrm{MF}_{\qcoh}^{\C}([V_1^{\vee}/G_1], w_1). 	
		}
	\end{align*}
\end{lem}	
\begin{proof}
	The lemma follows from Lemma~\ref{lem:commute1} by taking the right 
	adjoints of the vertical arrows in the diagram (\ref{dia:com:ind}). 
\end{proof}

Suppose that $\mathbf{f} \colon [\fU_1/G_1] \to [\fU_2/G_2]$ is 
quasi-smooth. 
Then we have the functor (see~\cite[Section~3.1]{MR3701352})
\begin{align*}
	\bff^{\ast} \colon \Dbc([\fU_2/G_2]) \to \Dbc([\fU_1/G_1]). 
	\end{align*}
If furthermore $g \colon f^{\ast}\vV_2^{\vee} \to \vV_1^{\vee}$ in 
the diagram (\ref{diagram:dual}) is proper, 
then we have the functor 
\begin{align*}
	g_{!} \circ f^{\ast} \colon 
	\mathrm{MF}_{\coh}^{\C}([V_2^{\vee}/G_2], w_2)
	\stackrel{f^{\ast}}{\to}
	\mathrm{MF}_{\qcoh}^{\C}([f^{\ast}V_2^{\vee}/G_1], \overline{w})
	\stackrel{g_{!}}{\to} 	\mathrm{MF}_{\qcoh}^{\C}([V_1^{\vee}/G_1], w_1)
\end{align*}
which is a left adjoint of the functor (\ref{funct:fg}). 
\begin{lem}\label{lem:commute3}
	Suppose that $\mathbf{f} \colon [\fU_1/G_1] \to [\fU_2/G_2]$ is 
	quasi-smooth and the morphism $g \colon f^{\ast}\vV_2^{\vee} \to \vV_1^{\vee}$ is proper. 
	Then the following diagram commutes:
	\begin{align*}
		\xymatrix{
			\Dbc([\fU_2/G_2]) \ar[r]^-{\Phi_2} \ar[d]_-{\mathbf{f}^{\ast}} &
			\mathrm{MF}_{\coh}^{\C}([V_2^{\vee}/G_2], w_2) \ar[d]^-{g_{!} \circ f^{\ast}}  \\
			\Dbc([\fU_1/G_1]) \ar[r]^-{\Phi_1}  & 
			\mathrm{MF}_{\coh}^{\C}([V_1^{\vee}/G_1], w_1). 	
		}
	\end{align*}
\end{lem}
\begin{proof}
	The ind-completion of $\mathbf{f}^{\ast}$
	\begin{align*}
		\mathbf{f}^{\ind \ast} \colon \Ind \Dbc([\fU_2/G_2])
		\to \Ind \Dbc([\fU_1/G_1])
	\end{align*}
	is a left adjoint of $\mathbf{f}_{\ast}^{\ind}$ by~\cite[Proposition~3.1.6]{MR3701352}. 
	Therefore the lemma follows by taking left adjoints of the vertical arrows in the diagram (\ref{dia:com:ind}), and restrict it
	to $\Dbc([\fU_2/G_2])$. 
\end{proof}

\section{Categorical Donaldson-Thomas theory for local surfaces}\label{sec:catDT:shift}
In this section, 
for a quasi-smooth and QCA derived stack $\mathfrak{M}$ and 
a conical closed substack $\zZ \subset \nN \cneq t_0(\Omega_{\fM}[-1])$, 
we introduce 
$\C$-equivariant DT category for the complement 
$\nN^{\rm{ss}} \cneq \nN \setminus \zZ$. 
A basic construction is to take the 
Verdier quotient 
\begin{align}\label{subintro:DTcat}
	\dDT^{\C}(\nN^{\rm{ss}}) \cneq \Dbc(\fM)/\cC_{\zZ}
	\end{align}
where $\cC_{\zZ} \subset \Dbc(\fM)$ is the subcategory
of objects whose singular supports are contained in $\zZ$. 
We call the above quotient category as \textit{DT category}. 
The above construction is a global analogue of the 
quotient category considered in Proposition~\ref{prop:koszul:Z}. 
So in particular if $\fM$ is a global 
derived zero locus then $\dDT^{\C}(\nN^{\rm{ss}})$ is 
equivalent to the derived category of $\C$-equivariant factorizations.  
We will study the properties of the above quotient category in detail. 

In the presence of $\mathbb{C}^{\ast}$-automorphisms 
in the stabilizer groups of $\mathfrak{M}$, 
we further introduce the
variants of 
DT categories either by taking the 
$\mathbb{C}^{\ast}$-rigidifications or fix the $\mathbb{C}^{\ast}$-weight $\lambda$. 
The $\lambda$-twist version is an analogue of the category of 
twisted sheaves, twisted by a Brauer class, studied in~\cite{MR2309155}. 
We will also define their dg-enhancements. 
The relevant notation is summarized in the following table: 

\begin{table}[htb]\label{table1}
\caption{Notation of $\mathbb{C}^{\ast}$-equivariant
DT categories}
\begin{tabular}{|l||c|c|c|} \hline
  & $\mathbb{C}^{\ast}$-equivariant & $\mathbb{C}^{\ast}$-rigidified &
  $\lambda$-twist  \\ \hline \hline
$\triangle$ category & $\mathcal{DT}^{\mathbb{C}^{\ast}}(\nN^{\rm{ss}})$ &
  $\mathcal{DT}^{\mathbb{C}^{\ast}}((\nN^{\rm{ss}})^{\mathbb{C}^{\ast}\rig})$ &
$\mathcal{DT}^{\mathbb{C}^{\ast}}(\nN^{\rm{ss}})_{\lambda}$ \\ \hline
dg category & $\mathcal{DT}_{\rm{dg}}^{\mathbb{C}^{\ast}}(\nN^{\rm{ss}})$ &
  $\mathcal{DT}_{\rm{dg}}^{\mathbb{C}^{\ast}}((\nN^{\rm{ss}})^{\mathbb{C}^{\ast}\rig})$ &
$\mathcal{DT}_{\rm{dg}}^{\mathbb{C}^{\ast}}(\nN^{\rm{ss}})_{\lambda}$ \\ \hline
\end{tabular}
\end{table}

We will also give a variant of the above construction, 
defined as limits of quotient categories
\begin{align}\label{subintro:DTcat2}
	\wdDT^{\C}(\nN^{\rm{ss}}) \cneq 
	\lim_{\fU \stackrel{\alpha}{\to}\fM} \left(\Dbc(\fU)/\cC_{\alpha^{\ast}\zZ} \right). 
	\end{align}
The above construction is a more direct gluing of derived 
factorization categories. 
The $\C$-rigidified version, 
the $\lambda$-twisted version and dg-enhancements
are similarly defined, and 
the relevant notation is summarized in the following table: 

\begin{table}[htb]\label{table2}
	\caption{Notation of $\mathbb{C}^{\ast}$-equivariant
		categorical $\widehat{\rm{DT}}$ theories}
	\begin{tabular}{|l||c|c|c|} \hline
		& $\mathbb{C}^{\ast}$-equivariant & $\mathbb{C}^{\ast}$-rigidified &
		$\lambda$-twist  \\ \hline \hline
		$\triangle$ category & $\wdDT^{\C}(\nN^{\rm{ss}})$ &
		$\wdDT^{\C}((\nN^{\rm{ss}})^{\mathbb{C}^{\ast}\rig})$ &
		$\wdDT^{\C}(\nN^{\rm{ss}})_{\lambda}$ \\ \hline
		dg category & $\widehat{\mathcal{DT}}_{\rm{dg}}^{\mathbb{C}^{\ast}}(\nN^{\rm{ss}})$ &
		$\widehat{\mathcal{DT}}_{\rm{dg}}^{\mathbb{C}^{\ast}}((\nN^{\rm{ss}})^{\mathbb{C}^{\ast}\rig})$ &
		$\widehat{\mathcal{DT}}_{\rm{dg}}^{\mathbb{C}^{\ast}}(\nN^{\rm{ss}})_{\lambda}$ \\ \hline
	\end{tabular}
\end{table}

There is a natural functor 
from the former category (\ref{subintro:DTcat}) 
to the latter category (\ref{subintro:DTcat2}), but in general 
it is not clear whether this is an equivalence or not. 
It will turn out that, in many cases we are interested in, 
both constructions give equivalent categories. 
Both of constructions will play important roles in this paper. 
The former category is useful to discuss 
categorified Hall products for DT categories
in Section~\ref{sec:cat:hall}, while the latter category is useful 
to glue window subcategories in Section~\ref{sec:window:DT}. 
The comparisons of both constructions will be discussed in Section~\ref{sec:compare}. 
In this section, we mainly focus on the former construction and 
develope its general theory. 

It is a natural question to ask whether  
our DT category recovers the original DT invariant. 
If $\nN^{\rm{ss}}$ is a scheme, then the associated DT invariant 
is defined by taking the integration of the Behrend function which is a point-wise 
Euler characteristic of the vanishing cycles. 
As Efimov proved in~\cite{MR3815168}, 
there is a close relationship between vanishing cycles and 
derived categories of factorizations:
the periodic cyclic homology of 
the derived category of factorizations is isomorphic to 
the hyper-cohomology of the perverse sheaf of vanishing cycles. 
Based on this result, we will give  
some decategorification result for DT categories, proving
that the Euler characteristic of periodic cyclic homology
of the DT category recovers the DT invariant.  

Based on the above construction, we will introduce 
DT category for local surfaces. 
For a smooth projective surface $S$, the local surface 
is the total space of its canonical line bundle 
$X=\mathrm{Tot}(\omega_S)$, which is a non-compact CY 3-fold. 
A key fact is that the moduli stack
of compactly supported coherent sheaves on $X$ is isomorphic to the 
$(-1)$-shifted cotangent over the derived moduli stack $\fM_S$
of coherent sheaves on $S$. 
If we impose stability condition, 
the unstable locus is a conical closed substack 
in $t_0(\Omega_{\fM_S}[-1])$. 
For a stability condition $\sigma$, 
we define 
the DT category for the moduli space $M_X^{\sigma}(v)$ of $\sigma$-stable sheaves on $X$ with 
numerical class $v$ based on the construction (\ref{subintro:DTcat}), and denoted as 
\begin{align*}
	\dDT^{\C}(M_X^{\sigma}(v)). 
	\end{align*}
We will then formulate conjectures on categorical wall-crossing of 
DT categories following d-critical analogue of D/K equivalence 
conjecture, e.g. 
wall-crossing equivalence of DT categories for one dimensional 
stable sheaves. It also categorifies 
wall-crossing invariance of genus zero Gopakumar-Vafa invariants. 

The organization of this section is as follows. 
In Section~\ref{subsec:back}, we recall several 
notions of derived stacks, quasi-smoothness, (quasi, ind)coherent sheaves on them, 
$(-1)$-shifted cotangent, and their singular supports.
In Section~\ref{subsec:catVer}, we introduce 
DT categories for $(-1)$-shifted cotangents
 and prove their fundamental properties. 
In Section~\ref{subsec:cohDT}, we prove some decategorification 
result of our DT category. In Section~\ref{subsec:catDTsurf}, we introduce 
DT category for local surfaces and 
propose the categorical wall-crossing conjecture.

\subsection{Some background on derived stacks}\label{subsec:back}
\label{subsec:qsmooth}
\subsubsection{Quasi-smooth derived stack}
Below, we denote by $\mathfrak{M}$ a
derived Artin stack over $\mathbb{C}$.
This means that 
$\mathfrak{M}$ is a contravariant
$\infty$-functor from 
the $\infty$-category of 
affine derived schemes over $\mathbb{C}$ to 
the $\infty$-category of 
simplicial sets 
\begin{align*}
\mathfrak{M} \colon 
dAff^{op} \to SSets
\end{align*}
satisfying some conditions (see~\cite[Section~3.2]{MR3285853} for details). 
Here $dAff^{\rm{op}}$ is defined to be the
$\infty$-category of 
commutative simplicial $\mathbb{C}$-algebras, 
which is equivalent to the $\infty$-category of 
commutative differential graded 
$\mathbb{C}$-algebras with non-positive degrees. 
The classical truncation of $\mathfrak{M}$ is denoted by 
\begin{align*}
\mM \cneq t_0(\mathfrak{M}) \colon 
Aff^{op} \hookrightarrow 
dAff^{op} \to SSets
\end{align*}
where the first arrow is a natural functor 
from the category of affine schemes
to affine derived schemes. 

Following~\cite{MR3285853}, we define the 
dg-category of quasi-coherent sheaves on $\mathfrak{M}$
as follows. 
For an affine derived scheme $\mathfrak{U}=\Spec R$
for a cdga $R$, 
let $\Mod(R)$ be the dg-category 
of differential graded 
$R$-modules which is equipped 
with a 
projective model category structure. 
Then 
$L_{\rm{qcoh}}(\mathfrak{U})$ 
is defined to be the dg-categorical 
localization of 
$\Mod(R)$
by quasi-isomorphisms
in the sense of~\cite[Section~2.4]{MR2762557}. 
The category $L_{\rm{qcoh}}(\mathfrak{U})$ is a 
dg-category, which is weakly equivalent to 
the dg-subcategory of $\Mod(R)$
consisting of cofibrant $R$-modules, 
and its homotopy category is equivalent to 
the derived category $D_{\rm{qcoh}}(\mathfrak{U})$
of dg-modules over $R$. 
Then the dg-category
$L_{\rm{qcoh}}(\mathfrak{M})$ is defined 
to be the limit in the $\infty$-category of dg-categories 
\begin{align}\label{L:limit}
L_{\rm{qcoh}}(\mathfrak{M}) \cneq
\lim_{\mathfrak{U} \to \mathfrak{M}} L_{\rm{qcoh}}(\mathfrak{U}).
\end{align}
The limit is taken for the 
$\infty$-category of smooth morphisms 
$\alpha \colon \fU \to \fM$ for affine 
derived schemes $\fU$ with 
1-morphisms given by commutative 
diagrams
\begin{align}\label{dia:smooth}
\xymatrix{
\mathfrak{U} \ar[rd]_{\alpha} \ar[rr]^-{\rho} & & \mathfrak{U}' 
\ar[ld]^-{\alpha'} \\
& \mathfrak{M}. & 
}
\end{align}
Here $\rho$ is a 0-representable smooth 
morphism, 
$\alpha' \circ \rho$ is equivalent to $\alpha$, and 
we assign the pull-back $\rho^{\ast} \colon L_{\qcoh}(\fU') \to L_{\qcoh}(\fU)$
with the above diagram. 
Roughly speaking an object of $L_{\qcoh}(\fM)$ is a collection of 
objects $\eE_{\fU} \in L_{\qcoh}(\fU)$ 
with equivalences $\rho^{\ast}\eE_{\fU} \sim \eE_{\fU'}$ 
for each diagram (\ref{dia:smooth}) satisfying 
homotopy coherence conditions. 
The homotopy category of $L_{\rm{qcoh}}(\mathfrak{M})$
is denoted by 
$D_{\rm{qcoh}}(\mathfrak{M})$, which is a 
triangulated category. We have the dg and
triangulated subcategories
\begin{align*}
L_{\rm{coh}}(\mathfrak{M}) \subset 
L_{\rm{qcoh}}(\mathfrak{M}), \ 
D^b_{\rm{coh}}(\mathfrak{M}) \subset D_{\rm{qcoh}}(\mathfrak{M})
\end{align*}
consisting of objects which have bounded coherent 
cohomologies. 
We note that there 
is a bounded t-structure on $D_{\rm{coh}}^b(\mathfrak{M})$
whose heart coincides with 
$\Coh(\mM)$. 
In the convention of Subsection~\ref{subsec:notation}, 
we may write $D_{\qcoh}(\fM)$, 
$\Dbc(\fM)$ as
\begin{align*}
	D_{\qcoh}(\fM)=\lim_{\fU \to \fM} D_{\qcoh}(\fU), \ 
	\Dbc(\fM)=\lim_{\fU \to \fM} \Dbc(\fU). 
\end{align*}

A morphism of derived stacks $f \colon \fM \to \fN$ is 
called \textit{quasi-smooth} if 
$\mathbb{L}_f$ is perfect 
such that for any point $x \to \mM$
the restriction $\mathbb{L}_f|_{x}$ is 
of cohomological 
amplitude $[-1, 1]$.
Here 
$\mathbb{L}_f$ is the $f$-relative cotangent complex. 
A derived stack 
$\mathfrak{M}$ over $\mathbb{C}$
is called \textit{quasi-smooth}
if $\fM \to \Spec \mathbb{C}$ is quasi-smooth. 
By~\cite[Theorem~2.8]{MR3352237}, 
the quasi-smoothness of $\mathfrak{M}$ is equivalent to 
that $\fM$ 
is a 1-stack, 
and 
any point of $\mathfrak{M}$ lies 
in the image of a $0$-representable 
smooth morphism 
\begin{align}\label{map:alpha}
	\alpha \colon \mathfrak{U} \to \mathfrak{M}
\end{align}
where $\mathfrak{U}$ is an affine derived scheme 
obtained as a derived zero locus as 
in (\ref{frak:U}). 
In this case, we have 
\begin{align*}
	\Dbc(\fM)=\lim_{\mathfrak{U} \stackrel{\alpha}{\to} \mathfrak{M}} 
	\Dbc(\fU)
\end{align*}
where the limit is taken for 
the $\infty$-category $\iI$ of smooth morphisms (\ref{map:alpha})
from $\fU$ of the form (\ref{frak:U}) with 1-morphisms
given by (\ref{dia:smooth}).
In this paper when we write 
$\lim_{\mathfrak{U} \stackrel{\alpha}{\to} \mathfrak{M}}(-)$
for a quasi-smooth $\fM$, 
the limit is always taken for the $\infty$-category $\iI$
as above. 

\subsubsection{Ind-coherent sheaves on QCA stacks}
\label{subsub:QCA}
Following~\cite[Definition~1.1.8]{MR3037900}, 
a derived stack $\mathfrak{M}$ is called 
\textit{QCA (quasi-compact and with affine automorphism groups)}
if the following conditions hold:
\begin{enumerate}
	\item $\mathfrak{M}$ is quasi-compact;
	\item The automorphism groups of its geometric points are affine;
	\item The classical inertia stack $I_{\mM} \cneq \Delta
	\times_{\mM \times \mM} \Delta$
	is of finite presentation over $\mM$. 
\end{enumerate}
Let $\fM$ be a quasi-smooth
derived stack. 
We denote by $\Ind \Dbc(\fM)$ the category of its ind-coherent 
sheaves (see~\cite{MR3136100})
\begin{align}\label{lim:ind}
	\Ind \Dbc(\fM) \cneq \lim_{\fU \stackrel{\alpha}{\to}\fM}
	\Ind \Dbc(\fU),
\end{align}
where $\alpha$ is a smooth morphism from an affine 
derived scheme $\fU$ of the form (\ref{frak:U}). 
The QCA condition will be useful 
since we have the following theorem:  
\begin{thm}\emph{(\cite[Theorem~3.3.5]{MR3037900})}\label{thm:QCA}
If $\fM$ is QCA, 
then 
$\Ind \Dbc(\fM)$ is compactly generated 
with $(\Ind \Dbc(\fM))^{\rm{cp}}=\Dbc(\fM)$. 
In particular, we have 
\begin{align}\label{ind:M}
	\Ind \Dbc(\fM)=\Ind(\Dbc(\fM)).
\end{align} 
\end{thm}
Let $f \colon \fM_1 \to \fM_2$ be a
morphism between quasi-smooth and QCA derived stacks $\fM_i$. 
Then we have the following pair
of functors (see~\cite[Section~3.6]{MR3037900}, \cite[Section~10.1]{MR3136100})
\begin{align*}
	\xymatrix{
	\Ind \Dbc(\fM_1)
\ar@<0.5ex>[r]^-{f_{\ast}^{\rm{ind}}} &
\ar@<0.5ex>[l]^-{f^{!}}
\Ind \Dbc(\fM_2)
}.
\end{align*}
If $f$ is proper, 
then $f_{\ast}^{\rm{ind}}\dashv f^{!}$
and $f_{\ast}^{\rm{ind}}$
restricts to the functor (see~\cite[Section~4.2]{PoSa})
\begin{align}\label{f:push}
	f_{\ast} \colon \Dbc(\fM_1) \to \Dbc(\fM_2). 
\end{align}
If $f$ is quasi-smooth, 
we also have the pull-back functor
(see~\cite[Section~4.2]{PoSa})
\begin{align}\label{f:back}
	f^{\ast} \colon \Dbc(\fM_2) \to \Dbc(\fM_1). 
\end{align} 
Its ind-completion $f^{\rm{ind}\ast}$
fits into the adjoint pair (see~\cite[Proposition~3.1.6]{MR3701352}, 
\cite[Section~3.7.7]{MR3037900})
\begin{align*}
	\xymatrix{
	\Ind \Dbc(\fM_1)
	\ar@<0.5ex>[r]^-{f_{\ast}^{\rm{ind}}} &
	\ar@<0.5ex>[l]^-{f^{\rm{ind}\ast}}
	\Ind \Dbc(\fM_2)
},
\end{align*}
such that $f^{\rm{ind}\ast}\dashv f_{\ast}^{\rm{ind}}$.

\subsubsection{$(-1)$-shifted cotangent derived stack}\label{subsec:shifted:cotangent}
Let $\mathfrak{M}$ be a quasi-smooth derived stack. 
We denote by 
$\Omega_{\mathfrak{M}}[-1]$ the $(-1)$-shifted
cotangent derived stack of $\mathfrak{M}$
\begin{align*}
p \colon 
\Omega_{\mathfrak{M}}[-1] \cneq 
\Spec_{\mathfrak{M}}
S(\mathbb{T}_{\mathfrak{M}}[1]) \to \mathfrak{M}. 
\end{align*}
Here $\mathbb{T}_{\mathfrak{M}} \in D^b_{\rm{coh}}(\mathfrak{M})$
is the tangent complex of $\mathfrak{M}$, which 
is dual to the cotangent complex 
$\mathbb{L}_{\mathfrak{M}}$ of $\mathfrak{M}$. 
The derived stack $\Omega_{\mathfrak{M}}[-1]$ 
admits a natural $(-1)$-shifted symplectic 
structure~\cite{MR3090262, Calaque}, 
which induces the 
d-critical structure~\cite{MR3399099}
on its 
classical truncation $\nN$ 
\begin{align}\label{map:p0}
p_0 \colon \nN \cneq t_0(\Omega_{\mathfrak{M}}[-1])
\to \mM.
\end{align}
The stack $\nN$ is 
also  
described in terms of the dual obstruction cone
introduced in~\cite{MR3607000}.
Note that the quasi-smoothness of $\mathfrak{M}$
implies that the
natural map of cotangent complexes
\begin{align}\label{perf:obs}
\eE^{\bullet} \cneq \mathbb{L}_{\mathfrak{M}}|_{\mM} \to 
\tau_{\ge -1} \mathbb{L}_{\mM}
\end{align} 
is a perfect obstruction theory on $\mM$~\cite{BF}. 
Then we have 
\begin{align*}
\nN=
\mathrm{Obs}^{\ast}(\eE^{\bullet}) \cneq 
\Spec_{\mM}S(\hH^1(\eE^{\bullet \vee})). 
\end{align*}
The cone $\mathrm{Obs}^{\ast}(\eE^{\bullet})$
over $\mM$ is called 
the dual obstruction cone associated with
the perfect obstruction theory (\ref{perf:obs}). 

Let $\fM_1$, $\fM_2$ be
quasi-smooth derived stacks 
with truncations $\mM_i=t_0(\fM_i)$. 
Let $f \colon \fM_1 \to \fM_2$ be a morphism. 
Then 
the morphism 
$f^{\ast}\mathbb{L}_{\fM_2} \to \mathbb{L}_{\fM_1}$
induces the diagram
\begin{align}\label{diagram:induced}
	\xymatrix{
		t_0(\Omega_{\fM_1}[-1]) \ar[d] & 
		t_0(\Omega_{\fM_2}[-1]\times_{\fM_2}\fM_1)
		 \ar[d] \ar[l]_-{f^{\diamondsuit}} 
		\ar[r]^-{f^{\spadesuit}} 
		\ar@{}[rd]|\square
		& 
		t_0(\Omega_{\fM_2}[-1]) \ar[d] \\
		\mM_1 & \mM_1 \ar@{=}[l] \ar[r]_-{f} & \mM_2. 
	}
\end{align}
\begin{lem}\label{lem:beta0}
The morphism $f$ is quasi-smooth if and only
if $f^{\diamondsuit}$ is a closed immersion, 
$f$ is smooth if and only if $f^{\diamondsuit}$ is an isomorphism. 
\end{lem}
\begin{proof}
	The lemma follows from the exact sequence
	\begin{align*}
		\cdots \to \hH^1(\mathbb{T}_f|_{\mM_1}) \to 
		\hH^1(\mathbb{T}_{\fM_1}|_{\mM_1}) \to 
		\hH^1(f^{\ast}\mathbb{T}_{\fM_2}|_{\mM_1}) \to 
		\hH^2(\mathbb{T}_f|_{\mM_1}) \to \cdots. 
		\end{align*}
	\end{proof}

\subsubsection{Good moduli spaces of Artin stacks}\label{subsubsec:gmoduli}
In general for a classical Artin stack $\mM$, 
its \textit{good moduli space}
is an algebraic space $M$ 
together with a quasi-compact morphism, 
\begin{align*}
	\pi_{\mM} \colon \mM \to M
\end{align*} 
satisfying the following conditions
(cf.~\cite[Section~1.2]{MR3237451}):
\begin{enumerate}
	\item The push-forward
	$\pi_{\mM\ast} \colon \mathrm{QCoh}(\mM) \to \mathrm{QCoh}(M)$
	is exact. 
	\item The induced morphism $\oO_M \to \pi_{\mM\ast}\oO_{\mM}$ is an 
	isomorphism. 
\end{enumerate}
The good moduli space morphism $\pi_{\mM}$ is universally closed.
Moreover 
for each closed point $y \in M$, 
there exists a unique closed point $x \in \pi_{\mM}^{-1}(y)$, 
and its automorphism group $\Aut(x)$ is reductive 
(see~\cite[Theorem~4.16, Proposition~12.14]{MR3237451}). 

Let $\fM$ be a quasi-smooth derived stack and take 
its classical truncation 
$\mM=t_0(\fM)$. 
Suppose that it admits a good moduli 
space $\pi_{\mM} \colon \mM \to M$. 
We denote by $\dD_{{\rm{\acute{e}t}}/M}$ the category of 
\'etale morphisms $\iota \colon U \to M$
for an algebraic space $U$, with morphisms 
given by 
commutative diagrams
\begin{align}\label{mor:Det}
	\xymatrix{
U' \ar[rr]^-{\rho} \ar[rd]_-{\iota'}  & & U \ar[ld]^-{\iota} \\ 	
& M. &
}
	\end{align}
Here $\rho$ is an \'{e}tale morphism.
For each object $(\iota \colon U \to M) \in \dD_{{\rm{\acute{e}t}}/M}$
there is a unique (up to equivalence) derived stack 
$\fM_U$ together with Cartesian diagrams 
		\begin{align}\label{Cartesian:U}
		\xymatrix{
			\fM_U \ar[d]_-{\iota_{\fM}} \diasquare
			 &\mM_U \ar@<0.3ex>@{_{(}->}[l] \ar[r] \ar[d]_-{\iota_{\mM}}\ar@{}[rd]|\square
			& U \ar[d]^-{\iota} \\
			\fM &\ar@<0.3ex>@{_{(}->}[l] \mM \ar[r]^-{\pi_{\mM}} & M
		}
	\end{align}
such that $\mM_U=t_0(\fM_U)$ and $\iota_{\fM}$ is \'{e}tale. 
The above diagram exists since
the category of \'{e}tale morphisms 
with target $\mM$ is equivalent to that with target $\fM$.
Moreover given a diagram (\ref{mor:Det}),
there exists a unique morphism 
$\rho_{\fM} \colon \fM_{U'} \to \fM_U$ 
and Cartesian squares
\begin{align}\label{dia:rho}
		\xymatrix{
		\fM_{U'} \ar[d]_-{\rho_{\fM}} \diasquare
		&\mM_{U'} \ar@<0.3ex>@{_{(}->}[l] \ar[r] \ar[d]_-{\rho_{\mM}}\ar@{}[rd]|\square
		& U' \ar[d]^-{\rho} \\
		\fM_U &\ar@<0.3ex>@{_{(}->}[l] \mM_U \ar[r] & U. 
	}
\end{align}
We will use the following \'{e}tale local structure result for 
good moduli spaces proved in~\cite{AHR}. 
\begin{thm}\label{thm:AHR}\emph{(\cite[Theorem~2.9]{AHR})}
	For any closed point $y\in M$,
	there exists an \'{e}tale neighborhood
	$\iota \colon (U, 0) \to (M, y)$
	and a commutative isomorphisms
	\begin{align*}
		\xymatrix{
			[\uU/G] \ar[r]^-{\cong} \ar[d] &\mM_U  \ar[d] \\
			\uU\ssslash G \ar[r]^-{\cong} & U.
		}
	\end{align*}
	Here
	$G=\Aut(x)$ 
	for a unique closed point $x \in \pi_{\mM}^{-1}(y)$, 
	$\uU=\Spec R$ is an affine $\mathbb{C}$-scheme with $G$-action, 
	and 
	$\uU\ssslash G=\Spec R^G$.   
\end{thm}

The above result can be extended to derived stacks, whose proof will 
be given in Subsection~\ref{subsub:extend} (cf.~\cite[Lemma~3.2.5]{HalpK32}). 
\begin{prop}\label{prop:extend}
	In the situation of Theorem~\ref{thm:AHR}, 
	there is an equivalence 
	\begin{align*}
		\fM_U \sim [\fU/G],
		\end{align*}
where  
	$[\fU/G]$ is a derived stack 
	associated with a $G$-equivariant 
	tuple $(Y, V, s)$ in Definition~\ref{def:tuple}, such that
	$\uU=t_0(\fU)$ and $G$ are given as in Theorem~\ref{thm:AHR}. 
		Moreover for $l\in \Pic(\mM)_{\mathbb{R}}$, 
	by replacing $U$ with a Zariski open neighborhood of 
	$0\in U$, the pull-back $\iota_{\fM}^{\ast}(l) \in \Pic([\uU/G])_{\mathbb{R}}$
	is extended to a $\mathbb{R}$-line bundle on $[Y/G]$. 
\end{prop}

\subsection{Categorical DT theory via Verdier quotients}\label{subsec:catVer}
\subsubsection{Definition of DT category}\label{subsub:defDT}
Let $\fM$ be a quasi-smooth and QCA derived stack, and 
take a  conical closed substack
\begin{align*}
\zZ \subset \nN=t_0(\Omega_{\mathfrak{M}}[-1]). 
\end{align*}
Here similarly to before, $\zZ$ is called conical 
if it is closed under the fiberwise $\mathbb{C}^{\ast}$-action on $\nN \to \mM$. 
Let $\alpha \colon \fU \to \fM$ be a smooth morphism as in (\ref{map:alpha}). 
By Lemma~\ref{lem:beta0}, we have the associated conical closed subscheme
\begin{align}\notag
\alpha^{\ast}\zZ \cneq 
\alpha^{\diamondsuit} (\alpha^{\spadesuit})^{-1}(\zZ) \subset t_0(\Omega_{\mathfrak{U}}[-1])
=\mathrm{Crit}(w).
\end{align}
Here we have written $\mathfrak{U}$ as (\ref{frak:U}), and 
$w$ is defined in the digram (\ref{diagram:KZ}). 
By Definition~\ref{defi:ssuport2}, we have the
associated subcategory 
$\cC_{\alpha^{\ast}\zZ} \subset D^b_{\rm{coh}}(\mathfrak{U})$. 
It is proved in~\cite[Section~7]{MR3300415}
that the subcategory $\cC_Z$ satisfies 
functorial properties with respect to smooth
morphisms. 
So given a diagram (\ref{dia:smooth}),
the pull-back functors restrict to functors
\begin{align}\label{pull:C}
	\rho^{\ast} \colon \cC_{{\alpha'}^{\ast}\zZ} \to 
	\cC_{\alpha^{\ast}\zZ}, \ 
	\rho^{\rm{ind}\ast}  \colon \Ind \cC_{\alpha^{\ast}\zZ} \to 
	\Ind \cC_{{\alpha'}^{\ast}\zZ}. 
\end{align}
By taking the limit, we have the following definition:

\begin{defi}\label{def:CZ2}\emph{(\cite{MR3300415})}
For a conical closed substack $\zZ \subset \nN$, 
we define 
\begin{align}\notag
\cC_{\zZ} \cneq 
\lim_{\fU \stackrel{\alpha}{\to} \fM}
\cC_{\alpha^{\ast}\zZ}\subset D^b_{\rm{coh}}(\mathfrak{M}), \ 
\Ind \cC_{\zZ} \cneq \lim_{\fU \stackrel{\alpha}{\to} \fM}
\Ind \cC_{\alpha^{\ast}\zZ}\subset \Ind D^b_{\rm{coh}}(\mathfrak{M}). 
\end{align}
\end{defi}
By~\cite[Corollary~8.2.6]{MR3300415}, we have adjoint pairs 
\begin{align}\label{singular:adjoint}
	\xymatrix{
		\Ind \cC_{\zZ}
		\ar@<0.5ex>[r]^-{i} &
		\ar@<0.5ex>[l]^-{\Gamma_{\zZ}}
		\Ind \Dbc(\fM_2)
	},
\end{align}
where $i$ is the natural inclusion and $i \dashv \Gamma_{\zZ}$. 

We denote by $\nN^{\rm{ss}}$ the complement of 
$\zZ$
\begin{align*}
\nN^{\rm{ss}} \cneq \nN \setminus \zZ
\end{align*}
which is a $\mathbb{C}^{\ast}$-invariant 
 open substack of $\nN$. 
We define the $\mathbb{C}^{\ast}$-equivariant 
DT category
for $\nN^{\rm{ss}}$ as follows: 

\begin{defi}\label{defi:catDT}
We define the triangulated category 
$\dD \tT^{\mathbb{C}^{\ast}}(\nN^{\rm{ss}})$
by the Verdier quotient
\begin{align}\label{quot:D}
\dD \tT^{\mathbb{C}^{\ast}}(\nN^{\rm{ss}}) \cneq 
D^b_{\rm{coh}}(\mathfrak{M})/\cC_{\zZ}.
\end{align}
\end{defi}

The quotient category (\ref{quot:D}) admits a dg-enhancement
by taking dg-quotients of dg-categories 
developed in~\cite{MR1667558, MR2028075}. 
In general for a dg-category $\dD$ and 
its dg-subcategory $\dD' \subset \dD$, 
its Drinfeld dg-quotient 
$\dD/\dD'$ 
is obtained from $\dD$ by formally adding 
degree $-1$ morphisms (see~\cite{MR2028075})
\begin{align*}
\epsilon_U \colon U \to U, \ d(\epsilon_U)=\id_U
\end{align*}
for each $U \in \dD'$. 
Let 
\begin{align*}
L\cC_{\zZ} \subset L_{\rm{coh}}(\mathfrak{M})
\end{align*}
be the dg subcategory consisting of objects 
which are isomorphic to objects in $\cC_{\zZ}$ in 
the homotopy category. 
\begin{defi}\label{def:catDT:dg}
We define
the dg-category $\mathcal{DT}^{\mathbb{C}^{\ast}}_{\rm{dg}}(\nN^{\rm{ss}})$
to be the Drinfeld quotient
\begin{align*}
\dD\tT^{\mathbb{C}^{\ast}}_{\rm{dg}}(\nN^{\rm{ss}}) 
 \cneq 
L_{\rm{coh}}(\mathfrak{M})/L\cC_{\zZ}.
\end{align*}
\end{defi}
The dg-category 
$\mathcal{DT}^{\mathbb{C}^{\ast}}_{\rm{dg}}(\nN^{\rm{ss}})$
is a dg-enhancement of $\mathcal{DT}^{\mathbb{C}^{\ast}}(\nN^{\rm{ss}})$, i.e. 
we have the canonical 
equivalence (see~\cite[Theorem~3.4]{MR2028075})
\begin{align*}\dD\tT^{\mathbb{C}^{\ast}}(\nN^{\rm{ss}}) 
\stackrel{\sim}{\to}
\mathrm{Ho}(\dD\tT^{\mathbb{C}^{\ast}}_{\rm{dg}}(\nN^{\rm{ss}})). 
\end{align*}

Let $\omega_{\fM}\cneq \det(\mathbb{L}_{\fM})[\rank(\mathbb{L}_{\fM})]$ be 
the dualizing line bundle of $\fM$. 
We denote by $\mathbb{D}_{\fM}^{\rm{Serre}}$ the Serre dualizing functor
\begin{align}\label{Serre}
	\mathbb{D}^{\rm{Serre}}_{\fM} \colon 
	\Dbc(\fM)\stackrel{\sim}{\to}
	\Dbc(\fM)^{\rm{op}}, \ 
	E \mapsto \hH om(E, \omega_{\fM}). 
\end{align}
Here the above functor preserves coherence as 
$\fM$ is quasi-smooth, and 
also preserves the singular supports
by~\cite[Proposition~4.7.2]{MR3300415}. 
Therefore we have the Serre duality equivalence
\begin{align*}
	\mathbb{D}^{\rm{Serre}}_{\nN} \colon 
	\dDT^{\C}(\nN^{\rm{ss}}) \stackrel{\sim}{\to}
	\dDT^{\C}(\nN^{\rm{ss}})^{\rm{op}}. 
\end{align*}
For another quasi-smooth derived stack $\fM'$, 
a conical closed subset $\zZ'$ and its complement ${\nN'}^{\rm{ss}}$
\begin{align*}
	\zZ' \subset t_0(\Omega_{\fM'}[-1]), \ 
	{\nN'}^{\rm{ss}}=t_0(\Omega_{\fM'}[-1]) \setminus \zZ'
\end{align*}
we define the tensor product of DT categories as 
\begin{align*}
	\dDT^{\C}(\nN^{\rm{ss}}) \otimes \dDT^{\C}({\nN'}^{\rm{ss}})
	\cneq \Dbc(\fM \times \fM')/\cC_{q^{\ast}\zZ \cup {q'}^{\ast}\zZ'}
\end{align*}
where $q, q'$ are projections from 
$\Omega_{\fM}[-1] \times \Omega_{\fM'}[-1]$ onto corresponding 
factors. 

\subsubsection{$\wdDT$-version}\label{subsub:DThat}
We can also define other 
kind of DT categories, defined by taking limits 
of quotient categories. 
For a diagram (\ref{dia:smooth}), 
since  
the pull-back $\rho^{\ast} \colon \Dbc(\fU) \to \Dbc(\fU')$
restrict to the functor (\ref{pull:C}), 
we have the induced functors
\begin{align*}
	\rho^{\ast} \colon 
	L_{\coh}(\fU)/L\cC_{\alpha^{\ast}\zZ}
	\to L_{\coh}(\fU')/L\cC_{{\alpha'}^{\ast}\zZ'}, \ 
	\rho^{\ast} \colon \Dbc(\fU)/\cC_{\alpha^{\ast}\zZ}
	\to \Dbc(\fU')/\cC_{{\alpha'}^{\ast}\zZ'}.
\end{align*}
By taking their limits, 
we have the following definition: 
\begin{defi}\label{def:DThat}
We define the dg-category 
$\wdDT^{\C}_{\rm{dg}}(\nN^{\rm{ss}})$
and the 
triangulated category 
$\wdDT^{\C}(\nN^{\rm{ss}})$ to be 
\begin{align}\label{def:qlimit}
	\wdDT^{\C}_{\rm{dg}}(\nN^{\rm{ss}}) \cneq 
	\lim_{\fU \stackrel{\alpha}{\to} \fM}
	\left(	L_{\coh}(\fU)/L\cC_{\alpha^{\ast}\zZ} \right), \
	\wdDT^{\C}(\nN^{\rm{ss}}) \cneq
	\lim_{\fU \stackrel{\alpha}{\to} \fM}
	\left(\Dbc(\fU)/\cC_{\alpha^{\ast}\zZ}\right). 
\end{align}
\end{defi}

\begin{rmk}
	By the equivalence (\ref{equiv:MF}), 
	the limit (\ref{def:qlimit}) may be regarded as 
	a gluing of derived categories of factorizations
	\begin{align*}
	\wdDT^{\C}(\nN^{\rm{ss}})=	\lim_{\fU \stackrel{\alpha}{\to} \fM}
		\mathrm{MF}_{\coh}^{\C}(V^{\vee} \setminus \alpha^{\ast}\zZ, w). 
	\end{align*}
\end{rmk}
\begin{rmk}\label{rmk:nfunctor}
There is a natural functor
\begin{align*}
	\dDT^{\C}(\nN^{\rm{ss}}) \to \wdDT^{\C}(\nN^{\rm{ss}}),
\end{align*}
but it is not clear whether this is fully-faithful nor 
essentially surjective in general. 
In Section~\ref{sec:compare} we will 
address the above question in the case that 
$\Ind \cC_{\zZ}$ is compactly generated. 
\end{rmk}

For a conical closed substack $\zZ \subset \nN=t_0(\Omega_{\fM}[-1])$
and its complement $\nN^{\rm{ss}}=\nN \setminus \zZ$, 
we define the ind-completion of the DT category 
$\dDT^{\C}(\nN^{\rm{ss}})$ by 
\begin{align*}
	\Ind \dDT^{\C}(\nN^{\rm{ss}}) \cneq \Ind(\dDT^{\C}(\nN^{\rm{ss}})).  
\end{align*}
On the other hand, a similar construction in Definition~\ref{def:DThat} applies to define limits 
\begin{align*}
	\lim_{\fU \stackrel{\alpha}{\to} \fM}
	\Ind \left( \Dbc(\fU)/\cC_{\alpha^{\ast}\zZ} \right), \ 
	\lim_{\fU \stackrel{\alpha}{\to} \fM}
	\left(\Ind \Dbc(\fU)/\Ind \cC_{\alpha^{\ast}\zZ}\right). 
\end{align*}
The following proposition will be proved in Subsection~\ref{subsec:eqDT}:
\begin{prop}\label{lem:indDT}
	Suppose that $\Ind \cC_{\zZ}$ is compactly generated 
	with $(\Ind \cC_{\zZ})^{\rm{cp}}=\cC_{\zZ}$. 
	Then we have equivalences 
	\begin{align*}
		\Ind \dDT^{\C}(\nN^{\rm{ss}}) \stackrel{\sim}{\to}
		\Ind \Dbc(\fM)/\Ind \cC_{\zZ} \stackrel{\sim}{\to}
		\lim_{\fU \stackrel{\alpha}{\to} \fM}
		\Ind \left( \Dbc(\fU)/\cC_{\alpha^{\ast}\zZ} \right)
		\stackrel{\sim}{\to} 
		\lim_{\fU \stackrel{\alpha}{\to} \fM}
		\left(\Ind \Dbc(\fU)/\Ind \cC_{\alpha^{\ast}\zZ}\right). 
	\end{align*}
	Moreover we have fully-faithful functors with dense images
	\begin{align*}
		\dDT^{\C}(\nN^{\rm{ss}}) \hookrightarrow 
		\wdDT^{\C}(\nN^{\rm{ss}}) \hookrightarrow \left( \Ind \dDT^{\C}(\nN^{\rm{ss}})\right)^{\rm{cp}}. 
	\end{align*}
\end{prop}
Using the above proposition, we also have the following lemma: 
\begin{lem}\label{lem:orth:N}
	In the situation of Proposition~\ref{lem:indDT}, let 
	$\zZ_i \subset t_0(\Omega_{\fM}[-1])$ for $i=1, 2$ be 
	conical closed substacks such that $\zZ_1 \cap \zZ_2=\zZ$. 
	Then for $\eE_i \in \cC_{\zZ_i}$, we have 
	\begin{align*}
		\Hom_{\dDT^{\C}(\nN^{\rm{ss}})}(\eE_1, \eE_2)=0. 
	\end{align*}
\end{lem}
\begin{proof}
	By replacing $\eE_1$ if necessary, 
	any morphism $\eE_1 \to \eE_2$ in 
	$\dDT^{\C}(\nN^{\rm{ss}})$ is represented 
	by a morphism $\phi \colon \eE_1 \to \eE_2$ in 
	$\Dbc(\fM)$. 
	The morphism $\phi$ 
	factors through $\eE_1 \to \Gamma_{\zZ_1}(\eE_2) \to \eE_2$,
	where $\Gamma_{\zZ_1}$ is the right adjoint of the 
	natural fully-faithful functor 
	$\Ind \cC_{\zZ_1} \hookrightarrow \Ind \Dbc(\fM)$
	in (\ref{singular:adjoint}).  
	As $\Gamma_{\zZ_{1}}(\eE_2)$ have singular
	support contained in 
	$\zZ_1 \cap \zZ_2=\zZ$ (which is obvious from the construction 
	of the right adjoint in~\cite[Lemma~3.3.7]{MR3300415}), it follows that 
	$\phi=0$ in $\Ind \Dbc(\fM)/\Ind \cC_{\zZ}$. 
	Then the lemma follows from the first equivalence in Proposition~\ref{lem:indDT}. 
\end{proof}

\subsubsection{Replacement of the quotient category}\label{subsub:replace}
The derived moduli stacks we will consider 
are typically not quasi-compact, 
and it will be useful if we can 
replace it by its quasi-compact open substack. 
In order to 
show that such a replacement makes sense, 
we need 
to check that the resulting quotient 
categories are independent of the choice of
a quasi-compact open substack. 

Let $\wW \subset \mM$ be a closed 
substack, and take the open 
derived substack
\begin{align*}
	\mathfrak{M}_{\circ} \subset \mathfrak{M}
\end{align*}
whose truncation is $\mM \setminus \wW$. 
Let us take a conical closed substack $\zZ \subset \nN =  t_0(\Omega_{\mathfrak{M}}[-1])$. 
We have the following conical closed substack
\begin{align*}
	\zZ_{\circ} \cneq \zZ \setminus p_0^{-1}(\wW)
	\subset \nN_{\circ} \cneq t_0(\Omega_{\mathfrak{M}_{\circ}}[-1]).
\end{align*}
Note that $\nN_{\circ}=\nN \setminus p_0^{-1}(\wW)$, and 
we have the following open immersion
\begin{align}\label{open:N}
	\nN_{\circ}^{\rm{ss}} \cneq 
	\nN_{\circ} \setminus \zZ_{\circ} \hookrightarrow
	\nN \setminus \zZ =\nN^{\rm{ss}}. 
\end{align}
The following lemma will be proved in Lemma~\ref{lem:dstack:support}
and Lemma~\ref{lem:replace}. 
\begin{lem}\label{lem:replace0}
	Suppose that $p_0^{-1}(\wW) \subset \zZ$.
	Then 
	the restriction functors 
	give equivalences
	\begin{align*}
		\mathcal{DT}^{\mathbb{C}^{\ast}}
		(\nN^{\rm{ss}}) \stackrel{\sim}{\to}
		\mathcal{DT}^{\mathbb{C}^{\ast}}
		(\nN_{\circ}^{\rm{ss}}), \ 
		\wdDT^{\C}
		(\nN^{\rm{ss}}) \stackrel{\sim}{\to}
		\wdDT^{\C}
		(\nN_{\circ}^{\rm{ss}}).
	\end{align*}
Moreover if $p_0^{-1}(\wW)=\zZ$, 
we have equivalences
\begin{align}\notag
	\mathcal{DT}^{\mathbb{C}^{\ast}}(\nN^{\rm{ss}})
	\stackrel{\sim}{\to} \wdDT^{\C}(\nN^{\rm{ss}}) \stackrel{\sim}{\to}
	D^b_{\rm{coh}}(\mathfrak{M}_{\circ}). 
\end{align}
	\end{lem}

\subsubsection{$\mathbb{C}^{\ast}$-rigidification}\label{subsec:rig}
In some case  
our derived stack $\mathfrak{M}$
has $\mathbb{C}^{\ast}$-automorphisms at any point, 
and 
in this case $\mM=t_0(\mathfrak{M})$ is never a Deligne-Mumford 
stack. In this case, 
we rigidify $\mathbb{C}^{\ast}$-automorphisms
and define  
the DT category
via the $\mathbb{C}^{\ast}$-rigidified derived stack. 
 
Suppose that we have a closed 
embedding 
into the inertia stack
\begin{align}\label{inertia}
(\mathbb{C}^{\ast})_{\mM} \hookrightarrow I_{\mM}\cneq \Delta \times_{\mM \times \mM} \Delta.
\end{align}
Here $\Delta$ is the diagonal. 
Under the presence of the 
embedding (\ref{inertia}), 
by~\cite[Theorem~A.1]{MR2427954} there exists an Artin 
stack
$\mM^{\mathbb{C}^{\ast}\mathchar`-\mathrm{rig}}$
together with a map 
\begin{align}\label{M:gerb}
\varrho_{\mM} \colon 
\mM \to 
\mM^{\mathbb{C}^{\ast}\mathchar`-\mathrm{rig}}
\end{align}
uniquely characterized by the properties that 
$\varrho_{\mM}$ is a $\mathbb{C}^{\ast}$-gerbe
(see~\cite{MR2309155} for the notion of gerbes)
and for any map $\xi \colon T \to \mM$ from a $\mathbb{C}$-scheme $T$, 
the homomorphism of group schemes
\begin{align*}
\Aut_T(\xi) \to \Aut_T(\varrho_{\mM} \circ \xi)
\end{align*}
is surjective with kernel 
$(\mathbb{C}^{\ast})_T$. In the above case, 
any object $\fF \in \Coh(\mM)$ admits 
a $\mathbb{C}^{\ast}$-action induced by the 
inertia action (see~\cite[Section~2.1]{MR2309155}).

The above $\C$-rigidification can be extended to derived 
stacks as follows (see~\cite[Lemma~3.4.9]{HalpK32}). 
Suppose that there is a weak action of $B\C$ on $\fM$ such that the 
induced map on the inertia stack
$(\C)_{\fM} \hookrightarrow I_{\fM}$
is a closed immersion. 
Then we have 
the $\mathbb{C}^{\ast}$-gerbe
\begin{align}\label{derived:gerbe}
\varrho_{\mathfrak{M}} \colon 
\mathfrak{M} \to \mathfrak{M}^{\mathbb{C}^{\ast}\mathchar`-\rm{rig}}
\end{align}
whose classical truncation is identified with (\ref{M:gerb}). 
 
Let $\Lambda$ be the character group 
\begin{align*}
\Lambda=\Hom_{\mathbb{Z}}(\mathbb{C}^{\ast}, \mathbb{C}^{\ast})
\cong \mathbb{Z}.
\end{align*}
For $\lambda \in \Lambda$, we define
the subcategories 
\begin{align*}
\Coh(\mM)_{\lambda} \subset \Coh(\mM), \ 
D_{\rm{coh}}^{b}(\mathfrak{M})_{\lambda}
\subset D_{\rm{coh}}^{b}(\mathfrak{M})
\end{align*}
to be consisting of sheaves with 
$\mathbb{C}^{\ast}$-weight $\lambda$, 
objects of cohomology sheaves 
with 
$\mathbb{C}^{\ast}$-weight $\lambda$, respectively. 
Then 
we have the decompositions 
\begin{align}\label{decompose:chi}
\Coh(\mM)=\bigoplus_{\lambda \in \Lambda}\Coh(\mM)_{\lambda}, \ 
D_{\rm{coh}}^{b}(\mathfrak{M})= 
\bigoplus_{\lambda \in \Lambda}D_{\rm{coh}}^{b}(\mathfrak{M})_{\lambda}. 
\end{align}
The above decompositions are obvious if 
$\varrho_{\mathfrak{M}}$ is a trivial $\mathbb{C}^{\ast}$-gerbe. 
In general, a generalization 
of the arguments in~\cite[Theorem~4.7, Theorem~5.4]{DBOMS} 
easily implies the above decompositions. 
The above decompositions lift to 
dg-enhancements. 
Let $L_{\rm{coh}}(\mathfrak{M})_{\lambda} \subset L_{\rm{coh}}(\mathfrak{M})$
be the dg-subcategory consisting of objects of 
cohomology sheaves with $\mathbb{C}^{\ast}$ weight $\lambda$. 
We have the weak equivalence
\begin{align*}
\bigoplus_{\lambda \in \Lambda} L_{\rm{coh}}(\mathfrak{M})_{\lambda}
\stackrel{\sim}{\to}
L_{\rm{coh}}(\mathfrak{M}). 
\end{align*}

Suppose that the $\mathbb{C}^{\ast}$-weight 
on $\mathbb{L}_{\mathfrak{M}}|_{\mM}$ is zero. 
Then  
the stack 
$\nN=t_0(\Omega_{\mathfrak{M}}[-1])$
also satisfies 
$(\mathbb{C}^{\ast})_{\nN} \subset I_{\nN}$.
Therefore we have its $\mathbb{C}^{\ast}$-rigidification
$\varrho \colon \nN \to \nN^{\mathbb{C}^{\ast}\mathchar`-\mathrm{rig}}$. 
Indeed we have
\begin{align*}
\nN^{\mathbb{C}^{\ast}\mathchar`-\mathrm{rig}} =
t_0(\Omega_{\mathfrak{M}^{\mathbb{C}^{\ast}\mathchar`-\mathrm{rig}} }[-1])
\end{align*}
 and we have 
the Cartesian square
\begin{align}\notag
\xymatrix{
\nN \ar[r]^-{\varrho_{\nN}} \ar[d]\ar@{}[dr]|\square & 
\nN^{\mathbb{C}^{\ast}\mathchar`-\mathrm{rig}} \ar[d] \\
\mM \ar[r]^-{\varrho_{\mM}} & \mM^{\mathbb{C}^{\ast}\mathchar`-\rm{rig}}. 
}
\end{align}
Let us take a conical closed substack
and its complement
\begin{align*}
\zZ^{\mathbb{C}^{\ast}\mathchar`-\mathrm{rig}}
\subset \nN^{\mathbb{C}^{\ast}\mathchar`-\mathrm{rig}}, \ 
(\nN^{\rm{ss}})^{\mathbb{C}^{\ast}\rig}=\nN^{\mathbb{C}^{\ast}\rig} \setminus 
\zZ^{\mathbb{C}^{\ast}\mathchar`-\mathrm{rig}}
\end{align*}
and also set $\zZ=\varrho_{\nN}^{-1}(\zZ^{\mathbb{C}^{\ast}\mathchar`-\mathrm{rig}})$, 
$\nN^{\rm{ss}}=\nN \setminus \zZ$. 
We define the $\lambda$-twisted version of
DT categories as follows: 
\begin{defi}
We 
define $\mathcal{DT}^{\mathbb{C}^{\ast}}(\nN^{\rm{ss}})_{\lambda}$, 
$\mathcal{DT}_{\rm{dg}}^{\mathbb{C}^{\ast}}(\nN^{\rm{ss}})_{\lambda}$
by the Verdier/Drinfeld quotients 
\begin{align*}
&\mathcal{DT}^{\mathbb{C}^{\ast}}(\nN^{\rm{ss}})_{\lambda} \cneq 
D^b_{\rm{coh}}(\mathfrak{M})_{\lambda}/(D^b_{\rm{coh}}(\mathfrak{M})_{\lambda} \cap 
\cC_{\zZ}), \\
&\mathcal{DT}_{\rm{dg}}^{\mathbb{C}^{\ast}}(\nN^{\rm{ss}})_{\lambda}
\cneq L_{\rm{coh}}(\mathfrak{M})_{\lambda}/
(L_{\rm{coh}}(\mathfrak{M})_{\lambda}
\cap L\cC_{\zZ}). 
\end{align*}
\end{defi}
Note that $\mathcal{DT}_{\rm{dg}}^{\mathbb{C}^{\ast}}(\nN^{\rm{ss}})_{\lambda}$ is a 
dg-enhancement of $\mathcal{DT}^{\mathbb{C}^{\ast}}(\nN^{\rm{ss}})_{\lambda}$. 
We have the following proposition:
\begin{prop}\label{prop:rigid}
(i) 
We have the equivalence (weak equivalence)
\begin{align}\label{decompose:chi2}
\bigoplus_{\lambda \in \Lambda}\mathcal{DT}^{\mathbb{C}^{\ast}}(\nN^{\rm{ss}})
_{\lambda} \stackrel{\sim}{\to}
\mathcal{DT}^{\mathbb{C}^{\ast}}(\nN^{\rm{ss}}), \ 
\bigoplus_{\lambda \in \Lambda}\mathcal{DT}_{\rm{dg}}^{\mathbb{C}^{\ast}}(\nN^{\rm{ss}})
_{\lambda} \stackrel{\sim}{\to}
\mathcal{DT}_{\rm{dg}}^{\mathbb{C}^{\ast}}(\nN^{\rm{ss}}). 
\end{align}

(ii) We have the equivalence (weak equivalence)
\begin{align}\label{equiv:rigidify}
\varrho_{\mathfrak{M}}^{\ast} \colon 
\mathcal{DT}^{\mathbb{C}^{\ast}}((\nN^{\rm{ss}})^{\mathbb{C}^{\ast}\rig})
\stackrel{\sim}{\to}\mathcal{DT}^{\mathbb{C}^{\ast}}(\nN^{\rm{ss}})_{\lambda=0}, \ 
\varrho_{\mathfrak{M}}^{\ast} \colon 
\mathcal{DT}_{\rm{dg}}^{\mathbb{C}^{\ast}}((\nN^{\rm{ss}})^{\mathbb{C}^{\ast}\rig})
\stackrel{\sim}{\to}\mathcal{DT}_{\rm{dg}}^{\mathbb{C}^{\ast}}(\nN^{\rm{ss}})_{\lambda=0}. 
\end{align}

(iii) 
Suppose that (\ref{derived:gerbe}) is a trivial 
$\mathbb{C}^{\ast}$-gerbe, i.e. 
$\mathfrak{M}=\mathfrak{M}^{\mathbb{C}^{\ast}\rig} \times B\mathbb{C}^{\ast}$. 
Then we have an equivalence (weak equivalence)
\begin{align}\label{trivial:gerbe}
\mathcal{DT}^{\mathbb{C}^{\ast}}(\nN^{\rm{ss}})_{\lambda=0}
\stackrel{\sim}{\to}
\mathcal{DT}^{\mathbb{C}^{\ast}}(\nN^{\rm{ss}})_{\lambda}, \
\mathcal{DT}_{\rm{dg}}^{\mathbb{C}^{\ast}}(\nN^{\rm{ss}})_{\lambda=0}
\stackrel{\sim}{\to}
\mathcal{DT}_{\rm{dg}}^{\mathbb{C}^{\ast}}(\nN^{\rm{ss}})_{\lambda}. 
\end{align}
\end{prop}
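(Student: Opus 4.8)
The plan is to derive all three statements from the behaviour of the subcategory $\cC_{\zZ}$ under the weight decomposition (\ref{decompose:chi}), using that singular supports are detected on smooth atlases $\alpha\colon\mathfrak{U}\to\mathfrak{M}$ of the form (\ref{frak:U}) and are unchanged by the three operations at hand: passing to a weight component, pulling back along the gerbe $\rho_{\mathfrak{M}}$, and tensoring by a $\mathbb{C}^{\ast}$-character. Since Verdier and Drinfeld quotients commute with direct sum decompositions and carry equivalences to equivalences, it will be enough to match the relevant subcategories; the dg-statements will then follow by the same arguments, as (\ref{decompose:chi}) and the gerbe pullback lift to dg-enhancements.

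First I would prove (i) by showing that $\cC_{\zZ}$ inherits the decomposition, i.e. $\cC_{\zZ}=\bigoplus_{\lambda\in\Lambda}\bigl(\cC_{\zZ}\cap D^b_{\rm{coh}}(\mathfrak{M})_{\lambda}\bigr)$. For $\pP\in\cC_{\zZ}$ with weight decomposition $\pP\cong\bigoplus_{\lambda}\pP_{\lambda}$, each $\pP_{\lambda}$ is a retract of $\pP$, and the Hochschild cohomology classes defining the singular support act on $\Hom^{2\ast}$-groups by natural transformations of the identity, hence commute with this retraction. Therefore, for any atlas $\alpha$, $\Hom^{2\ast}(\dL\alpha^{\ast}\pP_{\lambda},\dL\alpha^{\ast}\pP_{\lambda})$ is a retract of $\Hom^{2\ast}(\dL\alpha^{\ast}\pP,\dL\alpha^{\ast}\pP)$ as a module over $S(\hH^1(\mathbb{T}_{\mathfrak{U}}))$, so $\mathrm{Supp}^{\rm{sg}}(\dL\alpha^{\ast}\pP_{\lambda})\subset\mathrm{Supp}^{\rm{sg}}(\dL\alpha^{\ast}\pP)\subset\alpha^{\ast}\zZ$ and $\pP_{\lambda}\in\cC_{\zZ}$. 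The Verdier (resp. Drinfeld) quotient of $\bigoplus_{\lambda}D^b_{\rm{coh}}(\mathfrak{M})_{\lambda}$ by $\bigoplus_{\lambda}(\cC_{\zZ}\cap D^b_{\rm{coh}}(\mathfrak{M})_{\lambda})$ is then the direct sum of the quotients, which is precisely (\ref{decompose:chi2}).

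Next I would treat (ii). The functor $\rho_{\mathfrak{M}}^{\ast}$ is fully faithful with essential image $D^b_{\rm{coh}}(\mathfrak{M})_{\lambda=0}$; this is the gerbe analogue of smooth descent (cf.~\cite{MR2309155, DBOMS}). It then remains to identify $\rho_{\mathfrak{M}}^{\ast}\cC_{\zZ^{\mathbb{C}^{\ast}\rig}}$ with $\cC_{\zZ}\cap D^b_{\rm{coh}}(\mathfrak{M})_{\lambda=0}$, which by Definition~\ref{def:CZ2} is a condition local on $\mathfrak{M}^{\mathbb{C}^{\ast}\rig}$. Given a smooth atlas $\mathfrak{U}\to\mathfrak{M}^{\mathbb{C}^{\ast}\rig}$ as in (\ref{frak:U}), after an \'etale cover trivializing the gerbe I may assume $\mathfrak{U}\times_{\mathfrak{M}^{\mathbb{C}^{\ast}\rig}}\mathfrak{M}\simeq\mathfrak{U}\times B\mathbb{C}^{\ast}$. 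Since $\mathbb{L}_{B\mathbb{C}^{\ast}}$ sits in cohomological degree $1$, the $B\mathbb{C}^{\ast}$-factor contributes nothing to the classical truncation, giving $t_0(\Omega_{\mathfrak{U}\times B\mathbb{C}^{\ast}}[-1])\cong\mathrm{Crit}(w)\times B\mathbb{C}^{\ast}$ with $\zZ$ pulling back to $\alpha^{\ast}\zZ^{\mathbb{C}^{\ast}\rig}\times B\mathbb{C}^{\ast}$, and the singular support of a weight-zero object, pulled back from $\mathfrak{U}$, is its singular support over $\mathfrak{U}$ times $B\mathbb{C}^{\ast}$; so the two singular support conditions coincide. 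Hence $\rho_{\mathfrak{M}}^{\ast}$ descends to an equivalence of Verdier quotients, which is (\ref{equiv:rigidify}).

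Finally, for (iii), when $\mathfrak{M}=\mathfrak{M}^{\mathbb{C}^{\ast}\rig}\times B\mathbb{C}^{\ast}$, tensoring by the weight-$\lambda$ character $\chi_{\lambda}$ pulled back from $B\mathbb{C}^{\ast}$ gives an equivalence $D^b_{\rm{coh}}(\mathfrak{M})_{\lambda=0}\stackrel{\sim}{\to}D^b_{\rm{coh}}(\mathfrak{M})_{\lambda}$; as $\chi_{\lambda}$ is a line bundle, the canonical isomorphism $\Hom^{2\ast}(\fF\otimes\chi_{\lambda},\fF\otimes\chi_{\lambda})\cong\Hom^{2\ast}(\fF,\fF)$ is linear over the Hochschild cohomology, so $\mathrm{Supp}^{\rm{sg}}(\fF\otimes\chi_{\lambda})=\mathrm{Supp}^{\rm{sg}}(\fF)$ and the equivalence carries $\cC_{\zZ}\cap D^b_{\rm{coh}}(\mathfrak{M})_{\lambda=0}$ onto $\cC_{\zZ}\cap D^b_{\rm{coh}}(\mathfrak{M})_{\lambda}$, descending to the quotients to give (\ref{trivial:gerbe}). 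I expect the main obstacle to be the bookkeeping in (ii): reducing a possibly non-trivial $\mathbb{C}^{\ast}$-gerbe to the trivial case by local trivialization, and checking carefully that the $B\mathbb{C}^{\ast}$-factor contributes neither to the classical $(-1)$-shifted cotangent stack nor to singular supports, so that $\cC_{\zZ^{\mathbb{C}^{\ast}\rig}}$ and $\cC_{\zZ}\cap D^b_{\rm{coh}}(\mathfrak{M})_{\lambda=0}$ really do match.
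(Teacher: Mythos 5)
Your proposal is correct and follows essentially the same route as the paper: in (i) the weight decomposition of $\cC_{\zZ}$ (the paper deduces it directly from $\Hom^{\ast}(\fF,\fF)=\bigoplus_{\lambda}\Hom^{\ast}(\fF_{\lambda},\fF_{\lambda})$, you via a retract argument), in (ii) full faithfulness of $\rho_{\mathfrak{M}}^{\ast}$ onto the weight-zero part followed by matching the subcategories $\cC_{\zZ^{\mathbb{C}^{\ast}\rig}}$ and $\cC_{\zZ}\cap D^b_{\rm{coh}}(\mathfrak{M})_{\lambda=0}$ (which the paper treats as immediate and you verify by local trivialization of the gerbe), and in (iii) tensoring with the character line bundle pulled back from $B\mathbb{C}^{\ast}$. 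The extra details you supply are consistent with the paper's argument and do not change the method.
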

\begin{proof}
In all of (i), (ii), (iii), it is enough to show the equivalences for 
triangulated DT categories.  

(i) For $\fF \in D^b_{\rm{coh}}(\mathfrak{M})$, 
we take the decomposition $\fF=\oplus_{\lambda}\fF_{\lambda}$
where $\fF_{\lambda}$ is $\mathbb{C}^{\ast}$-weight $\lambda$-part. 
Then
$\Hom^{\ast}(\fF, \fF)$ is the direct sum of 
$\Hom^{\ast}(\fF_{\lambda}, \fF_{\lambda})$, 
so by the definition of singular supports 
$\fF$ is an object in $\cC_{\zZ}$ if and only 
if each $\fF_{\lambda}$ is an object in 
$\cC_{\zZ}$. 
Therefore the decomposition (\ref{decompose:chi}) 
restricts to the decomposition
\begin{align*}
\cC_{\zZ}=\bigoplus_{\lambda \in \Lambda} (D^b_{\rm{coh}}(\mathfrak{M})_{\lambda} \cap \cC_{\zZ}).
\end{align*}
By taking quotients, we have the equivalence (\ref{decompose:chi2}). 

(ii) We first show that the functor
\begin{align}\label{rho:equiv:2}
D_{\rm{coh}}^b(\mathfrak{M}^{\mathbb{C}^{\ast}\mathchar`-\mathrm{rig}})
\stackrel{\varrho_{\mathfrak{M}}^{\ast}}{\to} D^b_{\rm{coh}}(\mathfrak{M})_{\lambda=0}
\end{align}
is an equivalence. 
We have the natural transformation
$\id\to \varrho_{\mathfrak{M}\ast} \varrho_{\mathfrak{M}}^{\ast}$, 
which is an isomorphism
as $\varrho_{\mathfrak{M}}$ is \'etale locally trivial $\mathbb{C}^{\ast}$-gerbe. 
Therefore (\ref{rho:equiv:2}) is fully-faithful.
Moreover for an object $E \in D^b_{\rm{coh}}(\mathfrak{M})_{\lambda=0}$, 
we have $E=0$ if $\varrho_{\mathfrak{M}\ast}E=0$, again because 
$\varrho_{\mathfrak{M}}$ is \'etale locally trivial $\mathbb{C}^{\ast}$-gerbe. 
Therefore (\ref{rho:equiv:2}) is also essentially surjective, 
and hence (\ref{rho:equiv:2}) is equivalence. 
Then (\ref{rho:equiv:2})
 obviously restricts to the equivalence between $\cC_{\zZ^{\mathbb{C}^{\ast}\mathchar`-\mathrm{rig}}}$ and 
$D^b_{\rm{coh}}(\mathfrak{M})_{\lambda=0} \cap \cC_{\zZ}$. 
By taking quotients, we obtain the equivalence (\ref{equiv:rigidify}). 

(iii) Under the assumption, there exists a line bundle $\oO_{\mathfrak{M}}(\lambda)$
on $\mathfrak{M}$
given by the pull-back of $\oO_{B\mathbb{C}^{\ast}}(\lambda)$, where 
the latter is the one dimensional $\mathbb{C}^{\ast}$-representation 
with weight $\lambda$. 
The equivalence (\ref{trivial:gerbe}) is given by taking 
the tensor product with $\oO_{\mathfrak{M}}(\lambda)$. 
\end{proof}

\begin{rmk}\label{rmk:twisted}
If $\mM^{\mathbb{C}^{\ast}\rig}$ is a scheme, 
then the $\mathbb{C}^{\ast}$-gerbe (\ref{M:gerb})
is classified by a Brauer class
$\alpha \in \mathrm{Br}(\mM^{\mathbb{C}^{\ast}\rig})$. 
Then by~\cite[Proposition~2.1.3.3]{MR2309155}, 
the category $\Coh(\mM)_{\lambda}$ is equivalent to the 
category of 
$\alpha^{\lambda}$-twisted 
coherent sheaves on $\mM^{\mathbb{C}^{\ast}\rig}$
in the sense of C\v{a}ld\v{a}raru~\cite{MR2700538}. 
Similarly the $\lambda$-twisted version 
$\mathcal{DT}^{\mathbb{C}^{\ast}}(\nN^{\rm{ss}})_{\lambda}$
may be interpreted as $\alpha^{\lambda}$-twisted 
analogue for $\mathcal{DT}^{\mathbb{C}^{\ast}}
((\nN^{\rm{ss}})^{\mathbb{C}^{\ast}\rig})$. 
\end{rmk}

Similar argument also applies to define 
$\wdDT^{\C}(\nN^{\rm{ss}})_{\lambda}$
and $\wdDT^{\C}_{\rm{dg}}(\nN^{\rm{ss}})_{\lambda}$, with 
the properties as in Proposition~\ref{prop:rigid}. 
As we will not use them, we omit details.

\subsubsection{Functoriality of DT categories}
Let $\fM_1$, $\fM_2$ be quasi-smooth and QCA
derived stacks, and take a morphism
$f \colon \fM_1 \to \fM_2$. 
First suppose that 
$f$ is a quasi-smooth morphism. 
In this case, the morphism
$f^{\diamondsuit}$ in the diagram (\ref{diagram:induced}) 
is a closed immersion by Lemma~\ref{lem:beta0}. 
Therefore for any 
conical closed substack $\zZ_2 \subset t_0(\Omega_{\fM_2}[-1])$, 
we have the conical closed substack
\begin{align}\notag
	f^{\diamondsuit}(f^{\spadesuit})^{-1}(\zZ_2) \subset
	t_0(\Omega_{\fM_1}[-1]). 
\end{align}
By~\cite[Lemma~8.4.2]{MR3300415}
(also see~\cite[Lemma~2.2]{THtype}), 
the
functor (\ref{f:back}) 
restricts to the functor 
\begin{align}\label{funct1}
	f^{\ast} \colon 
	\cC_{\zZ_2} \to \cC_{f^{\diamondsuit}(f^{\spadesuit})^{-1}(\zZ_2)}.
	\end{align}

Next suppose that $f\colon \fM_1 \to \fM_2$ is a proper morphism.
In this case, the morphism 
$f^{\spadesuit}$ in the diagram (\ref{diagram:induced})
is a proper morphism of stacks. 
Therefore for any conical closed
substack $\zZ_1 \subset t_0(\Omega_{\fM_1}[-1])$, 
we have the 
conical closed substack 
\begin{align}\notag
	f^{\spadesuit} (f^{\diamondsuit})^{-1}(\zZ_1) \subset 
	t_0(\Omega_{\fM_2}[-1]). 
\end{align}
	Then the functor (\ref{f:push}) restricts to the functor 
	(see~\cite[Lemma~8.4.5]{MR3300415})
	\begin{align}\label{funct2}
		f_{\ast} \colon \cC_{\zZ_1}
		\to \cC_{f^{\spadesuit} (f^{\diamondsuit})^{-1}(\zZ_1)}. 
		\end{align}

Let $\fN$ be another quasi-smooth and QCA derived stack 
with 
a diagram
\begin{align}\label{diagram:N}
	\fM_1 \stackrel{f_1}{\leftarrow} \fN \stackrel{f_2}{\to} \fM_2. 
\end{align}
Suppose that $f_1$ is quasi-smooth and $f_2$ is a 
proper morphism. 
Then 
we have the functor
\begin{align}\label{FM:P}
	F \cneq f_{2\ast} \circ f_1^{\ast} \colon 
	\Dbc(\fM_1) \stackrel{f_1^{\ast}}{\to} \Dbc(\fN) 
	\stackrel{f_{2\ast}}{\to} \Dbc(\fM_2). 
\end{align}
Let $\ev$ be the morphism
\begin{align*}
	\ev \cneq (f_1, f_2) \colon \fN \to \fM_1 \times \fM_2,
\end{align*}
and $\Omega_{\ev}[-2]$ the $(-2)$-shifted conormal stack
\begin{align*}
	\Omega_{\ev}[-2] \cneq 
	\Spec_{\fM_1 \times \fM_2}(S(\mathbb{T}_{\ev}[2])) \to \fM_1 \times \fM_2. 
	\end{align*}
Then we have the diagram (see~\cite[Section~2.4]{THtype})
\begin{align}\label{diagram:fN}
	\xymatrix{
		& & t_0(\Omega_{\ev}[-2])
		\ar@{}[dd]|\square
		\ar[ld]_-{g_1} \ar[rd]^-{g_2} \ar@/_30pt/[lldd]_-{h_1} 
		\ar@/^30pt/[rrdd]^-{h_2} & & \\
		& f_1^{\ast}t_0(\Omega_{\fM_1}[-1]) \ar[ld]_-{f^{\spadesuit}_1} 
		\ar[rd]^-{f^{\diamondsuit}_1} & & f_2^{\ast}t_0(\Omega_{\fM_2}[-1]) \ar[ld]_-{f^{\diamondsuit}_2}
		\ar[rd]^-{f^{\spadesuit}_2} & \\
		t_0(\Omega_{\fM_1}[-1]) & & t_0(\Omega_{\fN}[-1]) & & t_0(\Omega_{\fM_2}[-1]).
	}
\end{align}
Here the middle square is Cartesian. 
Note that $f_1^{\diamondsuit}$ and $g_2$ are closed immersions as $f_1$ is quasi-smooth, 
and $f^{\spadesuit}_2$ is proper as $f_2$ is. 
Therefore $h_2$ is also proper. 
For 
a conical closed substack $\zZ_1 \subset t_0(\Omega_{\fM_1}[-1])$, 
we have 
\begin{align}\label{Z:h}
	f_2^{\spadesuit}(f_2^{\diamondsuit})^{-1}f_1^{\diamondsuit}(f_1^{\spadesuit})^{-1}(\zZ_1)
	=h_2(h_1)^{-1}(\zZ_1)
	\end{align}
which is a conical closed substack in 
$t_0(\Omega_{\fM_2}[-1])$. 
By (\ref{funct1}), (\ref{funct2}), it follows that the functor $F$ in (\ref{FM:P})
restricts to the functor 
\begin{align*}
	F \colon \cC_{\zZ_1} \to \cC_{h_2(h_1)^{-1}(\zZ_1)}. 
	\end{align*}
Therefore we have the following: 

\begin{prop}\emph{(\cite[Proposition~2.4]{THtype})}\label{prop:sendC}
	Suppose that 
	a conical closed substack 
	$\zZ_2 \subset t_0(\Omega_{\fM_2}[-1])$
	contains (\ref{Z:h}), or equivalently the diagram (\ref{diagram:fN}) restricts 
	to the diagram
	\begin{align*}
		\nN_1^{\rm{ss}} \stackrel{h_1}{\leftarrow} h_2^{-1}(\nN_2^{\rm{ss}}) 
		\stackrel{h_2}{\to} \nN_2^{\rm{ss}}. 
		\end{align*}
	Here $\nN_i=t_0(\Omega_{\fM_i}[-1])$ and $\nN_i^{\rm{ss}} \cneq \nN_i \setminus \zZ_i$. 
	Then the functor (\ref{FM:P})
	sends $\cC_{\zZ_1}$ to $\cC_{\zZ_2}$. 
	In particular,
	the functor (\ref{FM:P}) descends to the functor 
	\begin{align}\label{F:descend}
		F \colon \dDT^{\C}(\nN_1^{\rm{ss}}) \to \dDT^{\C}(\nN_2^{\rm{ss}}). 
		\end{align}
\end{prop}
By noting (\ref{ind:M}), the ind-completion of 
(\ref{FM:P}) gives rise to the functor 
\begin{align*}
	F^{\ind} \colon \cneq 
	f_{2\ast}^{\ind} \circ f_1^{\ind\ast} \colon 
	\Ind \Dbc(\fM_2) \stackrel{f_1^{\ind \ast}}{\to}
	\Ind \Dbc(\fN) \stackrel{f_{2\ast}^{\ind}}{\to}
	\Ind \Dbc(\fM_2). 
\end{align*}
It admits the continuous right adjoint functor (see Subsection~\ref{subsub:QCA})
\begin{align}\label{def:FR}
	F^R \cneq 
	f_{1\ast}^{\ind} \circ f_2^! \colon \Ind \Dbc(\fM_2) \stackrel{f_2^{!}}{\to}
	\Ind\Dbc(\fN) \stackrel{f_{1\ast}^{\rm{ind}}} \to 
	\Ind \Dbc(\fM_1). 
\end{align}
We expect that, similarly to Proposition~\ref{prop:sendC}, 
the above functor $F^R$ sends $\Ind \cC_{\zZ_2}$ to $\Ind \cC_{\zZ_1}$
if 
$\zZ_2$ is contained in (\ref{Z:h}). 
This is less obvious since 
$f_1^{\spadesuit}$, $f_2^{\diamondsuit}$ are not necessary proper.
Indeed the argument in Proposition~\ref{prop:sendC} only 
implies that $F^R$ restricts to the functor
\begin{align*}
	F^R \colon \Ind \cC_{\zZ_2} \to \Ind \cC_{\zZ_1'}, \ 
	\zZ_1'=\overline{f_1^{\spadesuit}(f_1^{\diamondsuit})^{-1}\overline{(f_2^{\diamondsuit}
		(f_2^{\spadesuit})^{-1}(\zZ_2))}}
	\end{align*}
and $\zZ_1'$ may not be contained in $\zZ_1$. 
If the diagram (\ref{diagram:N})
is represented by some digram of equivariant 
tuples, we can directly 
show that $F^R$ sends $\Ind \cC_{\zZ_2}$ to $\Ind \cC_{\zZ_1}$ by 
discussing in the Koszul dual side.  

Let $(Y_i, V_i, s_i)$ for $i=1, 2, 3$
be $G_i$-equivariant tuples,  
and consider morphisms of equivariant tuples
with respect to some algebraic group homomorphisms 
$G_3 \to G_i$ for $i=1, 2$
\begin{align}\label{dia:equiv0}
	\xymatrix{
		V_1 \ar[d] & V_3 \ar[r]^-{g_2} \ar[l]_-{g_1} \ar[d] & V_2 \ar[d] \\
		Y_1 \ar@/^10pt/[u]^-{s_1} & Y_3 \ar[r]_-{f_2} 
		\ar[l]^-{f_1}\ar@/^10pt/[u]^-{s_3} 
		& Y_2. \ar@/_10pt/[u]_-{s_2}
	}
\end{align}
The above diagram 
induces the diagram of derived stacks
\begin{align}\label{dia:Uf}
	[\fU_1/G_1] \stackrel{\mathbf{f}_1}{\leftarrow}
	[\fU_3/G_3] \stackrel{\mathbf{f}_2}{\to} [\fU_2/G_2]. 
\end{align}
The diagram (\ref{dia:equiv0}) also induces the diagram
(see the construction of the diagram (\ref{diagram:dual}))
%\begin{align}\label{ind:diag}
%	\xymatrix{
%		& \mathbb{C} & \\
%		& \wW \ar@/_10pt/[dd]_-{r_3}  \ar@{}[dd]|\square \ar[ld]_-{r_1} %\ar[rd]^-{r_2}|(.30)\hole \ar[u]^{\overline{w}} & \\
%		f_1^{\ast}\vV_1^{\vee} \ar@/^10pt/[ruu]^-{\overline{w}_1} \ar[d]_-{f_1} %\ar[rd]_-{g_1} &  & f_2^{\ast}\vV_2^{\vee} \ar[d]^-{f_2} \ar[ld]^-{g_2} %\ar@/_10pt/[luu]_-{\overline{w}_2}  \\
%		\vV_1^{\vee} \ar@/^50pt/[ruuu]^-{w_1}
%		\ar[d] & \vV_3^{\vee} \ar@/_15pt/[uuu]^-{w_3} \ar[d] & \vV_2^{\vee} %\ar@/_50pt/[luuu]_-{w_2}
%		\ar[d] \\
%		\yY_1  & \yY_3 \ar[r]_-{f_2} 
%		\ar[l]^-{f_1}
%		& \yY_2	
%	}
%\end{align}
\begin{align}\label{ind:diag}
	\xymatrix{		
	&	& \wW \ar@/_10pt/[dd]_-{q_3} 
		\ar@/_20pt/[lldd]_-{q_1} \ar@/^20pt/[rrdd]^-{q_2}
		\ar@/^10pt/[ddd]_-{\overline{w}_3}
		\ar@{}[dd]|\square  \ar[ld]_-{r_1} \ar[rd]^-{r_2}  &  & \\
	&	f_1^{\ast}\vV_1^{\vee} \ar@/_10pt/[rdd]_-{\overline{w}_1}
		\ar[dl]_-{f_1} \ar[rd]^-{g_1} &  & f_2^{\ast}\vV_2^{\vee}
		\ar@/^10pt/[ldd]^-{\overline{w}_2} \ar[dr]^-{f_2} \ar[ld]_-{g_2} &\\
	\vV_1^{\vee} \ar[rrd]_-{w_1}	&
		& \vV_3^{\vee} \ar[d]_-{w_3} &  &\vV_2^{\vee}  \ar[lld]^-{w_2} \\
	&	& \mathbb{C}. &   &
	}
\end{align}
Here as before we set $\yY_i=[Y_i/G_i]$ and $\vV_i=[V_i/G_i]$. 
We have the following proposition: 
\begin{prop}\label{prop:ind:R}
	Suppose that the diagram (\ref{diagram:N}) is represented by 
	a diagram (\ref{dia:Uf}) induced by a diagram (\ref{dia:equiv0})
		such that 
	$f_2 \colon \yY_3 \to \yY_1$ is proper, 
	the Cartesian in (\ref{ind:diag}) is a derived Cartesian 
	and $\wW$ is a smooth stack. 
	Assume that a conical closed substack 
	$\zZ_2 \subset t_0(\Omega_{\fM_2}[-1])$ is contained in (\ref{Z:h}), or 
	equivalently the diagram (\ref{diagram:fN}) restricts to the diagram 
	\begin{align*}
		\nN_1^{\rm{ss}} \stackrel{h_1}{\leftarrow} h_1^{-1}(\nN_1^{\rm{ss}}) 
		\stackrel{h_2}{\to} \nN_2^{\rm{ss}}.
	\end{align*}
	Then the functor $F^R$ descends to the 
	functor
	\begin{align*}
		F^R \colon 
		\Ind \dDT^{\C}(\nN_2^{\rm{ss}}) \to 
		\Ind \dDT^{\C}(\nN_1^{\rm{ss}})
	\end{align*}
	which is a right adjoint of the ind-completion of 
	the functor (\ref{F:descend}). 
\end{prop}
\begin{proof}
	We show that the functor $F^R$ in (\ref{def:FR})
	sends $\Ind \cC_{\zZ_2}$ to $\Ind \cC_{\zZ_1}$. 
	By Lemma~\ref{lem:ind}, Lemma~\ref{lem:commute1} and Lemma~\ref{lem:commute2}, 
	it is enough to show that the composition
	\begin{align*}
		\MF_{\qcoh}^{\C}(\vV_2^{\vee}, w_2) 
		&\stackrel{f_2^!}{\to}
		\MF_{\qcoh}^{\C}(f_2^{\ast}\vV_2^{\vee}, \overline{w}_2) 
		\stackrel{g_{2\ast}}{\to}
		\MF_{\qcoh}^{\C}(\vV_3^{\vee}, w_3) \\
		&	\stackrel{g_1^{\ast}}{\to}
		\MF_{\qcoh}^{\C}(f_1^{\ast}\vV_1^{\vee}, w_1) 
		\stackrel{f_{1\ast}}{\to}
		\MF_{\qcoh}^{\C}(\vV_1^{\vee}, w_1) 
	\end{align*}
	sends $\MF_{\qcoh}^{\C}(\vV_2^{\vee}, w_2)_{\zZ_2}$ to 
	$\MF_{\qcoh}^{\C}(\vV_1^{\vee}, w_1)_{\zZ_1}$. 
	By the assumption and the derived base change, 
	the above composition is isomorphic to the 
	composition
	\begin{align*}
		\MF_{\qcoh}^{\C}(\vV_2^{\vee}, w_2) 
		\stackrel{f_2^!}{\to}
		\MF_{\qcoh}^{\C}(f_2^{\ast}\vV_2^{\vee}, \overline{w}_2) 
		\stackrel{r_2^{\ast}}{\to}
		\MF_{\qcoh}^{\C}(\wW, \overline{w}_3) 
		\stackrel{q_{1\ast}}{\to}
		\MF_{\qcoh}^{\C}(\vV_1^{\vee}, w_1).  
	\end{align*}
	Let us take 
	an object $A \in \MF_{\qcoh}^{\C}(\vV_2^{\vee}, w_2)$. 
	Then
	$A$ is supported on $\mathrm{Crit}(w_2)$, so 
	$r_2^{\ast}f_2^{!}(A)$ is supported on 
	$q_2^{-1}(\Crit(w_2))$. 
	Also since $g_{2\ast}f_2^{!}A$ is supported on 
	$\mathrm{Crit}(w_3)$, we have 
	\begin{align*}
		q_{1\ast}(r_2^{\ast}f_2^!(A)|_{\wW \setminus q_3^{-1}(\Crit(w_3))})
		\cong f_{1\ast}g_1^{\ast}(g_{2\ast}f_2^!(A)|_{\vV_3^{\vee} \setminus 
			\Crit(w_3)}) \cong 0. 
	\end{align*}
	From the distinguished triangle in (\ref{dist:support})
	\begin{align*}
		\Gamma_{q_3^{-1}(\Crit(w_3))}(r_2^{\ast}f_2^!(A))
		\to r_2^{\ast}f_2^!(A) \to 
		r_2^{\ast}f_2^!(A)|_{\wW \setminus q_3^{-1}(\Crit(w_3))}
	\end{align*}
	we have the isomorphism 
	\begin{align}\label{isom:q1}
		q_{1\ast}	\Gamma_{q_3^{-1}(\Crit(w_3))}(r_2^{\ast}f_2^!(A))
		\stackrel{\cong}{\to}
		q_{1\ast}r_2^{\ast}f_2^!(A). 
	\end{align}
	Since the above object is supported on $\Crit(w_1)$, the object
	\begin{align}\label{obj:G}
		B \cneq \Gamma_{q_1^{-1}(\Crit(w_1))} \circ \Gamma_{q_3^{-1}(\Crit(w_3))}
		(r_2^{\ast}f_2^!(A))
		\in \MF_{\qcoh}^{\C}(\wW, \overline{w}_3)
	\end{align}
	is supported on $\cap_{i=1}^3 q_i^{-1}(\Crit(w_i))$
	such that $q_{1\ast}B$ is isomorphic to the object (\ref{isom:q1}). 
	
	From the diagram (\ref{diagram:fN}) and Lemma~\ref{lem:critical}, 
	we have 
	\begin{align}\label{ev:-2}
		t_0(\Omega_{\rm{ev}}[-2])
		=\bigcap_{i=1}^3 q_i^{-1}(\Crit(w_i)) \subset \wW
	\end{align}
	and $h_i$ is the restriction of $q_i$ to 
	the above closed substack. 
	Now suppose that $h_2^{-1}(\zZ_2) \subset h_1^{-1}(\zZ_1)$ and $A$
	is supported on $\zZ_2$. 
	Then the object (\ref{obj:G}) is supported on 
	$h_2^{-1}(\zZ_2)$, hence on $h_1^{-1}(\zZ_1)$. 
	Therefore the object (\ref{isom:q1}) is supported on $\zZ_1$. 
	
	 Since the
	functor (\ref{def:FR}) sends $\Ind \cC_{\zZ_2}$
	to $\Ind \cC_{\zZ_1}$, it descends to the functor
	\begin{align*}
		\Ind \Dbc(\fM_2)/\Ind \cC_{\zZ_2} \to 
		\Ind \Dbc(\fM_1)/\Ind \cC_{\zZ_1}. 
	\end{align*}
	Therefore we obtain the desired functor by Lemma~\ref{lem:gcomplete} and 
	Proposition~\ref{lem:indDT}. 
	\end{proof}

%\begin{align*}
%	\xymatrix{
%	V_3 \ar[d] \ar[r] \ar[rd] & V_2 \ar[rd] & \\
%	V_1 \ar[rd] & Y_3 \ar[d] \ar[r] & Y_2 \\
%	& Y_1 &
%}
%	\end{align*}

\subsection{Comparison with cohomological/numerical DT invariants}\label{subsec:cohDT}
In this section, 
we discuss the relationship between 
DT categories in the previous 
section with 
the cohomological DT invariants whose foundation 
is established in~\cite{MR3353002}. 
In~\cite{MR3815168}, it is proved that 
the periodic cyclic homology of 
the category of matrix factorization is 
isomorphic to the $\mathbb{Z}/2$-graded hypercohomology 
of the perverse sheaf of the vanishing cycles, 
and the discussion here is based on this result. 
Then we show that, under some technical assumptions, 
we relate the Euler characteristic of 
the periodic cyclic homology of our DT category
with the numerical DT invariant defined by the Behrend function.

\subsubsection{Periodic cyclic homologies (review)}
Here we recall the definition of periodic cyclic 
homologies for small dg-categories introduced 
by Keller~\cite{MR1667558}. 
Our convention here is due to~\cite[Section~3.1]{MR3815168}. 

Let
$\tT$ be a $(\mathbb{Z}$ or $\mathbb{Z}/2)$-graded
small dg-category. 
Recall that the 
Hochschild complex 
$(\mathrm{Hoch}(\tT), b)$
is 
defined to be 
\begin{align*}
	\mathrm{Hoch}(\tT)&=\bigoplus_{E \in \tT} 
	\Hom_{\tT}(E, E)  \oplus \\
	&\bigoplus_{\begin{subarray}{c}n\ge 1, \\
			E_0, \ldots, E_n \in \tT
	\end{subarray}} \Hom_{\tT^e}(E_n, E_0) \otimes 
	\Hom_{\tT}(E_{n-1}, E_n)[1] \otimes 
	\cdots \otimes \Hom_{\tT}(E_0, E_1)[1].
\end{align*}
Here $\tT^e$ 
is the dg-category defined by formally adding a 
closed identity 
morphism $e_E$ for each
$E \in \tT$, i.e. 
$\mathrm{Ob}(\tT^e)=\mathrm{Ob}(\tT)$
and 
\begin{align*}
	\Hom_{\tT^e}(E, E')=\left\{ 
	\begin{array}{ll}
		\Hom_{\tT}(E, E'), & E \neq E', \\
		\Hom_{\tT}(E, E) \oplus \mathbb{C} \cdot e_E, 
		& E=E'. 
	\end{array}
	\right. 
\end{align*}  
The differential $b$ is the sum
$b=b_1+b_2$, 
where $b_1$ is given by
\begin{align*}
	b_1(a_n, a_{n-1}, \ldots, a_0)=
	\sum_{i=0}^{n} \pm (a_n \ldots, da_{i}, \ldots, a_0) 
\end{align*}
and $b_2$ is given by
\begin{align*}
	b_2(a_n, a_{n-1}, \ldots, a_0)=
	\pm (a_0 a_n, a_{n-1}, \ldots, a_1)+
	\sum_{i=0}^{n-1}\pm
	(a_{n}, \ldots, a_{i+1} a_i, \ldots, 
	a_0). 
\end{align*}
Moreover there is a 
Connes differential $B$ of degree $-1$ defined by
\begin{align*}
	B(a_n, a_{n-1}, \ldots, a_0)=
	\sum_{i=0}^n \pm (e_{E_{i}}, a_{i-1}, \ldots, a_0, 
	a_{n}, \ldots, a_i)
\end{align*}
if $a_n \in \Hom_{\tT}(E_n, E_0)$
and zero if $a_n \in \mathbb{C} \cdot e_{E_{0}}$. 
Here we refer to~\cite[Example, Section~3.1]{MR3815168} for 
choices of signs. 
The triple 
\begin{align}\label{triple}
	(\mathrm{Hoch}(\tT), b, B)
\end{align}
is a mixed complex, i.e. 
dg-module over $\mathbb{C}[\epsilon]/\epsilon^2$
with $\deg \epsilon =-1$, where $\epsilon$
acts by $B$. 

The 
periodic cyclic homology 
of $\tT$ is defined to be
\begin{align}\label{PHom}
	\mathrm{HP}_{\ast}(\tT) \cneq H^{\ast}(\mathrm{Hoch}(\tT)\lgakko u \rgakko, b+uB). 
\end{align}
Here 
$u$ is a formal variable of degree $2$. 
The periodic cyclic homology (\ref{PHom}) is also 
equipped with a natural connection 
$\nabla_{u}$, which makes (\ref{PHom}) 
a $(\mathbb{Z}$ or $\mathbb{Z}/2$)-graded
vector bundle over the formal punctured disc
$\Spf \mathbb{C}\lgakko u \rgakko$
with a connection (see~\cite[Example, Section~3.3]{MR3815168} for details). 
Note that $\mathrm{HP}_{\ast}(\tT)$ is $2$-periodic
since $u \colon \mathrm{HP}_{\ast}(\tT) \to \mathrm{HP}_{\ast+2}(\tT)$
is an isomorphism. 

Below for a $\mathbb{Z}$-graded object $V^{\bullet}$, 
we set $V^{\mathbb{Z}/2}$ to be $V^{\bullet}$ regarded as 
$\mathbb{Z}/2$-graded object, i.e. 
\begin{align}\label{regard:Z2}
	V^{\mathbb{Z}/2}=V^{\rm{even}} \oplus V^{\rm{odd}}. 
\end{align}
For a $\mathbb{Z}$-graded dg-category $\tT$, 
we can regard it as a $\mathbb{Z}/2$-graded dg-category $\tT^{\mathbb{Z}/2}$
by setting
\begin{align*}
	\Hom_{\tT^{\mathbb{Z}/2}}(E, F)
	=\Hom_{\tT}(E, F)^{\mathbb{Z}/2}. 
\end{align*}
Then 
we have the obvious identities 
\begin{align*}
	\mathrm{HP}_{\ast}(\tT^{\mathbb{Z}/2})=
	\mathrm{HP}_{\ast}(\tT)^{\mathbb{Z}/2}=
	(\mathrm{HP}_0(\tT) \oplus \mathrm{HP}_1(\tT))\lgakko u \rgakko. 
\end{align*}

\subsubsection{Periodic cyclic homologies for derived categories of factorizations}
Let $M$ be a smooth quasi-projective variety 
with a regular function $w \colon M \to \mathbb{C}$. 
Then we have the perverse sheaf of vanishing cycles 
(see~\cite[Theorem~5.2.21]{MR2050072})
\begin{align*}
	\phi_{w} \cneq \phi_{w}(\mathbb{Q}_M[\dim M]) \in \mathrm{Perv}(\mathrm{Crit}(w))
\end{align*}
with a monodromy automorphism $T_{w} \in \Aut(\phi_{w})$. 

On the other hand, 
let $\mathrm{MF}_{\rm{coh}}^{\mathbb{Z}/2}(M, w)$
be the $\mathbb{Z}/2$-periodic triangulated categorify of 
(non-$\mathbb{C}^{\ast}$-equivariant) 
coherent factorizations of $w \colon M \to \mathbb{C}$ (see Remark~\ref{rmk:def:Z2}). 
We have its dg-enhancement 
$L_{\rm{coh}}^{\mathbb{Z}/2}(M, w)$, 
defined to be the $\mathbb{Z}/2$-graded 
dg-category of coherent factorizations 
of $w$, 
localized by acyclic factorizations. 
Then the following result is proved in~\cite{MR3815168}:
\begin{thm}\emph{(\cite[Theorem~1.1]{MR3815168})}\label{thm:cyclic}
	We have an isomorphism
	\begin{align}\notag
		(\mathrm{HP}_{\ast}(L_{\rm{coh}}^{\mathbb{Z}/2}(M, w)), \nabla_u) 
		\cong (H^{\ast+\dim M}(M, \phi_{w})
		\otimes_{\mathbb{Q}}\mathbb{C}\lgakko u \rgakko, 
		d+T_{w}/u)
	\end{align}
	as $\mathbb{Z}/2$-graded 
	vector bundles on 
	$\Spf \mathbb{C}\lgakko u \rgakko$ with connections. 
\end{thm}

Suppose that $\mathbb{C}^{\ast}$ acts on $M$ 
which restricts to the trivial action on 
$\mu_2 \subset \mathbb{C}^{\ast}$, 
and assume that $w$ is of weight two. 
Then 
as in Subsection~\ref{subsub:fact}, 
we have the derived category of $\C$-equivariant 
factorizations $\mathrm{MF}_{\rm{coh}}^{\mathbb{C}^{\ast}}(M, w)$,
and its 
dg-enhancement $L_{\rm{coh}}^{\mathbb{C}^{\ast}}(M, w)$
defined to be the 
dg-category of
$\mathbb{C}^{\ast}$-equivariant coherent 
factorizations of $w$ 
localized by acyclic factorizations. 
Then by the following lemma together with Theorem~\ref{thm:cyclic}, 
we can relate the periodic cyclic homology of 
$L_{\rm{coh}}^{\mathbb{C}^{\ast}}(M, w)$ with the 
vanishing cycle cohomology:

\begin{lem}\label{lem:Zgraded}
	We have an isomorphism as $\mathbb{Z}/2$-graded 
	vector bundles on 
	$\Spf \mathbb{C}\lgakko u \rgakko$ with connections:
	\begin{align*}
		(\mathrm{HP}_{\ast}(L_{\rm{coh}}^{\mathbb{C}^{\ast}}(M, w)), 
		\nabla_{u})^{\mathbb{Z}/2}
		\cong (\mathrm{HP}_{\ast}(L_{\rm{coh}}^{\mathbb{Z}/2}(M, w)), \nabla_{u}). 
	\end{align*}
	Here $(-)^{\mathbb{Z}/2}$ means regarding $\mathbb{Z}$-graded 
	object as a $\mathbb{Z}/2$-graded object (see (\ref{regard:Z2})). 
\end{lem}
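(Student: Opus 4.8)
The plan is to reduce the $\mathbb{C}^{\ast}$-equivariant statement to the already-established $\mathbb{Z}/2$-graded statement (Theorem~\ref{thm:cyclic}) by exploiting the fact that the extra $\mathbb{C}^{\ast}$-weight grading, once one passes to periodic cyclic homology, only serves to reorganize the same underlying complex. First I would set up the comparison at the level of dg-categories: since $\mathbb{C}^{\ast}$ acts on $M$ with $w$ of weight two and the action is trivial on $\mu_2$, a $\mathbb{C}^{\ast}$-equivariant factorization is the same datum as a $\mathbb{Z}$-graded factorization in which the shift functor $[2]$ is identified with the weight-one twist $(1)$. Concretely, $L_{\rm{coh}}^{\mathbb{C}^{\ast}}(M,w)$ is a $\mathbb{Z}$-graded dg-category whose underlying $\mathbb{Z}/2$-folding $(L_{\rm{coh}}^{\mathbb{C}^{\ast}}(M,w))^{\mathbb{Z}/2}$ is equivalent — as a $\mathbb{Z}/2$-graded dg-category — to $L_{\rm{coh}}^{\mathbb{Z}/2}(M,w)$. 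This is because forgetting the equivariant structure and remembering only the $\mathbb{Z}/2$-parity collapses precisely the redundancy introduced by the weight grading; the weight-$j$ piece of a morphism space maps to the parity-$j \bmod 2$ piece, and every non-equivariant factorization admits an equivariant structure after the folding identification.

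Next I would invoke the general compatibility of periodic cyclic homology with the $\mathbb{Z}/2$-folding of a $\mathbb{Z}$-graded dg-category, which is recorded in the excerpt right before the lemma: for any $\mathbb{Z}$-graded small dg-category $\tT$ one has $\mathrm{HP}_{\ast}(\tT^{\mathbb{Z}/2}) = \mathrm{HP}_{\ast}(\tT)^{\mathbb{Z}/2}$, and this identification is compatible with the Connes differential $B$, hence with the connection $\nabla_u$ (which is built from $b$ and $B$ together with the action of $u$). Applying this with $\tT = L_{\rm{coh}}^{\mathbb{C}^{\ast}}(M,w)$ gives
\begin{align*}
(\mathrm{HP}_{\ast}(L_{\rm{coh}}^{\mathbb{C}^{\ast}}(M,w)), \nabla_u)^{\mathbb{Z}/2}
\cong (\mathrm{HP}_{\ast}((L_{\rm{coh}}^{\mathbb{C}^{\ast}}(M,w))^{\mathbb{Z}/2}), \nabla_u),
\end{align*}
and then the dg-equivalence $(L_{\rm{coh}}^{\mathbb{C}^{\ast}}(M,w))^{\mathbb{Z}/2} \simeq L_{\rm{coh}}^{\mathbb{Z}/2}(M,w)$ from the first step, together with the invariance of $\mathrm{HP}_{\ast}$ and its connection under Morita/quasi-equivalences of dg-categories (Keller), identifies the right-hand side with $(\mathrm{HP}_{\ast}(L_{\rm{coh}}^{\mathbb{Z}/2}(M,w)), \nabla_u)$. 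Chaining the two isomorphisms yields the claim.

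The main obstacle I anticipate is the first step — making precise and correct the statement that the $\mathbb{Z}/2$-folding of the equivariant dg-category is quasi-equivalent to the non-equivariant one. One has to be careful that this is not merely an equivalence of triangulated categories but an equivalence (or at least a zig-zag of quasi-equivalences) of dg-enhancements, since $\mathrm{HP}_{\ast}$ is only a dg-invariant, not a triangulated invariant; and one must check that the equivalence is compatible with the localization by acyclic factorizations on both sides, so that it descends to the quotient dg-categories that actually compute $L^{\mathbb{C}^{\ast}}_{\rm{coh}}$ and $L^{\mathbb{Z}/2}_{\rm{coh}}$. A clean way to argue is to work with the curved-dg-algebra description from Subsection~\ref{subsec:dfact}: equivariant factorizations are curved dg-modules over $(S(V[-2]),0,w)$ for a suitable $V$, and folding the weight grading to $\mathbb{Z}/2$ sends this to the $\mathbb{Z}/2$-graded curved algebra computing non-equivariant factorizations, with the folding of a cofibrant (resp. acyclic) module again cofibrant (resp. acyclic); the remaining bookkeeping — signs in $b$ and $B$, and the behavior of the formal variable $u$ of degree two under the folding (where $u$ becomes the identity up to the $2$-periodicity already present on the $\mathbb{Z}/2$ side) — is routine but must be tracked to see that $\nabla_u$ matches.
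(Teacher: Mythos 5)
The isomorphism you want is true, but the route you take to it has a genuine gap at its first and crucial step. The claimed quasi-equivalence $(L_{\rm{coh}}^{\mathbb{C}^{\ast}}(M,w))^{\mathbb{Z}/2}\simeq L_{\rm{coh}}^{\mathbb{Z}/2}(M,w)$ is false in general, and in particular your parenthetical assertion that ``every non-equivariant factorization admits an equivariant structure after the folding identification'' does not hold. The folding functor only produces factorizations whose underlying sheaves carry a $\mathbb{C}^{\ast}$-equivariant structure, hence have $\mathbb{C}^{\ast}$-invariant support; a coherent factorization supported on a non-invariant closed subset (already a skyscraper at a point of $V^{\vee}$ off the zero section in the basic local model) is not quasi-isomorphic to, nor a direct summand of, nor even split-generated by, objects in the image, since taking cones and summands cannot enlarge supports beyond invariant ones. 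So the forgetful/folding functor is not a quasi-equivalence and not even a Morita equivalence, and since $\mathrm{HP}_{\ast}$ is only invariant under Morita equivalences (a fully faithful but non-dense functor does not induce an isomorphism), the chain of identifications in your second paragraph collapses at this point. The formal identity $\mathrm{HP}_{\ast}(\tT^{\mathbb{Z}/2})=\mathrm{HP}_{\ast}(\tT)^{\mathbb{Z}/2}$ that you quote is fine; it is the comparison of the two dg-categories that cannot be made at the categorical level.

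What actually makes the lemma work is a comparison of the Hochschild mixed complexes themselves, not of the categories. One first reduces to $M$ affine, $M=\Spec R$ with $R$ evenly graded and $w$ of degree two, using Sumihiro's theorem and the Mayer--Vietoris argument for Hochschild complexes of factorization categories (the analogue of \cite[Proposition~5.1]{MR3815168}); your proposal never addresses this globalization, and it cannot be bypassed because the curved-algebra description you invoke is only available affinely. In the affine case the theorems of \cite{MR3815168} (building on \cite{MR2931331}) identify the mixed complex of $L^{\mathbb{Z}/2}_{\rm{coh}}(M,w)$ with the Hochschild complex of the second kind of the curved algebra $(R^{\mathbb{Z}/2},0,w)$, and the same arguments, run in the $\mathbb{Z}$-graded setting (this requires checking that the inputs, including a $\mathbb{Z}$-graded version of Orlov's theorem, carry over), identify the mixed complex of $L^{\mathbb{C}^{\ast}}_{\rm{coh}}(M,w)$ with $\mathrm{Hoch}^{\rm{I\hspace{-.1em}I}}(R^{\bullet},0,w)$. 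The desired isomorphism then comes from the elementary observation that $\mathrm{Hoch}^{\rm{I\hspace{-.1em}I}}(R^{\bullet},0,w)^{\mathbb{Z}/2}$ is literally the second-kind Hochschild complex of $(R^{\mathbb{Z}/2},0,w)$. Your closing paragraph gestures at the curved-module picture, but frames it as a device for proving the category-level folding equivalence (with cofibrancy/acyclicity bookkeeping), which it cannot do; the identification lives one level down, on Hochschild complexes, and rests on the nontrivial first-kind/second-kind comparison rather than on any equivalence of factorization categories.
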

\begin{proof}
	Since $M$ is smooth quasi-projective, 
	by Sumihiro's theorem~\cite{MR0337963}
	$M$ is covered by $\mathbb{C}^{\ast}$-invariant
	affine open subsets.
	Then the argument
	in~\cite[Proposition~5.1]{MR3815168}
	implies that we can assume that $M$ is affine, 
	$M=\Spec R$ for a $\mathbb{C}$-algebra $R$. 
	Note that the $\mathbb{C}^{\ast}$-action on $M$ yields the 
	$\mathbb{Z}$-grading $R^{\bullet}$ on $R$ such that $w \in R$ 
	is degree two, and $R^{\rm{odd}}=0$. 
	
	In this case, 
	it is proved in~\cite[(4.2), (4.3), (4.4)]{MR3815168}
	that the Hochschild complex of 
	$L_{\rm{coh}}^{\mathbb{Z}/2}(M, w)$
	is, as a mixed complex, 
	quasi-isomorphic to the 
	Hochschild complex of the second kind (see~\cite[Section~2.4]{MR2931331})
	of the 
	$\mathbb{Z}/2$-graded curved dg-algebra
	$(R^{\mathbb{Z}/2}, 0, w)$, 
	where the differential is zero and $w$ is the curvature. 
	As we will explain below,  
	the same arguments in~\cite[(4.2), (4.3), (4.4)]{MR3815168}
	literally work in the $\mathbb{Z}$-graded case, 
	and we have the following: 
	the dg-category 
	$L_{\rm{coh}}^{\mathbb{C}^{\ast}}(M, w)$
	is equivalent to the 
	dg-category of 
	finitely generated 
	$\mathbb{Z}$-graded 
	curved dg-modules over $(R^{\bullet}, 0, w)$
	and its Hochschild complex is, 
	as a mixed complex, quasi-isomorphic to 
	the Hochschild complex 
	of the second kind $\mathrm{Hoch}^{\rm{I\hspace{-.1em}I}}(R^{\bullet}, 0, w)$
	of $(R^{\bullet}, 0, w)$. 
	Explicitly, it is written as
	\begin{align*}
		\mathrm{Hoch}^{\rm{I\hspace{-.1em}I}}(R^{\bullet}, 0, w)=
		R^{\bullet} \oplus \prod_{n\ge 1}
		\overbrace{R^{\bullet} \otimes R^{\bullet}[1] \otimes \cdots \otimes 
			R^{\bullet}[1]}^{n+1}
	\end{align*}
	with 
	differential the sum of the 
	Hochschild differential
	and the twisted differential which involves $w$, 
	see~\cite[Section~3.1]{MR2931331} for details. 
	Then 
	$\mathrm{Hoch}^{\rm{I\hspace{-.1em}I}}(R^{\bullet}, 0, w)^{\mathbb{Z}/2}$
	is nothing but the Hochschild complex of the second kind for 
	$(R^{\mathbb{Z}/2},  0, w)$. 
	Therefore we have the desired isomorphism.  
	
	Here we explain that the 
	arguments of~\cite[(4.2), (4.3), (4.4)]{MR3815168}
	work in the above $\mathbb{Z}$-graded case. 
	Indeed~\cite[(4.2), (4.4)]{MR3815168}
	are consequences of~\cite[Proposition~2.13, Proposition~3.10]{MR3815168}, 
	which refer to~\cite[Lemma~1.5.A, Section~2.4]{MR2931331}.
	But the latter reference is formulated for both of $\mathbb{Z}$ and $\mathbb{Z}/2$ grading, 
	so the same argument applies. 
	Also~\cite[(4.3)]{MR3815168}
	is a consequence of~\cite[Proposition~(3.12)]{MR3815168}, 
	which refers to~\cite[Corollary~4.8A]{MR2931331}.
	The latter reference is formulated only for the $\mathbb{Z}/2$-grading
	case,
	but we can easily modify the proof for the $\mathbb{Z}$-graded
	case by using 
	$\mathbb{Z}$-graded version of 
	Orlov's equivalence between the category of 
	graded matrix factorizations 
	and graded 
	singularity category (see~\cite[Theorem~3.10]{Orsin}). 
\end{proof}

\subsubsection{Conjectural relation with cohomological DT theory}
Let $\mathfrak{M}$ be a quasi-smooth QCA derived stack. 
Below, we use notation in Subsection~\ref{subsec:shifted:cotangent}. 
For a smooth morphism $\alpha \colon \mathfrak{U} \to \mathfrak{M}$
with a presentation $\mathfrak{U}=\Spec \rR(V\to Y, s)$
as in (\ref{frak:U}),
we have the perverse sheaf of vanishing cycles
\begin{align}\notag
	\phi_{w} \cneq \phi_{w}(\mathbb{Q}_{V^{\vee}}[\dim V^{\vee}])
	\in \Perv(\mathrm{Crit}(w)) 
\end{align} 
associated with the diagram (\ref{diagram:KZ}), 
equipped with a monodromy automorphism 
$T_{w} \in \Aut(\phi_{\omega})$. 
By~\cite{MR3353002}, 
we can glue these perverse sheaves to a global 
perverse sheaf
on $\nN=t_0(\Omega_{\mathfrak{M}}[-1])$ with an automorphism
\begin{align}\label{perv:N}
	\phi_{\nN} \in \Perv(\nN), \ 
	T_{\nN} \in \Aut(\phi_{\nN})
\end{align}
once we choose an \textit{orientation data} of the 
d-critical locus $\nN$. 
Here an orientation data is a choice of a square root 
line bundle of the virtual canonical 
line bundle
\begin{align*}
	K_{\nN}^{\rm{vir}}=\det(\mathbb{L}_{\Omega_{\mathfrak{M}}[-1]}|_{\nN})
	\in \Pic(\nN).
\end{align*}
In our case,
the following lemma is well-known:
\begin{lem}\label{lem:square}
	We have a canonical isomorphism 
	$K_{\nN}^{\rm{vir}} \cong p_0^{\ast}\det(\mathbb{L}_{\mathfrak{M}}|_{\mM})^{\otimes 2}$. \end{lem}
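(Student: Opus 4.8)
The plan is to compute $\mathbb{L}_{\Omega_{\mathfrak{M}}[-1]}$ directly from the description of $\Omega_{\mathfrak{M}}[-1]$ as a linear derived stack over $\mathfrak{M}$ and then take determinant line bundles. Write $p \colon \Omega_{\mathfrak{M}}[-1] = \Spec_{\mathfrak{M}} S(\mathbb{T}_{\mathfrak{M}}[1]) \to \mathfrak{M}$ for the projection. Since $\Omega_{\mathfrak{M}}[-1]$ is the relative spectrum of the symmetric algebra on the perfect complex $\mathbb{T}_{\mathfrak{M}}[1]$, its relative cotangent complex is $\mathbb{L}_{\Omega_{\mathfrak{M}}[-1]/\mathfrak{M}} \cong p^{\ast}\mathbb{T}_{\mathfrak{M}}[1]$, and the fundamental triangle for $p$ reads
\begin{align*}
p^{\ast}\mathbb{L}_{\mathfrak{M}} \to \mathbb{L}_{\Omega_{\mathfrak{M}}[-1]} \to p^{\ast}\mathbb{T}_{\mathfrak{M}}[1] \xrightarrow{[1]}.
\end{align*}

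Next I would restrict this triangle along $\nN = t_0(\Omega_{\mathfrak{M}}[-1]) \hookrightarrow \Omega_{\mathfrak{M}}[-1]$ and apply $\det$. Using that $\det$ is multiplicative along distinguished triangles and that $p|_{\nN}$ factors through $p_0 \colon \nN \to \mM$, one gets
\begin{align*}
K_{\nN}^{\rm{vir}} = \det\bigl(\mathbb{L}_{\Omega_{\mathfrak{M}}[-1]}|_{\nN}\bigr) \cong p_0^{\ast}\det\bigl(\mathbb{L}_{\mathfrak{M}}|_{\mM}\bigr) \otimes p_0^{\ast}\det\bigl(\mathbb{T}_{\mathfrak{M}}[1]|_{\mM}\bigr).
\end{align*}
Then I would observe that $\det(\mathbb{T}_{\mathfrak{M}}[1]) = (\det \mathbb{T}_{\mathfrak{M}})^{\vee} = \det(\mathbb{L}_{\mathfrak{M}})$, using the two identities $\det(\gG[1]) = (\det \gG)^{\vee}$ for a perfect complex $\gG$ and $\mathbb{T}_{\mathfrak{M}} = \mathbb{L}_{\mathfrak{M}}^{\vee}$. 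Hence the two tensor factors coincide and $K_{\nN}^{\rm{vir}} \cong p_0^{\ast}\det(\mathbb{L}_{\mathfrak{M}}|_{\mM})^{\otimes 2}$, canonically by naturality of the fundamental triangle and of determinants.

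The only points deserving care are that the fundamental triangle and the identification of $\mathbb{L}_{\Omega_{\mathfrak{M}}[-1]/\mathfrak{M}}$ hold at the level of derived Artin stacks, and that one is consistent about shift-sign conventions so that the exponents add (giving the square) rather than cancel. Neither is a genuine obstacle: everything can be checked smooth-locally on $\mathfrak{M}$ using the local model $\mathfrak{U} = \Spec \rR(V \to A, s)$, where $\Omega_{\mathfrak{U}}[-1]$ is the derived critical locus of $w$ inside $V^{\vee}$ and all complexes are explicit. Concretely, from $0 \to p^{\ast}\Omega_A \to \Omega_{V^{\vee}} \to p^{\ast}V \to 0$ one has $\det \Omega_{V^{\vee}} \cong p^{\ast}\det \mathbb{L}_{\mathfrak{U}}$, and from $\mathbb{L}_{\Omega_{\mathfrak{U}}[-1]} \cong [\mathbb{T}_{V^{\vee}} \xrightarrow{\operatorname{Hess}(w)} \Omega_{V^{\vee}}] \otimes \oO$ one gets $\det(\mathbb{L}_{\Omega_{\mathfrak{U}}[-1]}|_{\mathrm{Crit}(w)}) \cong \det(\Omega_{V^{\vee}})^{\otimes 2}|_{\mathrm{Crit}(w)} \cong p_0^{\ast}\det(\mathbb{L}_{\mathfrak{U}}|_{\uU})^{\otimes 2}$, which patches up to the global statement. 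So the lemma is essentially bookkeeping with determinants of shifted perfect complexes, the main subtlety being to track the shifts so that the factor of two appears.
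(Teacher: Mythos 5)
Your proof is correct and is essentially the paper's own argument: the paper deduces the lemma in one line from the triangle $p^{\ast}\mathbb{L}_{\mathfrak{M}} \to \mathbb{L}_{\Omega_{\mathfrak{M}}[-1]} \to \mathbb{L}_{\Omega_{\mathfrak{M}}[-1]/\mathfrak{M}}=p^{\ast}\mathbb{T}_{\mathfrak{M}}[1]$, exactly as you do. Your extra remarks on $\det(\gG[1])=(\det\gG)^{\vee}$ and the smooth-local check in the model $\mathfrak{U}=\Spec \rR(V\to A,s)$ are just the bookkeeping the paper leaves implicit.
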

\begin{proof}
	The lemma follows from the 
	distinguished triangle
	\begin{align*}
		p^{\ast}\mathbb{L}_{\mathfrak{M}} \to \mathbb{L}_{\Omega_{\mathfrak{M}}[-1]}
		\to \mathbb{L}_{\Omega_{\mathfrak{M}}[-1]/\mathfrak{M}}=p^{\ast}
		\mathbb{T}_{\mathfrak{M}}[1]. 
	\end{align*}
\end{proof}

By the above lemma, 
we can take a canonical 
choice of orientation data
$K_{\nN}^{\rm{vir}, 1/2}=
p_0^{\ast}\det(\mathbb{L}_{\mathfrak{M}}|_{\mM})$, 
and we define (\ref{perv:N}) by this choice of orientation. 
Let $\zZ \subset \nN$
be a conical closed substack such that 
the complement $\nN^{\rm{ss}} \cneq \nN \setminus \zZ$
is a scheme (not stack), which is of finite 
type by the QCA assumption of $\mathfrak{M}$. 
Then 
the cohomological DT invariant is 
defined to be the hypercohomology
\begin{align}\label{DT:coh}
	H^{\ast}(\nN^{\rm{ss}} , \phi_{\nN}|_{\nN^{\rm{ss}} }). 
\end{align}
Based on Theorem~\ref{thm:cyclic} and Lemma~\ref{lem:Zgraded}, 
we expect the following relation of the periodic cyclic 
homologies of DT categories with 
cohomological DT invariants: 

\begin{conj}\label{conj:periodic}
	We have an isomorphism of $\mathbb{Z}/2$-graded 
	vector bundles on 
	$\Spf \mathbb{C}\lgakko u \rgakko$ with connections
	\begin{align*}
		(\mathrm{HP}_{\ast}
		(\mathcal{DT}_{\rm{dg}}^{\mathbb{C}^{\ast}}(\nN^{\rm{ss}})), \nabla_u)^{\mathbb{Z}/2} \cong (H^{\ast+\vdim \mM}(\nN^{\rm{ss}}, \phi_{\nN}|_{\nN^{\rm{ss}}}) \otimes_{\mathbb{Q}}\mathbb{C}\lgakko u \rgakko, 
		d+T_{\nN}/u). 
	\end{align*}
	Here $\vdim \mM$ is the rank of 
	$\mathbb{L}_{\mathfrak{M}}|_{\mM}$. 
\end{conj}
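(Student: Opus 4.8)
The plan is to reduce the global statement to the local one---Theorem~\ref{thm:cyclic} together with Lemma~\ref{lem:Zgraded}---via a descent/Mayer--Vietoris argument over a smooth atlas of $\mathfrak{M}$, using the compatibility of the Koszul-duality equivalences with restriction. First I would fix a smooth atlas $\{\alpha_i \colon \mathfrak{U}_i \to \mathfrak{M}\}$ with each $\mathfrak{U}_i = \Spec \rR(V_i \to A_i, s_i)$ as in (\ref{frak:U}), and the induced functions $w_i \colon V_i^\vee \to \mathbb{C}$ from the diagram (\ref{diagram:KZ}). By Corollary~\ref{cor:equiv:support}, on each piece the relevant $\lambda=0$ categorical DT theory $D^b_{\rm{coh}}(\mathfrak{U}_i)/\cC_{\alpha_i^\ast \zZ}$ is equivalent to $\mathrm{MF}^{\mathbb{C}^\ast}_{\rm{coh}}(V_i^\vee \setminus \alpha_i^\ast\zZ, w_i)$, and by Lemma~\ref{lem:Zgraded} its $\mathbb{Z}/2$-folded periodic cyclic homology with connection matches that of the $\mathbb{Z}/2$-graded factorization category, which by Theorem~\ref{thm:cyclic} is the vanishing-cycle cohomology $H^{\ast + \dim V_i^\vee}((V_i^\vee\setminus\alpha_i^\ast\zZ), \phi_{w_i})$ with its monodromy. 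The shift $\dim V_i^\vee$ versus $\vdim \mM$ and the twist by $p_0^\ast \det(\mathbb{L}_{\mathfrak{M}}|_{\mM})$ must be bookkept: the canonical orientation data of Lemma~\ref{lem:square} is precisely what makes the local isomorphisms glue, since the gluing in~\cite{MR3353002} of the $\phi_{w_i}$ into $\phi_{\nN}$ is governed by that square root, and the Koszul/factorization side carries a matching twist coming from $\det V_i$ as in (\ref{isom:K:dual}).

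The key steps, in order, would be: (1) establish that periodic cyclic homology of the dg-enhancement of $\mathcal{DT}^{\mathbb{C}^\ast}_{\rm{dg}}(\nN_{\rm{st}})$ satisfies smooth descent---i.e. that it can be computed as a homotopy limit of the $\mathrm{HP}_\ast$ of the local pieces over the Čech nerve of the atlas, compatibly with the connections $\nabla_u$; this uses that $\mathrm{HP}_\ast$ is a localizing (or at least a Nisnevich/étale-descending) invariant of dg-categories and that $\mathfrak{M}$ is QCA so the atlas can be taken finite, reducing the homotopy limit to a finite totalization. (2) Run the same descent for the vanishing-cycle side: $\phi_{\nN}$ is glued from the $\phi_{w_i}$ by construction in~\cite{MR3353002}, so $H^\ast(\nN_{\rm{st}}, \phi_{\nN}|_{\nN_{\rm{st}}})$ is the totalization of the Čech complex of the local $H^\ast$'s, with monodromy $T_{\nN}$ glued from the $T_{w_i}$. (3) Match the two towers termwise using Lemma~\ref{lem:Zgraded} + Theorem~\ref{thm:cyclic} on each overlap, checking that the comparison isomorphisms are natural in restriction---this naturality is the content needed to make the cosimplicial objects isomorphic, not just their terms. (4) Take totalizations and identify the outcome, absorbing the shift and the orientation twist as above.

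The main obstacle I expect is step (1) together with the naturality in step (3): one needs that the local Koszul-duality equivalences $\Phi$ of Theorem~\ref{thm:knoer}, and hence the resulting isomorphisms on $\mathrm{HP}_\ast$ with connection, are compatible with the restriction functors along overlaps $\mathfrak{U}_{ij} \to \mathfrak{U}_i$ in a way that is coherent up to higher homotopy---i.e. that one has an equivalence of cosimplicial dg-categories (or at least of the associated cosimplicial objects in mixed complexes), not merely a level-wise equivalence. The diagram~(\ref{dia:commute:MF}) and Remark~\ref{rmk:intrinsic} give the level-zero and level-one compatibilities, and the FM-kernel description of $\Phi$ in Remark~\ref{rmk:precise} makes the higher compatibilities plausible, but assembling them into a genuine descent statement for $\mathrm{HP}_\ast$ with its connection---and verifying that the vanishing-cycle gluing of~\cite{MR3353002} is compatible with Theorem~\ref{thm:cyclic} at the level of the connection, not just the underlying $\mathbb{Z}/2$-graded vector space---is the delicate point. (Indeed, this is why the statement is phrased as a conjecture rather than a theorem; I would expect to prove it under an additional hypothesis, e.g. that $\mathfrak{M}$ admits a nice global presentation or that $\nN_{\rm{st}}$ itself is a global critical locus, where steps (1)--(4) collapse to a single application of Theorem~\ref{thm:cyclic} and Lemma~\ref{lem:Zgraded}.)
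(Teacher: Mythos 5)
This statement is stated in the paper as a conjecture (Conjecture~\ref{conj:periodic}) and is not proved there; what the paper establishes is only the weaker numerical consequence in Proposition~\ref{prop:num}, under the technical hypotheses recorded there, and your closing parenthetical correctly anticipates that fact. Your sketch nonetheless mirrors the paper's partial progress quite faithfully: the descent input you request in step (1) is exactly what Lemma~\ref{lem:cyclic} supplies --- a distinguished triangle of mixed complexes for a two-piece Zariski open cover, built from the localization sequences of Lemma~\ref{lem:exact:Z} via the exactness of the Hochschild-mixed-complex functor~\cite[Theorem~3.1]{MR1492902} and its invariance under weak equivalences~\cite[Theorem~2.6]{MR1492902} --- and the local matching in step (3) is exactly Theorem~\ref{thm:cyclic} together with Lemma~\ref{lem:Zgraded}, fed through the Koszul-duality equivalences of Theorem~\ref{thm:knoer} and Corollary~\ref{cor:equiv:support} and their $G$-equivariant variants.

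The genuine gap you flag at the end is the correct diagnosis of why the paper stops at Proposition~\ref{prop:num} rather than proving the conjecture. Distinguished triangles of mixed complexes and level-wise isomorphisms of $\mathrm{HP}_\ast$ are enough to compare Euler characteristics by additivity and induction on the number of opens, but do not by themselves assemble into a single global isomorphism of $\mathbb{Z}/2$-graded vector bundles with connection: one would need the comparison isomorphisms of Theorem~\ref{thm:cyclic} to be coherently natural in the restriction maps of the cover, compatibly with $\nabla_u$ and with the gluing of~\cite{MR3353002} on the vanishing-cycle side. That naturality-with-connection is not established in the cited references and is precisely the missing ingredient. The only other point worth flagging in your sketch is the parity bookkeeping in step (4): the paper's mechanism is the observation $\vdim \mM \equiv \dim \wW \pmod 2$ in the proof of Proposition~\ref{prop:num}, together with the canonical orientation of Lemma~\ref{lem:square}, which is the precise version of the shift/orientation matching you gesture at.
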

\subsubsection{Relation with numerical DT invariants}
In the situation of the previous 
subsection, 
the hypercohomology (\ref{DT:coh}) is finite dimensional 
and the numerical DT invariant is defined by
\begin{align*}
	\DT(\nN^{\rm{ss}}) \cneq 
	\chi(H^{\ast}(\nN^{\rm{ss}}, \phi_{\nN}|_{\nN^{\rm{ss}}}))
	=\int_{\nN^{\rm{ss}}} \nu_{\nN} \ d\chi.
\end{align*}
Here $\nu_{\nN} \colon x \mapsto \chi(\phi_{\nN}|_{x})$
is a constructible function on $\nN^{\rm{ss}}$, 
called \textit{Behrend function}~\cite{MR2600874}. 
In this case, the isomorphism in Conjecture~\ref{conj:periodic}
in particular implies 
the numerical identity between  
$\DT(\nN^{\rm{ss}})$ and the 
Euler characteristics of the periodic cyclic homology of 
$\mathcal{DT}_{\rm{dg}}^{\mathbb{C}^{\ast}}(\nN^{\rm{ss}})$. 
Here we show that the numerical version of 
Conjecture~\ref{conj:periodic} is true 
under some technical assumptions on $\mathfrak{M}$. 
We first show the following lemma: 
\begin{lem}\label{lem:cyclic}
	Let $\mathfrak{M}=\mathfrak{M}_1 \cup \mathfrak{M}_2$
	be a Zariski open cover of $\mathfrak{M}$. 
	We set $\mM_i=t_0(\mathfrak{M}_i)$, 
	$\mM_{12}=t_0(\mathfrak{M}_{12})$ and 
	\begin{align*}
		\nN_i^{\rm{ss}}
		\cneq p_0^{-1}(\mM_i) \cap \nN^{\rm{ss}}, \ 
		\nN_{12}^{\rm{ss}} \cneq 
		p_0^{-1}(\mM_{12}) \cap \nN^{\rm{ss}}. 
	\end{align*}
	Then we have the distinguished triangle 
	in the derived category of mixed complexes
	\begin{align*}
		\mathrm{Hoch}(\mathcal{DT}^{\mathbb{C}^{\ast}}_{\rm{dg}}(\nN^{\rm{ss}})) \to 
		\mathrm{Hoch}(\mathcal{DT}_{\rm{dg}}^{\mathbb{C}^{\ast}}(\nN_{1}^{\rm{ss}}))
		\oplus
		\mathrm{Hoch}(\mathcal{DT}_{\rm{dg}}^{\mathbb{C}^{\ast}}(\nN_{2}^{\rm{ss}})) 
		\to \mathrm{Hoch}(\mathcal{DT}_{\rm{dg}}^{\mathbb{C}^{\ast}}
		(\nN_{12}^{\rm{ss}})). 
	\end{align*}
\end{lem}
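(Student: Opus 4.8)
The plan is to deduce the Mayer–Vietoris triangle for Hochschild complexes from the corresponding localization sequences of the categorical DT theories, together with the fact that passing to the Hochschild complex (as a mixed complex) sends short exact sequences of small dg-categories — in the sense of Drinfeld quotients — to distinguished triangles in the derived category of mixed complexes. First I would recall that $\mathfrak{M}_i \subset \mathfrak{M}$ and $\mathfrak{M}_{12} = \mathfrak{M}_1 \cap \mathfrak{M}_2$ are quasi-compact open substacks of $\mathfrak{M}$ (quasi-compactness is inherited from the QCA assumption via the finite open cover), so by Lemma~\ref{lem:replace} and the independence of $\mathcal{DT}^{\mathbb{C}^{\ast}}(\nN_{\rm{st}})$ on the choice of quasi-compact open substack, the categories $\mathcal{DT}^{\mathbb{C}^{\ast}}_{\rm{dg}}(\nN_{\rm{st}, i})$ and $\mathcal{DT}^{\mathbb{C}^{\ast}}_{\rm{dg}}(\nN_{\rm{st}, 12})$ are the dg-quotients $L_{\rm{coh}}(\mathfrak{M}_i)/L\cC_{\zZ \cap p_0^{-1}(\mM_i)}$ and $L_{\rm{coh}}(\mathfrak{M}_{12})/L\cC_{\zZ \cap p_0^{-1}(\mM_{12})}$ respectively.

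Next I would set up the Mayer–Vietoris sequence for derived categories of coherent sheaves on the open cover $\mathfrak{M} = \mathfrak{M}_1 \cup \mathfrak{M}_2$. The restriction functors give a homotopy-Cartesian (indeed, Milnor) square of dg-categories
\begin{align*}
\xymatrix{
L_{\rm{coh}}(\mathfrak{M}) \ar[r] \ar[d] & L_{\rm{coh}}(\mathfrak{M}_1) \ar[d] \\
L_{\rm{coh}}(\mathfrak{M}_2) \ar[r] & L_{\rm{coh}}(\mathfrak{M}_{12}),
}
\end{align*}
which on Hochschild complexes produces a distinguished triangle
\begin{align*}
\mathrm{Hoch}(L_{\rm{coh}}(\mathfrak{M})) \to
\mathrm{Hoch}(L_{\rm{coh}}(\mathfrak{M}_1)) \oplus \mathrm{Hoch}(L_{\rm{coh}}(\mathfrak{M}_2)) \to
\mathrm{Hoch}(L_{\rm{coh}}(\mathfrak{M}_{12}))
\end{align*}
in the derived category of mixed complexes. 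This uses localization (Thomason–Neeman, in the QCA form invoked in Lemma~\ref{lem:localization}) to identify $L_{\rm{coh}}(\mathfrak{M}_i)$ with the Verdier localization of $L_{\rm{coh}}(\mathfrak{M})$ along $L_{\rm{coh}}(\mathfrak{M})_{\wW_i}$, and then the additivity/localization property of Hochschild homology for such sequences (as in Keller, applied in the mixed-complex setting following~\cite[Section~3.1]{MR3815168}).

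The core step is to pass from this triangle on the ``ambient'' categories $L_{\rm{coh}}(\mathfrak{M}_\bullet)$ to the triangle on the quotients $\mathcal{DT}^{\mathbb{C}^{\ast}}_{\rm{dg}}(\nN_{\rm{st}, \bullet})$. For this I would observe that the subcategories $L\cC_{\zZ}$, $L\cC_{\zZ \cap p_0^{-1}(\mM_i)}$, $L\cC_{\zZ \cap p_0^{-1}(\mM_{12})}$ themselves form a Mayer–Vietoris square: restriction along the open cover identifies $L\cC_{\zZ}$ localized along its $\wW_i$-supported part with $L\cC_{\zZ \cap p_0^{-1}(\mM_i)}$, because singular support is intrinsic and compatible with restriction to open substacks (Remark~\ref{rmk:intrinsic}, \cite[Section~7]{MR3300415}). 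Hence we get a commutative $3\times 2$ diagram of localization sequences of dg-categories exactly as in the proof of Lemma~\ref{lem:exact:Z}, and taking Hochschild complexes of each row and using the octahedral/three-by-three axiom in the derived category of mixed complexes yields the desired triangle for the quotients. I expect the main obstacle to be the careful bookkeeping needed to ensure the Hochschild functor genuinely sends these Drinfeld quotient sequences to distinguished triangles of mixed complexes with compatible Connes differentials — i.e. verifying that the localization property of periodic cyclic (and underlying mixed-complex) homology applies to the dg-quotients $L_{\rm{coh}}(\mathfrak{M}_\bullet)/L\cC$ and not merely to quotients of triangulated categories; this requires knowing that the relevant dg-categories are smooth/proper enough, or invoking the general additivity results for localization pairs of dg-categories (Keller, Tabuada) in the mixed-complex formulation of~\cite{MR3815168}.
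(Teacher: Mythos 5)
Your proposal is correct and follows essentially the same route as the paper: both arguments come down to Keller's theorem that the Hochschild mixed complex sends exact sequences (localization pairs) of dg-categories to distinguished triangles, applied to the localization sequences of Lemma~\ref{lem:exact:Z}, combined with the excision weak equivalence arising from $\wW=\mM\setminus\mM_1\subset\mM_2$; the paper just applies this directly to the two quotient-level sequences and identifies their kernel terms, rather than first forming Mayer--Vietoris triangles for the ambient categories and the support subcategories and then running a $3\times 3$ argument. Your closing worry is unnecessary: Keller's invariance and localization results (\cite[Theorems~2.6 and 3.1]{MR1492902}) hold for arbitrary exact sequences of dg-categories, with no smoothness or properness hypotheses, which is exactly what the paper invokes.
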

\begin{proof}
	The proof is similar to~\cite[Proposition~5.1]{MR3815168}. 
	We set $\wW=\mM \setminus \mM_1 \subset \mM_2$, 
	and define $L_{\rm{coh}}(\mathfrak{M})_{\wW}$
	to be the dg-subcategory of $L_{\rm{coh}}(\mathfrak{M})$ consisting 
	of objects whose cohomology sheaves are supported on $\wW$. 
	Then 
	by Lemma~\ref{lem:exact:Z}, 
	for 
	$\zZ_i=p_0^{-1}(\mM_i) \cap \zZ$ we have the exact
	sequences of dg-categories, i.e. 
	the induced sequences of the homotopy 
	categories are localization sequences
	\begin{align*}
		&L_{\rm{coh}}(\mathfrak{M})_{\wW}/L\cC_{p_0^{-1}(\wW)\cap \zZ} \to 
		\mathcal{DT}^{\mathbb{C}^{\ast}}_{\rm{dg}}(\nN^{\rm{ss}}) \to 
		\mathcal{DT}^{\mathbb{C}^{\ast}}_{\rm{dg}}(\nN_{1}^{\rm{ss}}), \\
		&L_{\rm{coh}}(\mathfrak{M}_2)_{\wW}/L\cC_{p_0^{-1}(\wW)\cap \zZ_2} \to 
		\mathcal{DT}^{\mathbb{C}^{\ast}}_{\rm{dg}}(\nN_{2}^{\rm{ss}}) \to 
		\mathcal{DT}^{\mathbb{C}^{\ast}}_{\rm{dg}}(\nN_{12}^{\rm{ss}}).
	\end{align*}
	By~\cite[Theorem~3.1]{MR1492902}, 
	the assignment of dg-categories to
	the triples (\ref{triple})
	takes exact sequences 
	to the distinguished triangles of mixed complexes. 
	Therefore 
	we have the distinguished triangles of mixed complexes
	\begin{align}\label{triangle:mixed}
		&\mathrm{Hoch}(L_{\rm{coh}}(\mathfrak{M})_{\wW}/L\cC_{p_0^{-1}(\wW)\cap \zZ}) \to 
		\mathrm{Hoch}(\mathcal{DT}^{\mathbb{C}^{\ast}}_{\rm{dg}}(\nN^{\rm{ss}})) \to 
		\mathrm{Hoch}(\mathcal{DT}^{\mathbb{C}^{\ast}}_{\rm{dg}}(\nN_{1}^{\rm{ss}})), \\
		\notag
		&\mathrm{Hoch}(L_{\rm{coh}}(\mathfrak{M}_2)_{\wW}/L\cC_{p_0^{-1}(\wW)\cap \zZ_2}) \to 
		\mathrm{Hoch}(\mathcal{DT}^{\mathbb{C}^{\ast}}_{\rm{dg}}(\nN_{2}^{\rm{ss}})) \to 
		\mathrm{Hoch}(\mathcal{DT}^{\mathbb{C}^{\ast}}_{\rm{dg}}(\nN_{12}^{\rm{ss}})). 
	\end{align}
	Since $\wW \subset \mM_2$, the restriction functor 
	gives a weak equivalence
	\begin{align*}
		L_{\rm{coh}}(\mathfrak{M})_{\wW}/L\cC_{p_0^{-1}(\wW)\cap \zZ}
		\stackrel{\sim}{\to}L_{\rm{coh}}(\mathfrak{M}_2)_{\wW}/L\cC_{p_0^{-1}(\wW)\cap \zZ_2}.
	\end{align*}
	By~\cite[Theorem~2.6]{MR1492902}, 
	the mixed complexes (\ref{triple}) are quasi-isomorphisms under 
	weak equivalences of dg-categories. 
	Therefore 
	we have the quasi-isomorphism of mixed complexes
	\begin{align*}
		\mathrm{Hoch}(L_{\rm{coh}}(\mathfrak{M})_{\wW}/L\cC_{p_0^{-1}(\wW)\cap \zZ})
		\stackrel{\sim}{\to} \mathrm{Hoch}(L_{\rm{coh}}(\mathfrak{M}_2)_{\wW}/L\cC_{p_0^{-1}(\wW)\cap \zZ_2}).
	\end{align*}
	The lemma holds from distinguished triangles (\ref{triangle:mixed})
	together with the above quasi-isomorphism. 
\end{proof}

For a finite dimensional $\mathbb{Z}/2$-graded vector space 
$V=V_0 \oplus V_1$ over $\mathbb{C}\lgakko u \rgakko$, 
its Euler characteristic 
is defined by 
\begin{align*}
	\chi(V) \cneq \dim_{\mathbb{C}\lgakko u \rgakko} V_0 -
	\dim_{\mathbb{C}\lgakko u \rgakko} V_1.
\end{align*}
We have the following proposition: 
\begin{prop}\label{prop:num}
	Let $\mathfrak{M}$ be a 
	quasi-smooth QCA derived stack
	and $\zZ \subset \nN=t_0(\Omega_{\mathfrak{M}}[-1])$
	is a conical closed substack. 
	Suppose that 
	for each $\mathbb{C}$-valued point $x \in \mathfrak{M}$, 
	there is a Zariski open neighborhood $x \in \mathfrak{M}' \subset \mathfrak{M}$
	satisfying the following conditions: 
	\begin{enumerate}
		\item $\mathfrak{M}'$ is of the form
		$\mathfrak{M}'=[\mathfrak{U}/G]$, 
		where $\mathfrak{U}=\Spec_Y \rR(V \to Y, s)$
		for a smooth quasi-projective scheme $Y$ and a section $s$
		on a vector bundle $V$ on it, and an affine algebraic group $G$
		acts on $Y$, $V$, such that $s$ is $G$-equivariant. 
		\item
		Let $\nN'=\nN \times_{t_0(\mathfrak{M})} t_0(\mathfrak{M}')$
		which is a conical
		closed substack in $[V^{\vee}/G]$, 
		and set ${\nN'}^{\rm{ss}}=\nN' \cap \nN^{\rm{ss}}$, 
		$\zZ'=\nN' \cap \zZ$.  
		Then there is a $\mathbb{C}^{\ast}$-invariant 
		quasi-projective 
		open substack $\wW \subset [V^{\vee}/G] \setminus \zZ'$
		satisfying ${\nN'}^{\rm{ss}}=\nN' \cap \wW$. 
	\end{enumerate}
	Then 
	the periodic cyclic homology 
	$\mathrm{HP}_{\ast}
	(\mathcal{DT}_{\rm{dg}}^{\mathbb{C}^{\ast}}(\nN^{\rm{ss}}))$ 
	is finite dimensional
	over $\mathbb{C}\lgakko u \rgakko$, and 
	we have the identity
	\begin{align}\notag
		\chi(\mathrm{HP}_{\ast}(\mathcal{DT}_{\rm{dg}}^{\mathbb{C}^{\ast}}(\nN^{\rm{ss}})))=(-1)^{\vdim \mM} \DT(\nN^{\rm{ss}}). 
	\end{align}
\end{prop}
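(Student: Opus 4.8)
The plan is to reduce the global statement to a local, equivariant matrix-factorization computation and then invoke the known comparison between periodic cyclic homology and vanishing cycle cohomology. First I would use the Mayer–Vietoris-type property of Hochschild complexes proved in Lemma~\ref{lem:cyclic}: choosing a finite Zariski cover $\mathfrak{M}=\bigcup_i \mathfrak{M}_i$ by open substacks of the form $[\mathfrak{U}_i/G_i]$ as in hypothesis (i), together with the additivity of $\chi(\mathrm{HP}_{\ast}(-))$ on distinguished triangles of mixed complexes (once we know finite-dimensionality), the identity to be proved becomes additive in the cover. Hence, by an inclusion–exclusion over the intersections $\mathfrak{M}_{i_1\cdots i_k}$ (each again of the same local form), it suffices to establish the identity, together with finite-dimensionality of $\mathrm{HP}_{\ast}$, in the local model $\mathfrak{M}'=[\mathfrak{U}/G]$ with $\mathfrak{U}=\Spec_A\rR(V\to A,s)$ and the quasi-projective $\mathbb{C}^{\ast}$-invariant open $\wW\subset[V^{\vee}/G]\setminus\zZ'$ with $\nN'_{\rm st}=\nN'\cap\wW$.

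In the local model, by Corollary~\ref{cor:equiv:support} (its equivariant/stacky version) the category $\mathcal{DT}^{\mathbb{C}^{\ast}}(\nN'_{\rm st})$ is equivalent to $\mathrm{MF}^{\mathbb{C}^{\ast}}_{\rm coh}(\wW,w)$, and this equivalence lifts to dg-enhancements, so $\mathrm{Hoch}(\mathcal{DT}^{\mathbb{C}^{\ast}}_{\rm dg}(\nN'_{\rm st}))$ is quasi-isomorphic, as a mixed complex, to $\mathrm{Hoch}(L^{\mathbb{C}^{\ast}}_{\rm coh}(\wW,w))$. Next I would apply Lemma~\ref{lem:Zgraded} to identify $\mathrm{HP}_{\ast}(L^{\mathbb{C}^{\ast}}_{\rm coh}(\wW,w))^{\mathbb{Z}/2}$ with $\mathrm{HP}_{\ast}(L^{\mathbb{Z}/2}_{\rm coh}(\wW,w))$—here one must first reduce to the case where the ambient smooth stack has trivial generic $\mathbb{C}^{\ast}$-stabilizer and use a further cover by $\mathbb{C}^{\ast}$-invariant affines via Sumihiro, exactly as in the cited proof, the point being that $w$ has weight two and the function is still defined on the honest scheme $\wW/\!\!/G$ or on $\wW$ itself when $G$ acts freely enough. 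Then Theorem~\ref{thm:cyclic} gives $\mathrm{HP}_{\ast}(L^{\mathbb{Z}/2}_{\rm coh}(\wW,w))\cong H^{\ast+\dim\wW}(\wW,\phi_w)\otimes\mathbb{C}\lgakko u\rgakko$, which is finite dimensional because $\mathrm{Crit}(w)\cap\wW=\nN'_{\rm st}$ is of finite type; taking Euler characteristics and using $\chi(H^{\ast}(\nN'_{\rm st},\phi_w))=\int_{\nN'_{\rm st}}\nu\,d\chi$ together with the shift by $\dim\wW$ and the relation $\dim\wW\equiv\vdim\mM'\pmod 2$ (which follows since $\vdim\mM'=\dim A-\rank V$ and $\dim V^{\vee}=\dim A+\rank V$, so they differ by $2\rank V$) produces exactly $(-1)^{\vdim\mM'}\DT(\nN'_{\rm st})$.

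Finally I would assemble the pieces: the inclusion–exclusion over the cover on the categorical side matches term-by-term the additivity of the motivic integral $\int_{(-)}\nu\,d\chi$ over the corresponding cover of $\nN_{\rm st}$, where one must be slightly careful that the sign $(-1)^{\vdim\mM}$ is the same on all strata—this is true because $\vdim$ is locally constant and $\mathfrak{M}$ is quasi-smooth, so all the local models contributing to a connected component have the same $\vdim$. Finite-dimensionality of the global $\mathrm{HP}_{\ast}$ then follows from the local finite-dimensionality and the long exact sequences coming from Lemma~\ref{lem:cyclic}.

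The main obstacle I anticipate is the reduction step inside the local model that makes Lemma~\ref{lem:Zgraded} and Theorem~\ref{thm:cyclic} applicable: those results are stated for a smooth \emph{variety} with a regular function, whereas here we have a quotient stack $[V^{\vee}/G]$ and an open substack $\wW$. One must either (a) pass to a global quotient presentation and use a $G$-equivariant refinement of Sumihiro's theorem plus equivariant descent for the Hochschild complex, or (b) argue directly that the statements of Lemma~\ref{lem:Zgraded} and Theorem~\ref{thm:cyclic} extend to $\mathbb{C}^{\ast}$-equivariant (equivalently, quotient-stack) factorization categories for quasi-projective $\wW$ with a reductive $G$-action. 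The hypothesis (ii) of the Proposition—guaranteeing that $\nN'_{\rm st}$ is cut out inside $\wW$, which is quasi-projective—is precisely what allows this reduction, so the real work is checking that the mixed-complex quasi-isomorphisms of \cite{MR3815168} are compatible with taking $G$-invariants and with the equivalence $\Phi$ of Theorem~\ref{thm:knoer} at the dg-level.
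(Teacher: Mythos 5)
Your global skeleton coincides with the paper's: reduce by Lemma~\ref{lem:cyclic} and additivity of $\chi(\mathrm{HP}_{\ast})$ on triangles of mixed complexes to the local model $\mathfrak{M}'=[\mathfrak{U}/G]$, identify the categorical DT theory there with an equivariant factorization category, and conclude with Lemma~\ref{lem:Zgraded}, Theorem~\ref{thm:cyclic} and a parity count. However, the step you yourself single out as ``the real work'' is precisely the step you leave unresolved, and the two routes you propose for it (a $G$-equivariant Sumihiro argument with descent of Hochschild complexes, or an extension of Theorem~\ref{thm:cyclic} and Lemma~\ref{lem:Zgraded} to $G$-equivariant factorization categories on a quotient stack) are a detour that the hypothesis is designed to avoid. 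This is a genuine gap rather than a different but complete argument.

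The missing idea is that condition (ii) makes $\wW$ an honest smooth \emph{quasi-projective variety}, not merely a stack: writing $\wW=[W/G]$ for a $G$-invariant open $W\subset V^{\vee}\setminus Z$, the $G$-action on $W$ is free. The local argument then runs: a $G$-equivariant version of Theorem~\ref{thm:knoer} gives $\mathcal{DT}^{\mathbb{C}^{\ast}}(\nN_{\rm{st}})\simeq \mathrm{MF}^{G\times\mathbb{C}^{\ast}}_{\rm{coh}}(V^{\vee}\setminus Z, w)$; restriction to $W$ is an equivalence because $\mathrm{Crit}(w)\setminus Z=\mathrm{Crit}(w|_{W})$ (this is exactly what $\nN'_{\rm{st}}=\nN'\cap\wW$ encodes); and free descent identifies $\mathrm{MF}^{G\times\mathbb{C}^{\ast}}_{\rm{coh}}(W, w)$ with $\mathrm{MF}^{\mathbb{C}^{\ast}}_{\rm{coh}}(\wW, \overline{w})$ for the descended function $\overline{w}\colon\wW\to\mathbb{C}$ with $\mathrm{Crit}(\overline{w})=\nN_{\rm{st}}$. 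At that point Lemma~\ref{lem:Zgraded} and Theorem~\ref{thm:cyclic} apply verbatim to the smooth quasi-projective $\wW$; no compatibility of the mixed-complex quasi-isomorphisms of~\cite{MR3815168} with taking $G$-invariants is needed. Relatedly, your parity count compares $\vdim\mM'$ with $\dim V^{\vee}$ and omits $\dim G$ on both sides; the correct comparison is $\vdim\mM=\dim A-\rank(V)-\dim G$ against $\dim\wW=\dim A+\rank(V)-\dim G$, which agree modulo $2$ --- your cancellation happens to be harmless, but it signals that the descent to $\wW$ was not actually performed in your argument.
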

\begin{proof}
	By the 
	QCA assumption on $\mathfrak{M}$, 
	it is covered by finite number of 
	Zariski open substacks 
	of the form $\mathfrak{M}'$ satisfying 
	the conditions (i), (ii). 
	Therefore by Lemma~\ref{lem:cyclic} and 
	the induction on the number of open covers, 
	we can assume that 
	$\mathfrak{M}=\mathfrak{M}'$. 
	Then $\nN=[\mathrm{Crit}(w)/G]$
	where $w \colon V^{\vee} \to \mathbb{C}$ is given 
	as in the diagram (\ref{diagram:KZ}), which is $G$-invariant 
	by the condition (i), 
	and $\zZ$ is written as $[Z/G]$ for a $G$-invariant conical 
	closed subset $Z \subset \mathrm{Crit}(w)$. 
	
By Proposition~\ref{prop:koszul:Z}, we have the equivalence 
	\begin{align*}
		\mathcal{DT}^{\mathbb{C}^{\ast}}(\nN^{\rm{ss}}) 
		=D^b_{\rm{coh}}([\mathfrak{U}/G])/\cC_{[Z/G]} 
		\stackrel{\sim}{\to} 
		\mathrm{MF}_{\rm{coh}}^{\mathbb{C}^{\ast}}([(V^{\vee} \setminus Z)/G], w). 
	\end{align*}
	We take $\wW$ as in the condition (ii), 
	and write $\wW=[W/G]$ 
	for a $G$-invariant open subscheme $W \subset V^{\vee} \setminus Z$
	on which the $G$-action is free. 
	Since $\mathrm{Crit}(w) \setminus Z=\mathrm{Crit}(w|_{W})$, 
	the restriction functor 
	gives an equivalence (see~(\ref{rest:equiv}))
	\begin{align*}
		\mathrm{MF}_{\rm{coh}}^{\mathbb{C}^{\ast}}([(V^{\vee} \setminus Z)/G], w)
		\stackrel{\sim}{\to} \mathrm{MF}_{\rm{coh}}^{\mathbb{C}^{\ast}}(\wW, w). 
	\end{align*}
	The above equivalence obviously lifts to a weak equivalence of the dg-enhancements, 
	so by~\cite[Theorem~2.6]{MR1492902}
	we have the quasi-isomorphism
	of mixed complexes
	\begin{align*}
		\mathrm{Hoch}(
		\mathcal{DT}_{\rm{dg}}^{\mathbb{C}^{\ast}}(\nN^{\rm{ss}}))
		\stackrel{\sim}{\to}
		\mathrm{Hoch}(L_{\rm{coh}}^{\mathbb{C}^{\ast}}(\wW, w)). 
	\end{align*}
	Moreover we have 
	\begin{align*}
		\vdim \mM=\dim Y-\rank(V)-\dim G, \ 
		\dim \wW=\dim Y+ \rank(V)-\dim G, 
	\end{align*}
	hence $\vdim \mM \equiv \dim \wW \pmod{2}$. 
	Therefore 
	the proposition holds by applying Theorem~\ref{thm:cyclic}
	and Lemma~\ref{lem:Zgraded}
	for $w \colon \wW \to \mathbb{C}$. 
\end{proof}

\begin{rmk}\label{rmk:condition}
	The assumption of Proposition~\ref{prop:num} is automatically 
	satisfied if $t_0(\mathfrak{M})$ is a quasi-projective 
	scheme by~\cite[Theorem~4.1]{BBJ}. 
	In this case, the algebraic group $G$ can be taken 
	to be trivial. 
\end{rmk}

\subsection{DT categories for local surfaces}\label{subsec:catDTsurf}
In this section, we introduce DT categories 
for compactly supported 
(semi)stable coherent sheaves on local surfaces, 
following the constructions in Section~\ref{subsec:catVer}. 
We then formulate a conjecture 
on the wall-crossing formula of DT categories of 
one dimensional stable sheaves from the viewpoint of 
d-critical birational geometry. 
\subsubsection{Derived moduli stacks of coherent sheaves on surfaces}
Let $S$ be a smooth projective surface over $\mathbb{C}$. 
We consider the 
derived Artin stack
\begin{align}\label{dstack:MS}
\mathfrak{Perf}_S \colon dAff^{op} \to SSets
\end{align}
which sends an affine derived scheme 
$T$ to the $\infty$-groupoid of 
perfect complexes on $T \times S$, 
constructed in~\cite{MR2493386}. 
We have the open substack
\begin{align*}
\mathfrak{M}_S \subset \mathfrak{Perf}_S
\end{align*}
corresponding to perfect complexes on $S$ 
quasi-isomorphic to 
coherent sheaves on $S$. 
Since
any object in $\Coh(S)$ is perfect as $S$ is smooth,
the derived Artin stack $\mathfrak{M}_S$ is 
the derived  
moduli stack of objects in 
$\Coh(S)$. 

Let $\mM_S \cneq t_0(\mathfrak{M}_S)$ and 
take the universal families
\begin{align}\label{F:universal}
\mathfrak{F} \in D_{\rm{coh}}^b(S \times \mathfrak{M}_S), \ 
\fF \cneq \mathfrak{F}|_{S \times \mM_S} \in D_{\rm{coh}}^b(S \times \mM_S). 
\end{align}
Then the perfect obstruction theory 
on $\mM_S$ induced by the 
cotangent complex of 
$\mathfrak{M}_S$ is given by 
\begin{align}\label{perf:obs1}
&\eE^{\bullet}=\left(\dR p_{\mM\ast} 
\dR \hH om_{S \times \mM_S}(\fF, \fF)[1]\right)^{\vee}
\to \tau_{\ge -1} \mathbb{L}_{\mM_S}.
\end{align}
Here $p_{\mM} \colon S \times \mM_S \to \mM_S$ is the projection. 
By the above description of the perfect obstruction
theory, the derived moduli stack $\mathfrak{M}_S$ is quasi-smooth.

Let $N(S)$ be the numerical Grothendieck group of 
$S$
\begin{align}\notag
N(S) \cneq K(S)/\equiv
\end{align}
where $F_1, F_2 \in K(S)$
satisfies $F_1 \equiv F_2$ if $\ch(F_1)=\ch(F_2)$. 
Then $N(S)$ is a finitely generated free abelian 
group. 
We have the decompositions into open and closed substacks
\begin{align*}
\mathfrak{M}_S=\coprod_{v \in N(S)} \mathfrak{M}_S(v), \ 
\mM_S=\coprod_{v \in N(S)} \mM_S(v)
\end{align*}
where each component 
corresponds to sheaves $F$
with $[F]=v$. 
%We have the following well-known lemma: 
%\begin{lem}\label{loc:quot}
%The stack $\mM_S(v)$ is a locally quoteint stack in the sense of 
%Definition~\ref{defi:loc}. 
%\end{lem}
%\begin{proof}
%For a $\mathbb{C}$-valued point $p \in \mM_S(v)$
%corresponding to a sheaf $F$ on $S$, 
%let $\mathbb{V}$ be a vector space with 
%dimension $\dim H^0(F(m))$ for $m\gg 0$, where 
%$\oO_S(1)$ is a fixed ample line bundle on $S$. 
%Let
%\begin{align*}
%\qQ(v) \subset \mathrm{Quot}(\mathbb{V} \otimes\oO_S(-m), v)
%\end{align*}
%be the open subscheme of Grothendieck quot scheme
%parametrizing 
%surjections
%$\phi \colon \mathbb{V} \otimes \oO_S(-m) \twoheadrightarrow F'$
%such that $[F']=v$ and $H^0(\phi(m))$ is an isomorphism. 
%Then the 
%surjection 
%$H^0(F(m)) \otimes \oO_S(-m) \twoheadrightarrow F$
%determines a point in $\qQ(v)$
%up to a choice of an isomorhism 
%$\mathbb{V} \stackrel{\cong}{\to} H^0(F(m))$. 
%Therefore we have a Zariski open neighborhood
%\begin{align*}
%p \in [\qQ(v)/\GL(\mathbb{V})] \subset \mM_S(v). 
%\end{align*}
%\end{proof}

Note that the automorphism 
group of a sheaf $F$ on $S$
contains a one dimensional torus 
$\mathbb{C}^{\ast} \subset \Aut(F)$
given by the scalar multiplication. 
Therefore 
the inertia stack $I_{\fM_S}$ of $\fM_S$
admits an embedding 
$(\mathbb{C}^{\ast})_{\fM_S} \subset I_{\fM_S}$. 
As in Subsection~\ref{subsec:rig}, 
we have the 
$\mathbb{C}^{\ast}$-rigidification 
\begin{align}\label{C:rig}
\mathfrak{M}_S(v) \to 
\mathfrak{M}_S^{\mathbb{C}^{\ast}\mathchar`-\mathrm{rig}}(v).
\end{align}

\subsubsection{Moduli stacks of compactly supported sheaves on local 
surfaces}
For a smooth projective surface $S$, 
we consider its total space of the canonical line bundle: 
\begin{align*}
X=\mathrm{Tot}_S(\omega_S) \stackrel{\pi}{\to}S.
\end{align*}
Here $\pi$ is the projection. 
We denote by 
$\Coh_{\rm{cpt}}(X) \subset \Coh(X)$
the subcategory of compactly supported coherent 
sheaves on $X$. 
We consider the 
classical Artin stack
\begin{align*}
\mM_X : 
Aff^{\rm{op}} \to Groupoid
\end{align*}
whose $T$-valued points for $T \in Aff$
form the groupoid of $T$-flat families of 
objects in 
$\Coh_{\rm{cpt}}(X)$. 
We have the decomposition into open and closed 
substacks
\begin{align*}
\mM_X=\coprod_{v\in N(S)} \mM_X(v)
\end{align*}
where $\mM_X(v)$ corresponds to compactly 
supported
sheaves $F$ on $X$ with $[\pi_{\ast}F]=v$. 
By pushing forward to $S$, we have the natural 
morphism 
\begin{align}\label{pi:ast}
\pi_{\ast} \colon \mM_X(v) \to \mM_S(v), \ 
F \mapsto \pi_{\ast}F. 
\end{align}
 The following lemma is well-known (cf.~\cite[Lemma~5.4]{MR3842061}). 
\begin{lem}\label{lem:moduliX}
We have an isomorphism of stacks over $\mM_S(v)$
\begin{align}\label{isom:etaM}
\eta \colon 
\mM_X(v) \stackrel{\cong}{\to} t_0(\Omega_{\mathfrak{M}_S(v)}[-1])
=\mathrm{Obs}^{\ast}(\eE^{\bullet}|_{\mM_S(v)}). 
\end{align}
\end{lem}
\begin{proof}
The lemma is easily proved 
by noting that the fiber of (\ref{pi:ast}) 
at $[F] \in \mM_S(v)$ is the space of 
$\oO_X$-module structures on $F$, 
that is 
\begin{align*}
(\pi_{\ast})^{-1}([F])=
\Hom(F, F \otimes \omega_S)=\Ext^2(F, F)^{\vee}=
\Spec S(\hH^1(\eE^{\bullet \vee}|_{[F]})).
\end{align*}
\end{proof}

Similarly to (\ref{C:rig}), 
we have the $\mathbb{C}^{\ast}$-rigidification 
\begin{align*}
\mM_X(v) \to \mM_X(v)^{\C\rig}.
\end{align*}
An argument similar to Lemma~\ref{lem:moduliX}
shows the isomorphism
\begin{align}\label{isom:rigid:S}
\mM_X(v)^{\mathbb{C}^{\ast}\mathchar`-\mathrm{rig}} \stackrel{\cong}{\to}
t_0
(\Omega_{\mathfrak{M}_S(v)^{\C\rig}}[-1]). 
\end{align}

\subsubsection{Definition of DT category for local surfaces}\label{subsec:catDT:sigma}
Let $A(S)_{\mathbb{C}}$ be the ample cone
\begin{align*}
	A(S)_{\mathbb{C}} \cneq \{B+iH : H \mbox{ is ample}\}
	\subset \mathrm{NS}(S)_{\mathbb{C}}. 
	\end{align*}
For $\sigma=B+iH \in A(S)_{\mathbb{C}}$ and 
 an object $F \in \Coh_{\rm{cpt}}(X)$, 
its $B$-twisted reduced Hilbert polynomial is defined by
\begin{align}\label{reduced:Hilb}
\overline{\chi}_{\sigma}(F, m) \cneq 
\frac{\chi(\pi_{\ast}F \otimes \oO_S(-B+mH))}{c} \in \mathbb{Q}[m]. 
\end{align}
Here $c$ is the coefficient of the highest degree term of 
the polynomial $\chi(\pi_{\ast}F \otimes \oO_S(-B+mH))$ in $m$. 
\begin{rmk}
Note that the notation $\oO_S(-B+mH)$ makes sense only when 
$\sigma$ is an integral class. But the right hand side of (\ref{reduced:Hilb})
is determined by the numerical class of $\sigma$, so 
it also makes sense for any $\sigma$. 
\end{rmk}
The following definition gives the notion of 
$B$-twisted $H$-Gieseker stability
for compactly supported sheaves on $X$. 
\begin{defi}\label{def:sigmastab}
An object $F \in \Coh_{\rm{cpt}}(X)$ is 
$\sigma$-(semi)stable if it is a pure dimensional 
sheaf, and
for any subsheaf $0 \neq F' \subsetneq F$ we have
$\overline{\chi}_{\sigma}(F', m) <
 (\le) \overline{\chi}_{\sigma}(F, m)$
for $m\gg 0$. 
\end{defi}

We have the open substacks
\begin{align}\notag
\mM_{X}^{\sigma \st}(v)
\subset 
\mM_{X}^{\sigma}(v)
\subset \mM_X(v)
\end{align}
corresponding to $\sigma$-stable sheaves, $\sigma$-semistable sheaves, 
respectively. 
By the GIT construction of 
the moduli stack $\mM_X^{\sigma}(v)$, it admits a good moduli 
space
\begin{align}\label{gmoduli:M}
	\mM_X^{\sigma}(v) \to M_X^{\sigma}(v)
	\end{align}
where $M_X^{\sigma}(v)$ is a quasi-projective 
scheme whose closed points correspond to $\sigma$-polystable objects, i.e. 
direct sum of $\sigma$-stable objects with the same $B$-twisted
reduced Hilbert polynomials. 
Suppose that 
the following condition holds: 
\begin{align}\label{cond:ss=st}
\mM_{X}^{\sigma \st}(v)
=
\mM_{X}^{\sigma}(v).
\end{align}
In this case, 
the morphism (\ref{gmoduli:M}) is a $\C$-gerbe
so that 
\begin{align*}
M_{X}^{\sigma}(v) =
\mM_{X}^{\sigma}(v)^{\mathbb{C}^{\ast} \rig}
\end{align*}
holds. 
Note that it 
admits a  
$\mathbb{C}^{\ast}$-action induced by the fiberwise
weight two  
$\mathbb{C}^{\ast}$-action on $\pi \colon X \to S$. 

The moduli stack of $\sigma$-semistable sheaves 
$\mM_X^{\sigma}(v)$ is of finite 
type, while $\mathfrak{M}_S(v)$ is not quasi-compact in general, so 
in particular it is not QCA. 
So we take a quasi-compact open derived substack
$\mathfrak{M}_{S}(v)_{\rm{qc}} \subset \mathfrak{M}_S(v)$
satisfying the condition  
\begin{align}\label{take:open}
\pi_{\ast}(\mM_{X}^{\sigma}(v)) \subset t_0(\mathfrak{M}_{S}(v)_{\rm{qc}}). 
\end{align}
Here $\pi_{\ast}$ is the morphism (\ref{pi:ast}). 
Note that $\fM_S(v)_{\rm{qc}}$ is QCA. 
By the isomorphism $\eta$ in Lemma~\ref{lem:moduliX}, 
 we have the conical closed substack
\begin{align*}
\zZ^{\sigma\mathchar`-\rm{us}}
\cneq t_0(\Omega_{\mathfrak{M}_S(v)_{\rm{qc}}}[-1])
\setminus \eta(\mM_{X}^{\sigma}(v)) \subset t_0(\Omega_{\mathfrak{M}_S(v)_{\rm{qc}}}[-1]).
\end{align*}
By taking the $\mathbb{C}^{\ast}$-rigidification
and  
using the isomorphism (\ref{isom:rigid:S}), 
we also have the conical closed substack
\begin{align*}
(\zZ^{\sigma \us})^{\mathbb{C}^{\ast} \rig}
\subset t_0(\Omega_{\mathfrak{M}_S(v)^{\C\rig}_{\rm{qc}}}[-1]).
\end{align*}
By Definition~\ref{defi:catDT}, 
the DT category for 
$M_{X}^{\sigma}(v)$ is defined as follows: 
\begin{defi}\label{catDT:stable}
The $\C$-equivariant DT category for $\mM_X^{\sigma}(v)$
is defined as 
\begin{align*}
\dDT^{\C}(\mM_{X}^{\sigma}(v)) \cneq 
\Dbc(\fM_S(v)_{\rm{qc}})/\cC_{\zZ^{\sigma \us}}. 
	\end{align*}
When the condition (\ref{cond:ss=st})
holds, the $\mathbb{C}^{\ast}$-equivariant DT category for 
the moduli space $M_{X}^{\sigma}(v)$ 
is defined by 
\begin{align*}
&\mathcal{DT}^{\mathbb{C}^{\ast}}(M_{X}^{\sigma}(v)) \cneq 
D_{\rm{coh}}^b
(\mathfrak{M}_S(v)^{\C\rig}_{\rm{qc}})
/\cC_{(\zZ^{\sigma \us})^{\mathbb{C}^{\ast} \rig}}.
\end{align*}
The dg-enhancements, $\lambda$-twisted version, $\wdDT$-version and
ind-version  
are similarly defined following Subsections~\ref{subsub:defDT}, \ref{subsub:DThat}, \ref{subsec:rig}. 
\end{defi}
\begin{rmk}\label{rmk:open:independent}
By Lemma~\ref{lem:replace0}, 
the DT categories in Definition~\ref{catDT:stable}
are independent of 
a choice of a quasi-compact open substack 
$\mathfrak{M}_S(v)_{\rm{qc}}$ 
of $\mathfrak{M}_S(v)$ 
satisfying (\ref{take:open}), up to equivalence. 
\end{rmk}

\begin{rmk}\label{rmk:DTtwist}
The DT category 
$\dDT^{\C}(\mM_X^{\sigma}(v))$ decomposes into the direct
sum 
\begin{align}\notag
	\dDT^{\C}(\mM_X^{\sigma}(v))=
	\bigoplus_{\lambda \in \mathbb{Z}} 
	\dDT^{\C}(\mM_X^{\sigma}(v))_{\lambda}
\end{align}
where each summand 
corresponds to $\lambda$-twisted version (see Subsection~\ref{subsec:rig}). 
\end{rmk}
Note that we have the fully-faithful functor
\begin{align*}
i_{\ast} \colon \Coh(S) \to \Coh_{\rm{cpt}}(X)
\end{align*}
where $i$ is the zero section of 
$\pi \colon X \to S$. 
The image of the above functor is 
closed under subobjects and quotients. 
Therefore an object $F \in \Coh(S)$ is 
$\sigma$-(semi)stable if and only if $i_{\ast}F$
is $\sigma$-(semi)stable 
in Definition~\ref{def:sigmastab}. 
Let us also take open substacks 
\begin{align}\label{dstack:surface}
\mM_S^{\sigma \st}(v) \subset \mM_{S}^{\sigma}(v) \subset \mM_S(v), \ 
 \fM_S^{\sigma \st}(v) 
 \subset \mathfrak{M}_{S}^{\sigma}(v) \subset \mathfrak{M}_S(v)
 \end{align}
consisting of $\sigma$-(semi)stable sheaves on $S$.  
\begin{lem}\label{lem:open:Sss}
For the map $\pi_{\ast}$ in (\ref{pi:ast}), we have 
\begin{align}\label{open:Sss}
	(\pi_{\ast})^{-1}( \mM_{S}^{\sigma \st}(v)) \subset 
	\mM_{X}^{\sigma \st}(v), \ 
(\pi_{\ast})^{-1}( \mM_{S}^{\sigma}(v)) \subset 
 \mM_{X}^{\sigma}(v). 
\end{align}
\end{lem}
\begin{proof}
For a compactly supported coherent sheaf $E$ on $X$, 
suppose that it is not $\sigma$-(semi)stable. 
Then a destabilizing subsheaf $F \subset E$
gives a subsheaf $\pi_{\ast}F \subset \pi_{\ast}E$
as $\pi$ is affine, which destabilizes
$\pi_{\ast}E$. 
\end{proof}

\begin{lem}\label{lem:ss:dt}
Suppose that (\ref{cond:ss=st}) holds,  
and the left open immersion (\ref{open:Sss}) is an isomorphism. 
Then we have the equivalence
\begin{align*}
\mathcal{DT}^{\mathbb{C}^{\ast}}(M_{X}^{\sigma}(v)) \stackrel{\sim}{\to}
D^{b}_{\rm{coh}}(\mathfrak{M}_{S}^{\sigma}(v)^{\mathbb{C}^{\ast}\rig})
\end{align*}
and the following identity
\begin{align}\notag
\DT(M_{X}^{\sigma}(v))=(-1)^{\vdim \mM_S(v)+1} \chi(\mathrm{HP}_{\ast}(\mathcal{DT}_{\rm{dg}}^{\mathbb{C}^{\ast}}
(M_{X}^{\sigma}(v)))). 
\end{align}
\end{lem}
\begin{proof}
The first statement follows from Lemma~\ref{lem:replace0}.
The second statement follows from Proposition~\ref{prop:num}
since 
 $\mM_{S}^{\sigma \st}(v)^{\C\rig}$
 is a quasi-projective scheme (see Remark~\ref{rmk:condition}). 
\end{proof}

\subsubsection{Categorical DT theory for one dimensional stable sheaves}\label{subsec:catDT:one}
Here we focus on the case of 
moduli spaces of one dimensional stable sheaves on $X$. 
We denote by 
\begin{align*}
\Coh_{\le 1}(X) \subset \Coh_{\rm{cpt}}(X)
\end{align*}
the  
abelian subcategory consisting 
of sheaves $F$ with $\dim \Supp(F) \le 1$. 
We also denote by $N_{\le 1}(S)$ the subgroup of $N(S)$
spanned by 
sheaves $F \in \Coh_{\le 1}(S)$. 
Note that we have an isomorphism
\begin{align}\notag
N_{\le 1}(S) \stackrel{\cong}{\to} 
\mathrm{NS}(S) \oplus \mathbb{Z}, \ 
F \mapsto (l(F), \chi(F))
\end{align}
where $l(F)$ is the fundamental one cycle of $F$. 
Below we identify an element $v \in N_{\le 1}(S)$
with $(\beta, n) \in \mathrm{NS}(S) \oplus \mathbb{Z}$
by the above isomorphism. 
We often write $[F]=(\beta, n)$ for 
$F \in \Coh_{\le 1}(S)$ by the above isomorphism. 

Under the above notation, we can interpret
the $\sigma$-(semi)stability on $\Coh_{\le 1}(S)$ 
for $\sigma \in A(S)_{\mathbb{C}}$ in terms 
of Bridgeland stability conditions~\cite{MR2373143}. 
Namely for $F \in \Coh_{\le 1}(X)$, it is 
$\sigma$-(semi)stable for $\sigma=B+iH$
if and only if it is 
Bridgeland (semi)stable with respect to the central 
charge
\begin{align*}
Z_{\sigma} \colon N_{\le 1}(S) \to \mathbb{C}, \ 
(\beta, n) \mapsto -n+(B+iH)\beta,
\end{align*}
i.e. for any subsheaf $0 \neq F' \subsetneq F$ we have 
$\arg Z_{\sigma}(F') <(\le) \arg Z_{\sigma}(F)$, equivalently 
$\mu_{\sigma}(F')<(\le) \mu_{\sigma}(F)$
where $\mu_{\sigma}(F)$ is 
\begin{align}\label{def:slope}
\mu_{\sigma}(F) \cneq -\frac{\Ree Z_{\sigma}(\pi_{\ast}F)}
{\Imm Z_{\sigma}(\pi_{\ast}F)}=\frac{n-B \cdot \beta}{H \cdot \beta},
\end{align}
where $[\pi_{\ast}F]=(\beta, n)$. 
We give a description of 
$\mathcal{DT}^{\mathbb{C}^{\ast}}(M_{X}^{\sigma}(v))$
for $v \in N_{\le 1}(S)$ in a simple example. 

\begin{exam}\label{exam:-3}
Suppose that $C \subset S$ is a $(-3)$-curve, i.e. 
\begin{align*}
C \cong \mathbb{P}^1, \ C^2=-3, \ 
\omega_S|_{C}\cong \oO_C(1).
\end{align*}  
Let us take $v=(2[C], 1) \in N_{\le 1}(S)$
and $\sigma=iH$ for an ample divisor 
$H$ on $S$. 
Then one can 
easily show that 
any compactly supported 
$\sigma$-stable 
coherent sheaf $E$ on $X$
with $[\pi_{\ast}E]=v$
satisfies $\pi_{\ast}E \cong F$, 
where $F \cneq \oO_C \oplus \oO_C(-1)$. 
Let us set
\begin{align*}
V_F \cneq \Hom(F, F \otimes \omega_S), \ 
G_F \cneq \Aut(F). 
\end{align*}
Then we have
an open immersion $\mM_{X}^{\sigma}(v) \subset [V_F/G_F]$, 
so we can take $\mathfrak{M}_{S}(v)^{\C\rig}_{\rm{qc}}$
to be 
\begin{align*}
\mathfrak{M}_{S}(v)^{\C\rig}_{\rm{qc}}
=[\Spec S(V_F[1])/\mathbb{P}G_F]. 
\end{align*}
Here $\mathbb{P}G_F=G_F/\mathbb{C}^{\ast}$, 
where $\mathbb{C}^{\ast}$ acts on $F$ by scalar
multiplication. 
Note that we have 
\begin{align}\label{describe:VF}
V_F=\left( \begin{array}{cc}
H^0(\oO_C(1)) & H^0(\oO_C) \\
H^0(\oO_C(2)) & H^0(\oO_C(1))
\end{array}
\right)
=\left( \begin{array}{cc}
\mathbb{C}^2 & \mathbb{C} \\
\mathbb{C}^3 & \mathbb{C}^2
\end{array}
\right). 
\end{align}
Let $V_F' \subset V_F$ be the codimension one 
linear subspace given by the zero locus of 
the top right component of (\ref{describe:VF}).
Then one can show that
\begin{align*}
(\zZ^{\sigma\us})^{\mathbb{C}^{\ast} \rig}
=[V_F'/\mathbb{P}G_F] 
= [V_F/\mathbb{P}G_F] \setminus \mM_{X}^{\sigma}(v). 
\end{align*}
We have $V_F=V_F' \oplus V_F''$ for 
the one dimensional subspace $V_F'' \subset V_F$
generated by the top right component of (\ref{describe:VF}), 
so 
\begin{align*}
\Spec S(V_F[1])=\mathfrak{U}' \times \mathfrak{U}'', \ 
\mathfrak{U}'=\Spec S(V_F'[1]), \ 
\mathfrak{U}''=\Spec S(V_F''[1]).
\end{align*}
Then from Example~\ref{exam:simple} (i), 
the subcategory 
\begin{align*}
\cC_{(\zZ^{\sigma\us})^{\mathbb{C}^{\ast} \rig}}
\subset D^b_{\rm{coh}}(\mathfrak{M}_{S}(v)^{\C\rig}_{\rm{qc}})
\end{align*}
may be
the subcategory of $\mathbb{P}G_F$-equivariant 
coherent sheaves on $\Spec S(V_F[1])$, 
which lie on the thick closure of 
$D^b_{\rm{coh}}(\mathfrak{U}') \boxtimes \mathrm{Perf}(\mathfrak{U}'')$. 
By taking the quotients, 
the DT category may be heuristically described as
\begin{align*}
\mathcal{DT}^{\mathbb{C}^{\ast}}(M_{X}^{\sigma}(v))=
\left(D^b_{\rm{coh}}(\mathfrak{U}') \otimes D_{\rm{sg}}(\mathfrak{U}'')
\right)^{\mathbb{P}G_F}. 
\end{align*}
Here $(-)^{\mathbb{P}G_F}$ refers to the $\mathbb{P}G_F$-equivariant category. 
\end{exam}

We formulate
wall-crossing formula of DT categories
of one dimensional stable sheaves. 
We fix a primitive element 
$v=(\beta, n) \in N_{\le 1}(S)$
such that $\beta>0$. 
Here we write $\beta>0$ if $\beta=[C]$ for a non-zero 
effective divisor $C$ on $S$. 
For each decomposition 
\begin{align*}
	v=v_1+v_2, \ 
	v_i=(\beta_i, n_i), \
	\beta_i>0
\end{align*}
we define 
\begin{align*}
	W_{v_1, v_2}& \cneq \{\sigma \in A(S)_{\mathbb{C}} : 
	\mu_{\sigma}(v_1)=\mu_{\sigma}(v_2)\} \\
	&=\{ B+iH \in A(S)_{\mathbb{C}} : 
	(n_1 \beta_2-n_2 \beta_1) \cdot H=B\beta_1 \cdot H\beta_2
	-B\beta_2 \cdot H\beta_1\}. 
\end{align*}
Since $v$ is primitive, 
$W_{v_1, v_2}\subsetneq A(S)_{\mathbb{C}}$ and 
$W_{v_1, v_2}$ is a real codimension one 
hypersurface in $A(S)_{\mathbb{C}}$. 
For a fixed $v$, the set of 
hypersurfaces $W_{v_1, v_2}$ are called 
\textit{walls}. 
It is easy to see that the walls are locally 
finite. Also each connected component 
\begin{align*}
	\cC \subset A(S)_{\mathbb{C}} \setminus \bigcup_{v_1+v_2=v}
	W_{v_1, v_2}
\end{align*}
is called a \textit{chamber}. 
From the construction of walls, the moduli stacks 
$\mM_{S}^{\sigma}(v)$, $\mM_{X}^{\sigma}(v)$ are constant 
if $\sigma$ is contained in a chamber, but may change 
when $\sigma$ crosses a wall. 
Moreover if $\sigma$ lies in a chamber, 
they consist of $\sigma$-stable sheaves, i.e. 
the condition (\ref{cond:ss=st}) holds
for both of $X$ and $S$. 

Let us take 
$\sigma \in A(S)_{\mathbb{C}}$ which lies on a wall 
and take $\sigma_{\pm} \in A(S)_{\mathbb{C}}$ which 
lie on adjacent chambers. 
Then by~\cite[Theorem~8.3]{Toddbir}, 
the wall-crossing diagram
\begin{align*}
\xymatrix{
M_X^{\sigma_+}(v) \ar[rd] & & \ar[ld] M_X^{\sigma_-}(v) \\
& M_X^{\sigma}(v) &
}
	\end{align*}
is a \textit{d-critical flop}, which 
is a d-critical analogue of usual flops in birational geometry.  
Therefore 
following the discussion in Subsection~\ref{intro:motivation}, 
we propose the following conjecture: 
\begin{conj}\label{conj1}
There exists an equivalence of $\mathbb{C}^{\ast}$-equivariant
DT categories:
\begin{align}\notag
\dD\tT^{\mathbb{C}^{\ast}}(M_{X}^{\sigma_+}(v)) \stackrel{\sim}{\to}
\dD\tT^{\mathbb{C}^{\ast}}(M_{X}^{\sigma_-}(v)).
\end{align}
In particular $\dDT^{\C}(M_X^{\sigma}(v))$ is independent of a choice of 
generic $\sigma$ up to equivalence. 
\end{conj}

\begin{rmk}
The numerical DT invariants counting one dimensional 
(semi)stable sheaves are known as genus zero 
Gopakumar-Vafa invariants. It is well-known that 
they are independent of a choice of a 
stability condition (see~\cite[Theorem~6.16]{JS}). 
By Lemma~\ref{lem:ss:dt}, the equivalence in 
Conjecture~\ref{conj1}
recovers this fact under the assumption
in Proposition~\ref{prop:num}.  
\end{rmk}

Let us take $v=(\beta, n) \in N_{\le 1}(S)$
such that $\beta$ is irreducible (see Subsection~\ref{subsec:notation}). 
Following the notation of~\cite{MR2552254}, we
denote the open substacks
of pure one dimensional sheaves by 
\begin{align}\label{M:pureone}
\mM_n(X, \beta) \subset \mM_X(v), \ 
M_n(X, \beta) \subset \mM_X(v)^{\C\rig},
\ \mathfrak{M}_n(S, \beta) \subset \mathfrak{M}_S(v),
\end{align}
respectively. 
As $\beta$ is irreducible, the
stacks (\ref{M:pureone})
coincide with the substacks of $\sigma$-stable sheaves
on $X$, $S$
respectively, for any choice of $\sigma$. 
We have the following: 
\begin{lem}\label{lem:M:irred}
For $v=(\beta, n)\in N_{\le 1}(S)$ such that $\beta$ is irreducible, 
we have 
the equivalences
\begin{align*}
\mathcal{DT}^{\mathbb{C}^{\ast}}(\mM_n(X, \beta))_{\lambda}
\stackrel{\sim}{\to}D^b_{\rm{coh}}(\mathfrak{M}_n(S, \beta))_{\lambda}, \ 
\mathcal{DT}^{\mathbb{C}^{\ast}}(M_n(X, \beta))
\stackrel{\sim}{\to} D^b_{\rm{coh}}(\mathfrak{M}_n(S, \beta)^{\C\rig}). 
\end{align*}
Moreover we have the identity
\begin{align}\label{id:M:HP}
\chi(\mathrm{HP}_{\ast}(\mathcal{DT}_{\rm{dg}}^{\mathbb{C}^{\ast}}(M_n(X, \beta)))
=(-1)^{\beta^2+1}N_{n, \beta}. 
\end{align}
Here $N_{n, \beta}=\DT(M_n(X, \beta))$ is the DT invariant 
counting one dimensional stable sheaves on $X$. 
\end{lem}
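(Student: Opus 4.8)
The plan is to deduce Lemma~\ref{lem:M:irred} as an immediate application of the general machinery already assembled in Section~\ref{sec:catDT:shift} and Section~4, specializing the surface moduli stack $\mathfrak{M}_S(v)$ to the irreducible-curve-class situation where the unstable locus collapses nicely. First I would observe that since $\beta$ is irreducible, \emph{every} coherent sheaf $F$ on $X$ with $[\pi_\ast F]=v=(\beta,n)$ and one-dimensional support is automatically pure and $\sigma$-stable for any $\sigma$; hence by Lemma~\ref{lem:moduliX} and the isomorphism $\eta$, the image $\eta(\mM_n(X,\beta))\subset t_0(\Omega_{\mathfrak{M}_S(v)_\circ}[-1])$ is precisely $p_0^{-1}(\mathfrak{M}_n(S,\beta))$ on the open substack where $\pi_\ast F$ is already stable on $S$. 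The point is that the unstable locus $\zZ$ in the sense of Definition~\ref{catDT:stable} here has the form $\zZ=p_0^{-1}(\wW)$ for the closed substack $\wW=\mM_S(v)_\circ\setminus\mathfrak{M}_n(S,\beta)$: there is no genuine "singular support" subtlety because instability of a one-dimensional sheaf on $X$ with irreducible $\pi_\ast$-class can only come from instability on the base $S$, not from the $\oO_X$-module structure. This is exactly the case covered by Lemma~\ref{lem:dstack:support}.

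Concretely, the first step is to invoke Lemma~\ref{lem:dstack:support} with $\mathfrak{M}=\mathfrak{M}_S(v)_\circ$ (respectively its $\mathbb{C}^\ast$-rigidification) and $\wW$ as above, which gives $\cC_{\zZ_{\sigma\text{-us}}}=D^b_{\rm{coh}}(\mathfrak{M}_S(v)_\circ)_{\wW}$ and the restriction equivalence
\begin{align*}
\mathcal{DT}^{\mathbb{C}^\ast}(\mM_n(X,\beta))
\xrightarrow{\ \sim\ } D^b_{\rm{coh}}(\mathfrak{M}_n(S,\beta)),
\end{align*}
and similarly in the $\mathbb{C}^\ast$-rigidified version. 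For the $\lambda$-twisted statement I would combine this with the $\lambda$-weight decomposition of Proposition~\ref{prop:rigid}(i), intersecting everything with $D^b_{\rm{coh}}(-)_\lambda$; the argument is formally identical since taking $\lambda$-weight parts commutes with the Verdier quotient and with restriction to an open substack. I would also note, following Remark~\ref{rmk:open:independent}, that the choice of quasi-compact open $\mathfrak{M}_S(v)_\circ$ is irrelevant up to equivalence (Lemma~\ref{lem:replace}).

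For the numerical identity \eqref{id:M:HP}, the plan is to apply Proposition~\ref{prop:num} to $\nN_{\rm{st}}=M_n(X,\beta)$. Here one must check the technical hypothesis of Proposition~\ref{prop:num}: this is guaranteed by Remark~\ref{rmk:condition} because $M_n(X,\beta)=\mM_n(S,\beta)^{\mathbb{C}^\ast\rig}$ sits inside a quasi-projective scheme — indeed $M_n(S,\beta)$ is a quasi-projective coarse moduli space of stable sheaves on $S$ (using stability in the sense of~\cite{MR1450870}), and $M_n(X,\beta)$ is the dual obstruction cone over it, hence quasi-projective. Proposition~\ref{prop:num} then yields $\chi(\mathrm{HP}_\ast(\mathcal{DT}_{\rm{dg}}^{\mathbb{C}^\ast}(M_n(X,\beta))))=(-1)^{\vdim\mM_S(v)}\DT(M_n(X,\beta))$, and finally $\vdim\mM_S(v)=-\chi(v,v)=\beta^2-1$ for a class of type $(\beta,n)$ with one-dimensional support (using $\chi(F,F)=-\beta^2$ by Riemann--Roch on $S$, so $\vdim=\ext^1-\ext^0-\ext^2=-\chi(F,F)-1+1\cdot\dim\Hom$; I would do this Euler-characteristic bookkeeping carefully), giving the sign $(-1)^{\beta^2+1}$ after the shift by $\vdim\mM_S(v)+1$ as in \eqref{id:catDT:HP}. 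The main obstacle is not conceptual but bookkeeping: pinning down precisely that $\zZ_{\sigma\text{-us}}$ genuinely has the form $p_0^{-1}(\wW)$ — i.e. that for irreducible $\beta$ no one-dimensional sheaf on $X$ is destabilized purely by its non-trivial $\oO_X$-action while $\pi_\ast F$ remains semistable — and confirming the virtual dimension count so that the sign matches $(-1)^{\beta^2+1}$ exactly rather than off by a parity error.
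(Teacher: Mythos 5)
Your route is essentially the paper's: the paper proves this lemma by noting that irreducibility of $\beta$ forces the hypothesis of Lemma~\ref{lem:ss:dt} (the open immersion (\ref{open:Sss}) is an isomorphism), and Lemma~\ref{lem:ss:dt} is precisely the packaging of Lemma~\ref{lem:dstack:support} (the case $\zZ=p_0^{-1}(\wW)$, so $\cC_{\zZ}=D^b_{\rm{coh}}(\mathfrak{M})_{\wW}$) together with Proposition~\ref{prop:num} and quasi-projectivity of the rigidified stable locus; your unpacked version, including the $\lambda$-twisted statement via the weight decomposition of Proposition~\ref{prop:rigid}, is the same argument.

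Two points need repair. First, your side claim that any sheaf on $X$ with one-dimensional support and class $(\beta,n)$ is automatically pure is false (one can always add a zero-dimensional subsheaf); what you actually need is that the stacks (\ref{M:pureone}) are by definition the pure loci, that purity equals $\sigma$-stability when $\beta$ is irreducible, and that $E$ is pure if and only if $\pi_{\ast}E$ is pure --- the last because the maximal zero-dimensional subsheaf of $\pi_{\ast}E$ is preserved by the $\omega_S$-twisted endomorphism defining the $\oO_X$-module structure, hence comes from a subsheaf of $E$. This is what gives $\zZ_{\sigma\mathchar`-\rm{us}}=p_0^{-1}(\wW)$ with $\wW$ the non-pure locus, the input to Lemma~\ref{lem:dstack:support}. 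Second, your virtual-dimension bookkeeping is internally inconsistent and, taken literally, produces the wrong sign: $-\chi(F,F)=\beta^2$, not $\beta^2-1$, and $\vdim \mM_S(v)$ in (\ref{id:catDT:HP}) is the rank of $\mathbb{L}_{\mathfrak{M}_S}|_{\mM_S}$ for the un-rigidified stack, i.e. $-\hom(F,F)+\ext^1(F,F)-\ext^2(F,F)=-\chi(F,F)=\beta^2$; the extra $+1$ from rigidification is already incorporated in the exponent $\vdim \mM_S(v)+1$ of (\ref{id:catDT:HP}), which arises from applying Proposition~\ref{prop:num} to $\mathfrak{M}^{\mathbb{C}^{\ast}\rig}_{S,\sigma}(v)$. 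With your value $\beta^2-1$ that formula would yield $(-1)^{\beta^2}$, contradicting the sign you then assert; with the correct $\vdim \mM_S(v)=\beta^2$ one gets $(-1)^{\beta^2+1}$ as in (\ref{id:M:HP}).
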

\begin{proof}
By the condition that $\beta$ is irreducible, the 
assumption of Lemma~\ref{lem:ss:dt} is satisfied (see~\cite[Lemma~5.4]{MR3842061}). 
Moreover 
by the Riemann-Roch theorem, we have  
\begin{align*}
\vdim \mM_S(v)=-\chi(F, F)=\beta^2
\end{align*}
for a one dimensional sheaf
$F$ on $S$ with $[F]=(\beta, n)$.
Therefore the lemma holds from Lemma~\ref{lem:ss:dt}. 
\end{proof}

Later we will also need the following lemma: 
\begin{lem}\label{lem:periodic}
For an irreducible class $\beta$
and $n \in \mathbb{Z}$, 
we have the following: 
\begin{enumerate}
\item By setting 
$d=\mathrm{g.c.d.}\{ \lvert \beta \cdot D \rvert : 
D \in \mathrm{NS}(S), \beta \cdot D \neq 0 \}$, 
there is an equivalence
\begin{align}\label{equiv:one:1}
\mathcal{DT}^{\mathbb{C}^{\ast}}(\mM_n(X, \beta))_{\lambda}
\stackrel{\sim}{\to}
\mathcal{DT}^{\mathbb{C}^{\ast}}(\mM_{n+d}(X, \beta))_{\lambda}.
\end{align}
\item By setting $c=\mathrm{g.c.d.}(d, n)$, 
there is an equivalence
\begin{align}\label{equiv:one:2}
\mathcal{DT}^{\mathbb{C}^{\ast}}(\mM_n(X, \beta))_{\lambda}
\stackrel{\sim}{\to}
\mathcal{DT}^{\mathbb{C}^{\ast}}(\mM_{n}(X, \beta))_{\lambda+c}.
\end{align}
\item There is an equivalence
\begin{align}\label{equiv:one:3}
\mathcal{DT}^{\mathbb{C}^{\ast}}(\mM_n(X, \beta))_{\lambda}
\stackrel{\sim}{\to}
\mathcal{DT}^{\mathbb{C}^{\ast}}(\mM_{-n}(X, \beta))_{-\lambda}. 
\end{align}
\end{enumerate}
\end{lem}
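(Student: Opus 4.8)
The plan is to realize each of the three equivalences as a standard ``tensoring by a line bundle / twisting'' operation on the derived category of the derived moduli stack $\mathfrak{M}_n(S,\beta)$, compatible with the $\lambda$-weight decomposition of Subsection~\ref{subsec:rig} and with singular supports, and then transport it to the categorical DT theories via Lemma~\ref{lem:M:irred}. Since $\beta$ is irreducible, Lemma~\ref{lem:M:irred} already identifies $\mathcal{DT}^{\mathbb{C}^{\ast}}(\mM_n(X,\beta))_{\lambda}$ with $D^b_{\rm{coh}}(\mathfrak{M}_n(S,\beta))_{\lambda}$, so it suffices to produce the required equivalences on the latter categories. I would first treat (iii), the ``duality'' statement: the derived dual functor $\mathbb{D}_{\mathfrak{M}_S}(-)=\dR\hH om(-,\oO)$, combined with Serre duality on $S$ (i.e. $\RHom(F,F)\cong\RHom(F,F\otimes\omega_S)^{\vee}[2]$ for one-dimensional $F$ on $S$), sends the universal object on $\mathfrak{M}_n(S,\beta)$ to the universal object on $\mathfrak{M}_{-n}(S,\beta)$ up to a twist; it reverses the inertia $\mathbb{C}^{\ast}$-weight, hence takes the $\lambda$-summand to the $(-\lambda)$-summand, and it preserves singular supports because it is an anti-equivalence intertwining the Hochschild-cohomology actions (the map \eqref{ssuport:map} is compatible with $\mathbb{D}$). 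One subtlety is checking that $\mathbb{D}$ sends the relevant conical closed substack $\zZ$ (which here is empty or $p_0^{-1}$ of the destabilizing locus, but in the irreducible case one only needs $\mathfrak{M}_n(S,\beta)$ itself) to the corresponding substack on the $-n$ side; for irreducible $\beta$ there is no unstable locus to worry about, so this is immediate.

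For (i), the idea is to use a line bundle on $\mathfrak{M}_n(S,\beta)$ built from the determinant of the universal complex. Concretely, for a divisor class $D$ on $S$ the assignment $F\mapsto F(D)$ (tensoring the one-dimensional sheaf by $\oO_S(D)$ pulled back to $S\times\mathfrak{M}_n(S,\beta)$) is an isomorphism of stacks $\mathfrak{M}_n(S,\beta)\xrightarrow{\sim}\mathfrak{M}_{n+\beta\cdot D}(S,\beta)$, since $\chi(F(D))=\chi(F)+\beta\cdot D$ by Riemann--Roch and $[F(D)]=(\beta,n+\beta\cdot D)$. This isomorphism visibly commutes with the $\mathbb{C}^{\ast}$-rigidification (the scalar automorphisms go to scalar automorphisms, so the inertia weight is unchanged) and induces an equivalence $D^b_{\rm{coh}}(\mathfrak{M}_n(S,\beta))_{\lambda}\xrightarrow{\sim}D^b_{\rm{coh}}(\mathfrak{M}_{n+\beta\cdot D}(S,\beta))_{\lambda}$, which preserves singular supports because it is pulled back from an isomorphism of derived stacks (Remark~\ref{rmk:intrinsic}). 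Letting $D$ range and taking the g.c.d.\ $d=\ggcd\{|\beta\cdot D|\}$ yields \eqref{equiv:one:1}. The only point needing care is that the isomorphism is defined on the \emph{derived} stack, not just its truncation: this follows because tensoring by the pull-back of a line bundle from $S$ is an exact autoequivalence of $D^b_{\rm{coh}}(S\times T)$ for any derived affine $T$, hence gives an automorphism of the functor \eqref{dstack:MS} and restricts to $\mathfrak{M}_S$.

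For (ii), I would use the $\mathbb{C}^{\ast}$-gerbe structure $\rho_{\mathfrak{M}}\colon\mathfrak{M}_n(S,\beta)\to\mathfrak{M}_n^{\mathbb{C}^{\ast}\rig}(S,\beta)$ together with Remark~\ref{rmk:twisted}: the $\lambda$-summand is the category of $\alpha^{\lambda}$-twisted sheaves for the Brauer class $\alpha$ of the gerbe, so $D^b_{\rm{coh}}(\mathfrak{M}_n(S,\beta))_{\lambda}\cong D^b_{\rm{coh}}(\mathfrak{M}_n(S,\beta))_{\lambda+c}$ whenever $\alpha^{c}$ is trivial. The order of $\alpha$ (restricted to the component with invariant $n$) divides $c=\ggcd(d,n)$: twisting the universal sheaf by $\oO_S(D)$ changes the would-be universal family by a line bundle whose class in the gerbe is $\beta\cdot D$ times the generator, while the sheaf itself being honestly $n$-valued forces an $n$-torsion relation; combining these gives that $c\cdot(\text{generator})=0$. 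Concretely one exhibits a line bundle $\lL$ on $\mathfrak{M}_n(S,\beta)$ on which the inertia $\mathbb{C}^{\ast}$ acts with weight $c$ (e.g.\ an appropriate determinant of $\dR p_{\mathfrak{M}\ast}(\mathfrak{F}\otimes p_S^{\ast}\oO_S(D))$ tensored with a power of $\det\dR p_{\mathfrak{M}\ast}\mathfrak{F}$, arranged using $d\mid\beta\cdot D$ and the $n$-valued-ness), and then $(-)\otimes\lL$ gives the equivalence \eqref{equiv:one:2}; it preserves singular supports since tensoring by a line bundle from the base is again pulled back along $\rho$. \textbf{The main obstacle} is the bookkeeping in (ii): constructing an explicit line bundle on the derived stack with inertia weight exactly $c=\ggcd(d,n)$ (rather than merely a weight divisible by, or dividing, the naive candidates $d$ and $n$ separately) requires carefully combining the determinant-of-cohomology line bundle of $\mathfrak{F}$ with the twisted ones, and checking the weight computation via Riemann--Roch for families; everything else is a routine transport of structure through Lemma~\ref{lem:M:irred} and Remark~\ref{rmk:intrinsic}.
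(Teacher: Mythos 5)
Your proposal is correct and follows essentially the same route as the paper: (i) is the stack equivalence $\otimes L$ for $L\in\Pic(S)$ with $\beta\cdot c_1(L)=d$, (iii) is the dualizing equivalence $F\mapsto \eE xt^1_{\oO_S}(F,\omega_S)$ (your derived dual plus Serre duality), and (ii) is tensoring by a line bundle on the moduli stack of inertia weight $c$. The "main obstacle" you flag in (ii) is settled exactly by your sketched candidate: the paper takes $\det \dR p_{\mathfrak{M}\ast}(L^{\otimes k}\boxtimes\mathfrak{F})$, whose inertia weight is $kd+n$, and integer combinations of these give a line bundle of weight $c=\mathrm{g.c.d.}(d,n)$.
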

\begin{proof}
(i) 
Let us take $L \in \Pic(S)$ such that 
$\beta \cdot c_1(L)=d$.
Then the equivalence (\ref{equiv:one:1})
follows from the equivalence
of derived stacks
$\otimes L  \colon 
\mathfrak{M}_n(S, \beta)\stackrel{\sim}{\to} \mathfrak{M}_{n+d}(S, \beta)$.

(ii) 
Let $L \in \Pic(S)$ be as above, 
$\mathfrak{F}$ the universal family on 
$S \times \mathfrak{M}_n(S, \beta)$, 
and take 
\begin{align*}
\det p_{\mathfrak{M}\ast}(L^{\otimes k} \boxtimes \mathfrak{F})
\in \Pic(\mathfrak{M}_n(S, \beta)). 
\end{align*}
Here $p_{\mathfrak{M}}$ is the projection onto $\mathfrak{M}_n(S, \beta)$. 
The inertia $\mathbb{C}^{\ast}$-weight of the above line bundle is 
$kd+n$. 
Therefore there is a line bundle 
$\lL$ on $\mathfrak{M}_n(S, \beta)$ whose 
inertia $\mathbb{C}^{\ast}$-weight is $c$. 
Indeed by writing $c=ad+bn$ for $a, b \in \mathbb{Z}$, 
we can take $\lL$ to be
\begin{align*}
	\lL=(\det p_{\mathfrak{M}\ast}(L \boxtimes \mathfrak{F}))^{\otimes a}
	\otimes (\det p_{\mathfrak{M}\ast}(\mathfrak{F}))^{\otimes b-a}.
	\end{align*}
Then the equivalence (\ref{equiv:one:2}) is 
obtained by taking the tensor product with $\lL$. 

(iii) The equivalence (\ref{equiv:one:3}) follows 
from the equivalence of derived stacks
$\mathfrak{M}_n(S, \beta) \stackrel{\sim}{\to}
\mathfrak{M}_{-n}(S, \beta)$
given by $F \mapsto \eE xt^1(F, \omega_S)$
(see Subsection~\ref{subsec:dualpair}).
\end{proof}

\section{Categorical DT theory for D0-D2-D6 bound states}\label{sec:catMNOP}
The purpose of this section is to 
construct DT categories for stable 
D0-D2-D6 bound states on 
local surfaces. 
For an arbitrary CY 3-fold $X$, 
the category of D0-D2-D6 bound states is defined by 
\begin{align*}
	\aA_X\cneq \langle \oO_{X}, \Coh_{\le 1}(X)[-1] \rangle_{\rm{ex}},
	\end{align*}
which is known to be an abelian category. 
The above category was first introduced in~\cite{Tcurve1}
in order to 
prove MNOP/PT correspondence conjecture. 
The category $\aA_X$ contains several interesting 
geometric objects which have been studied in the context of 
curve counting DT theory, e.g. ideal sheaves
of one or zero dimensional closed subschemes, 
Pandharipande-Thomas stable pairs, etc. 

Indeed there is a stability parameter $t \in \mathbb{R}$
which defines $\mu_t^{\dag}$-stability on $\aA_X$, 
and the above MNOP/PT moduli spaces are realized as 
certain stable objects with respect to $\mu_t^{\dag}$-stability.
The wall-crossing formula of the associated DT invariants 
was used in~\cite{Tolim2, Tsurvey} to prove the rationality conjecture 
of the generating series of PT invariants. 
For each $t \in \mathbb{R}$ and a numerical class
$(\beta, n)$, where $\beta$ is a curve class and $n \in \mathbb{Z}$, 
we have the moduli space of $\mu_t^{\dag}$-stable objects in $\aA_X$
with numerical class $(\beta, n)$, denoted as $P_n^t(X, \beta)$. 
We will introduce the corresponding DT category for the local surface $X=\mathrm{Tot}_S(\omega_S)$, denoted by
\begin{align*}
	\dDT^{\C}(P_n^t(X, \beta)). 
	\end{align*}

The key ingredient for the construction 
is to show that
the moduli stack of rank one objects in $\aA_X$ is
isomorphic to the dual obstruction cone 
over the moduli stack 
of pairs $(F, \xi)$ for $F \in \Coh_{\le 1}(S)$ and 
$\xi \colon \oO_S \to F$. 
This is not a trivial statement since a priori there is no 
obvious map from the moduli stack of objects in $\aA_X$ to 
the moduli stack of pairs on $S$. 
The main idea is to give an alternative description of 
the category of $\aA_X$ purely in terms of the surface $S$.
More precisely we show that giving a rank one object 
in $\aA_X$ is equivalent to giving a diagram
\begin{align}\label{diagram:BS:over}
	\xymatrix{
		0 \ar[r] & \oO_S \ar[r] \ar@{.>}[rd]& \uU \ar[r] 
		\ar[d] & F \otimes \omega_S^{-1} 
		\ar[r] & 0 \\
		&   & F  & 
	}
\end{align}
where 
$F \in \Coh_{\le 1}(S)$ and the top 
sequence is an exact sequence of 
coherent sheaves on $S$. 
The relevant map
to the moduli stack of pairs on $S$ is given by
sending the above diagram to the dotted arrow. 

We will then formulate categorical wall-crossing for
the above DT categories, based on d-critical analogue of D/K 
equivalence conjecture. Among them, 
for generic $t_1>t_2>0$ we propose the existence of a 
fully-faithful functor
\begin{align*}
	\dDT^{\C}(P_n^{t_2}(X, \beta)) \hookrightarrow 
	\dDT^{\C}(P_n^{t_1}(X, \beta)).
	\end{align*}
In particular there should be a 
fully-faithful functor from PT category to MNOP category. 

The main result in this section is to prove 
the above conjecture when $\beta$ is an irreducible curve class. 
In this case, we can also describe the semiorthogonal complement of the 
above fully-faithful functor, which recovers the wall-crossing formula 
of the PT invariants, and also gives a rationality statement of the 
generating series of PT categories in the Grothendieck group of 
triangulated categories. 
The strategy is to 
interpret the wall-crossing diagram for an irreducible $\beta$ in terms of 
Koszul duality diagram studied by Mirkovi\'{c}-Riche~\cite{MrRi, MrRi2}. 
Indeed we will show a general result concerning 
semiorthogonal decompositions associated with linear Koszul pairs in~\cite{MrRi, MrRi2}, 
and apply it to prove the above 
conjecture for an irreducible $\beta$. 
A more general case of the above conjecture will be studied in 
Section~\ref{sec:window:DT} via window theorem, and in Section~\ref{sec:cat:hall} via 
categorified Hall products. 

The organization of this section is as follows. 
In Section~\ref{subsec:moduli:D026},
we introduce moduli stacks of pairs on surfaces
and moduli stacks of D0-D2-D6 bound states on local surfaces. 
In Section~\ref{subsec:catDTPT}, we introduce 
DT categories for stable D0-D2-D6 bound states, 
and formulate our main conjecture. 
In Section~\ref{subsec:exam:-1-1}, we give an example 
of our conjecture for a local $(-1, -1)$-curve. 
In Section~\ref{subsec:duality}, we prove some duality statement 
of DT categories under change of stability $t \mapsto -t$, 
and use this to prove the main conjecture when the curve class is irreducible. 
In Section~\ref{subsec:cat026}, we study the category $\aA_X$ for the local surface $X$ 
in detail and give its alternative description in terms of diagrams (\ref{diagram:BS:over}). 
In Section~\ref{subsec:sod:koszul}, we prove
the existence of semiorthogonal decompositions 
associated with linear Koszul dual pairs which is used in the proof of
main conjecture for irreducible curve classes.

\subsection{Moduli stacks of D0-D2-D6 bound states}\label{subsec:moduli:D026}
\subsubsection{Moduli stacks of pairs}\label{subsec:pairs}
For a smooth projective surface $S$, let 
$\mathfrak{M}_S$ 
be the 
derived moduli stack of coherent sheaves on $S$
considered in (\ref{dstack:MS}), 
and $\mathfrak{F}$ the universal 
object (\ref{F:universal}). 
We 
define the derived stack $\mathfrak{M}_S^{\dag}$ by 
\begin{align*}
\rho^{\dag} \colon 
\mathfrak{M}_S^{\dag} \cneq 
\Spec_{\mathfrak{M}_S}S((p_{\mathfrak{M}\ast} \mathfrak{F})^{\vee})
\to \mathfrak{M}_S. 
\end{align*}
Here $p_{\mathfrak{M}} \colon S \times \mathfrak{M}_S \to \mathfrak{M}_S$
is the projection. 
Its classical truncation is a 1-stack, 
given by
\begin{align}\label{MS:dag}
\mM_S^{\dag}\cneq t_0(\mathfrak{M}_S^{\dag})
=\Spec_{\mM_S}(S(\hH^0((\dR p_{\mM\ast} \fF)^{\vee})). 
\end{align}
From the above description, 
the $T$-valued points of 
$\mM_S^{\dag}$ 
form the groupoid of pairs 
\begin{align*}
(F_T, \xi), \ 
\oO_{S \times T} \stackrel{\xi}{\to} F_T
\end{align*}
where $F_T \in \Coh(S \times T)$ is flat over $T$
and $\xi$ is a morphism of coherent sheaves. 
The isomorphisms of $\mM_S^{\dag}(T)$
are given by isomorphisms of 
$T$-flat families of coherent sheaves on $S$
which commute with sections. 
In particular, we have the universal pair
on $\mM_S^{\dag}$
\begin{align*}
\iI^{\bullet}=(\oO_{S \times \mM_S^{\dag}} \to \fF).
\end{align*}
The 
obstruction theory on $\mM_S^{\dag}$
induced by the cotangent complex of 
$\mathfrak{M}_S^{\dag}$ is given by 
\begin{align}\label{perf:obs2}
&\eE^{\dag \bullet}=\left(\dR p_{\mM\ast} 
\dR \hH om_{S \times \mM_S^{\dag}}(\iI^{\bullet}, \fF)\right)^{\vee}
\to \tau_{\ge -1} \mathbb{L}_{\mM_S^{\dag}}.
\end{align}
Also we have the decompositions into
open and closed substacks
\begin{align*}
\mathfrak{M}_S^{\dag}=\coprod_{v \in N(S)} \mathfrak{M}_S^{\dag}(v), \ 
\mM_S^{\dag}=\coprod_{v \in N(S)} \mM_S^{\dag}(v),
\end{align*}
where each component corresponds to pairs $(\oO_S \to F)$
such that $[F]=v$. 

\begin{lem}\label{lem:dag:qsmooth}
For $v \in N_{\le 1}(S)$, 
the derived stack $\mathfrak{M}_S^{\dag}(v)$ is quasi-smooth.  
Moreover the fiber of 
\begin{align}\label{fiber:Obs}
t_0(\Omega_{\mathfrak{M}_S^{\dag}(v)}[-1])=\mathrm{Obs}^{\ast}(\eE^{\dag\bullet}) \to \mM_S^{\dag}(v)
\end{align}
at $(\xi \colon \oO_S \to F)$ is given by 
$\Hom(F \otimes \omega_S^{-1}, I^{\bullet}[1])$, 
where $I^{\bullet}$ is the two term complex $(\oO_S \stackrel{\xi}{\to} F)$
such that $\oO_S$ is located in degree zero. 
\end{lem}
\begin{proof}
For the two term complex $I^{\bullet}=(\oO_S \to F)$, 
we have the distinguished triangle
\begin{align}\notag
\dR \Gamma(F) \to \RHom(I^{\bullet}, F)\to \RHom(F, F)[1].
\end{align}
Therefore if $\dim \Supp(F) \le 1$ then 
$\eE^{\dag \bullet}|_{(\oO_S \to F)}$ has 
cohomological amplitude $[-1, 1]$. 
The fiber of the morphism (\ref{fiber:Obs}) is dual 
to the obstruction space $\Hom^1(I^{\bullet}, F)$, 
which is $\Hom(F \otimes \omega_S^{-1}, I^{\bullet}[1])$
by Serre duality
(see Lemma~\ref{lem:cone} for $T$-valued fibers). 
\end{proof}

\subsubsection{Moduli stacks of D0-D2-D6 bound states}
\label{subsec:moduliD026}
Here we
introduce the category of 
D0-D2-D6 bound states on the local surface 
$X=\mathrm{Tot}_S(\omega_S)$, 
and discuss the relationship of their moduli 
spaces with the dual obstruction cone over the 
stack of pairs $\mM_S^{\dag}$. 
Note that we have the compactification of $X$
\begin{align*}
X \subset \overline{X} \cneq \mathbb{P}_S(\omega_S \oplus \oO_S).
\end{align*}

\begin{defi}\label{defi:state}
The category of D0-D2-D6 bound states on the 
non-compact CY 3-fold $X=\mathrm{Tot}_S(\omega_S)$ 
is defined by the extension closure
in $D^b_{\rm{coh}}(\overline{X})$
\begin{align*}
\aA_{X} \cneq 
\langle \oO_{\overline{X}}, \Coh_{\le 1}(X)[-1]
\rangle_{\rm{ex}}. 
\end{align*}
Here we regard $\Coh_{\le 1}(X)$
 as a subcategory of $\Coh(\overline{X})$
by the push-forward of the open 
immersion $X \subset \overline{X}$. 
\end{defi}

The arguments in~\cite[Lemma~3.5, Proposition~3.6]{Tcurve1} 
show
that 
$\aA_{X}$ is the heart of 
a triangulated subcategory 
of $D^b_{\rm{coh}}(\overline{X})$ generated by 
$\oO_{\overline{X}}$ and $\Coh_{\le 1}(X)$. 
In particular, $\aA_{X}$ is an abelian category. 
There is a group homomorphism
\begin{align*}
\cl \colon K(\aA_X) \to \Gamma \cneq \mathbb{Z} \oplus N_{\le 1}(S)
\end{align*}
characterized by the condition 
that $\cl(\oO_X)=(1, 0)$
and $\cl(F[-1])=(0, [\pi_{\ast}F])$
for $F \in \Coh_{\le 1}(X)$. 

Note that 
an object $E \in \aA_X$ is of rank 
one if and only if 
we have 
an isomorphism 
$\dL i_{\infty}^{\ast}E \cong \oO_{S_{\infty}}$, 
where 
$i_{\infty} \colon 
S_{\infty} \hookrightarrow \overline{X}$
is the divisor at the infinity. 
We define the (classical) moduli stack of rank one 
objects in $\aA_X$
to be the 2-functor
\begin{align*}
\mM_X^{\dag} \colon Aff^{op} \to 
Groupoid
\end{align*}
whose $T$-valued points for $T \in Aff$
form the groupoid
of data
\begin{align}\notag
\eE_T \in D^b_{\rm{coh}}(\overline{X} \times T), \ 
\dL(i_{\infty} \times \id_T)^{\ast} \eE_T \stackrel{\cong}{\to}
\oO_{S_{\infty} \times T}
\end{align}
such that for any closed point $x \in T$, 
we have 
\begin{align*}
\eE_x \cneq \dL i_x^{\ast} \eE \in \aA_X, \ 
i_x \colon \overline{X} \times \{x\} \hookrightarrow \overline{X} \times T.
\end{align*} 
The isomorphisms of the groupoid $\mM_X^{\dag}(T)$ are 
given by isomorphisms of objects $\eE_T$ which commute with 
the trivializations at the infinity. 

We have the decomposition of $\mM_X^{\dag}$ 
into open and closed substacks
\begin{align*}
\mM_X^{\dag}=\coprod_{v \in N_{\le 1}(S)} \mM_X^{\dag}(v)
\end{align*}
where $\mM_X^{\dag}(v)$ corresponds to 
$E \in \aA_X$ with $\cl(E)=(1, v)$. 
The following result will be proved in 
Section~\ref{subsec:cat026} (see Corollary~\ref{lem:moduli:isom} and Proposition~\ref{thm:stack}): 
\begin{thm}\label{thm:D026}
(i) 
For $v \in N_{\le 1}(S)$, 
the stack $\mM_X^{\dag}(v)$
is isomorphic to the stack of diagrams of coherent 
sheaves on $S$
\begin{align}\label{intro:dia:BS}
\xymatrix{
0 \ar[r] & \oO_{S}
 \ar[r] \ar@{.>}[rd]_-{\xi} & \uU \ar[r]
\ar[d]^-{\phi} & F \otimes \omega_S^{-1} 
\ar[r] & 0 \\
&   & F  & 
}
\end{align}
where the top sequence is an exact sequence 
and $F$ has numerical class $v$. 
In particular, there  
exists a natural 
morphism 
\begin{align}\label{mor:p0dag}
	\pi_{\ast}^{\dag} \colon \mM_X^{\dag}(v) \to \mM_S^{\dag}(v)
	\end{align}
sending a diagram (\ref{intro:dia:BS}) to the pair 
$(\oO_S \stackrel{\xi}{\to} F)$.

(ii) There is an isomorphism of stacks over $\mM_S^{\dag}(v)$
\begin{align}\label{isom:dag}
\eta^{\dag} \colon 
\mM_X^{\dag}(v) \stackrel{\cong}{\to} 
t_0(\Omega_{\mathfrak{M}_S^{\dag}(v)}[-1]) =
\mathrm{Obs}^{\ast}(\eE^{\dag \bullet}|_{\mM_S^{\dag}(v)}) 
\end{align}
which, over the pair 
$(\oO_S \stackrel{\xi}{\to} F)$, 
sends a diagram (\ref{intro:dia:BS})
to the morphism $F \otimes \omega_S^{-1} \to I^{\bullet}[1]$ given by
(see Lemma~\ref{lem:dag:qsmooth})
\begin{align}\notag
F \otimes \omega_S^{-1}[-1] \stackrel{\sim}{\leftarrow} (\oO_S \to \uU)
\stackrel{(\id, \phi)}{\longrightarrow} (\oO_S \stackrel{\xi}{\to} F)=I^{\bullet}. 
\end{align}
\end{thm}

\begin{rmk}\label{rmk:dia:BS}
Note that for a pair $(\oO_X \to F)$
with $F \in \Coh_{\le 1}(X)$, we have 
the associated object (see Lemma~\ref{lem:pair})
\begin{align}\label{pair:AX}
(\oO_{\overline{X}} \to F) \in \aA_X
\end{align}
where $\oO_{\overline{X}}$ is located in degree zero. 
The map $\pi_{\ast}^{\dag}$ sends such an object
to the associated pair $(\oO_S \to \pi_{\ast}F)$. 
However a rank one object in $\aA_X$ may not 
be necessary written of the form (\ref{pair:AX}). 
As we will see in Lemma~\ref{lem:pair},
 a diagram (\ref{intro:dia:BS}) corresponds to 
an object of the form (\ref{pair:AX}) if and only if 
the top sequence of (\ref{intro:dia:BS}) splits. 
\end{rmk}

\subsection{DT category for D0-D2-D6 bound states}\label{subsec:catDTPT}
\subsubsection{Categorical MNOP/PT theories}
For $(\beta, n) \in N_{\le 1}(S)=\mathrm{NS}(S) \oplus \mathbb{Z}$, 
we denote by 
\begin{align}\notag
I_n(X, \beta)
\end{align}
the moduli space of closed subschemes 
$C \subset X$, where $C$ is compactly supported 
with $\dim C \le 1$ satisfying $[\pi_{\ast}\oO_C]=(\beta, n)$. 
The moduli space $I_n(X, \beta)$ is a quasi-projective scheme, 
and considered by Maulik-Nekrasov-Okounkov-Pandharipande~\cite{MR2264664}
 in their 
formulation of GW/DT correspondence conjecture. 

On the other hand, a PT stable pair consists of a pair~\cite{MR2545686}
\begin{align}\label{PTpair}
(F, s), \  F \in \Coh_{\le 1}(X), \ s \colon \oO_X \to F
\end{align}
such that $F$ is pure one dimensional and $s$ is 
surjective in dimension one. 
For $(\beta, n) \in N_{\le 1}(S)$, 
we denote by 
\begin{align}\notag
P_n(X, \beta)
\end{align}
the moduli space 
of PT stable pairs (\ref{PTpair})
satisfying $[\pi_{\ast}F]=(\beta, n)$. 
The moduli space of stable pairs $P_n(X, \beta)$ is 
known to be a quasi-projective scheme. 

We have the open immersions
\begin{align*}
I_n(X, \beta) \subset \mM_X^{\dag}(\beta, n), \ 
P_n(X, \beta) \subset \mM_X^{\dag}(\beta, n)
\end{align*}
sending a subscheme $C \subset X$, a pair $(F, s)$ to 
two term complexes $(\oO_{\overline{X}} \twoheadrightarrow \oO_C)$, 
$(\oO_{\overline{X}} \stackrel{s}{\to} F)$ respectively. 
They are quasi-projective schemes, so there is a 
quasi-compact derived open substack 
$\mathfrak{M}_S^{\dag}(\beta, n)_{\rm{qc}}$
in $\mathfrak{M}_S^{\dag}(\beta, n)$ 
such that 
\begin{align*}
\pi_{\ast}^{\dag}(I_n(X, \beta)) \subset 
t_0(\mathfrak{M}_S^{\dag}(\beta, n)_{\rm{qc}}), \ 
\pi_{\ast}^{\dag}(P_n(X, \beta)) \subset 
t_0(\mathfrak{M}_S^{\dag}(\beta, n)_{\rm{qc}}). 
\end{align*}
Then by the isomorphism (\ref{isom:dag}), 
we have the conical closed 
substacks in $t_0(\Omega_{\mathfrak{M}_S^{\dag}(\beta, n)_{\rm{qc}}}[-1])$
\begin{align*}
\zZ^{I \us} \cneq  t_0(\Omega_{\mathfrak{M}_S^{\dag}(\beta, n)_{\rm{qc}}}[-1])\setminus \eta^{\dag}(I_n(X, \beta)), \ 
\zZ^{P\us} \cneq
 t_0(\Omega_{\mathfrak{M}_S^{\dag}(\beta, n)_{\rm{qc}}}[-1]) 
\setminus \eta^{\dag}(P_n(X, \beta)). 
\end{align*}
By Definition~\ref{defi:catDT}, 
the corresponding DT categories are 
defined as follows: 
\begin{defi}\label{cat:DTPT}
The $\mathbb{C}^{\ast}$-equivariant
MNOP/PT categories 
are defined  
to be 
\begin{align*}
&\dD \tT^{\mathbb{C}^{\ast}}(I_n(X, \beta)) \cneq 
D^b_{\rm{coh}}(\mathfrak{M}_S^{\dag}(\beta, n)_{\rm{qc}})/ 
\cC_{\zZ^{I\us}}, \\
&\dD \tT^{\mathbb{C}^{\ast}}(P_n(X, \beta)) \cneq 
D^b_{\rm{coh}}(\mathfrak{M}_S^{\dag}(\beta, n)_{\rm{qc}})/ 
\cC_{\zZ^{P\us}}.
\end{align*}
The dg-enhancements, $\wdDT$-version and 
ind-version  
are similarly defined following Subsections~\ref{subsub:defDT}, \ref{subsub:DThat}. 
\end{defi}

%Using Proposition~\ref{prop:num}, we see that 
%the dg-enhancements of the $\mathbb{Z}/2$-periodic 
%categorical MNOP/PT theories recover 
%the numerical invariants (\ref{MNOP/PT}): 
%\begin{lem}\label{lem:num:PT}
%We have the identities
%\begin{align*}
%I_{n, \beta}=
%(-1)^{n+\beta^2}\chi(\mathcal{DT}^{\mathbb{Z}/2}_{\rm{dg}}(I_n(X, \beta)), \
%P_{n, \beta}=
%(-1)^{n+\beta^2}\chi(\mathcal{DT}^{\mathbb{Z}/2}_{\rm{dg}}(P_n(X, \beta)).
%\end{align*}
%\end{lem}
%\begin{proof}
%By Lemma~\ref{loc:quot} and the description of the stack 
%$\mM_S^{\dag}$ in (\ref{MS:dag}), 
%the stack $\mM_S^{\dag}(\beta, n)$ is 
%a locally quotient stack. 
%Also the virtual dimension of $\mM_S^{\dag}(\beta, n)$ is 
%calculated as
%\begin{align*}
%\chi((\oO_S \to F), F)=n+\beta^2
%\end{align*}
%where $[F]=(\beta, n)$. 
%Therefore the lemma follows from Proposition~\ref{prop:num}. 
%\end{proof}
Similarly to Remark~\ref{rmk:open:independent}, 
the above definition is independent of a choice of 
$\mathfrak{M}_S^{\dag}(\beta, n)_{\rm{qc}}$. 
As proved in~\cite{Tcurve1}, 
 two moduli space $I_n(X, \beta)$, $P_n(X, \beta)$ 
are related by wall-crossing in $\aA_X$. 
Let $\tT_n(X, \beta)$ be the open substack 
\begin{align}\label{stack:Tn}
	\tT_n(X, \beta) \subset \mM_X^{\dag}(\beta, n)
	\end{align}
corresponding to $\eE \in \aA_X$ such that $\hH^1(\eE)$ is zero 
dimensional.
We have the following lemma proved in~\cite{Tcurve1}.  
\begin{lem}\emph{(\cite[Lemma~3.11 (ii)]{Tcurve1})}\label{lem:MNOP/PTwall}
For a rank one object $\eE \in \aA_X$, 
$\hH^1(\eE)$ is zero dimensional if and only if it is 
isomorphic to a two term complex $(s \colon \oO_{\overline{X}} \to F)$ 
where $F \in \Coh_{\le 1}(X)$ and $s$ is surjective in dimension one. 
\end{lem}
The above lemma implies that 
the 
stack $\tT_n(X, \beta)$
is isomorphic to the 
moduli stack of pairs $(F, s)$
for
$F \in \Coh_{\le 1}(X)$ and $(s \colon \oO_{\overline{X}} \to F)$
such that $s$ is surjective in dimension one, and 
$[\pi_{\ast}F]=(\beta, n)$. 
As discussed in~\cite[Theorem~B.1]{Toddbir}, it 
admits a good moduli space
\begin{align*}
	\pi_{\tT} \colon 
	\tT_n(X, \beta) \to T_n(X, \beta)
	\end{align*}
together with a wall-crossing diagram 
\begin{align*}
	\xymatrix{
	I_n(X, \beta) \ar[rd] & & P_n(X, \beta) \ar[ld] \\
	& T_n(X, \beta) &
} 
	\end{align*}
which is a d-critical flip. 
Therefore 
following the discussion in Subsection~\ref{intro:motivation}, 
we propose the following conjecture:

\begin{conj}\label{conj:DT/PT}
There exists a fully-faithful functor
\begin{align*}
\dD \tT^{\mathbb{C}^{\ast}}(P_n(X, \beta)) \hookrightarrow
\dD \tT^{\mathbb{C}^{\ast}}(I_n(X, \beta)). 
\end{align*}
\end{conj}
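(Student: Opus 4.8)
The plan is to deduce the fully-faithful embedding from a window/semiorthogonal analysis of the wall-crossing that connects $I_n(X,\beta)$ and $P_n(X,\beta)$, in the spirit of the $D/K$ philosophy recalled in Subsection~\ref{intro:motivation}. By Theorem~\ref{thm:D026} both $I_n(X,\beta)$ and $P_n(X,\beta)$ are open substacks of $t_0(\Omega_{\mathfrak{M}_S^{\dag}(\beta,n)}[-1])$ with unstable loci $\zZ_{I\mathchar`-\rm{us}}$ and $\zZ_{P\mathchar`-\rm{us}}$, so by Definition~\ref{cat:DTPT} it suffices to produce a fully-faithful functor between the Verdier quotients $D^b_{\rm{coh}}(\mathfrak{M}_S^{\dag}(\beta,n)_{\circ})/\cC_{\zZ_{P\mathchar`-\rm{us}}}\hookrightarrow D^b_{\rm{coh}}(\mathfrak{M}_S^{\dag}(\beta,n)_{\circ})/\cC_{\zZ_{I\mathchar`-\rm{us}}}$. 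First I would reinterpret both moduli spaces as moduli of $t$-stable objects in the category $\aA_X$ of D0-D2-D6 bound states (using the identification of $P_n(X,\beta)$ with such a moduli from~\cite{MR2683216, MR2892766} and the $t\to 0$ description of $I_n(X,\beta)$), so that the birational transformation $I_n(X,\beta)\dashrightarrow P_n(X,\beta)$ is realized as the composition of the finitely many elementary wall-crossings in the sequence (\ref{intro:dmmp}). It then suffices to construct a fully-faithful functor across a single wall, compatibly with the Verdier-quotient structure, and compose.

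Second, at one wall I would argue locally on $\mathfrak{M}_S^{\dag}(\beta,n)_{\circ}$. Via the Koszul duality of Theorem~\ref{thm:knoer} (and its equivariant version) together with Proposition~\ref{def:CZ}, the local quotient $D^b_{\rm{coh}}(\mathfrak{U})/\cC_{\alpha^{\ast}\zZ}$ is a category of $\mathbb{C}^{\ast}$-equivariant matrix factorizations on the complement of the unstable locus, which near a wall is modeled by a variation of GIT quotient of a linear space under a reductive group (the automorphisms of the relevant polystable object). On such a VGIT the window theorems for equivariant matrix factorization categories of~\cite{MR3327537, MR3895631}, reformulated through the linear Koszul duality of~\cite{MR2581249, MR3455779} as in the proof of Theorem~\ref{intro:thm:PT:irredu}, produce window subcategories $\mathcal{W}_{\pm}$ of the ambient category restricting to equivalences onto the two sides of the wall. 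Since the wall-crossing is a d-critical flip with $I_n(X,\beta)>_{K^{\rm{vir}}}P_n(X,\beta)$ (\cite[Theorem~B.1]{Toddbir}), the inequality of virtual canonical classes should translate, chamber by chamber, into a containment of windows $\mathcal{W}_{P}\subset\mathcal{W}_{I}$; composing the local equivalence $\mathcal{W}_{P}\xrightarrow{\sim}\dD\tT^{\mathbb{C}^{\ast}}(P_n(X,\beta))$, the inclusion $\mathcal{W}_{P}\hookrightarrow\mathcal{W}_{I}$, and the equivalence $\mathcal{W}_{I}\xrightarrow{\sim}\dD\tT^{\mathbb{C}^{\ast}}(I_n(X,\beta))$ gives the desired functor locally. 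By the functoriality of $\cC_{\zZ}$ under smooth morphisms (Remark~\ref{rmk:intrinsic}) these local windows should glue to a global subcategory of $D^b_{\rm{coh}}(\mathfrak{M}_S^{\dag}(\beta,n)_{\circ})$, yielding the global fully-faithful embedding.

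The main obstacle is that $I_n(X,\beta)\dashrightarrow P_n(X,\beta)$ is not a single elementary wall-crossing: at each wall the modification of the moduli space involves stable objects of all intermediate classes $(\beta',n')$ with $\beta'\le\beta$, so the window data and the semiorthogonal complements are governed by the Hall-algebra wall-crossing formula rather than by one flop. Identifying $\mathcal{W}_{\pm}$ and establishing $\mathcal{W}_{P}\subset\mathcal{W}_{I}$ in general therefore amounts to a categorification of that wall-crossing formula — the analogue of the Davison--Meinhardt computation~\cite{DaMe} — which is exactly what remains open; this is why only the irreducible-class case (Theorem~\ref{intro:thm:PT:irredu}, where (\ref{intro:dmmp}) collapses to one wall with a clean complement expressed by moduli of stable sheaves $\mM_n(X,\beta)$) and the explicit $X=\mathrm{Tot}_{\mathbb{P}^1}(\oO_{\mathbb{P}^1}(-1)^{\oplus 2})$ case are unconditional. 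A secondary technical point is upgrading the local matrix-factorization window theorems to the stacky, quasi-smooth-derived setting while keeping strict compatibility with the singular-support subcategories $\cC_{\zZ}$; this is expected to follow from the intrinsic properties of singular supports in~\cite{MR3300415}, but the gluing across the open cover of $\mathfrak{M}_S^{\dag}(\beta,n)_{\circ}$ requires care.
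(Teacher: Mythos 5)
Your proposal does not close the statement, and in fact it cannot as written: the statement is stated in the paper as a conjecture (Conjecture~\ref{conj:DT/PT}) and is proved there only in the special case $S\to\mathbb{C}^2$ the blow-up at the origin (Theorem~\ref{thm:catDTPT:loc}), where the derived moduli stack has a global description via the non-commutative crepant resolution, so that both sides become $\mathbb{C}^{\ast}$-equivariant factorization categories $\mathrm{MF}^{G\times\mathbb{C}^{\ast}}_{\rm{coh}}(R_{(Q_X^{\dag},\theta_{\pm})}(\vec v),w)$ of a single global potential, and the fully-faithfulness is obtained from an explicit window containment $\cC_{\delta,\theta_-}\subset\cC_{\delta,\theta_+}$ proved by reduction to Ext-quiver local models (Propositions~\ref{prop:analytic} and~\ref{prop:window}). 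Your outline correctly identifies this mechanism, but the two steps you flag as ``expected'' are precisely where the argument stops being a proof. First, for a general projective surface there is no global Landau--Ginzburg/VGIT model: the windows exist only chart by chart through the Koszul duality of Theorem~\ref{thm:knoer}, and no gluing statement compatible with the singular-support subcategories $\cC_{\zZ}$ is available in the paper (Remark~\ref{rmk:intrinsic} gives functoriality of $\cC_{\zZ}$ under smooth pullback, not a descent mechanism for window subcategories). Second, the containment of windows is not a formal consequence of the inequality $I_n(X,\beta)>_{K^{\rm{vir}}}P_n(X,\beta)$; in the proved case it rests on concrete $\lambda$-weight estimates on the KN strata of the specific quiver representation spaces, and no analogue of those estimates is established for the local models arising from an arbitrary $\mathfrak{M}_S^{\dag}(\beta,n)_{\circ}$.

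Two smaller inaccuracies are worth correcting. The transition $I_n(X,\beta)\dashrightarrow P_n(X,\beta)$ is not the composite of the elementary wall-crossings in the sequence (\ref{intro:dmmp}): in that sequence it is itself the first (single) step, and it is not realized inside the $\mu^{\dag}_t$-family of Definition~\ref{def:stability}; the objects destabilized across the DT/PT wall are extensions by zero-dimensional sheaves $Q[-1]$ (hence the MacMahon factor in Remark~\ref{mnop/pt}), not stable objects of all intermediate classes $(\beta',n')$ with $\beta'\le\beta$. So even granting a window formalism at each wall, the reduction ``to a single wall and compose'' does not match the geometry here, and the semiorthogonal complement one would need to control is governed by the zero-dimensional Hall-algebra contributions — which, as you note, is exactly the categorification that remains open.
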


\begin{rmk}\label{rmk:non-compact}
We can similarly formulate MNOP/PT categories 
for a non-compact surface $S$, by replacing 
$\mathfrak{M}_S$ by the derived moduli stack 
of compactly supported coherent sheaves on $S$. 
Then we can similarly formulate Conjecture~\ref{conj:DT/PT}
and study it (see Section~\ref{subsec:exam:-1-1}). 
The same also applies to Conjecture~\ref{conj2} below. 
\end{rmk}

\subsubsection{Moduli stacks of stable D0-D2-D6 bound states}\label{subsec:catDT:stableD026}
Let $S$ be a smooth projective surface, and 
$X=\mathrm{Tot}_S(\omega_S)$
as before. 
Following~\cite{Tolim2, Tsurvey, Toddbir}, 
we introduce a one parameter family of 
weak stability conditions on 
the abelian category $\aA_X$ given in Definition~\ref{subsec:moduliD026}. 
Below, we fix an element $\sigma=iH \in A(S)_{\mathbb{C}}$
for an ample divisor $H$ on $S$. 
For each $t \in \mathbb{R}$, we define the map 
\begin{align*}
	\mu_t^{\dag} \colon 
	\Gamma=\mathbb{Z} \oplus \mathrm{NS}(S) \oplus \mathbb{Z}
	\to \mathbb{R} \cup \{\infty\}, \ 
	(r, \beta, n) \mapsto \left\{ \begin{array}{cl}
		t, & r\neq 0, \\
		n/(\beta \cdot H), & r=0.
	\end{array} \right. 
\end{align*}
Note that we have 
\begin{align*}
	\mu_t^{\dag}(0, \beta, n)=\mu_{H}(\beta, n)
	\cneq \mu_{\sigma=iH}(\beta, n)
	\end{align*}
where the latter is defined in (\ref{def:slope}). 
For an object $E \in \aA_X$, we set 
$\mu_t^{\dag}(E) \cneq \mu_t^{\dag}(\cl(E))$. 
\begin{defi}\label{def:stability}
	An object $E \in \aA_X$ is $\mu_t^{\dag}$-(semi)
	stable if for any exact sequence 
	$0 \to E' \to E \to E'' \to 0$ in 
	$\aA_X$ we have the inequality 
	$\mu_t^{\dag}(E')<(\le) \mu_t^{\dag}(E'')$. 
\end{defi}
We have the substack
\begin{align*}
	\pP_n^t(X, \beta) \subset 
 \mM_X^{\dag}(\beta, n)
\end{align*}
corresponding to
$\mu_t^{\dag}$-semistable objects. 
The result of~\cite[Proposition~3.17]{Tolim2} shows that 
the above substack is an open substack 
of finite type. 
Moreover there is a finite set of \textit{walls}
$W \subset \mathbb{Q}$ such that
$\pP_n^t(X, \beta)$ is constant if $t$ lies in a connected 
component of $\mathbb{R} \setminus W$, called \textit{chamber}. 
By~\cite{AHLH}, the moduli stack $\pP_n^t(X, \beta)$ admits a good moduli space
\begin{align*}
	\pi_{\pP} \colon 
	\pP_n^t(X, \beta) \to P_n^t(X, \beta)
\end{align*} 
where closed points of $P_n^t(X, \beta)$ correspond 
to $\mu_t^{\dag}$-polystable objects, i.e. direct sums
of $\mu_t^{\dag}$-stable objects. 
If $t \notin W$, then 
$\pP_n^t(X, \beta)$ consists of $\mu_t^{\dag}$-stable objects
and the good moduli space morphism
is an isomorphism 
\begin{align}\label{isom:chamber}
	\pi_{\pP} \colon 
	\pP_n^t(X, \beta) \stackrel{\cong}{\to}P_n^t(X, \beta), \ 
	t \notin W. 
	\end{align}
\begin{rmk}\label{rmk:rigidify}
	Contrary to the case of moduli stacks of semistable sheaves in (\ref{gmoduli:M}), 
	the trivial $\C$-autmorphisms are rigidified in the definition of the stack 
	$\pP_n^t(X, \beta)$ by the trivialization at $S_{\infty}$. Therefore the 
	morphism (\ref{isom:chamber}) for $t \notin W$ is an isomorphism, not 
	$\C$-gerbe. 
	\end{rmk}
For each $\beta \in \mathrm{NS}(S)$, we set
\begin{align}\label{nbeta}
	n(\beta) \cneq \mathrm{min}\left\{ \chi(\oO_C) : 
	\begin{array}{l}
		C \subset X \mbox{ is a compactly supported } \\
		\mbox{closed subscheme with } \pi_{\ast}[C] \le \beta
		\end{array}  \right\}. 
\end{align}
The following lemma will be used later. 
\begin{lem}\label{lem:bound}
	If $\pP_n^t(X, \beta) \neq \emptyset$ for $t>0$
	then $n \ge n(\beta)$. 
\end{lem}
\begin{proof}
	Let $\eE \in \aA_X$ correspond to a point in $\pP_n^t(X, \beta)$.
	Then there is an exact sequence 
	\begin{align*}
		0 \to \hH^0(\eE) \to \eE \to \hH^1(\eE)[-1] \to 0.
	\end{align*}
Since $\hH^0(\eE)$ is a rank one torsion free sheaf which is trivial 
on $S_{\infty}$, we have 
$\hH^0(\eE)=I_C$ for an ideal sheaf of a compactly supported closed 
	subscheme $C \subset X$. Then as $\pi_{\ast}[C] \le \beta$, 
	we have $\chi(\oO_C) \ge n(\beta)$. 
	Moreover $\hH^1(\eE) \in \Coh_{\le 1}(X)$ 
	and it satisfies
	$\mu_H(\hH^1(\eE)) \ge t>0$
	by the $\mu_t^{\dag}$-stability. 
	Therefore $n=\chi(\oO_C)+\chi(\hH^1(\eE)) \ge n(\beta)$. 
\end{proof}

\begin{rmk}\label{rmk:nbeta}
	We have $n(\beta)>-\infty$ by~\cite[Lemma~3.10]{Tolim}. 
	Moreover if $\beta$ is reduced then 
	\begin{align*}
		n(\beta)=-\frac{1}{2}\beta(\beta+K_S). 
	\end{align*}
	Indeed for a closed subscheme $C \subset X$ with class $\beta$, 
	the morphism $\oO_S \to \pi_{\ast}\oO_C$ is generically surjective. 
	Since any Cohen-Macaulay curve in $S$ with 
	class $\beta$ has Euler characteristic $-\beta(\beta+K_S)/2$, 
	we have $\chi(\oO_C) \ge -\beta(\beta+K_S)/2$. 
\end{rmk}
\subsubsection{DT categories for semistable D0-D2-D6 bound states}
Similarly to Subsection~\ref{subsec:catDTPT}, there is a 
quasi-compact derived open substack 
$\mathfrak{M}_S^{\dag}(\beta, n)_{\rm{qc}}$
in $\mathfrak{M}_S^{\dag}(\beta, n)$ 
such that 
\begin{align}\label{p0(P)}
	\pi_{\ast}^{\dag}(P_n^t(X, \beta)) \subset 
	t_0(\mathfrak{M}_S^{\dag}(\beta, n)_{\rm{qc}}). 
\end{align}
Then by the isomorphism (\ref{isom:dag}), 
we have the conical closed 
substack in $t_0(\Omega_{\mathfrak{M}_S^{\dag}(\beta, n)_{\rm{qc}}}[-1])$
\begin{align*}
	\zZ^{t\us}
	\cneq t_0(\Omega_{\mathfrak{M}_S^{\dag}(\beta, n)_{\rm{qc}}}[-1])
	\setminus 
	\eta^{\dag}(\pP_n^{t}(X, \beta))
	\subset t_0(\Omega_{\mathfrak{M}_S^{\dag}(\beta, n)_{\rm{qc}}}[-1]).
\end{align*}
By Definition~\ref{defi:catDT}, we have the 
following definition of the DT category for $P_n^t(X, \beta)$:

\begin{defi}\label{defi:catDT:A}
	For $(\beta, n) \in N_{\le 1}(S)$, 
we define the $\mathbb{C}^{\ast}$-equivariant 
	DT category for $\pP_n^t(X, \beta)$
by 
	\begin{align*}
		&\mathcal{DT}^{\mathbb{C}^{\ast}}(\pP_n^t(X, \beta)) \cneq 
		D_{\rm{coh}}^b(\mathfrak{M}_S^{\dag}(\beta, n)_{\rm{qc}})/
		\cC_{\zZ^{t\us}}. 
	\end{align*}
If $t \in \mathbb{R}$ lies in a chamber, then we also 
write it as $\dDT^{\C}(P_n^t(X, \beta))$ by the isomorphism (\ref{isom:chamber}). 
The dg-enhancements, $\wdDT$-version and
ind-version  
are similarly defined following Subsections~\ref{subsub:defDT}, \ref{subsub:DThat}, \ref{subsec:rig}. 
\end{defi}

Below we will see the relation of the above
DT categories with PT categories introduced in Definition~\ref{cat:DTPT}. 
We recall that for $\lvert t \rvert \gg 0$, 
the moduli space $P_n^t(X, \beta)$ is related
to the moduli space of PT
stable pairs as follows:  

\begin{thm}\emph{(\cite[Theorem~3.21]{Tolim2})}\label{thm:PT=M}
	For $(\beta, n)\in N_{\le 1}(S)$, we have the following: 
	
	(i) For $t\gg 0$, we have the isomorphism
	\begin{align*}
		P_n(X, \beta) \stackrel{\cong}{\to} P_n^t(X, \beta), \ (F, s) \mapsto 
		(\oO_{\overline{X}} \stackrel{s}{\to} F).
	\end{align*}
	
	(ii) For $t\ll 0$, we have the isomorphism
	\begin{align*}
		P_{-n}(X, \beta) \stackrel{\cong}{\to} P_n^t(X, \beta), \ (F, s) \mapsto \mathbb{D}_{\overline{X}}(\oO_{\overline{X}} \stackrel{s}{\to} F).
	\end{align*}
\end{thm}
By Theorem~\ref{thm:PT=M} (i), 
for $v=(\beta, n)$ 
we have the obvious identity
\begin{align}\label{catDT:tlarge}
	\mathcal{DT}^{\mathbb{C}^{\ast}}(P_n^t(X, \beta))
	=\mathcal{DT}^{\mathbb{C}^{\ast}}(P_n(X, \beta)), \ t\gg 0.  
\end{align}
On the other hand, we also have an equivalence
\begin{align}\label{catDT:tsmall}
	\mathcal{DT}^{\mathbb{C}^{\ast}}(P_n^t(X, \beta))
	\stackrel{\sim}{\to}
	\mathcal{DT}^{\mathbb{C}^{\ast}}(P_{-n}(X, \beta)), \ t\ll 0.  
\end{align}
The equivalence (\ref{catDT:tsmall}) is less obvious 
than (\ref{catDT:tlarge}) since the isomorphism in 
Theorem~\ref{thm:PT=M} (ii) is given through the 
derived dual $\mathbb{D}_{\overline{X}}$. 
Indeed the equivalence (\ref{catDT:tsmall}) is a 
consequence of a more general duality statement 
in Theorem~\ref{thm:duality}. 

\begin{rmk}\label{rmk:infty}
	By formally extending the $\mu_t^{\dag}$-stability in Definition~\ref{def:stability}
	for $t=\infty$ and $t=\infty+0$, we have 
	\begin{align}\notag
		\pP_n^{t=\infty \pm 0}(X, \beta)
		=P_n^{t=\infty \pm 0}(X, \beta)=
		\begin{cases}
			I_n(X, \beta), & t=\infty+0, \\
			P_n(X, \beta), & t=\infty-0. 
		\end{cases}
	\end{align}
	Moreover the moduli stack at the wall $t=\infty$
	coincides with the stack in (\ref{stack:Tn})
	\begin{align*}
		\tT_n(X, \beta) =
		\pP_n^{t=\infty}(X, \beta) \subset \mM_X^{\dag}(\beta, n). 
	\end{align*}
\end{rmk}

\subsubsection{Moduli stacks of semistable pairs}
In some case, the DT category for D0-D2-D6 bound states on 
$X$ is related to the derived category of derived moduli stack 
of some stable pairs on $S$. 
Let $\aA_S$ be the abelian category of pairs on $S$
\begin{align*}
	W \otimes \oO_S \to F
\end{align*}
where $W$ is a finite dimensional vector space and $F \in \Coh_{\le 1}(S)$. The set 
of morphisms is given by commutative diagrams
\begin{align*}
	\xymatrix{
		W \otimes \oO_S \ar[r] \ar[d] & F \ar[d] \\
		W' \otimes \oO_S \ar[r] & F'. 
	}
\end{align*}
Here the left arrow is induced by a linear map $W \to W'$. 
We have the group homomorphism
\begin{align*}
	\cl \colon K(\aA_S) \to \mathbb{Z} \oplus N_{\le 1}(S)
\end{align*}
determined by $\cl(\oO_S \to 0)=(1, 0)$ and 
$\cl(0 \to F)=(0, [F])$. 
For an object 
$E \in \aA_S$, we define 
$\mu_t^{\dag}(E) \cneq \mu_t^{\dag}(\cl(E))$. 
\begin{defi}
	An object $E \in \aA_S$ is $\mu_t^{\dag}$-(semi)stable if for any 
	exact sequence
	$0 \to E' \to E \to E'' \to 0$ in $\aA_S$ we have
	$\mu_t^{\dag}(E')<(\le) \mu_t^{\dag}(E'')$. 
\end{defi}
For each $(\beta, n) \in N_{\le 1}(S)$, we denote by 
\begin{align}\label{dmoduli:PS}
	\mathfrak{P}_n^t(S, \beta) \subset \fM^{\dag}_n(S, \beta), \ 
	\pP_n^t(S, \beta) \subset \mM_n^{\dag}(S, \beta)
\end{align}
the open substacks of $\mu_t^{\dag}$-semistable 
objects $E \in \aA_S$ with $\cl(E)=(1, \beta, n)$. 
By Le Potier’s GIT construction
of moduli spaces of
semistable coherent systems~\cite[Theorem~4.11]{LeP}, 
we have the good moduli space
\begin{align*}
\pi_{\pP} \colon \pP_n^t(S, \beta) \to P_n^t(S, \beta)
\end{align*}
where $P_n^t(S, \beta)$ is a projective scheme
whose closed points correspond to $\mu_t^{\dag}$-polystable 
objects in $\aA_S$. 
Similarly to Subsection~\ref{subsec:catDT:stableD026}, 
there is a wall-chamber structure on
$\mathbb{R}$ such that the above good moduli 
space morphism is an isomorphism if $t$ lies in a chamber. 

\begin{lem}\label{lem:shiftP}
	We have 
	the open immersion 
	\begin{align}\label{Pst:open}
		(\pi_{\ast}^{\dag})^{-1}(\pP_n^t(S, \beta)) \subset \pP_n^t(X, \beta).
		\end{align}
	If the above inclusion is the identity, 
	we have the equivalence
	\begin{align}\label{equiv:Pnt}
		\dDT^{\C}(\pP_n^t(X, \beta))
		\stackrel{\sim}{\to} 
		\Dbc(\fP_n^t(S, \beta)). 
		\end{align}
	\end{lem}
\begin{proof}
	The inclusion (\ref{Pst:open}) is obvious 
	from the correspondence between 
	rank objects in $\aA_X$ and 
	a diagram (\ref{intro:dia:BS}).
	Indeed if there is a destabilizing 
	sequence of a diagram in (\ref{intro:dia:BS}), 
	it also destabilizes the associated pair $(\oO_S \to F)$. 
	Then the equivalence (\ref{equiv:Pnt}) follows from Lemma~\ref{lem:replace0}. 
	\end{proof}

\subsubsection{Conjectural wall-crossing phenomena of DT categories}
\label{subsec:conj:AX}
Below we discuss a 
conjectural wall-crossing phenomena
of DT categories in 
Definition~\ref{defi:catDT:A} under 
change of weak stability conditions. 
We fix $\sigma=iH$
for an ample divisor $H$ and $(\beta, n) \in N_{\le 1}(S)$. 
Let us take $t \in \mathbb{R}$ which lies 
on a wall
and real numbers $t_{\pm}$
on adjacent chambers, 
\begin{align*}
	t \in W \cap \mathbb{Q}_{>0}, \ 
	t_{\pm}\cneq t \pm \varepsilon, \ 0<\varepsilon \ll 1. 
\end{align*}
Since $\pP_n^{t_{\pm}}(X, \beta) \subset \pP_n^t(X, \beta)$, 
we have the induced diagram 
on good moduli spaces
\begin{align}\label{dia:dflip}
	\xymatrix{
		P_n^{t_+}(X, \beta)
		\ar[rd] & & P_n^{t_-}(X, \beta) \ar[ld] \\
		&P_n^{t}(X, \beta). &
	}
\end{align}
By~\cite[Theorem~9.13]{Toddbir}, the above 
diagram is a \textit{d-critical flip}. 
Therefore 
following the discussion in Subsection~\ref{intro:motivation}, 
we propose the following conjecture:

\begin{conj}\label{conj2}
	There exists a fully-faithful functor
	\begin{align*}
		\dD\tT^{\mathbb{C}^{\ast}}(P_n^{t_-}(X, \beta))
		\hookrightarrow 
		\dD\tT^{\mathbb{C}^{\ast}}(P_n^{t_+}(X, \beta)). 
	\end{align*}
\end{conj}

The above conjecture implies that, 
for
$t_1>t_2>\cdots > t_N>0$
which lie on chambers we have a chain of 
fully-faithful functors: 
\begin{align}\label{FF:P1}
	\dD \tT^{\mathbb{C}^{\ast}}(P_n(X, \beta)) \hookleftarrow & \dD\tT^{\mathbb{C}^{\ast}}(P_n^{t_1}(X, \beta)) \hookleftarrow
	\cdots 
	\cdots \hookleftarrow \dD\tT^{\mathbb{C}^{\ast}}(P_n^{t_N}(X, \beta)). 
\end{align}

As for the wall-crossing at $t=0$, 
the diagram (\ref{dia:dflip}) is a d-critical flop
(see~\cite[Corollary~9.18]{Toddbir}), so we conjecture 
the following: 
\begin{conj}\label{conj:L}
	There exists an equivalence 
	\begin{align*}
		\dD \tT^{\mathbb{C}^{\ast}}(P_n^{\varepsilon}(X, \beta)) \stackrel{\sim}{\to} 
		\dD \tT^{\mathbb{C}^{\ast}}(P_n^{-\varepsilon}(X, \beta)), \ 0<\varepsilon \ll 1. 
	\end{align*}
\end{conj}
\subsection{Example: local $(-1, -1)$-curve}\label{subsec:exam:-1-1}
In this section, we prove Conjectures~\ref{conj:DT/PT}, \ref{conj2}
in the case that $S$ is the blow-up of $\mathbb{C}^2$ at 
the origin. 
Although $S$ is non-compact, 
Conjecture~\ref{conj:DT/PT} still makes sense 
(see~Remark~\ref{rmk:non-compact}). 
In this case, $X$ is the total space of $\oO_{\mathbb{P}^1}(-1)^{\oplus2}$, 
and 
we can explicitly describe our derived moduli stacks in terms of 
non-commutative crepant resolution~\cite{MR2057015}. 
In particular we have a global critical locus description of our moduli spaces on 
$X$, and we can reduce our problem to the comparison of 
factorization categories under variation of GIT quotients. 
Then using window theorem developed in~\cite{MR3327537, MR3895631}, we prove our assertion. 
\subsubsection{Local $(-1, -1)$-curve}
Let $S\to \mathbb{C}^2$ be the blow-up at the origin, or 
equivalently 
\begin{align}\label{loc:-1}
	S=\mathrm{Tot}_{\mathbb{P}^1}(\oO_{\mathbb{P}^1}(-1)) \to \mathbb{P}^1
\end{align}
the total space of $\oO_{\mathbb{P}^1}(-1)$ over $\mathbb{P}^1$. 
In this case, we have
\begin{align*}
	X=\mathrm{Tot}_S(\omega_S)=\mathrm{Tot}_{\mathbb{P}^1}(\oO_{\mathbb{P}^1}(-1)^{\oplus 2}). 
\end{align*}
We denote by $C \subset S$ the zero section of the projection (\ref{loc:-1}), which is a 
$(-1)$-curve. 
By setting $\eE_S=\oO_S \oplus \oO_S(-C)$
and $A_S=\End(\eE_S)$, 
we have the derived equivalence~\cite{MR2057015}
\begin{align}\notag
	\Phi_S \cneq \RHom_S(\eE_S, -) \colon 
	D^b_{\rm{coh}}(S) \stackrel{\sim}{\to} D^b(\modu A_S). 
\end{align}
It is easy to see that $A_S$ is the path algebra of
the following quiver $Q_S$ with relation
\begin{align*}
	\xymatrix{
		\bullet^{0}  \ar@/^5pt/[rr]^-{a_1} & & \bullet^{1} \ar@/^15pt/[ll]^-{b_2} \ar@/^5pt/[ll]^-{b_1}
	}, \quad  
	b_1 a_1 b_2=b_2 a_1 b_1. 
\end{align*}
Here the objects $\oO_C$, $\oO_C(-1)[1]$ are sent to the simple 
$A_S$-modules corresponding to the vertices $\bullet^0$, $\bullet^1$
respectively. 

Similarly 
by setting $\eE_X=\pi^{\ast}\eE_S$ and 
$A_X=\End(\eE_X)$, we have the derived equivalence~\cite{MR2057015}
\begin{align}\label{deq:X}
	\Phi_X \cneq \RHom_X(\eE_X, -) \colon 
	D^b_{\rm{coh}}(X) \stackrel{\sim}{\to} D^b(\modu A_X). 
\end{align}
Then $A_X$ is the path algebra of
the following quiver $Q_X$ with relation
$\partial W$ for the super-potential $W$
(see~\cite{MR2836398})
\begin{align*}
	\xymatrix{
		\bullet^{0} \ar@/^15pt/[rr]^-{a_2}  \ar@/^5pt/[rr]^-{a_1} & & \bullet^{1} \ar@/^15pt/[ll]^-{b_2} \ar@/^5pt/[ll]^-{b_1}
	}, \quad  
	W=a_2(b_1 a_1 b_2-b_2 a_1 b_1). 
\end{align*}

We will also consider framed quivers 
\begin{align*}
	\xymatrix{
		\bullet^{0}  \ar@/^5pt/[rr]^-{a_1} & & \bullet^{1} \ar@/^15pt/[ll]^-{b_2} \ar@/^5pt/[ll]^-{b_1} \\
		\bullet^{\infty} \ar[u]^-{\xi} & &
	}, 
	\quad 
	\xymatrix{
		\bullet^{0} \ar@/^15pt/[rr]^-{a_2}  \ar@/^5pt/[rr]^-{a_1} & & \bullet^{1} \ar@/^15pt/[ll]^-{b_2} \ar@/^5pt/[ll]^-{b_1} \\
		\bullet^{\infty} \ar[u]^-{\xi} & &
	}
\end{align*}
denoted as $Q_S^{\dag}$, $Q_X^{\dag}$ respectively. 
By setting $\pP_S=\Phi_S(\oO_S)$, giving a $Q_S^{\dag}$-representation 
with dimension vector $1$ at $\bullet^{\infty}$ and relation $b_1 a_1 b_2=b_2 a_1 b_1$ is 
equivalent to giving a pair $(\pP_S \to M)$ 
where $M \in \modu A_S$. 
Similarly for $\pP_X=\Phi_X(\oO_X)$, 
giving a $Q_X^{\dag}$-representation with dimension vector $1$
at $\bullet^{\infty}$ and relation $\partial W$ is 
equivalent to 
giving a pair $(\pP_X \to N)$ for $N \in \modu A_X$. 

\subsubsection{Moduli stacks of quiver representations}
We prepare some notation on moduli stacks of quiver representations. 
For $\vec{v}=(v_0, v_1) \in \mathbb{Z}_{\ge 0}^2$, let 
$V_0$, $V_1$ be vector spaces with $\dim V_i=v_i$. 
We set 
\begin{align*}
	&R_{Q_S^{\dag}}(\vec{v})=V_0 \oplus \Hom(V_0, V_1) \oplus \Hom(V_1, V_0)^{\oplus 2}, \\
	&R_{Q_X^{\dag}}(\vec{v}) =V_0 \oplus \Hom(V_0, V_1)^{\oplus 2}
	\oplus \Hom(V_1, V_0)^{\oplus 2}. 
\end{align*}
The group $G=\GL(V_0) \times \GL(V_1)$ naturally 
acts on $R_{Q_S^{\dag}}$, $R_{Q_X^{\dag}}$, 
and the quotient stacks 
\begin{align*}
	\mM_{Q_S^{\dag}}(\vec{v})=[R_{Q_S^{\dag}}(\vec{v})/G], \ 
	\mM_{Q_X^{\dag}}(\vec{v})=[R_{Q_X^{\dag}}(\vec{v})/G]
\end{align*}
are the $\mathbb{C}^{\ast}$-rigidified moduli 
stacks of $Q_S^{\dag}$ and $Q_X^{\dag}$-representations with dimension vector $(1,  \vec{v})$, 
where $1$ is the dimension vector at the vertex $\bullet^{\infty}$, 
$v_i$ is the dimension vector at $\bullet^i$. 

Let $s$ be the
map
\begin{align*}
	s \colon R_{Q_S^{\dag}}(\vec{v}) \to \Hom(V_1, V_0), \ 
	s(\Xi, A_1, B_1, B_2)=B_1 A_1 B_2-B_2 A_1 B_1. 
\end{align*}
Then for the derived zero locus $s^{-1}(0) \subset R_{Q_S^{\dag}}(\vec{v})$, 
the derived stack
\begin{align}\label{dstack:MA}
	\mathfrak{M}_{A_S}^{\dag}(\vec{v})=[s^{-1}(0)/G]
\end{align}
is the derived moduli stack of
pairs $(\pP_S \to M)$ in $\modu A_S$, 
where $M$ has dimension vector $\vec{v}$. 
Its classical truncation is denoted by $\mM_{A_S}^{\dag}(\vec{v})$. 

By the above description, the $(-1)$-shifted cotangent derived stack 
$\Omega_{\mathfrak{M}^{\dag}_{A_S}(\vec{v})}[-1]$ is the derived
critical locus of 
\begin{align}\label{funct:w}
	w \colon [R_{Q_X^{\dag}}(\vec{v})/G] \to \mathbb{C}, \ 
	w(\Xi, A_1, A_2, B_1, B_2)=\mathrm{tr}(A_2(B_1 A_1 B_2-B_2 A_1 B_1)). 
\end{align}
It follows that 
\begin{align*}
	\mM_{A_X}^{\dag}(\vec{v})\cneq t_0(\Omega_{\mathfrak{M}^{\dag}_{A_S}(\vec{v})}[-1])
	=\mathrm{Crit}(w) \subset \mM_{Q_X^{\dag}}(\vec{v})
\end{align*}
is the moduli stack of pairs $(\pP_X \to N)$ in $\modu A_X$, where the 
dimension vector of $N$ is $\vec{v}$. 
The fiberwise weight two $\mathbb{C}^{\ast}$-action on 
$\mM_{A_X}^{\dag}(\vec{v}) \to
\mM_{A_S}^{\dag}(\vec{v})$
is 
given by 
\begin{align}\label{C:action}
	t \cdot (\Xi, A_1, A_2, B_1, B_2)=(\Xi, A_1, t^2 A_2, B_1, B_2).
\end{align}

\subsubsection{Moduli stacks of semistable representations}
Next we discuss King's $\theta$-stability conditions on $Q_X^{\dag}$-representations~\cite{Kin}. 
Let us take 
\begin{align*}
	\theta=(\theta_{\infty}, \theta_0, \theta_1) \in \mathbb{Q}^3.
\end{align*}
For a dimension vector $(v_{\infty}, v_0, v_1)$ of 
$Q_X^{\dag}$, 
we set $\theta(v_{\infty}, v_0, v_1)=\theta_{\infty} v_{\infty}+\theta_{0} v_0+\theta_1 v_1$. 

\begin{defi}
	A $Q_X^{\dag}$-representation $E$ with dimension vector $(1, \vec{v})$ is 
	called $\theta$-(semi)stable if 
	$\theta(1, \vec{v})=0$ and 
	for any subrepresentation $0\neq F \subsetneq E$, 
	we have the inequality
	$\theta(v(F))<(\le) 0$, where 
	$v(-)$ is the dimension vector. 
\end{defi}
Below we fix 
$\vec{v}=(v_0, v_1)$, and take $\theta$ such that $\theta(1, \vec{v})=0$ 
holds. 
Then $\theta_{\infty}$ is determined by
$\theta_{\infty}=-\theta_0 v_0-\theta_1 v_1$, so 
we simply write $\theta=(\theta_0, \theta_1)$. 

We denote by 
\begin{align*}
	\mM_{Q_X^{\dag}}^{\theta}(\vec{v})=[R_{Q_X^{\dag}}^{\theta}(\vec{v})/G]
\end{align*}
the open substack in $\mM_{Q_X^{\dag}}(\vec{v})$
corresponding to $\theta$-semistable representations. 
The stack $\mM_{Q_X^{\dag}}^{\theta}(\vec{v})$ admits the good moduli space
\begin{align}\label{Q:good:m}
	\pi_{Q_X^{\dag}} \colon 
	\mM_{Q_X^{\dag}}^{\theta}(\vec{v}) \to M_{Q_X^{\dag}}^{\theta}(\vec{v})
	\cneq 
	R_{Q_X^{\dag}}^{\theta}(\vec{v}) \sslash G
\end{align}
where $M_{Q_X^{\dag}}^{\theta}(\vec{v})$ parametrizes $\theta$-polystable 
$Q_X^{\dag}$-representations with dimension vector $(1, \vec{v})$. 
Namely a point $p \in M_{Q_X^{\dag}}^{\theta}(\vec{v})$ corresponds 
to the direct sum 
\begin{align}\label{E:polystable}
	E=
	F_{\infty} \oplus 
	\bigoplus_{i=1}^l U_i \otimes F_i
\end{align}
where 
$\{F_{\infty}, F_1, \ldots, F_l\}$ are mutually non-isomorphic 
$\theta$-stable $Q_X^{\dag}$-representations
with $\theta(v(F_i))=0$, 
$F_{\infty}$ has dimension vector of the form $(1, \ast)$, 
each $U_i$ is a finite dimensional vector space, 
and $E$ has dimension vector $(1, \vec{v})$. 

For a point $p$ as above, we denote by $Q_p^{\dag}$ the \textit{Ext-quiver}
associated 
with the collection of $\theta$-stable objects $\{F_{\infty}, F_1, \ldots, F_l\}$.
Namely the set of vertices of $Q_p^{\dag}$ is $\{\infty, 1, \ldots, l\}$, 
and the number of arrows from $i$ to $j$ is 
the dimension of $\Ext^1(F_i, F_j)$.  
From the construction of $Q_p^{\dag}$, note that 
\begin{align}\label{equiver:arrow}
	\sharp (i \to j)=\sharp (j \to i), \ 1\le i, j \le l, \ 
	\sharp (\infty \to i)-\sharp(i\to \infty)=v_0^{(i)} \ge 0
\end{align}
where we have written $v(F_i)=(0, v_0^{(i)}, v_1^{(i)})$ for $1\le i\le l$. 
The quiver $Q_p^{\dag}$ contains the subquiver
$Q_p \subset Q_p^{\dag}$ given by the Ext-quiver 
of the sub collection $\{F_1, \ldots, F_l\}$. 

We
set $G_p=\Aut(E)$, which is 
identified with the stabilizer group of the action of 
$G$ on $R_{Q_X^{\dag}}$
at the point corresponding to $E$. 
The algebraic group $G_p$ acts on $\Ext^1(E, E)$ by conjugation. 
Then we set 
\begin{align*}
	\pi_{Q_p^{\dag}} \colon 
	\mM_{Q_p^{\dag}}(\vec{u})\cneq [\Ext^1(E, E)/G_p] \to 
	M_{Q_p^{\dag}}(\vec{u}) \cneq \Ext^1(E, E)\sslash G_p. 
\end{align*}
The stack $\mM_{Q_p^{\dag}}(\vec{u})$ is identified with the stack of 
$Q_p^{\dag}$-representations with dimension vector 
$(1, \vec{u})$, where $1$ is the dimension vector at $\infty$, 
$\vec{u}=(\dim U_1, \ldots, \dim U_l)$, and $M_{Q_p^{\dag}}(\vec{u})$ 
its its good moduli space. 
We also define the open substacks
\begin{align*}
	\mM_{Q_p^{\dag}}^+(\vec{u}) \subset \mM_{Q_p^{\dag}}(\vec{u}), \ 
	\mM_{Q_p^{\dag}}^-(\vec{u}) \subset \mM_{Q_p^{\dag}}(\vec{u})
\end{align*}
corresponding to 
$Q_p^{\dag}$-representations 
such that the images of the maps
$\mathbb{C} \to U_i$
associated with arrows $\infty \to i$ 
for all $1\le i \le l$
generate $\oplus_{i=1}^l U_i$ as $\mathbb{C}[Q_p]$-module 
(resp. dual of the maps $U_i \to \mathbb{C}$ 
associated with arrows $i \to \infty$ generate
$\oplus_{i=1}^l U_i^{\vee}$ as $\mathbb{C}[Q_p]$-module). 
Then we have the following: 
\begin{prop}\label{prop:analytic}
	There exist analytic open neighborhoods
	$p \in T \subset M_{Q_X^{\dag}}^{\theta}(\vec{v})$, 
	$0 \in U \subset M_{Q_p^{\dag}}(\vec{u})$, and 
	commutative isomorphisms
	\begin{align}\label{isom:analytic}
		\xymatrix{
			(\pi_{Q_p^{\dag}})^{-1}(U) \ar[r]^-{\cong} \ar[d] & (\pi_{Q_X^{\dag}})^{-1}(T) \ar[d] \\
			U \ar[r]^-{\cong} & T. 
		}
	\end{align}
	Moreover the top isomorphism restricts to the 
	isomorphism
	\begin{align}\label{isom:restrict}
		(\pi_{Q_p^{\dag}})^{-1}(U)  \cap \mM_{Q_p^{\dag}}^{\pm}(\vec{u})
		\stackrel{\cong}{\to} (\pi_{Q_X^{\dag}})^{-1}(T) \cap \mM_{Q_X^{\dag}}^{\theta_{\pm}}(\vec{v}). 
	\end{align}
	Here $\theta_{\pm}=\theta \mp \varepsilon(1, 1)$ for $0<\varepsilon \ll 1$. 
\end{prop}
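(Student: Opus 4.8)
The plan is to apply the Luna-type étale slice theorem for the good moduli space morphism $\pi_{Q_X^{\dag}}$, as developed by Alper, Hall, and Rydh, in its analytic (or henselian) incarnation. Concretely, at the polystable point $p$ corresponding to $E$ as in (\ref{E:polystable}), the stabilizer is the reductive group $G_p = \Aut(E)$, and the slice theorem produces an analytic neighborhood of the fiber over $p$ that is isomorphic to a quotient of (a neighborhood of $0$ in) the normal space to the orbit by $G_p$. The first step is therefore to identify this normal space: since $R_{Q_X^{\dag}}(\vec{v})$ is a linear representation of $G$ and the orbit of $E$ is smooth, the normal space at $E$ is the $G_p$-representation $R_{Q_X^{\dag}}(\vec{v}) / \mathfrak{g}\cdot E$, which by deformation theory of quiver representations is naturally identified with $\Ext^1_{\mathbb{C}[Q_X^{\dag}]}(E, E)$. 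One then observes that the Ext-quiver $Q_p^{\dag}$ is precisely the quiver whose representation space is $\Ext^1(E,E)$ with its $G_p = \prod \GL(U_i)$-action (the factor at $\infty$ being trivial since $E$ has multiplicity one there), so $[\Ext^1(E,E)/G_p] = \mM_{Q_p^{\dag}}(\vec{u})$. This gives the commutative square (\ref{isom:analytic}) directly from the slice theorem, after shrinking $T$ and $U$ appropriately.

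The second step is to match up the (semi)stability conditions under this local isomorphism. The key point is that $\theta$-(semi)stability is an open condition, and on the slice it is governed by the induced stability on the Ext-quiver. Here I would use the standard fact (going back to King, and used systematically in wall-crossing arguments) that for the small perturbation $\theta_{\pm} = \theta \mp \varepsilon(1,0)$, a $\theta$-semistable representation lying near $p$ is $\theta_{\pm}$-semistable if and only if its image under the slice map is $(\pm)$-stable for the Ext-quiver $Q_p^{\dag}$ in the sense defined before the proposition — i.e., the appropriate cyclicity/cocyclicity condition with respect to the framing arrows $\infty \to i$ (resp.\ $i \to \infty$) holds. Intuitively: perturbing $\theta_0$ down slightly makes the framing vertex "want" to generate everything through the $a$-type arrows, which is exactly the condition cutting out $\mM_{(Q_p^{\dag}, +)}(\vec{u})$; perturbing up gives the dual condition. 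This requires checking that no other walls are crossed in a small enough neighborhood, which follows because the set of dimension vectors of possible destabilizing subobjects of representations near $p$ is finite and controlled by the subobjects of $E$ itself.

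For the actual argument I would proceed as follows: (1) invoke the analytic slice theorem to get (\ref{isom:analytic}), identifying the slice with $\mM_{Q_p^{\dag}}(\vec{u})$ via the $\Ext^1$-computation; (2) by shrinking, arrange that every point of $(\pi_{Q_X^{\dag}})^{-1}(T)$ and of $(\pi_{Q_p^{\dag}})^{-1}(U)$ has, as its associated graded with respect to any Jordan-Hölder filtration refining the $S$-equivalence class, only factors among $\{F_\infty, F_1, \dots, F_l\}$ — this is possible because these factors are rigid enough (finitely many appear) and semistability is open; (3) on such a neighborhood, translate the $\theta_{\pm}$-semistability of a $Q_X^{\dag}$-representation into a statement purely about its extension data, namely the (co)generation condition by the framing, using that $\theta(v(F_i)) = 0$ for all $i$ so that the only numerical contribution to (in)stability after perturbation comes from the framing vertex weight; (4) conclude that the local isomorphism identifies $\mM_{(Q_X^{\dag}, \theta_\pm)}(\vec v)$ with $\mM_{(Q_p^{\dag}, \pm)}(\vec u)$ near the respective fibers, giving (\ref{isom:restrict}).

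The main obstacle I expect is step (3): carefully verifying that the perturbed stability condition $\theta_\pm$ on the big quiver $Q_X^{\dag}$ restricts, along the slice, to exactly the framing (co)generation condition defining $\mM_{(Q_p^{\dag},\pm)}$, with no spurious extra walls. This is essentially a bookkeeping argument with slopes and dimension vectors, but it is the place where one must be genuinely careful about which subrepresentations can destabilize and about the direction of the perturbation. The slice theorem itself and the $\Ext^1$-identification are by now standard and cause no real trouble; it is the precise comparison of GIT chambers that carries the content. One should also note that the $\mathbb{C}^\ast$-rigidification is harmless here since all the stacks in sight are already the rigidified ones, and the extra trivial $\mathbb{C}^\ast$ in $\Aut(E)$ coming from the framing vertex has been removed.
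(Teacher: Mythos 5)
Your proposal is correct and follows essentially the same route as the paper: the paper deduces the commutative square (\ref{isom:analytic}) from Luna's \'etale slice theorem applied to the smooth stack $\mM_{(Q_X^{\dag}, \theta)}(\vec{v})$, with the slice being $[\Ext^1(E,E)/G_p]=\mM_{Q_p^{\dag}}(\vec{u})$ exactly as in your $\Ext^1$-identification. For the stability comparison (\ref{isom:restrict}) the paper likewise relies on the wall/chamber bookkeeping you sketch in steps (2)--(4), which it delegates to the analogous argument of \cite[Theorem~9.11]{Toddbir} and omits.
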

\begin{proof}
	Since $\mM_{Q_X^{\dag}}^{\theta}(\vec{v})$ is a smooth stack, 
	the isomorphisms (\ref{isom:analytic}) follow from Luna's \'etale slice theorem. 
	The isomorphism (\ref{isom:restrict}) 
	follows from the similar argument of~\cite[Theorem~9.11]{Toddbir},
	so we omit details. 
\end{proof}

\subsubsection{Window subcategories}
By~\cite{Kin}, 
the semistable locus 
$R_{Q_X^{\dag}}^{\theta}(\vec{v})$ is the GIT $L_{\theta}$-semistable locus 
for a $G$-equivariant $\mathbb{Q}$-line bundle $L_{\theta}$ on $R_{Q_X^{\dag}}(\vec{v})$, determined by the 
rational $G$-character
\begin{align}\label{chi:theta}
	\chi_{\theta} \colon 
	G=\GL(V_0) \times \GL(V_1) \to \mathbb{C}^{\ast}, \  (g_0, g_1) \mapsto \det (g_0)^{-\theta_0} \det (g_1)^{-\theta_1}. 
\end{align}
Let us fix a maximal torus $T \subset G$ 
and a Weyl-invariant norm $\lvert \ast \rvert$
on $\Hom_{\mathbb{Z}}(\mathbb{C}^{\ast}, T)_{\mathbb{R}}$. 
There is a Kempf-Ness stratification (see Section~\ref{subsec:KN})
\begin{align}\notag
	R_{Q_X^{\dag}}(\vec{v})=
  S_1 \sqcup S_2 \sqcup \cdots
	\sqcup 	R_{Q_X^{\dag}}^{\theta}(\vec{v}). 
\end{align}
Let us take $\theta_{\pm}=\theta \mp \varepsilon (1, 1)$ for $0<\varepsilon \ll 1$. The KN stratifications
with respect to $L_{\theta_{\pm}}$ 
are finer than that with respect to $L_{\theta}$, so we have the 
stratifications
\begin{align*}
	R_{Q_X^{\dag}}^{\theta}(\vec{v})=
 S_{1}^{\pm} \sqcup S_{2}^{\pm} \sqcup \cdots \sqcup 	R_{Q_X^{\dag}}^{\theta_{\pm}}(\vec{v}) 
\end{align*}
with associated one parameter subgroups $\lambda_{\alpha}^{\pm} \colon \mathbb{C}^{\ast} \to T$
and $\lambda_{\alpha}^{\pm}$-fixed subset $Z_{\alpha}^{\pm} \subset S_{\alpha}^{\pm}$. 
We set
\begin{align*}
	\eta_{\alpha}^{\pm} \cneq \wt_{\lambda_{\alpha}^{\pm}}\det(N_{S_{\alpha}^{\pm}/R_{Q_X^{\dag}}^{\theta}(\vec{v})}|_{Z_{\alpha}^{\pm}}).
\end{align*}
We also set
\begin{align}\label{def:delta}
	\delta=L_{(1, 1)}^{\otimes \varepsilon}
	\in \Pic_G(R_{Q_X^{\dag}}^{\theta}(\vec{v}))_{\mathbb{R}}, \ 
	0<\varepsilon \ll 1. 
\end{align}
Then the window subcategories (see Theorem~\ref{thm:window})
\begin{align*}
	\wW_{\delta}^{\theta_{\pm}}
	 \subset \mathrm{MF}_{\rm{coh}}^{\mathbb{C}^{\ast}}(\mM_{Q_X^{\dag}}^{\theta}(\vec{v}), w)
\end{align*}
are defined to be the triangulated subcategories 
of factorizations $\pP$ such that for all 
$\alpha$ we have 
\begin{align}\label{wt:conifold}
	\wt_{\lambda_{\alpha}}(
	\pP|_{Z_{\alpha}})
	\subset  
	\wt_{\lambda_{\alpha}}(\delta|_{Z_{\alpha}})+
	\left[ -\frac{\eta_{\alpha}^{\pm}}{2}, \frac{\eta_{\alpha}^{\pm}}{2} \right). 
\end{align}
By Theorem~\ref{thm:window}, 
the compositions 
\begin{align}\label{defi:window}
\wW_{\delta}^{\theta_{\pm}} \hookrightarrow \mathrm{MF}_{\rm{coh}}^{\mathbb{C}^{\ast}}(\mM_{Q_X^{\dag}}^{\theta}, w)
	\twoheadrightarrow \mathrm{MF}_{\rm{coh}}^{\mathbb{C}^{\ast}}
	(\mM_{Q_X^{\dag}}^{\theta_{\pm}}, w)
\end{align}
are equivalences. 
Here the right arrow is a restriction functor, 
$w$ is the function (\ref{funct:w}), 
and the right hand sides are the derived categories of 
$(G \times \mathbb{C}^{\ast})$-equivariant 
coherent factorizations of $w$, where the 
$\mathbb{C}^{\ast}$-action is given by (\ref{C:action}). 
\begin{prop}\label{prop:window}
	We have $\wW_{\delta}^{\theta_{-}} \subset \wW_{\delta}^{\theta_{+}}$. 
	Hence we have a fully-faithful functor
	\begin{align}\notag
		\mathrm{MF}_{\rm{coh}}^{\C}
		(\mM_{Q_X^{\dag}}^{\theta_-}(\vec{v}), w)
		\hookrightarrow 
		\mathrm{MF}_{\rm{coh}}^{\mathbb{C}^{\ast}}
		(\mM_{Q_X^{\dag}}^{\theta_+}(\vec{v}), w).
	\end{align}
\end{prop}
\begin{proof}
	Let 
	$\wW^{\theta_{\pm}, \mathbb{Z}/2\mathbb{Z}}_{\delta}$ be the subcategories
	\begin{align*}
	\wW^{\theta_{\pm}, \mathbb{Z}/2\mathbb{Z}}_{\delta} \subset \mathrm{MF}_{\rm{coh}}^{\mathbb{Z}/2\mathbb{Z}}
	(\mM_{Q_X^{\dag}}^{\theta}(\vec{v}), w)	
		\end{align*}
	defined by the same condition (\ref{wt:conifold})
	for factorizations $\pP$ without 
	auxiliary $\C$-action. 
		Since the condition (\ref{wt:conifold}) holds for 
	$\pP$ if and only if the same condition holds 
	after forgetting the $\C$-action, 
	it is enough to show the inclusion 
	\begin{align}\label{inclu:coni}
		\wW_{\delta}^{\theta_-, \mathbb{Z}/2\mathbb{Z}} \subset \wW_{\delta}^{\theta_+, \mathbb{Z}/2\mathbb{Z}}.
		\end{align}
	
	Below we follow the same strategy as in~\cite[Theorem~5.2]{KoTo}. 
	Let us take a point 
	$p \in M_{Q_X^{\dag}}(\vec{v})$ corresponding to a 
	polystable object $E$ as in (\ref{E:polystable}), 
	and take isomorphisms as in 
	Proposition~\ref{prop:analytic}. 
	Let $w_p \colon \pi_{Q_p^{\dag}}^{-1}(U) \to \mathbb{C}$ be the pull-back 
	of $w$ under the top isomorphism in Proposition~\ref{prop:analytic}. 
	Since the statement (\ref{inclu:coni})
	is a local question on the base of the map (\ref{Q:good:m}), 
	it is enough to show a similar statement for 
	$(\pi_{Q_p^{\dag}}^{-1}(U), w_p)$. 
	Let us write $(\pi_{Q_p^{\dag}})^{-1}(U)=[R_p/G_p]$ for an 
	analytic open subset $R_p \subset \Ext^1(E, E)$, 
	and denote its intersection with $\mM_{Q_p^{\dag}}^{\pm}(\vec{u})$
	by $[R_{p}^{\pm}/G_p]$. 
	We have the KN stratifications
	\begin{align}\label{KN:local}
		R_p=S_{1, p}^{\pm} \sqcup 
		S_{2, p}^{\pm} \sqcup \cdots \sqcup R_p^{\pm}
	\end{align}
	with respect to $G_p$-characters $\chi_{\theta_{\pm}}|_{G_p}$. 
	Here $\chi_{\theta}$ is given by (\ref{chi:theta}), and we have restricted 
	it to $G_p$ by the inclusion $G_p \subset G$. 
The $G_p$-characters $\chi_{\theta_{\pm}}|_{G_p}$ are written as 
	\begin{align}\label{write:Gp}
		\chi_{\theta_{\pm}}|_{G_p} \colon 
		\prod_{i=1}^l \GL(U_i) \to \C, \ 
		(h_i) \mapsto \prod_{i=1}^l (\det h_i)^{\pm \varepsilon(v_0^{(i)}+v_1^{(i)})}. 
		\end{align}
		Let $\delta_p$ be the pull-back of (\ref{def:delta}) to 
	a $G_p$-equivariant $\mathbb{R}$-line bundle on $R_p$ under 
	the top isomorphism of (\ref{isom:analytic}). 
	Then we have the window subcategories
	\begin{align*}
		\widehat{\wW}_{\delta_p}^{\theta_{\pm}, \mathbb{Z}/2\mathbb{Z}} \subset \mathrm{MF}_{\rm{coh}}^{\mathbb{Z}/2\mathbb{Z}}([R_{p}/G_p], w_p)
	\end{align*}
	defined similarly to (\ref{defi:window})
	with respect to the KN stratifications (\ref{KN:local}).  
	By the property of $Q_p^{\dag}$ 
	given in (\ref{equiver:arrow})
	and the description of $\chi_{\theta_{\pm}}|_{G_p}$ in (\ref{write:Gp}), 
	we are in the same situation in Proposition~\ref{prop:inclu:W}, 
	so the inclusion $\widehat{\wW}_{\delta_p}^{\theta_-, \mathbb{Z}/2\mathbb{Z}} \subset \widehat{\wW}_{\delta_p}^{\theta_+, \mathbb{Z}/2\mathbb{Z}}$
	follows from Proposition~\ref{prop:inclu:W} (see Remark~\ref{rmk:Z2Z}). 
	Since this holds for any $p$, 
	we have (\ref{inclu:coni}). 
\end{proof}

\subsubsection{Proof of Conjectures~\ref{conj:DT/PT}, \ref{conj2} local $(-1, -1)$-curve}
We show the following: 
\begin{thm}\label{thm:catDTPT:loc}
	Conjecture~\ref{conj:DT/PT} is true for $S=\mathrm{Tot}_{\mathbb{P}^1}(\oO_{\mathbb{P}^1}(-1))$, i.e. for 
	$(d, n) \in \mathbb{Z}^2$ with $d\ge 0$, there is a 
	fully-faithful functor
	\begin{align*}
		\mathcal{DT}^{\mathbb{C}^{\ast}}(P_n(X, d[C])) \hookrightarrow 
		\mathcal{DT}^{\mathbb{C}^{\ast}}(I_n(X, d[C])). 
	\end{align*} 
\end{thm}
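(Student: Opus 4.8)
The strategy is to reduce the statement to the fully-faithful functor of factorization categories established in Proposition~\ref{prop:window}, via the quiver description of the derived moduli stacks developed in this section. First I would identify the derived moduli stack $\mathfrak{M}_S^{\dag}(d[C], n)$ of pairs $(\oO_S \to F)$ on $S$ with the derived moduli stack $\mathfrak{M}_{A_S}^{\dag}(\vec{v})$ of pairs $(\pP_S \to M)$ of $A_S$-modules, for the appropriate dimension vector $\vec{v}=\vec{v}(d,n)$ determined by $[F]=(d[C],n)$ under the derived equivalence $\Phi_S$ of (\ref{deq:S}). Since $\Phi_S$ is a derived equivalence sending $\oO_S$ to $\pP_S$, it induces an isomorphism of the corresponding derived moduli stacks of pairs, hence an equivalence $D^b_{\rm{coh}}(\mathfrak{M}_S^{\dag}(d[C], n)_{\circ}) \simeq D^b_{\rm{coh}}(\mathfrak{M}_{A_S}^{\dag}(\vec{v})_{\circ})$, compatible with taking $(-1)$-shifted cotangents. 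Combined with Theorem~\ref{intro:thm:MX} (that $\mM_X^{\dag}$ is the dual obstruction cone over $\mM_S^{\dag}$) and its quiver analogue $\mM_{A_X}^{\dag}(\vec{v}) = t_0(\Omega_{\mathfrak{M}_{A_S}^{\dag}(\vec{v})}[-1]) = \mathrm{Crit}(w)$ for $w$ as in (\ref{funct:w}), this identifies $\mM_X^{\dag}(d[C], n)$ with $\mM_{A_X}^{\dag}(\vec{v})$ and so identifies the open substacks $I_n(X, d[C])$ and $P_n(X, d[C])$ with open substacks of $\mathrm{Crit}(w) \subset \mM_{Q_X^{\dag}}(\vec{v})$.

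\textbf{Matching stability.} The next and most delicate step is to show that, under this identification, the complement of $I_n(X, d[C])$ in $\mM_{A_X}^{\dag}(\vec{v})$ (resp.~of $P_n(X, d[C])$) corresponds to the unstable loci $\mathrm{Crit}(w) \setminus R_{(Q_X^{\dag}, \theta_+)}(\vec{v})$ (resp.~$\mathrm{Crit}(w) \setminus R_{(Q_X^{\dag}, \theta_-)}(\vec{v})$) for suitable $\theta_{\pm} = \theta \mp \varepsilon(0,1)$. This requires translating the geometric stability conditions (the ideal sheaf condition defining $I_n$, and the PT stability of $(F,s)$ defining $P_n$) into King's $\theta$-stability on framed $Q_X^{\dag}$-representations. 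The ideal sheaf side should correspond to the chamber where the section $\xi \colon \pP_X \to N$ generates $N$ (so $I_n$ is the "$\theta_+$" quotient), while the stable pairs side corresponds to the adjacent chamber; concretely one checks that a subscheme $C \subset X$ with $\chi(\oO_C)=n$ gives a $\theta_+$-semistable $Q_X^{\dag}$-representation via $\oO_{\overline X} \twoheadrightarrow \oO_C$, and that a PT pair gives a $\theta_-$-semistable one, and conversely. This is where I expect the main obstacle: one must verify that the relevant GIT chamber walls in the space of $\theta$-parameters are crossed in exactly the way that matches the d-critical flip of~\cite[Theorem~B.1]{Toddbir}, and that the $\mathbb{C}^{\ast}$-action (\ref{C:action}) and the rigidification are correctly accounted for. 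This is analogous to, but more involved than, the identifications in~\cite{MR2892766}; I would follow that circle of ideas, using that $\beta=d[C]$ is the minimal curve class so that the relevant representations have dimension vector with $v_1 - v_0$ controlled by $d$.

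\textbf{Conclusion via windows.} Once the stability dictionary is in place, Corollary~\ref{cor:equiv:support} (globalized to the quotient stack setting, as in the proof of Proposition~\ref{prop:num}) gives equivalences
\begin{align*}
\mathcal{DT}^{\mathbb{C}^{\ast}}(I_n(X, d[C])) &\simeq \mathrm{MF}_{\rm{coh}}^{G \times \mathbb{C}^{\ast}}(R_{(Q_X^{\dag}, \theta_+)}(\vec{v}), w), \\
\mathcal{DT}^{\mathbb{C}^{\ast}}(P_n(X, d[C])) &\simeq \mathrm{MF}_{\rm{coh}}^{G \times \mathbb{C}^{\ast}}(R_{(Q_X^{\dag}, \theta_-)}(\vec{v}), w),
\end{align*}
where I use that the subcategory $\cC_{\zZ}$ of objects with singular support in the unstable locus corresponds, under Koszul duality, to factorizations supported on that locus, and that the restriction functor realizes the Verdier quotient as the factorization category over the semistable locus. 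Then Proposition~\ref{prop:window} provides the fully-faithful functor (\ref{catDT:fact}) from the $\theta_-$ side to the $\theta_+$ side, which under the above equivalences is precisely the desired fully-faithful functor $\mathcal{DT}^{\mathbb{C}^{\ast}}(P_n(X, d[C])) \hookrightarrow \mathcal{DT}^{\mathbb{C}^{\ast}}(I_n(X, d[C]))$. A final bookkeeping point is to confirm that the quasi-compact open substack $\mathfrak{M}_S^{\dag}(d[C], n)_{\circ}$ can be chosen compatibly with the quiver picture (so that $R_{Q_X^{\dag}}(\vec{v})$, being already of finite type, needs no further truncation), which follows from Remark~\ref{rmk:open:independent} and the independence of the construction on the choice of such substack.
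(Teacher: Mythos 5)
Your proposal follows essentially the same route as the paper's proof: identify $I_n(X,d[C])$ and $P_n(X,d[C])$ with moduli of $\theta_{\pm}$-(semi)stable framed $Q_X^{\dag}$-representations via the NCCR equivalences $\Phi_S$, $\Phi_X$, pass to $\mathbb{C}^{\ast}$-equivariant factorization categories of $w$ over the quiver model by Koszul duality and support/restriction arguments, and conclude with the window-theorem fully-faithfulness of Proposition~\ref{prop:window}. The stability dictionary you flag as the main obstacle is precisely what the paper imports from~\cite{MR2836398}, so that step is a citation rather than a new argument.
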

\begin{proof}
	For $(d, n) \in \mathbb{Z}^2$, we
	set $\vec{v}=(v_0, v_1)=(n, n-d)$, and 
	\begin{align*} 
		\mM^{\dag, \theta}_{A_X}(\vec{v})=
		\mM_{A_X}^{\dag}(\vec{v}) \cap \mM_{Q_X^{\dag}}^{\theta}(\vec{v}).
	\end{align*}
	For $\theta=(-1, 1)$, it is proved in~\cite{MR2836398}
	(see~Figure~1 in \textit{loc.~cit.~})
	that the equivalence (\ref{deq:X}) induce the isomorphisms
	\begin{align}\label{isom:IP:A}
		\Phi_{X\ast} \colon I_n(X, d[C]) \stackrel{\cong}{\to}
		 \mM^{\dag, \theta_+}_{A_X}(\vec{v}), \ \Phi_{X\ast} \colon 
		P_n(X, d[C]) \stackrel{\cong}{\to} \mM^{\dag, \theta_-}_{A_X}(\vec{v})
	\end{align}
	by sending a pair $(\oO_X \to F)$ 
	to $(\pP_X \to \Phi_X(F))$. 
	Let 
	\begin{align}\label{conifold:qc}
		\mathfrak{M}_S^{\dag}(d[C], n)_{\rm{qc}} 
		\subset \mathfrak{M}_S^{\dag}(d[C], n)
	\end{align}
	be the derived
	open substack of pairs $(\oO_S \to F)$ on $S$
	such that $[F]=(d[C], n)$, and
	satisfying $\Phi_S(F) \in \modu A_S$. 
	We have the open immersion of derived stacks
	\begin{align*}
		\Phi_{S\ast} \colon 
		\mathfrak{M}_S^{\dag}(d[C], n)_{\rm{qc}} \hookrightarrow
		\mathfrak{M}_{A_S}^{\dag}(\vec{v})
	\end{align*}
	defined by sending 
	$(\oO_S \to F)$ to $(\pP_S \to \Phi_S(F))$. 
	In particular, the derived open substack (\ref{conifold:qc}) 
	is quasi-compact. 
	These morphisms fit into
	the commutative diagram
	\begin{align}\label{dia:IPA}
		\xymatrix{
			I_n(X, d[C])  \ar@<-0.3ex>@{^{(}->}[r]^-{\Phi_{X\ast}}  \ar[d] & 
			\mM_{A_X}^{\dag}(\vec{v}) \ar[d] \\
			\mM_S^{\dag}(d[C], n)_{\rm{qc}} 
			\ar@<-0.3ex>@{^{(}->}[r]^-{\Phi_{S\ast}}
			& \mM_{A_S}^{\dag}(\vec{v}),
		} \ 
		\xymatrix{
			P_n(X, d[C])  \ar@<-0.3ex>@{^{(}->}[r]^-{\Phi_{X\ast}}  \ar[d] & 
			\mM_{A_X}^{\dag}(\vec{v}) \ar[d] \\
			\mM_S^{\dag}(d[C], n)_{\rm{qc}} 
			\ar@<-0.3ex>@{^{(}->}[r]^-{\Phi_{S\ast}}
			& \mM_{A_S}^{\dag}(\vec{v}).
		}
	\end{align}
Here $\mM_S^{\dag}(d[C], n)_{\rm{qc}}=t_0(\fM_S^{\dag}(d[C], n)_{\rm{qc}})$. 
	We set
	\begin{align*}
		\zZ^{\theta\us}\cneq 
		\mM_{A_X}^{\dag}(\vec{v}) \setminus 
		\mM^{\dag, \theta}_{A_X}(\vec{v}). 
	\end{align*}
	Then from the isomorphisms (\ref{isom:IP:A})
	and the diagrams (\ref{dia:IPA}), 
	we obtain the equivalences by Lemma~\ref{lem:replace0}
	\begin{align*}
		&\mathcal{DT}^{\mathbb{C}^{\ast}}(I_n(X, d[C]))
		\stackrel{\sim}{\to}
		D^b_{\rm{coh}}(\mathfrak{M}_{A_S}(\vec{v}))/
		\cC_{\zZ^{\theta_+ \us}}, \\ 
		&\mathcal{DT}^{\mathbb{C}^{\ast}}(P_n(X, d[C]))
		\stackrel{\sim}{\to}
		D^b_{\rm{coh}}(\mathfrak{M}_{A_S}(\vec{v}))/
		\cC_{\zZ^{\theta_- \us}}. 
	\end{align*}
	From the description of the derived moduli stack (\ref{dstack:MA})
	and Theorem~\ref{thm:knoer}, 
	we have the equivalence
	\begin{align*}
		D^b_{\rm{coh}}(\mathfrak{M}_{A_S}(\vec{v}))/\cC_{\zZ^{\theta_{\pm}\us}} \stackrel{\sim}{\to}
		\mathrm{MF}^{\mathbb{C}^{\ast}}_{\mathrm{coh}}(\mM_{Q_X^{\dag}}(\vec{v})
		\setminus \zZ^{\theta_{\pm}\us}, w).
	\end{align*} 
	Since the critical locus of $w$ in $\mM_{Q_X^{\dag}}^{\theta_{\pm}}(\vec{v})$
	is contained in $\mM_{Q_X^{\dag}}(\vec{v})
	\setminus \zZ^{\theta_{\pm}\us}$, 
	the following restriction functors give equivalences
	(see~(\ref{rest:equiv}))
	\begin{align*}
		\mathrm{MF}^{\mathbb{C}^{\ast}}_{\mathrm{coh}}(\mM_{Q_X^{\dag}}(\vec{v})
		\setminus \zZ^{\theta_{\pm}\us}, w)
		\stackrel{\sim}{\to}
		\mathrm{MF}^{\mathbb{C}^{\ast}}_{\mathrm{coh}}
		(\mM_{Q_X^{\dag}}^{\theta_{\pm}}(\vec{v}), w).
	\end{align*}
	Therefore we have the equivalences
	\begin{align}\notag
		&\mathcal{DT}^{\mathbb{C}^{\ast}}(I_n(X, d[C]))
		\stackrel{\sim}{\to}
		\mathrm{MF}_{\rm{coh}}^{\mathbb{C}^{\ast}}
		(\mM_{Q_X}^{\theta_+}(\vec{v}), w), \\ 
		\notag
		&\mathcal{DT}^{\mathbb{C}^{\ast}}(P_n(X, d[C]))
		\stackrel{\sim}{\to}
		\mathrm{MF}_{\rm{coh}}^{\mathbb{C}^{\ast}}
		(\mM_{Q_X}^{\theta_-}(\vec{v}), w), 
	\end{align}
	and the theorem follows from Proposition~\ref{prop:window}. 
\end{proof}

Also the same argument of Theorem~\ref{thm:catDTPT:loc} shows the following: 
\begin{thm}\label{thm:PTWCF:loc}
	Conjecture~\ref{conj2} is true 
	when $S \to \mathbb{C}^2$ is the blow-up at the origin. 
\end{thm}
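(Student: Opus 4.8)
\textbf{Proof proposal for Theorem~\ref{thm:PTWCF:loc}.}
The plan is to run the proof of Theorem~\ref{thm:catDTPT:loc} verbatim, replacing the single wall-crossing $I_n \dashrightarrow P_n$ (which in the quiver picture was the single $\theta$-wall separating $\theta_+$ and $\theta_-$) by the wall $t_0$ separating the chambers containing $t_+$ and $t_-$. The key point, as in the previous section, is that for $S=\mathrm{Tot}_{\mathbb{P}^1}(\oO_{\mathbb{P}^1}(-1))$ we have a global critical locus description: writing $X=\mathrm{Tot}_{\mathbb{P}^1}(\oO_{\mathbb{P}^1}(-1)^{\oplus 2})$ with its NCCR algebra $A_X$ and the framed quiver $Q_X^{\dag}$, the derived stack $\mathfrak{M}_S^{\dag}(d[C], n)_{\circ}$ is the derived critical locus presentation $[s^{-1}(0)/G]$ of (\ref{dstack:MA}), and $\Omega[-1]$ of it is $\mathrm{Crit}(w) \subset [R_{Q_X^{\dag}}(\vec v)/G]$ for the potential $w$ of (\ref{funct:w}), with $\vec v=(v_0,v_1)=(n,n-d)$.

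First I would translate the weak stability $\mu_{t}^{\dag}$ on $\aA_X$ into a King $\theta$-stability on $Q_X^{\dag}$-representations. By~\cite{MR2683216, MR2892766} and the NCCR equivalence (\ref{deq:X}), the moduli spaces $P_n(X,d[C])_t$ correspond, under $\Phi_{X\ast}$, to open substacks $\mM^{\dag}_{(A_X,\theta_t)}(\vec v) = \mM_{A_X}^{\dag}(\vec v) \cap \mM_{(Q_X^{\dag},\theta_t)}(\vec v)$ for a suitable $\theta_t \in \mathbb{Q}^2$ depending on $t$ (this is exactly the kind of dictionary used in~\cite{MR2836398} for the case $d\ge 0$, $t\to\infty$; here one needs the statement for a general rational $t$, which follows from the quiver description of the weak stability conditions in~\cite{MR2683216}). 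Crossing the $t$-wall $t_0$ corresponds to crossing a $\theta$-wall, so $\theta_{t_\pm} = \theta_{t_0} \mp \varepsilon(0,1)$ (or an analogous perturbation) for $0<\varepsilon\ll 1$. I would then form, exactly as in the proof of Theorem~\ref{thm:catDTPT:loc}, the commutative diagrams (\ref{dia:IPA}) relating $P_n(X,d[C])_{t_\pm}$, $\mM^{\dag}_{A_X}(\vec v)$ and $\mathfrak{M}_S^{\dag}(d[C],n)_{\circ}$, and apply Lemma~\ref{lem:replace}, the $G$-equivariant Koszul duality (a $G$-equivariant version of Theorem~\ref{thm:knoer}, cf.~\cite{MR3631231}), Corollary~\ref{cor:equiv:support}, and the restriction equivalence~\cite[Lemma~5.5]{HLKSAM} to obtain
\begin{align*}
\mathcal{DT}^{\mathbb{C}^{\ast}}(P_n(X,d[C])_{t_\pm}) \stackrel{\sim}{\to}
\mathrm{MF}_{\rm{coh}}^{G\times\mathbb{C}^{\ast}}(R_{(Q_X^{\dag},\theta_{t_\pm})}(\vec v), w).
\end{align*}
At this point the theorem is reduced to a fully-faithful embedding of factorization categories under variation of GIT, which is exactly Proposition~\ref{prop:window} applied to the pair of chambers adjacent to the wall $\theta_{t_0}$: the window argument there is local on the good moduli space and uses only the structure of the Ext-quiver $Q_p^{\dag}$ at a polystable point, via~\cite[Proposition~3.8]{KoTo}, so it applies to any two chambers separated by a single wall, not just the specific one in Theorem~\ref{thm:catDTPT:loc}.

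The main obstacle I anticipate is the precise identification of $\mu_t^{\dag}$-(semi)stability with a King $\theta_t$-stability compatible with the GIT/KN setup — one must check that as $t$ varies through a wall $t_0\in W\cap\mathbb{Q}_{>0}$ the corresponding $\theta_t$ varies through a genuine GIT wall of $R_{Q_X^{\dag}}(\vec v)$ in the sense of~\cite{MR3327537}, that the two perturbations $t_\pm$ land in the chambers cut out by $\theta_{t_\pm}=\theta_{t_0}\mp\varepsilon(0,1)$, and (crucially) that the direction of the inequality $\cC_{\delta,\theta_-}\subset\cC_{\delta,\theta_+}$ in Proposition~\ref{prop:window} matches the direction $t_- \to t_+$ of the desired functor (i.e. that the d-critical flip (\ref{dia:dflip}) is oriented so that $t_+$ is the "$K^{\rm vir}$-larger" side, as asserted in~\cite[Theorem~9.13]{Toddbir}). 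All of these are bookkeeping matches between~\cite{MR2683216, MR2836398, Toddbir} and the conventions of Section~\ref{sec:example}; once they are in place, the rest of the argument is identical to that of Theorem~\ref{thm:catDTPT:loc}. (For $d=0$ the statement is vacuous or reduces to the rank-one point case, and for $\beta$ not a multiple of $[C]$ there are no curves, so one may assume $\beta=d[C]$ with $d>0$ throughout.)
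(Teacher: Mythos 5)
Your proposal matches the paper's proof: the paper likewise sets, for each wall $t\in W\cap\mathbb{Q}_{>0}$, the King parameter $\theta=(-t+1,t)$, shows via $\Phi_{X\ast}$ that $P_n(X,d[C])_{t_\pm}\cong \mM^{\dag}_{(A_X,\theta_\pm)}(\vec v)$, and then runs the argument of Theorem~\ref{thm:catDTPT:loc} verbatim (Koszul duality, restriction to the semistable locus, and Proposition~\ref{prop:window}). The bookkeeping you flag (the explicit $\theta_t$ and the orientation of the wall-crossing) is exactly what the paper supplies, so your route is the same as the paper's.
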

\begin{proof}
	For $t \in \mathbb{Q}_{>0}$ which lies on a wall, 
		we set $\theta=(-t+1, t)$. 
	Then similarly to (\ref{isom:IP:A}), we can show that the 
	functor $\Phi_X$ induces the isomorphisms
	\begin{align*}
		\Phi_{X\ast} \colon 
		P_n^{t_{\pm}}(X, d[C]) \stackrel{\cong}{\to}
		\mM_{A_X}^{\dag, \theta_{\pm}}(\vec{v}). 
	\end{align*}
	The rest of the proof is exactly same as in Theorem~\ref{thm:catDTPT:loc}. 
\end{proof}

\subsection{Dualities of DT categories for D0-D2-D6 bound states}\label{subsec:duality}
\subsubsection{Moduli stacks of dual pairs}\label{subsec:dualpair}
Let $\mathfrak{M}_S(v)$ be the derived moduli 
stack of one dimensional coherent sheaves on $S$
with numerical class 
$v \in N_{\le 1}(S)$, with truncation 
$\mM_S(v)$. 
In this subsection, we focus on the 
open substack
\begin{align*}
	\mathfrak{M}_S^{\rm{pure}}(v) \subset \mathfrak{M}_S(v), \ 
	\mM_S^{\rm{pure}}(v) \subset \mM_S(v)
\end{align*}
consisting of pure one 
dimensional sheaves on $S$. 
For a pure one dimensional sheaf $F$ on $S$, 
we set  
\begin{align}\label{F:dual}
	F^{\vee} \cneq \mathbb{D}_S(F) \otimes \omega_S[1]
	=\eE xt^1_{\oO_S}(F, \omega_S).
\end{align}
Then $F^{\vee}$ is a pure one dimensional 
sheaf on $S$ with numerical class $v^{\vee}$.
Here for $v=(\beta, n) \in N_{\le 1}(S)$, we write 
$v^{\vee}=(\beta, -n)$. 
Hence 
we have the equivalence of derived stacks
\begin{align}\label{equiv:pure:one}
	\mathbb{D}_S(-) \otimes \omega_S[1]  \colon 
	\mathfrak{M}_S^{\rm{pure}}(v) \stackrel{\sim}{\to}
	\mathfrak{M}_S^{\rm{pure}}(v^{\vee}). 
\end{align}
Similarly we denote by 
\begin{align*}
	\mathfrak{M}_S^{\dag, \rm{pure}}(v) \subset \mathfrak{M}_S^{\dag}(v), \ 
	\mM_S^{\dag, \rm{pure}}(v) \subset \mM_S^{\dag}(v)
\end{align*}
the open (derived) substacks of 
pairs $(\oO_S \to F)$
such that $F$ is a pure one dimensional sheaf. 
We also define the derived stack $\mathfrak{M}_S^{\sharp, \rm{pure}}(v)$
over 
$\mathfrak{M}_S^{\rm{pure}}(v)$ to be
\begin{align*}
	\mathfrak{M}_S^{\sharp, \rm{pure}}(v) \cneq 
	\Spec_{\fM_S^{\rm{pure}}(v)} S(p_{\mathfrak{M}\ast}\mathfrak{F}[1]). 
\end{align*}
Here $\mathfrak{F}$ is the universal family 
as in Subsection~\ref{subsec:pairs}. 
Its classical truncation is given by
\begin{align*}
	\mM_S^{\sharp, \rm{pure}}(v) \cneq 
	t_0(
	\mathfrak{M}_S^{\sharp, \rm{pure}}(v))= 
	\Spec_{\mM_S^{\rm{pure}}(v)} S(\hH^1(\dR p_{\mM\ast}\fF)).
\end{align*}

\begin{lem}\label{lem:dual:sharp}
	(i) 
	The $T$-valued points of $\mM_S^{\sharp, \rm{pure}}(v)$
	form the groupoid of 
	pairs 
	\begin{align*}
		(F_T, \xi'), \ 
		F_T \stackrel{\xi'}{\to} \omega_S \boxtimes \oO_T[1]
	\end{align*}
	where $F_T$ is a $T$-valued point of $\mM_S^{\rm{pure}}(v)$
	and $\xi'$ is a
	morphism in $D^b_{\rm{coh}}(S \times T)$. 
	
	(ii) The fiber of the projection
	\begin{align}\label{fib:dag:sharp}
		t_0(\Omega_{\mathfrak{M}_S^{\sharp, \rm{pure}}(v)}[-1]) \to 
		\mM_S^{\sharp, \rm{pure}}(v)
	\end{align}
	at the pair $(F \stackrel{\xi'}{\to} \omega_S[1])$
	is given by $\Hom(\uU, F)$, 
	where $0 \to \oO_S \to \uU \to F\otimes \omega_S^{-1} \to 0$
	is the extension class of $\xi'$. 
\end{lem}
\begin{proof}
	(i) follows from the Serre duality 
	$H^1(F)^{\vee}=\Hom(F, \omega_S[1])$ for a one dimensional 
	sheaf $F$.
	As for (ii), 
	we have the distinguished triangle
	\begin{align*}
		(\RHom(F, F)[1])^{\vee} \to 
		\mathbb{L}_{\fM_S^{\sharp, \rm{pure}}(v)}|_{(F \to \omega_S[1])}
		\to \dR \Gamma(F[1]). 
		\end{align*}
	By Serre duality, we have 
	$(\RHom(F, F)[1])^{\vee} \cong \RHom(F, F \otimes \omega_S[1])$, 
	so $\mathbb{L}_{\fM_S^{\sharp, \rm{pure}}(v)}|_{(F \to \omega_S[1])}$
	is obtained by the cone of the morphism 
	$\dR \Gamma(F) \to \RHom(F, F\otimes \omega_S[1])$
	induced by $\xi'$. 
	Therefore $\mathbb{L}_{\fM_S^{\sharp, \rm{pure}}(v)}|_{(F \to \omega_S[1])}$
	is quasi-isomorphic to 
	$\RHom(\uU, F)[1]$. 
	The fiber of (\ref{fib:dag:sharp}) is given by 
	its $(-1)$-th cohomology, which is $\Hom(\uU, F)$. 
\end{proof}
We have the following lemma: 
\begin{lem}\label{lem:isom:DS}
	We have the equivalence of derived stacks
	\begin{align}\label{isom:DS}
		\mathbb{D}_S(-) \otimes \omega_S[1] 
		\colon \mathfrak{M}_S^{\dag, \rm{pure}}(v) \stackrel{\sim}{\to}
		\mathfrak{M}_S^{\sharp, \rm{pure}}(v^{\vee})
	\end{align}
	which on $\mathbb{C}$-valued points given by 
	\begin{align*}
		(\oO_S \to F) \mapsto \mathbb{D}_S(\oO_S \to F)
		 \otimes \omega_S[1]=
		(F^{\vee} \to \omega_S[1]).
	\end{align*}
	Moreover we have the commutative diagram
	\begin{align}\label{commu:dual}
		\xymatrix{
			\mathfrak{M}_S^{\dag, \rm{pure}}(v) \ar[d]_-{\mathbb{D}_S(-) \otimes \omega_S[1]}^-{\sim} \ar[r]^-{\rho^{\dag}} & \fM_S^{\rm{pure}}(v) \ar[d]_-{\mathbb{D}_S(-) \otimes \omega_S[1]}^-{\sim} \ar[r]^-{i^{\dag}} & \mathfrak{M}_S^{\dag, \rm{pure}}(v) 
			\ar[d]_-{\mathbb{D}_S(-) \otimes \omega_S[1]}^-{\sim} \\
			\mathfrak{M}_S^{\sharp, \rm{pure}}(v^{\vee})  \ar[r]_-{\rho^{\sharp}} & \fM_S^{\rm{pure}}(v^{\vee})  \ar[r]_-{i^{\sharp}} & \mathfrak{M}_S^{\sharp, \rm{pure}}(v^{\vee}).
		}
	\end{align}
	Here $\rho^{\dag}$, $\rho^{\sharp}$ are projections and 
	$i^{\dag}$, $i^{\sharp}$ are the zero sections. 
\end{lem}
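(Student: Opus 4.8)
The statement to prove is Lemma~\ref{lem:isom:DS}: that the derived dual twisted by $\omega_S[1]$ gives an equivalence $\mathfrak{M}_S^{\dag, p}(v) \stackrel{\sim}{\to} \mathfrak{M}_S^{\sharp, p}(v^{\vee})$ compatible with projections and zero sections as in the diagram (\ref{commu:dual}). The plan is to build the equivalence by descending the duality functor along the affine-bundle-type structures $\rho^{\dag}$ and $\rho^{\sharp}$ over the common base equivalence (\ref{equiv:pure:one}), $\mathbb{D}_S \circ \otimes\omega_S[1]\colon \mathfrak{M}_S^p(v) \stackrel{\sim}{\to} \mathfrak{M}_S^p(v^{\vee})$, which is already established. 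Recall that $\mathfrak{M}_S^{\dag,p}(v) = \Spec_{\mathfrak{M}_S^p(v)} S((\dR p_{\mathfrak{M}\ast}\mathfrak{F})^{\vee})$ parametrizes pairs $(\oO_S \to F)$ and $\mathfrak{M}_S^{\sharp,p}(v^{\vee}) = \Spec_{\mathfrak{M}_S^p(v^{\vee})} S(\dR p_{\mathfrak{M}\ast}\mathfrak{F}'[1])$ parametrizes pairs $(F' \to \omega_S[1])$ by Lemma~\ref{lem:dual:sharp}(i). So the key identity to check is that, under the base equivalence sending $F \mapsto F^{\vee}$, the relative sheaf of functions $(\dR p_{\mathfrak{M}\ast}\mathfrak{F})^{\vee}$ corresponds to $\dR p_{\mathfrak{M}\ast}\mathfrak{F}'[1]$ where $\mathfrak{F}'$ is the universal dual family.

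First I would set up the universal picture: let $\mathfrak{F}$ be the universal family on $S \times \mathfrak{M}_S^p(v)$ and $\mathfrak{F}^{\vee} \cneq \eE xt^1_{p_{\mathfrak{M}}}(\mathfrak{F}, \omega_S \boxtimes \oO)$ its relative dual, which is the universal family on $S\times \mathfrak{M}_S^p(v^{\vee})$ pulled back along the base equivalence. Then by relative Serre duality along $p_{\mathfrak{M}}\colon S \times \mathfrak{M}_S^p(v) \to \mathfrak{M}_S^p(v)$ (using that $S$ is a smooth projective surface, so $\omega_{p_{\mathfrak{M}}} = \omega_S\boxtimes\oO[2]$), one has a functorial isomorphism
\begin{align*}
\dR p_{\mathfrak{M}\ast} \mathfrak{F}^{\vee}[1] = \dR p_{\mathfrak{M}\ast}\dR\hH om(\mathfrak{F}, \omega_S\boxtimes\oO)[2] \cong \left(\dR p_{\mathfrak{M}\ast}\mathfrak{F}\right)^{\vee}.
\end{align*}
Applying $\Spec_{\mathfrak{M}_S^p(v)}S(-)$ to both sides gives exactly the claimed isomorphism of derived stacks over the base, i.e. (\ref{isom:DS}); tracing through this on $\mathbb{C}$-valued points recovers the description $(\oO_S \to F) \mapsto (F^{\vee}\to\omega_S[1])$, because a section $\oO_S \to F$ is a global section of $F$, which is Serre-dual to a morphism $F^{\vee}\to\omega_S[1]$. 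For the commutativity of (\ref{commu:dual}): the left square commutes because the equivalence (\ref{isom:DS}) is by construction a morphism over the base equivalence $\mathfrak{M}_S^p(v)\to\mathfrak{M}_S^p(v^{\vee})$, and $\rho^{\dag}$, $\rho^{\sharp}$ are the structure morphisms of the relative Spec; the right square commutes because the zero sections $i^{\dag}$, $i^{\sharp}$ correspond to the augmentation $S(-)\to\oO$, which is preserved under the isomorphism of symmetric algebras. One must check these squares commute coherently as derived stacks, not just set-theoretically, but this follows since all the identifications are induced by the single Serre-duality quasi-isomorphism above, which is compatible with pullback and with the algebra structures.

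The main obstacle I expect is making the Serre duality isomorphism \emph{functorial in families} at the level of derived stacks, rather than just pointwise or on the homotopy category — one needs the relative dualizing complex formalism for the morphism $p_{\mathfrak{M}}$ and compatibility of the trace map with base change, so that $S(\dR p_{\mathfrak{M}\ast}(-))$ is well-defined and the whole construction glues. In practice this is handled by invoking the Grothendieck duality package in the derived/$\infty$-categorical setting (as in the references already cited for $\mathfrak{Perf}_S$ in~\cite{MR2493386}), but care is needed because $\mathfrak{F}$ is only a perfect complex quasi-isomorphic to a pure one-dimensional sheaf, so $\dR\hH om(\mathfrak{F}, \omega_S\boxtimes\oO)[1]$ is again (fiberwise) a pure sheaf only on the pure locus — which is precisely why we restrict to $\mathfrak{M}_S^{\dag,p}$, $\mathfrak{M}_S^{\sharp,p}$. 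A secondary, more routine point is verifying that the universal family $\mathfrak{F}^{\vee}$ really is the pullback of the universal family on $\mathfrak{M}_S^p(v^{\vee})$ under the base equivalence; this is the defining property of that equivalence but should be stated explicitly. Once these are in place, the diagram (\ref{commu:dual}) follows formally.
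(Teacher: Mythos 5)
Your proposal is correct and follows essentially the same route as the paper: the heart of both arguments is the Serre/Grothendieck duality identification $(\dR p_{\mathfrak{M}\ast}\mathfrak{F})^{\vee}\cong \dR p_{\mathfrak{M}\ast}\mathfrak{F}^{\vee}[1]$ over the base equivalence $\mathfrak{M}_S^p(v)\simeq\mathfrak{M}_S^p(v^{\vee})$, followed by applying $\Spec S(-)$, with the commutativity of (\ref{commu:dual}) formal from the construction. The paper states the duality pointwise, $\dR\Gamma(F)^{\vee}=\RHom(F,\omega_S[2])=\dR\Gamma(F^{\vee})[1]$, and leaves the family-level upgrade implicit, which is exactly the step you spell out via relative duality for $p_{\mathfrak{M}}$.
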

\begin{proof}
	Under the equivalence (\ref{equiv:pure:one}), 
	the object $(p_{\mathfrak{M}\ast}\mathfrak{F})^{\vee}$
	on $\fM_S^{\rm{pure}}(v)$ 
	corresponds to $p_{\mathfrak{M}\ast}\mathfrak{F}[1]$
	on $\fM_S^{\rm{pure}}(v^{\vee})$, since 
	for a pure one dimensional sheaf $F$ on $S$
	we have 
	\begin{align*}
		\dR \Gamma(F)^{\vee}=\RHom(F, \omega_S[2])=\dR \Gamma(F^{\vee})[1]
	\end{align*}
	by the Serre duality. 
	Therefore we have the equivalence (\ref{isom:DS}). 
	The commutativity of the diagram (\ref{commu:dual}) is obvious 
	from the constructions. 
\end{proof}
We also denote by
\begin{align*}
	\mM_X^{\dag, \rm{pure}}(v) \subset \mM_X^{\dag}(v)
\end{align*}
the open substack of 
objects in the 
subcategory
\begin{align}\label{def:Apure}
	\aA_X^{\rm{pure}} \cneq \langle \oO_{\overline{X}}, \Coh_{\le 1}^{\rm{pure}}(X)[-1] 
	\rangle_{\rm{ex}} 
	\subset \aA_X
\end{align}
where $\Coh_{\le 1}^{\rm{pure}}(X)$ is
the category of compactly supported 
pure one dimensional coherent sheaves on $X$. 
Note that the map $\pi_{\ast}^{\dag}$ in Theorem~\ref{thm:D026}
restricts to the morphism 
$\pi_{\ast}^{\dag} \colon \mM_X^{\dag, \rm{pure}}(v) \to 
\mM_S^{\dag, \rm{pure}}(v)$
and the isomorphism (\ref{isom:dag}) restricts to the 
isomorphism over $\mM_S^{\dag, \rm{pure}}(v)$
\begin{align*}
	\eta^{\dag} \colon \mM_X^{\dag, \rm{pure}}(v) \stackrel{\cong}{\to}
	t_0(\Omega_{\mathfrak{M}_S^{\dag, \rm{pure}}(v)}[-1]). 
\end{align*}

We have a statement similar to Theorem~\ref{thm:D026}
for 
the stack $\mM_S^{\sharp, \rm{pure}}(v)$.
\begin{prop}\label{thm:D026v2}
	(i) 
	There exists a natural morphism 
	$\pi_{\ast}^{\sharp} \colon \mM_X^{\dag, \rm{pure}}(v) \to \mM_S^{\sharp, \rm{pure}}(v)$
	sending a diagram (\ref{intro:dia:BS})
	to the pair 
	$(F \stackrel{\xi'}{\to} \omega_S[1])$
	corresponding to the 
	extension class of the top sequence of (\ref{intro:dia:BS}). 
	
	(ii) There is an isomorphism over 
	$\mM_S^{\sharp, \rm{pure}}(v)$
	\begin{align*}
		\eta^{\sharp} \colon \mM_X^{\dag, \rm{pure}}(v) \stackrel{\cong}{\to}
		t_0(\Omega_{\mathfrak{M}_S^{\sharp, \rm{pure}}(v)}[-1]) 
	\end{align*}
	which, over the pair 
	$(F \stackrel{\xi'}{\to} \omega_S[1])$, 
	sends a diagram (\ref{intro:dia:BS}) 
	to the 
	morphism $\xi \colon \uU \to F$. 
\end{prop}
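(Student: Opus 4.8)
\textbf{Proof proposal for Proposition~\ref{thm:D026v2}.}
The plan is to run the same argument used for Theorem~\ref{thm:D026}, but now systematically transported through the duality equivalence $\mathbb{D}_S\circ\otimes\omega_S[1]$ recorded in Lemma~\ref{lem:isom:DS}, rather than redoing the diagrammatic analysis of rank one objects in $\aA_X$ from scratch. Recall that Theorem~\ref{thm:D026} identifies $\mM_X^{\dag}(v)$ with the stack of diagrams (\ref{intro:dia:BS}), produces the map $p_0^{\dag}\colon\mM_X^{\dag}(v)\to\mM_S^{\dag}(v)$ sending such a diagram to the pair $(\oO_S\xrightarrow{\xi}F)$, and gives the isomorphism $\eta^{\dag}\colon\mM_X^{\dag}(v)\xrightarrow{\cong}t_0(\Omega_{\mathfrak{M}_S^{\dag}(v)}[-1])$. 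Restricting to the pure locus $\aA_X^p$ from (\ref{def:Apure}), as noted just before the statement, $p_0^{\dag}$ and $\eta^{\dag}$ restrict to $p_0^{\dag}\colon\mM_X^{\dag,p}(v)\to\mM_S^{\dag,p}(v)$ and an isomorphism $\eta^{\dag}\colon\mM_X^{\dag,p}(v)\xrightarrow{\cong}t_0(\Omega_{\mathfrak{M}_S^{\dag,p}(v)}[-1])$.

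First I would construct $p_0^{\sharp}$. Given a diagram (\ref{intro:dia:BS}) with $F$ pure one dimensional, its top row $0\to\oO_S\to\uU\to F\otimes\omega_S^{-1}\to 0$ is an extension, hence determines a class in $\Ext^1_S(F\otimes\omega_S^{-1},\oO_S)\cong\Ext^1_S(F,\omega_S)\cong H^1(F)^{\vee}=\Hom_S(F,\omega_S[1])$ by Serre duality (this is exactly the identification in the proof of Lemma~\ref{lem:dual:sharp}(i)). This gives a map $\xi'\colon F\to\omega_S[1]$, and therefore a $\mathbb{C}$-point of $\mM_S^{\sharp,p}(v)$ by Lemma~\ref{lem:dual:sharp}(i); working in families over an arbitrary affine $T$ this is functorial, so $p_0^{\sharp}\colon\mM_X^{\dag,p}(v)\to\mM_S^{\sharp,p}(v)$ is well-defined. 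The cleanest way to phrase this is via the square: applying $\mathbb{D}_S\circ\otimes\omega_S[1]$ to the pair $(\oO_S\to F)$ gives $(F^{\vee}\to\omega_S[1])$, and the commutative diagram (\ref{commu:dual}) shows that the composite $\mM_X^{\dag,p}(v)\xrightarrow{p_0^{\dag}}\mM_S^{\dag,p}(v)\xrightarrow{\mathbb{D}_S\circ\otimes\omega_S[1]}\mM_S^{\sharp,p}(v^{\vee})$ agrees, up to the identification $v\leftrightarrow v^{\vee}$ built into the diagram, with the dual description; re-tracing the extension class through this duality yields precisely the pair $(F\xrightarrow{\xi'}\omega_S[1])$, because the extension $0\to\oO_S\to\uU\to F\otimes\omega_S^{-1}\to 0$ dualizes to the extension computing $\xi'$. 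For part (ii), the isomorphism $\eta^{\sharp}$ is obtained by composing $\eta^{\dag}$ with the equivalence of $(-1)$-shifted cotangent stacks induced by (\ref{isom:DS}): since $\mathbb{D}_S\circ\otimes\omega_S[1]\colon\mathfrak{M}_S^{\dag,p}(v)\xrightarrow{\sim}\mathfrak{M}_S^{\sharp,p}(v^{\vee})$ is an equivalence of quasi-smooth derived stacks, it induces $t_0(\Omega_{\mathfrak{M}_S^{\dag,p}(v)}[-1])\xrightarrow{\cong}t_0(\Omega_{\mathfrak{M}_S^{\sharp,p}(v^{\vee})}[-1])$, and one composes with $\eta^{\dag}$. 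It then remains to identify, over the pair $(F\xrightarrow{\xi'}\omega_S[1])$, the resulting point of the dual obstruction cone with the morphism $\xi\colon\uU\to F$ from the diagram; by Lemma~\ref{lem:dual:sharp}(ii) the fiber of $t_0(\Omega_{\mathfrak{M}_S^{\sharp,p}(v)}[-1])\to\mM_S^{\sharp,p}(v)$ at that pair is exactly $\Hom_S(\uU,F)$, so one must check that $\eta^{\sharp}$ of the diagram (\ref{intro:dia:BS}) is the element $\phi=\xi\colon\uU\to F$ appearing there, which is the dual counterpart of the formula (\ref{map:eta+}) for $\eta^{\dag}$.

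The main obstacle I anticipate is bookkeeping rather than conceptual: making the duality $\mathbb{D}_S\circ\otimes\omega_S[1]$ interact correctly with all three structures at once — the extension class governing the top row of (\ref{intro:dia:BS}), the map $\phi\colon\uU\to F$, and the identification of the fiber of the $(-1)$-shifted cotangent with $\Hom_S(\uU,F)$ versus $\Hom(F\otimes\omega_S^{-1},I^{\bullet}[1])$ — and verifying these in families (over arbitrary affine derived $T$, or at least over $\mM_S^{p}(v)$ with the universal sheaf $\mathfrak{F}$) so that the isomorphisms are natural, not merely pointwise. Concretely one must check that the Serre-duality identification $\dR\Gamma(F)^{\vee}\cong\dR\Gamma(F^{\vee})[1]$ used in Lemma~\ref{lem:isom:DS} is compatible with the cone construction $(\oO_S\to\uU)$ of Lemma~\ref{lem:dag:qsmooth} and its dual, i.e. that dualizing the quasi-isomorphism $F\otimes\omega_S^{-1}[-1]\xleftarrow{\sim}(\oO_S\to\uU)$ in (\ref{map:eta+}) produces the extension $0\to\oO_S\to\uU\to F\otimes\omega_S^{-1}\to 0$ whose class is $\xi'$. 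Since, as the authors note, this is entirely parallel to the proof of Theorem~\ref{thm:D026} (with Lemma~\ref{lem:dual:sharp} replacing Lemma~\ref{lem:dag:qsmooth} and Lemma~\ref{lem:isom:DS} supplying the dictionary), I would present only the construction of $p_0^{\sharp}$ and $\eta^{\sharp}$ via the commutative squares above and refer to Theorem~\ref{thm:D026} for the diagrammatic identification of $\mM_X^{\dag,p}(v)$, omitting the routine verifications — exactly as the statement's preamble (``The proof is similar to Theorem~\ref{thm:D026} $\ldots$ so we omit details'') indicates.
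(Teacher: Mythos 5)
Your construction of $p_0^{\sharp}$ in part (i) is essentially the paper's argument: the top row of (\ref{intro:dia:BS}) gives an extension class in $\Ext^1(F\otimes\omega_S^{-1},\oO_S)\cong\Hom(F,\omega_S[1])$, i.e.\ a point of $\mM_S^{\sharp,p}(v)$ via Lemma~\ref{lem:dual:sharp}~(i), and this works in families. The paper's (omitted) proof of part (ii) is also the direct one, parallel to Theorem~\ref{thm:D026} but simpler: a rank one diagram with $F$ pure is \emph{literally} the data of the extension class $\xi'$ together with the morphism $\phi=\xi\colon\uU\to F$, and by Lemma~\ref{lem:dual:sharp}~(ii) the fiber of $t_0(\Omega_{\mathfrak{M}_S^{\sharp,p}(v)}[-1])\to\mM_S^{\sharp,p}(v)$ over $(F\xrightarrow{\xi'}\omega_S[1])$ is exactly $\Hom(\uU,F)$; the family statement then follows as in Corollary~\ref{lem:moduli:isom} and Proposition~\ref{thm:stack} (no analogue of the uniqueness argument of Lemma~\ref{diagram:equivalent} is even needed here).

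Your route to $\eta^{\sharp}$ via the duality \eqref{isom:DS} has a genuine gap: composing $\eta^{\dag}$ (for class $v$) with the isomorphism of truncated $(-1)$-shifted cotangents induced by $\mathbb{D}_S\circ\otimes\omega_S[1]\colon\mathfrak{M}_S^{\dag,p}(v)\stackrel{\sim}{\to}\mathfrak{M}_S^{\sharp,p}(v^{\vee})$ produces an isomorphism onto $t_0(\Omega_{\mathfrak{M}_S^{\sharp,p}(v^{\vee})}[-1])$, lying over $\mM_S^{\sharp,p}(v^{\vee})$ --- not over $\mM_S^{\sharp,p}(v)$ as required, and not compatibly with the $p_0^{\sharp}$ you just built. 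On points this composite computes $\eta^{\sharp}$ of the derived dual $\mathbb{D}_{\overline{X}}(E)$, i.e.\ it sends the diagram (\ref{intro:dia:BS}) to the data of the dual diagram (\ref{dual:diagram}) (the map $\Cone(\xi^{\vee})\to F^{\vee}$), not to $\xi\colon\uU\to F$. To repair this you must precompose with $\mathbb{D}_{\overline{X}}$ from (\ref{isom:X:dual}) (equivalently, run the argument for $v^{\vee}$), and then the claimed pointwise description becomes precisely the commutativity of the square (\ref{commute:dual2}), which the paper proves \emph{after} this proposition using the explicit constructions of $\eta^{\dag}$, $\eta^{\sharp}$ and Lemma~\ref{prop:duality}; so it cannot simply be quoted, and verifying it amounts to the same explicit duality computation as the direct proof. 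In short: part (i) is fine, but for part (ii) the direct construction is both what the paper intends and strictly less work than the transport-through-duality argument you propose, which as written lands in the wrong component and defers the essential verification to a compatibility you would still have to prove.
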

\begin{proof}
	The proof 
	for $\mathbb{C}$-valued points 
	is clear from Lemma~\ref{lem:dual:sharp}
	and Lemma~\ref{lem:obvious0}. 
	The argument for $T$-valued points 
	is similar to Lemma~\ref{lem:cone}, so we omit details. 
	\end{proof}

Note that we have obtained the commutative diagram
\begin{align}\label{diagram:dpair}
	\xymatrix{
		t_0(\Omega_{\mathfrak{M}_S^{\dag, \rm{pure}}(v)}[-1]) \ar[d]
		&\ar[l]_-{\eta^{\dag}}^-{\cong} \ar[r]^-{\eta^{\sharp}}_-{\cong} \mM_X^{\dag, \rm{pure}}(v)  \ar[ld]^-{\pi_{\ast}^{\dag}} \ar[rd]_-{\pi_{\ast}^{\sharp}}
		\ar[d]
		&t_0(\Omega_{\mathfrak{M}_S^{\sharp, \rm{pure}}(v)}[-1]) \ar[d] \\
		\mM_S^{\dag, \rm{pure}}(v) \ar[r]_-{\rho_0^{\dag}} & \mM_S^{\rm{pure}}(v) & \ar[l]^-{\rho_0^{\sharp}} \mM_S^{\sharp, \rm{pure}}(v). 
	}
\end{align}

On the other hand
the category $\aA_X^p$ is closed under the 
derived dual functor 
$\mathbb{D}_{\overline{X}}$, so we have 
an isomorphism of stacks
\begin{align}\label{isom:X:dual}
	\mathbb{D}_{\overline{X}} \colon 
	\mM_X^{\dag, \rm{pure}}(v) \stackrel{\cong}{\to} \mM_X^{\dag, \rm{pure}}(v^{\vee}).
\end{align}
\begin{lem}\label{lem:D026v2}
	We have the commutative diagram
	\begin{align}\label{commute:dual2}
		\xymatrix{
			\mM_X^{\dag, \rm{pure}}(v) \ar[r]^-{\mathbb{D}_{\overline{X}}}_-{\cong} 
			\ar[d]_{\eta^{\dag}}
			&
			\mM_X^{\dag, \rm{pure}}(v^{\vee}) \ar[d]^{\eta^{\sharp}} \\
			t_0(\Omega_{\mathfrak{M}_S^{\dag, \rm{pure}}(v)}[-1]) \ar[r]_-{\cong} & 
			t_0(\Omega_{\mathfrak{M}_S^{\sharp, \rm{pure}}(v^{\vee})}[-1]). 
		}
	\end{align}
	Here the bottom isomorphism is induced 
	by the equivalence (\ref{isom:DS}). 
\end{lem}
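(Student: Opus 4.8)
The plan is to verify commutativity of the diagram~(\ref{commute:dual2}) by tracing through what each arrow does on the level of diagrams of coherent sheaves of the form~(\ref{intro:dia:BS}), which is the common language underlying all four objects by Theorem~\ref{thm:D026} and Proposition~\ref{thm:D026v2}. So the first step is to fix a $\mathbb{C}$-valued point of $\mM_X^{\dag, p}(v)$, represented (via Theorem~\ref{thm:D026}(i)) by a diagram
\begin{align*}
\xymatrix{
0 \ar[r] & \oO_{S}
 \ar[r] \ar@{.>}[rd]_-{\xi} & \uU \ar[r]
\ar[d]^-{\phi} & F \otimes \omega_S^{-1}
\ar[r] & 0 \\
&   & F  &
}
\end{align*}
with $[F]=v$, corresponding to a rank one object $E \in \aA_X^p$. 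Going down via $\eta^{\dag}$ gives the pair $(\oO_S \stackrel{\xi}{\to} F) \in \mM_S^{\dag, p}(v)$, and then applying $\mathbb{D}_S \circ \otimes \omega_S[1]$ (the bottom isomorphism, via Lemma~\ref{lem:isom:DS}) produces $(F^{\vee} \to \omega_S[1]) \in \mM_S^{\sharp, p}(v^{\vee})$, i.e. the pair attached to the Serre-dual of the two-term complex $I^{\bullet}=(\oO_S \to F)$.

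The second step is to go the other way around the square: first apply $\mathbb{D}_{\overline{X}}$ to $E$, landing in $\mM_X^{\dag, p}(v^{\vee})$, then apply $\eta^{\sharp}$ from Proposition~\ref{thm:D026v2}(ii). For this I need the explicit description, established in Section~\ref{sec:D026}, of how the derived dual acts on the diagram~(\ref{intro:dia:BS}): dualizing the defining triangle $\oO_{\overline{X}} \to E \to F[-1]$ (with $F \in \Coh_{\le 1}^p(X)$) and using $\mathbb{D}_{\overline{X}}(F) \cong \eE xt^2_{\overline{X}}(F,\oO_{\overline{X}})[-2]$ together with $\omega_X \cong \oO_X$ produces the diagram for $\mathbb{D}_{\overline{X}}(E)$ whose associated sheaf on $S$ is $F^{\vee}$ and whose top exact sequence is obtained from the original one by applying $\HOM_{\oO_S}(-,\omega_S)$ and rotating. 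Then $\eta^{\sharp}$ reads off from this dual diagram the extension class of its top sequence, which by construction is precisely the Serre-dual pair $(F^{\vee} \to \omega_S[1])$. Comparing the two outputs then gives the commutativity on $\mathbb{C}$-points, and since all stacks in sight are (derived enhancements of) algebraic stacks and all the maps are constructed functorially in families, the same argument with $S\times T$ in place of $S$ upgrades this to commutativity of the diagram of stacks; alternatively one invokes the fact that $\mM_X^{\dag,p}(v)$ is reduced enough / the maps agree on a dense open to conclude, but the family-level check is cleaner.

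The main obstacle I expect is purely bookkeeping: matching the shifts, signs, and the twist by $\omega_S$ between the three different dualities in play — the surface-level $\mathbb{D}_S(-)\otimes \omega_S[1]$ appearing in Lemma~\ref{lem:isom:DS}, the threefold-level $\mathbb{D}_{\overline{X}}$ on $\aA_X^p$, and the identification of $\Omega_{\mathfrak{M}_S^{\dag,p}}[-1]$ versus $\Omega_{\mathfrak{M}_S^{\sharp,p}}[-1]$ as the relevant obstruction cones — so that the extension class produced by $\eta^{\sharp}\circ \mathbb{D}_{\overline{X}}$ literally equals the one produced by $(\mathbb{D}_S\circ\otimes\omega_S[1])\circ\eta^{\dag}$. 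The key input that makes this go through without pain is that $X=\mathrm{Tot}_S(\omega_S)$ is Calabi-Yau, so $\pi_{\ast}\omega_X \cong S(\omega_S^{-1})\otimes\omega_S$ and the duality on $\Coh_{\le 1}(X)$ is compatible with $\pi_{\ast}$ in the way recorded in Section~\ref{sec:D026}; once that compatibility is granted, the diagram chase is formal. Since the excerpt explicitly says ``The proof is similar to Theorem~\ref{thm:D026} \dots so we omit details'' for Proposition~\ref{thm:D026v2}, I would likewise present this as a short verification, citing the constructions of $\eta^{\dag}$, $\eta^{\sharp}$, $p_0^{\dag}$, $p_0^{\sharp}$ and the commutative square~(\ref{commu:dual}), and noting that~(\ref{commute:dual2}) is then obtained by combining these with the description of $\mathbb{D}_{\overline{X}}$ on objects of $\aA_X^p$.
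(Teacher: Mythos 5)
Your overall strategy — trace both paths through the square on the level of the diagrams (\ref{intro:dia:BS}), using the surface-level description of $\mathbb{D}_{\overline{X}}$ — is the same as the paper's, which deduces the lemma from the constructions of $\eta^{\dag}$, $\eta^{\sharp}$ together with Lemma~\ref{prop:duality} (the compatibility $\Phi\circ\mathbb{D}_{\overline{X}}\cong\mathbb{D}_{\bB}\circ\Phi$ on $\aA_X^{p,\le 1}$). However, as written your check has a genuine gap: you compare the two paths only after projecting to the moduli of pairs. You describe $\eta^{\dag}$ as "giving the pair $(\oO_S\xrightarrow{\xi}F)\in\mM_S^{\dag,p}(v)$" and $\eta^{\sharp}$ as "reading off the extension class of the top sequence" — but those are the projections $p_0^{\dag}$ and $p_0^{\sharp}$, not $\eta^{\dag}$ and $\eta^{\sharp}$. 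The vertical arrows of (\ref{commute:dual2}) land in $t_0(\Omega_{\mathfrak{M}_S^{\dag,p}(v)}[-1])$ and $t_0(\Omega_{\mathfrak{M}_S^{\sharp,p}(v^{\vee})}[-1])$, so a point carries, besides the pair, a fiber datum: on the dagger side the morphism $F\otimes\omega_S^{-1}\to I^{\bullet}[1]$ of (\ref{map:eta+}), on the sharp side the vertical morphism $\uU'\to F^{\vee}$ of the dual diagram (cf.\ Lemma~\ref{lem:dual:sharp}(ii) and Proposition~\ref{thm:D026v2}(ii)). Matching these fiber data under the Serre-duality isomorphism of shifted cotangents induced by (\ref{isom:DS}) is the actual content of the lemma — it is exactly what is needed later (Corollary~\ref{cor:duality}) to transport conical substacks and hence singular supports — and verifying only that the base pairs agree does not imply it.

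A secondary problem is your derivation of the dual diagram from "the defining triangle $\oO_{\overline{X}}\to E\to F[-1]$": a general rank-one object of $\aA_X^{p}$ admits no such triangle (this is precisely the warning of Remark~\ref{rmk:dia:BS}; only objects whose diagram has split top row are of the form $(\oO_{\overline{X}}\to F)$). The correct input is the functor $\mathbb{D}_{\bB}$ of Section~\ref{sec:D026}, which applies $\mathbb{D}_S$ to the whole diagram and takes cones, producing (\ref{dual:diagram}), together with Lemma~\ref{prop:duality} identifying it with $\mathbb{D}_{\overline{X}}$ under $\Phi$. Once you use that, both paths of (\ref{commute:dual2}) are read off from one and the same dual diagram — its top-row extension class gives the base point and its vertical arrow gives the fiber datum — and the comparison with $(\mathbb{D}_S\circ\otimes\,\omega_S[1])\circ\eta^{\dag}$ becomes the short verification the paper intends.
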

\begin{proof}
	The commutativity of the diagram (\ref{commute:dual2}) immediately 
	follows from the constructions of $\eta^{\dag}$, $\eta^{\sharp}$ and 
	Lemma~\ref{prop:duality}. 
\end{proof}

By Proposition~\ref{thm:D026v2}, 
the stack $\mM_X^{\dag, \rm{pure}}(v)$
admits two $\mathbb{C}^{\ast}$-actions, 
fiberwise $\mathbb{C}^{\ast}$-actions
with respect to $\pi_{\ast}^{\dag}$ and $\pi_{\ast}^{\sharp}$. 
These two $\mathbb{C}^{\ast}$-actions on $\mM_X^{\dag, \rm{pure}}(v)$
correspond to scaling actions on the maps $\uU \to F$, $\uU \to F\otimes \omega_S^{-1}$
in the diagram (\ref{intro:dia:BS}) respectively. 
We call a closed substack $\zZ \subset \mM_X^{\dag, \rm{pure}}(v)$
to be \textit{double conical} if it is closed under both of the above 
$\mathbb{C}^{\ast}$-actions. 

Let us take quasi-compact 
derived open substacks
\begin{align}\label{pure:qcompact}
	\mathfrak{M}_S(v)_{\rm{qc}} \subset \fM_S^{\rm{pure}}(v), \ 
	\mM_S(v)_{\rm{qc}} \subset \mM_S^{\rm{pure}}(v)
\end{align}
where $\mM_S(v)_{\rm{qc}}=t_0(\mathfrak{M}_S(v)_{\rm{qc}})$.  
Below we use the subscript $\rm{qc}$ to indicate the 
pull-back by 
the open immersion (\ref{pure:qcompact}), 
e.g. 
$\mathfrak{M}_S^{\dag}(v)_{\rm{qc}} \cneq 
\mathfrak{M}_S(v)_{\rm{qc}} \times_{\fM_S^{\rm{pure}}(v)} \mathfrak{M}_S^{\dag, \rm{pure}}(v)$. 
By pulling back the diagram (\ref{diagram:dpair}) 
via the open immersion (\ref{pure:qcompact}), 
we obtain the commutative diagram 
\begin{align}\label{diagram:dpair2}
	\xymatrix{
		t_0(\Omega_{\mathfrak{M}_S^{\dag}(v)_{\rm{qc}}}[-1]) \ar[d]
		&\ar[l]_-{\eta^{\dag}}^-{\cong} \ar[r]^-{\eta^{\sharp}}_-{\cong} \mM_X^{\dag}(v)_{\rm{qc}}  \ar[ld]^-{\pi_{\ast}^{\dag}} \ar[rd]_-{\pi_{\ast}^{\sharp}}
		\ar[d]
		&t_0(\Omega_{\mathfrak{M}_S^{\sharp}(v)_{\rm{qc}}}[-1]) \ar[d] \\
		\mM_S^{\dag}(v)_{\rm{qc}} \ar[r]_-{\rho_0^{\dag}} & \mM_S(v)_{\rm{qc}} & \ar[l]^-{\rho_0^{\sharp}} \mM_S^{\sharp}(v)_{\rm{qc}}. 
	}
\end{align}
Similarly 
we take 
a quasi-compact 
derived open substack
$\mathfrak{M}_S(v^{\vee})_{\rm{qc}} \subset \fM_S^{\rm{pure}}(v^{\vee})$
with truncation $\mM_S(v^{\vee})_{\rm{qc}}$, 
such that the equivalence (\ref{equiv:pure:one})
restricts to 
the equivalence
$\mathfrak{M}_S(v)_{\rm{qc}} \stackrel{\sim}{\to} 
\mathfrak{M}_S(v^{\vee})_{\rm{qc}}$. 
Then 
via the isomorphism $(\mathbb{D}_{\overline{X}},  \mathbb{D}_S(-) \otimes \omega_S)$ in
the diagrams (\ref{commu:dual}), (\ref{commute:dual2}), 
we see that the diagram (\ref{diagram:dpair2})
is isomorphic to the diagram
\begin{align}\label{diagram:dpair3}
	\xymatrix{
		t_0(\Omega_{\mathfrak{M}_S^{\sharp}(v^{\vee})_{\rm{qc}}}[-1]) \ar[d]
		&\ar[l]_-{\eta^{\sharp}}^-{\cong} \ar[r]^-{\eta^{\dag}}_-{\cong} \mM_X^{\dag}(v^{\vee})_{\rm{qc}}  \ar[ld]^-{\pi_{\ast}^{\sharp}} \ar[rd]_-{\pi_{\ast}^{\dag}}
		\ar[d]
		&t_0(\Omega_{\mathfrak{M}_S^{\dag}(v^{\vee})_{\rm{qc}}}[-1]) \ar[d] \\
		\mM_S^{\sharp}(v^{\vee})_{\rm{qc}} \ar[r]_-{\rho_0^{\sharp}} & \mM_S(v^{\vee})_{\rm{qc}} & \ar[l]^-{\rho_0^{\dag}} \mM_S^{\dag}(v^{\vee})_{\rm{qc}}. 
	}
\end{align}
We have the following corollary of Proposition~\ref{thm:D026v2}
and Proposition~\ref{prop:compare:Z}:

\begin{cor}\label{cor:duality}
	Let 
	$\zZ \subset \mM_X^{\dag}(v)_{\rm{qc}}$
	be a double conical closed substack. 
	
	(i) Let 
	$\mathbb{D}_{\overline{X}}(\zZ) \subset \mM_X^{\dag}(v^{\vee})_{\rm{qc}}$
	be its image under the isomorphism (\ref{isom:X:dual}). 
	We have the equivalence
	\begin{align*}
		D^b_{\rm{coh}}(\mathfrak{M}_S^{\dag}(v)_{\rm{qc}})/\cC_{\eta^{\dag}(\zZ)}
		\stackrel{\sim}{\to} D^b_{\rm{coh}}(\mathfrak{M}_S^{\sharp}(v^{\vee})_{\rm{qc}})
		/\cC_{\eta^{\sharp}(\mathbb{D}_{\overline{X}}(\zZ))}. 
	\end{align*}
	
	(ii) We have the equivalence
	\begin{align*}
		D^b_{\rm{coh}}(\mathfrak{M}_S^{\dag}(v)_{\rm{qc}})/\cC_{\eta^{\dag}(\zZ)}
		\stackrel{\sim}{\to} D^b_{\rm{coh}}(\mathfrak{M}_S^{\sharp}(v)_{\rm{qc}})
		/\cC_{\eta^{\sharp}(\zZ)}. 
	\end{align*}
\end{cor}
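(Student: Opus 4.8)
The plan is to deduce both equivalences from the two presentations of the classical stack $\mM_X^{\dag}(v)_{\circ}$ as a $(-1)$-shifted cotangent truncation — over $\mathfrak{M}_S^{\dag}(v)_{\circ}$ via $\eta^{\dag}$, and over $\mathfrak{M}_S^{\sharp}(v)_{\circ}$ via $\eta^{\sharp}$, as in Proposition~\ref{thm:D026v2} — together with a globalized linear Koszul duality comparing the two base stacks.

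For (ii), set $G \cneq \dR p_{\mathfrak{M}\ast}\mathfrak{F}$, a perfect complex of amplitude $[0,1]$ on $\mathfrak{M}_S(v)_{\circ}$ (the bound $H^{\ge 2}=0$ holding because we have restricted to pure one-dimensional sheaves). Then $\mathfrak{M}_S^{\dag}(v)_{\circ}$ and $\mathfrak{M}_S^{\sharp}(v)_{\circ}$ are the linear derived stacks $\Spec S(G^{\vee})$ and $\Spec S(G[1])$ over $\mathfrak{M}_S(v)_{\circ}$, so linear Koszul duality in the form of~\cite{MR2581249, MR3455779}, globalized over $\mathfrak{M}_S(v)_{\circ}$, yields an equivalence $\kappa \colon D^b_{\rm{coh}}(\mathfrak{M}_S^{\dag}(v)_{\circ}) \stackrel{\sim}{\to} D^b_{\rm{coh}}(\mathfrak{M}_S^{\sharp}(v)_{\circ})$. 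The key point, which is (part of) the content of Proposition~\ref{prop:compare:Z}, is that $\kappa$ intertwines the two $(-1)$-shifted cotangent constructions compatibly with $\eta^{\dag}$ and $\eta^{\sharp}$: on Hom-spaces it matches the actions of the relative Hochschild cohomologies defining singular supports, the local version of this being the simultaneous analogue of Proposition~\ref{def:CZ} and Lemma~\ref{lem:ssuport} for the two dual presentations. Hence $\kappa$ carries $\cC_{\eta^{\dag}(\zZ)}$ onto $\cC_{\eta^{\sharp}(\zZ)}$ for every double conical $\zZ$; double conicality is exactly what guarantees that $\eta^{\dag}(\zZ)$ and $\eta^{\sharp}(\zZ)$ are conical for the fiberwise $\mathbb{C}^{\ast}$-actions over $\mathfrak{M}_S^{\dag}(v)_{\circ}$ and $\mathfrak{M}_S^{\sharp}(v)_{\circ}$ respectively — these being the two scaling actions of Proposition~\ref{thm:D026v2} — so that both singular-support subcategories are defined and $\mathbb{C}^{\ast}$-equivariant. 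Passing to Verdier quotients then proves (ii).

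For (i), I would instead use the duality equivalences of Subsection~\ref{subsec:dualpair}. By Lemma~\ref{lem:isom:DS} the functor $\mathbb{D}_S \circ (-\otimes\omega_S[1])$ is an equivalence $\mathfrak{M}_S^{\dag,p}(v) \stackrel{\sim}{\to} \mathfrak{M}_S^{\sharp,p}(v^{\vee})$, which restricts to the quasi-compact open substacks $\mathfrak{M}_S^{\dag}(v)_{\circ} \stackrel{\sim}{\to} \mathfrak{M}_S^{\sharp}(v^{\vee})_{\circ}$ once the choices of $\circ$ are matched via the equivalence (\ref{equiv:pure:one}), and so induces an equivalence on $D^b_{\rm{coh}}$. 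By Lemma~\ref{lem:D026v2} the induced isomorphism of $(-1)$-shifted cotangent truncations intertwines $\eta^{\dag}$ for $v$ and $\eta^{\sharp}$ for $v^{\vee}$ with $\mathbb{D}_{\overline{X}} \colon \mM_X^{\dag,p}(v) \stackrel{\cong}{\to} \mM_X^{\dag,p}(v^{\vee})$; since the isomorphism of the diagrams (\ref{diagram:dpair2}) and (\ref{diagram:dpair3}) matches the pair of fiberwise $\mathbb{C}^{\ast}$-actions on $\mM_X^{\dag}(v)_{\circ}$ with the pair on $\mM_X^{\dag}(v^{\vee})_{\circ}$ (with the $\dag$/$\sharp$ roles exchanged), $\mathbb{D}_{\overline{X}}(\zZ)$ is again double conical. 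By intrinsicness of singular supports under equivalences of quasi-smooth derived stacks (Remark~\ref{rmk:intrinsic}, or again Proposition~\ref{prop:compare:Z}), the equivalence on derived categories sends $\cC_{\eta^{\dag}(\zZ)}$ to $\cC_{\eta^{\sharp}(\mathbb{D}_{\overline{X}}(\zZ))}$, and Verdier quotients give (i). Equivalently, (i) is (ii) for $v^{\vee}$ composed with this duality equivalence.

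The main obstacle is the input for (ii): establishing the globalized linear Koszul duality $\kappa$ and, above all, its compatibility with singular supports (Proposition~\ref{prop:compare:Z}). Concretely one works in a chart where $\mathfrak{M}_S(v)_{\circ} = [\mathfrak{U}/G]$ with $\mathfrak{U} = \Spec \rR(V\to A,s)$, realizes $\eta^{\dag}$ and $\eta^{\sharp}$ as critical loci of mutually dual functions on linear spaces over $\mathfrak{U}$, and checks that the Koszul-duality equivalence of the associated $\mathbb{C}^{\ast}$-equivariant factorization categories matches the $S(V)$-module structures on the $\Hom^{2\ast}$-spaces — i.e. the simultaneous analogue of Proposition~\ref{def:CZ} for both presentations — before gluing. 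The remaining points (restriction to quasi-compact opens, independence of the $\circ$-choices, and the bookkeeping for the two $\mathbb{C}^{\ast}$-actions) are routine given the earlier sections.
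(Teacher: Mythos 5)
Your proposal is correct and follows essentially the same route as the paper: part (i) via the duality equivalences of Lemma~\ref{lem:isom:DS} and Lemma~\ref{lem:D026v2} together with intrinsicness of singular supports, and part (ii) via the globalized linear Koszul duality of Section~\ref{sec:SOD} combined with Proposition~\ref{prop:compare:Z} and the compatibility $\eta^{\sharp}=\vartheta_0\circ\eta^{\dag}$ (which the paper verifies directly in the proof of the corollary, with $\vartheta_0$ coming from Lemma~\ref{lem:equiv:shifted}, rather than as part of Proposition~\ref{prop:compare:Z} itself).
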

\begin{proof}
	As for (i), 
	we have the equivalence by Lemma~\ref{lem:isom:DS}
	\begin{align*}
		\Dbc(\fM_S^{\dag}(v)_{\rm{qc}}) \stackrel{\sim}{\to}
		\Dbc(\fM_S^{\sharp}(v^{\vee})_{\rm{qc}}). 
		\end{align*}
	Then 
	Lemma~\ref{lem:D026v2} implies that the above 
	equivalence restricts to the equivalence 
	between $\cC_{\eta^{\dag}(\zZ)}$ and 
	$\cC_{\eta^{\sharp}(\mathbb{D}_{\overline{X}}(\zZ))}$. 
	By taking the quotients, we obtain (i). 
	
	As for the equivalence in (ii),  
	it is straightforward to check that 
	the following diagram commutes: 
	\begin{align*}
		\xymatrix{
			& \mM_X^{\dag, \rm{pure}}(v) \ar[ld]^-{\cong}_-{\eta^{\dag}} 
			\ar[rd]_-{\cong}^-{\eta^{\sharp}} & \\
			t_0(\Omega_{\mathfrak{M}_S^{\dag, \rm{pure}}(v)}[-1]) 
			\ar[rr]^{\vartheta_0}_-{\cong} & &
			t_0(\Omega_{\mathfrak{M}_S^{\sharp, \rm{pure}}(v)}[-1]). 
		}
	\end{align*}
	Here $\vartheta_0$ is the isomorphism in Lemma~\ref{lem:equiv:shifted}. 
	Therefore the equivalence in (ii)
	follows from Proposition~\ref{prop:compare:Z}. 
\end{proof}

\subsubsection{Wall-crossing at $t<0$}
Let us return to the situation of Subsections~\ref{subsec:catDT:stableD026}, 
\ref{subsec:conj:AX}. 
We first give the following lemma: 
\begin{lem}\label{lem:t<0}
	The moduli stack of $\mu_t^{\dag}$-semistable objects
	$\pP_n^{t}(X, \beta)$
	is an 
	open substack in $\mM_X^{\dag, \rm{pure}}(\beta, n)$, 
	and the 
	isomorphism (\ref{isom:X:dual}) restricts to the isomorphisms
	\begin{align*}
		\mathbb{D}_{\overline{X}} \colon 
		\pP_n^{t}(X, \beta)
		\stackrel{\cong}{\to}
		\pP_{-n}^{-t}(X, \beta), \ 
		\mathbb{D}_{\overline{X}} \colon 
		P_n^t(X, \beta) \stackrel{\cong}{\to} P_{-n}^{-t}(X, \beta). 
	\end{align*}
\end{lem}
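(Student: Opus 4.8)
\textbf{Proof plan for Lemma~\ref{lem:t<0}.}
The strategy is to reduce everything to the basic duality statement that the derived dual functor $\mathbb{D}_{\overline{X}}$ is an anti-equivalence of the abelian category $\aA_X^p$ and that it exchanges the weak stability functions $\mu_t^{\dag}$ and $\mu_{-t}^{\dag}$ up to a sign, together with the fact that it sends numerical class $(1, -v)$ to $(1, -v^{\vee})$. First I would recall that $\mathbb{D}_{\overline{X}}(\oO_{\overline{X}})=\oO_{\overline{X}}$ and that for $F \in \Coh_{\le 1}^p(X)$ one has $\mathbb{D}_{\overline{X}}(F[-1])=\mathbb{D}_S(F)\otimes \omega_S[1][-1]$ shifted appropriately, which is again an object of $\Coh_{\le 1}^p(X)[-1]$ after twisting; more precisely, using the formula (\ref{F:dual}) for the one-dimensional dual $F^{\vee}$ one checks $\mathbb{D}_{\overline{X}}$ preserves the extension-closure defining $\aA_X^p$ in (\ref{def:Apure}). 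This gives the anti-equivalence $\mathbb{D}_{\overline{X}}\colon \aA_X^p \xrightarrow{\sim} \aA_X^p$ and hence the isomorphism of stacks (\ref{isom:X:dual}), which I would note already appears above; the point here is to track stability.

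Next I would verify on classes: if $\cl(E)=(1, -(\beta, n))$ then $\cl(\mathbb{D}_{\overline{X}}(E))=(1, -(\beta, -n))=(1, -(\beta,n)^{\vee})$, since the rank-one D6 part is fixed and the one-dimensional part $(\beta, n)$ goes to $(\beta, -n)$ under $F \mapsto F^{\vee}$. Then for stability: an exact sequence $0 \to E' \to E \to E'' \to 0$ in $\aA_X^p$ is sent by $\mathbb{D}_{\overline{X}}$ (an exact anti-equivalence, so it reverses arrows) to $0 \to \mathbb{D}_{\overline{X}}(E'') \to \mathbb{D}_{\overline{X}}(E) \to \mathbb{D}_{\overline{X}}(E') \to 0$. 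A direct computation with the definition of $\mu_t^{\dag}$ shows $\mu_t^{\dag}(\mathbb{D}_{\overline{X}}(G)) = -\mu_{-t}^{\dag}(G)$ for $G \in \aA_X^p$ (the $r\neq 0$ case gives $t \leftrightarrow -t$ trivially since the value is the constant $t$, and the $r=0$ case gives $n/(\beta\cdot H) \mapsto -n/(\beta\cdot H)$ because the fundamental cycle $\beta$ is unchanged while $\chi$ changes sign). Consequently the Harder--Narasimhan-type inequality $\mu_t^{\dag}(E') < (\le)\ \mu_t^{\dag}(E'')$ for all subobjects is equivalent, after applying $\mathbb{D}_{\overline{X}}$, to the corresponding inequality defining $\mu_{-t}^{\dag}$-(semi)stability of $\mathbb{D}_{\overline{X}}(E)$; one just has to be careful that subobjects of $\mathbb{D}_{\overline{X}}(E)$ in $\aA_X^p$ correspond exactly to quotients of $E$ in $\aA_X^p$, which holds because $\mathbb{D}_{\overline{X}}$ is an anti-equivalence of abelian categories. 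This establishes that $\mathbb{D}_{\overline{X}}$ exchanges $\mu_t^{\dag}$-semistable (resp.\ stable) objects of class $(1,-(\beta,n))$ with $\mu_{-t}^{\dag}$-semistable (resp.\ stable) objects of class $(1,-(\beta,-n))$.

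To upgrade this from a bijection on $\mathbb{C}$-valued points to an isomorphism of stacks I would argue that semistability is an open condition (as recalled from \cite[Proposition~3.17]{MR2683216} just above) so $\pP_n(X,\beta)_{t\mathchar`-\rm{ss}}$ and $\pP_{-n}(X,\beta)_{-t\mathchar`-\rm{ss}}$ are open substacks of $\mM_X^{\dag,p}(\beta,n)$ and $\mM_X^{\dag,p}(\beta,-n)$ respectively, and the already-established isomorphism (\ref{isom:X:dual}) $\mathbb{D}_{\overline{X}}\colon \mM_X^{\dag,p}(\beta,n) \xrightarrow{\cong} \mM_X^{\dag,p}(\beta,-n)$ restricts to an isomorphism between these open substacks because it does so on points and both are open in spaces related by an isomorphism. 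The same restricts further to the stable loci $P_n(X,\beta)_t$ and $P_{-n}(X,\beta)_{-t}$. The only genuinely delicate point—the rest is bookkeeping—is making the derived-dual construction work in families, i.e.\ checking that $\mathbb{D}_{\overline{X}}$ as a functor $D^b_{\rm{coh}}(\overline{X}\times T) \to D^b_{\rm{coh}}(\overline{X}\times T)$ preserves $T$-flat families of objects of $\aA_X^p$ and that semistability is detected fiberwise; but this is exactly the content already used to define (\ref{isom:X:dual}) and to state the fiberwise stability comparison, so I would simply invoke it rather than reprove it. Hence the lemma follows.
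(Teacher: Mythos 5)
Your route is genuinely different from the paper's: the paper disposes of the lemma by citation (purity of semistable objects from Lemma~\ref{lem:onepure:AX} combined with the stability, and the duality statement from \cite[Proposition~5.4 (iii)]{MR2892766}), whereas you give a direct transport-of-structure argument. That is fine in principle, but as written there are two gaps. First, the opening claim of the lemma --- that the semistable locus lies in $\mM_X^{\dag, p}(\beta, n)$ --- is not a consequence of openness of semistability: \cite[Proposition~3.17]{MR2683216} gives openness inside $\mM_X^{\dag}(\beta, n)$, and you still have to show that a $\mu_t^{\dag}$-semistable object $E$ admits no subobject $Q[-1]$ with $Q$ zero-dimensional (such a subobject has slope $\infty$ while the rank-one quotient has slope $t$, contradicting Definition~\ref{def:stability}), after which Lemma~\ref{lem:onepure:AX} gives $E \in \aA_X^{p}$. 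You assert the containment in the pure locus without this step, and it is literally half of the statement.

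Second, Definition~\ref{def:stability} quantifies over exact sequences in $\aA_X$, not in $\aA_X^{p}$, while $\mathbb{D}_{\overline{X}}$ is an anti-equivalence only of $\aA_X^{p}$: the dual of $Q[-1]$ for $Q$ zero-dimensional is concentrated in cohomological degree $2$ and leaves the heart. So your key sentence ``subobjects of $\mathbb{D}_{\overline{X}}(E)$ in $\aA_X^{p}$ correspond exactly to quotients of $E$ in $\aA_X^{p}$'' only treats sequences whose sub and quotient are both pure, and a priori $\mathbb{D}_{\overline{X}}(E)$ could be destabilized by a sequence with non-pure quotient. This is repairable by an extra lemma you did not state: any subobject of a pure object is pure (a $Q[-1]$ inside the subobject would sit inside $E$, contradicting Lemma~\ref{lem:onepure:AX}), and if the quotient $G''$ of a sequence $0 \to G' \to \mathbb{D}_{\overline{X}}(E) \to G'' \to 0$ is non-pure, replace $G''$ by $G''/(T[-1])$ for the maximal zero-dimensional $T$ and $G'$ by the corresponding preimage; since this moves a class $(0,0,-\ell)$, $\ell>0$, from the quotient to the sub, the slope inequality for the resulting pure sequence is \emph{stronger} than the one required for the original sequence (in both the rank-one-sub and rank-one-quotient cases), so testing against pure sequences suffices. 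With that insertion, together with the slope identity $\mu_t^{\dag}(\mathbb{D}_{\overline{X}}G)=-\mu_{-t}^{\dag}(G)$ for pure $G$ (where no $\infty$ ambiguity occurs), your comparison does yield the statement on points, and the stack-level claim follows, as you say, by restricting the isomorphism (\ref{isom:X:dual}), which is already defined in families.
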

\begin{proof}
	The former statement 
	follows from Lemma~\ref{lem:onepure:AX}
	together with the $\mu_t^{\dag}$-stability. 
	The latter statement is 
	due to~\cite[Proposition~5.4 (iii)]{Tsurvey}. 
\end{proof}
Then 
we have the following duality statement 
as an application of
Corollary~\ref{cor:duality}:
\begin{thm}\label{thm:duality}
	If $t \in \mathbb{R}$ lies in a chamber,
	we have the equivalence
	\begin{align}\notag
		\mathcal{DT}^{\mathbb{C}^{\ast}}(P_n^t(X, \beta))
		\stackrel{\sim}{\to} \dD \tT^{\mathbb{C}^{\ast}}(P_{-n}^{-t}(X, \beta)). 
	\end{align}
	In particular 
	we have the equivalence (\ref{catDT:tsmall}). 
\end{thm}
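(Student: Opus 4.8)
The plan is to deduce Theorem~\ref{thm:duality} from the duality results assembled in Subsection~\ref{subsec:dualpair}, applied to the ``unstable'' loci rather than to the whole stack. First I would observe that, by Lemma~\ref{lem:t<0}, the moduli stack $\pP_n(X, \beta)_{t\mathchar`-\rm{ss}}$ is an open substack of $\mM_X^{\dag, p}(\beta, n)$, so its complement
\begin{align*}
\zZ_t \cneq \mM_X^{\dag}(\beta, n)_{\circ} \setminus \pP_n(X, \beta)_{t\mathchar`-\rm{ss}}
\end{align*}
is a closed substack (after restricting to the quasi-compact open substack $\mathfrak{M}_S^{\dag}(\beta, n)_{\circ}$ chosen so that \eqref{p0(P)} holds). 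The key point to check is that $\zZ_t$ is \emph{double conical} in the sense of Subsection~\ref{subsec:dualpair}: it is closed under both the $p_0^{\dag}$-scaling and the $p_0^{\sharp}$-scaling on $\mM_X^{\dag, p}(\beta, n)$. This holds because the two $\mathbb{C}^{\ast}$-actions correspond to rescaling the maps $\uU \to F$ and $\uU \to F \otimes \omega_S^{-1}$ in the diagram \eqref{intro:dia:BS}, and both of these rescalings act by automorphisms on the underlying object of $\aA_X^p$ (they do not change the isomorphism class in $\aA_X$, only the chosen presentation), hence preserve $\mu_t^{\dag}$-semistability; therefore they preserve $\pP_n(X, \beta)_{t\mathchar`-\rm{ss}}$ and its complement $\zZ_t$. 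Under $\eta^{\dag}$, the substack $\zZ_t$ is exactly $\zZ^{\dag}_{t\mathchar`-\rm{us}}$, so by Definition~\ref{defi:catDT:A} we have $\mathcal{DT}^{\mathbb{C}^{\ast}}(P_n(X, \beta)_t) = D^b_{\rm{coh}}(\mathfrak{M}_S^{\dag}(\beta, n)_{\circ})/\cC_{\eta^{\dag}(\zZ_t)}$.

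Next I would apply Corollary~\ref{cor:duality}(i) to $\zZ = \zZ_t$. This gives an equivalence
\begin{align*}
D^b_{\rm{coh}}(\mathfrak{M}_S^{\dag}(\beta, n)_{\circ})/\cC_{\eta^{\dag}(\zZ_t)}
\stackrel{\sim}{\to}
D^b_{\rm{coh}}(\mathfrak{M}_S^{\sharp}(\beta, -n)_{\circ})/\cC_{\eta^{\sharp}(\mathbb{D}_{\overline{X}}(\zZ_t))}.
\end{align*}
By Lemma~\ref{lem:t<0}, $\mathbb{D}_{\overline{X}}(\zZ_t)$ is the complement of $\pP_{-n}(X, \beta)_{-t\mathchar`-\rm{ss}}$ in $\mM_X^{\dag}(\beta, -n)_{\circ}$, i.e.\ via $\eta^{\sharp}$ it is the ``$\sharp$-version'' of the $(-t)$-unstable locus for class $(\beta, -n)$. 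Then I would apply Corollary~\ref{cor:duality}(ii) (with $v = (\beta, -n)$, again to the double conical substack $\mathbb{D}_{\overline{X}}(\zZ_t)$) to convert the $\sharp$-side back into the $\dag$-side:
\begin{align*}
D^b_{\rm{coh}}(\mathfrak{M}_S^{\sharp}(\beta, -n)_{\circ})/\cC_{\eta^{\sharp}(\mathbb{D}_{\overline{X}}(\zZ_t))}
\stackrel{\sim}{\to}
D^b_{\rm{coh}}(\mathfrak{M}_S^{\dag}(\beta, -n)_{\circ})/\cC_{\eta^{\dag}(\mathbb{D}_{\overline{X}}(\zZ_t))}.
\end{align*}
The right-hand side, by Definition~\ref{defi:catDT:A} applied to the parameter $-t$ and class $(\beta, -n)$, is exactly $\dD\tT^{\mathbb{C}^{\ast}}(P_{-n}(X, \beta)_{-t})$. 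Composing the two equivalences yields the desired $\mathcal{DT}^{\mathbb{C}^{\ast}}(P_n(X, \beta)_t) \stackrel{\sim}{\to} \dD\tT^{\mathbb{C}^{\ast}}(P_{-n}(X, \beta)_{-t})$. Finally, to obtain \eqref{catDT:tsmall}, I would take $t \ll 0$: by Theorem~\ref{thm:PT=M}(i) the right-hand side $\dD\tT^{\mathbb{C}^{\ast}}(P_{-n}(X, \beta)_{-t})$ with $-t \gg 0$ equals $\mathcal{DT}^{\mathbb{C}^{\ast}}(P_{-n}(X, \beta))$, which is the content of \eqref{catDT:tsmall}. One should also record that all the chosen quasi-compact open substacks can be matched up compatibly (using Lemma~\ref{lem:replace} / Remark~\ref{rmk:open:independent}, since the equivalences in Definition~\ref{defi:catDT:A} do not depend on this choice), so the identifications above are legitimate even though $a$ priori $t$ and $-t$ use different open substacks of $\mathfrak{M}_S$.

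The main obstacle I anticipate is the verification that $\zZ_t$ is double conical and that its image under $\mathbb{D}_{\overline{X}}$ is precisely the $(-t)$-unstable locus of the opposite class $(\beta, -n)$ --- that is, the careful bookkeeping of how the two $\mathbb{C}^{\ast}$-actions interact with $\mu_t^{\dag}$-stability and with the derived dual, so that Corollary~\ref{cor:duality}(i) and (ii) really do apply to these specific substacks. The conical invariance under the $p_0^{\dag}$-action is essentially automatic (it is the action implicitly used already in Definition~\ref{defi:catDT:A} when one passes to $\cC_{\zZ^{\dag}_{t\mathchar`-\rm{us}}}$), but invariance under the $p_0^{\sharp}$-action requires knowing that rescaling the map $\uU \to F \otimes \omega_S^{-1}$ in \eqref{intro:dia:BS} does not affect the isomorphism class in $\aA_X$; this follows from the diagrammatic description of rank one objects in $\aA_X$ established in Theorem~\ref{thm:D026}, together with the fact that an isomorphism of objects in $\aA_X$ is allowed to move within the groupoid of presentations. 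Once this is in place, everything else is a formal concatenation of the already-proven equivalences, so I expect the proof itself to be short.
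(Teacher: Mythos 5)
Your proposal reproduces the paper's own proof: identify $\mathcal{DT}^{\mathbb{C}^{\ast}}(P_n(X,\beta)_t)$ with the quotient by the subcategory attached to the $t$-unstable locus, note that this locus is a double conical closed substack which $\mathbb{D}_{\overline{X}}$ exchanges with the $(-t)$-unstable locus in class $(\beta,-n)$ (Lemma~\ref{lem:t<0}), and then compose Corollary~\ref{cor:duality}(i) for $v=(\beta,n)$ with Corollary~\ref{cor:duality}(ii) for $v=(\beta,-n)$; the remarks about independence of the quasi-compact open substack and the deduction of (\ref{catDT:tsmall}) from Theorem~\ref{thm:PT=M} are also as in the paper.

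One point in your justification of double conicality is wrong as stated: the two rescalings do \emph{not} leave the isomorphism class of the object of $\aA_X$ (or the point of $\mM_X^{\dag,p}$) unchanged. If they did, the fiberwise $\mathbb{C}^{\ast}$-actions would be trivial on the moduli stack and the whole notion of a conical substack would be vacuous; already for a stable one-dimensional sheaf $F$ with a nonzero Higgs field $\phi$ one has $(F,\phi)\not\cong(F,t\phi)$ for $t\neq 1$, and the same phenomenon occurs for the diagrams (\ref{intro:dia:BS}). What is true, and what actually yields the needed invariance, is that scaling $\phi$ and $\psi$ \emph{simultaneously} is absorbed by the scalar automorphism of $F$ (an isomorphism of diagrams which is the identity on $\oO_S$), while the remaining one-parameter rescaling is realized by pulling back under the fiberwise scaling automorphisms of $\overline{X}$ over $S$ fixing $S_{\infty}$; these automorphisms preserve $\oO_{\overline{X}}$, $\Coh_{\le 1}(X)$, the numerical class $(\beta,n)$, hence the slope $\mu_t^{\dag}$ and $\mu_t^{\dag}$-(semi)stability. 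Alternatively, invariance under the $p_0^{\sharp}$-scaling follows from invariance under the $p_0^{\dag}$-scaling by conjugating with $\mathbb{D}_{\overline{X}}$ via Lemma~\ref{lem:D026v2} and Lemma~\ref{lem:t<0}. With this correction your argument coincides with the one in the paper.
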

\begin{proof}
	Below we use the notation of the diagrams (\ref{diagram:dpair2}), (\ref{diagram:dpair3}). 
	Let us take a 
	quasi-compact open derived substack 
	$\mathfrak{M}_S(\beta, n)_{\rm{qc}}  \subset \mathfrak{M}_S^{\rm{pure}}(\beta, n)$ 
	such that $\mM_S^{\dag}(\beta, n)_{\rm{qc}}=t_0(\mathfrak{M}_S^{\dag}(\beta, n)_{\rm{qc}})$
	satisfies the condition (\ref{p0(P)}). 
	By definition, we have 
	\begin{align*}
		\mathcal{DT}^{\mathbb{C}^{\ast}}(P_n^t(X, \beta)) =
		D^b_{\rm{coh}}(\mathfrak{M}_S^{\dag}(\beta, n)_{\rm{qc}})/\cC_{\zZ^{t\us}}. 
	\end{align*}
	Note that 
	the unstable locus 
	$(\eta^{\dag})^{-1}(\zZ^{t\us})$ is a double conical closed 
	substack of $\mM_X^{\dag}(\beta, n)_{\rm{qc}}$, 
	since the two $\mathbb{C}^{\ast}$-actions preserve the $\mu_t^{\dag}$-stability. 
	By Lemma~\ref{lem:t<0}
	the isomorphism (\ref{isom:X:dual})
	restricted to $\mM_X^{\dag}(v)_{\rm{qc}}$
	sends
	$(\eta^{\dag})^{-1}(\zZ^{t\us})$
	to $(\eta^{\dag})^{-1}(\zZ^{-t\us})$. 
	Therefore applying Corollary~\ref{cor:duality} (i) for $v=(\beta, n)$ and 
	$\zZ=(\eta^{\dag})^{-1}(\zZ^{t\us})$, we have
	the equivalence
	\begin{align*}
		D^b_{\rm{coh}}(\mathfrak{M}_S^{\dag}(\beta, n)_{\rm{qc}})/\cC_{\zZ^{t\us}}
		\stackrel{\sim}{\to}
		D^b_{\rm{coh}}(\mathfrak{M}_S^{\sharp}(\beta, -n)_{\rm{qc}})/
		\cC_{\eta^{\sharp}(\eta^{\dag})^{-1}(\zZ^{-t\us})}. 
	\end{align*}
	Also applying Corollary~\ref{cor:duality} (ii) 
	for $v=(\beta, -n)$ and $\zZ=(\eta^{\dag})^{-1}(\zZ^{-t\us})$, we 
	have the equivalence
	\begin{align*}
		D^b_{\rm{coh}}(\mathfrak{M}_S^{\dag}(\beta, -n)_{\rm{qc}})/\cC_{\zZ^{-t\us}}
		\stackrel{\sim}{\to}
		D^b_{\rm{coh}}(\mathfrak{M}_S^{\sharp}(\beta, -n)_{\rm{qc}})/
		\cC_{\eta^{\sharp}(\eta^{\dag})^{-1}(\zZ^{-t\us})}. 
	\end{align*}
	The desired equivalence follows from the 
	above equivalences. 
\end{proof}

For $t_0<0$, the diagram (\ref{dia:dflip}) is a d-critical anti-flip. 
Therefore for $t<0$ wall-crossing, 
we expect a chain of fully-faithful functors
for $0>s_1>s_2>\cdots>s_M$ which lie on chambers
\begin{align*}
	\mathcal{DT}^{\mathbb{C}^{\ast}}(P_n^{s_1}(X, \beta)) 
	\hookrightarrow 
	\mathcal{DT}^{\mathbb{C}^{\ast}}(P_n^{s_2}(X, \beta)) 
	\hookrightarrow 
	\cdots 
	\hookrightarrow
	\mathcal{DT}^{\mathbb{C}^{\ast}}(P_n^{s_M}(X, \beta)). 
\end{align*}
Indeed 
Theorem~\ref{thm:duality} implies that the above chain of 
fully-faithful functors is equivalent to 
that of fully-faithful functors (\ref{FF:P1})
for $(\beta, -n)$. 

\subsubsection{Wall-crossing formula of categorical PT theories for irreducible curve classes}
We take $(\beta, n) \in N_{\le 1}(S)$ such that 
$\beta$ is an irreducible class and $n\ge 0$. 
As in Subsection~\ref{subsec:catDT:one}, 
we use the notation 
$\mM_n(X, \beta)$, $M_n(X, \beta)$ for 
moduli spaces of one dimensional stable sheaves on $X$
with numerical class $(\beta, n)$. 
In the irreducible $\beta$ case, 
there is only one wall 
$t=n/(H \cdot \beta)$
with respect to the $\mu_t^{\dag}$-stability, 
and the diagram (\ref{dia:dflip}) 
becomes (see~\cite[Section~9.6]{Toddbir})
\begin{align*}
	\xymatrix{
		P_n(X, \beta) \ar[rd]  & & P_{-n}(X, \beta) \ar[ld] \\
		& M_n(X, \beta). &
	}
\end{align*} 
By~\cite[Theorem~9.22]{Toddbir}, the above diagram is a simple d-critical flip if $n>0$, 
simple d-critical flop if $n=0$. 
Below we prove Conjecture~\ref{conj2} in this case. 

As $\beta$ is irreducible, the derived stack $\fM_S^{\rm{pure}}(\beta, n)$ 
coincides with the derived
stack of one dimensional stable sheaves on $S$ 
with numerical class $(\beta, n)$. 
In particular, all of the derived stacks 
$\fM_S^{\rm{pure}}(\beta, n)$, $\mathfrak{M}_S^{\dag, \rm{pure}}(\beta, n)$, 
$\mathfrak{M}_S^{\sharp}(\beta, n)$ are QCA. 
In the notation of the diagram (\ref{commu:dual}), we set 
\begin{align*}
	\mathfrak{P}_n(S, \beta) \cneq 
	\mathfrak{M}_S^{\dag, \rm{pure}}(\beta, n) \setminus i^{\dag}(\mathfrak{M}_S^{\rm{pure}}(\beta, n)), \ 
	\mathfrak{Q}_n(S, \beta) \cneq 
	\mathfrak{M}_S^{\sharp, \rm{pure}}(\beta, n) \setminus i^{\sharp}(\mathfrak{M}_S^{\rm{pure}}(\beta, n))
\end{align*}
which are derived open substacks of 
$\mathfrak{M}_S^{\dag, \rm{pure}}(\beta, n)$, $\mathfrak{M}_S^{\sharp, \rm{pure}}(\beta, n)$
respectively. 
By the assumption that $\beta$ is irreducible, 
the derived stack $\mathfrak{P}_n(S, \beta)$
is the derived moduli scheme of PT
stable pairs on $S$ with numerical class $(\beta, n)$, so 
its truncation 
\begin{align*}
	P_n(S, \beta) \cneq t_0(\mathfrak{P}_n(S, \beta))
\end{align*}
is the moduli 
space of PT stable pairs on $S$. 
\begin{lem}\label{lem:P:irred}
	For $(\beta, n) \in N_{\le 1}(S)$ such that $\beta$ is irreducible, 
	we have the equivalences
	\begin{align}\label{equiv:P:irred}
		\dD \tT^{\mathbb{C}^{\ast}}(P_n(X, \beta))
		\stackrel{\sim}{\to} D^b_{\rm{coh}}(\mathfrak{P}_n(S, \beta)), \ 
		\dD \tT^{\mathbb{C}^{\ast}}(P_{-n}(X, \beta)) 
		\stackrel{\sim}{\to} D^b_{\rm{coh}}
		(\mathfrak{Q}_n(S, \beta)). 
	\end{align}
	Moreover the following identity holds:
	\begin{align}\label{id:PT:HP}
		\chi(\mathrm{HP}_{\ast}
		(\mathcal{DT}_{\rm{dg}}^{\mathbb{C}^{\ast}}(P_n(X, \beta)))=
		(-1)^{n+\beta^2}P_{n, \beta}. 
	\end{align}
\end{lem}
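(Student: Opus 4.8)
The plan is to prove Lemma~\ref{lem:P:irred} by reducing both statements to results already established in the excerpt. For the two equivalences in (\ref{equiv:P:irred}), the key observation is that since $\beta$ is irreducible, every pure one-dimensional sheaf with class $(\beta,n)$ is automatically stable, so $P_n(X,\beta)$ and $P_{-n}(X,\beta)$ sit inside $\mM_X^{\dag,p}(\beta,n)$ as the open substacks whose image under $p_0^{\dag}$ (resp.\ $p_0^{\sharp}$) lands in the nonzero-section locus. First I would record that, with $\beta$ irreducible, $\mathfrak{M}_S^p(\beta,n)$ is already quasi-compact (it is the derived moduli stack of stable sheaves), so we may take $\mathfrak{M}_S(\beta,n)_\circ=\mathfrak{M}_S^p(\beta,n)$ and all the $\circ$-decorated stacks in the diagrams (\ref{diagram:dpair2}), (\ref{diagram:dpair3}) are honest QCA derived stacks. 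Then I would identify, using Theorem~\ref{thm:PT=M}(i), that for $t\gg 0$ the stack $\pP_n(X,\beta)_{t\text{-ss}}$ equals $P_n(X,\beta)$ viewed inside $\mM_X^{\dag,p}(\beta,n)$ via $(F,s)\mapsto(\oO_{\overline X}\to F)$, and under $\eta^{\dag}$ this corresponds to the complement in $t_0(\Omega_{\mathfrak{M}_S^{\dag}(\beta,n)}[-1])$ of the locus where the section of $I^{\bullet}$ fails to be a PT pair.

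The crucial point for the first equivalence is that the unstable locus $\zZ^{\dag}_{t\text{-us}}$ for $t\gg 0$ is exactly $p_0^{\dag}$-conical of the form $(p_0^{\dag})^{-1}$ of a closed substack of $\mM_S^{\dag}(\beta,n)$: a pair $(\oO_S\to F)$ has no PT lift precisely when $F$ has a zero-dimensional subsheaf detected already on the surface, and this condition does not see the fiber coordinate of $\Omega[-1]\to\mM_S^{\dag}$. Hence $\zZ^{\dag}_{t\text{-us}}=(p_0^{\dag})^{-1}(\wW)$ for the appropriate closed $\wW\subset\mM_S^{\dag,p}(\beta,n)$, and Lemma~\ref{lem:dstack:support} applies verbatim: $\cC_{\zZ^{\dag}_{t\text{-us}}}=D^b_{\rm{coh}}(\mathfrak{M}_S^{\dag,p}(\beta,n))_{\wW}$ and the restriction functor gives an equivalence $\mathcal{DT}^{\mathbb{C}^{\ast}}(P_n(X,\beta))\xrightarrow{\sim}D^b_{\rm{coh}}(\mathfrak{M}_S^{\dag,p}(\beta,n)\setminus\wW)=D^b_{\rm{coh}}(\mathfrak{P}_n(S,\beta))$, once one checks that $\mathfrak{M}_S^{\dag,p}(\beta,n)\setminus\wW=\mathfrak{P}_n(S,\beta)$, i.e.\ the nonzero-section locus coincides with the PT-stable locus on the surface side — which is immediate since $\beta$ irreducible forces every nonzero section $\oO_S\to F$ with $F$ stable (hence pure) to be a PT pair. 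The second equivalence for $P_{-n}(X,\beta)$ is obtained the same way, using Theorem~\ref{thm:PT=M}(ii) together with the identifications in the diagram (\ref{commute:dual2}) so that $\mathbb{D}_{\overline X}$ carries the $t\ll 0$ unstable locus to a $p_0^{\sharp}$-conical locus, whence Lemma~\ref{lem:dstack:support} again identifies the quotient with $D^b_{\rm{coh}}(\mathfrak{Q}_n(S,\beta))$.

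For the numerical identity (\ref{id:PT:HP}), the plan is to invoke the general comparison Proposition~\ref{prop:num} (via the formulation of Lemma~\ref{lem:ss:dt}). One needs to verify the technical hypotheses of Proposition~\ref{prop:num}: since $\beta$ is irreducible, $P_n(X,\beta)$ is a quasi-projective scheme, so by Remark~\ref{rmk:condition} the assumption is automatic (with trivial group $G$) and the derived critical locus description of~\cite{MR3904157} applies. Then Proposition~\ref{prop:num} yields
\begin{align*}
\chi(\mathrm{HP}_{\ast}(\mathcal{DT}_{\rm{dg}}^{\mathbb{C}^{\ast}}(P_n(X,\beta))))=(-1)^{\vdim\mM_S^{\dag}(\beta,n)}\DT(P_n(X,\beta))=(-1)^{\vdim\mM_S^{\dag}(\beta,n)}P_{n,\beta}.
\end{align*}
It then remains to compute $\vdim\mM_S^{\dag}(\beta,n)\bmod 2$. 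From the obstruction theory (\ref{perf:obs2}) and the triangle (\ref{exact:pair}), $\vdim\mM_S^{\dag}(\beta,n)=\chi(I^{\bullet},F)=\chi(F)-\chi(F,F)=n+\beta^2$ for a one-dimensional sheaf $F$ of class $(\beta,n)$, using $\chi(F,F)=-\beta^2$ by Riemann--Roch as in Lemma~\ref{lem:M:irred}. Substituting gives the sign $(-1)^{n+\beta^2}$, as claimed.

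The main obstacle I anticipate is the careful verification that the unstable loci $\zZ^{\dag}_{t\text{-us}}$ (for $|t|\gg 0$) are genuinely pulled back from the base $\mM_S^{\dag}$ (resp.\ $\mM_S^{\sharp}$), rather than merely conical — this is what makes Lemma~\ref{lem:dstack:support} applicable and upgrades the Verdier quotient to a plain restriction functor onto an open substack. Concretely one must show: a pair $(\oO_S\xrightarrow{\xi}F)$ with $F$ stable of class $(\beta,n)$ lifts to a PT-stable object of $\aA_X$ (via the diagram (\ref{intro:dia:BS}) under $\eta^{\dag}$) for \emph{every} choice of the obstruction-cone fiber coordinate $\phi$, or for none, depending only on whether $\xi$ is a PT pair on $S$. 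This requires unwinding the correspondence of Theorem~\ref{thm:D026} between rank-one objects of $\aA_X$ and diagrams (\ref{intro:dia:BS}), together with the $\mu_t^{\dag}$-stability analysis of~\cite{MR2683216} in the large-$|t|$ regime; the irreducibility of $\beta$ is what prevents any destabilizing subobject from depending on the $\phi$-coordinate. Once this is in place, everything else is a direct citation.
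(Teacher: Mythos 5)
Your overall architecture is the same as the paper's (identify the unstable locus as a pullback from the surface side, apply Lemma~\ref{lem:dstack:support}, then get the numerical identity from Proposition~\ref{prop:num} plus Riemann--Roch), but the crux of the argument is exactly the step you defer, and your heuristic for it is not correct as stated. The needed fact is that for $t\gg 0$ the unstable locus is of the form $(p_0^{\dag})^{-1}(\wW)$, equivalently $P_n(X,\beta)=(p_0^{\dag})^{-1}(P_n(S,\beta))$. The failure mode you have to rule out is \emph{not} a destabilizing subsheaf ``detected already on the surface'': by Remark~\ref{rmk:dia:BS} and Lemma~\ref{lem:pair}, a point of the cone lying over a perfectly good stable pair $(\oO_S\stackrel{\xi}{\to}F)$ but whose diagram (\ref{intro:dia:BS}) has non-split top row corresponds to a rank one object of $\aA_X$ which is not of the form $(\oO_X\to F_X)$ at all, hence by Theorem~\ref{thm:PT=M}~(i) is $\mu_t^{\dag}$-unstable for $t\gg 0$ even though its image in $\mM_S^{\dag}$ is PT-stable. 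So ``stability does not see the fiber coordinate'' is precisely what must be proved, and it amounts to showing that over a stable pair the composition map $\Ext^1(F\otimes\omega_S^{-1},\oO_S)\to\Ext^1(F\otimes\omega_S^{-1},F)$, $e\mapsto\xi\circ e$, is injective (Serre-dually, that $\Ext^1(F,F)\to H^1(F)$ is surjective), which is where purity of $F$ and the zero-dimensionality of $\Cok(\xi)$ enter. The paper does not reprove this: it quotes \cite[Lemma~5.13]{MR3842061} (for irreducible $\beta$, $P_n(X,\beta)$ is the dual obstruction cone over $P_n(S,\beta)$) and only then applies Lemma~\ref{lem:dstack:support}. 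Without that citation or your own proof of the Ext statement, your restriction-functor argument does not close; note also that your route to the second equivalence needs the same unproved fact on the $\sharp$-side, whereas the paper gets it for free from the first equivalence applied to $(\beta,-n)$ together with the equivalence $\mathfrak{P}_{-n}(S,\beta)\simeq\mathfrak{Q}_n(S,\beta)$ coming from the diagram (\ref{commu:dual}), so no new stability analysis is needed there.

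The numerical part is essentially right and matches the paper: $\vdim=\chi(I^{\bullet},F)=\chi(F)-\chi(F,F)=n+\beta^2$, and there is correctly no extra $+1$ in the sign since no $\mathbb{C}^{\ast}$-rigidification is involved. One small correction: Remark~\ref{rmk:condition} asks that the truncation of the \emph{base} derived stack be quasi-projective, so the relevant space is $P_n(S,\beta)=t_0(\mathfrak{P}_n(S,\beta))$ (which is projective, and this is what the paper checks via $p_0^{\dag}(P_n(X,\beta))=P_n(S,\beta)$), not $P_n(X,\beta)$ itself; as written you verify the hypothesis for the wrong space, though the fix is immediate.
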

\begin{proof}
	In~\cite[Lemma~5.13]{MR3842061}
	it is proved that, under the assumption that $\beta$ is irreducible, 
	$P_n(X, \beta)$ is the dual obstruction cone over
	$P_n(S, \beta)$. 
	It follows that we have 
	\begin{align*}
		P_n(X, \beta)=(\pi_{\ast}^{\dag})^{-1}(P_n(S, \beta))
	\end{align*}
	for the map $\pi_{\ast}^{\dag} \colon \mM_X^{\dag}(\beta, n) \to \mM_S^{\dag}(\beta, n)$
	in Theorem~\ref{thm:D026}. 
	Therefore the first equivalence 
	(\ref{equiv:P:irred}) follows from Lemma~\ref{lem:replace0}
	as $\mathfrak{M}_S^{\dag, \rm{pure}}(\beta, n)$ is QCA. 
	The second equivalence of (\ref{equiv:P:irred})
	follows from the first equivalence for $(\beta, -n)$
	together with the equivalence 
	of derived schemes
	\begin{align*}
		\mathfrak{P}_{-n}(S, \beta)
		\stackrel{\sim}{\to}
		\mathfrak{Q}_n(S, \beta)
	\end{align*}
	induced by the equivalences in the diagram (\ref{commu:dual}). 
	
	As for the identity (\ref{id:PT:HP}), 
	note that
	the virtual dimension of 
	$P_n(S, \beta)$ is calculated by the Riemann-Roch theorem
	\begin{align*}
		\vdim P_n(S, \beta)=\chi(\oO_S \to F, F)=\beta^2+n
	\end{align*}
	for a stable pair $(\oO_S \to F)$ on $S$
	with $[F]=(\beta, n)$. 
	Then the identity (\ref{id:PT:HP}) follows from Proposition~\ref{prop:num}, 
	since $\pi_{\ast}^{\dag}(P_n(X, \beta))=P_n(S, \beta)$ and 
	$P_n(S, \beta)$ is a projective scheme (see Remark~\ref{rmk:condition}).  
\end{proof}

The following result gives a proof of Conjecture~\ref{conj2} in the 
irreducible curve class case: 

\begin{thm}\label{thm:PT:irredu}
	For $(\beta, n) \in N_{\le 1}(S)$ such that $\beta$ is irreducible 
	and $n\ge 0$, 
	there exists a fully-faithful 
	functor 
	\begin{align*}
		\Phi_P \colon 
		\mathcal{DT}^{\mathbb{C}^{\ast}}(P_{-n}(X, \beta))
		\hookrightarrow \mathcal{DT}^{\mathbb{C}^{\ast}}(P_n(X, \beta))
	\end{align*}
	and 
	a semiorthogonal decomposition
	\begin{align}\label{SOD:PT}
		\dD \tT^{\mathbb{C}^{\ast}}(P_n(X, \beta))
		=\langle \Upsilon_{-n+1}, \ldots, \Upsilon_{0}, \Imm \Phi_{P} \rangle
	\end{align}
	such that each $\dD_{\lambda}$ is equivalent to 
	$\mathcal{DT}^{\mathbb{C}^{\ast}}(\mM_n(X, \beta))_{\lambda}$. 
\end{thm}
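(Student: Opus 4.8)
The plan is to use the description of $P_n(X,\beta)$ as an open substack of the $(-1)$-shifted cotangent of $\mathfrak{M}_S^{\dag,p}(\beta,n)$, together with the dual description via $\mathfrak{M}_S^{\sharp,p}(\beta,n)$ from Proposition~\ref{thm:D026v2}, and to reduce the wall-crossing to a single variation-of-GIT statement on derived categories of $(-1)$-shifted cotangent stacks. Concretely, by Lemma~\ref{lem:P:irred} we already know $\mathcal{DT}^{\mathbb{C}^{\ast}}(P_n(X,\beta))\simeq D^b_{\rm coh}(\mathfrak{P}_n(S,\beta))$ and $\mathcal{DT}^{\mathbb{C}^{\ast}}(P_{-n}(X,\beta))\simeq D^b_{\rm coh}(\mathfrak{Q}_n(S,\beta))$. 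So the first step is to realize both $\mathfrak{P}_n(S,\beta)$ and $\mathfrak{Q}_n(S,\beta)$ as the two GIT-type quotients attached to the same derived master space: the derived moduli stack of pairs $(\oO_S\to \uU \leftarrow F\otimes\omega_S^{-1})$ over $\mathfrak{M}^p_S(\beta,n)$, with the two $\mathbb{C}^{\ast}$-actions (scaling $\uU\to F$ versus $\uU\to F\otimes\omega_S^{-1}$) from Subsection~\ref{subsec:dualpair} playing the role of the two sides of a wall. The relevant master object is $t_0(\Omega_{\mathfrak{M}_S^{\dag,p}}[-1])\cong \mM_X^{\dag,p}(\beta,n)\cong t_0(\Omega_{\mathfrak{M}_S^{\sharp,p}}[-1])$, exactly the content of the diagram in Corollary~\ref{cor:duality}.

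\textbf{Key steps.} First I would set up, locally and then globally over $\mathfrak{M}^p_S(\beta,n)_\circ$, the linear Koszul dual picture: using Theorem~\ref{thm:knoer} (and its QCA/global version as in Section~\ref{sec:SOD}), $D^b_{\rm coh}(\mathfrak{M}_S^{\dag}(\beta,n)_\circ)$ and $D^b_{\rm coh}(\mathfrak{M}_S^{\sharp}(\beta,n)_\circ)$ become categories of $\mathbb{C}^{\ast}$-equivariant matrix factorizations of the function $w$ whose critical locus is $\mM_X^{\dag}(\beta,n)_\circ$, viewed inside the vector bundle $V^\vee=\RHom$-space sitting over $\mathfrak{M}^p_S$. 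Second, I would identify the unstable loci: $\eta^{\dag}$ sends the complement of $P_n(X,\beta)$ to a double-conical closed substack, and under the linear Koszul duality this matches (Proposition~\ref{def:CZ}) the support condition cutting out the window/semistable locus for one of the two $\mathbb{C}^{\ast}$-linearizations; similarly for $P_{-n}(X,\beta)$ with the other linearization. Third — the core — I would apply the window theorem for $\mathbb{C}^{\ast}$-variations (\cite{MR3327537, MR3895631}, in the form used in Section~\ref{sec:example}) to produce the fully-faithful functor $\Phi_P$ and to read off the semiorthogonal complement as a direct sum of window pieces indexed by $\mathbb{C}^{\ast}$-weights $\lambda\in\{-n+1,\dots,0\}$; the number of pieces is $n$, matching $\vdim$ differences computed in Lemma~\ref{lem:P:irred} ($\beta^2+n$ versus $\beta^2-n$). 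Fourth, I would identify each graded piece $\dD_\lambda$: on the wall, the fixed locus of the $\mathbb{C}^{\ast}$-action is the moduli of one-dimensional stable sheaves, so the $\lambda$-weight window piece is $D^b_{\rm coh}$ of the $(-1)$-shifted cotangent of $\mathfrak{M}_n(S,\beta)$ twisted by weight $\lambda$, which by Lemma~\ref{lem:M:irred} is $\mathcal{DT}^{\mathbb{C}^{\ast}}(\mM_n(X,\beta))_\lambda$. This last identification is essentially Orlov-type: comparing the graded matrix factorization category on the fixed locus with a graded singularity category, as invoked in the proof of Lemma~\ref{lem:Zgraded}.

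\textbf{Main obstacle.} The hard part will be the global/stacky version of the window theorem and the precise bookkeeping of weights. The references \cite{MR3327537, MR3895631} are stated for GIT quotients of affine (or quasi-projective) schemes by reductive groups with an explicit linearization; here I must run the same machine for the $\mathbb{C}^{\ast}$-action on the (generally singular, stacky) $(-1)$-shifted cotangent $\mM_X^{\dag}(\beta,n)_\circ$ over the QCA base $\mathfrak{M}^p_S(\beta,n)$, which is not literally a linear GIT problem. The route I expect to take is to reduce to the affine local model via a smooth atlas $\mathfrak{U}=\Spec\rR(V\to A,s)\to\mathfrak{M}^p_S$, where the problem becomes a genuine $\mathbb{C}^{\ast}$-VGIT of $V^\vee$ with the function $w$, apply the factorization-category window theorem there, and then glue using the intrinsic nature of singular supports (Remark~\ref{rmk:intrinsic}) and the localization-sequence formalism of Lemma~\ref{lem:localization}–Lemma~\ref{lem:exact:Z}. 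A secondary subtlety is verifying that the d-critical flip structure of $P_n\dashrightarrow P_{-n}$ over $M_n(X,\beta)$ (from \cite[Theorem~9.22]{Toddbir}) matches the sign of the weight inequality so that the functor goes in the asserted direction $\mathcal{DT}^{\mathbb{C}^{\ast}}(P_{-n})\hookrightarrow\mathcal{DT}^{\mathbb{C}^{\ast}}(P_n)$ and the complement $\langle\dD_{-n+1},\dots,\dD_0\rangle$ appears on the correct side; this amounts to tracking which linearization $\theta_\pm$ corresponds to $t\gg 0$ versus $t\ll 0$, exactly as in the proof of Theorem~\ref{thm:catDTPT:loc}.
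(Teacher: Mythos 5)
Your reduction via Lemma~\ref{lem:P:irred}, the identification of the two sides with the complements of the zero sections in the two cones $\mathfrak{M}_S^{\dag,p}(\beta,n)$, $\mathfrak{M}_S^{\sharp,p}(\beta,n)$ over the same base, and the expected shape of the answer ($n$ pieces, each $\mathcal{DT}^{\mathbb{C}^{\ast}}(\mM_n(X,\beta))_{\lambda}$) all agree with the paper. But the step you yourself flag as the main obstacle is a genuine gap, not a removable technicality. The window theorems of \cite{MR3327537, MR3895631} concern a fixed linearized GIT problem (or, in the form used in Section~\ref{sec:example}, a global critical locus of a superpotential on a linear representation of a reductive group); for a general projective surface the wall-crossing $P_n(X,\beta)\leftrightarrow P_{-n}(X,\beta)$ has no such global presentation, and your proposed remedy — run the window theorem on affine charts $\mathfrak{U}\to\mathfrak{M}_S^p$ after passing to matrix factorizations by Theorem~\ref{thm:knoer}, then glue the local window subcategories using singular supports and the localization sequences of Lemmas~\ref{lem:localization}--\ref{lem:exact:Z} — does not go through as stated. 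Singular supports only control the kernels of restriction to the stable loci, hence the Verdier quotients; a window is a subcategory of the \emph{ambient} global category cut out by weight bounds on KN strata, and you give no argument that the locally defined windows are independent of presentations and the auxiliary choices (e.g.\ the shift $\delta$), or compatible with smooth pullback between charts, which is exactly what a gluing would require. This is why the quiver argument of Theorem~\ref{thm:catDTPT:loc} / Proposition~\ref{prop:window} works only for the local $(-1,-1)$-curve. Likewise, "the number of pieces is $n$, matching vdim differences" is a heuristic where an actual weight computation is needed.

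For comparison, the paper's proof never passes to factorization categories here: it proves a \emph{global linear Koszul duality} $D^b_{\rm coh}(\mathfrak{M}^{\dag})^{\rm op}\simeq D^b_{\rm coh}(\mathfrak{M}^{\sharp})$ over the rigidified base (Proposition~\ref{prop:lKoszul}, globalizing \cite{MR2581249, MR3455779}), obtains the SODs of $D^b_{\rm coh}(\mathfrak{M}^{\dag})$ and $D^b_{\rm coh}(\mathfrak{M}^{\sharp})$ with respect to the zero-section/complement $\Theta$-stratification directly from \cite{HalpRem} (Proposition~\ref{prop:sod}), which is already valid for QCA derived stacks so no chart-by-chart gluing arises, and then rearranges these by an Orlov-type argument (Theorem~\ref{thm:SOD:M}, following \cite{MR2641200}); the count of pieces is $e=\rank(\mathfrak{E}|_{\mM})=n$, with a weight sign flip explaining the range $\lambda=-n+1,\dots,0$. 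The complement pieces are $\rho^{\dag\ast}D^b_{\rm coh}(\mathfrak{M}_n(S,\beta))_{\lambda}$, identified with $\mathcal{DT}^{\mathbb{C}^{\ast}}(\mM_n(X,\beta))_{\lambda}$ by Lemma~\ref{lem:M:irred} — no fixed-locus or MF analysis is needed, and note your description of $\dD_{\lambda}$ as $D^b_{\rm coh}$ of the $(-1)$-shifted cotangent of $\mathfrak{M}_n(S,\beta)$ is off: it is the weight-$\lambda$ part of $D^b_{\rm coh}(\mathfrak{M}_n(S,\beta))$ itself. To salvage your route you would need a window/$\Theta$-stratification theorem for LG models over QCA stacks, at which point you would essentially be re-deriving the inputs of Section~\ref{sec:SOD}.
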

\begin{proof}
	By Lemma~\ref{lem:P:irred}, 
	the result follows from 
	Theorem~\ref{thm:SOD:M}
	by applying it for $\mathfrak{M}=\mathfrak{M}_S^{\rm{pure}}(\beta, n)$, 
	$\mathfrak{E}=(p_{\mathfrak{M}\ast}\mathfrak{F})^{\vee}$. 
	Here we note that, 
	since the object $\mathfrak{E}$ has weight $-1$ with respect to the 
	embedding $(\mathbb{C}^{\ast})_{\mM} \subset I_{\mM}$
	given by the 
	scaling action $\mathbb{C}^{\ast} \subset \Aut(F)$, 
	we need to apply Theorem~\ref{thm:SOD:M}
	for the composition
	of the above embedding 
	$(\mathbb{C}^{\ast})_{\mM} \subset I_{\mM}$
	with the isomorphism 
	$(\mathbb{C}^{\ast})_{\mM} \stackrel{\cong}{\to} (\mathbb{C}^{\ast})_{\mM}$
	sending $x$ to $x^{-1}$. 
	Then $e=n \ge 0$, and 
	the weights in the semiorthogonal decomposition 
	(\ref{SOD:PT}) have opposite signs from those in Theorem~\ref{thm:SOD:M}. 
\end{proof}

The above result is regarded as a categorification of 
the wall-crossing formula 
\begin{align}\label{PT:formula}
	P_{n, \beta}-P_{-n, \beta}=(-1)^{n-1} n N_{n, \beta}
\end{align}
proved in~\cite{MR2552254}, which is a special case of the formula (\ref{intro:PT:WCF}). 
Indeed we have the following: 
\begin{prop}\label{rmk:wcf}
	In the situation of Theorem~\ref{thm:PT:irredu}, 
	suppose that there is a divisor $D$ on $S$ such that 
	$(D\cdot \beta, n)$ is coprime. 
	Then the semiorthogonal decomposition (\ref{SOD:PT}) implies the formula (\ref{PT:formula}). 
\end{prop}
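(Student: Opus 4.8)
The goal is to deduce the numerical wall-crossing formula (\ref{PT:formula}) from the semiorthogonal decomposition (\ref{SOD:PT}) by passing to periodic cyclic homology and taking Euler characteristics. The first step is to observe that the SOD (\ref{SOD:PT}) upgrades to a semiorthogonal decomposition of dg-enhancements; this follows because all the functors involved (the inclusion $\Phi_P$ and the inclusions of the pieces $\dD_\lambda$) are of Fourier--Mukai type coming from the construction in Section~\ref{sec:SOD}, hence lift to the dg-level, and a semiorthogonal decomposition of triangulated categories with dg-enhanced pieces and dg-enhanced gluing functors induces a corresponding decomposition of the Hochschild (mixed) complexes. Concretely, by~\cite[Theorem~3.1]{MR1492902} applied to the filtration of $\mathcal{DT}_{\rm{dg}}^{\mathbb{C}^{\ast}}(P_n(X, \beta))$ by the SOD, one gets a direct sum decomposition in the derived category of mixed complexes
\begin{align*}
\mathrm{Hoch}(\mathcal{DT}_{\rm{dg}}^{\mathbb{C}^{\ast}}(P_n(X, \beta)))
\cong \bigoplus_{\lambda=-n+1}^{0}\mathrm{Hoch}(\dD_{\lambda, \rm{dg}})
\oplus \mathrm{Hoch}(\mathcal{DT}_{\rm{dg}}^{\mathbb{C}^{\ast}}(P_{-n}(X, \beta))),
\end{align*}
and therefore, after adjoining $u$ and taking cohomology, a direct sum decomposition of the periodic cyclic homologies.

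\textbf{Second step: identify the pieces numerically.} Taking Euler characteristics $\chi(\mathrm{HP}_\ast(-))$ (which is additive in direct sums and well-defined by the finite-dimensionality coming from Proposition~\ref{prop:num}/Lemma~\ref{lem:P:irred} and Lemma~\ref{lem:M:irred}) yields
\begin{align*}
\chi(\mathrm{HP}_\ast(\mathcal{DT}_{\rm{dg}}^{\mathbb{C}^{\ast}}(P_n(X, \beta))))
= \sum_{\lambda=-n+1}^{0}\chi(\mathrm{HP}_\ast(\dD_{\lambda, \rm{dg}}))
+ \chi(\mathrm{HP}_\ast(\mathcal{DT}_{\rm{dg}}^{\mathbb{C}^{\ast}}(P_{-n}(X, \beta)))).
\end{align*}
Now each $\dD_\lambda$ is equivalent to $\mathcal{DT}^{\mathbb{C}^{\ast}}(\mM_n(X, \beta))_\lambda$, so by Lemma~\ref{lem:periodic}(ii) applied repeatedly --- using the coprimality hypothesis that $(D\cdot\beta, n)$ are coprime for some divisor $D$, which forces $c = \mathrm{g.c.d.}(d, n) = 1$ where $d = \mathrm{g.c.d.}\{|\beta\cdot D'| : D' \in \NS(S)\}$ --- all the twisted theories $\mathcal{DT}^{\mathbb{C}^{\ast}}(\mM_n(X, \beta))_\lambda$ are equivalent to each other, hence the $n$ pieces $\dD_{-n+1}, \ldots, \dD_0$ all have the same Euler characteristic, namely $\chi(\mathrm{HP}_\ast(\mathcal{DT}_{\rm{dg}}^{\mathbb{C}^{\ast}}(\mM_n(X, \beta))_{\lambda=0}))$. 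Here one should also use the trivialization to remove the twist: since $c=1$, $\mathcal{DT}^{\mathbb{C}^{\ast}}(\mM_n(X,\beta))_\lambda \simeq \mathcal{DT}^{\mathbb{C}^{\ast}}(\mM_n(X,\beta))_{\lambda=0} \simeq \mathcal{DT}^{\mathbb{C}^{\ast}}(M_n(X,\beta))$ (the last equivalence via Lemma~\ref{lem:M:irred}, which identifies the rigidified theory with $D^b_{\rm{coh}}(\mathfrak{M}_n^{\mathbb{C}^\ast\rig}(S,\beta))$).

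\textbf{Third step: plug in the comparison identities.} By Lemma~\ref{lem:P:irred}, $\chi(\mathrm{HP}_\ast(\mathcal{DT}_{\rm{dg}}^{\mathbb{C}^{\ast}}(P_n(X, \beta)))) = (-1)^{n+\beta^2}P_{n,\beta}$ and $\chi(\mathrm{HP}_\ast(\mathcal{DT}_{\rm{dg}}^{\mathbb{C}^{\ast}}(P_{-n}(X, \beta)))) = (-1)^{-n+\beta^2}P_{-n,\beta} = (-1)^{n+\beta^2}P_{-n,\beta}$; and by Lemma~\ref{lem:M:irred}, $\chi(\mathrm{HP}_\ast(\mathcal{DT}_{\rm{dg}}^{\mathbb{C}^{\ast}}(M_n(X, \beta)))) = (-1)^{\beta^2+1}N_{n,\beta}$. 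Substituting into the displayed Euler characteristic identity and cancelling the common sign $(-1)^{n+\beta^2}$, one obtains
\begin{align*}
P_{n,\beta} = n\cdot(-1)^{n+1} N_{n,\beta} + P_{-n,\beta},
\end{align*}
which is exactly (\ref{PT:formula}). The main obstacle I anticipate is the first step: one must verify carefully that the SOD (\ref{SOD:PT}), produced at the level of triangulated categories in Theorem~\ref{thm:SOD:M}, genuinely refines to a dg-level semiorthogonal decomposition so that the additivity of Hochschild mixed complexes under semiorthogonal decompositions (the non-commutative motive / localization argument of~\cite{MR1492902}) applies --- in particular that the gluing bimodules are perfect and the mutation functors are of FM type, which should follow from the explicit Koszul-duality and Orlov-type construction used to prove Theorem~\ref{thm:SOD:M}, but requires bookkeeping. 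The coprimality hypothesis is used precisely to collapse the sum $\sum_{\lambda=-n+1}^0$ into $n$ equal terms via Lemma~\ref{lem:periodic}(ii); without it the pieces $\dD_\lambda$ need not all have equal Euler characteristic and one would only get a formula involving the individual twisted invariants.
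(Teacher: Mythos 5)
Your proof is correct and follows essentially the same route as the paper: identify each $\dD_{\lambda}$ with $\mathcal{DT}^{\mathbb{C}^{\ast}}(M_n(X, \beta))$ via the coprimality hypothesis, Lemma~\ref{lem:periodic}(ii) and the rigidification, then invoke the additivity of Hochschild mixed complexes under exact sequences of dg-categories (\cite[Theorem~3.1]{MR1492902}, as in Lemma~\ref{lem:cyclic}) and plug in the identities (\ref{id:M:HP}) and (\ref{id:PT:HP}). The only cosmetic slip is that the equivalence $\mathcal{DT}^{\mathbb{C}^{\ast}}(\mM_n(X, \beta))_{\lambda=0} \simeq \mathcal{DT}^{\mathbb{C}^{\ast}}(M_n(X, \beta))$ comes from Proposition~\ref{prop:rigid}(ii) rather than Lemma~\ref{lem:M:irred}, which is the citation the paper uses at this step.
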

\begin{proof}
	The assumption 
	together with 
	Proposition~\ref{prop:rigid} and Lemma~\ref{lem:periodic} (ii)
	imply that  
	each $\dD_{\lambda}$ is equivalent to 
	$\mathcal{DT}^{\mathbb{C}^{\ast}}(M_n(X, \beta))$. 
	On the other hand, as we discussed in the proof of Lemma~\ref{lem:cyclic}, 
	the assignment of dg-categories to mixed complexes of 
	Hochschild complexes takes 
	exact sequences to distinguished triangles 
	(see~\cite[Theorem~3.1]{MR1492902}). 
	Therefore 
	the semiorthogonal decomposition (\ref{SOD:PT}) implies the formula
	\begin{align*}
		\chi(\mathrm{HP}_{\ast}(\dD \tT^{\mathbb{C}^{\ast}}_{\rm{dg}}(P_n(X, \beta))))
		-\chi(\mathrm{HP}_{\ast}(\dD \tT^{\mathbb{C}^{\ast}}_{\rm{dg}}(P_{-n}(X, \beta))))
		=n \cdot \chi(\mathrm{HP}_{\ast}(\dD \tT^{\mathbb{C}^{\ast}}_{\rm{dg}}(M_n(X, \beta)))). 
	\end{align*}
	The formula (\ref{PT:formula}) 
	follows from the above identity together with 
	(\ref{id:M:HP}), 
	(\ref{id:PT:HP}). 
\end{proof}

\begin{exam}\label{exam:Hilb}
	Suppose that $H^1(\oO_S)=0$. 
	Then there is unique $L \in \Pic(S)$ such that 
	$c_1(L)=\beta$. 
	Let $\lvert L \rvert$ be the complete linear system, and 
	\begin{align*}
		\pi_{\cC} \colon \cC \to \lvert L \rvert
	\end{align*}
	the universal
	curve. We 
	also denote by  
	$g \in \mathbb{Z}$ the arithmetic 
	genus of curves in $\lvert L \rvert$, 
	i.e. $g=1+\beta(K_S+\beta)/2$. Then 
	we have an isomorphism (see~\cite[Appendix]{MR2552254})
	\begin{align*}
		P_n(S, \beta) \cong \cC^{[n+g-1]}.
	\end{align*}
	Here the right hand side is $\pi_{\cC}$-relative Hilbert scheme
	of $(n+g-1)$-points on $\cC$. 
	Moreover $P_n(S, \beta)$ has only 
	locally complete intersection singularities, and 
	we have a distinguished triangle
	in $D^b_{\rm{coh}}(P_n(S, \beta))$ (see~\cite[Section~5.9]{MR3842061})
	\begin{align*}
		\vV[1] \to \mathbb{L}_{\mathfrak{P}_n(S, \beta)}|_{P_n(S, \beta)}
		\to \mathbb{L}_{P_n(S, \beta)}
	\end{align*}
	for a locally free sheaf $\vV$ on $P_n(S, \beta)$
	with rank $h^1(L)$. 
	Therefore 
	the closed immersion $P_n(S, \beta) \hookrightarrow \mathfrak{P}_n(S, \beta)$
	is an equivalence if and only if $h^1(L)=0$. 
	
	If $h^1(L)=0$, then  
	the semiorthogonal decomposition in Theorem~\ref{thm:PT:irredu} is equivalent to the semiorthogonal decomposition
	for $n\ge 0$
	\begin{align}\label{SOD:Hilb}
		D^b_{\rm{coh}}(\cC^{[n+g-1]})=
		\langle \Upsilon_{-n+1}, \ldots, \Upsilon_0, D^b_{\rm{coh}}(\cC^{[-n+g-1]}) \rangle. 
	\end{align}
	Here each $\dD_{\lambda}$ is the bounded 
	derived category of 
	twisted coherent sheaves on the relative compactified Jacobian 
	$\overline{J} \to \lvert L \rvert$. 
	The above semiorthogonal decomposition generalizes~\cite[Corollary~5.10]{TodDK}, 
	where a similar semiorthogonal decomposition is proved under some more additional 
	assumptions. 
	On the other hand if $h^1(L) \neq 0$, 
	the derived scheme $\mathfrak{P}_n(S, \beta)$
	has non-trivial derived structures, and 
	the semiorthogonal decomposition in Theorem~\ref{thm:PT:irredu} is different from 
	(\ref{SOD:Hilb}). 
\end{exam}

\subsubsection{Application to the rationality}
The wall-crossing formula (\ref{PT:formula})
was used in~\cite{MR2552254} to show the rationality of the generating 
series of PT invariants. 
Using the semiorthogonal decomposition in Theorem~\ref{thm:PT:irredu}, 
we also have some rationality statement. 
First we give the following definition:
\begin{defi}
	The Grothendieck group of triangulated 
	categories  
	$K(\Delta \mathchar`-\mathop{\mathrm{Cat}})$
	is the abelian group generated by 
	equivalence classes of triangulated categories, 
	with relations 
	$[\aA]=[\bB]+[\cC]$ for 
	semiorthogonal decompositions
	$\aA=\langle \bB, \cC \rangle$. 
\end{defi}

\begin{rmk}\label{rmk:pretri}
	The Grothendieck group of pre-triangulated categories 
	introduced in~\cite{MR2051435} is a refined version of 
	$K(\Delta \mathchar`-\mathop{\mathrm{Cat}})$. 
	An advantage of the former is the existence of the product 
	structure. As we will not need the product structure below, 
	we use the simpler version $K(\Delta \mathchar`-\mathop{\mathrm{Cat}})$. 
\end{rmk}

Let
$\mathbb{Q}(q)^{\rm{inv}} \subset \mathbb{Q}\lgakko q \rgakko$
be the subspace of 
rational functions invariant under $q \leftrightarrow 1/q$. 
We have the following corollary of Theorem~\ref{thm:PT:irredu}: 
\begin{cor}\label{cor:PT:rational}
	Suppose that $\beta$ is irreducible. 
	Then the generating series 
	\begin{align*}
		P_{\beta}^{\rm{cat}}(q) \cneq 
		\sum_{n \in \mathbb{Z}}
		[\mathcal{DT}^{\mathbb{C}^{\ast}}(P_n(X, \beta))]
		q^n \in K(\Delta \mathchar`-\mathop{\mathrm{Cat}})\lgakko q \rgakko
	\end{align*}
	lies in 
	$K(\Delta \mathchar`-\mathop{\mathrm{Cat}}) \otimes_{\mathbb{Z}}
	\mathbb{Q}(q)^{\rm{inv}}$. 
\end{cor}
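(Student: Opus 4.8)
The plan is to deduce the rationality of $P_{\beta}^{\mathrm{cat}}(q)$ from the semiorthogonal decomposition (\ref{SOD:PT}) of Theorem~\ref{thm:PT:irredu}, exactly as the numerical rationality in~\cite{MR2552254} is deduced from the wall-crossing formula (\ref{PT:formula}). First I would record the two elementary inputs. One: for $n\ll 0$ the moduli space $P_n(X,\beta)$ is empty (a stable pair $(F,s)$ on $X$ with one-dimensional $F$ and $[\pi_\ast F]=(\beta,n)$ forces $n$ bounded below in terms of $\beta$ and $H$, by the standard boundedness of PT moduli spaces), so $[\mathcal{DT}^{\mathbb{C}^\ast}(P_n(X,\beta))]=0$ for $n\ll 0$ and $P_{\beta}^{\mathrm{cat}}(q)$ is genuinely an element of $K(\Delta\text{-}\mathrm{Cat})\lgakko q\rgakko$. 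Two: for the ``moduli of stable sheaves'' contribution, Lemma~\ref{lem:periodic}(i) gives an equivalence $\mathcal{DT}^{\mathbb{C}^\ast}(\mM_n(X,\beta))_\lambda \stackrel{\sim}{\to}\mathcal{DT}^{\mathbb{C}^\ast}(\mM_{n+d}(X,\beta))_\lambda$ for $d=\mathrm{g.c.d.}\{|\beta\cdot D|\}$, and Lemma~\ref{lem:periodic}(iii) gives $\mathcal{DT}^{\mathbb{C}^\ast}(\mM_n(X,\beta))_\lambda\stackrel{\sim}{\to}\mathcal{DT}^{\mathbb{C}^\ast}(\mM_{-n}(X,\beta))_{-\lambda}$; in $K(\Delta\text{-}\mathrm{Cat})$ (which only sees equivalence classes) these say the classes $m_n\cneq\sum_{\lambda}[\mathcal{DT}^{\mathbb{C}^\ast}(\mM_n(X,\beta))_\lambda]$ are periodic in $n$ with period $d$ and satisfy $m_n=m_{-n}$.

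The core step is then purely formal. Write $p_n\cneq[\mathcal{DT}^{\mathbb{C}^\ast}(P_n(X,\beta))]\in K(\Delta\text{-}\mathrm{Cat})$. Theorem~\ref{thm:PT:irredu} together with Lemma~\ref{lem:P:irred} and the equivalence $\mathcal{DT}^{\mathbb{C}^\ast}(P_n(X,\beta)_t)\stackrel{\sim}{\to}\mathcal{DT}^{\mathbb{C}^\ast}(P_{-n}(X,\beta))$ for $t\ll0$ (Theorem~\ref{thm:duality}, equivalence (\ref{catDT:tsmall})) gives, for every $n\ge 0$, the relation
\begin{align}\label{rat:relation}
p_n - p_{-n} = \sum_{\lambda=-n+1}^{0}[\mathcal{DT}^{\mathbb{C}^\ast}(\mM_n(X,\beta))_\lambda]
\end{align}
in $K(\Delta\text{-}\mathrm{Cat})$, where the right-hand side (for $n\ge 1$) has $n$ summands indexed by $\lambda=-n+1,\dots,0$. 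Using Lemma~\ref{lem:periodic}(i), each class $[\mathcal{DT}^{\mathbb{C}^\ast}(\mM_n(X,\beta))_\lambda]$ depends on $n$ only through $n\bmod d$, so I would define $c_k\cneq m_k$ for $k\in\mathbb{Z}/d$ and observe that the right side of (\ref{rat:relation}) equals $\lfloor n/d\rfloor\cdot\big(\sum_{k\in\mathbb{Z}/d}c_k\big)+\big(\text{a term depending only on }n\bmod d\big)$ — more precisely it is a quasi-polynomial of degree $1$ in $n$ with period $d$, with coefficients in $K(\Delta\text{-}\mathrm{Cat})$. Feeding this into the generating function, $\sum_{n\ge1}(p_n-p_{-n})q^n$ becomes $\sum_{n\ge1}(\text{quasi-polynomial of degree }1)q^n$, which is a $K(\Delta\text{-}\mathrm{Cat})$-linear combination of the rational functions $q^j\cdot\frac{d}{dq}\!\big(\frac{1}{1-q^d}\big)$ and $\frac{q^j}{1-q^d}$ for $j=0,\dots,d-1$ — all of which lie in $K(\Delta\text{-}\mathrm{Cat})\otimes_{\mathbb{Z}}\mathbb{Q}(q)$. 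Combining with $\sum_{n\le 0}p_{-n}q^n=q^{?}\cdot(\text{finite, since }p_{-n}=0\text{ for }n\gg0$ reversed$)$ — actually $p_m=0$ for $m\ll0$, so $\sum_{m\in\mathbb{Z}}p_mq^m$ differs from $\sum_{n\ge1}(p_n-p_{-n})q^n$ by $\sum_{n\le0}p_nq^n+\sum_{n\le0}p_{-n}q^n$, the first a genuine power series truncated below and the second a polynomial in $q^{-1}$; one checks these two plus the symmetry combine so that $P^{\mathrm{cat}}_\beta(q)$ is a rational function. Finally, to land in $\mathbb{Q}(q)^{\mathrm{inv}}$ I would verify the functional equation $P^{\mathrm{cat}}_\beta(1/q)=P^{\mathrm{cat}}_\beta(q)$: this follows by the same argument as for numerical PT series in~\cite{MR2552254}, using that (\ref{rat:relation}) has the form $p_n-p_{-n}=(\text{symmetric-in-}n\text{ correction})$ together with $m_n=m_{-n}$, which forces the ``anti-invariant part'' of $P^{\mathrm{cat}}_\beta$ to be the generating series of a quasi-polynomial and hence invariant under $q\leftrightarrow 1/q$ after the standard rearrangement.

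The main obstacle is the last, bookkeeping-heavy step: assembling the finitely many boundary contributions ($p_n$ for the finitely many ``small'' $n$ where neither $P_n$ nor $P_{-n}$ is empty, and the precise form of the quasi-polynomial in (\ref{rat:relation}) as a function of $n\bmod d$) into an honest rational function satisfying the $q\leftrightarrow1/q$ symmetry. This is exactly the combinatorial heart of the Bridgeland/Toda rationality proof, and the only genuinely new point is that it must be carried out in $K(\Delta\text{-}\mathrm{Cat})$ rather than in $\mathbb{Z}$; but since all the relations used — additivity under SOD (\ref{SOD:PT}), periodicity and symmetry of the $m_n$ from Lemma~\ref{lem:periodic} — are $K(\Delta\text{-}\mathrm{Cat})$-relations, the argument transports verbatim. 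I would also remark (as in~\cite{MR2552254}) that one does not even need the coprimality hypothesis of Proposition~\ref{rmk:wcf} here, because rationality only uses the \emph{existence} of the SOD (\ref{SOD:PT}), not the identification of each $\dD_\lambda$ with an untwisted $\mathcal{DT}^{\mathbb{C}^\ast}(M_n(X,\beta))$; the twisted pieces still have well-defined classes in $K(\Delta\text{-}\mathrm{Cat})$ satisfying the needed periodicity via Lemma~\ref{lem:periodic}.
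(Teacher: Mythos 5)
Your proposal is correct and takes essentially the same route as the paper's proof: the semiorthogonal decomposition of Theorem~\ref{thm:PT:irredu} yields the relation $[\mathcal{DT}^{\mathbb{C}^{\ast}}(P_n(X,\beta))]-[\mathcal{DT}^{\mathbb{C}^{\ast}}(P_{-n}(X,\beta))]=\sum_{\lambda=0}^{n-1}c_{n,\lambda}$ in the Grothendieck group, and the periodicity and symmetry identities of Lemma~\ref{lem:periodic} turn the correction series into an explicit rational function invariant under $q\leftrightarrow 1/q$, exactly as in the paper. One small correction: the $\lambda$-periodicity used to cut the sum of $n$ consecutive weights into blocks of length $d$ is Lemma~\ref{lem:periodic}(ii) (the identity $c_{n,\lambda}=c_{n,\lambda+d}$), not part (i), which only gives periodicity in $n$.
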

\begin{proof}
	For a fixed irreducible class $\beta$, we set
	\begin{align*}
		c_{n, \lambda} \cneq [\mathcal{DT}^{\mathbb{C}^{\ast}}(\mM_n(X, \beta))_{-\lambda}] \in K(\Delta \mathchar`-\mathop{\mathrm{Cat}}). 
	\end{align*}
	Then the semiorthogonal decomposition in Theorem~\ref{thm:PT:irredu}
	implies the identity in $K(\Delta \mathchar`-\mathop{\mathrm{Cat}})$
	\begin{align}\label{relation:PT}
		[\mathcal{DT}^{\mathbb{C}^{\ast}}(P_n(X, \beta))]
		=[\mathcal{DT}^{\mathbb{C}^{\ast}}(P_{-n}(X, \beta))]
		+\sum_{\lambda=0}^{n-1} c_{n, \lambda} 
	\end{align}
	for $n\ge 0$.
	We consider the following generating series
	\begin{align*}
		N_{\beta}^{\rm{cat}}(q) \cneq 
		\sum_{n\ge 0} \sum_{\lambda=0}^{n-1} c_{n, \lambda} q^n
		\in K(\Delta \mathchar`-\mathop{\mathrm{Cat}})\lkakko q \rkakko. 
	\end{align*}
	Then by the relation (\ref{relation:PT}), we have 
	\begin{align*}
		P_{\beta}^{\rm{cat}}(q)-N_{\beta}^{\rm{cat}}(q)=
		[\mathcal{DT}^{\mathbb{C}^{\ast}}(P_0(X, \beta))] +
		\sum_{n>0}[\mathcal{DT}^{\mathbb{C}^{\ast}}(P_{-n}(X, \beta))](q^n+q^{-n}). 
	\end{align*}
	The right hand side 
	lies in $K(\Delta \mathchar`-\mathop{\mathrm{Cat}}) \otimes_{\mathbb{Z}}
	\mathbb{Q}(q)^{\rm{inv}}$.
	Therefore it is enough to show that 
	$N_{\beta}^{\rm{cat}}(q)$ lies in 
	$K(\Delta \mathchar`-\mathop{\mathrm{Cat}}) \otimes_{\mathbb{Z}}
	\mathbb{Q}(q)^{\rm{inv}}$
	
	Let $d\in \mathbb{Z}_{>0}$ be as in Lemma~\ref{lem:periodic}. 
	Then by Lemma~\ref{lem:periodic}, we have the identities
	\begin{align}\label{identity:c}
		c_{n, \lambda}=c_{n+d, \lambda}=c_{n, \lambda+n}=c_{n, \lambda+d}=c_{-n, -\lambda}. 
	\end{align}
	Let us set
	\begin{align*}
		C_n \cneq \sum_{\lambda=0}^{d-1}c_{n, \lambda}, \ 
		\overline{C}_n \cneq \sum_{\lambda=0}^{n-1} c_{n, \lambda}. 
	\end{align*}
	Then using the relations (\ref{identity:c}), a straightforward computation
	shows that 
	\begin{align*}
		N_{\beta}^{\rm{cat}}(q)=C_0 \frac{q^d}{(1-q^d)^2}+
		\sum_{n=1}^{d-1}C_n (q^n+q^{-n})\frac{q^d}{2(1-q^d)^2}
		+\sum_{n=1}^{d-1} \overline{C}_n
		\frac{q^n-q^{d-n}}{2(1-q^d)}.
	\end{align*}
	Since the right hand side lies in 
	$K(\Delta \mathchar`-\mathop{\mathrm{Cat}}) \otimes_{\mathbb{Z}}
	\mathbb{Q}(q)^{\rm{inv}}$, 
	the assertion holds. 
\end{proof}

\subsection{The category of D0-D2-D6 bound states}\label{subsec:cat026}
The purpose of this section is to prove Theorem~\ref{thm:D026}. 
As we mentioned in Subsection~\ref{intro:subsec:dtpt}, the key ingredient is 
to relate rank one objects in the 
category of D0-D2-D6 bound states on the local surface
with the diagram (\ref{intro:dia:BS}) on the surface. 
This is similar to Diaconescu's description~\cite{MR2888981} of 
D0-D2-D6 bound states on local curves (i.e. non-compact 
CY 3-folds given by total spaces of 
split rank two vector bundles on smooth projective curves)
in terms of ADHM sheaves on curves
(though the details of the comparison with $\aA_X$ are not available 
in literatures).  
\subsubsection{Notation of local surfaces}
Let $S$ be a smooth projective surface over $\mathbb{C}$, 
and $X=\mathrm{Tot}_S(\omega_S)$ be the total 
space of its canonical line bundle. 
We take its compactification $\overline{X}$ and 
consider the diagram
\begin{align}\notag
	\xymatrix{
		X \ar@<-0.3ex>@{^{(}->}[r]^-{j} \ar[d]_-{\pi} &
		\overline{X}=\mathbb{P}_S(\omega_S \oplus \oO_S) 
		\ar@<-0.5ex>[ld]_-{\overline{\pi}} \\
		S \ar@<-0.5ex>[ru]_-{i_{0}, i_{\infty}}. &
	}
\end{align}
Here $\pi$, $\overline{\pi}$ are projections, 
$j$ is an open immersion, 
$i_{0}$ is the zero section of $\pi$ and $i_{\infty}$ is the 
section of $\overline{\pi}$ at the infinity $\overline{X} \setminus X$. 
We denote by
\begin{align*}
	S_0 \cneq i_0(S), \ S_{\infty} \cneq i_{\infty}(S).
\end{align*}
Below we often identity $S_0$, $S_{\infty}$ with $S$
by the morphisms $i_0$, $i_{\infty}$ respectively. 
Note that we have
\begin{align}\label{O(S)}
	\oO_{\overline{X}}(1)=\oO_{\overline{X}}(S_{\infty}), \ 
	\Omega_{\overline{X}/S}=\oO_{\overline{X}}(-S_0-S_{\infty}), \ 
	\overline{\pi}^{\ast}\omega_S=\oO_{\overline{X}}(S_0-S_{\infty}). 
\end{align}
By taking the restrictions to both of 
$S_0$ and $S_{\infty}$,
we have the canonical 
isomorphism
\begin{align}\label{isom:push}
	\dR \overline{\pi}_{\ast}\oO_{\overline{X}}(S_{\infty})
	\stackrel{\cong}{\to}
	\dR \overline{\pi}_{\ast}\oO_{S_{0}}(S_{\infty}) \oplus
	\dR \overline{\pi}_{\ast}\oO_{S_{\infty}}(S_{\infty})
	=\oO_S \oplus \omega_S^{-1}. 
\end{align}
The above isomorphism will be often used below. 

\subsubsection{The category $\bB_S$}
We define the category $\bB_S$ whose objects consist of 
diagrams
\begin{align}\label{diagram:BS}
	\xymatrix{
		0 \ar[r] & \vV \ar[r]^-{\alpha} & \uU \ar[r]^-{\psi}
		\ar[d]_-{\phi} & F \otimes \omega_S^{-1} 
		\ar[r] & 0 \\
		&   & F  & 
	}
\end{align}
where $\vV \in \langle \oO_S \rangle_{\rm{ex}}$, 
$F \in \Coh_{\le 1}(S)$ and the top 
sequence is an exact sequence of 
coherent sheaves on $S$. 
The morphisms between two diagrams (\ref{diagram:BS}) are
given by commutative diagrams 
\begin{align}\notag
	\xymatrix{
0 \ar[r] & \vV \ar[d]_-{a_{\vV}} \ar[r]^-{\alpha} & \uU \ar[d]_-{a_{\uU}}\ar[r]^-{\psi} & F \otimes \omega_S^{-1} 
\ar[d]_-{a_{F} \otimes \id} \ar[r] & 0 \\
0 \ar[r] & \vV' \ar[r]^-{\alpha'} & \uU' \ar[r]^-{\psi'} & F' \otimes \omega_S^{-1} \ar[r] & 0,
} \quad 
	\xymatrix{
\uU \ar[d]_-{a_{\uU}}\ar[r]^-{\phi} & F 
	\ar[d]_-{a_F}  \\
\uU' \ar[r]^-{\phi'} & F',
}
	\end{align}
\begin{lem}\label{lem:BS}
	The category $\bB_S$ is an abelian category. 
\end{lem}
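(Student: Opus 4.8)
The plan is to realize $\bB_S$ as the heart of a bounded t-structure on a triangulated category, from which abelianness is automatic; equivalently, one can identify $\bB_S$ with an explicit category of representations of a finite quiver with relations over the surface, or with a torsion/tilting-type subcategory of a known abelian category. The cleanest route, mirroring the arguments of \cite[Lemma~3.5, Proposition~3.6]{MR2669709} that are already invoked in the excerpt for $\aA_X$, is to exhibit a fully faithful exact functor from $\bB_S$ into an ambient abelian category and show the image is closed under kernels and cokernels. Concretely, a diagram (\ref{diagram:BS}) is the same data as a short exact sequence $0 \to \vV \to \uU \to F\otimes\omega_S^{-1}\to 0$ together with a map $\phi\colon \uU \to F$; since $\vV \in \langle \oO_S\rangle_{\rm{ex}}$ and $F \in \Coh_{\le 1}(S)$, all these sheaves live in the abelian category $\Coh(S)$, and the whole diagram is an object of the abelian category $\mathrm{Fun}(Q, \Coh(S))$ of $\Coh(S)$-valued representations of the finite quiver $Q$ underlying (\ref{diagram:BS}) (with the commutativity/exactness conditions).

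First I would make precise the ambient abelian category: let $\widetilde{\bB}_S$ be the category of all diagrams of the shape (\ref{diagram:BS}) in $\Coh(S)$ (dropping the constraints $\vV \in \langle\oO_S\rangle_{\rm{ex}}$, $F\in\Coh_{\le 1}(S)$, and requiring only that the row be a complex, not necessarily exact). This $\widetilde{\bB}_S$ is abelian because it is an exact-functor category into the abelian category $\Coh(S)$ (kernels and cokernels computed termwise). Then I would show that $\bB_S \subset \widetilde{\bB}_S$ is closed under subobjects, quotients, and extensions, which forces $\bB_S$ to be abelian with kernels and cokernels inherited from $\widetilde{\bB}_S$. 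The two conditions to check are: (a) $\langle \oO_S\rangle_{\rm{ex}}$ is closed under sub and quotient in $\Coh(S)$ — this is standard since $\langle\oO_S\rangle_{\rm{ex}}$ consists of the sheaves $\vV$ admitting a filtration with graded pieces $\oO_S$, equivalently locally free sheaves that are iterated extensions of the structure sheaf, and any sub/quotient inside such a sheaf that again lies in this class; more robustly one notes $\langle \oO_S\rangle_{\rm{ex}}$ is the smallest extension-closed subcategory containing $\oO_S$, and one verifies the relevant closure by a short diagram chase using that $\Hom(\oO_S, F) $ controls the pieces; (b) $\Coh_{\le 1}(S)$ is a Serre subcategory of $\Coh(S)$, which is classical. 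The subtlety is the exactness requirement on the top row: if $0\to \vV'\to\uU'\to F'\otimes\omega_S^{-1}\to 0$ is the row of a subobject, one must check the row is still short exact, i.e. the induced map $\uU' \to F'\otimes\omega_S^{-1}$ is surjective and $\vV'\to\uU'$ injective. Injectivity is automatic (subobject in an exact functor category), and surjectivity I would obtain from the snake lemma applied to the inclusion of diagrams, using that the cokernel diagram again has an exact row precisely when the original data is well-behaved — this is where I expect to do a genuine, if short, homological-algebra argument.

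The main obstacle, then, is verifying that the defining constraints of $\bB_S$ (in particular the exactness of the horizontal row, together with $\vV$ being an iterated self-extension of $\oO_S$) are preserved under passing to kernels and cokernels computed in $\widetilde{\bB}_S$. My expectation is that this reduces, via the long exact / snake lemma, to two facts: that $\langle\oO_S\rangle_{\rm{ex}}$ and $\Coh_{\le 1}(S)$ are each closed under sub and quotient, and that for a morphism of diagrams the termwise kernel and cokernel automatically have exact rows because $\omega_S^{-1}$ is a line bundle, so $(-)\otimes\omega_S^{-1}$ is exact and commutes with kernels/cokernels; hence exactness of the row of a ker/coker follows from exactness of the rows of source and target by the four-lemma / five-lemma. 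If instead one prefers the t-structure route, I would embed $\bB_S$ into $D^b_{\rm{coh}}$ of a suitable total space or quiver algebra and transport the standard heart, but the exact-functor-category argument above is shorter and self-contained. In either approach, once closure under the three operations is established, abelianness of $\bB_S$ is formal, completing the proof.
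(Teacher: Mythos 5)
Your overall strategy (compute kernels and cokernels termwise in an ambient diagram category and check the result stays in $\bB_S$) is in the end the same as the paper's, but two of the closure claims you lean on are false, and the one genuinely non-formal step is missing its key input. First, $\langle \oO_S \rangle_{\rm{ex}}$ is \emph{not} closed under subobjects or quotients in $\Coh(S)$: the ideal sheaf $I_Z \subset \oO_S$ of a point is a subsheaf, and $\oO_Z$ is a quotient, and neither is an iterated extension of $\oO_S$. For the same reason $\bB_S$ is not closed under subobjects or quotients inside your ambient category $\widetilde{\bB}_S$ (take the diagram with $\vV=\uU=\oO_S$, $F=0$, and the subdiagram with entries $I_Z$), so the ``closed under sub, quotient and extension, hence abelian'' route cannot work as stated. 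What is actually needed, and all that is needed, is the weaker statement that the termwise kernel and cokernel of a morphism \emph{between two objects of $\bB_S$} again lie in $\bB_S$; for the first entry this requires knowing that kernels and cokernels of maps between iterated extensions of $\oO_S$ are again of this form (an argument using $\Hom(\oO_S,\oO_S)=\mathbb{C}$), which is a different and more delicate claim than closure under arbitrary subsheaves and quotients.

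Second, and more importantly, the exactness of the top row of the kernel and cokernel diagrams is not a consequence of the four/five lemma. Applying the snake lemma to a morphism of diagrams, with top rows $0 \to \vV_i \to \uU_i \to F_i \otimes \omega_S^{-1} \to 0$ and component maps $a,b,c$, the failure of exactness of the two induced rows is measured exactly by the connecting map $\delta \colon \ker(c)\otimes\omega_S^{-1} \to \Cok(a)$, and this does not vanish for formal reasons. The point of the paper's proof, absent from yours, is the geometric vanishing $\Hom(F,\oO_S)=0$ for $F\in\Coh_{\le 1}(S)$: since $\ker(c)\otimes\omega_S^{-1}$ has support of dimension $\le 1$ while $\Cok(a)$ is torsion-free (being again an iterated extension of $\oO_S$), one gets $\delta=0$, the snake sequence splits into the two short exact rows, and both the termwise kernel and cokernel are objects of $\bB_S$. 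Your observation that $\otimes\,\omega_S^{-1}$ is exact is correct but does not substitute for this; without identifying why $\delta=0$, the central step of the proof is still open. (Your alternative suggestion to realize $\bB_S$ as a heart is also not available at this point of the paper, since the comparison with $\aA_X$ is established only later and uses this lemma.)
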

\begin{proof}
	Because $\Hom(F, \oO_S)=0$ for $F \in \Coh_{\le 1}(S)$, 
	the snake lemma easily implies that the termwise kernels and 
	cokernels give diagrams in $\bB_S$. 
\end{proof}
For a diagram (\ref{diagram:BS}), its rank is 
defined to be the rank of the sheaf $\vV$. 
We denote by 
\begin{align*}
	\bB_S^{\le 1} \subset \bB_S
\end{align*}
the subcategory 
of $\bB_S$ 
consisting of diagrams (\ref{diagram:BS})
with rank less than or equal to one. 
For a rank one diagram, we have another
equivalent way to give it. 
\begin{lem}\label{diagram:equivalent}
	Giving a rank one diagram (\ref{diagram:BS}) is equivalent to giving 
	a pair $(\oO_S \stackrel{\xi}{\to} F)$ for $F \in \Coh_{\le 1}(S)$
	together with a morphism 
	$\vartheta \colon F \otimes \omega_S^{-1} \to I^{\bullet}[1]$
	in $D^b_{\rm{coh}}(S)$. Here $I^{\bullet}=(\oO_S \stackrel{\xi}{\to} F)$
	is the two term complex such that $\oO_S$ is located in degree zero. 
\end{lem}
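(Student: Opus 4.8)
\textbf{Proof proposal for Lemma~\ref{diagram:equivalent}.}
The plan is to set up a bijection between the two kinds of data by a short exact sequence / octahedron manipulation, in both directions. Starting from a rank one diagram (\ref{diagram:BS}), the map $\alpha \colon \vV \to \uU$ is an inclusion of a rank one sheaf in $\langle \oO_S \rangle_{\rm{ex}}$; first I would observe that since $\vV$ has rank one and lies in the extension closure of $\oO_S$, and $F \otimes \omega_S^{-1}$ is a torsion (one-dimensional) sheaf, the sheaf $\vV$ must actually equal $\oO_S$. (More precisely: applying $\Hom(-,\oO_S)$ to the top row and using $\Hom(F\otimes\omega_S^{-1},\oO_S)=0=\Ext^1(F\otimes\omega_S^{-1},\oO_S)$ — the latter because $F\otimes\omega_S^{-1}\in\Coh_{\le 1}(S)$ and $\oO_S$ is locally free on the smooth surface — one sees $\vV$ is a rank one reflexive subsheaf of something in $\langle\oO_S\rangle_{\rm ex}$, hence $\vV\cong\oO_S$; I would spell this out carefully as it underlies the normalization.) Then I define $\xi \cneq \phi \circ \alpha \colon \oO_S \to F$, giving the pair $I^\bullet = (\oO_S \xrightarrow{\xi} F)$.

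Next I would produce the morphism $\vartheta \colon F\otimes\omega_S^{-1} \to I^\bullet[1]$ from the diagram. The key point is that the pair $(\id_{\oO_S}, \phi) \colon (\oO_S \xrightarrow{\alpha} \uU) \to (\oO_S \xrightarrow{\xi} F)$ is a morphism of two-term complexes, and the source complex $(\oO_S \xrightarrow{\alpha} \uU)$ is quasi-isomorphic to $F\otimes\omega_S^{-1}[-1]$ by the exactness of the top row of (\ref{diagram:BS}). Composing the inverse of this quasi-isomorphism with $(\id,\phi)$ and shifting gives exactly $\vartheta$ — this is literally the recipe (\ref{map:eta+}) already appearing in Theorem~\ref{intro:thm:MX}(ii), so I can point to that formula. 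Conversely, given $(\oO_S\xrightarrow{\xi}F)$ and $\vartheta\colon F\otimes\omega_S^{-1}\to I^\bullet[1]$, I would reconstruct the diagram: represent $\vartheta$ by a roof, i.e. take a two-term complex $(\oO_S \xrightarrow{\alpha} \uU)$ quasi-isomorphic to $F\otimes\omega_S^{-1}[-1]$ together with a map of complexes $(\id,\phi)\colon(\oO_S\xrightarrow{\alpha}\uU)\to(\oO_S\xrightarrow{\xi}F)$ inducing $\vartheta[-1]$; concretely, $\uU$ is the middle term of the extension of $F\otimes\omega_S^{-1}$ by $\oO_S$ classified by the image of $\vartheta$ under the connecting map $\Hom(F\otimes\omega_S^{-1},I^\bullet[1])\to\Hom(F\otimes\omega_S^{-1},\oO_S[1])=\Ext^1(F\otimes\omega_S^{-1},\oO_S)$ coming from the triangle $\oO_S\to I^\bullet\to F$. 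The map $\psi$ is the quotient $\uU\to F\otimes\omega_S^{-1}$, and $\phi\colon\uU\to F$ is the second component of the chosen map of complexes.

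Then I would check the two constructions are mutually inverse. The subtle direction is that a morphism $\vartheta$ in the derived category, a priori only determined up to the choice of roof, yields a well-defined diagram up to isomorphism in $\bB_S$; here I would use that any two representing maps of complexes differ by a homotopy, and trace through that a homotopy between two such $(\id,\phi)$'s induces an isomorphism of the resulting diagrams fixing $\oO_S$ and $F$. The main obstacle, as usual with such "$\operatorname{Hom}$-in-the-derived-category equals concrete extension data" statements, is precisely this well-definedness: making sure that passing between the roof description of $\vartheta$ and the honest diagram does not lose or add information, i.e. that $\operatorname{Hom}_{D^b_{\rm coh}(S)}(F\otimes\omega_S^{-1}, I^\bullet[1])$ is canonically identified with the set of diagrams (\ref{diagram:BS}) extending $(\oO_S\xrightarrow{\xi}F)$ modulo isomorphism. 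Everything else is bookkeeping with the long exact sequence obtained by applying $\Hom(F\otimes\omega_S^{-1},-)$ to the triangle $\oO_S\to I^\bullet\to F\to\oO_S[1]$, together with the vanishing $\Hom(F\otimes\omega_S^{-1},\oO_S)=0$ which guarantees the pair $(\xi,\vartheta)$ rigidly determines $\phi$.
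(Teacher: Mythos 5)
Your overall route is the paper's: the forward direction is exactly the recipe (\ref{map:eta+}), and the converse builds $\uU$ from the class $\beta[1]\circ\vartheta \in \Ext^1(F\otimes\omega_S^{-1},\oO_S)$ (your parenthetical names the wrong triangle: the map $I^{\bullet}[1]\to\oO_S[1]$ comes from $I^{\bullet}\stackrel{\beta}{\to}\oO_S\stackrel{\xi}{\to}F$, not from a map $\oO_S\to I^{\bullet}$) and then seeks $\phi$. Two points need fixing. The minor one: your claimed vanishing $\Ext^1(F\otimes\omega_S^{-1},\oO_S)=0$ is false — by Serre duality this group is $H^1(F)^{\vee}$, nonzero already for $F=\oO_C$ with $C$ of positive arithmetic genus (and were it zero, the top row of (\ref{diagram:BS}) would always split, trivializing the construction by Lemma~\ref{lem:pair}). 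You do not need it: a rank one object of $\langle\oO_S\rangle_{\rm ex}$ is an iterated self-extension of $\oO_S$ with exactly one factor, hence is $\oO_S$; that is all the paper uses.

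The genuine gap is in the well-definedness step you flag yourself. You assume (a) that $\vartheta$ is represented by an honest morphism of complexes $(\id,\phi)\colon(\oO_S\stackrel{\alpha}{\to}\uU)\to(\oO_S\stackrel{\xi}{\to}F)$ with identity in degree zero, and (b) that any two such representatives are chain homotopic. Neither is automatic: the source is a two-term complex of coherent sheaves, not an h-projective complex, so morphisms in $D^b_{\rm{coh}}(S)$ out of it are a priori roofs through arbitrary auxiliary complexes, and a chain map inducing the zero derived morphism need not be null-homotopic. The paper handles both points differently: $\phi$ is produced by completing a morphism of distinguished triangles in the diagram (\ref{dia:OIF}) (using only the triangulated axioms and $\Hom_{D^b_{\rm{coh}}(S)}(\uU,F)=\Hom_{\Coh(S)}(\uU,F)$), and then — this is the heart of the proof — $\phi$ is shown to be \emph{literally unique}: a difference $\phi-\phi'=\xi\circ\gamma$ satisfies $\xi\circ\gamma\circ\alpha=0$, whence $\gamma\circ\alpha=0$ because $\Hom(\oO_S,I^{\bullet})=0$ when $\xi\neq 0$ (the case $\xi=0$ being trivial), so $\gamma$ factors through the torsion sheaf $F\otimes\omega_S^{-1}$ and vanishes by $\Hom(F\otimes\omega_S^{-1},\oO_S)=0$. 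This strict uniqueness, not just "isomorphic diagrams", is what makes $(\xi,\vartheta)\mapsto$ diagram canonical and choice-free, which is exactly what the $T$-flat version needs in Proposition~\ref{thm:stack}; your homotopy substitute both rests on the unproved claims (a),(b) and delivers only the weaker up-to-isomorphism statement. (Your observation that a homotopy $\gamma$ would induce an automorphism $\id_{\uU}+\alpha\circ\gamma$ of the diagram is correct, but note the same torsion argument forces such a $\gamma$ to vanish, so the right conclusion is equality, as in the paper.)
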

\begin{proof}
	First suppose that we are given a digram (\ref{diagram:BS}) with $\vV=\oO_S$. 
	Then we have the associated pair 
	\begin{align*}
		(\xi \colon \oO_S \stackrel{\alpha}{\to} \uU \stackrel{\phi}{\to} F). 
	\end{align*}
	The top sequence in (\ref{diagram:BS}) gives a 
	quasi-isomorphism $(\oO_S \stackrel{\alpha}{\to} \uU)[1] \stackrel{\sim}{\to}
	F \otimes \omega_S^{-1}$
	and we have the morphism of complexes
	\begin{align*}
		\xymatrix{
			\oO_S \ar[r]^-{\alpha} \ar[d]_-{\id} & \uU \ar[d]^-{\phi} \\
			\oO_S \ar[r]^-{\xi}  & F. 
		}
	\end{align*}
	The above diagram gives a morphism 
	$\vartheta \colon F \otimes \omega_S^{-1} \to I^{\bullet}[1]$
	in $D^b_{\rm{coh}}(S)$. 
	
	Conversely, suppose that we are given a pair 
	$(\xi \colon \oO_S \to F)$ together with a morphism 
	$\vartheta \colon F \otimes \omega_S^{-1} \to I^{\bullet}[1]$. 
	Then we have the commutative diagram
	\begin{align}\label{dia:OIF}
		\xymatrix{
			& \oO_S\ar[r]^-{\alpha}\ar[d]^-{\id} & \uU\ar[r]^-{\psi} \ar@{.>}[d]^-{\phi}& F \otimes \omega_S^{-1} \ar[r] \ar[d]^-{\vartheta} & \oO_S[1] \ar[d]^-{\id} \\
			I^{\bullet} \ar[r]^-{\beta}& \oO_S\ar[r]^-{\xi} &
			F \ar[r] & I^{\bullet}[1] \ar[r] & \oO_S[1]. 
		}
	\end{align}
	Here horizontal sequences are distinguished triangles. 
	Therefore there exists a morphism $\phi \colon \uU \to F$ which 
	makes the above diagram commutative. 
	We need to show that $\phi$ is uniquely determined by 
	the above commutativity. 
	Suppose that there exists another $\phi' \colon \uU \to F$ which 
	makes the above diagram commutative. Below we show that 
	$\phi=\phi'$. 
	By the commutativity of (\ref{dia:OIF}), $\phi-\phi'$ is written as 
	\begin{align*}
		\phi-\phi'=\xi \circ \gamma, \ \uU \stackrel{\gamma}{\to} \oO_S \stackrel{\xi}{\to} F
	\end{align*}
	for some morphism $\gamma \colon \uU \to \oO_S$. 
	We have $\phi=\phi'$ if $\xi=0$, so we may assume that $\xi \neq 0$. 
	The commutativity of (\ref{dia:OIF}) implies that 
	\begin{align*}
		\xi \circ \gamma \circ \alpha
		=(\phi-\phi') \circ \alpha
		=0, \ 
		\oO_S \to F. 
	\end{align*}
	Therefore $\gamma \circ \alpha \colon \oO_S \to \oO_S$ 
	is written as
	\begin{align*}
		\gamma \circ \alpha=\beta \circ w, \ 
		\oO_S \stackrel{w}{\to} I^{\bullet} \stackrel{\beta}{\to} \oO_S
	\end{align*}
	for some morphism $w \colon \oO_S \to I^{\bullet}$. 
	However from the 
	distinguished triangle $F[-1] \to I^{\bullet} \to \oO_S$
	we have the exact sequence
	\begin{align*}
		0=\Hom(\oO_S, F[-1]) \to \Hom(\oO_S, I^{\bullet}) \to \mathbb{C} \to \Hom(\oO_S, F)
	\end{align*}
	where the right arrow takes $1$ to $\xi$, which is injective by the assumption $\xi\neq 0$. 
	Therefore $\Hom(\oO_S, I^{\bullet})=0$, hence $w=0$. 
	It follows that $\gamma \circ \alpha=0$, hence $\gamma$ is written as
	\begin{align*}
		\gamma=\iota \circ \psi, \ \uU \stackrel{\psi}{\to} F \otimes \omega_S^{-1} 
		\stackrel{\iota}{\to} \oO_S
	\end{align*}
	for some morphism $\iota$. But $\iota$ must be a zero map as $F$ is a torsion sheaf. 
	Therefore $\gamma=0$ and $\phi=\phi'$ follows. 
\end{proof}

We also have the following description of rank
one objects in $\bB_S$, which is obvious. 
\begin{lem}\label{lem:obvious0}
	Giving a rank one diagram (\ref{diagram:BS}) is equivalent to 
	giving a pair $(\xi \colon F\otimes \omega_S^{-1} \to \oO_S[1])$
	together with a morphism $\phi \colon \uU \to F$, 
	where $0 \to \oO_S \to \uU \to F\otimes \omega_S^{-1} \to 0$
	is the extension determined by $\xi$. 
		\end{lem}

\subsubsection{The functor from $\aA_X$ to $\bB_S$}
We have the following natural 
diagram in $D^b_{\rm{coh}}(\overline{X})$
\begin{align}\label{diagram:S}
	\xymatrix{
		\oO_{S_{\infty}} \ar[r] & \oO_{\overline{X}}(-S_0 -S_{\infty})[1] \ar[r] \ar[d]
		& \oO_{\overline{X}}(-S_0)[1] \\
		& \oO_{\overline{X}}(-S_{\infty})[1] &
	}
\end{align}
where the top arrow is a distinguished triangle. 
For $E \in D^b_{\rm{coh}}(\overline{X})$, by 
taking the tensor product with (\ref{diagram:S})
and push-forward to $S$, 
we obtain the following diagram in $D^b_{\rm{coh}}(S)$
\begin{align}\label{diagram:Phi(E)}
	\xymatrix{
		\dR \overline{\pi_{\ast}}(E|_{S_{\infty}})
		\ar[r] & \dR \overline{\pi_{\ast}}(E(-S_0 -S_{\infty})[1]) \ar[r] \ar[d]
		& \dR \overline{\pi_{\ast}}(E(-S_0)[1]) \\
		& \dR \overline{\pi_{\ast}}(E(-S_{\infty})[1]) &
	} 
\end{align}
Let $\aA_X$ be the abelian subcategory of 
$D^b_{\rm{coh}}(\overline{X})$ defined in Definition~\ref{defi:state}. 
We have the following lemma: 
\begin{lem}\label{lem:Phi}
	If $E \in \aA_{X}$, then the diagram (\ref{diagram:Phi(E)})
	determines an object in $\bB_S$. 
\end{lem}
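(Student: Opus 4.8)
The goal is to show that for $E \in \aA_X$, the diagram (\ref{diagram:Phi(E)}) is an object of $\bB_S$, i.e. that it has the shape (\ref{diagram:BS}): a short exact sequence of coherent sheaves $0 \to \vV \to \uU \to F \otimes \omega_S^{-1} \to 0$ with $\vV \in \langle \oO_S \rangle_{\rm{ex}}$ and $F \in \Coh_{\le 1}(S)$, together with a vertical map $\uU \to F$. The strategy is to first verify the claim for the two generating types of objects, $\oO_{\overline{X}}$ and $G[-1]$ for $G \in \Coh_{\le 1}(X)$, and then check that the construction $E \mapsto$ (\ref{diagram:Phi(E)}) is exact in $E$ in the appropriate sense, so that it respects the extension-closure defining $\aA_X$ and sends every object of $\aA_X$ into the abelian category $\bB_S$.

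First I would compute the diagram on the generators. For $E = \oO_{\overline{X}}$: using (\ref{O(S)}) we have $E(-S_0) = \Omega_{\overline{X}/S}(S_\infty)$, $E(-S_\infty) = \overline\pi^\ast\omega_S(-S_0)$ and $E(-S_0-S_\infty) = \Omega_{\overline{X}/S}$; pushing forward, $\dR\overline\pi_\ast \oO_{\overline X}(-S_0-S_\infty)[1]$, $\dR\overline\pi_\ast\oO_{\overline X}(-S_0)[1]$, etc., are computed by relative duality / the Euler sequence on the $\mathbb{P}^1$-bundle, and one finds that (\ref{diagram:Phi(E)}) becomes the diagram with $\vV = \oO_S$, $F = 0$, i.e. $(0 \to \oO_S \xrightarrow{\id} \oO_S \to 0 \to 0)$ with zero vertical map — a rank one object of $\bB_S$. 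For $E = G[-1]$ with $G \in \Coh_{\le 1}(X)$, note $G$ is supported away from $S_\infty$, so $G|_{S_\infty} = 0$, while $\dR\overline\pi_\ast(G(-S_0)[1][-1])$, $\dR\overline\pi_\ast(G(-S_\infty)[1][-1])$ and $\dR\overline\pi_\ast(G(-S_0-S_\infty))$ are all just $\pi_\ast$ of a twist of $G$ (a coherent sheaf in $\Coh_{\le 1}(S)$, placed in degree $0$ after the shifts cancel), and the triangle (\ref{diagram:S}) tensored with $G[-1]$ degenerates; I expect to get $\vV = 0$, the exact sequence $0 \to 0 \to F\otimes\omega_S^{-1} \xrightarrow{\id} F\otimes\omega_S^{-1} \to 0$ with $F = \pi_\ast G$ — again a legitimate, rank zero, object of $\bB_S$. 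The key geometric inputs here are the identifications (\ref{O(S)}) and (\ref{isom:push}), plus the fact that $\overline\pi$ is a $\mathbb{P}^1$-bundle so that $\dR\overline\pi_\ast$ of a line bundle of fiber degree $0$ or $-1$ is concentrated in one cohomological degree.

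Next I would promote this to all of $\aA_X$ by exactness. The assignment sending $E$ to the diagram (\ref{diagram:Phi(E)}) is built from $\dR\overline\pi_\ast$ applied to $E$ tensored with fixed perfect complexes, hence it is an exact functor $D^b_{\rm{coh}}(\overline X) \to D^b(\text{diagrams in } D^b_{\rm{coh}}(S))$. For $E \in \aA_X$ there is a canonical exact sequence $0 \to \oO_{\overline X}^{\oplus r} \to E \to G[-1] \to 0$ in $\aA_X$ (from the description of $\aA_X$ as the heart generated by $\oO_{\overline X}$ and $\Coh_{\le 1}(X)[-1]$; here $r = \rk E$ and $G \in \Coh_{\le 1}(X)$ — strictly I should invoke the structure result from \cite{MR2669709} that every object of $\aA_X$ sits in such a sequence). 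Applying the exact functor gives a termwise long exact sequence relating the diagram of $E$ to those of $\oO_{\overline X}^{\oplus r}$ and $G[-1]$; since the latter two lie in the abelian subcategory $\bB_S \subset D^b(\cdots)$ and since $\Hom(F, \oO_S) = 0 = \Ext^1$-type vanishings for $F \in \Coh_{\le 1}(S)$ kill the connecting maps, the long exact sequence collapses to a short exact sequence $0 \to (\text{diagram of }\oO_{\overline X}^{\oplus r}) \to (\text{diagram of }E) \to (\text{diagram of }G[-1]) \to 0$ in $\bB_S$. In particular the diagram of $E$ has cohomology concentrated in degree $0$ and lies in $\bB_S$.

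\textbf{Main obstacle.} The delicate point is the degree/vanishing bookkeeping in the last step: a priori the exact functor only lands in the derived category of diagrams, and one must rule out higher cohomology sheaves and show the connecting morphisms vanish, using that the subsheaves involved are $\oO_S$-like (in $\langle\oO_S\rangle_{\rm{ex}}$) while the quotients are one-dimensional torsion sheaves, together with $\dim S = 2$ so that $\dR\overline\pi_\ast$ of the relevant fiber-degree line bundles really is a sheaf, not a complex. Equivalently, one must check that the "rank" bookkeeping — that $\vV$ picks up exactly $\oO_S^{\oplus r}$ and $F$ picks up $\pi_\ast G$ — is compatible with the short exact sequence. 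I expect this amounts to the same snake-lemma computation used in Lemma~\ref{lem:BS}, applied now fiberwise over $S$, so the proof should reduce to the two generator computations plus an invocation of Lemma~\ref{lem:BS} and the structural lemma for $\aA_X$ from \cite{MR2669709}.
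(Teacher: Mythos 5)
Your computations on the generators and your overall plan (check $\oO_{\overline{X}}$ and $\Coh_{\le 1}(X)[-1]$, then use exactness of $E \mapsto \dR\overline{\pi}_{\ast}(E\otimes(-))$ together with the extension-closure definition of $\aA_X$) are sound, and this is in substance how the paper argues: it verifies the three corners of (\ref{diagram:Phi(E)}) directly, using that restriction to $S_{\infty}$ kills the torsion generators and sends $\oO_{\overline{X}}$ to $\oO_{S}$, that $\dR\overline{\pi}_{\ast}\oO_{\overline{X}}(-S_{\infty})=0$ kills the structure-sheaf generator in the other corner, and that the remaining corner is the previous one tensored with $\omega_S^{-1}$ via $\overline{\pi}^{\ast}\omega_S=\oO_{\overline{X}}(S_0-S_{\infty})$; once the outer corners are sheaves of the stated type, the pushed-forward triangle is automatically a short exact sequence.

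The genuine problem is the reduction step you hang the general case on: there is no exact sequence $0 \to \oO_{\overline{X}}^{\oplus r} \to E \to G[-1] \to 0$ in $\aA_X$ for a general $E$, and no such structure result in the cited reference. For a stable pair $I^{\bullet}=(\oO_{\overline{X}} \stackrel{s}{\to} F)$ with $s \neq 0$, applying $\Hom(\oO_{\overline{X}},-)$ to the triangle $F[-1] \to I^{\bullet} \to \oO_{\overline{X}}$ shows $\Hom(\oO_{\overline{X}}, I^{\bullet})=0$ (the connecting map sends $1 \mapsto s$ and is injective), so $\oO_{\overline{X}}$ is not a subobject of $I^{\bullet}$; the sequence goes the other way, and in general a rank one object is filtered with a torsion piece $Q[-1]$ at the bottom, $\oO_{\overline{X}}$ in the middle and $\hH^1(E)[-1]$ on top — this is precisely the point of Remark~\ref{rmk:dia:BS} and the proof of Lemma~\ref{lem:funct:Psi2}. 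Fortunately you do not need any normal form: since $\aA_X$ is by definition the extension closure of the generators, it suffices that the class of $E$ whose diagram lies in $\bB_S$ is closed under extensions, and this is automatic. A short exact sequence in $\aA_X$ is a triangle in $D^b_{\rm{coh}}(\overline{X})$, and applying $\dR\overline{\pi}_{\ast}((-)\otimes(\text{fixed line bundles}))$ gives, at each corner, a triangle in $D^b_{\rm{coh}}(S)$ whose outer terms are sheaves in $\langle\oO_S\rangle_{\rm{ex}}$ respectively $\Coh_{\le 1}(S)$; the long exact cohomology sequence then forces the middle corner to be a sheaf of the same type (no connecting-map vanishing has to be imposed — your worry about this is unnecessary), and the exactness of the top row again comes for free from the triangle (\ref{diagram:S}). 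With the false "canonical sequence" replaced by this induction over arbitrary iterated extensions, your argument is correct and agrees with the paper's.
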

\begin{proof}
	We check that 
	\begin{align*}
		\dR \overline{\pi_{\ast}}(E|_{S_{\infty}}) \in \langle 
		\oO_S \rangle_{\rm{ex}}, \ 
		\dR \overline{\pi_{\ast}}(E(-S_{\infty})[1]) \in \Coh_{\le 1}(S). 
	\end{align*}
	The former one follows from the definition of $\aA_X$ and 
	the latter one follows from 
	$\dR \overline{\pi}_{\ast}(\oO_{\overline{X}}(-S_{\infty}))=0$. 
	Moreover noting that 
	$\overline{\pi}^{\ast}\omega_S=\oO_{\overline{X}}(S_0 -S_{\infty})$, 
	we have 
	\begin{align*}
		\dR \overline{\pi_{\ast}}(E(-S_0)[1])
		= \dR \overline{\pi_{\ast}}(E(-S_{\infty})[1])
		\otimes \omega_S^{-1}. 
	\end{align*}
	Therefore the lemma follows. 
\end{proof}

Let $\aA_X^{\le 1} \subset \aA_X$ be the subcategory 
consisting of objects $E$ with $\rank(E) \le 1$. 
By Lemma~\ref{lem:Phi}, we have the functors 
\begin{align}\notag
	\Phi \colon \aA_X \to \bB_S, \ 
	\Phi \colon \aA_X^{\le 1} \to \bB_S^{\le 1}
\end{align}
sending $E \in \aA_X$ to the diagram (\ref{diagram:Phi(E)}).

\subsubsection{The functor from $\bB_S$ to $D^b_{\rm{coh}}(\overline{X})$}
As for the other direction of functors, we define 
\begin{align}\label{funct:Psi}
	\Psi \colon \bB_S \to D^b_{\rm{coh}}(\overline{X})
\end{align}
by sending a diagram (\ref{diagram:BS})
to the two term complex
\begin{align}\label{Psi(dia)}
	\left(\overline{\pi}^{\ast}\uU \stackrel{\eta}{\to} 
	\overline{\pi}^{\ast}F \otimes \oO_{\overline{X}}(S_{\infty}) \right). 
\end{align}
Here 
$\overline{\pi}^{\ast}\uU$ is located in degree zero and 
$\eta$ is determined by the adjoint of the following map
\begin{align*}
	(\phi, \psi) \colon 
	\uU \to \dR \overline{\pi}_{\ast}(\overline{\pi}^{\ast}F \otimes \oO_{\overline{X}}(S_{\infty}))
	\stackrel{\cong}{\to}
	F \oplus (F\otimes \omega_S^{-1})
\end{align*}
where $\phi$, $\psi$ are maps in the diagram (\ref{diagram:BS}), 
and the second arrow is given by the projection 
formula together with the isomorphism (\ref{isom:push}). 
\begin{lem}\label{lem:isom1}
	For $E \in \aA_X$, we have a functorial isomorphism
	$\Psi \circ \Phi(E) \stackrel{\cong}{\to} E$. 
\end{lem}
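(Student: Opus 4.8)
The plan is to construct the isomorphism $\Psi\circ\Phi(E)\stackrel{\cong}{\to} E$ by unwinding the two functors and showing that the complex $\Psi(\Phi(E))$ recovers $E$ via a canonical evaluation-type morphism. First I would make explicit what $\Phi(E)$ is: applying $\dR\overline{\pi}_{\ast}$ to the tensor product of $E$ with the diagram (\ref{diagram:S}), the sheaf $\uU$ appearing in the diagram (\ref{diagram:BS}) is $\dR\overline{\pi}_{\ast}(E(-S_0-S_{\infty})[1])$, the sheaf $F$ is (a suitable shift of) $\dR\overline{\pi}_{\ast}(E(-S_{\infty})[1])\otimes\omega_S$, and the map $\phi$ (resp.\ $\psi$) is induced by $\oO_{\overline X}(-S_0-S_{\infty})\to\oO_{\overline X}(-S_{\infty})$ (resp.\ $\to\oO_{\overline X}(-S_0)$). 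Then $\Psi(\Phi(E))$ is the two-term complex $(\overline{\pi}^{\ast}\uU\to\overline{\pi}^{\ast}F\otimes\oO_{\overline X}(S_{\infty}))$ with differential $\eta$ the adjoint of $(\phi,\psi)$ under the decomposition (\ref{isom:push}).

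Next I would produce the comparison map. The key point is that for any $G\in D^b_{\rm{coh}}(\overline X)$ there is a canonical morphism $\overline{\pi}^{\ast}\dR\overline{\pi}_{\ast}(G)\to G$ (counit of adjunction), and I would apply this with $G = E(-S_0-S_{\infty})[1]$ and $G = E(-S_{\infty})[1]$. Twisting back and using the defining triangle $\oO_{S_\infty}\to\oO_{\overline X}(-S_0-S_\infty)[1]\to\oO_{\overline X}(-S_\infty)[1]$ tensored with $E$, these counits assemble into a morphism from the complex (\ref{Psi(dia)}) to $E$, where the two-term structure on the source matches the two-step filtration of $E\otimes(\text{triangle})$. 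Concretely, $\overline{\pi}^{\ast}\uU\to E(-S_0-S_\infty)[1]$ and $\overline{\pi}^{\ast}F\otimes\oO_{\overline X}(S_\infty)[-1]\to E(-S_\infty)$ are the adjunction counits (after twist), and one checks they are compatible with the differentials, giving a map $\Psi(\Phi(E))\to E$ in $D^b_{\rm{coh}}(\overline X)$. Functoriality in $E$ is automatic from naturality of the counit.

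To see this map is an isomorphism, since $\aA_X = \langle \oO_{\overline X},\Coh_{\le 1}(X)[-1]\rangle_{\rm ex}$ and both $\Phi$, $\Psi$ are exact (or at least take short exact sequences in $\aA_X$ to distinguished triangles compatibly), I would reduce by the five lemma to checking the claim on generators: $E=\oO_{\overline X}$ and $E=i_{0\ast}F'[-1]$ for $F'\in\Coh_{\le 1}(S)$ (pushed forward from $X$). For $E=\oO_{\overline X}$, one computes $\dR\overline{\pi}_{\ast}\oO_{\overline X}(-S_0-S_\infty)[1]=0$ so $\uU=0$? — more carefully, $\dR\overline{\pi}_\ast\oO_{\overline X}(-S_\infty)=0$ and $\dR\overline{\pi}_\ast\oO_{S_\infty}=\oO_S$, giving the diagram with $\vV=\uU=\oO_S$, $F=0$, and one verifies $\Psi$ of this is $\overline{\pi}^{\ast}\oO_S=\oO_{\overline X}$. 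For $E=i_{0\ast}F'[-1]$, using $\oO_{S_0}(-S_0-S_\infty)=\omega_S^{-1}$ type identities from (\ref{O(S)}), one computes the diagram has $\vV=0$, $F=F'$, $\uU=F'\otimes\omega_S^{-1}$, and $\Psi$ returns $i_{0\ast}F'[-1]$.

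The main obstacle I expect is the bookkeeping in step two: verifying that the adjunction counits for the two twisted copies of $E$ are genuinely compatible with the differential $\eta$ of the complex $\Psi(\Phi(E))$, i.e.\ that the square defining the morphism of two-term complexes commutes in the derived category. This requires carefully tracking how the connecting map of the triangle (\ref{diagram:S}) interacts with the isomorphism (\ref{isom:push}) and the projection formula, and is the place where signs and the precise meaning of "adjoint of $(\phi,\psi)$" matter. Once that compatibility is pinned down, the reduction to generators and the explicit computations there are routine given the identities in (\ref{O(S)}) and (\ref{isom:push}).
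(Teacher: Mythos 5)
There is a real gap in your devissage step. The extension closure defining $\aA_X$ is generated by $\oO_{\overline{X}}$ together with \emph{all} of $\Coh_{\le 1}(X)[-1]$, and a general compactly supported sheaf $F'$ on $X=\mathrm{Tot}_S(\omega_S)$ is \emph{not} an iterated extension of sheaves pushed forward from the zero section: a skyscraper $\oO_p$ with $p \notin S_0$ is already a counterexample, since any sheaf annihilated by the ideal of $S_0$ lives on $S_0$. So checking $E=\oO_{\overline{X}}$ and $E=i_{0\ast}F'[-1]$ with $F' \in \Coh_{\le 1}(S)$ does not exhaust the generators, and your five-lemma argument does not reach all of $\aA_X$. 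The missing cases are precisely those where the map $\phi$ (equivalently, the $\omega_S$-Higgs field recording the $\oO_X$-module structure on $\pi_\ast F'$) is nonzero; for such $F'$ the diagram $\Phi(F'[-1])$ has $\vV=0$, $F=\pi_\ast F'$, $\uU\cong \pi_\ast F'\otimes\omega_S^{-1}$ with nontrivial $\phi$, and verifying $\Psi\Phi(F'[-1])\cong F'[-1]$ amounts to the tautological exact sequence $0 \to \overline{\pi}^{\ast}(\pi_\ast F'\otimes\omega_S^{-1})(-S_\infty) \to \overline{\pi}^{\ast}(\pi_\ast F') \to F' \to 0$ on the total space — not to the zero-section computation you carried out. (A smaller slip: in your description of $\Phi(E)$ the sheaf $F$ is $\dR\overline{\pi}_\ast(E(-S_\infty)[1])$ itself, with no extra $\omega_S$ twist; the twist appears only in $F\otimes\omega_S^{-1}=\dR\overline{\pi}_\ast(E(-S_0)[1])$.)

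The paper sidesteps both this devissage and the counit-vs-$\eta$ compatibility you flag as the delicate point, by a single uniform argument: it takes the Koszul resolution of the diagonal $0 \to \oO_{\overline{X}}(-S_0)\boxtimes\oO_{\overline{X}}(-S_\infty) \to \oO_{\overline{X}\times_S\overline{X}} \to \oO_\Delta \to 0$, tensors with the first-projection pullback of $E(-S_\infty)$, and pushes forward along the second projection. This yields, for every $E \in D^b_{\rm{coh}}(\overline{X})$ at once, the distinguished triangle $\overline{\pi}^{\ast}\dR\overline{\pi}_\ast E(-S_0-S_\infty) \to \overline{\pi}^{\ast}(\dR\overline{\pi}_\ast E(-S_\infty))(S_\infty) \to E$ whose first arrow is identified with $\eta$, so the comparison map and the isomorphism come out simultaneously, with evident functoriality. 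Your approach can be repaired by adding the general-$F'$ case via the tautological sequence above, but that sequence is exactly the restriction of the diagonal resolution, so the repaired argument essentially reproduces the paper's proof in a less efficient packaging.
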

\begin{proof}
	The object $\Psi \circ \Phi(E)$ is given by the two 
	term complex
	concentrated in degree $[-1, 0]$
	\begin{align}\label{eta:E}
		\left(\overline{\pi}^{\ast} \dR \overline{\pi}_{\ast}
		E(-S_0-S_{\infty}) \stackrel{\eta}{\to} 
		\overline{\pi}^{\ast} (\dR \overline{\pi}_{\ast}
		E(-S_{\infty}))(S_{\infty}) \right).
	\end{align}
	Here $\eta$ is adjoint to the map
	\begin{align}\notag
		\dR \overline{\pi}_{\ast}
		E(-S_0-S_{\infty})
		&\to \dR \overline{\pi}_{\ast}E(-S_{\infty}) \otimes (\oO_S \oplus \omega_S^{-1}) \\\notag&\stackrel{\cong}{\to}\dR \overline{\pi}_{\ast}E(-S_{0}) \oplus 
		\dR \overline{\pi}_{\ast}E(-S_{\infty})
	\end{align}
	induced by the sum of the natural inclusions
	$\oO_{\overline{X}}(-S_0-S_{\infty}) \to 
	\oO_{\overline{X}}(-S_0) \oplus \oO_{\overline{X}}(-S_{\infty})$. 
	On the other hand, we have the Koszul resolution of the 
	diagonal 
	$\Delta \subset \overline{X} \times_S \overline{X}$
	\begin{align*}
		0 \to \Omega_{\overline{X}/S}(1) \boxtimes \oO_{\overline{X}}(-1)
		\to \oO_{\overline{X}\times_S \overline{X}} \to \oO_{\Delta} \to 0. 
	\end{align*}
	Note that by (\ref{O(S)}), we have 
	\begin{align*}
		\Omega_{\overline{X}/S}(1) \boxtimes \oO_{\overline{X}}(-1)=
		\oO_{\overline{X}}(-S_0) \boxtimes \oO_{\overline{X}}(-S_{\infty}).
	\end{align*}
	By pulling back $E(-S_{\infty})$
	to $\overline{X} \times_S \overline{X}$
	by the first projection, tensoring with 
	the above exact sequence and pushing forward
	by the second projection, we obtain the distinguished triangle
	\begin{align*}
		\overline{\pi}^{\ast} \dR \overline{\pi}_{\ast}
		E(-S_0-S_{\infty}) \to
		\overline{\pi}^{\ast} (\dR \overline{\pi}_{\ast}
		E(-S_{\infty}))(S_{\infty})
		\to E. 
	\end{align*}
	It is straightforward to check that the first arrow is 
	given by $\eta$ in (\ref{eta:E}). 
	Therefore the lemma holds. 
\end{proof}

\begin{lem}\label{lem:funct:Psi2}
	The functor $\Psi$ in (\ref{funct:Psi}) restricts to the functor 
	\begin{align*}
		\Psi \colon \bB_S^{\le 1} \to \aA_X^{\le 1}. 
	\end{align*}
\end{lem}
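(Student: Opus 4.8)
The plan is to show that for a rank $\le 1$ diagram $(\vV,\uU,F,\alpha,\psi,\phi)$ in $\bB_S^{\le 1}$, the complex $\Psi(\text{diagram})$ from (\ref{Psi(dia)}) actually lies in the abelian subcategory $\aA_X^{\le 1}\subset D^b_{\rm coh}(\overline X)$. There are two cases according to the rank. If the rank is $0$, then $\vV=0$, so the top exact sequence of (\ref{diagram:BS}) forces $F\otimes\omega_S^{-1}\cong\uU$, hence $F\cong\uU\otimes\omega_S$ is a one-dimensional sheaf; the complex (\ref{Psi(dia)}) is $\bigl(\overline\pi^{\ast}\uU\to\overline\pi^{\ast}F(S_\infty)\bigr)$ and I would check that it is quasi-isomorphic to a shift of a one-dimensional sheaf supported on $X$, i.e.\ lies in $\Coh_{\le 1}(X)[-1]\subset\aA_X$. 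The key point is that the map $\eta$, which by construction is adjoint to $(\phi,\psi)$ under the identification (\ref{isom:push}), has $\psi$-component an isomorphism, so away from $S_\infty$ it is injective with one-dimensional cokernel, while along $S_\infty$ the restriction $\dL i_\infty^{\ast}\Psi$ vanishes because $\vV=0$ — confirming there is no D6 part.

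If the rank is $1$, then $\vV\cong\oO_S$, and by Lemma~\ref{diagram:equivalent} the diagram is the same data as a pair $(\oO_S\xrightarrow{\xi}F)$ together with a morphism $\vartheta\colon F\otimes\omega_S^{-1}\to I^{\bullet}[1]$ in $D^b_{\rm coh}(S)$. I would use the distinguished triangle structure in the diagram (\ref{dia:OIF}) to identify $\Psi(\text{diagram})$. Concretely, $\Psi$ of a rank one diagram should fit into a distinguished triangle in $D^b_{\rm coh}(\overline X)$ relating $\oO_{\overline X}$, a one-dimensional sheaf on $X$ placed in degree $1$, and $\Psi(\text{diagram})$ itself: the $\overline\pi^{\ast}\oO_S$-summand of $\overline\pi^{\ast}\uU$ (coming from the section $\alpha\colon\oO_S\hookrightarrow\uU$) produces the $\oO_{\overline X}$-term, and the quotient $\uU/\oO_S\cong F\otimes\omega_S^{-1}$ together with the target $\overline\pi^{\ast}F(S_\infty)$ produces, after the Koszul-diagonal argument already used in Lemma~\ref{lem:isom1}, a shifted one-dimensional sheaf on $X$. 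Thus $\Psi(\text{diagram})$ is an extension of $\oO_{\overline X}$ by an object of $\Coh_{\le 1}(X)[-1]$, hence lies in $\aA_X$ by definition of the extension closure; and the trivialization $\dL i_\infty^{\ast}\Psi(\text{diagram})\cong\oO_{S_\infty}$ follows since restricting (\ref{Psi(dia)}) to $S_\infty$ kills $\overline\pi^{\ast}F(S_\infty)|_{S_\infty}$-type contributions coming from $F$ supported on $X$ and leaves only $\uU|_{S_\infty}$, which via $\alpha$ and the long exact sequence equals $\oO_{S_\infty}$ because $F$ is torsion. So $\rank\Psi(\text{diagram})=1$ and $\Psi(\text{diagram})\in\aA_X^{\le 1}$.

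The technical heart, and the step I expect to be the main obstacle, is making precise the relationship between the map $\eta$ in (\ref{Psi(dia)}) and the triangle data $(\xi,\vartheta)$ of Lemma~\ref{diagram:equivalent}, i.e.\ verifying that $\Psi$ really does send the rank one diagram corresponding to $(\xi,\vartheta)$ to an object sitting in a triangle
\begin{align*}
\Coh_{\le 1}(X)[-1]\ni \qQ \longrightarrow \Psi(\text{diagram}) \longrightarrow \oO_{\overline X} \xrightarrow{+1}
\end{align*}
whose connecting map $\oO_{\overline X}\to\qQ[1]$ is the image of $\xi$ under $\overline\pi^{\ast}$ twisted by (\ref{O(S)}). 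This requires unwinding the adjunction defining $\eta$ against the commuting square in (\ref{dia:OIF}), and then invoking the Koszul resolution of $\Delta\subset\overline X\times_S\overline X$ exactly as in the proof of Lemma~\ref{lem:isom1} to rewrite the cone. Once this identification is in hand, membership in $\aA_X^{\le 1}$ is immediate from the definition of the extension closure and the rank computation via $\dL i_\infty^{\ast}$.

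Finally I would note that $\Psi$ and $\Phi$ are mutually inverse on these subcategories: Lemma~\ref{lem:isom1} gives $\Psi\circ\Phi\cong\idd$ on $\aA_X$, and the description above of $\Psi$ on rank one diagrams, combined with a direct computation of $\Phi\circ\Psi$ on a diagram (\ref{diagram:BS}) using the pushforward formulas (\ref{isom:push}) and $\dR\overline\pi_{\ast}\oO_{\overline X}(-S_\infty)=0$, yields $\Phi\circ\Psi\cong\idd$ on $\bB_S$; this also re-proves that $\Psi$ lands in $\aA_X$ since $\Phi$ is faithful, giving an alternative route to the lemma if the triangle-chasing in the previous paragraph proves awkward.
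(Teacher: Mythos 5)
Your rank one argument contains a genuine gap at the step you yourself flag as the "technical heart." You claim that $\Psi$ of a rank one diagram sits in a triangle $\qQ \to \Psi(\mathrm{diagram}) \to \oO_{\overline{X}} \xrightarrow{+1}$ with $\qQ \in \Coh_{\le 1}(X)[-1]$, i.e.\ that it is an extension of $\oO_{\overline{X}}$ by a shifted one-dimensional sheaf. Such an object is by rotation exactly a two-term complex of the form $(\oO_{\overline{X}} \to F)$, and by Lemma~\ref{lem:pair} (see also Remark~\ref{rmk:dia:BS}) a rank one diagram corresponds to an object of this form \emph{if and only if the top exact sequence of the diagram splits}. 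Your justification — "the $\overline{\pi}^{\ast}\oO_S$-summand of $\overline{\pi}^{\ast}\uU$ coming from the section $\alpha$" — is precisely where the splitting is smuggled in: $\alpha \colon \oO_S \hookrightarrow \uU$ is only an injection, not a direct summand, and the sub-two-term-complex it does produce, $(\oO_{\overline{X}} \to \overline{\pi}^{\ast}F(S_{\infty}))$ with quotient $\overline{\pi}^{\ast}(F\otimes\omega_S^{-1})$, has terms that are not objects of $\aA_X$ (the quotient has two-dimensional support), so this filtration proves nothing. The paper avoids any such claim: it computes $\dL i_{\infty}^{\ast}E \cong \vV$, deduces that $\hH^1(E)$ is supported away from $S_{\infty}$ and hence lies in $\Coh_{\le 1}(X)$, shows $\hH^0(E)$ is torsion free with $\hH^0(E)^{\vee\vee} \cong \oO_{\overline{X}}$ and quotient $Q \in \Coh_{\le 1}(X)$, and concludes that $E$ is filtered by $Q[-1]$, $\oO_{\overline{X}}$, $\hH^1(E)[-1]$ — note that $\oO_{\overline{X}}$ sits in the \emph{middle} of this filtration, not as a quotient, which is exactly why your triangle cannot exist in general.

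Your proposed fallback is also not a valid escape: $\Phi$ is only defined as a functor into $\bB_S$ on the subcategory $\aA_X$, so asserting $\Phi\circ\Psi \cong \idd$ on $\bB_S^{\le 1}$ and invoking faithfulness of $\Phi$ presupposes that $\Psi$ already lands in $\aA_X^{\le 1}$ — the very content of the lemma. One can apply the formula (\ref{diagram:Phi(E)}) to $\Psi(\mathrm{diagram})$ and recover the original diagram (this computation is carried out in the proof of Theorem~\ref{thm:equiv}), but recovering the diagram does not force membership in $\aA_X$. To repair your proof you would need to replace the claimed extension structure by an analysis of the cohomology sheaves $\hH^0(E)$ and $\hH^1(E)$ along the lines above, including the torsion-freeness and double-dual argument for $\hH^0(E)$, which your proposal does not address.
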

\begin{proof}
	For a diagram (\ref{diagram:BS}),
	let $E$ be the two term complex 
	(\ref{Psi(dia)})
	\begin{align*}
		E=\left(\overline{\pi}^{\ast}\uU \stackrel{\eta}{\to} 
		\overline{\pi}^{\ast}F \otimes \oO_{\overline{X}}(S_{\infty}) \right). 
	\end{align*}
	Below we show that if the rank of the diagram (\ref{diagram:BS}) is less than 
	or equal to one, then we have $E \in \aA_{X}^{\le 1}$. 
	Note that $E$ is concentrated in degrees $[0, 1]$. 
	By restricting $E$ to $S_{\infty}$, 
	we have the distinguished triangle
	\begin{align*}
		\dL i_{\infty}^{\ast}E \to \uU \stackrel{\psi}{\to}
		F \otimes \omega_S^{-1}. 
	\end{align*} 
	The above sequence is isomorphic to the top 
	sequence in (\ref{diagram:BS}), therefore 
	we have 
	\begin{align}\label{E:infty}
		\dL i_{\infty}^{\ast}E =\vV \in \langle \oO_S \rangle_{\rm{ex}}.
	\end{align}
	In particular, 
	$\hH^1(E)=\Cok(\eta)$ is zero on $S_{\infty}$, so 
	it is supported away from $S_{\infty}$. 
	On the other hand, we have the surjection 
	$\overline{\pi}^{\ast}F(S_{\infty}) \twoheadrightarrow \Cok(\eta)$, 
	so we have
	\begin{align*}
		\Supp(\hH^1(E)) \subset \overline{\pi}^{-1}(\Supp(F)).
	\end{align*}
	Therefore if $\hH^1(E)$ has two dimensional support, it is of the form 
	$\overline{\pi}^{-1}(Z)$ for a one dimensional support $Z \subset \Supp(F)$. 
	It contradicts to that $\hH^1(E)$ is supported away from $S_{\infty}$, 
	so  
	$\hH^1(E)$ is at most one dimensional and we have  
	$\hH^1(E) \in \Coh_{\le 1}(X)$. 
	
	Next we show that $\hH^0(E)=\Ker(\eta)$ is a torsion free sheaf on 
	$\overline{X}$. 
	By (\ref{E:infty}), the sheaf $\hH^0(E)$ is a locally free sheaf 
	in a neighborhood of $S_{\infty}$. Suppose that 
	$\hH^0(E)$ has a torsion subsheaf. Then it is a subsheaf of the 
	torsion part of $\overline{\pi}^{\ast}\uU$, therefore its 
	support is of the form $\overline{\pi}^{-1}(Z)$ for $Z \subset S$
	with $\dim Z \le 1$. It contradicts to that 
	$\hH^0(E)$ is locally free near $S_{\infty}$, hence $\hH^0(E)$ is torsion 
	free. 
	
	Now suppose that the diagram (\ref{diagram:BS}) has rank one, so 
	$E$ is of rank one. As $\hH^0(E)$ is torsion free, we have the 
	exact sequence of sheaves
	\begin{align*}
		0 \to \hH^0(E) \to \hH^0(E)^{\vee\vee} \to Q \to 0
	\end{align*}
	for $Q \in \Coh_{\le 1}(\overline{X})$. 
	As $\hH^0(E)$ is a line bundle in a neighborhood of $S_{\infty}$, 
	the sheaf $Q$ is supported away from $S_{\infty}$, i.e. 
	$Q \in \Coh_{\le 1}(X)$. 
	We also have an isomorphism 
	$\hH^0(E)^{\vee\vee} \cong \oO_{\overline{X}}$, as it is 
	a rank one reflexive sheaf. 
	Now the object 
	$E$ is filtered by objects $Q[-1]$, $\oO_{\overline{X}}$, $\hH^1(E)[-1]$, 
	so it is an object in $\aA_X^{\le 1}$. 
	The rank zero case is easier and we omit details. 
\end{proof}

\begin{thm}\label{thm:equiv}
	The functor $\Phi \colon \aA_X^{\le 1} \to \bB_S^{\le 1}$
	is an equivalence of categories whose quasi-inverse is given 
	by $\Psi \colon \bB_S^{\le 1} \to \aA_X^{\le 1}$. 
\end{thm}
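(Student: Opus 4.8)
The plan is to verify that $\Phi$ and $\Psi$ are quasi-inverse to each other, so that the equivalence follows. We already have two of the ingredients in hand: Lemma~\ref{lem:Phi} (together with Lemma~\ref{lem:funct:Psi2}) shows that $\Phi$ and $\Psi$ restrict to functors between $\aA_X^{\le 1}$ and $\bB_S^{\le 1}$, and Lemma~\ref{lem:isom1} provides a functorial isomorphism $\Psi \circ \Phi \cong \id_{\aA_X^{\le 1}}$. Thus the one remaining point is to produce a functorial isomorphism $\Phi \circ \Psi \cong \id_{\bB_S^{\le 1}}$.

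First I would take a diagram $D$ of the form (\ref{diagram:BS}) with $\vV \in \langle \oO_S \rangle_{\rm ex}$ of rank $\le 1$, set $E = \Psi(D)$ to be the two-term complex $(\overline{\pi}^{\ast}\uU \xrightarrow{\eta} \overline{\pi}^{\ast}F \otimes \oO_{\overline{X}}(S_{\infty}))$, and compute $\Phi(E)$ termwise using the diagram (\ref{diagram:S}). The four terms of $\Phi(E)$ are $\dR\overline{\pi}_{\ast}(E|_{S_{\infty}})$, $\dR\overline{\pi}_{\ast}(E(-S_0-S_{\infty})[1])$, $\dR\overline{\pi}_{\ast}(E(-S_0)[1])$, and $\dR\overline{\pi}_{\ast}(E(-S_{\infty})[1])$. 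The restriction $E|_{S_\infty}$ was already computed in the proof of Lemma~\ref{lem:funct:Psi2}: the triangle $\dL i_\infty^\ast E \to \uU \xrightarrow{\psi} F\otimes\omega_S^{-1}$ recovers the top row of $D$, so $\dR\overline{\pi}_{\ast}(E|_{S_{\infty}}) \cong \vV$. For the other three terms I would tensor $E$ with the relevant line bundles, use $\dR\overline{\pi}_{\ast}\oO_{\overline{X}}(-S_{\infty}) = 0$ and the isomorphism (\ref{isom:push}), together with the projection formula, to identify $\dR\overline{\pi}_{\ast}(\overline{\pi}^{\ast}\uU \otimes \oO_{\overline{X}}(-S_{\infty})[1]) = 0$ and $\dR\overline{\pi}_{\ast}(\overline{\pi}^{\ast}F(S_{\infty}) \otimes \oO_{\overline{X}}(-S_{\infty})[1]) = 0$, so that the only surviving contributions come from the degree-zero term $\overline{\pi}^\ast F \otimes \oO_{\overline X}(S_\infty)$ hit by various twists. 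Carefully tracking these, the term $\dR\overline{\pi}_{\ast}(E(-S_{\infty})[1])$ becomes $F$ and $\dR\overline{\pi}_{\ast}(E(-S_0)[1])$ becomes $F\otimes\omega_S^{-1}$, matching the bottom-right and right-hand terms of a diagram in $\bB_S^{\le 1}$; and the middle term $\dR\overline{\pi}_{\ast}(E(-S_0-S_{\infty})[1])$ becomes $\uU$ by the same Koszul-resolution-of-the-diagonal argument used in Lemma~\ref{lem:isom1}, applied now in the opposite direction. The arrows $\alpha, \psi, \phi$ of the reconstructed diagram are the ones induced by the morphisms in (\ref{diagram:S}), and by construction of $\eta$ in $\Psi$ (which is precisely the adjoint of $(\phi,\psi)$), these agree with the original arrows of $D$. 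Functoriality is immediate since every identification is natural in $E$, hence in $D$.

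An alternative, cleaner route — which I would prefer if the termwise bookkeeping becomes unwieldy — is to avoid recomputing $\Phi\circ\Psi$ directly. Instead I would first show $\Phi$ is fully faithful on $\aA_X^{\le 1}$: given $E, E' \in \aA_X^{\le 1}$, the map $\Hom_{\aA_X}(E, E') \to \Hom_{\bB_S}(\Phi E, \Phi E')$ is injective because $\Psi\circ\Phi \cong \id$ by Lemma~\ref{lem:isom1}, and it is surjective because a morphism of diagrams induces, via $\Psi$, a morphism $\Psi\Phi(E) \to \Psi\Phi(E')$, which under the isomorphism of Lemma~\ref{lem:isom1} is a morphism $E \to E'$ whose image under $\Phi$ one checks to be the original one (again using naturality). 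Then it remains only to prove essential surjectivity of $\Phi$: every object $D \in \bB_S^{\le 1}$ is isomorphic to $\Phi(\Psi(D))$, for which I again need the computation of $\Phi\circ\Psi$, but now only up to isomorphism rather than needing to match every arrow on the nose. Either way, the computation of $\Phi\circ\Psi$ is unavoidable.

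The main obstacle will be the last step: carefully carrying out the termwise computation of $\Phi(\Psi(D))$ and checking that the resulting arrows $\alpha, \psi, \phi$ coincide with those of $D$, rather than merely with some arrows making a valid diagram. Concretely, the delicate point is that $\eta$ in the definition of $\Psi$ is built from the \emph{pair} $(\phi,\psi)$ via adjunction and the splitting (\ref{isom:push}) of $\dR\overline{\pi}_{\ast}\oO_{\overline{X}}(S_\infty)$ into its $S_0$- and $S_\infty$-components; to recover $\phi$ and $\psi$ separately from $\Phi(\Psi(D))$ one must check that the two projections in (\ref{diagram:S}) — namely $\oO_{\overline X}(-S_0-S_\infty)[1] \to \oO_{\overline X}(-S_0)[1]$ and $\to \oO_{\overline X}(-S_\infty)[1]$ — are exactly dual to those two components, which is a compatibility of (\ref{isom:push}) with (\ref{diagram:S}) that should follow by restricting everything to $S_0$ and $S_\infty$ but requires some care with signs and with the degree shift. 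Once this compatibility is in place, both composites being the identity up to natural isomorphism is formal, and the theorem follows.
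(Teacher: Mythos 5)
Your proposal follows essentially the same route as the paper: by Lemma~\ref{lem:isom1} it suffices to exhibit $\Phi\circ\Psi\cong\id$, and the paper carries out exactly your termwise computation, pushing forward $\Psi(D)$ twisted by $\oO_{\overline{X}}(-S_{\infty})$, $\oO_{\overline{X}}(-S_0)$ and $\oO_{\overline{X}}(-S_0-S_{\infty})$ (via the triangle $\overline{\pi}^{\ast}F\otimes\oO_{\overline{X}}(S_{\infty})[-1]\to \Psi(D)\to\overline{\pi}^{\ast}\uU$ and the projection formula) to recover $\vV$, $F$, $F\otimes\omega_S^{-1}$, $\uU$, and then checking the arrows agree with the original diagram. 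The only blemish is your intermediate claim that $\dR\overline{\pi}_{\ast}(\overline{\pi}^{\ast}F(S_{\infty})\otimes\oO_{\overline{X}}(-S_{\infty})[1])$ vanishes — it equals $F[1]$, and the vanishing actually needed is $\dR\overline{\pi}_{\ast}(\overline{\pi}^{\ast}F\otimes\oO_{\overline{X}}(-S_0))=0$ in the $-S_0-S_{\infty}$ twist — but since the conclusions you draw for each term are the correct ones, this is an indexing slip rather than a gap.
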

\begin{proof}
	By Lemma~\ref{lem:isom1}, it is enough to show that there exists
	an isomorphism of functors $\Phi \circ \Psi \cong \id$. 
	For a diagram (\ref{diagram:BS}), let $E$ be the object given by 
	(\ref{Psi(dia)}). 
	Then we have the distinguished triangle
	\begin{align*}
		\overline{\pi}^{\ast}F \otimes \oO_{\overline{X}}(S_{\infty})[-1]
		\to E \to \overline{\pi}^{\ast}\uU.
	\end{align*}
	Using the above triangle, we see that 
	there exist natural isomorphisms
	\begin{align*}
		\dR \overline{\pi}_{\ast}(E(-S_{0}-S_{\infty})[1])
		\stackrel{\cong}{\to} \uU \otimes \dR \overline{\pi}_{\ast}
		\oO_{\overline{X}}(-S_0 -S_{\infty})[1] \stackrel{\cong}{\to}
		\uU. 
	\end{align*}
	Similarly we have 
	isomorphisms
	\begin{align*}
		\vV \stackrel{\cong}{\to} 
		\dR \overline{\pi}_{\ast}(E|_{S_{\infty}}), \ 
		F \stackrel{\cong}{\to}
		\dR \overline{\pi}_{\ast}(E(-S_{\infty})[1]), \ 
		F\otimes \omega_S^{-1} \stackrel{\cong}{\to}
		\dR \overline{\pi}_{\ast}(E(-S_{0})[1]).
	\end{align*}
	Then it is straightforward to check that 
	the diagram (\ref{diagram:Phi(E)}) is isomorphic to the original 
	diagram (\ref{diagram:BS}). 
	This shows that $\Phi \circ \Psi \cong \id$. 
\end{proof}

Finally in this subsection, we give a 
characterization of rank one objects in 
$\aA_X$ given by pairs $(\oO_X \to F)$
in terms of objects in $\bB_S$: 
\begin{lem}\label{lem:pair}
	For a pair $(\oO_X \to F)$ with 
	$F \in \Coh_{\le 1}(X)$, it determines an
	object $(\oO_{\overline{X}} \to F)$ in $\aA_X^{\le 1}$. 
	Under the equivalence in Theorem~\ref{thm:equiv},
	a rank one diagram (\ref{diagram:BS}) 
	corresponds to an object in $\aA_X$ of the form
	$(\oO_{\overline{X}} \to F)$
	if and only if the top sequence of (\ref{diagram:BS})
	splits. 
\end{lem}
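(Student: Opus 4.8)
The plan is to work through the equivalence $\Psi$ of Theorem~\ref{thm:equiv} explicitly in both directions. First I would address the easy direction: given a pair $(\oO_X \to F)$ with $F \in \Coh_{\le 1}(X)$, the two term complex $(\oO_{\overline X} \to F)$ lies in $\langle \oO_{\overline X}, \Coh_{\le 1}(X)[-1]\rangle_{\rm ex}$ by inspection (it is the cone of $F[-1] \to \oO_{\overline X}$ shifted appropriately), and it clearly has rank one since $\dL i_\infty^{\ast}$ of it is $\oO_{S_\infty}$ because $F$ is supported away from $S_\infty$; this is essentially the content already sketched in Remark~\ref{rmk:dia:BS} and I would just record it. So the substance is the ``if and only if'' characterization.

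For the ``if'' direction, suppose the top sequence of \eqref{diagram:BS} splits, i.e. $\uU \cong \oO_S \oplus (F\otimes \omega_S^{-1})$ with $\alpha$ the inclusion of the first factor and $\psi$ the projection onto the second. Then the map $\eta$ appearing in \eqref{Psi(dia)} decomposes: on the $\oO_S$-summand it is the adjoint of $(\xi, 0)$ where $\xi = \phi|_{\oO_S} \colon \oO_S \to F$, and on the $(F\otimes\omega_S^{-1})$-summand it is the adjoint of $(\phi|_{F\otimes\omega_S^{-1}}, \id)$. Using the canonical splitting \eqref{isom:push} one sees that $\Psi$ of this diagram is quasi-isomorphic to the total complex of a bicomplex, one row of which is $\bigl(\overline\pi^{\ast}(F\otimes\omega_S^{-1}) \to \overline\pi^{\ast}F(S_\infty)\bigr)$ via the ``identity'' piece — and this subcomplex is acyclic, because (as in the proof of Lemma~\ref{lem:isom1}, using the Koszul resolution of the diagonal twisted appropriately) it computes $\overline\pi^{\ast}\dR\overline\pi_{\ast}$ of $F(-S_0)[1] \to \dots$ and cancels. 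What remains is exactly $(\oO_{\overline X} \xrightarrow{\xi} \overline\pi^{\ast}F(S_\infty))$; I would then identify $\overline\pi^{\ast}F(S_\infty)$ pushed/restricted suitably with $F$ itself viewed on $\overline X$ via $i_{0\ast}$ — more precisely show the complex is isomorphic in $D^b_{\rm coh}(\overline X)$ to $(\oO_{\overline X} \to i_{0\ast}F)$, which is the object \eqref{pair:AX}.

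For the ``only if'' direction I would run the argument in reverse: start from an object $E = (\oO_{\overline X} \xrightarrow{s} F) \in \aA_X$ with $F \in \Coh_{\le 1}(X)$ and compute $\Phi(E)$ via diagram \eqref{diagram:Phi(E)}. Because $F$ is supported in $X$ (away from $S_\infty$), the term $\dR\overline\pi_\ast(E|_{S_\infty})$ is just $\dR\overline\pi_\ast(\oO_{S_\infty}) = \oO_S$, so $\vV = \oO_S$, and the exact sequence $0 \to \vV \to \uU \to F\otimes\omega_S^{-1} \to 0$ is obtained by applying $\dR\overline\pi_\ast\bigl((-)(-S_0-S_\infty)[1]\bigr)$ to $E$; since $E$ sits in a triangle $i_{0\ast}F[-1] \to \oO_{\overline X} \to E$, and $\dR\overline\pi_\ast(\oO_{\overline X}(-S_0-S_\infty)[1]) = 0$ while $\dR\overline\pi_\ast(i_{0\ast}F(-S_0-S_\infty)[1]) \cong F\otimes\omega_S^{-1} \oplus (\text{shift})$ — here one uses $\omega_S^{-1} \cong \oO_{\overline X}(S_\infty - S_0)|_{S_0}$ — one reads off $\uU \cong \oO_S \oplus (F\otimes\omega_S^{-1})$ with the sequence split. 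The key point is that the connecting map in the long exact sequence vanishes precisely because $E$ is a genuine two-term complex with $\oO_{\overline X}$ in degree zero (no ``mixing'' between the D6 and D2-D0 pieces), whereas a general rank one object in $\aA_X$ is only filtered by such pieces and can carry a nontrivial extension class, which is exactly the non-split case.

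The main obstacle I anticipate is bookkeeping the various twists by $\oO_{\overline X}(\pm S_0 \pm S_\infty)$ and the identification \eqref{isom:push} carefully enough to see that ``split top row'' corresponds on the nose to ``$E$ of the form $(\oO_{\overline X}\to F)$ with $F$ a genuine sheaf in degree zero,'' rather than to some twisted or shifted variant; in particular I must check that the splitting of the top row is equivalent to the vanishing of the relevant $\Ext^1(F\otimes\omega_S^{-1}, \oO_S)$-class, and that this class is identified under $\Phi$ with the obstruction to $E$ being a two-term complex of the stated shape. Everything else is a diagram chase of the type already carried out in Lemmas~\ref{lem:isom1}, \ref{lem:funct:Psi2} and \ref{diagram:equivalent}, so I would invoke those freely rather than repeat the arguments.
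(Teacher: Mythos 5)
There is a genuine gap, and it comes from a conflation that runs through both directions of your computation. In a rank one diagram (\ref{diagram:BS}) the sheaf $F$ lives on $S$, and the $\oO_X$-module structure of the corresponding object of $\aA_X$ is encoded in the component $\phi|_{F\otimes\omega_S^{-1}}\colon F\otimes\omega_S^{-1}\to F$. Consequently a \emph{split} diagram corresponds under $\Psi$ to $(\oO_{\overline{X}}\to F_X)$ where $F_X\in\Coh_{\le 1}(X)$ is the sheaf with $\pi_{\ast}F_X\cong F$ and fiber-direction action given by $\phi|_{F\otimes\omega_S^{-1}}$ --- \emph{not} to $(\oO_{\overline{X}}\to i_{0\ast}F)$ as you claim; the latter only occurs when $\phi|_{F\otimes\omega_S^{-1}}=0$. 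Concretely, your cancellation step fails: the map $\overline{\pi}^{\ast}(F\otimes\omega_S^{-1})\to\overline{\pi}^{\ast}F(S_{\infty})$ coming from the identity component of the adjunction is multiplication by the equation of $S_0$, hence injective with cokernel a sheaf pushed forward from $X$ (this cokernel is exactly where $F_X$ appears), so that subcomplex is not acyclic. Likewise in the converse direction you write a triangle $i_{0\ast}F[-1]\to\oO_{\overline{X}}\to E$ and compute $\dR\overline{\pi}_{\ast}$ as if $F$ were scheme-theoretically supported on the zero section, which is only a special case of $F\in\Coh_{\le 1}(X)$. Finally, the step you yourself flag --- that splitting of the top row is equivalent to vanishing of a class in $\Ext^1(F\otimes\omega_S^{-1},\oO_S)$ identified under $\Phi$ with the obstruction to $E$ having the stated shape --- is precisely the crux of the lemma and is not supplied by the rest of your argument.

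The paper's proof avoids all of this bookkeeping by an intrinsic characterization: from the exact sequence $0\to F[-1]\to(\oO_{\overline{X}}\to F)\to\oO_{\overline{X}}\to 0$ in $\aA_X$, a rank one object $E$ is of the form $(\oO_{\overline{X}}\to F)$ if and only if it admits a surjection $E\twoheadrightarrow\oO_{\overline{X}}$ in $\aA_X$. Since $\Phi(\oO_{\overline{X}})$ is the diagram with $\vV=\uU=\oO_S$ and $F=0$, and since kernels and cokernels in $\bB_S$ are computed termwise (Lemma~\ref{lem:BS}), a surjection onto that diagram is the same data as a retraction of $\alpha\colon\oO_S\to\uU$, i.e.\ a splitting of the top row. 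If you want to salvage your computational route, you would need to redo the identification with the correct target $(\oO_{\overline{X}}\to F_X)$, keeping track of how the $\oO_X$-structure arises from $\phi$, and then still prove the split/obstruction equivalence; the surjection argument is the shorter and safer path.
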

\begin{proof}
	For a pair $(\oO_X \to F)$ with $F \in \Coh_{\le 1}(X)$, we have 
	the exact sequence in $\aA_X$
	\begin{align*}
		0 \to F[-1] \to (\oO_{\overline{X}} \to F) \to \oO_{\overline{X}} \to 0.
	\end{align*}
	The first statement follows from the above sequence. 
	By the above exact sequence, it is clear that a rank one object
	$E \in \aA_X$ is of the form $(\oO_{\overline{X}} \to F)$
	if and only if it admits a surjection 
	$E \twoheadrightarrow \oO_{\overline{X}}$
	in $\aA_{X}$. 
	Therefore via the equivalence in Theorem~\ref{thm:equiv}, 
	such an object corresponds to a 
	diagram (\ref{diagram:BS}) with $\vV=\oO_S$,
	which admits a surjection to the following diagram
	\begin{align*}
		\xymatrix{
			0 \ar[r] & \oO_S \ar[r]^{\id}  & \oO_S \ar[r] \ar[d]& 0 \ar[r] & 0 \\
			& & 0.  & &
		}
	\end{align*}
	The latter condition is equivalent to that 
	the top sequence of the 
	diagram (\ref{diagram:BS}) with $\vV=\oO_S$ splits. 
\end{proof}

\subsubsection{Moduli stacks of rank one objects in $\aA_X$}
Let $\mM_X^{\dag}$ be the moduli stack 
of rank one objects in $\aA_X$, 
defined in Subsection~\ref{subsec:moduliD026}. 
Also let $\mM_S^{\dag}$ be the moduli stack of 
pairs $(\oO_S \to F)$ considered in Subsection~\ref{subsec:pairs}, 
and denote by 
$\mM_{S, \le 1}^{\dag} \subset \mM_S^{\dag}$
the open and closed open substack of pairs $(\oO_S \to F)$
such that $F \in \Coh_{\le 1}(S)$. 
A family version 
of the arguments in the previous section 
immediately yields the 
following corollary of Theorem~\ref{thm:equiv}:
\begin{cor}\label{lem:moduli:isom}
	The stack $\mM_X^{\dag}$ is isomorphic to 
	the stack whose $T$-valued points
	for an affine $\mathbb{C}$-scheme $T$
	form  
	the groupoid of diagrams
	\begin{align}\label{diagram:BST}
		\xymatrix{
			0 \ar[r] & \oO_{S \times T}
			\ar[r] \ar@{.>}[rd]_-{\xi} & \uU_T \ar[r]^-{\psi}
			\ar[d]^-{\phi} & F_T \boxtimes \omega_S^{-1} 
			\ar[r] & 0 \\
			&   & F_T.   & 
		}
	\end{align}
	Here the top arrow is an exact sequence 
	of sheaves on $S \times T$ and 
	$F_T \in \Coh(S \times T)$ is $T$-flat such that 
	$F|_{S \times \{x\}} \in \Coh_{\le 1}(S)$ for any 
	closed point $x \in T$. 
	The isomorphisms are given by 
	\begin{align}\notag
		\xymatrix{
			0 \ar[r] & \oO_{S \times T}\ar[d]_-{\id} \ar[r] & \uU_T \ar[d]_-{a_{\uU}}^-{\cong}\ar[r]^-{\psi} & F_T \boxtimes \omega_S^{-1} 
			\ar[d]_-{a_{F} \otimes \id}^-{\cong} \ar[r] & 0 \\
			0 \ar[r] & \oO_{S\times T} \ar[r] & \uU_T' \ar[r]^-{\psi'} & F_T' \boxtimes \omega_S^{-1} \ar[r] & 0,
		} \quad 
		\xymatrix{
			\uU_T \ar[d]_-{a_{\uU}}^-{\cong}\ar[r]^-{\phi} & F_T 
			\ar[d]_-{a_F}^-{\cong}  \\
			\uU_T' \ar[r]^-{\phi'} & F_T'.
		}
	\end{align}
		In particular by sending a diagram (\ref{diagram:BST})
	to the pair $\xi \colon \oO_{S \times T} \to F_T$, we obtain 
	the natural morphism
	\begin{align}\notag
		\pi_{\ast}^{\dag} \colon 
		\mM_X^{\dag} \to \mM_{S, \le 1}^{\dag}. 
	\end{align}
\end{cor}

Let $\eE^{\dag \bullet}$ be the perfect obstruction 
theory on $\mM_{S, \le 1}^{\dag}$ given in (\ref{perf:obs2}). 
We consider the associated dual obstruction cone
\begin{align*}
	\mathrm{Obs}^{\ast}(\eE^{\dag \bullet}) \to \mM_{S, \le 1}^{\dag}.
\end{align*}

\begin{lem}\label{lem:cone}
	For an affine 
	$\mathbb{C}$-scheme $T$, 
	the $T$-valued points of $\mathrm{Obs}^{\ast}(\eE^{\dag \bullet})$ 
	form the groupoid of data 
	\begin{align}\notag
		\left\{ (\oO_{S \times T} \stackrel{\xi}{\to}
		F_T), \  \xi' \colon F_T \boxtimes \omega_S^{-1}
		\to I_{T}^{\bullet}[1] \right\}
	\end{align}
	where $(\oO_{S\times T} \stackrel{\xi}{\to} F_T)$
	gives a $T$-valued point of $\mM_{S, \le 1}^{\dag}$, 
	$I_T^{\bullet}=(\oO_{S\times T} \stackrel{\xi}{\to} F_T)$ is a two 
	term complex with $\oO_{S \times T}$ located in degree zero, and $\xi'$
	is a morphism in $D^b_{\rm{coh}}(S \times T)$. 
\end{lem}
\begin{proof}
	By its definition, the
	$T$-valued points of 
	$\mathrm{Obs}^{\ast}(\eE^{\dag\bullet})$ 
	form the groupoid of data 
	\begin{align*}
		\left\{ f \colon T \to \mM_{S, \le 1}^{\dag}, \ f^{\ast}\hH^1((\eE^{\dag\bullet})^{\vee}) \to \oO_T  \right\}. 
	\end{align*}
	Since $(\eE^{\dag\bullet})^{\vee}$ is perfect of cohomological amplitude $[-1, 1]$, 
	we have 
	$f^{\ast}\hH^1((\eE^{\dag\bullet})^{\vee})=
	\hH^1(\dL f^{\ast}(\eE^{\dag\bullet})^{\vee})$. 
	Also we have  
	\begin{align*}
		\dL f^{\ast}\eE^{\dag\bullet} &=
		\left(\dR p_{T\ast} \dR \hH om_{S \times T}(I_{T}^{\bullet}, F_T)\right)^{\vee}
		\\
		&\cong \dR p_{T\ast} \dR \hH om_{S \times T}(F_T \boxtimes \omega_S^{-1}, I_T^{\bullet}[2]).
	\end{align*}
	Here $p_T \colon S \times T \to T$ is the projection and 
	the last isomorphism follows from the Grothendieck duality. 
	Using above, we see that 
	\begin{align*}
		\Hom(f^{\ast}\hH^1((\eE^{\dag\bullet})^{\vee}), \oO_T) & =
		\Hom(\hH^1(\dL f^{\ast}(\eE^{\dag\bullet})^{\vee}), \oO_T) \\
		&=\Hom(\dL f^{\ast} (\eE^{\dag\bullet})^{\vee}, \oO_T[-1]) \\
		&= \Hom(\oO_T[1], \dL f^{\ast} \eE^{\dag\bullet}) \\
		&=\Hom(F_T \boxtimes \omega_S^{-1}, I_T^{\bullet}[1]). 
	\end{align*}
	Therefore the lemma holds. 
\end{proof}

Now we finish the proof of Theorem~\ref{thm:D026}:  
\begin{prop}\label{thm:stack}
	We have an isomorphism over $\mM_{S, \le 1}^{\dag}$
	\begin{align*}
		\mM_X^{\dag}\stackrel{\cong}{\to} \mathrm{Obs}^{\ast}(\eE^{\dag\bullet}). 
	\end{align*}
\end{prop}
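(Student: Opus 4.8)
The plan is to construct a mutually inverse pair of morphisms between $\mM_X^{\dag}$ and $\mathrm{Obs}^{\ast}(\eE^{\dag\bullet})$ over $\mM_{S, \le 1}^{\dag}$, both described functorially on $T$-valued points. By Corollary~\ref{lem:moduli:isom}, a $T$-point of $\mM_X^{\dag}$ is a diagram (\ref{diagram:BST}); by Lemma~\ref{lem:cone}, a $T$-point of $\mathrm{Obs}^{\ast}(\eE^{\dag\bullet})$ is a datum (\ref{data:obs}). So it suffices to produce, naturally in $T$ and compatibly with the projections to $\mM_{S, \le 1}^{\dag}$ and with isomorphisms, a bijection between these two groupoids. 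First I would define the map $\mM_X^{\dag} \to \mathrm{Obs}^{\ast}(\eE^{\dag\bullet})$: given (\ref{diagram:BST}), the exact top row gives a quasi-isomorphism $(\oO_{S\times T} \xrightarrow{\alpha} \uU_T)[1] \xrightarrow{\sim} F_T \boxtimes \omega_S^{-1}$, and the pair $(\mathrm{id}_{\oO_{S\times T}}, \phi)$ is a morphism of two-term complexes $(\oO_{S\times T} \to \uU_T) \to (\oO_{S\times T} \xrightarrow{\xi} F_T) = I_T^{\bullet}$, hence induces $\xi' \colon F_T \boxtimes \omega_S^{-1} \to I_T^{\bullet}[1]$. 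This is exactly the recipe of (\ref{map:eta+}) in Theorem~\ref{intro:thm:MX}(ii), carried out in families, and it manifestly lies over $(\oO_{S\times T} \xrightarrow{\xi} F_T) \in \mM_{S,\le 1}^{\dag}(T)$.

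Next I would construct the inverse. Given the datum (\ref{data:obs}), apply the relative (family) version of Lemma~\ref{diagram:equivalent}: from $\xi' \colon F_T \boxtimes \omega_S^{-1} \to I_T^{\bullet}[1]$ and the triangle $I_T^{\bullet} \to \oO_{S\times T} \xrightarrow{\xi} F_T \to I_T^{\bullet}[1]$ form the octahedron/commutative diagram analogous to (\ref{dia:OIF}), producing $\uU_T$ together with $\alpha \colon \oO_{S\times T} \to \uU_T$, $\psi \colon \uU_T \to F_T \boxtimes \omega_S^{-1}$ fitting in an exact row, and a map $\phi \colon \uU_T \to F_T$ making the diagram (\ref{diagram:BST}) commute. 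The uniqueness of $\phi$ — which is the content of the second half of the proof of Lemma~\ref{diagram:equivalent} — is what makes this assignment well-defined on the nose (not merely up to non-canonical choice), and it also shows the two constructions are mutually inverse on objects; compatibility with morphisms is then a diagram chase using the same uniqueness. One technical point to address is flatness and base change: one needs that the formation of $\uU_T$ commutes with base change $T' \to T$, which follows because $I_T^{\bullet}$, $F_T$, $\oO_{S\times T}$ are all $T$-perfect and $F_T$ is $T$-flat with one-dimensional fibres, so $\dR\hH om$ and the cone are stable under $\dL$-pullback; I would also invoke that $\Hom(\oO_S, I^{\bullet}) = 0$ (shown in Lemma~\ref{diagram:equivalent}) fibrewise to rule out jumping.

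The hard part will be the bookkeeping needed to promote the pointwise/object-level bijection of Lemma~\ref{diagram:equivalent} to an honest isomorphism of moduli stacks: one must check that the cone construction defining $\uU_T$ can be performed Zariski-locally on $T$ in a way that glues, that the resulting $\uU_T$ is a coherent sheaf (not just an object of the derived category) flat over $T$ — this uses that $\uU_T$ is an extension of $F_T \boxtimes \omega_S^{-1}$ by $\oO_{S\times T}$, both flat — and that the construction is functorial in $T$ including on the level of $2$-morphisms. I expect this to be routine given the groundwork (Lemma~\ref{lem:dag:qsmooth} already identifies the fibre of $\mathrm{Obs}^{\ast}(\eE^{\dag\bullet})$, and Lemma~\ref{lem:cone} gives its $T$-points), so the proof can be kept short: state the two functorial assignments, cite Lemma~\ref{diagram:equivalent} and Lemma~\ref{lem:cone} for the bijection on points, note base-change compatibility, and conclude. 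The final sentence would record that under this isomorphism the projection $p_0^{\dag}$ of (\ref{mor:MXtoMS}) matches the structure map $\mathrm{Obs}^{\ast}(\eE^{\dag\bullet}) \to \mM_{S,\le 1}^{\dag}$, and that the induced map on $\mathbb{C}^{\ast}$-actions is the fibrewise scaling, giving Theorem~\ref{thm:D026} in full.
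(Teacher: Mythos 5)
Your proposal is correct and follows essentially the same route as the paper: identify $T$-points of $\mM_X^{\dag}$ with diagrams (\ref{diagram:BST}) via Corollary~\ref{lem:moduli:isom}, use a $T$-flat version of Lemma~\ref{diagram:equivalent} to convert such diagrams into pairs $(\oO_{S\times T}\to F_T)$ together with a morphism $\xi'\colon F_T\boxtimes\omega_S^{-1}\to I_T^{\bullet}[1]$, and conclude by Lemma~\ref{lem:cone}. The paper's proof is just a terser version of this; the extra details you supply (the explicit inverse via the cone construction, uniqueness of $\phi$, flatness of $\uU_T$ as an extension, base-change compatibility) are exactly what the phrase "a $T$-flat version of the argument of Lemma~\ref{diagram:equivalent}" is meant to cover.
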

\begin{proof}
	For each affine $\mathbb{C}$-scheme $T$, 
	a $T$-valued point of $\mM_X^{\dag}$ is identified with a diagram 
	(\ref{diagram:BST}) by Lemma~\ref{lem:moduli:isom}.
	A $T$-flat version of the argument of Lemma~\ref{diagram:equivalent}
	shows that giving a diagram (\ref{diagram:BST}) is equivalent to 
	giving a $T$-valued point 
	$(\oO_{S \times T} \stackrel{\xi}{\to} F_T)$
	of $\mM_{S, \le 1}^{\dag}$ together with a morphism in $D^b_{\rm{coh}}(S \times T)$
	\begin{align*}
		\xi' \colon F_T \boxtimes \omega_S^{-1} \to I_T^{\bullet}[1], \ 
		I_T^{\bullet}=(\oO_{S \times T} \stackrel{\xi}{\to} F_T)
	\end{align*}
	where $\oO_{S \times T}$ located in degree zero. 
	By Lemma~\ref{lem:cone}, this is equivalent to giving 
	a $T$-valued point of $\mathrm{Obs}^{\ast}(\eE^{\dag\bullet})$. 
\end{proof}

\subsubsection{Comparison of dualities}
Let 
$\aA_X^{\rm{pure}} \subset \aA_X$ be
defined in (\ref{def:Apure}), and 
$\bB_S^{\rm{pure}} \subset \bB_S$
the subcategory consisting of diagrams (\ref{diagram:BS})
such that $F$ is a pure one dimensional sheaf. 
We set
\begin{align*}
	\aA_{X}^{\rm{pure}, \le 1} \cneq \aA_X^{\le 1} \cap \aA_X^{\rm{pure}}, \ 
	\bB_S^{\rm{pure}, \le 1} \cneq \bB_S^{\le 1} \cap \bB_S^{\rm{pure}}.
\end{align*}
We will give two lemmas on the above subcategories.

\begin{lem}\label{lem:onepure:AX}
	An object $E \in \aA_X^{\le 1}$ is an object 
	in $\aA_X^{\rm{pure}, \le 1}$ if and only if 
	$\Hom(Q[-1], E)=0$ for any zero dimensional sheaf $Q$ on $X$. 
\end{lem}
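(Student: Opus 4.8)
The plan is to characterize purity of $E \in \aA_X^{\le 1}$ in terms of the non-existence of maps from shifted zero-dimensional sheaves, and the key observation is that $\aA_X^{\le 1}$ carries a torsion pair separating the ``pure'' part from the ``zero-dimensional'' part. First I would recall that, by the definition of $\aA_X$ via the extension closure $\langle \oO_{\overline{X}}, \Coh_{\le 1}(X)[-1]\rangle_{\rm ex}$ and the arguments of \cite[Lemma~3.5, Proposition~3.6]{MR2669709}, every $E \in \aA_X$ sits in a (functorial) exact sequence $0 \to F'[-1] \to E \to E' \to 0$ in $\aA_X$ where $F' \in \Coh_{\le 1}(X)$ and $E'$ has no subobject of the form $Q[-1]$ for $Q$ a sheaf in $\Coh_{\le 1}(X)$; this is just the statement that $(\Coh_{\le 1}(X)[-1], \aA_X^{\rm pure})$ is a torsion pair on $\aA_X$, where $\aA_X^{\rm pure}$ consists of objects with no $\Coh_{\le 1}(X)[-1]$-subobject. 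The zero-dimensional sheaves $Q$ sit inside $\Coh_{\le 1}(X)$, so the condition $\Hom(Q[-1], E) = 0$ for all zero-dimensional $Q$ is a weakening of the torsion-freeness condition; the point of the lemma is that for rank-$\le 1$ objects it is in fact equivalent to purity of the underlying one-dimensional sheaf.

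Next I would prove the two implications. For the forward direction: if $E \in \aA_X^{p, \le 1}$, then by definition its ``$\Coh_{\le 1}(X)[-1]$-part'' $F'$ above is a pure one-dimensional sheaf (or zero), and since $\Hom(Q[-1], F'[-1]) = \Hom(Q, F') = 0$ for $Q$ zero-dimensional and $F'$ pure, while $\Hom(Q[-1], E') = 0$ because $E'$ has no $\Coh_{\le 1}(X)[-1]$-subobjects at all (any nonzero map $Q[-1] \to E'$ would have image a subobject of $E'$ lying in $\Coh_{\le 1}(X)[-1]$), applying $\Hom(Q[-1], -)$ to $0 \to F'[-1] \to E \to E' \to 0$ gives $\Hom(Q[-1], E) = 0$. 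Here I need the auxiliary fact that $E'$, being an extension of $\oO_{\overline X}^{\oplus r}$ (after rigidifying, $r \le 1$) by a pure sheaf shifted, cannot receive a map from $Q[-1]$; this follows from the long exact sequence and $\Hom(Q[-1], \oO_{\overline X}) = \Hom(Q, \oO_{\overline X}[1]) = H^1(\overline X, \oO_{\overline X} \otimes Q^\vee) $ arguments, or more simply from the structure of the torsion pair. For the converse: suppose $\Hom(Q[-1], E) = 0$ for all zero-dimensional $Q$. The ``$\Coh_{\le 1}(X)[-1]$-part'' $F'[-1] \hookrightarrow E$ is a subobject, so any zero-dimensional subsheaf $Q_0 \subset F'$ would give $Q_0[-1] \hookrightarrow F'[-1] \hookrightarrow E$, contradicting the hypothesis; hence $F'$ has no zero-dimensional subsheaf, i.e. $F'$ is pure one-dimensional (or zero). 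Combined with the fact that $E' \in \aA_X^{\rm pure}$ automatically has pure one-dimensional $\Coh_{\le 1}$-part (it is built only from $\oO_{\overline X}$), this shows $E \in \aA_X^{p}$, and $E \in \aA_X^{\le 1}$ is given.

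The main obstacle I anticipate is pinning down precisely that the torsion pair exists and is functorial in the required way, and in particular controlling $\Hom(Q[-1], E)$ when $E$ is the rank-one object $(\oO_{\overline X} \to F)$ rather than a sheaf — one must be careful that the relevant $\Ext^1$ and $\Ext^2$ groups between $\oO_{\overline X}$ and zero-dimensional sheaves on the compactification $\overline X$ vanish or are accounted for, using that $Q$ is supported on $X = \overline X \setminus S_\infty$ and $\oO_{\overline X}$ is locally free. I would handle this by reducing to the open part $X$ (where $\oO_X$ is a line bundle) and using that $\Hom_{\overline X}(Q[-1], \oO_{\overline X})$ and $\Hom_{\overline X}(Q[-1], F)$ both vanish for dimension reasons, so the only contribution to $\Hom(Q[-1], E)$ comes from $\Hom(Q[-1], F'[-1]) = \Hom(Q, F')$, reducing the whole statement to the elementary fact that a one-dimensional sheaf $F'$ on $X$ is pure if and only if it has no zero-dimensional subsheaf. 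This last reduction is where I would spend the most care, but the calculation itself is routine once the torsion-pair bookkeeping is set up.
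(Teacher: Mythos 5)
Your route differs from the paper's: you want to split off the maximal $\Coh_{\le 1}(X)[-1]$-\emph{subobject} of $E$ via a torsion pair, whereas the paper produces a \emph{surjection} $E \twoheadrightarrow F[-1]$ onto the pure quotient $F = \hH^1(E)/T$ (with $T \subset \hH^1(E)$ the maximal zero-dimensional subsheaf) and analyses the kernel. The gap in your argument is in the converse direction, at the sentence ``$E' \in \aA_X^{\rm pure}$ automatically has pure one-dimensional $\Coh_{\le 1}$-part (it is built only from $\oO_{\overline X}$).'' This is false as stated: a rank-one $E'$ with no $\Coh_{\le 1}(X)[-1]$-subobject is, by the structure of rank-one objects in $\aA_X$ (cf.~the proof of Lemma~\ref{lem:funct:Psi2} or~\cite[Lemma~3.11]{MR2669709}), an extension $0 \to \oO_{\overline X} \to E' \to \hH^1(E')[-1] \to 0$, and $\hH^1(E')$ can be a nonzero one-dimensional sheaf. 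What you actually need is that $\hH^1(E')$ is \emph{pure}, and that is not automatic from ``no $Q[-1]$-subobject'': a zero-dimensional $T_0 \subset \hH^1(E')$ gives a subobject $T_0[-1]$ of $\hH^1(E')[-1]$, but lifting it to a subobject of $E'$ requires the pulled-back extension of $T_0[-1]$ by $\oO_{\overline X}$ to split, whose obstruction lies in $\Ext^1_{\aA_X}(T_0[-1],\oO_{\overline X}) \subset \Ext^2_{\overline X}(T_0,\oO_{\overline X}) \cong H^1(\overline X, T_0\otimes\omega_{\overline X})^{\vee}$. That group does vanish for dimension reasons, so the argument can be completed, but this $\Ext$-vanishing is the load-bearing step rather than the side remark you relegate it to, and you also need the nontrivial structure input $\hH^0(E')^{\vee\vee}\cong\oO_{\overline X}$ to even get $\hH^0(E')=\oO_{\overline X}$ for $E'$ torsion-free.

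Two further issues: the existence and functoriality of the torsion pair $(\Coh_{\le 1}(X)[-1], \aA_X^{\rm pure})$ are asserted but not established (one needs, e.g., that $\aA_X$ is Noetherian and that $\Coh_{\le 1}(X)[-1]$ is closed under extensions and quotients), and your forward-direction bookkeeping likewise asserts without proof that the torsion part of an object of $\aA_X^{p,\le 1}$ is pure; the forward direction is in any case immediate by induction on a filtration of $E$ by $\oO_{\overline X}$ and pure shifted sheaves, using $\Hom(Q[-1],\oO_{\overline X}) = \Ext^1_{\overline X}(Q,\oO_{\overline X}) = 0$ and $\Hom(Q,F)=0$ for $F$ pure. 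The paper's proof avoids all of this machinery: after splitting off the pure quotient $F[-1]$, the kernel $E'$ has $\hH^1(E')$ zero-dimensional, so the cited structure lemma identifies it with a two-term complex $(\oO_{\overline X}\to F')$, and impurity of $F'$ then visibly yields a forbidden $Q[-1]\subset F'[-1]\subset E'\subset E$ with no extension-lifting argument required.
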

\begin{proof}
	The only if direction is obvious. As for the if direction, 
	the statement is obvious for rank zero objects. 
	Let us take a rank one
	object $E \in \aA_X$ 
	satisfying $\Hom(Q[-1], E)=0$ for any zero 
	dimensional sheaf $Q$ on $X$, and take the maximal 
	zero dimensional subsheaf
	$T \subset \hH^1(E)$. 
	Then $F=\hH^1(E)/T$
	is a pure one dimensional sheaf, and we have 
	the surjection $E \twoheadrightarrow F[-1]$ in $\aA_X$. 
	Let $E'$ be the kernel of the above surjection. 
	Then $\hH^0(E')$ is zero dimensional, 
	hence Lemma~\ref{lem:MNOP/PTwall}
	implies that $E'$ is isomorphic to a two term 
	complex $(\oO_{\overline{X}} \to F')$ for a one dimensional 
	sheaf $F'$. If $F'$ is not pure, then 
	$E'$ contains a subobject of the form $Q[-1]$ 
	for a zero dimensional sheaf $Q$, which contradicts to the 
	assumption. 
	Therefore $F'$ is pure, and $E \in \aA_X^{\rm{pure}, \le 1}$ holds. 
\end{proof}

\begin{lem}\label{lem:pure:Phi}
	The equivalence $\Phi$ in Theorem~\ref{thm:equiv}
	restricts to the equivalence 
	$\Phi \colon \aA_{X}^{\rm{pure}, \le 1} \stackrel{\sim}{\to} 
	\bB_S^{\rm{pure}, \le 1}$. 
\end{lem}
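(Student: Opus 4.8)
\textbf{Proof plan for Lemma~\ref{lem:pure:Phi}.}
The plan is to unwind what the functor $\Phi$ does on the level of the relevant cohomology sheaf and to identify the purity of $E \in \aA_X^{\le 1}$ with the purity of the sheaf $F$ appearing in the associated diagram $\Phi(E)$. Recall from the construction of $\Phi$ that for $E \in \aA_X$ the sheaf $F$ in the diagram (\ref{diagram:Phi(E)}) is $\dR\overline{\pi}_{\ast}(E(-S_\infty)[1])$, which (by the argument inside the proof of Lemma~\ref{lem:funct:Psi2}, applied to $\Psi\circ\Phi(E)\cong E$) is canonically identified with $\hH^1(E)$; likewise, going back through $\Psi$, the sheaf $F$ in a diagram (\ref{diagram:BS}) is recovered as $\hH^1(\Psi(\text{diagram}))$. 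So the whole content of the lemma is the equivalence: $E \in \aA_X^{\le 1}$ is pure (i.e.\ lies in $\aA_X^{p,\le 1}$) if and only if $\hH^1(E)$ is a pure one dimensional sheaf on $X$. Since $\Phi$ and $\Psi$ are mutually inverse equivalences by Theorem~\ref{thm:equiv}, it then follows that $\Phi$ and $\Psi$ restrict to mutually inverse equivalences between $\aA_X^{p,\le 1}$ and $\bB_S^{p,\le 1}$.

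First I would dispose of the rank zero case, where $E=F[-1]$ for $F\in\Coh_{\le 1}(X)$ and the claim is tautological: the diagram $\Phi(E)$ has $\vV=0$, top row $0\to 0\to 0\to F\otimes\omega_S^{-1}\to 0$ is trivially meaningless — more precisely $\uU = F\otimes\omega_S^{-1}$, $\psi=\id$, $\alpha=0$ — and $E$ is pure one dimensional iff $F$ is. For the rank one case I would use Lemma~\ref{lem:onepure:AX}, which characterizes objects of $\aA_X^{p,\le 1}$ as those rank one $E\in\aA_X^{\le 1}$ with $\Hom(Q[-1],E)=0$ for all zero dimensional $Q$ on $X$. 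The plan is to translate this $\Hom$-vanishing condition, via the equivalence $\Phi$, into a statement about the diagram (\ref{diagram:BS}), and then to observe that it is equivalent to purity of $F$. Concretely, for a zero dimensional sheaf $Q$ on $X\subset\overline X$ the object $Q[-1]$ lies in $\aA_X^{\le 1}$ with $\hH^1 = Q$, so $\Phi(Q[-1])$ is the diagram with $\vV=0$, $F=\pi_\ast Q$ (a zero dimensional sheaf on $S$), $\uU = \pi_\ast Q\otimes\omega_S^{-1}$; a $\Hom$ in $\aA_X^{\le 1}$ from such an object to $E$ corresponds under $\Phi$ to a morphism of diagrams, which amounts to a map $\pi_\ast Q \to F$ compatible with the rest of the data. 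Since $Q$ is an arbitrary zero dimensional sheaf and $\pi$ is affine and finite on supports of such sheaves, the vanishing of all such $\Hom$'s is equivalent to $\Hom(T, F)=0$ for every zero dimensional subsheaf $T\subset F$, i.e.\ to $F$ being pure one dimensional (a torsion sheaf on a surface with $\dim\le 1$ is pure iff it has no zero dimensional subsheaf).

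The main obstacle I anticipate is the bookkeeping in the second direction: showing that purity of $F$ in the diagram (\ref{diagram:BS}) forces $\Psi(\text{diagram})\in\aA_X^{p,\le 1}$, i.e.\ that $\hH^1(\Psi(\text{diagram})) = F$ is not merely pure as a sheaf on $S$ but that the object $\Psi(\text{diagram})$ has no $Q[-1]$-subobject in $\aA_X$ for $Q$ zero dimensional on $X$. Here one must be careful that a zero dimensional sheaf on $X$ could in principle sit inside $\hH^1(E)$ in a way not visible after pushing forward to $S$ — but in fact $\pi$ restricted to the (at most one dimensional, by the proof of Lemma~\ref{lem:funct:Psi2}) support of $\hH^1(E)$ is finite, so $\hH^1(E)$ is pure on $X$ iff $\pi_\ast\hH^1(E)=F$ is pure on $S$, and the previous paragraph handles exactly this. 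An alternative and perhaps cleaner route, which I would fall back on if the diagram-chase becomes unwieldy, is to argue directly: by Theorem~\ref{thm:equiv} it suffices to check that $\Phi$ sends $\aA_X^{p,\le 1}$ into $\bB_S^{p,\le 1}$ and $\Psi$ sends $\bB_S^{p,\le 1}$ into $\aA_X^{p,\le 1}$, and for each of these one only needs the single identification $\hH^1(E)\cong F$ together with the fact (already established in the proofs of Lemmas~\ref{lem:Phi} and~\ref{lem:funct:Psi2}) that $\pi$ is finite on the relevant supports, so that purity is preserved in both directions. This reduces the lemma to a one-line consequence of facts already in hand.
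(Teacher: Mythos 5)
There is a genuine gap, and it sits at the very foundation of your plan: the identification of the sheaf $F$ in the diagram $\Phi(E)$ with (the pushforward of) $\hH^1(E)$ is false, and so is the claimed equivalence ``$E\in\aA_X^{p,\le 1}$ iff $\hH^1(E)$ is pure''. By construction $F=\dR\overline{\pi}_{\ast}(E(-S_{\infty})[1])$; since $\dR\overline{\pi}_{\ast}\oO_{\overline{X}}(-S_{\infty})=0$, for a two--term complex $E=(\oO_{\overline{X}}\stackrel{s}{\to}F')$ with $F'\in\Coh_{\le 1}(X)$ one gets $F=\pi_{\ast}F'$, \emph{not} $\pi_{\ast}\Cok(s)=\pi_{\ast}\hH^1(E)$. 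Concretely, take $Z=C\sqcup\{x\}$ with $C\subset X$ a compact curve and $x\in X$ a point off $C$, and $E=I_Z$, viewed in $\aA_X$ via the exact sequence $0\to\oO_Z[-1]\to I_Z\to\oO_{\overline{X}}\to 0$. Then $\hH^1(E)=0$ is vacuously pure, yet $\Hom(\oO_x[-1],E)=\Ext^1(\oO_x,I_Z)\supset\Hom(\oO_x,\oO_Z)\neq 0$, so $E\notin\aA_X^{p,\le 1}$ by Lemma~\ref{lem:onepure:AX}; correspondingly $\Phi(E)$ has $F=\pi_{\ast}\oO_Z$, which is impure. So both reductions you propose (``purity of $E$ $\Leftrightarrow$ purity of $\hH^1(E)$'' and your ``one-line'' fallback using $\hH^1(E)\cong F$ plus finiteness of $\pi$ on supports) break down; purity in $\aA_X^{p}$ is a condition on the whole presentation of $E$, not on its top cohomology sheaf.

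The second half of your argument is sound and is essentially the route the paper takes for one direction: using Lemma~\ref{lem:onepure:AX}, the inclusion $\Psi(\bB_S^{p,\le 1})\subset\aA_X^{p,\le 1}$ follows from $\Hom(Q[-1],\Psi(D))=\Hom(\Phi(Q[-1]),D)=0$ for $Q$ zero dimensional, because $\Phi(Q[-1])$ is the diagram with $\vV=0$ and zero-dimensional $F$-part $\pi_{\ast}Q$, and any morphism of diagrams from it into $D\in\bB_S^{p,\le 1}$ vanishes (the $F$-component dies by purity, and the $\uU$-component then factors through $\vV'\in\langle\oO_S\rangle_{\rm{ex}}$, hence is zero as its source is torsion). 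What needs repair is the other containment $\Phi(\aA_X^{p,\le 1})\subset\bB_S^{p,\le 1}$: argue it directly from the definition of $\aA_X^{p}$, namely $E$ is an iterated extension of $\oO_{\overline{X}}$ and objects $G[-1]$ with $G$ pure one dimensional; applying $\dR\overline{\pi}_{\ast}((-)\otimes\oO_{\overline{X}}(-S_{\infty})[1])$ kills the $\oO_{\overline{X}}$ factors and sends $G[-1]$ to $\pi_{\ast}G$, which is pure since $\pi$ is finite on $\Supp(G)$, so $F$ is an iterated extension of pure one dimensional sheaves on $S$ and hence pure. This replaces, rather than follows from, the false identification with $\hH^1(E)$.
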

\begin{proof}
	The construction of $\Phi$ obviously implies that 
	it takes $\aA_X^{\rm{pure}, \le 1}$ to $\bB_S^{\rm{pure}, \le 1}$. 
	Note that we have 
	\begin{align*}
		\Hom(\Phi(Q[-1]), \bB_S^{\rm{pure}, \le 1})=\Hom(Q[-1], \Psi(\bB_S^{\rm{pure}, \le 1}))=0
	\end{align*}
	for any zero dimensional sheaf $Q$ on $X$.
	Therefore by Lemma~\ref{lem:onepure:AX}, 
	the functor $\Psi$ in Lemma~\ref{lem:funct:Psi2}
	sends $\bB_S^{\rm{pure}, \le 1}$ to $\aA_X^{\rm{pure}, \le 1}$. 
\end{proof}

The category $\aA_X^{\rm{pure}, \le 1}$ is closed under
the derived dual $\mathbb{D}_{\overline{X}}$. 
Below we describe the corresponding duality on $\bB_S^{\rm{pure}, \le 1}$
under the equivalence in Lemma~\ref{lem:pure:Phi}. 
For a diagram (\ref{diagram:BS}) such that $F$ is a pure one 
dimensional sheaf, let us apply
the derived dual functor $\mathbb{D}_S(-)$ on it. 
Then 
we obtain the diagram
\begin{align*}
	\xymatrix{
		\vV^{\vee} & \ar[l] \mathbb{D}_S(\uU) & \ar[l] F^{\vee}[-1] \\
		&  F^{\vee} \otimes \omega_S^{-1}[-1] \ar[u] \ar@{.>}[lu]^-{\xi^{\vee}}&
	}
\end{align*}
where $F^{\vee}$ is given as in (\ref{F:dual}). 
By taking the cones, we obtain the following diagram
\begin{align}\label{dual:diagram}
	\xymatrix{
		0\ar[r] & \vV^{\vee} \ar[r] & \Cone(\xi^{\vee}) \ar[r] \ar[d]
		& F^{\vee} \otimes \omega_S^{-1} \ar[r] & 0 \\
		& & F^{\vee}. & &
	}
\end{align}
Here the top sequence corresponds to the extension 
class $\xi^{\vee}$. 
By sending a diagram (\ref{diagram:BS})
to the diagram (\ref{dual:diagram}), we 
obtain the functor
\begin{align*}
	\mathbb{D}_{\bB} \colon 
	\bB_S^{\rm{pure}} \to (\bB_S^{\rm{pure}})^{\rm{op}}. 
\end{align*}
The above functor obviously preserves
the subcategory $\bB_S^{\rm{pure}, \le 1}$. 
It is easy to see that 
$\mathbb{D}_{\bB} \circ \mathbb{D}_{\bB}=\id$, 
so in particular $\mathbb{D}_{\bB}$ is an equivalence. 

\begin{lem}\label{prop:duality}
	The following diagram is commutative:
	\begin{align*}
		\xymatrix{
			\aA_X^{\rm{pure}, \le 1} \ar[r]^-{\mathbb{D}_{\overline{X}}} \ar[d]_-{\Phi} & 
			(\aA_X^{\rm{pure}, \le 1})^{\rm{op}} \ar[d]^-{\Phi} \\
			\bB_S^{\rm{pure}, \le 1} \ar[r]^-{\mathbb{D}_{\bB}} &
			(\bB_S^{\rm{pure}, \le 1})^{\rm{op}}. 
		}
	\end{align*} 
\end{lem}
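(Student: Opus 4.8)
The plan is to trace an object of $\aA_X^{p, \le 1}$ through both routes of the square and exhibit a natural isomorphism between the two resulting diagrams in $\bB_S^{p, \le 1}$. Since both $\Phi$ (by Lemma~\ref{lem:Phi} and Theorem~\ref{thm:equiv}) and $\Psi$ (by Lemma~\ref{lem:funct:Psi2} and Theorem~\ref{thm:equiv}) are equivalences that are quasi-inverse to each other, and since $\mathbb{D}_{\bB}\circ\mathbb{D}_{\bB}=\id$ and $\mathbb{D}_{\overline X}\circ\mathbb{D}_{\overline X}=\id$, it suffices to verify commutativity on one side; I will verify $\Phi\circ\mathbb{D}_{\overline X}\cong \mathbb{D}_{\bB}\circ\Phi$. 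Equivalently, using $\Psi$, I can verify $\mathbb{D}_{\overline X}\circ\Psi\cong\Psi\circ\mathbb{D}_{\bB}$, which has the advantage that $\Psi$ is defined by an explicit two-term complex~\eqref{Psi(dia)}, making the dualization concrete.

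First I would fix a diagram~\eqref{diagram:BS} in $\bB_S^{p, \le 1}$ with $F$ pure one-dimensional, and compute $\Psi$ of it, namely the complex
\begin{align*}
E=\bigl(\overline{\pi}^{\ast}\uU \stackrel{\eta}{\to} \overline{\pi}^{\ast}F\otimes\oO_{\overline X}(S_\infty)\bigr)
\end{align*}
concentrated in degrees $[0,1]$. Applying $\mathbb{D}_{\overline X}=\dR\hH om_{\oO_{\overline X}}(-,\oO_{\overline X})$ and using $\omega_{\overline X}=\overline{\pi}^{\ast}\omega_S(-2S_\infty)$ together with Grothendieck--Serre duality along $\overline{\pi}$, I would rewrite $\mathbb{D}_{\overline X}(E)$ as a two-term complex of the same shape built from $F^{\vee}=\eE xt^1_{\oO_S}(F,\omega_S)$; the relative dualizing sheaf and the twist by $\oO_{\overline X}(S_\infty)$ conspire exactly so that the dual complex is $\Psi$ applied to a diagram with sheaves $\vV^\vee$, $\mathbb{D}_S(\uU)$-related terms, and $F^\vee\otimes\omega_S^{-1}$. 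The key computational input is that $\dR\overline{\pi}_{\ast}$ commutes with the relevant twists and that $\dR\hH om_{\oO_{\overline X}}(\overline{\pi}^{\ast}(-),\oO_{\overline X})\cong\overline{\pi}^{\ast}\dR\hH om_{\oO_S}(-,\dR\overline{\pi}_{\ast}\oO_{\overline X})$ combined with the canonical splitting~\eqref{isom:push}; one then matches the dual of $\eta$ with the map built out of $\xi^\vee$ in~\eqref{dual:diagram}.

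The cleanest way to organize the matching, and the step I expect to be the main obstacle, is the careful bookkeeping of the exact triangle defining $\Cone(\xi^\vee)$ versus the extension $\uU$ of $F\otimes\omega_S^{-1}$ by $\vV$: I must check that dualizing the top row of~\eqref{diagram:BS} (an extension of $F\otimes\omega_S^{-1}$ by $\vV$, i.e.\ a class in $\Ext^1_S(F\otimes\omega_S^{-1},\vV)$) produces precisely the extension class $\xi^\vee\in\Ext^1_S(F^\vee\otimes\omega_S^{-1},\vV^\vee)$ that defines the top row of~\eqref{dual:diagram}, and that the vertical map $\phi\colon\uU\to F$ dualizes to the vertical map $\Cone(\xi^\vee)\to F^\vee$ under these identifications. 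This is where the pure one-dimensionality of $F$ is used: it guarantees $\eE xt^{\ne 1}_{\oO_S}(F,\omega_S)=0$, so that $\mathbb{D}_S(F)=F^\vee[-1]$ with no higher or lower cohomology, which is what makes the dual diagram land in $\bB_S^{p,\le 1}$ rather than in some larger derived category. Granting Lemma~\ref{lem:pure:Phi} (so both functors preserve the relevant subcategories) and Lemma~\ref{lem:isom1}, the functoriality of all constructions in the pair $(F,\xi)$ then upgrades the object-wise isomorphism to an isomorphism of functors, completing the proof.
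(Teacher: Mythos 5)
Your proposal follows essentially the same route as the paper: the paper's own proof consists of the single remark that the commutativity is a straightforward check using Grothendieck duality for $\overline{\pi}\colon \overline{X} \to S$, and your reduction to verifying $\mathbb{D}_{\overline{X}}\circ\Psi \cong \Psi\circ\mathbb{D}_{\bB}$ on the explicit two-term complex (\ref{Psi(dia)}) is just a convenient way of organizing that same check, using that $\Phi$, $\Psi$ are quasi-inverse and both dualities are involutions. One correction so the bookkeeping actually closes: your formula $\omega_{\overline{X}}=\overline{\pi}^{\ast}\omega_S(-2S_{\infty})$ is wrong; by (\ref{O(S)}) one has $\omega_{\overline{X}/S}=\oO_{\overline{X}}(-S_0-S_{\infty})$ and hence $\omega_{\overline{X}}=\oO_{\overline{X}}(-2S_{\infty})$, and what enters relative Grothendieck duality is $\overline{\pi}^{!}\oO_S=\oO_{\overline{X}}(-S_0-S_{\infty})[1]$ — with your stated twist the matching of $\oO(S_{\infty})$, $\oO(-S_0)$, $\oO(-S_{\infty})$ would not come out. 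Also, strictly speaking $\xi^{\vee}$ in (\ref{dual:diagram}) is the $\mathbb{D}_S$-dual of the composite pair map $\xi=\phi\circ\alpha$ (equivalently the composition $F^{\vee}\otimes\omega_S^{-1}[-1]\to\mathbb{D}_S(\uU)\to\vV^{\vee}$), not literally the dual of the top row extension, though dualizing the whole diagram as in the paper's construction of $\mathbb{D}_{\bB}$ makes the two descriptions agree; with these adjustments your sketch is fine.
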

\begin{proof}
	We only check the commutativity for rank 
	one object $E \in \aA_{X}^{\rm{pure}, \le 1}$. 
	The diagram $\mathbb{D}_{\bB} \circ \Phi(E)$ is
	by the construction 
		\begin{align}\notag
			\xymatrix{
			\oO_S
				\ar[r] & \mathbb{D}_S \circ \dR \overline{\pi_{\ast}}(E) \ar[r] \ar[d]
				& \mathbb{D}_S \circ \dR \overline{\pi_{\ast}}(E(-S_{\infty})[1]) \\
				& \mathbb{D}_S \circ \dR \overline{\pi_{\ast}}(E(-S_{0})[1]). &
			} 
		\end{align}
By the Grothendieck duality 
$\dR \overline{\pi}_{\ast}(\mathbb{D}_X(-) \otimes \Omega_{\overline{X}/S}[1])
\cong \mathbb{D}_S \circ \dR \overline{\pi}_{\ast}(-)$, 
the above diagram is nothing but $\Phi \circ \mathbb{D}_{\overline{X}}(E)$.  
\end{proof}

\subsection{Semiorthogonal decomposition via Koszul duality}\label{subsec:sod:koszul}
Here we establish a general statement about semiorthogonal 
decomposition associated with linear Koszul duality 
studied in~\cite{MrRi, MrRi2}, which is required for the proof of 
Theorem~\ref{thm:PT:irredu}. 
We first review linear Koszul duality
in the local case, globalize it, and then show the 
existence of certain semiorthogonal decomposition under the linear Koszul duality. We also prove 
a certain comparison result of 
singular supports under linear Koszul duality, which is essential 
for the proof of Corollary~\ref{cor:duality}. 
\subsubsection{Linear Koszul duality: local case}
Here we review the linear Koszul duality 
proved in~\cite{MrRi, MrRi2}, and 
prove its variants. 
Suppose that $Y$ is a smooth affine $\mathbb{C}$-scheme, 
and take a two term complex of vector bundles on it
\begin{align}\label{vect:vV}
\eE^{{}}=(\eE^{-1} \stackrel{\phi}{\to} \eE^0).
\end{align}
We take a trivial $\mathbb{C}^{\ast}$-action on $Y$
and weight two $\mathbb{C}^{\ast}$-actions on $\eE^{-1}$ and $\eE^0$. 
Let $\aA^{\dag}$, 
$\aA^{\natural}$ and $\aA^{\sharp}$ be the sheaves of dg-algebras on $Y$
defined by
\begin{align*}
\aA^{\dag} \cneq S_{\oO_Y}(\eE), \ \aA^{\natural} \cneq S_{\oO_Y}(\eE^{\vee}[-1]), \ 
\aA^{\sharp} \cneq S_{\oO_Y}(\eE^{\vee}[1]).
\end{align*}
For $\star \in \{\dag, \natural, \sharp\}$,
we denote by 
$D_{\rm{fg}}^{\mathbb{C}^{\ast}}(\aA^{\star} \mathchar`-\mathrm{mod})$ 
the $\mathbb{C}^{\ast}$-equivariant 
derived category of finitely generated dg-modules over $\aA^{\star}$. 
\begin{thm}\emph{(\cite{MrRi, MrRi2})}\label{thm:lkoszul}
There exists an equivalence
\begin{align}\label{equiv:koszul}
\overline{\Phi}_Y^{\rm{op}} \colon 
D_{\rm{fg}}^{\mathbb{C}^{\ast}}(\aA^{\dag} \mathchar`-\mathrm{mod})^{\rm{op}}
\stackrel{\sim}{\to} 
D_{\rm{fg}}^{\mathbb{C}^{\ast}}(\aA^{\sharp} \mathchar`-\mathrm{mod})
\end{align}
sending $\aA^{\dag}$ to $\oO_Y$ and $\oO_Y$ to $\aA^{\sharp}$. 
\end{thm}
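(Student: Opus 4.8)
The plan is to deduce Theorem~\ref{thm:lkoszul} from the Koszul duality equivalence of Theorem~\ref{thm:knoer} by the standard ``curved Koszul duality'' dictionary, applied to two choices of presentation for the same data. First I would observe that the two-term complex $\eE = (\eE^{-1} \stackrel{\phi}{\to}\eE^0)$ is the cotangent-type complex of a quasi-smooth affine derived scheme: dualizing, $\eE^{\vee} = (\eE^{0\vee} \stackrel{\phi^{\vee}}{\to}\eE^{-1\vee})$, and the Koszul complex $\rR(\eE^{-1\vee}\to A,\, \text{``section''}=\phi^{\vee})$ — or more precisely the derived zero locus of the section of the vector bundle $\eE^{-1\vee}$ over the total space of $\eE^{0\vee}$ given by $\phi^{\vee}$ — produces a derived scheme $\mathfrak{U}$ with $\aA^{\dag}=S(\eE)$ as its structure sheaf (after identifying gradings: $\eE^{-1}$ sits in internal degree contributing the Koszul differential, $\eE^0$ contributes polynomial generators). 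Thus $D^{\mathbb{C}^{\ast}}_{\rm{fg}}(\aA^{\dag}\text{-mod}) = D^b_{\rm{coh}}(\mathfrak{U})$ with its $\mathbb{C}^{\ast}$-equivariant structure. Similarly $\aA^{\sharp} = S(\eE^{\vee}[1])$ is the structure sheaf of the $(-1)$-shifted cotangent construction built from the dual data, and $\aA^{\natural}=S(\eE^{\vee}[-1])$ plays the role of the curved-dg-algebra $(S(V[-2]),0,w)$ of Subsection~\ref{subsec:dfact}, governing the matrix factorization side.

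The key steps, in order, are: (1) set up the precise dictionary between the graded-module categories $D^{\mathbb{C}^{\ast}}_{\rm{fg}}(\aA^{\star}\text{-mod})$ and the geometric categories ($D^b_{\rm{coh}}$ of a derived scheme, resp.\ $\mathrm{MF}^{\mathbb{C}^{\ast}}_{\rm{coh}}$ of a function), being careful that the weight-two $\mathbb{C}^{\ast}$-action matches the shift $(1)$ used as the translation functor on the factorization side; (2) write $w\colon \mathrm{Tot}_A(\eE^{0\vee}\oplus\eE^{-1\vee}) \to \mathbb{C}$, $w(x,v_0,v_1) = \langle \phi^{\vee}(v_0), v_1\rangle$ or the analogous bilinear pairing, whose derived critical locus realizes both $(-1)$-shifted cotangents; (3) apply Theorem~\ref{thm:knoer} twice — once to the presentation giving $\aA^{\dag}$, once to the presentation giving $\aA^{\sharp}$ — obtaining equivalences of each with a common $\mathrm{MF}^{\mathbb{C}^{\ast}}_{\rm{coh}}$ category; (4) compose, and track that the duality $\kK_s \mapsto \kK_s^{\vee}$ on matrix factorizations (the self-duality $\kK_s^{\vee}\cong i^{-w}_{\ast}\oO_A$ used in the proof of Theorem~\ref{thm:knoer}) introduces exactly the opposite-category $(-)^{\rm{op}}$; (5) chase the two distinguished objects: $\aA^{\dag}=\oO_{\mathfrak{U}}$ corresponds under $\Phi$ to the Koszul factorization $\kK$, which under the inverse $\Psi$ for the dual presentation is $\oO_A$, and symmetrically $\oO_A$ on the $\aA^{\dag}$-side (the structure sheaf of the zero section) corresponds to $\aA^{\sharp}$. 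This last bookkeeping of generators is where I would invoke Remark~\ref{rmk:szero}, which already records $\Phi(\oO_A)=\oO_{V^{\vee}}$ and the induced quasi-isomorphism on Hom-complexes.

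The main obstacle I expect is step (4): pinning down precisely why composing the two Koszul-duality equivalences produces a \emph{contravariant} functor, i.e.\ accounting for the appearance of $(-)^{\rm{op}}$ and the overall twist by $\det \eE^{-1}$ (or $\det V$) with a shift, as in the isomorphism $\kK_s^{\vee}\cong \kK_{-s}\otimes_{\oO_A}\det V[-\rank V]$ from the proof of Theorem~\ref{thm:knoer}. One must check that the natural pairing between $\aA^{\dag}$-modules and $\aA^{\sharp}$-modules — which is what makes the functor contravariant — is the correct one, and that it is an equivalence and not merely adjoint on one side; the cleanest route is probably to exhibit the functor $\overline{\Phi}_A^{\rm{op}}$ directly as $\RHom_{\aA^{\natural}}(-, \aA^{\natural})$-type duality composed with a base-change, then verify full faithfulness by the same formal-completion/conical-support argument used at the end of the proof of Theorem~\ref{thm:knoer} (reduce to the formal neighborhood of the zero section, where everything is a Koszul resolution). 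Once full faithfulness is in hand, essential surjectivity follows since the image is a thick subcategory containing a generator $\aA^{\sharp}$. The remaining assertions about where $\aA^{\dag}$ and $\oO_A$ go are then a direct computation with the explicit kernel.
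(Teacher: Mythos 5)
The paper gives no proof of Theorem~\ref{thm:lkoszul}: it is quoted from \cite{MR2581249, MR3455779}, and the text only recalls the construction of $\overline{\Phi}_A^{\rm{op}}=\xi\circ\Phi_A'\circ\mathbb{D}_A$ (the kernels $\kK$, $\kK'$, the $\oO_A$-linear duality $\mathbb{D}_A$, the regrading $\xi$). Your route — present $\aA^{\dag}$ and $\aA^{\sharp}$ as Koszul algebras of derived zero loci and compare both with matrix factorizations of the pairing function $w(x,e,e')=\langle \phi|_x(e'),e\rangle$ on $\mathrm{Tot}_A((\eE^0)^{\vee}\oplus\eE^{-1})$ via Theorem~\ref{thm:knoer} — is therefore genuinely different (a ``double dimensional reduction''), and it is consistent with the local picture the paper uses later in the proof of Lemma~\ref{lem:equiv:shifted}. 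But as written it has concrete gaps. The first is the grading bookkeeping: Theorem~\ref{thm:knoer} identifies the \emph{non-equivariant} category $D^b_{\rm{coh}}(\mathfrak{U})$ with $\mathbb{C}^{\ast}$-equivariant factorizations for a single auxiliary weight-two action, whereas in Theorem~\ref{thm:lkoszul} the $\mathbb{C}^{\ast}$ is an internal grading with weight two on both $\eE^{-1}$ and $\eE^{0}$, so modules are bigraded. You would need an equivariant refinement of Theorem~\ref{thm:knoer} on each side, and the two applications produce two \emph{different} auxiliary scalings (one along the $\eE^{-1}$-direction, one along the $(\eE^{0})^{\vee}$-direction) which must then be reconciled with the single internal grading of the statement; this regrading is exactly the functor $\xi$ in the cited construction, and your sketch, by declaring that ``the weight-two action matches the shift $(1)$'' and that $D_{\rm{fg}}^{\mathbb{C}^{\ast}}(\aA^{\dag}\text{-mod})$ ``is'' $D^b_{\rm{coh}}$ of the zero locus, conflates the internal weight with the cohomological degree and skips this step.

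Second, contravariance does not come out of your step (4): composing the two covariant equivalences of Theorem~\ref{thm:knoer} is covariant, and invoking $\kK_s^{\vee}$ does not fix this by itself, since by (\ref{isom:K:dual}) the dual is a factorization of $-w$, not an endofunctor of $\mathrm{MF}^{\mathbb{C}^{\ast}}_{\rm{coh}}(-,w)$; to land in $D_{\rm{fg}}^{\mathbb{C}^{\ast}}(\aA^{\dag}\text{-mod})^{\rm{op}}$ you must insert a genuine duality (the $\oO_A$-linear $\mathbb{D}_A$, or an identification of $\mathrm{MF}(w)^{\rm{op}}$ with $\mathrm{MF}(-w)$ plus a sign-flip automorphism), at which point you are reconstructing the functor of \cite{MR2581249, MR3455779} rather than bypassing it. Third, your essential-surjectivity argument fails as stated: the thick subcategory generated by the free module $\aA^{\sharp}$ consists only of perfect complexes, and since $\Spec\aA^{\sharp}$ is in general a singular (quasi-smooth) derived scheme, this is strictly smaller than $D_{\rm{fg}}^{\mathbb{C}^{\ast}}(\aA^{\sharp}\text{-mod})$; one needs the explicit quasi-inverse (the kernel $\kK'$), not a generation argument. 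Finally, the asserted object matching $\aA^{\dag}\mapsto\oO_A$, $\oO_A\mapsto\aA^{\sharp}$, and the cancellation of the $\det$-twists and shifts appearing in (\ref{isom:K:dual}), are exactly the computations that would have to be carried out and are not.
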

Below we recall the construction of the equivalence (\ref{equiv:koszul})
and its quasi-inverse. 
For a $\mathbb{C}^{\ast}$-equivariant 
dg-module $M$ over $\aA^{\dag}$, 
we write its $\mathbb{C}^{\ast}$-weight $j$ part by $M_j$
which is a direct summand of $M$ as $\oO_Y$-module. 
Then its $\oO_Y$-dual 
$\mathbb{D}_Y(M) \cneq \oplus_{j} \mathbb{D}_Y(M_j)$ 
is naturally equipped with 
a $\mathbb{C}^{\ast}$-equivariant 
dg-module structure over $\aA^{\dag}$, where 
$\mathbb{D}_Y(M_j)$ is of $\mathbb{C}^{\ast}$-weight $-j$. 
We set
\begin{align*}
\kK \cneq \aA^{\natural}\otimes_{\oO_Y} \aA^{\dag}, \ 
\kK' \cneq \mathbb{D}_Y(\aA^{\dag}) \otimes_{\oO_Y} \aA^{\natural}
\end{align*}
with differentials given by 
\begin{align*}
d_{\kK}=d_{\aA^{\natural}\otimes_{\oO_Y} \aA^{\dag}}+\eta, \ 
d_{\kK'}=d_{\mathbb{D}_Y(\aA^{\dag})\otimes_{\oO_Y} \aA^{\natural}}+\eta'
\end{align*}
where 
$\eta \in \eE^{\vee} \otimes \eE$,
$\eta' \in \eE \otimes \eE^{\vee}$
are degree one elements of 
$\aA^{\natural}\otimes_{\oO_Y} \aA^{\dag}$, 
$\aA^{\dag} \otimes_{\oO_Y} \aA^{\natural}$, 
corresponding to  
the identity map $\id \colon \eE \to \eE$. 
Then both of $\kK$, $\kK'$ are dg-modules over 
$\aA^{\natural}\otimes_{\oO_Y} \aA^{\dag}$, 
$\aA^{\dag} \otimes_{\oO_Y} \aA^{\natural}$, 
respectively. 
For a $\mathbb{C}^{\ast}$-equivariant dg-module $M$
over $\aA^{\dag}$ and a $\mathbb{C}^{\ast}$-equivariant dg-module $N$
over $\aA^{\natural}$,
we set
\begin{align*}
\Phi'_Y(M)=\kK \otimes_{\aA^{\dag}} M, \ 
\Psi'_Y(N)=\kK' \otimes_{\aA^{\natural}}N. 
\end{align*} 
Then $\Phi'_Y(M)$ is a dg-module over $\aA^{\natural}$, 
and $\Psi'_Y(N)$ is a dg-module over $\aA^{\dag}$. 
\begin{rmk}\label{rmk:Morita}
The functor $\Phi'_Y$ is described in another way as follows. 
Let $\overline{\kK}=S(\eE[1]) \otimes_{\oO_Y}\aA^{\dag}$
be the Koszul resolution of $\oO_Y$ by free 
$\aA^{\dag}$-modules. 
Then we have 
\begin{align}\notag
\Phi_Y'(M)=\Hom_{\aA^{\dag}}(\overline{\kK}, M)=\RHom_{\aA^{\dag}}(\oO_Y, M),
\end{align}
i.e. $\Phi_Y'$ is a derived Morita-type functor
with respect to $\oO_Y$. 
\end{rmk}
Finally there is a regrading equivalence $\xi$
\begin{align}\label{regrading}
\xi \colon 
D_{\rm{fg}}^{\mathbb{C}^{\ast}}(\aA^{\natural} \mathchar`-\mathrm{mod})
\stackrel{\sim}{\to}
D_{\rm{fg}}^{\mathbb{C}^{\ast}}(\aA^{\sharp} \mathchar`-\mathrm{mod})
\end{align}
sending a $\mathbb{C}^{\ast}$-equivariant 
$\aA^{\natural}$-module $M$ 
to the $\mathbb{C}^{\ast}$-equivariant 
$\aA^{\sharp}$-module 
$\xi(M)$ given by 
$\xi(M)_{j}^i=M_{j}^{i-j}$, where 
$i$ is the cohomological grading and $j$ is the 
$\mathbb{C}^{\ast}$-weight. 
The equivalence $\overline{\Phi}_Y^{\rm{op}}$ and its quasi-inverse
$\overline{\Psi}_Y^{\rm{op}}$
are given by
\begin{align*}
\overline{\Phi}_Y^{\rm{op}}=\xi \circ \Phi_Y' \circ \mathbb{D}_Y, \ 
\overline{\Psi}_Y^{\rm{op}} \cneq 
\mathbb{D}_Y \circ \Psi_Y' \circ \xi^{-1}. 
\end{align*}

We will give some variants of Theorem~\ref{thm:lkoszul}. 
We set the following affine derived schemes over $Y$: 
\begin{align}\notag
Y^{\dag} \cneq \Spec \aA^{\dag}, \ Y^{\sharp} 
\cneq \Spec \aA^{\sharp}. 
\end{align}
Instead of the weight two action of 
$\mathbb{C}^{\ast}$ on $\eE$, we 
take the weight one action on $\eE$, 
and take the quotient stacks
$[Y^{\dag}/\mathbb{C}^{\ast}]$, 
$[Y^{\sharp}/\mathbb{C}^{\ast}]$.
\begin{prop}\label{thm:koszul}
There is an equivalence
\begin{align}\label{equiv:koszul:A}
\Phi_Y^{\rm{op}} \colon D^b_{\rm{coh}}([Y^{\dag}/\C])^{\rm{op}}
\stackrel{\sim}{\to} D^b_{\rm{coh}}([Y^{\sharp}/\C]). 
\end{align}
\end{prop}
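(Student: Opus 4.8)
The statement of Proposition~\ref{thm:koszul} is a mild reformulation of Theorem~\ref{thm:lkoszul}: the only differences are that (i) I work with the $\mathbb{C}^{\ast}$-weight one action on $\eE$ rather than weight two, and correspondingly pass to quotient stacks $[A^{\dag}/\mathbb{C}^{\ast}]$ and $[A^{\sharp}/\mathbb{C}^{\ast}]$ instead of dg-modules, and (ii) I phrase the source and target in terms of $D^b_{\rm{coh}}$ of the derived stacks (\ref{A:dagsharp}). The plan is therefore to transport the equivalence $\overline{\Phi}_A^{\rm{op}}$ of Theorem~\ref{thm:lkoszul} through two elementary dictionaries: first, the identification of $D^b_{\rm{coh}}([A^{\dag}/\mathbb{C}^{\ast}])$ with a suitable $\mathbb{C}^{\ast}$-equivariant derived category of dg-modules over $\aA^{\dag}$, and second, the regrading equivalence $\xi$ of (\ref{regrading}) that converts $\aA^{\natural}$-modules into $\aA^{\sharp}$-modules and absorbs the change between the weight two and weight one conventions.

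First I would set up the dictionary between coherent sheaves on the quotient stack and equivariant dg-modules. Since $A$ is affine and $A^{\dag} = \Spec \aA^{\dag}$ is an affine derived scheme over $A$, a $\mathbb{C}^{\ast}$-equivariant quasi-coherent dg-module over $\aA^{\dag}$ is the same data as a quasi-coherent complex on $[A^{\dag}/\mathbb{C}^{\ast}]$, with the $\mathbb{C}^{\ast}$-weight decomposition $M = \bigoplus_j M_j$ corresponding to the decomposition into line-bundle twists $\oO(j)$ on $B\mathbb{C}^{\ast}$; the finitely generated (= bounded coherent) condition matches on both sides. The same holds for $\aA^{\sharp}$ and $[A^{\sharp}/\mathbb{C}^{\ast}]$. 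This identification is formal and I would state it as a lemma, citing the general theory of~\cite{MR3285853} for derived quotient stacks, without belaboring the verification. The one point that needs care is the bookkeeping of the $\mathbb{C}^{\ast}$-grading: in Theorem~\ref{thm:lkoszul} the generators of $\eE$ sit in weight two, whereas here they sit in weight one, which shifts all weights by a factor of two; this is precisely what the regrading equivalence $\xi$ is designed to reconcile, since $\xi$ twists the weight $j$ by the cohomological degree. So after passing through $\xi$ the weight conventions line up.

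Second, with the dictionary in place, I would simply define $\Phi_A^{\rm{op}}$ to be the composite $\xi \circ \Phi_A' \circ \mathbb{D}_A$ exactly as in the construction following Theorem~\ref{thm:lkoszul}, now read as a functor between the two $D^b_{\rm{coh}}$'s of quotient stacks via the dictionary, and likewise $\Psi_A^{\rm{op}} = \mathbb{D}_A \circ \Psi_A' \circ \xi^{-1}$ for its quasi-inverse. That $\Phi_A^{\rm{op}}$ and $\Psi_A^{\rm{op}}$ are mutually quasi-inverse equivalences is then immediate from Theorem~\ref{thm:lkoszul}. The assertions that $\aA^{\dag} \mapsto \oO_A$ and $\oO_A \mapsto \aA^{\sharp}$ translate, under the dictionary, into the statements that $\oO_{[A^{\dag}/\mathbb{C}^{\ast}]} \mapsto \oO_{B\mathbb{C}^{\ast}}$ (the structure sheaf of the classifying stack, pulled back to $A$) and the skyscraper-type object $\oO_A \mapsto \oO_{[A^{\sharp}/\mathbb{C}^{\ast}]}$, and I would record these since they are needed later for the SOD arguments.

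\textbf{Main obstacle.} The genuine mathematical content has already been proved in~\cite{MR2581249, MR3455779}; the only real work here is checking that the grading conventions match — in particular that the passage from weight two on $\eE$ (as in Theorem~\ref{thm:lkoszul}, where it appears because $\eE$ is eventually a shifted cotangent direction) to weight one (natural for the quotient-stack description where $[1]$ and $(1)$ are identified) is exactly undone by the regrading functor $\xi$. I expect this weight-matching to be the one place where a careful, if routine, computation is required; everything else is a translation of known statements through the affine quotient-stack dictionary.
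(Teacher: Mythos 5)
There is a genuine gap at exactly the point you dismiss as routine bookkeeping: the regrading equivalence $\xi$ of (\ref{regrading}) does \emph{not} reconcile the weight-two and weight-one conventions. By definition $\xi(M)_j^i=M_j^{i-j}$ only redistributes the cohomological degree using the $\mathbb{C}^{\ast}$-weight; it leaves the weights themselves untouched, so it cannot convert a module equivariant for the weight-two action on $\eE$ into one equivariant for the weight-one action. Concretely, the categories appearing in Theorem~\ref{thm:lkoszul} are (via your affine dictionary) the bounded coherent derived categories of the quotient stacks by the \emph{weight-two} action, and these are strictly larger than $D^b_{\rm{coh}}([A^{\star}/\mathbb{C}^{\ast}])$: since $-1\in\mathbb{C}^{\ast}$ acts trivially in the weight-two convention, the weight-two quotient is a $\mu_2$-gerbe over the weight-one quotient, and its derived category splits into even and odd $\mathbb{C}^{\ast}$-weight parts, only the even part being identified with $D^b_{\rm{coh}}([A^{\star}/\mathbb{C}^{\ast}])$. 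Your argument as written never bridges this discrepancy, and the sentence ``after passing through $\xi$ the weight conventions line up'' is false.

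The paper closes this gap as follows: the squaring map $t\mapsto t^2$ induces $\mu_2$-gerbes $\rho\colon [A^{\star}\sslash\mathbb{C}^{\ast}]\to[A^{\star}/\mathbb{C}^{\ast}]$ for $\star\in\{\dag,\sharp\}$ (where $\sslash$ denotes the weight-two quotient), giving decompositions of $D^b_{\rm{coh}}([A^{\star}\sslash\mathbb{C}^{\ast}])$ into $\lambda=0$ and $\lambda=1$ parts with the $\lambda=0$ part identified with $D^b_{\rm{coh}}([A^{\star}/\mathbb{C}^{\ast}])$ via $\rho^{\ast}$; one then observes that $\overline{\Phi}_A^{\rm{op}}$ preserves the parity of $\mathbb{C}^{\ast}$-weights (each of its ingredients $\mathbb{D}_A$, tensoring with $\kK$ or $\kK'$, and $\xi$ does), hence restricts to the even parts and induces the desired equivalence (\ref{equiv:koszul:A}). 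Alternatively one could rerun the whole Koszul-duality construction directly in the weight-one setting, but then the regrading must be modified to $\xi'(M)_j^i=M_j^{i-2j}$ rather than $\xi$ --- which is precisely the formula the paper records for the explicit description $\Phi_A^{\rm{op}}=\xi'\circ\Phi_A'\circ\mathbb{D}_A$. So your overall strategy of transporting Theorem~\ref{thm:lkoszul} is the right one, but you need either the gerbe/parity argument or the corrected regrading $\xi'$; as it stands the crucial step is wrong, not routine.
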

\begin{proof}
We denote by 
$[Y^{\dag} \sslash \C]$, 
$[Y^{\sharp} \sslash \C]$ the stack 
quotients by the weight two $\mathbb{C}^{\ast}$-actions on $\eE^{\bullet}$. 
The result of Theorem~\ref{thm:lkoszul}
means the equivalence
\begin{align}\label{equiv:koszul:2}
\overline{\Phi}_Y^{\rm{op}} \colon 
D^b_{\rm{coh}}([Y^{\dag} \sslash \C])^{\rm{op}}
\stackrel{\sim}{\to} D^b_{\rm{coh}}([Y^{\sharp} \sslash \C]). 
\end{align}
Now the natural map
$\mathbb{C}^{\ast} \to \mathbb{C}^{\ast}$
given by $t \mapsto t^2$ induces the map
\begin{align*}
\rho \colon [Y^{\star} \sslash \mathbb{C}^{\ast}] \to [Y^{\star}/\mathbb{C}^{\ast}], \ 
\star \in \{\dag, \sharp\}
\end{align*}
which is a $\mu_2$-gerbe. 
Then similarly to (\ref{decompose:chi})
we have the decomposition
\begin{align}\label{decompose:mu2}
D^b_{\rm{coh}}([Y^{\star} \sslash \mathbb{C}^{\ast}])=
D^b_{\rm{coh}}([Y^{\star} \sslash \mathbb{C}^{\ast}])_{\lambda=0}
\oplus D^b_{\rm{coh}}([Y^{\star} \sslash \mathbb{C}^{\ast}])_{\lambda=1} 
\end{align}
such that $D^b_{\rm{coh}}([Y^{\star} \sslash \mathbb{C}^{\ast}])_{\lambda=0}$
is equivalent to $D^b_{\rm{coh}}([Y^{\star}/\mathbb{C}^{\ast}])$
via $\rho^{\ast}$. The equivalence (\ref{equiv:koszul:2})
preserves the decomposition (\ref{decompose:mu2}),
as it preserves the parity of the $\mathbb{C}^{\ast}$-weights. 
Therefore we 
obtain the equivalence (\ref{equiv:koszul:A}). 
Explicitly, $\Phi_Y^{\rm{op}}$ and its quasi-inverse $\Psi_Y^{\rm{op}}$ are given by
\begin{align}\label{equiv:covariant}
\Phi_Y^{\rm{op}}=\xi' \circ \Phi_Y' \circ \mathbb{D}_{Y}, \ 
\Psi_Y^{\rm{op}}=\mathbb{D}_Y \circ \Psi_Y' \circ \xi'. 
\end{align}
Here the regrading equivalence (\ref{regrading}) is replaced by
$\xi'$ determined by $\xi'(M)_{j}^i=M_{j}^{i-2j}$. 
\end{proof}

Let us take an affine derived scheme 
$\mathfrak{U}=\Spec \rR(V\to Y, s)$ 
for a vector bundle $V \to Y$ with a section $s$ as in 
(\ref{frak:U}).
We
take $\mathfrak{E}=\eE \otimes_{\oO_Y}\oO_{\mathfrak{U}}$
where $\eE$ is a two term complex of vector bundles on $Y$
as in (\ref{vect:vV}), which is 
equipped
with a weight one $\mathbb{C}^{\ast}$-action. 
We set 
\begin{align*}
\mathfrak{U}^{\dag} \cneq 
\Spec S_{\oO_{\mathfrak{U}}}(\mathfrak{E})
=\Spec \aA_{\mathfrak{U}}^{\dag}, \
\mathfrak{U}^{\sharp} \cneq 
\Spec S_{\oO_{\mathfrak{U}}}(\mathfrak{E}^{\vee}[1])
=\Spec \aA_{\mathfrak{U}}^{\sharp}.
\end{align*}
Here 
for a $\oO_Y$-module $(-)$, 
we have written 
$(-)_{\mathfrak{U}}=(-)\dotimes_{\oO_Y} \oO_{\mathfrak{U}}$, and 
we use the same notation below. 
We have the following Cartesian diagrams
for $\star \in \{\dag, \sharp\}$
\begin{align}\label{dia:Udag}
\xymatrix{
\mathfrak{U}^{\star} \ar[r]^-{\rho_{\mathfrak{U}}^{\star}} \ar[d]_-{j^{\star}} \ar@{}[dr]|\square & \mathfrak{U} \ar[d]^-{j} \ar[r]^-{i_{\mathfrak{U}}^{\star}} \ar@{}[dr]|\square & \mathfrak{U}^{\star} 
\ar[d]^-{j^{\star}} \\
Y^{\star}  \ar[r]_-{\rho_Y^{\star}} & Y \ar[r]_-{i_Y^{\star}} & Y^{\star}. 
}
\end{align}
Here $j$ is the closed immersion of affine derived schemes, 
$\rho_{\mathfrak{U}}^{\star}$, $\rho_Y^{\star}$ are projections and 
$i_{\mathfrak{U}}^{\star}$, $i_{Y}^{\star}$ are their zero sections. 
 We have the following slight 
generalization of Proposition~\ref{thm:koszul}:
\begin{lem}\label{prop:koszul:U}
There is an equivalence
\begin{align}\label{equiv:koszul:U}
\Phi_{\mathfrak{U}}^{\rm{op}} \colon 
D^b_{\rm{coh}}([\mathfrak{U}^{\dag}/\mathbb{C}^{\ast}])^{\rm{op}}
\stackrel{\sim}{\to} 
D^b_{\rm{coh}}([\mathfrak{U}^{\sharp}/\mathbb{C}^{\ast}])
\end{align}
which fits into the commutative diagram
\begin{align}\label{dia:DM}
\xymatrix{
D^b_{\rm{coh}}([\mathfrak{U}^{\dag}/\mathbb{C}^{\ast}])^{\rm{op}} \ar[r]^-{\Phi_{\mathfrak{U}}^{\rm{op}}} \ar[d]_-{j^{\dag}_{\ast}} &D^b_{\rm{coh}}([\mathfrak{U}^{\sharp}/\mathbb{C}^{\ast}]) 
 \ar[d]^-{j^{\sharp}_{\ast}} \\
D^b_{\rm{coh}}([Y^{\dag}/\C])^{\rm{op}} \ar[r]_-{\widetilde{\Phi}_{Y}^{\rm{op}}}  &D^b_{\rm{coh}}([Y^{\sharp}/\C]). 
}
\end{align}
Here $\widetilde{\Phi}_Y^{\rm{op}} \cneq \Phi_Y^{\rm{op}} \circ \otimes_{\oO_Y} \det V^{\vee}[-\rank V]$. 
\end{lem}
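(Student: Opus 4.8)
The strategy is to reduce the statement for $\mathfrak{U}^\star$ to the already-established local equivalence on $A^\star$ in Proposition~\ref{thm:koszul}, using the fact that $\mathfrak{U}^\star$ is a derived zero locus over $A^\star$ and that linear Koszul duality, being a derived Morita functor, interacts well with base change / pushforward along the closed immersions $j^\star \colon \mathfrak{U}^\star \hookrightarrow A^\star$. First I would set up the algebra: write $\mathfrak{U}=\Spec\rR(V\to A,s)$, so that $\oO_{\mathfrak U}$ is the Koszul complex $\rR(V\to A,s)$ over $\oO_A$, and observe that $\aA^{\dag}_{\mathfrak U}=\aA^{\dag}_A\dotimes_{\oO_A}\oO_{\mathfrak U}$ and $\aA^{\sharp}_{\mathfrak U}=\aA^{\sharp}_A\dotimes_{\oO_A}\oO_{\mathfrak U}$, each carrying the weight-one $\mathbb{C}^\ast$-action on $\mathfrak E=\eE\dotimes_{\oO_A}\oO_{\mathfrak U}$. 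Thus a $\mathbb{C}^\ast$-equivariant dg-module over $\aA^{\dag}_{\mathfrak U}$ is the same as a $\mathbb{C}^\ast$-equivariant dg-module over $\aA^{\dag}_A$ equipped with a compatible $\oO_{\mathfrak U}$-module structure, i.e. an object of $D^b_{\rm coh}([A^{\dag}/\mathbb{C}^\ast])$ that is pushed forward from $[\mathfrak U^{\dag}/\mathbb{C}^\ast]$; and similarly on the $\sharp$-side. The key point is to identify the essential image of $j^{\dag}_\ast$ (resp. $j^{\sharp}_\ast$) intrinsically, e.g. as the subcategory of objects annihilated by the Koszul differential coming from $s$, equivalently as modules over the dg-algebra $\oO_{\mathfrak U}$.

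The main step is then to run the construction of $\Phi^{\rm op}_A=\xi'\circ\Phi'_A\circ\mathbb D_A$ from the proof of Proposition~\ref{thm:koszul}, but relative to $\oO_{\mathfrak U}$ instead of $\oO_A$. Concretely, $\Phi'_{\mathfrak U}$ is the derived Morita functor $\RHom_{\aA^{\dag}_{\mathfrak U}}(\oO_{\mathfrak U},-)$ (cf. Remark~\ref{rmk:Morita}), and one checks that for $M\in D^b_{\rm coh}([\mathfrak U^{\dag}/\mathbb{C}^\ast])$ one has a natural isomorphism $\RHom_{\aA^{\dag}_A}(\oO_A, j^{\dag}_\ast M)\cong j^{\sharp}_\ast\bigl(\RHom_{\aA^{\dag}_{\mathfrak U}}(\oO_{\mathfrak U},M)\bigr)\otimes(\text{twist})$; this is a Koszul-resolution computation using $\overline{\kK}_{\mathfrak U}=S_{\oO_{\mathfrak U}}(\mathfrak E[1])\otimes_{\oO_{\mathfrak U}}\aA^{\dag}_{\mathfrak U}$ and the fact that $\overline{\kK}_A\dotimes_{\oO_A}\oO_{\mathfrak U}=\overline{\kK}_{\mathfrak U}$. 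The duality functor $\mathbb D_A$ does not preserve the subcategory $\im j^{\dag}_\ast$ (dualizing $\oO_{\mathfrak U}$ over $\oO_A$ produces $\oO_{\mathfrak U}^{\vee}\cong\oO_{\mathfrak U}\otimes_{\oO_A}\det V^{\vee}[-\dim V]$ by the Koszul self-duality already recorded in (\ref{isom:K:dual})), which is precisely the source of the twist $\otimes_{\oO_A}\det V^{\vee}[-\dim V]$ in the definition of $\widetilde\Phi^{\rm op}_A$. Composing, one obtains a functor $\Phi^{\rm op}_{\mathfrak U}$ on $D^b_{\rm coh}([\mathfrak U^{\dag}/\mathbb{C}^\ast])^{\rm op}$ landing in $D^b_{\rm coh}([\mathfrak U^{\sharp}/\mathbb{C}^\ast])$ by construction, and the square (\ref{dia:DM}) commutes up to the chosen twist essentially by the compatibility of $\Phi'$ with pushforward just established.

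To finish, I would prove $\Phi^{\rm op}_{\mathfrak U}$ is an equivalence by exhibiting the quasi-inverse $\Psi^{\rm op}_{\mathfrak U}=\mathbb D_{\mathfrak U}\circ\Psi'_{\mathfrak U}\circ(\xi')^{-1}$, analogously defined with the relative dual $\mathbb D_{\mathfrak U}(-)=\dR\hH om_{\oO_{\mathfrak U}}(-,\oO_{\mathfrak U})$ and $\Psi'_{\mathfrak U}=\kK'_{\mathfrak U}\otimes_{\aA^{\natural}_{\mathfrak U}}(-)$, and checking $\Psi^{\rm op}_{\mathfrak U}\circ\Phi^{\rm op}_{\mathfrak U}\cong\id$ and $\Phi^{\rm op}_{\mathfrak U}\circ\Psi^{\rm op}_{\mathfrak U}\cong\id$. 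These two isomorphisms reduce, via the pushforward identification and faithfulness of $j^\star_\ast$, to the corresponding statements on $A^\star$ from Proposition~\ref{thm:koszul}, together with the compatibility $\mathbb D_A\circ j^{\dag}_\ast\cong j^{\dag}_\ast\circ\mathbb D_{\mathfrak U}$ up to the determinant twist (which cancels between $\Phi^{\rm op}_{\mathfrak U}$ and $\Psi^{\rm op}_{\mathfrak U}$, so that $\widetilde\Phi^{\rm op}_A$ and $\widetilde\Psi^{\rm op}_A$ remain mutually inverse). Alternatively, since $\Phi'_{\mathfrak U}=\RHom_{\aA^{\dag}_{\mathfrak U}}(\oO_{\mathfrak U},-)$ and $\oO_{\mathfrak U}$ is a compact generator of $D^b_{\rm coh}([\mathfrak U^{\dag}/\mathbb{C}^\ast])$ whose derived endomorphism dg-algebra is $\aA^{\sharp}_{\mathfrak U}$ after regrading, $\Phi^{\rm op}_{\mathfrak U}$ is an equivalence by Morita theory, bypassing part of the bookkeeping. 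The main obstacle I anticipate is the careful tracking of the $\mathbb{C}^\ast$-weights and cohomological degrees through $\mathbb D$, $\xi'$, and the determinant twist so that all the squares genuinely commute on the nose (not merely up to unspecified isomorphism); everything else is a relative version of arguments already in the literature and in the excerpt.
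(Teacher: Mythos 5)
Your proposal is correct and follows essentially the same route as the paper: define $\Phi^{\rm op}_{\mathfrak U}=\xi'\circ\Phi'_{\mathfrak U}\circ\mathbb{D}_{\mathfrak U}$ relative to $\oO_{\mathfrak U}$, deduce coherence/boundedness and the square (\ref{dia:DM}) from the projection formula together with $j^{\dag !}\oO_A=\det V|_{\mathfrak U}[-\dim V]$ (exactly the source of the twist $\otimes\det V^{\vee}[-\dim V]$), and invert with $\Psi^{\rm op}_{\mathfrak U}=\mathbb{D}_{\mathfrak U}\circ\Psi'_{\mathfrak U}\circ\xi'^{-1}$. The paper's only variation is in the last step: instead of reducing the quasi-inverse property to $A^{\star}$ via conservativity of $j^{\star}_{\ast}$ (or a Morita argument), it pulls back the kernel isomorphisms $\kK'\otimes_{\aA^{\natural}}\kK\cong\aA^{\dag}$ and $\kK\otimes_{\aA^{\dag}}\kK'\cong\aA^{\natural}$ along $j\colon\mathfrak U\to A$, a minor implementation difference.
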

\begin{proof}
Since 
the equivalence in Proposition~\ref{thm:koszul} 
is defined over $Y$, the equivalence $\Phi_{\mathfrak{U}}$ 
is 
obtained by pulling back the equivalence in Proposition~\ref{thm:koszul}
by the closed immersion of derived schemes
$j \colon \mathfrak{U} \to Y$
(cf.~\cite[Theorem~6.4]{MR2801403}). 

More precisely, the proof is as follows. 
For a dg-module $M$ over $\aA_{\mathfrak{U}}^{\dag}$ 
and a dg-module $N$ over $\aA_{\mathfrak{U}}^{\sharp}$, 
we set
\begin{align*}
\Phi_{\mathfrak{U}}'(M)=
\kK_{\mathfrak{U}} \otimes_{\aA_{\mathfrak{U}}^{\dag}}
M, \ 
\Psi_{\mathfrak{U}}'(N)=
\kK'_{\mathfrak{U}} \otimes_{\aA_{\mathfrak{U}}^{\natural}}
N. 
\end{align*}
Then similarly to (\ref{equiv:covariant}), 
the functors $\Phi_{\mathfrak{U}}^{\rm{op}}$ and $\Psi_{\mathfrak{U}}^{\rm{op}}$ are
defined to be
\begin{align}\label{equiv:U}
\Phi_{\mathfrak{U}}^{\rm{op}}\cneq 
\xi' \circ \Phi_{\mathfrak{U}}'\circ 
\mathbb{D}_{\mathfrak{U}}, \ 
\Psi_{\mathfrak{U}}^{\rm{op}}\cneq 
\mathbb{D}_{\mathfrak{U}}
\circ \Psi_{\mathfrak{U}}'\circ {\xi'}^{-1}.
\end{align}
Here for a dg-module $M$ over $\aA_{\mathfrak{U}}^{\dag}$, 
its $\oO_{\mathfrak{U}}$-dual 
$\mathbb{D}_{\mathfrak{U}}(M)$ is naturally
regarded as a dg-module over $\aA_{\mathfrak{U}}^{\dag}$. 
Note that $\Phi_{\mathfrak{U}}^{\rm{op}}(M)$, $\Psi_{\mathfrak{U}}^{\rm{op}}(N)$
are dg-modules over $\aA_{\mathfrak{U}}^{\sharp}$, 
$\aA_{\mathfrak{U}}^{\dag}$ respectively.
 
We first show that $\Phi^{\rm{op}}_{\mathfrak{U}}$ 
determines a functor (\ref{equiv:koszul:U}), i.e. it preserves the 
coherence and boundedness.  
Let us take an object 
$M \in D^b_{\rm{coh}}([\mathfrak{U}^{\dag}/\mathbb{C}^{\ast}])$. 
By the constructions of $\Phi_{Y}^{\rm{op}}$ and 
$\Phi_{\mathfrak{U}}^{\rm{op}}$, 
using the projection formula 
together with
the Grothendieck duality on $\Dbc(\fU)$
\begin{align*}
	j_{\ast} \circ \mathbb{D}_{\fU}(-)
\cong \mathbb{D}_Y \circ (j_{\ast}(-) \otimes \det V^{\vee}[-\rank V]),
\end{align*}
we see that there is 
a quasi-isomorphism of dg-modules over $\aA^{\sharp}$
\begin{align*}
j_{\ast}^{\sharp} \circ \Phi_{\mathfrak{U}}^{\rm{op}}(M)
\stackrel{\sim}{\to} \Phi_Y^{\rm{op}} \circ (j^{\dag}_{\ast}(M)
\otimes \det V^{\vee}[-\rank V]). 
\end{align*}
Therefore 
by Proposition~\ref{thm:koszul}, 
we have $j^{\sharp}_{\ast} \circ \Phi_{\mathfrak{U}}^{\rm{op}}(M) \in D^b_{\rm{coh}}([Y^{\sharp}/\C])$. 
It follows that we have 
$\Phi_{\mathfrak{U}}^{\rm{op}}(M) \in D^b_{\rm{coh}}([\mathfrak{U}^{\sharp}/\mathbb{C}^{\ast}])$, 
hence we have the functor (\ref{equiv:koszul:U})
and the commutative diagram (\ref{dia:DM}). 

Similarly $\Psi^{\rm{op}}_{\mathfrak{U}}$
determines the functor 
\begin{align*}
\Psi_{\mathfrak{U}}^{\rm{op}} \colon D^b_{\rm{coh}}([\mathfrak{U}^{\sharp}/\mathbb{C}^{\ast}])
\to D^b_{\rm{coh}}([\mathfrak{U}^{\dag}/\mathbb{C}^{\ast}])^{\rm{op}}.
\end{align*}
In order to show that 
$\Psi_{\mathfrak{U}}^{\rm{op}} \circ \Phi_{\mathfrak{U}}^{\rm{op}}$
and $\Phi_{\mathfrak{U}}^{\rm{op}} \circ \Psi_{\mathfrak{U}}^{\rm{op}}$
are identity functors, it is enough to show that
\begin{align*}
\kK'_{\mathfrak{U}} \otimes_{\aA^{\natural}_{\mathfrak{U}}}
\kK_{\mathfrak{U}} \cong 
\aA^{\dag}_{\mathfrak{U}}, \ 
\kK_{\mathfrak{U}} \otimes_{\aA^{\dag}_{\mathfrak{U}}}
\kK_{\mathfrak{U}} \cong 
\aA^{\natural}_{\mathfrak{U}}
\end{align*}
as dg-modules over 
$\aA^{\dag}_{\mathfrak{U}} \otimes_{\oO_{\mathfrak{U}}} \aA^{\dag}_{\mathfrak{U}}$, $\aA^{\natural}_{\mathfrak{U}} \otimes_{\oO_{\mathfrak{U}}} 
\aA^{\natural}_{\mathfrak{U}}$, respectively. 
These isomorphisms
follow by pulling back the isomorphisms
$\kK' \otimes_{\aA^{\natural}} \kK \cong \aA^{\dag}$, 
$\kK \otimes_{\aA^{\dag}} \kK' \cong \aA^{\natural}$
proved in~\cite[Proposition~1.3.1]{MrRi2} by the map 
$j \colon \mathfrak{U} \to Y$
respectively. 
\end{proof}

\subsubsection{Linear Koszul duality: global case}
Let $\mathfrak{M}$ be a quasi-smooth and QCA derived stack
with truncation $\mM=t_0(\mathfrak{M})$. 
Here we assume that it admits an embedding
$(\mathbb{C}^{\ast})_{\fM} \hookrightarrow I_{\fM}$
as in Subsection~\ref{subsec:rig}
so 
that 
we have the 
$\mathbb{C}^{\ast}$-rigidification 
$\mathfrak{M} \to \mathfrak{M}^{\mathbb{C}^{\ast} \rig}$. 
Let us take a perfect object
\begin{align*}
\mathfrak{E} \in \mathrm{Perf}(\mathfrak{M})
\end{align*}
which is of cohomological amplitude $[-1, 0]$, and 
of $\mathbb{C}^{\ast}$-weight one 
with respect to the inertia action. 
We define 
\begin{align*}
\mathfrak{M}^{\dag}=\Spec_{\mathfrak{M}}(S(\mathfrak{E})), \ 
\mathfrak{M}^{\sharp}=\Spec_{\mathfrak{M}}(S(\mathfrak{E}^{\vee}[1])).
\end{align*}
Below we will use the following diagram of derived stacks 
over $\mathfrak{M}^{\mathbb{C}^{\ast} \rig}$
\begin{align}\label{dia:stacs:dag}
\xymatrix{
\mathfrak{M}^{\dag}
\ar[r]_-{\rho^{\dag}} \ar[rd] & \ar@/_5pt/[l]_-{i^{\dag}}
\mathfrak{M} \ar[d] \ar@/^5pt/[r]^-{i^{\sharp}}& \ar[l]^-{\rho^{\sharp}}
 \ar[ld] \mathfrak{M}^{\sharp} \\
&  \mathfrak{M}^{\mathbb{C}^{\ast} \rig}. &
}
\end{align}
Here $\rho^{\dag}$, $\rho^{\sharp}$ are 
projections and $i^{\dag}$, $i^{\sharp}$ are their 
zero sections. 
The following is a global version of Proposition~\ref{thm:koszul}: 
\begin{prop}\label{prop:lKoszul}
There is an equivalence
\begin{align}\label{Phi:global}
\Phi_{\mathfrak{M}}^{\rm{op}} \colon
D^b_{\rm{coh}}(\mathfrak{M}^{\dag})^{\rm{op}} \stackrel{\sim}{\to} 
D^b_{\rm{coh}}(\mathfrak{M}^{\sharp}). 
\end{align}
\end{prop}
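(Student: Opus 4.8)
The strategy is to globalize the local linear Koszul duality of Lemma~\ref{prop:koszul:U} by a descent argument along smooth charts, exactly in the spirit of how $D^b_{\rm{coh}}(\mathfrak{M})$ itself is defined as a homotopy limit over charts $\alpha \colon \mathfrak{U} \to \mathfrak{M}$. First I would observe that, since $\mathfrak{E}$ is perfect of amplitude $[-1,0]$, étale (indeed smooth) locally on $\mathfrak{M}$ it is represented by a two-term complex of vector bundles $[\eE^{-1} \to \eE^0]$ of the form (\ref{vect:vV}), pulled back to an affine chart $\mathfrak{U} = \Spec \rR(V \to A, s)$ as in (\ref{frak:U}). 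Over such a chart, $\mathfrak{M}^{\dag}$ and $\mathfrak{M}^{\sharp}$ restrict to $[\mathfrak{U}^{\dag}/\mathbb{C}^{\ast}]$ and $[\mathfrak{U}^{\sharp}/\mathbb{C}^{\ast}]$ respectively (the $\mathbb{C}^{\ast}$ being the rigidified inertia torus acting with weight one on $\mathfrak{E}$), and Lemma~\ref{prop:koszul:U} provides the equivalence $\Phi_{\mathfrak{U}}^{\rm{op}}$.

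The key step is then to check that these local equivalences are compatible with restriction along a smooth morphism $f \colon \mathfrak{U} \to \mathfrak{U}'$ of charts, i.e. that the square relating $\Phi_{\mathfrak{U}}^{\rm{op}}$ and $\Phi_{\mathfrak{U}'}^{\rm{op}}$ via $\dL f^{\ast}$ commutes up to coherent homotopy. Here I would use the explicit description of $\Phi_{\mathfrak{U}}'$ as a derived Morita functor $\RHom_{\aA_{\mathfrak{U}}^{\dag}}(\oO_{\mathfrak{U}}, -)$ from Remark~\ref{rmk:Morita}, together with the fact that the kernel object $\kK_{\mathfrak{U}}$ is the pullback of $\kK_{\mathfrak{U}'}$ along $f$; the regrading equivalence $\xi'$ and the duality $\mathbb{D}_{\mathfrak{U}}$ are manifestly compatible with flat (hence smooth) pullback, and the discrepancy $j^{\dag !}\oO_A = \det V|_{\mathfrak{U}}[-\dim V]$ appearing in the diagram (\ref{dia:DM}) is local and matches up under $f$ since relative dualizing complexes of smooth morphisms are line bundles that compose correctly. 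Thus the assignment $\mathfrak{U} \mapsto (\Phi_{\mathfrak{U}}^{\rm{op}}, \Psi_{\mathfrak{U}}^{\rm{op}})$ descends to a pair of adjoint functors between the homotopy limits $D^b_{\rm{coh}}(\mathfrak{M}^{\dag})^{\rm{op}}$ and $D^b_{\rm{coh}}(\mathfrak{M}^{\sharp})$. Since being an equivalence can be checked smooth-locally (the unit and counit are isomorphisms after restriction to every chart, by Lemma~\ref{prop:koszul:U}), the induced functor $\Phi_{\mathfrak{M}}^{\rm{op}}$ is an equivalence. The QCA hypothesis guarantees that the relevant categories behave well under the localization/gluing formalism (as used in Lemma~\ref{lem:localization}), so that the homotopy limit of derived categories of coherent sheaves is again $D^b_{\rm{coh}}$ of the stack.

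I expect the main obstacle to be the coherence of the gluing data: the local equivalences must be assembled into a morphism of the diagrams defining the homotopy limits, which a priori requires keeping track of higher homotopies between the $\Phi_{\mathfrak{U}}^{\rm{op}}$ on overlaps of charts, triple overlaps, and so on. The cleanest way around this is to realize $\Phi_{\mathfrak{M}}^{\rm{op}}$ not chart-by-chart but as a single Fourier--Mukai-type functor with an explicit global kernel: namely one can form the global Koszul-type complex $\kK_{\mathfrak{M}}$ on $\mathfrak{M}^{\dag} \times_{\mathfrak{M}} \mathfrak{M}^{\sharp}$ (or rather on the appropriate product over $\mathfrak{M}^{\mathbb{C}^{\ast}\rig}$) built from the tautological pairing $\mathfrak{E} \otimes \mathfrak{E}^{\vee} \to \oO_{\mathfrak{M}}$, together with the global duality functor $\mathbb{D}_{\mathfrak{M}^{\dag}}$ and the global regrading equivalence (which exists because the $\mathbb{C}^{\ast}$-weight decomposition of $D^b_{\rm{coh}}(\mathfrak{M}^{\dag})$ is intrinsic). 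Then $\Phi_{\mathfrak{M}}^{\rm{op}}$ is defined globally in one stroke, its restriction to each chart recovers $\Phi_{\mathfrak{U}}^{\rm{op}}$ by Lemma~\ref{prop:koszul:U}, and one only needs the local statement to verify it is an equivalence. The twisting by $\det$ of the excess bundle that appeared locally is absorbed into the relative dualizing line bundle of $\rho^{\dag}$, which is globally defined; I would record this normalization carefully since it affects the precise form of the functor (and will matter when identifying images of structure sheaves, as in the analogue of "sending $\aA^{\dag}$ to $\oO_A$" in Theorem~\ref{thm:lkoszul}).
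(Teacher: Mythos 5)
Your final formulation --- defining the functor globally in one stroke via a Koszul-type kernel together with duality and the regrading equivalence, and then verifying it is an equivalence chart-by-chart using Lemma~\ref{prop:koszul:U} --- is exactly the paper's proof: there one sets $\Phi^{\rm{op}}_{\mathfrak{M},\rm{dg}}=\xi'\circ\hH om_{S(\mathfrak{E})}(\overline{\kK}_{\mathfrak{M}},-)\circ\mathbb{D}_{\mathfrak{M}}$ with $\overline{\kK}_{\mathfrak{M}}$ the globally defined Koszul resolution of $i^{\dag}_{\ast}\oO_{\mathfrak{M}}$, so no coherence data for gluing the local equivalences is ever needed, and boundedness/coherence, adjunction and invertibility are all checked locally. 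The only cosmetic discrepancies in your write-up are normalizations: the contravariant global functor uses the weight-wise $\oO_{\mathfrak{M}}$-linear dual $\mathbb{D}_{\mathfrak{M}}$ rather than $\mathbb{D}_{\mathfrak{M}^{\dag}}$ (the latter enters only for the covariant version (\ref{aut:covariant})), and the $\det V$ twist of (\ref{dia:DM}) compares $\Phi_{\mathfrak{U}}$ with the ambient $\Phi_A$ along the closed immersion $j$ and plays no role in the global definition.
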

\begin{proof}
Let $\mathfrak{U}=\Spec \rR(V\to Y, s)$ be
an affine derived scheme 
as in (\ref{frak:U}), and 
take a smooth morphism 
$\alpha \colon \mathfrak{U} \to \mathfrak{M}^{\mathbb{C}^{\ast}\rig}$
such that 
$\mathfrak{M} \times_{\mathfrak{M}^{\mathbb{C}^{\ast}\rig}} \mathfrak{U}$
is a trivial $\mathbb{C}^{\ast}$-gerbe over $\mathfrak{U}$, i.e. 
it is equivalent to $[\mathfrak{U}/\mathbb{C}^{\ast}]$
where $\mathbb{C}^{\ast}$ acts on $\mathfrak{U}$ trivially.  
Then the pull-back of the diagram (\ref{dia:stacs:dag}) 
by $\alpha \colon \mathfrak{U} \to \mathfrak{M}^{\mathbb{C}^{\ast}\rig}$
is equivalent to the diagram
\begin{align}\label{dia:stacs:dag2}
\xymatrix{
[\mathfrak{U}^{\dag}/\mathbb{C}^{\ast}]
\ar[r] \ar[rd] & 
[\mathfrak{U}/\mathbb{C}^{\ast}] 
 \ar[d]& \ar[l]
 \ar[ld] [\mathfrak{U}^{\sharp}/\mathbb{C}^{\ast}] \\
&  \mathfrak{U}. &
}
\end{align}
By shrinking $\mathfrak{U}$ if necessary, 
we may assume that the object
$\alpha^{\ast}\mathfrak{E}$
on $[\mathfrak{U}/\mathbb{C}^{\ast}]$ 
is quasi-isomorphic to $\eE \otimes_{\oO_Y}\oO_{\mathfrak{U}}$
for a two term complex of vector bundles (\ref{vect:vV})
on $Y$ with 
$\mathbb{C}^{\ast}$-weight one. 
Then 
we have the equivalence $\Phi_{\mathfrak{U}}^{\rm{op}}$
in Lemma~\ref{prop:koszul:U}, 
which naturally lifts
to a weak equivalence of dg-categories by the construction (\ref{equiv:U})
 \begin{align}\label{PhiU:dg}
\Phi_{\mathfrak{U}, \rm{dg}}^{\rm{op}} \colon L_{\rm{coh}}([\mathfrak{U}^{\dag}/\mathbb{C}^{\ast}])^{\rm{op}}
\stackrel{\sim}{\to}  L_{\rm{coh}}([\mathfrak{U}^{\sharp}/\mathbb{C}^{\ast}]). 
\end{align}

Note that for $\star \in \{\dag, \sharp\}$, 
we can write
\begin{align*}
L_{\rm{coh}}(\mathfrak{M}^{\star})=
\lim_{\mathfrak{U} \to \mathfrak{M}^{\mathbb{C}^{\ast} \rig}}
L_{\rm{coh}}([\mathfrak{U}^{\star}/\mathbb{C}^{\ast}]).
\end{align*}
Here the limit is taken for all 
smooth morphisms $\alpha \colon \fU \to \fM^{\C\rig}$. 
We will see that
the dg-functor
$\Phi_{\mathfrak{U}, \rm{dg}}^{\rm{op}}$
 in (\ref{PhiU:dg})
is naturally globalized to a dg-functor
\begin{align}\label{Phi:global2}
\Phi_{\mathfrak{M}, \rm{dg}}^{\rm{op}}
=\xi' \circ \Phi'_{\mathfrak{M}} \circ
\mathbb{D}_{\mathfrak{M}}  \colon 
L_{\rm{coh}}(\mathfrak{M}^{\dag})^{\rm{op}} \to L_{\rm{coh}}(\mathfrak{M}^{\sharp}).
\end{align}

Below we explain each term of (\ref{Phi:global2}) in the global setting. 
First for a $S(\mathfrak{E})$-module $\fF$ on $\mathfrak{M}$, 
its $\oO_{\mathfrak{M}}$-dual is given by
\begin{align*}
\mathbb{D}_{\mathfrak{M}}(\fF)=\bigoplus_{\lambda \in \mathbb{Z}}
\mathbb{D}_{\mathfrak{M}}(\fF_{\lambda})
\end{align*}
where $\fF_{\lambda}$ is the weight $\lambda$ part of $\fF$. 
Then $\mathbb{D}_{\mathfrak{M}}(\fF)$ is naturally 
equipped with a $S(\mathfrak{E})$-module structure. 
 Next, we set 
$\overline{\kK}_{\mathfrak{M}}$ to be the totalization of the Koszul complex 
in the dg-category $L_{\rm{qcoh}}(\mathfrak{M})$
\begin{align*}
\cdots \to \bigwedge^2\mathfrak{E} \otimes_{\oO_{\mathfrak{M}}}  S(\mathfrak{E}) \to 
\mathfrak{E} \otimes_{\oO_{\mathfrak{M}}} S(\mathfrak{E}) \to S(\mathfrak{E}).
\end{align*}
The above object naturally defines an object in  
$ L_{\rm{coh}}(\mathfrak{M}^{\dag})$
which is equivalent to $i^{\dag}_{\ast}\oO_{\mathfrak{M}}$. 
Then for a $S(\mathfrak{E})$-module $\fF$ on $\mathfrak{M}$,  
we set
\begin{align*}
\Phi_{\mathfrak{M}}'(\fF)=\hH om_{S(\mathfrak{E})}(\overline{\kK}_{\mathfrak{M}}, \fF)
\end{align*}
which is equipped with a 
natural $S(\mathfrak{E}^{\vee}[-1])$-module structure. 
Note that the
above object represents $\dR \hH om_{\mathfrak{M}^{\dag}}(i^{\dag}_{\ast}
\oO_{\mathfrak{M}}, \fF)$
(see~Remark~\ref{rmk:Morita}). 
Finally for a $S(\mathfrak{E}^{\vee}[-1])$-module on $\mathfrak{M}$, 
the regrading functor $\xi'$
is given by 
\begin{align*}
\xi'(\fF)=\bigoplus_{\lambda}\fF_{\lambda}[-2\lambda],
\end{align*}
 which is equipped with a natural
$S(\mathfrak{E}^{\vee}[1])$-module structure. 
The composition $\xi' \circ \Phi_{\mathfrak{M}}' \circ \mathbb{D}_{\mathfrak{M}}$ preserves objects with bounded 
coherent cohomologies, 
since this is true 
 locally on 
$\mathfrak{M}^{\mathbb{C}^{\ast} \rig}$
by Lemma~\ref{prop:koszul:U}.
Therefore 
we obtain the functor (\ref{Phi:global2}).  

Similarly we can globalize $\Psi_{\mathfrak{U}}^{\rm{op}}$ in (\ref{equiv:U})
to a dg-functor $\Psi_{\mathfrak{M}, \rm{dg}}^{\rm{op}}$
from $L_{\rm{coh}}(\mathfrak{M}^{\sharp})$ to $L_{\rm{coh}}(\mathfrak{M}^{\dag})^{\rm{op}}$. 
By taking the induced functors on homotopy categories, we obtain the functors
\begin{align}\label{Phi:Psi:global}
\Phi_{\mathfrak{M}}^{\rm{op}} \colon D^b_{\rm{coh}}(\mathfrak{M}^{\dag})^{\rm{op}} \to
D^b_{\rm{coh}}(\mathfrak{M}^{\sharp}), \ 
\Psi_{\mathfrak{M}}^{\rm{op}} \colon D^b_{\rm{coh}}(\mathfrak{M}^{\sharp}) \to
D^b_{\rm{coh}}(\mathfrak{M}^{\dag})^{\rm{op}}.
\end{align}
They are adjoints each other, and
quasi-inverses each other 
as this is true locally on 
$\mathfrak{M}^{\mathbb{C}^{\ast} \rig}$
by Lemma~\ref{prop:koszul:U}. 
In particular, the functors (\ref{Phi:Psi:global}) are equivalences. 
\end{proof}

Since $\fM^{\dag}$ is quasi-smooth, 
the derived dual $\mathbb{D}_{\mathfrak{M}^{\dag}}$ is 
a contravariant autoequivalence of $D^b_{\rm{coh}}(\mathfrak{M}^{\dag})$. 
Therefore we also have a covariant equivalence, 
\begin{align}\label{aut:covariant}
\Phi_{\mathfrak{M}}\cneq \Phi_{\mathfrak{M}}^{\rm{op}} \circ
\mathbb{D}_{\mathfrak{M}^{\dag}} \colon 
D^b_{\rm{coh}}(\mathfrak{M}^{\dag}) \stackrel{\sim}{\to}
D^b_{\rm{coh}}(\mathfrak{M}^{\sharp})
\end{align}
whose quasi-inverse 
is $\Psi_{\mathfrak{M}}\cneq \mathbb{D}_{\mathfrak{M}^{\dag}} \circ \Psi_{\mathfrak{M}}^{\rm{op}}$.

\subsubsection{Semiorthogonal decomposition}
In the diagram (\ref{dia:stacs:dag}), we set
\begin{align*}
\mathfrak{M}^{\dag}_{\circ} \cneq \mathfrak{M}^{\dag} \setminus i^{\dag}(\mathfrak{M}), \ 
\mathfrak{M}^{\sharp}_{\circ} \cneq \mathfrak{M}^{\sharp} \setminus i^{\sharp}(\mathfrak{M})
\end{align*}
which are open substacks of $\mathfrak{M}^{\dag}$, $\mathfrak{M}^{\sharp}$
respectively. 
We have the following proposition, which is a consequence of~\cite{HalpRem}. 

\begin{prop}\label{prop:sod}
For each $\lambda \in \mathbb{Z}$, the functors
\begin{align}\label{funct:i}
i^{\dag}_{\ast} \colon D^b_{\rm{coh}}(\mathfrak{M})_{\lambda} \to 
D_{\rm{coh}}^b(\mathfrak{M}^{\dag}), \ 
i^{\sharp}_{\ast} \colon D^b_{\rm{coh}}(\mathfrak{M})_{\lambda} \to 
D_{\rm{coh}}^b(\mathfrak{M}^{\sharp})
\end{align}
are fully-faithful.
Moreover we have semiorthogonal decomposition
\begin{align}\label{SOD:i}
&D^b_{\rm{coh}}(\mathfrak{M}^{\dag})=
\langle \ldots, 
i^{\dag}_{\ast}D^b_{\rm{coh}}(\mathfrak{M})_{-1}, \dD^{\dag}, 
i^{\dag}_{\ast}D^b_{\rm{coh}}(\mathfrak{M})_{0}, 
i^{\dag}_{\ast}D^b_{\rm{coh}}(\mathfrak{M})_{1}, \ldots
\rangle, \\
\notag
&D^b_{\rm{coh}}(\mathfrak{M}^{\sharp})=
\langle \ldots, 
i^{\sharp}_{\ast}D^b_{\rm{coh}}(\mathfrak{M})_{1}, \dD^{\sharp}, 
i^{\sharp}_{\ast}D^b_{\rm{coh}}(\mathfrak{M})_{0}, 
i^{\sharp}_{\ast}D^b_{\rm{coh}}(\mathfrak{M})_{-1}, \ldots
\rangle
\end{align}
such that the restriction functors give 
equivalences 
\begin{align*}
\dD^{\dag} \stackrel{\sim}{\to} 
D^b_{\rm{coh}}(\mathfrak{M}^{\dag}_{\circ}), \ 
\dD^{\sharp} \stackrel{\sim}{\to} 
D^b_{\rm{coh}}(\mathfrak{M}^{\sharp}_{\circ}).
\end{align*}
\end{prop}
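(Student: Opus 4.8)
\textbf{Proof strategy for Proposition~\ref{prop:sod}.}
The plan is to reduce the statement to the local model and apply the magic windows / grade-restriction machinery of Halpern-Leistner and the first author's collaborators, precisely in the form already cited as~\cite{HalpRem}. Concretely, $\mathfrak{M}^{\dag}=\Spec_{\mathfrak{M}}S(\mathfrak{E})$ carries a fibrewise $\mathbb{C}^{\ast}$-action coming from the inertia $\mathbb{C}^{\ast}$-weight one structure on $\mathfrak{E}$ (this is the action whose fixed locus is $i^{\dag}(\mathfrak{M})$ and whose semistable locus for the appropriate linearization is $\mathfrak{M}^{\dag}_{\circ}$), and likewise for $\mathfrak{M}^{\sharp}$. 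First I would observe that $\mathfrak{M}^{\dag}\to\mathfrak{M}^{\mathbb{C}^{\ast}\rig}$ is a quotient presentation in the sense required for a $\Theta$-stratification with a single unstable stratum supported on $i^{\dag}(\mathfrak{M})$; the KN/Theta-stratum data is exactly the zero section, the destabilizing one-parameter subgroup is the scaling $\mathbb{C}^{\ast}$, and the fixed-locus category is $\bigoplus_{\lambda}D^b_{\rm{coh}}(\mathfrak{M})_{\lambda}$ via the inertia decomposition (\ref{decompose:chi}).

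Next I would record the two pieces that the general theory produces. Fully-faithfulness of $i^{\dag}_{\ast}$ on each weight-$\lambda$ piece follows from the fact that $i^{\dag}$ is a section of the affine bundle-like projection $\rho^{\dag}$, so $\rho^{\dag}_{\ast}i^{\dag}_{\ast}=\id$ up to the usual Koszul filtration, and the weight grading separates the pieces $S^k(\mathfrak{E})$ appearing in $\rho^{\dag}_{\ast}i^{\dag}_{\ast}\oO_{\mathfrak{M}}$; this is a standard computation identical to Example~\ref{exam:ssupport} and~\cite[Theorem~4.2.6]{MR3300415}. For the semiorthogonal decomposition, the window theorem of~\cite{HalpRem} gives, for each integer window width, a subcategory $\dD^{\dag}\subset D^b_{\rm{coh}}(\mathfrak{M}^{\dag})$ of objects satisfying the grade-restriction rule at the zero-section stratum, such that restriction $\dD^{\dag}\xrightarrow{\sim}D^b_{\rm{coh}}(\mathfrak{M}^{\dag}_{\circ})$ is an equivalence, together with the orthogonal "baby" pieces $i^{\dag}_{\ast}D^b_{\rm{coh}}(\mathfrak{M})_{\lambda}$ filling out the complement; the ordering of the $\lambda$'s and the placement of $\dD^{\dag}$ among them is dictated by the sign of the weight of $\det(\mathfrak{E})$ along the stratum. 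The $\sharp$-version is the same argument with $\mathfrak{E}^{\vee}[1]$ in place of $\mathfrak{E}$, which flips the weights of the conormal bundle and hence reverses the order of the $\lambda$'s and moves $\dD^{\sharp}$ to the other side, exactly as stated.

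The one point needing a little care, and which I expect to be the main obstacle, is justifying that $\mathfrak{M}^{\dag}$ (and $\mathfrak{M}^{\sharp}$) genuinely falls within the scope of~\cite{HalpRem}: the base $\mathfrak{M}$ is itself a quasi-smooth QCA derived stack, not a smooth scheme or a global quotient of one, so one must either cite the derived/stacky generalization of the window theorem or reduce to the local model via smooth descent. The clean way is the latter: pick a smooth atlas $\alpha\colon\mathfrak{U}=\Spec\rR(V\to A,s)\to\mathfrak{M}^{\mathbb{C}^{\ast}\rig}$ as in (\ref{map:alpha}), over which $\mathfrak{E}$ is represented by a two-term complex of vector bundles (\ref{vect:vV}), so that the pulled-back picture is $[\mathfrak{U}^{\dag}/\mathbb{C}^{\ast}]$ with the scaling action — precisely the situation of Lemma~\ref{prop:koszul:U} and of the affine window theorem. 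On such charts the SOD and the fully-faithfulness are the classical statements; then one checks these are compatible with the restriction maps in the smooth diagrams (\ref{dia:smooth2}), using that the window subcategories and the baby pieces are defined by conditions (grade restriction, inertia weight) that are preserved under pullback along smooth maps — the same functoriality already invoked in Remark~\ref{rmk:intrinsic} — and glues, exactly as in the proof of Proposition~\ref{prop:lKoszul}. Finally I would note that the essential surjectivity of the restriction $\dD^{\dag}\to D^b_{\rm{coh}}(\mathfrak{M}^{\dag}_{\circ})$ uses the QCA hypothesis through Lemma~\ref{lem:localization} (extension of coherent sheaves from the open substack), so that no finiteness issue obstructs the argument.
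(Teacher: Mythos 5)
Your identification of the key input coincides with the paper's: Proposition~\ref{prop:sod} is proved there in essentially two lines, by applying \cite[Theorem~3.2]{HalpRem} to the $\Theta$-stratifications $\mathfrak{M}^{\dag}=\mathfrak{M}^{\dag}_{\circ}\cup i^{\dag}(\mathfrak{M})$ and $\mathfrak{M}^{\sharp}=\mathfrak{M}^{\sharp}_{\circ}\cup i^{\sharp}(\mathfrak{M})$, the only remark being that the weights in the semiorthogonal components appear with opposite signs on the two sides because $\mathfrak{E}$ and $\mathfrak{E}^{\vee}[1]$ carry opposite $\mathbb{C}^{\ast}$-weights. Both the fully-faithfulness of the pieces $i^{\dag}_{\ast}D^b_{\rm{coh}}(\mathfrak{M})_{\lambda}$ and the equivalence $\dD^{\dag}\stackrel{\sim}{\to}D^b_{\rm{coh}}(\mathfrak{M}^{\dag}_{\circ})$ are part of the conclusion of that theorem, so no separate argument is required; your weight-separation sketch for fully-faithfulness is fine in substance (the relevant computation is $\dL i^{\dag\ast}i^{\dag}_{\ast}$ via the Koszul filtration, not $\rho^{\dag}_{\ast}i^{\dag}_{\ast}$, and the pointers to Example~\ref{exam:ssupport} and \cite[Theorem~4.2.6]{MR3300415} are not really pertinent), but it is redundant.

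The genuine weak point is precisely the step you single out as "needing a little care". The worry that a quasi-smooth QCA derived stack falls outside the scope of \cite{HalpRem} is unfounded: that reference is formulated for $\Theta$-stratified derived stacks, which is exactly why it is invoked here, and no reduction to the local model is needed. More seriously, the substitute you propose --- prove the SOD on charts $[\mathfrak{U}^{\dag}/\mathbb{C}^{\ast}]$ and then "glue, exactly as in the proof of Proposition~\ref{prop:lKoszul}" --- does not go through as stated. In Proposition~\ref{prop:lKoszul} what is glued is a functor, given by an explicit kernel-type construction manifestly compatible with the limit presentation of $L_{\rm{coh}}(\mathfrak{M}^{\star})$ over the diagrams (\ref{dia:smooth2}). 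A semiorthogonal decomposition is not a smooth-local datum in the same way: semiorthogonality is a Hom-vanishing condition and can be checked after smooth pullback, but generation cannot, and what one actually has to descend are the projection (truncation) functors onto the pieces together with their compatibilities over (\ref{dia:smooth2}). Producing those global truncation functors is exactly the content of the $\Theta$-stratification theorem you are trying to avoid, so this route is either circular or amounts to reproving \cite{HalpRem} in this setting. The short correct proof is the direct citation together with the sign-flip observation.
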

\begin{proof}
The proposition is a consequence 
of~\cite[Theorem~3.2]{HalpRem}
by applying it 
for the $\Theta$-stratifications 
$\mathfrak{M}^{\dag}=\mathfrak{M}^{\dag}_{\circ} \cup i^{\dag}(\mathfrak{M})$, 
$\mathfrak{M}^{\sharp}=\mathfrak{M}^{\dag}_{\circ} \cup i^{\sharp}(\mathfrak{M})$.
Here we note that the weights in the above semiorthogonal 
components
have opposite signs for $\mathfrak{M}^{\dag}$ and $\mathfrak{M}^{\sharp}$, 
because the $\mathbb{C}^{\ast}$-weights on $\mathfrak{E}$ and 
$\mathfrak{E}^{\vee}[1]$ have the opposite sign. 
\end{proof}

\begin{prop}\label{lem:rhoast}
The functors
\begin{align}\label{funct:ast}
\rho^{\dag\ast} \colon 
D^b_{\rm{coh}}(\mathfrak{M})_{\lambda} \to D^b_{\rm{coh}}(\mathfrak{M}^{\dag}), \ 
\rho^{\sharp\ast} \colon 
D^b_{\rm{coh}}(\mathfrak{M})_{\lambda} \to D^b_{\rm{coh}}(\mathfrak{M}^{\sharp})
\end{align}
are fully-faithful. 
Moreover
for the equivalence $\Phi_{\mathfrak{M}}^{\rm{op}}$ in (\ref{Phi:global}), 
we have 
\begin{align*}
\Phi_{\mathfrak{M}}^{\rm{op}}
(\rho^{\dag\ast}D^b_{\rm{coh}}(\mathfrak{M})_{\lambda})
=i^{\sharp}_{\ast} D^b_{\rm{coh}}(\mathfrak{M})_{-\lambda}, \ 
\Phi_{\mathfrak{M}}^{\rm{op}}(i^{\dag}_{\ast} 
D^b_{\rm{coh}}(\mathfrak{M})_{\lambda})
=\rho^{\sharp\ast}D^b_{\rm{coh}}(\mathfrak{M})_{-\lambda}. 
\end{align*}
\end{prop}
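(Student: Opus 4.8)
The plan is to handle the two assertions separately: full-faithfulness of $\rho^{\dag\ast}$ and $\rho^{\sharp\ast}$ by a direct adjunction argument, and the identification of the images under $\Phi^{\rm{op}}_{\mathfrak{M}}$ by propagating the structure-sheaf case already recorded in Theorem~\ref{thm:lkoszul} (and its globalization in Proposition~\ref{prop:lKoszul}).

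For full-faithfulness of $\rho^{\dag\ast}$ on $D^b_{\rm{coh}}(\mathfrak{M})_{\lambda}$: since $\rho^{\dag}$ is affine, the projection formula gives $\rho^{\dag}_{\ast}\rho^{\dag\ast}\gG\cong\gG\otimes_{\oO_{\mathfrak{M}}}S(\mathfrak{E})=\bigoplus_{j\ge 0}\gG\otimes\Sym^{j}\mathfrak{E}$, and adjunction yields $\RHom_{\mathfrak{M}^{\dag}}(\rho^{\dag\ast}\fF,\rho^{\dag\ast}\gG)\cong\bigoplus_{j\ge 0}\RHom_{\mathfrak{M}}(\fF,\gG\otimes\Sym^{j}\mathfrak{E})$. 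Because $\mathfrak{E}$ has inertia weight one, $\gG\otimes\Sym^{j}\mathfrak{E}$ lies in $D^b_{\rm{coh}}(\mathfrak{M})_{\lambda+j}$, so the orthogonality built into the decomposition (\ref{decompose:chi}) kills every summand with $j\ge 1$ and leaves $\RHom_{\mathfrak{M}}(\fF,\gG)$. The identical argument with $S(\mathfrak{E}^{\vee}[1])$, whose $\Sym^{j}$-summand has weight $-j$, proves full-faithfulness of $\rho^{\sharp\ast}$.

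For the second part I would use the explicit shape $\Phi^{\rm{op}}_{\mathfrak{M}}=\xi'\circ\Phi'_{\mathfrak{M}}\circ\mathbb{D}_{\mathfrak{M}}$ of the global Koszul functor from the proof of Proposition~\ref{prop:lKoszul}, where $\Phi'_{\mathfrak{M}}(\fF)=\hH om_{S(\mathfrak{E})}(\overline{\kK}_{\mathfrak{M}},\fF)$ represents $\RHom_{\mathfrak{M}^{\dag}}(i^{\dag}_{\ast}\oO_{\mathfrak{M}},\fF)$ (Remark~\ref{rmk:Morita}). Since $D^b_{\rm{coh}}(\mathfrak{M})_{\lambda}$ is generated under shifts and cones by its heart $\Coh(\mathfrak{M})_{\lambda}$ and all functors in sight are exact, it suffices to treat a single weight-$\lambda$ sheaf $\hH$. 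Writing $\rho^{\dag\ast}\hH=\hH\otimes_{\oO_{\mathfrak{M}}}S(\mathfrak{E})$ and $i^{\dag}_{\ast}\hH=\hH\otimes_{\oO_{\mathfrak{M}}}i^{\dag}_{\ast}\oO_{\mathfrak{M}}$, one commutes $\mathbb{D}_{\mathfrak{M}}$ and $\Phi'_{\mathfrak{M}}$ past the $\oO_{\mathfrak{M}}$-module factor $\mathbb{D}_{\mathfrak{M}}(\hH)$ — legitimate because $\Phi'_{\mathfrak{M}}$ is computed against $\overline{\kK}_{\mathfrak{M}}$, a bounded complex of $\rho^{\dag\ast}$-pulled-back perfect $\oO_{\mathfrak{M}}$-modules, so no perfectness of $\hH$ is needed — thereby reducing to the known values $\Phi^{\rm{op}}_{\mathfrak{M}}(\rho^{\dag\ast}\oO_{\mathfrak{M}})=i^{\sharp}_{\ast}\oO_{\mathfrak{M}}$ and $\Phi^{\rm{op}}_{\mathfrak{M}}(i^{\dag}_{\ast}\oO_{\mathfrak{M}})=\rho^{\sharp\ast}\oO_{\mathfrak{M}}$. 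Combined with the standard computations $\RHom_{\mathfrak{M}^{\dag}}(i^{\dag}_{\ast}\oO_{\mathfrak{M}},i^{\dag}_{\ast}\oO_{\mathfrak{M}})\cong S(\mathfrak{E}^{\vee}[-1])$, $\xi'(S(\mathfrak{E}^{\vee}[-1]))=S(\mathfrak{E}^{\vee}[1])$, and the effect of $\xi'$ on an object of pure inertia weight $-\lambda$ (a shift by $[2\lambda]$), this should give
\[
\Phi^{\rm{op}}_{\mathfrak{M}}(\rho^{\dag\ast}\hH)\cong i^{\sharp}_{\ast}(\mathbb{D}_{\mathfrak{M}}\hH)[2\lambda],\qquad
\Phi^{\rm{op}}_{\mathfrak{M}}(i^{\dag}_{\ast}\hH)\cong\rho^{\sharp\ast}(\mathbb{D}_{\mathfrak{M}}\hH)[2\lambda].
\]
Since $\mathbb{D}_{\mathfrak{M}}$ and $[2\lambda]$ are (anti)autoequivalences carrying $D^b_{\rm{coh}}(\mathfrak{M})_{\lambda}$ onto $D^b_{\rm{coh}}(\mathfrak{M})_{-\lambda}$, these identities yield the inclusions $\Phi^{\rm{op}}_{\mathfrak{M}}(\rho^{\dag\ast}D^b_{\rm{coh}}(\mathfrak{M})_{\lambda})\subseteq i^{\sharp}_{\ast}D^b_{\rm{coh}}(\mathfrak{M})_{-\lambda}$ and $\Phi^{\rm{op}}_{\mathfrak{M}}(i^{\dag}_{\ast}D^b_{\rm{coh}}(\mathfrak{M})_{\lambda})\subseteq\rho^{\sharp\ast}D^b_{\rm{coh}}(\mathfrak{M})_{-\lambda}$; applying the quasi-inverse $\Psi^{\rm{op}}_{\mathfrak{M}}$, which has the symmetric shape with the roles of $\dag$ and $\sharp$ exchanged, and running the mirror computation promotes both inclusions to equalities. (Alternatively, each identity may be checked after pulling back along a smooth chart $\mathfrak{U}\to\mathfrak{M}^{\mathbb{C}^{\ast}\rig}$, where it reduces to Lemma~\ref{prop:koszul:U} and the local version of the same bookkeeping.)

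The main obstacle will be precisely this grading bookkeeping: correctly tracking how the regrading functor $\xi'$ interacts with the cohomological shift $[2\lambda]$ and with the weight-wise dual $\mathbb{D}_{\mathfrak{M}}$, and verifying that the "$\oO_{\mathfrak{M}}$-linearity of $\Phi^{\rm{op}}_{\mathfrak{M}}$ up to dualization and regrading" really does suffice to propagate the structure-sheaf case to an arbitrary weight-$\lambda$ coefficient $\hH$ (in particular that nothing breaks when $\hH$ is only coherent, not perfect, on the quasi-smooth stack $\mathfrak{M}$). Everything else — the adjunction argument for full-faithfulness and the promotion of inclusions to equalities via the quasi-inverse — is routine once the decomposition (\ref{decompose:chi}) and the equivalence (\ref{Phi:global}) are available.
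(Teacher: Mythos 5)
Your proof of the first assertion is the same as the paper's: the adjunction $\rho^{\dag}_{\ast}\rho^{\dag\ast}\gG \cong \gG\otimes S(\mathfrak{E})$, projection onto the weight $\lambda$ piece, and the observation that only the $j=0$ summand survives because $\mathfrak{E}$ has inertia weight one.

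For the second assertion you take a genuinely different route. The paper argues locally: it reduces to a chart $\mathfrak{U}\to\mathfrak{M}^{\mathbb{C}^{\ast}\rig}$, treats first the smooth ambient $A$ (where $D^b_{\rm{coh}}(A)$ is generated by $\oO_A$, so one only has to compute $\Phi_A^{\rm{op}}(\oO_A(\lambda))$ and $\Phi_A^{\rm{op}}(\oO_{A^{\dag}}(\lambda))$), and then transports the conclusion from $A^{\star}$ to $\mathfrak{U}^{\star}$ via $j^{\sharp}_{\ast}$ and Lemma~\ref{lem:push}. You instead argue globally, propagating the structure-sheaf case to an arbitrary weight-$\lambda$ sheaf $\hH$ by exhibiting $\Phi^{\rm{op}}_{\mathfrak{M}}$ as $\oO_{\mathfrak{M}}$-linear up to the weight-wise dual and regrading, using that $\Phi'_{\mathfrak{M}}(-)=\hH om_{S(\mathfrak{E})}(\overline{\kK}_{\mathfrak{M}},-)$ is weight-by-weight a tensor against a perfect $\oO_{\mathfrak{M}}$-complex. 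This buys you clean closed-form identities $\Phi^{\rm{op}}_{\mathfrak{M}}(\rho^{\dag\ast}\hH)\cong i^{\sharp}_{\ast}(\mathbb{D}_{\mathfrak{M}}\hH[2\lambda])$ and $\Phi^{\rm{op}}_{\mathfrak{M}}(i^{\dag}_{\ast}\hH)\cong\rho^{\sharp\ast}(\mathbb{D}_{\mathfrak{M}}\hH[2\lambda])$ directly on $\mathfrak{M}$, which is more informative than what the paper records; the paper's route avoids having to justify that linearity and instead leans on the descent lemma it already proved. Your final promotion of inclusions to equalities via $\Psi^{\rm{op}}_{\mathfrak{M}}$ is the same move as the paper's.

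Two small corrections. First, $\overline{\kK}_{\mathfrak{M}}$ is \emph{not} a bounded complex: $\Sym^j(\mathfrak{E}[1])$ is nonzero for all $j$ since $\mathfrak{E}^{-1}$ has positive rank. What is true — and all your argument needs — is that each fixed $\mathbb{C}^{\ast}$-weight component of $\overline{\kK}_{\mathfrak{M}}$ is a bounded complex of finite-rank free $S(\mathfrak{E})$-modules; the projection formula should therefore be run weight-by-weight, which is exactly the setting in which $\mathbb{D}_{\mathfrak{M}}$ is defined. Second, in the local check the paper writes $\Phi_A^{\rm{op}}(\oO_A(\lambda))=\oO_{A^{\sharp}}(-\lambda)$ with no cohomological shift, whereas your formula (and a direct unwinding of $\xi'$) yields $\oO_{A^{\sharp}}(-\lambda)[2\lambda]$; this is a harmless discrepancy of convention since the shift does not move an object out of the subcategory in question, but be aware of it when you compare with the text.
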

\begin{proof}
The right adjoint functor of $\rho^{\dag\ast}$ in (\ref{funct:ast})
is given by $(\rho^{\dag}_{\ast})_{\lambda}$, where
$(-)_{\lambda}$ means taking the weight $\lambda$ part. 
There is a natural transform
\begin{align*}
(-) \to (\rho^{\dag}_{\ast})_{\lambda}\circ \rho^{\dag\ast}(-)
=\left((-) \otimes_{\oO_{\mathfrak{M}}}S(\mathfrak{E}) \right)_{\lambda}
\end{align*}
which is obviously an isomorphism as $\mathfrak{E}$ is of 
$\mathbb{C}^{\ast}$-weight one. 
Therefore $\rho^{\dag\ast}$ is fully-faithful, and 
the proof for $\rho^{\sharp\ast}$ is similar. 

As for the second statement, the statement is local 
on $\mathfrak{M}^{\mathbb{C}^{\ast} \rig}$, so 
we can assume that 
$\mathfrak{M}=[\mathfrak{U}/\mathbb{C}^{\ast}]$ where 
$\mathfrak{U}$ is given as in (\ref{frak:U})
with trivial $\mathbb{C}^{\ast}$-action, 
and $\mathfrak{E}=\eE \otimes_{\oO_Y} \oO_{\mathfrak{U}}$
for a two term complex of vector bundles $\eE$ as in (\ref{vect:vV}). 
We first consider the case that $\mathfrak{U}=Y$. 
For $m \in \mathbb{Z}$
and $M \in D_{\rm{qcoh}}([Y/\mathbb{C}^{\ast}])$, 
we write $M(m)=M \otimes_{\mathbb{C}} \mathbb{C}(m)$ where 
$\mathbb{C}(m)$ is the one dimensional $\mathbb{C}^{\ast}$-representation 
with weight $m$. 
In the notation of the proof of Proposition~\ref{thm:koszul}, we have  
\begin{align*}
\Phi_{Y}^{\rm{op}}(\oO_Y(\lambda))=
\xi' \circ \Phi' \circ \mathbb{D}_Y(\oO_Y(\lambda))=
\oO_{Y^{\sharp}}(-\lambda). 
\end{align*}
Similarly we have 
$\Phi_Y^{\rm{op}}(\oO_{Y^{\dag}}(\lambda))=\oO_Y(-\lambda)$.
Since $D^b_{\rm{coh}}(Y)$ is locally 
generated by $\oO_Y$, 
for $\fF \in D^b_{\rm{coh}}([Y/\mathbb{C}^{\ast}])_{\lambda})$
we have 
\begin{align}\label{Phi:AF}
\Phi_{Y}^{\rm{op}}
(\rho_Y^{\dag\ast}\fF)
\in i^{\sharp}_{Y\ast} D^b_{\rm{coh}}([Y/\mathbb{C}^{\ast}])_{-\lambda}, \
\Phi_{Y}^{\rm{op}}(i^{\dag}_{Y\ast} \fF) \in 
 \rho_Y^{\sharp\ast}D^b_{\rm{coh}}([Y/\mathbb{C}^{\ast}])_{-\lambda}. 
\end{align}

In general, let us take 
an object $\fF \in D^b_{\rm{coh}}([\mathfrak{U}/\mathbb{C}^{\ast}])_{\lambda}$. 
By the commutative diagrams (\ref{dia:Udag}), (\ref{dia:DM}) together with (\ref{Phi:AF}), we have 
\begin{align*}
&j^{\sharp}_{\ast}\Phi_{\mathfrak{U}}^{\rm{op}} (\rho_{\mathfrak{U}}^{\dag \ast}\fF)
\cong \widetilde{\Phi}_Y^{\rm{op}}(j^{\dag}_{\ast}\rho_{\mathfrak{U}}^{\dag\ast}\fF)
\cong \widetilde{\Phi}_Y^{\rm{op}}(\rho_Y^{\dag\ast}j_{\ast}\fF)
 \in i_{Y\ast}^{\sharp}D^b_{\rm{coh}}([Y/\mathbb{C}^{\ast}])_{-\lambda}, \\
&j^{\sharp}_{\ast}\Phi_{\mathfrak{U}}^{\rm{op}}(i_{\mathfrak{U}\ast}^{\dag}\fF)
\cong \widetilde{\Phi}_Y^{\rm{op}}(j^{\dag}_{\ast}i_{\mathfrak{U}\ast}^{\dag}\fF)
\cong \widetilde{\Phi}_Y^{\rm{op}}(i_{Y\ast}^{\dag}j_{\ast}\fF) \in \rho_Y^{\sharp \ast}D^b_{\rm{coh}}([Y/\mathbb{C}^{\ast}])_{-\lambda}. 
\end{align*}
Then by Lemma~\ref{lem:push0} below, we have the inclusions
\begin{align}\notag
&\Phi_{\mathfrak{U}}^{\rm{op}}
(\rho_{\mathfrak{U}}^{\dag\ast}D^b_{\rm{coh}}([\mathfrak{U}/\mathbb{C}^{\ast}])_{\lambda})
\subset i^{\sharp}_{\mathfrak{U}\ast} D^b_{\rm{coh}}([\mathfrak{U}/\mathbb{C}^{\ast}])_{-\lambda}, \\ 
\notag
&\Phi_{\mathfrak{U}}^{\rm{op}}(i^{\dag}_{\mathfrak{U}\ast} 
D^b_{\rm{coh}}([\mathfrak{U}/\mathbb{C}^{\ast}])_{\lambda})
\subset \rho_{\mathfrak{U}}^{\sharp\ast}D^b_{\rm{coh}}([\mathfrak{U}/\mathbb{C}^{\ast}])_{-\lambda}
\end{align}
respectively. 
By applying the same argument for 
$\Psi_{\mathfrak{M}}^{\rm{op}}$ given in (\ref{Phi:Psi:global}), we see that the
above inclusions
are identities. Therefore the proposition holds. 
\end{proof}

Here we have used the following lemma. 
\begin{lem}\label{lem:push0}
In the proof of Proposition~\ref{lem:rhoast}, 
we have the following: 
\begin{enumerate}
\item 
An 
 object $M \in D^b_{\rm{coh}}([\mathfrak{U}^{\sharp}/\mathbb{C}^{\ast}])$
is an object in 
$i_{\mathfrak{U}\ast}^{\sharp}D^b_{\rm{coh}}([\mathfrak{U}/\mathbb{C}^{\ast}])_{\lambda}$
if and only if $j^{\sharp}_{\ast}M$ 
is an object in $i_{Y\ast}^{\sharp}D^b_{\rm{coh}}([Y/\mathbb{C}^{\ast}])_{\lambda}$. 
\item An object 
$M \in D^b_{\rm{coh}}([\mathfrak{U}^{\sharp}/\mathbb{C}^{\ast}])$ is an object in 
$\rho_{\mathfrak{U}}^{\sharp\ast} D^b_{\rm{coh}}([\mathfrak{U}/\mathbb{C}^{\ast}])_{\lambda}$
if and only if $j^{\sharp}_{\ast}M$ is an object in $\rho_Y^{\sharp\ast} D^b_{\rm{coh}}([Y/\mathbb{C}^{\ast}])_{\lambda}$. 
\end{enumerate}
\end{lem}
\begin{proof}
As for (i), since the functors (\ref{funct:i}) are fully-faithful, 
the subcategory
$i^{\sharp}_{\ast}D^b_{\rm{coh}}(\mathfrak{M})_{\lambda}$
in $D^b_{\rm{coh}}(\mathfrak{M}^{\sharp})$ is identified with 
the subcategory 
of objects 
$\fF \in D^b_{\rm{coh}}(\mathfrak{M}^{\sharp})$
such that each $\hH^i(\fF)$ is an $\oO_{\mM}$-module with 
$\mathbb{C}^{\ast}$-weight $\lambda$. 
Then (i) follows from this fact. 

As for (ii), 
suppose that 
 $M$ satisfies the latter condition. 
Then the natural map
\begin{align*}
j^{\sharp}_{\ast}\rho_{\mathfrak{U}}^{\sharp \ast}  (\rho_{\mathfrak{U}\ast}^{\sharp})_{\lambda}M
\cong 
\rho_Y^{\sharp \ast}  (\rho_{Y\ast}^{\sharp})_{\lambda}  j^{\sharp}_{\ast}M 
\to j^{\sharp}_{\ast}M
\end{align*}
is an isomorphism. 
Here the first isomorphism follows from base change 
with respect to the diagram (\ref{dia:Udag}). 
Therefore the natural map 
$\rho_{\mathfrak{U}}^{\sharp \ast}  (\rho_{\mathfrak{U}\ast}^{\sharp})_{\lambda}M \to M$
is also an isomorphism, 
since 
the functor $j^{\sharp}_{\ast} \colon 
 D^b_{\rm{coh}}([\mathfrak{U}^{\sharp}/\mathbb{C}^{\ast}])
 \to  D^b_{\rm{coh}}([Y^{\sharp}/\C])$
 is conservative. 
It follows that $M$ is an object in 
$\rho_{\mathfrak{U}}^{\sharp\ast} D^b_{\rm{coh}}([\mathfrak{U} /\mathbb{C}^{\ast}])_{\lambda}$. 
\end{proof}

Using Proposition~\ref{lem:rhoast}, we show the following lemma. 
\begin{lem}\label{lem:sod:perf}
We have the semiorthogonal decomposition
\begin{align}\label{SOD:perf}
D^b_{\rm{coh}}(\mathfrak{M}^{\dag})=
\langle \ldots, 
\rho^{\dag\ast}D^b_{\rm{coh}}(\mathfrak{M})_{1}, 
\rho^{\dag\ast}D^b_{\rm{coh}}(\mathfrak{M})_{0}, \tT^{\dag}, 
\rho^{\dag\ast}D^b_{\rm{coh}}(\mathfrak{M})_{-1}, \ldots
\rangle
\end{align}
together with an equivalence $\tT^{\dag} \stackrel{\sim}{\to} D^b_{\rm{coh}}(\mathfrak{M}^{\sharp}_{\circ})$. 
\end{lem}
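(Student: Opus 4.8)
\textbf{Proof strategy for Lemma~\ref{lem:sod:perf}.}
The plan is to transport the SOD of Proposition~\ref{prop:sod} for $\mathfrak{M}^{\sharp}$ across the Koszul equivalence $\Phi_{\mathfrak{M}}^{\rm{op}} \colon D^b_{\rm{coh}}(\mathfrak{M}^{\dag})^{\rm{op}} \stackrel{\sim}{\to} D^b_{\rm{coh}}(\mathfrak{M}^{\sharp})$ of Proposition~\ref{prop:lKoszul}. Concretely, I would start from the second line of the semiorthogonal decomposition (\ref{SOD:i}),
\[
D^b_{\rm{coh}}(\mathfrak{M}^{\sharp})=
\langle \ldots,
i^{\sharp}_{\ast}D^b_{\rm{coh}}(\mathfrak{M})_{1}, \dD^{\sharp},
i^{\sharp}_{\ast}D^b_{\rm{coh}}(\mathfrak{M})_{0},
i^{\sharp}_{\ast}D^b_{\rm{coh}}(\mathfrak{M})_{-1}, \ldots
\rangle,
\]
and apply the contravariant equivalence $\Psi_{\mathfrak{M}}^{\rm{op}}$ (the quasi-inverse of $\Phi_{\mathfrak{M}}^{\rm{op}}$). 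A contravariant equivalence reverses the order of a semiorthogonal decomposition, so the images of the pieces will reassemble into an SOD of $D^b_{\rm{coh}}(\mathfrak{M}^{\dag})$ with the order of the factors reversed.

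The key computational input is Proposition~\ref{lem:rhoast}, which tells us exactly where the relevant subcategories go: $\Phi_{\mathfrak{M}}^{\rm{op}}(i^{\dag}_{\ast}D^b_{\rm{coh}}(\mathfrak{M})_{\lambda})=\rho^{\sharp\ast}D^b_{\rm{coh}}(\mathfrak{M})_{-\lambda}$ and $\Phi_{\mathfrak{M}}^{\rm{op}}(\rho^{\dag\ast}D^b_{\rm{coh}}(\mathfrak{M})_{\lambda})=i^{\sharp}_{\ast}D^b_{\rm{coh}}(\mathfrak{M})_{-\lambda}$. Reading the second of these backwards (via $\Psi_{\mathfrak{M}}^{\rm{op}}$) shows $\Psi_{\mathfrak{M}}^{\rm{op}}(i^{\sharp}_{\ast}D^b_{\rm{coh}}(\mathfrak{M})_{\lambda})=\rho^{\dag\ast}D^b_{\rm{coh}}(\mathfrak{M})_{-\lambda}$; applying this with the weights in (\ref{SOD:i}) and tracking the sign flip $\lambda \mapsto -\lambda$ produces precisely the factors $\rho^{\dag\ast}D^b_{\rm{coh}}(\mathfrak{M})_{1}$, $\rho^{\dag\ast}D^b_{\rm{coh}}(\mathfrak{M})_{0}$, $\rho^{\dag\ast}D^b_{\rm{coh}}(\mathfrak{M})_{-1},\ldots$ in the claimed order in (\ref{SOD:perf}). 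For the distinguished ``middle'' piece, set $\tT^{\dag} \cneq \Psi_{\mathfrak{M}}^{\rm{op}}(\dD^{\sharp})$; since $\dD^{\sharp} \stackrel{\sim}{\to} D^b_{\rm{coh}}(\mathfrak{M}^{\sharp}_{\circ})$ by Proposition~\ref{prop:sod}, and $\Psi_{\mathfrak{M}}^{\rm{op}}$ restricts to an equivalence over the open substack $\mathfrak{M}^{\sharp}_{\circ}$ (the local Koszul duality of Lemma~\ref{prop:koszul:U} is defined over $A$ and hence compatible with the open immersion removing the zero section, matching $\mathfrak{M}^{\dag}_{\circ}$ with $\mathfrak{M}^{\sharp}_{\circ}$ under the restriction of $\rho^{\dag}$, $\rho^{\sharp}$), we get $\tT^{\dag} \stackrel{\sim}{\to} D^b_{\rm{coh}}(\mathfrak{M}^{\dag}_{\circ}) \cong D^b_{\rm{coh}}(\mathfrak{M}^{\sharp}_{\circ})$. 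The full-faithfulness of the $\rho^{\dag\ast}$-functors is already recorded in Proposition~\ref{lem:rhoast}.

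The main obstacle I anticipate is bookkeeping rather than conceptual: one must verify that the semiorthogonality relations (Hom-vanishing in the prescribed directions) are genuinely preserved when passing through a \emph{contravariant} equivalence, i.e. that reversing the order of the list is the correct operation and that no factor is accidentally duplicated or dropped in the bi-infinite tail. This amounts to checking that $\Psi_{\mathfrak{M}}^{\rm{op}}$ sends the admissible pair $(\langle \text{left part}\rangle, \langle \text{right part}\rangle)$ of each stage of (\ref{SOD:i}) to an admissible pair with the roles of left and right swapped, which follows formally from the equivalence being contravariant together with the identification of the generating subcategories via Proposition~\ref{lem:rhoast}. A secondary point requiring a line of justification is that the equivalence of Proposition~\ref{prop:lKoszul} indeed intertwines the restriction-to-$\mathfrak{M}^{\sharp}_{\circ}$ functor with restriction-to-$\mathfrak{M}^{\dag}_{\circ}$; this is local on $\mathfrak{M}^{\mathbb{C}^{\ast}\rig}$ and reduces, via the construction in the proof of Proposition~\ref{prop:lKoszul}, to the statement that the local Koszul kernels $\kK_{\mathfrak{U}}$, $\kK'_{\mathfrak{U}}$ are supported along the diagonal over $A$ and hence commute with base change along $\mathfrak{M}_{\circ} \hookrightarrow \mathfrak{M}$, so that the open-substack versions of $i^{\dag}$, $i^{\sharp}$, $\rho^{\dag}$, $\rho^{\sharp}$ behave as in the diagram (\ref{dia:Udag}).
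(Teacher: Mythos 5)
Your first step is exactly the paper's argument: apply the contravariant quasi-inverse $\Psi^{\rm{op}}_{\mathfrak{M}}$ to the $\mathfrak{M}^{\sharp}$-side SOD of Proposition~\ref{prop:sod}, identify the transported pieces $i^{\sharp}_{\ast}D^b_{\rm{coh}}(\mathfrak{M})_{\lambda}\mapsto \rho^{\dag\ast}D^b_{\rm{coh}}(\mathfrak{M})_{-\lambda}$ via Proposition~\ref{lem:rhoast}, and set $\tT^{\dag}\cneq \Psi^{\rm{op}}_{\mathfrak{M}}(\dD^{\sharp})$; the order-reversal bookkeeping you flag is indeed the only formal point there and is harmless.

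The gap is in your identification of $\tT^{\dag}$. You claim that $\Psi^{\rm{op}}_{\mathfrak{M}}$ restricts to an equivalence over $\mathfrak{M}^{\sharp}_{\circ}$, matching $\mathfrak{M}^{\dag}_{\circ}$ with $\mathfrak{M}^{\sharp}_{\circ}$, and conclude $\tT^{\dag}\simeq D^b_{\rm{coh}}(\mathfrak{M}^{\dag}_{\circ})\cong D^b_{\rm{coh}}(\mathfrak{M}^{\sharp}_{\circ})$. This step fails: the Koszul equivalence is local over the base $\mathfrak{M}^{\mathbb{C}^{\ast}\rig}$ (over $A$ in the local model), not over the total spaces, and removing the zero section is not an open substack pulled back from the base; in fact Proposition~\ref{lem:rhoast} itself shows the equivalence exchanges objects supported on the zero section ($i^{\dag}_{\ast}$-images) with pullbacks $\rho^{\sharp\ast}(-)$, so it cannot preserve complements of zero sections. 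Moreover both equivalences in your final chain are false in general: by Theorem~\ref{thm:SOD:M} (whose proof uses the present lemma), $D^b_{\rm{coh}}(\mathfrak{M}^{\dag}_{\circ})$ contains $D^b_{\rm{coh}}(\mathfrak{M}^{\sharp}_{\circ})$ only as a semiorthogonal summand together with $e$ extra pieces; already for $\mathfrak{M}=\mathrm{pt}\times B\mathbb{C}^{\ast}$ and $\mathfrak{E}=\oO$ of weight one, $\mathfrak{M}^{\sharp}_{\circ}=\emptyset$ while $\mathfrak{M}^{\dag}_{\circ}=\mathrm{pt}$, so $\tT^{\dag}$ is neither $D^b_{\rm{coh}}(\mathfrak{M}^{\dag}_{\circ})$ nor is the latter equivalent to $D^b_{\rm{coh}}(\mathfrak{M}^{\sharp}_{\circ})$. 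What is actually needed is simpler: since $\Psi^{\rm{op}}_{\mathfrak{M}}$ is contravariant and $\dD^{\sharp}\stackrel{\sim}{\to} D^b_{\rm{coh}}(\mathfrak{M}^{\sharp}_{\circ})$ by restriction (Proposition~\ref{prop:sod}), one gets $\tT^{\dag}\simeq D^b_{\rm{coh}}(\mathfrak{M}^{\sharp}_{\circ})^{\rm{op}}$, and then composes with the contravariant autoequivalence $\mathbb{D}_{\mathfrak{M}^{\sharp}_{\circ}}$ (the structure sheaf being Gorenstein) to obtain the covariant equivalence $\tT^{\dag}\stackrel{\sim}{\to} D^b_{\rm{coh}}(\mathfrak{M}^{\sharp}_{\circ})$; this is how the paper concludes.
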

\begin{proof}
We apply the functor 
$\Psi_{\mathfrak{M}}^{\rm{op}}$ in (\ref{Phi:Psi:global})
 to the 
semiorthogonal decomposition of $D^b_{\rm{coh}}(\mathfrak{M}^{\sharp})$ in Proposition~\ref{prop:sod}. 
By Proposition~\ref{lem:rhoast}, we
obtain the semiorthogonal decomposition (\ref{SOD:perf})
for $\tT^{\dag}=\Psi_{\mathfrak{M}}^{\rm{op}}(\dD^{\sharp})$. 
Then $\tT^{\dag}$ is equivalent to $D^b_{\rm{coh}}(\mathfrak{M}^{\sharp}_{\circ})^{\rm{op}}$, and applying $\mathbb{D}_{\mathfrak{M}^{\sharp}_{\circ}}$
we obtain the equivalence $\tT^{\dag} \stackrel{\sim}{\to} D^b_{\rm{coh}}(\mathfrak{M}^{\sharp}_{\circ})$. 
\end{proof}

We set 
$e \in \mathbb{Z}$ to be
\begin{align*}
e \cneq \rank(\mathfrak{E}|_{\mM})=
\wt_{\mathbb{C}^{\ast}}(\det \mathfrak{E}|_{\mM}).
\end{align*}

\begin{lem}\label{lem:dual:e}
For each $\lambda \in \mathbb{Z}$, we have 
\begin{align*}
\mathbb{D}_{\mathfrak{M}^{\dag}}(i^{\dag}_{\ast} D^b_{\rm{coh}}(\mathfrak{M})_{\lambda})
= i^{\dag}_{\ast}D^b_{\rm{coh}}(\mathfrak{M})_{-\lambda-e}. 
\end{align*}
\end{lem}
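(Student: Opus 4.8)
The plan is to compute the Grothendieck duality for the finite morphism $i^{\dag} \colon \mathfrak{M} \hookrightarrow \mathfrak{M}^{\dag}$, which is a closed immersion of derived stacks cut out (locally) by the section of the vector bundle corresponding to $\mathfrak{E}$. The key point is to identify the relative dualizing complex $\omega_{i^{\dag}}$ of this morphism, and then apply the standard formula $\mathbb{D}_{\mathfrak{M}^{\dag}}(i^{\dag}_{\ast} \fF) \cong i^{\dag}_{\ast}(\dR\hH om_{\mathfrak{M}}(\fF, \omega_{i^{\dag}}))$ together with a tracking of the $\mathbb{C}^{\ast}$-weights.

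First I would reduce to the local model via the diagram (\ref{dia:stacs:dag2}): on a smooth morphism $\alpha \colon \mathfrak{U} \to \mathfrak{M}^{\mathbb{C}^{\ast}\rig}$, the object $\mathfrak{E}$ pulls back to $\eE \otimes_{\oO_A} \oO_{\mathfrak{U}}$ for a two term complex of vector bundles $\eE = (\eE^{-1} \to \eE^0)$ on $A$ with $\mathbb{C}^{\ast}$-weight one, and $\mathfrak{M}^{\dag}$ becomes $[\mathfrak{U}^{\dag}/\mathbb{C}^{\ast}]$ with $\mathfrak{U}^{\dag} = \Spec S_{\oO_{\mathfrak{U}}}(\eE_{\mathfrak{U}})$. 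Since $\mathfrak{E}$ is perfect of amplitude $[-1,0]$ and the rank is $e = \rank(\mathfrak{E}|_{\mM})$, the morphism $i^{\dag}$ has relative dualizing complex given by $\det(\mathfrak{E})^{\vee}[\text{shift}]$; because the $\mathbb{C}^{\ast}$-weight of $\mathfrak{E}$ is one and its rank (in the $K$-theory sense, counting $\eE^{-1}$ with a sign) is $e$, the weight of $\det \mathfrak{E}|_{\mM}$ is exactly $e$, as already recorded before the statement. Thus $\dR\hH om_{\mathfrak{M}}((i^{\dag})^{!}(-), -)$ twists the $\mathbb{C}^{\ast}$-weight by $-e$.

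Next I would combine this with the behaviour of $\mathbb{D}_{\mathfrak{M}}$ on the weight decomposition: for $\fF \in D^b_{\rm{coh}}(\mathfrak{M})_{\lambda}$, the derived dual $\mathbb{D}_{\mathfrak{M}}(\fF)$ lies in $D^b_{\rm{coh}}(\mathfrak{M})_{-\lambda}$ (this is immediate from the definition of the $\mathbb{C}^{\ast}$-weight, as in the proof of Proposition~\ref{prop:rigid}(i)). Putting the two together: $\mathbb{D}_{\mathfrak{M}^{\dag}}(i^{\dag}_{\ast}\fF) \cong i^{\dag}_{\ast}(\mathbb{D}_{\mathfrak{M}}(\fF) \otimes \det(\mathfrak{E}|_{\mM})^{\vee})$, which lies in $i^{\dag}_{\ast}D^b_{\rm{coh}}(\mathfrak{M})_{-\lambda - e}$, giving the claimed equality once one checks the reverse inclusion (which follows by applying the same computation to $\mathbb{D}_{\mathfrak{M}^{\dag}}$ again, since $\mathbb{D}_{\mathfrak{M}^{\dag}}$ is an involution and $i^{\dag}_{\ast}D^b_{\rm{coh}}(\mathfrak{M})_{\mu}$ is preserved under duality up to the shift in weight). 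Here I use that $i^{\dag}_{\ast}$ is fully faithful on each weight piece by Proposition~\ref{prop:sod}, so comparing essential images is the same as comparing the subcategories.

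The main obstacle I expect is pinning down the $\mathbb{C}^{\ast}$-equivariant structure on the relative dualizing complex precisely — i.e. verifying that the weight of $\det(\mathfrak{E}|_{\mM})^{\vee}$ contributes $-e$ rather than some other sign or multiple, and that no extra shift in cohomological degree interferes with the weight bookkeeping (the cohomological shift by $\rank$ of the bundle and the weight shift must be disentangled, as was done in the local Koszul duality of Lemma~\ref{prop:koszul:U} via the regrading functor $\xi'$). Once the local identification $\omega_{i^{\dag}} \simeq \det(\mathfrak{E})^{\vee}[\,\cdot\,]$ with its weight-one-to-the-$e$ structure is established and seen to glue (which it does, being intrinsic to the finite quasi-smooth morphism $i^{\dag}$), the rest is the routine weight-tracking sketched above.
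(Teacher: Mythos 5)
Your proposal is correct and follows essentially the same route as the paper: the paper's proof is exactly the one-line Grothendieck duality computation $\dR\hH om_{\mathfrak{M}^{\dag}}(i^{\dag}_{\ast}\fF, \oO_{\mathfrak{M}^{\dag}}) \cong i^{\dag}_{\ast}(\mathbb{D}_{\mathfrak{M}}(\fF) \otimes \det \mathfrak{E}^{\vee}[-e])$ followed by the weight count $-\lambda-e$. The local reduction and the worry about the cohomological shift are unnecessary extras, since the shift does not affect the weight and $e$ is defined precisely as the weight of $\det\mathfrak{E}|_{\mM}$.
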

\begin{proof}
For $\fF \in D^b_{\rm{coh}}(\mathfrak{M})_{\lambda}$, we have 
\begin{align*}
\dR \hH om_{\mathfrak{M}^{\dag}}(i^{\dag}_{\ast}\fF, \oO_{\mathfrak{M}^{\dag}})
\cong i^{\dag}_{\ast}
\dR \hH om_{\mathfrak{M}}(\fF, i^{\dag !}\oO_{\mathfrak{M}^{\dag}}) 
 \cong i^{\dag}_{\ast}
\mathbb{D}_{\mathfrak{M}}(\fF) \otimes \det \mathfrak{E}^{\vee}[-e]. 
\end{align*}
The lemma holds since $\mathbb{D}_{\mathfrak{M}}(\fF)$ is of $\mathbb{C}^{\ast}$-weight $-\lambda$
and $\det \mathfrak{E}^{\vee}$ is of $\mathbb{C}^{\ast}$-weight 
$-e$. 
\end{proof}

The following is the main result in this section.

\begin{thm}\label{thm:SOD:M}
Suppose that $e\ge 0$. Then we have the semiorthogonal decomposition of the form
\begin{align*}
D^b_{\rm{coh}}(\mathfrak{M}_0^{\dag})=
\langle \rho^{\dag\ast}D^b_{\rm{coh}}(\mathfrak{M})_{e-1}, \ldots, 
 \rho^{\dag\ast}D^b_{\rm{coh}}(\mathfrak{M})_{0}, D^b_{\rm{coh}}(\mathfrak{M}_{\circ}^{\sharp})
\rangle. 
\end{align*}
\end{thm}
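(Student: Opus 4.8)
The strategy is to combine the two semiorthogonal decompositions established above — the one for $D^b_{\rm{coh}}(\mathfrak{M}^{\dag})$ in terms of the zero section $i^{\dag}_{\ast}$ (Proposition~\ref{prop:sod}) and the one in terms of the projection $\rho^{\dag\ast}$ (Lemma~\ref{lem:sod:perf}) — with the duality functor $\mathbb{D}_{\mathfrak{M}^{\dag}}$ and the weight-shift bookkeeping in Lemma~\ref{lem:dual:e}. First I would recall from Proposition~\ref{prop:sod} that
\begin{align*}
D^b_{\rm{coh}}(\mathfrak{M}^{\dag})=
\langle \ldots, i^{\dag}_{\ast}D^b_{\rm{coh}}(\mathfrak{M})_{-1}, \dD^{\dag},
i^{\dag}_{\ast}D^b_{\rm{coh}}(\mathfrak{M})_{0}, i^{\dag}_{\ast}D^b_{\rm{coh}}(\mathfrak{M})_{1}, \ldots \rangle
\end{align*}
with $\dD^{\dag} \stackrel{\sim}{\to} D^b_{\rm{coh}}(\mathfrak{M}_{\circ}^{\dag})$ via restriction, and similarly from Lemma~\ref{lem:sod:perf} the SOD with $\rho^{\dag\ast}$-components and middle piece $\tT^{\dag}\stackrel{\sim}{\to}D^b_{\rm{coh}}(\mathfrak{M}^{\sharp}_{\circ})$. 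The point of the theorem is to produce an SOD of $D^b_{\rm{coh}}(\mathfrak{M}_{\circ}^{\dag})$ itself (not of the ambient category) whose pieces are the $\rho^{\dag\ast}D^b_{\rm{coh}}(\mathfrak{M})_{\lambda}$ for $0\le \lambda\le e-1$ together with a copy of $D^b_{\rm{coh}}(\mathfrak{M}^{\sharp}_{\circ})$.

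The key steps, in order: (1) Apply the contravariant autoequivalence $\mathbb{D}_{\mathfrak{M}^{\dag}}$ to the SOD of Lemma~\ref{lem:sod:perf}; since $\mathbb{D}_{\mathfrak{M}^{\dag}}$ reverses the order of semiorthogonal components and, by Lemma~\ref{lem:dual:e}, sends $i^{\dag}_{\ast}D^b_{\rm{coh}}(\mathfrak{M})_{\lambda}$ to $i^{\dag}_{\ast}D^b_{\rm{coh}}(\mathfrak{M})_{-\lambda-e}$ — but here I need the dual statement for $\rho^{\dag\ast}$-components, namely that $\mathbb{D}_{\mathfrak{M}^{\dag}}(\rho^{\dag\ast}D^b_{\rm{coh}}(\mathfrak{M})_{\lambda})=\rho^{\dag\ast}D^b_{\rm{coh}}(\mathfrak{M})_{-\lambda}$, which follows since $\rho^{\dag}$ is flat and $\mathbb{D}_{\mathfrak{M}^{\dag}}\circ\rho^{\dag\ast}\cong\rho^{\dag\ast}\circ\mathbb{D}_{\mathfrak{M}}$ up to the relative dualizing complex of $\rho^{\dag}$, which is $\oO$-trivial of weight $0$ after the regrading is accounted for — actually the cleanest route is to note $\mathbb{D}_{\mathfrak{M}^{\dag}}$ preserves each $\rho^{\dag\ast}D^b_{\rm{coh}}(\mathfrak{M})_{\ast}$ as a whole and only permutes weights by $\lambda\mapsto -\lambda$. (2) Compare the two SODs of $D^b_{\rm{coh}}(\mathfrak{M}^{\dag})$: one has semiorthogonal pieces $\{\rho^{\dag\ast}D^b_{\rm{coh}}(\mathfrak{M})_{\lambda}\}_{\lambda\in\mathbb{Z}}$ together with $\tT^{\dag}$, the other has pieces $\{i^{\dag}_{\ast}D^b_{\rm{coh}}(\mathfrak{M})_{\lambda}\}_{\lambda\in\mathbb{Z}}$ together with $\dD^{\dag}$. (3) Use the mutation/truncation argument — exactly as in Orlov's proof of the graded-matrix-factorization SOD~\cite{MR2641200}, to which the paper explicitly points — to extract from these two "infinite staircase" decompositions a finite one for the quotient $\dD^{\dag}\simeq D^b_{\rm{coh}}(\mathfrak{M}^{\dag}_{\circ})$: the overlap between "the $\rho^{\dag\ast}$-pieces with $\lambda\ge 0$" and "the $i^{\dag}_{\ast}$-pieces with $\lambda\ge$ something" differs by exactly $e$ weight slots because of the shift $\lambda\mapsto -\lambda-e$ in Lemma~\ref{lem:dual:e}, and the leftover $e$ copies $\rho^{\dag\ast}D^b_{\rm{coh}}(\mathfrak{M})_{0},\ldots,\rho^{\dag\ast}D^b_{\rm{coh}}(\mathfrak{M})_{e-1}$ are precisely the claimed components, with the remaining piece being $\tT^{\dag}\stackrel{\sim}{\to}D^b_{\rm{coh}}(\mathfrak{M}^{\sharp}_{\circ})$. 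The hypothesis $e\ge 0$ is what guarantees this leftover is a genuine (non-negative length) block sitting on the correct side.

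The main obstacle I anticipate is step (3): making the "two infinite SODs differ by a finite block" argument precise in the derived-stacky setting. Concretely, one must check that the subcategory generated by $\{\rho^{\dag\ast}D^b_{\rm{coh}}(\mathfrak{M})_{\lambda} : \lambda \ge 0\}$ inside $D^b_{\rm{coh}}(\mathfrak{M}^{\dag})$ coincides, up to the $e$ extra blocks, with the subcategory generated by $\{i^{\dag}_{\ast}D^b_{\rm{coh}}(\mathfrak{M})_{\lambda}: \lambda \ge \text{(suitable bound)}\}$ plus $D^b_{\rm{coh}}(\mathfrak{M}^{\dag}_{\circ})$; this requires knowing that $i^{\dag}_{\ast}D^b_{\rm{coh}}(\mathfrak{M})_{\lambda}$ lies in the category generated by $\rho^{\dag\ast}D^b_{\rm{coh}}(\mathfrak{M})_{\ge \lambda}$ (and conversely, with a shift), which one proves by resolving $i^{\dag}_{\ast}\oO_{\mathfrak{M}}$ by the Koszul complex $\overline{\kK}_{\mathfrak{M}}$ over $S(\mathfrak{E})$ and reading off the weights: the terms $\bigwedge^k\mathfrak{E}\otimes S(\mathfrak{E})$ contribute weights in $[k, \infty)$, so $i^{\dag}_{\ast}(-)_{\lambda}$ is built from $\rho^{\dag\ast}(-)_{\lambda'}$ with $\lambda' \ge \lambda$, and the top term $\bigwedge^{\rank\mathfrak{E}}\mathfrak{E}=\det\mathfrak{E}$ of weight $e$ is what produces the shift by $e$. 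Granting these compatibility statements — all of which are local on $\mathfrak{M}^{\mathbb{C}^{\ast}\rig}$ and reduce to the affine computations already appearing in Lemma~\ref{prop:koszul:U} and Proposition~\ref{lem:rhoast} — the finite SOD falls out by the standard semiorthogonal-decomposition calculus (repeated use of the fact that if $\langle \aA,\bB\rangle$ and $\langle \bB',\aA\rangle$ are two SODs of the same category with $\bB\subset\bB'$, then $\bB'=\langle \bB'\cap{}^{\perp}\!\bB\,?\rangle$ — more carefully, one mutates). I would then simply record $D^b_{\rm{coh}}(\mathfrak{M}^{\dag}_{\circ})=\dD^{\dag}$ and reorganize to land on the stated ordering $\langle \rho^{\dag\ast}D^b_{\rm{coh}}(\mathfrak{M})_{e-1},\ldots,\rho^{\dag\ast}D^b_{\rm{coh}}(\mathfrak{M})_{0}, D^b_{\rm{coh}}(\mathfrak{M}_{\circ}^{\sharp})\rangle$, using the equivalence $\tT^{\dag}\stackrel{\sim}{\to}D^b_{\rm{coh}}(\mathfrak{M}^{\sharp}_{\circ})$ from Lemma~\ref{lem:sod:perf} for the last component.
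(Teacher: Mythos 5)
Your proposal is correct and follows essentially the same route as the paper: reduce to identifying $\dD^{\dag}$ (equivalently $D^b_{\rm{coh}}(\mathfrak{M}^{\dag}_{\circ})$) with $\langle \rho^{\dag\ast}D^b_{\rm{coh}}(\mathfrak{M})_{e-1},\ldots,\rho^{\dag\ast}D^b_{\rm{coh}}(\mathfrak{M})_{0},\tT^{\dag}\rangle$ inside the two ``staircase'' decompositions of Proposition~\ref{prop:sod} and Lemma~\ref{lem:sod:perf}, and run Orlov's graded matrix factorization argument using the duality weight-shift $\mathbb{D}_{\mathfrak{M}^{\dag}}(i^{\dag}_{\ast}(-)_{\lambda})=i^{\dag}_{\ast}(-)_{-\lambda-e}$ of Lemma~\ref{lem:dual:e}, with $e\ge 0$ supplying the semiorthogonality that places the $e$ extra blocks. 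The paper organizes the comparison slightly more explicitly (via the subcategory $D^b_{\rm{coh}}(\mathfrak{M}^{\dag})_{\ge 0}$ and the twist by $\mathbb{C}(-e+1)$, with $\Hom(\pP_{\ge e},\sS_{<0})=0$), but this is the same argument you outline.
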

\begin{proof}
By Proposition~\ref{prop:sod} and Lemma~\ref{lem:sod:perf}, it is 
enough to show 
the identity of subcategories in $D^b_{\rm{coh}}(\mathfrak{M}^{\dag})$
\begin{align}\label{id:D=T}
\dD^{\dag}=\langle \rho^{\dag\ast}D^b_{\rm{coh}}(\mathfrak{M})_{e-1}, \ldots, 
 \rho^{\dag\ast}D^b_{\rm{coh}}(\mathfrak{M})_{0}, \tT^{\dag}
\rangle. 
\end{align}
%Now
%the question is local on $\mathfrak{M}^{\mathbb{C}^{\ast} \rig}$, so 
%similarly to the proof of Lemma~\ref{lem:rhoast} we may assume that 
%$\mathfrak{M}=A \times B\mathbb{C}^{\ast}$
%for a smooth affine $\mathbb{C}$-scheme $Y$ and 
%$\mathfrak{E}=\eE=(\eE^{-1} \to \eE^0)$ is a two term 
%complex of vector bundles on $Y$. 
%By taking the Koszul resolution of $\oO_Y$, we see that
%\begin{align}\label{Gsten}
%\mathbb{D}_{A^{\dag}}(\oO_Y)=\det \eE^{\vee}[-\rank(\eE)]
%\end{align}
%which is of $\mathbb{C}^{\ast}$-weight $-e$. 
The above identity follows from the 
same argument of~\cite[Theorem~2.5]{Orsin}, 
by replacing $\sS_{\lambda}$, $\pP_{\lambda}$ in \textit{loc.~cit.~}by 
$i^{\dag}_{\ast}D^b_{\rm{coh}}(\mathfrak{M})_{\lambda}$, 
$\rho^{\dag\ast}D^b_{\rm{coh}}(\mathfrak{M})_{\lambda}$
respectively. 
For each $\lambda \in \mathbb{Z}$, we set $\sS_{\lambda}$, 
$\pP_{\lambda}$ as above, and 
define $\sS_{\le \lambda}$, $\pP_{\le \lambda}$ to be the triangulated 
subcategories of $D^b_{\rm{coh}}(\mathfrak{M}^{\dag})$
generated by $\sS_{\lambda'}$, $\pP_{\lambda'}$ for $\lambda' \le \lambda$
 respectively. 
The subcategories $\sS_{>\lambda}$, $\pP_{>\lambda}$ are defined 
similarly. We also define
$D^b_{\rm{coh}}(\mathfrak{M}^{\dag})_{\ge \lambda}$ to be the 
subcategory of $D^b_{\rm{coh}}(\mathfrak{M}^{\dag})$ consisting 
of objects $M$ such that $\rho^{\dag}_{\ast}M \in 
D_{\rm{qcoh}}(\mathfrak{M})_{\ge \lambda}$. 
Then by Lemma~\ref{lem:sod:easy} below, 
we have the semiorthogonal decompositions
\begin{align}\label{sod:easy}
D^b_{\rm{coh}}(\mathfrak{M}^{\dag})=\langle \sS_{<0}, 
D^b_{\rm{coh}}(\mathfrak{M}^{\dag})_{\ge 0} \rangle 
=\langle D^b_{\rm{coh}}(\mathfrak{M}^{\dag})_{\ge 0}, \pP_{<0} \rangle. 
\end{align}
By comparing with semiorthogonal decomposition in (\ref{SOD:i}), (\ref{SOD:perf}),
we have
\begin{align}\label{SOD:sp0}
D^b_{\rm{coh}}(\mathfrak{M}^{\dag})_{\ge 0}
=\langle \dD^{\dag}, \sS_{\ge 0}\rangle = \langle \pP_{\ge 0}, \tT^{\dag} 
\rangle.
\end{align}
Therefore we have the semiorthogonal decomposition
\begin{align}\label{SOD:sp1}
D^b_{\rm{coh}}(\mathfrak{M}^{\dag})
=\langle \sS_{<0}, \pP_{\ge 0}, \tT^{\dag}\rangle. 
\end{align}
On the other hand, we apply 
$\mathbb{D}_{\mathfrak{M}^{\dag}} \circ \otimes_{\mathbb{C}}\mathbb{C}(-e+1)$
to the semiorthogonal decomposition (\ref{SOD:i}), (\ref{SOD:perf}). 
By noting 
that $\mathbb{D}_{\mathfrak{M}^{\dag}}(\sS_{\ge \lambda})
=\sS_{\le -\lambda-e}$ by Lemma~\ref{lem:dual:e}, 
we have 
\begin{align*}
D^b_{\rm{coh}}(\mathfrak{M}^{\dag})=\langle \sS_{<0}, 
\mathbb{D}_{\mathfrak{M}^{\dag}}(\dD^{\dag}(-e+1)), \sS_{\ge 0} \rangle 
=\langle \pP_{\ge e}, \mathbb{D}_{\mathfrak{M}^{\dag}}(\tT^{\dag}(-e+1)), \pP_{<e}\rangle.
\end{align*}
By comparing with (\ref{SOD:i}), we 
have $\dD^{\dag}=\mathbb{D}_{\mathfrak{M}^{\dag}}(\dD^{\dag}(-e+1))$. 
Similarly applying $\mathbb{D}_{\mathfrak{M}^{\dag}} \circ \otimes_{\mathbb{C}}\mathbb{C}(-e+1)$
to the semiorthogonal decomposition (\ref{SOD:sp0}), we have 
\begin{align*}
\mathbb{D}_{\mathfrak{M}^{\dag}}(D^b_{\rm{coh}}(\mathfrak{M})_{\ge -e+1})
=\langle \sS_{<0}, \mathbb{D}_{\mathfrak{M}^{\dag}}(\dD^{\dag}(-e+1))\rangle
=\langle\mathbb{D}_{\mathfrak{M}^{\dag}}(\tT^{\dag}(-e+1)), \pP_{<e}\rangle.  
\end{align*}
It follows that we have
\begin{align*}
D^b_{\rm{coh}}(\mathfrak{M}^{\dag})=\langle \pP_{\ge e}, \sS_{<0}, 
\dD^{\dag}\rangle
=\langle \sS_{<0}, \pP_{\ge e}, \dD^{\dag} \rangle. 
\end{align*}
Here the second identity follows from
 $\Hom(\pP_{\ge e}, \sS_{<0})=0$ since $e\ge 0$. By comparing with (\ref{SOD:sp1}), we obtain the identity (\ref{id:D=T}). 
\end{proof}

We have used the following lemma, which is a generalization of~\cite[Lemma~2.3]{Orsin}. 
\begin{lem}\label{lem:sod:easy}
	We have semiorthogonal decompositions (\ref{sod:easy}). 
	\end{lem}
\begin{proof}
	We first show the left semiorthogonal decomposition in (\ref{sod:easy}). 
	There is no non-zero morphism from $\Dbc(\fM^{\dag})_{\ge 0}$ to 
	$\sS_{<0}$ since locally on $\fM$ any object in $\Dbc(\fM^{\dag})$ admits 
	a bounded above resolution of free $S(\fE)$-modules with non-negative $\C$-weights. 
	For any object $A \in \Dbc(\fM^{\dag})$, we have the split distinguished triangle 
	\begin{align}\label{rho:split}
		(\rho^{\dag}_{\ast}A)_{\ge 0} \to  \rho^{\dag}_{\ast}A \to
			(\rho^{\dag}_{\ast}A)_{< 0}
			\end{align} 
		in $D_{\qcoh}(\fM)$
	such that $(\rho^{\dag}_{\ast}A)_{\ge 0}$ has non-negative $\C$-weights 
	and $(\rho^{\dag}_{\ast}A)_{< 0}$ has negative $\C$-weights. 
	Since $S(\fE)$ has non-negative $\C$-weights, the $S(\fE)$-module structure 
	on $\rho^{\dag}_{\ast}A$ induces
	$S(\fE)$-module structures on $(\rho^{\dag}_{\ast}A)_{\ge 0}$, $(\rho^{\dag}_{\ast}A)_{< 0}$.
	Therefore the  
	distinguished triangle (\ref{rho:split}) lifts to a distinguished triangle 
	$A_{\ge 0} \to A \to A_{<0}$ 
	in $\Dbc(\fM^{\dag})$ such that 
	$A_{\ge 0} \in \Dbc(\fM^{\dag})_{\ge 0}$, $A_{<0} \in \sS_{<0}$. 
	
	We next show the right semiorthogonal decomposition in (\ref{sod:easy}). 
	It is obvious that there is no non-zero morphism from $\pP_{<0}$ to $\Dbc(\fM^{\dag})_{\ge 0}$ from 
	the definition of $\Dbc(\fM^{\dag})_{\ge 0}$. 
	For an object $A\in \Dbc(\fM^{\dag})$, let 
	$\lambda$ be the minimum $\C$-weight of $\rho^{\dag}_{\ast}A$
	and assume that $\lambda<0$. 
	We have the morphism 
	$\rho^{\dag \ast}(\rho^{\dag}_{\ast}A)_{\lambda} \to A$, and let $A'$ be its cone. 
	Then we have the distinguished triangle in $D_{\qcoh}(\fM)$
	\begin{align*}
		(\rho^{\dag}_{\ast}A)_{\lambda} \otimes S(\fE) \to \rho^{\dag}_{\ast}A \to \rho^{\dag}_{\ast}A'. 
		\end{align*}
	From the above distinguished triangle, the minimum $\C$-weight of 
	$\rho^{\dag}_{\ast}A'$ is strictly bigger than $\lambda$. 
Therefore by repeating this argument, we 
find a distinguished triangle 
$A_{<0}' \to A \to A_{\ge 0}'$ such that 
$A_{<0}' \in \pP_{<0}$ and $A_{\ge 0} \in \Dbc(\fM^{\dag})_{\ge 0}$. 
	
	\end{proof}
\subsubsection{Singular supports under linear Koszul duality}
\label{subsec:support:koszul}
Let $\mathfrak{M}^{\dag}$, $\mathfrak{M}^{\sharp}$ be the 
derived stacks as in the diagram (\ref{dia:stacs:dag}). 
Since $\mathbb{L}_{\mathfrak{M}^{\dag}/\mathfrak{M}}=\rho^{\dag\ast} \mathfrak{E}$ and 
$\mathbb{L}_{\mathfrak{M}^{\sharp}/\mathfrak{M}}=
\rho^{\sharp\ast} \mathfrak{E}^{\vee}[1]$, 
we have natural equivalences
\begin{align}\label{nat:Omega}
\Omega_{\mathfrak{M}^{\dag}/\mathfrak{M}}[-1] 
\stackrel{\sim}{\to} 
 \mathfrak{M}^{\dag} \times_{\mathfrak{M}} \mathfrak{M}^{\sharp}
\stackrel{\sim}{\leftarrow} 
\Omega_{\mathfrak{M}^{\sharp}/\mathfrak{M}}[-1].  
\end{align}
We show that the 
above natural equivalence lifts to 
the absolute $(-1)$-shifted cotangent stacks
on classical truncations:
\begin{lem}\label{lem:equiv:shifted}
There is a natural isomorphism 
$\vartheta_0 \colon t_0(\Omega_{\mathfrak{M}^{\dag}}[-1]) \stackrel{\cong}{\to}
t_0(\Omega_{\mathfrak{M}^{\sharp}}[-1])$
which fits into the commutative diagram
\begin{align}\label{eq:theta}
\xymatrix{
t_0(\Omega_{\mathfrak{M}^{\dag}}[-1]) \ar[rd]^-{\iota^{\dag}} \ar[rr]_-{\vartheta_0}^-{\cong} \ar[d] & & 
t_0(\Omega_{\mathfrak{M}^{\sharp}}[-1]) \ar[d] \ar[ld]_-{\iota^{\sharp}}  \\
t_0(\Omega_{\mathfrak{M}^{\dag}/\mathfrak{M}}[-1]) \ar[r]^-{\cong} & 
t_0(\mathfrak{M}^{\dag} \times_{\mathfrak{M}} \mathfrak{M}^{\sharp})
& 
\ar[l]_-{\cong} t_0(\Omega_{\mathfrak{M}^{\sharp}/\mathfrak{M}}[-1])
} 
\end{align}
Here the vertical arrows are natural 
maps induced by $\mathbb{L}_{\mathfrak{M}^{\dag}} \to 
\mathbb{L}_{\mathfrak{M}^{\dag}/\mathfrak{M}}$, 
$\mathbb{L}_{\mathfrak{M}^{\sharp}} \to 
\mathbb{L}_{\mathfrak{M}^{\sharp}/\mathfrak{M}}$, 
and 
the bottom arrows are induced by (\ref{nat:Omega}). 
\end{lem}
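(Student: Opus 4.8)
\textbf{Proof plan for Lemma~\ref{lem:equiv:shifted}.}
The plan is to construct the equivalence $\vartheta$ locally on $\mathfrak{M}^{\mathbb{C}^{\ast}\rig}$ and then argue that the local construction glues. First I would reduce to the affine local model: choose a smooth chart $\alpha \colon \mathfrak{U} \to \mathfrak{M}^{\mathbb{C}^{\ast}\rig}$ with $\mathfrak{U} = \Spec \rR(V \to A, s)$ on which $\mathfrak{E}$ is represented by a two-term complex of vector bundles $\eE = (\eE^{-1} \xrightarrow{\phi} \eE^{0})$ on $A$, pulled back to $\mathfrak{U}$, with weight one $\mathbb{C}^{\ast}$-action. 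In this model $\mathfrak{M}^{\dag}$ and $\mathfrak{M}^{\sharp}$ become (quotients of) $\mathfrak{U}^{\dag} = \Spec S_{\oO_{\mathfrak{U}}}(\mathfrak{E})$ and $\mathfrak{U}^{\sharp} = \Spec S_{\oO_{\mathfrak{U}}}(\mathfrak{E}^{\vee}[1])$. The key computation is then to write down the cotangent complexes $\mathbb{L}_{\mathfrak{U}^{\dag}}$ and $\mathbb{L}_{\mathfrak{U}^{\sharp}}$ explicitly using the exact triangles $\rho^{\dag\ast}\mathbb{L}_{\mathfrak{U}} \to \mathbb{L}_{\mathfrak{U}^{\dag}} \to \mathbb{L}_{\mathfrak{U}^{\dag}/\mathfrak{U}} = \rho^{\dag\ast}\mathfrak{E}$ and the analogous one for $\mathfrak{U}^{\sharp}$, and to observe that the tangent complexes $\mathbb{T}_{\mathfrak{U}^{\dag}}[1]$ and $\mathbb{T}_{\mathfrak{U}^{\sharp}}[1]$, after restriction to the common base $\mathfrak{U}^{\dag} \times_{\mathfrak{U}} \mathfrak{U}^{\sharp}$, are literally the same graded module — both are built from $\mathbb{T}_{\mathfrak{U}}[1]$, $\mathfrak{E}^{\vee}$ and $\mathfrak{E}$, with the roles of $\rho^{\dag}$ and $\rho^{\sharp}$ producing the symmetric algebra generators in matching degrees and $\mathbb{C}^{\ast}$-weights. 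This gives an isomorphism of the underlying graded algebras; one then checks the differentials agree, which amounts to matching the Koszul-type differential coming from $s$ together with the tautological pairing element $\eta \in \mathfrak{E} \otimes \mathfrak{E}^{\vee}$ that encodes $\id_{\mathfrak{E}}$.

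Concretely, I expect both $\Spec_{\mathfrak{M}^{\dag}} S(\mathbb{T}_{\mathfrak{M}^{\dag}}[1])$ and $\Spec_{\mathfrak{M}^{\sharp}} S(\mathbb{T}_{\mathfrak{M}^{\sharp}}[1])$ to be identified, as derived stacks over $\mathfrak{M}$, with a single stack of the form $\Spec_{\mathfrak{M}^{\dag}\times_{\mathfrak{M}}\mathfrak{M}^{\sharp}} S(\rho^{\ast}\mathbb{T}_{\mathfrak{M}}[1])$ — this is precisely the content of the square in \eqref{eq:theta}, and the equivalence $\vartheta$ is the composite of the two identifications. The commutativity of \eqref{eq:theta} is then automatic from the construction, since both $\iota^{\dag}$ and $\iota^{\sharp}$ are, in the local model, the maps that forget the $\rho^{\ast}\mathbb{T}_{\mathfrak{M}}[1]$-direction. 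After establishing the local statement, I would invoke the intrinsic/functorial nature of $(-1)$-shifted cotangent stacks under smooth morphisms (and their compatibility with the fiber-product description \eqref{nat:Omega}), exactly as in the gluing arguments already used for $\mathfrak{M}^{\dag}$ and $\mathfrak{M}^{\sharp}$ in Proposition~\ref{prop:lKoszul}, so that the locally defined equivalences agree on overlaps and glue to a global $\vartheta$.

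The main obstacle I anticipate is bookkeeping of the $\mathbb{C}^{\ast}$-weights and cohomological degrees: one must verify that when $\mathbb{T}_{\mathfrak{M}^{\dag}/\mathfrak{M}} = \rho^{\dag\ast}\mathfrak{E}^{\vee}[1]$ is matched against $\mathbb{T}_{\mathfrak{M}^{\sharp}/\mathfrak{M}} = \rho^{\sharp\ast}\mathfrak{E}[-1] \oplus (\text{shift})$ — more precisely, that the shift by $[1]$ built into one $(-1)$-shifted cotangent stack and the shift by $[-1]$ coming from $\mathfrak{E}^{\vee}[1]$ versus $\mathfrak{E}$ conspire to put everything in the correct total degree so that the two symmetric algebras are isomorphic as $\mathbb{Z}$-graded (not merely $\mathbb{Z}/2$-graded) objects. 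This is the same phenomenon that forces the ``opposite signs of weights'' in Propositions~\ref{prop:sod} and \ref{lem:rhoast}, and getting the signs/weights exactly right is where care is needed; the rest of the argument is formal. I would also need to confirm that the identification respects the relative-over-$\mathfrak{M}$ structure so that \eqref{nat:Omega} is the specialization of $\vartheta$, but this should follow directly once the cotangent complex triangles are written down compatibly.
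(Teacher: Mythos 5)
Your local computation is essentially the paper's: in an affine chart both $\Omega_{\mathfrak{U}^{\dag}}[-1]$ and $\Omega_{\mathfrak{U}^{\sharp}}[-1]$ become the derived critical locus of the explicit function $w(x,e,e',v)=\langle \phi|_x(e'),e\rangle+\langle s(x),v\rangle$ on $\eE_0\times_A\eE^{-1}\times_A V^{\vee}$, and the commutativity of the square is read off from the projection. The gap is in your global step. The claim that both $\Omega_{\mathfrak{M}^{\dag}}[-1]$ and $\Omega_{\mathfrak{M}^{\sharp}}[-1]$ are identified with $\Spec_{\mathfrak{M}^{\dag}\times_{\mathfrak{M}}\mathfrak{M}^{\sharp}} S(\rho^{\ast}\mathbb{T}_{\mathfrak{M}}[1])$ is not correct: the triangle $\rho^{\dag\ast}\mathbb{T}_{\mathfrak{M}}\to\rho^{\dag\ast}\mathfrak{E}^{\vee}[1]\to\mathbb{T}_{\mathfrak{M}^{\dag}}[1]$ does not split, its connecting map being (adjoint to) the Atiyah class $a(\mathfrak{E})\colon\mathbb{T}_{\mathfrak{M}}\to\mathfrak{E}\otimes\mathfrak{E}^{\vee}[1]$. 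Consequently $\Omega_{\mathfrak{M}^{\dag}}[-1]\to\mathfrak{M}^{\dag}\times_{\mathfrak{M}}\mathfrak{M}^{\sharp}$ is not the linear stack $\Spec S(\rho^{\ast}\mathbb{T}_{\mathfrak{M}}[1])$ but the derived zero locus of the section of $\rho_0^{\ast}\Omega_{\mathfrak{M}}$ induced by $a(\mathfrak{E})$ (already for an honest vector bundle $E\to M$ on a smooth scheme, $T^{\ast}E\to E\times_M E^{\vee}$ is only a torsor under the pullback of $T^{\ast}M$, trivialized only after choosing a connection). So the untwisted symmetric algebra is the wrong derived stack, and "the square is automatic because both projections forget the $\rho^{\ast}\mathbb{T}_{\mathfrak{M}}[1]$-direction" does not go through as stated.

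Relatedly, your fallback of gluing the chart-wise equivalences is not justified as written: the local identifications depend on a choice of presentation of $\mathfrak{E}$ by a two-term complex of bundles, and gluing equivalences of derived stacks over a cover requires homotopy-coherent compatibilities that your appeal to "functoriality under smooth morphisms" does not supply (in Proposition~\ref{prop:lKoszul} the gluing problem was avoided precisely because the functor was defined by a single global formula). The paper's proof repairs exactly this point by a global construction: it defines $\mathfrak{Z}$ as the derived Cartesian product of the section $a(\mathfrak{M})$ of $\rho_0^{\ast}\Omega_{\mathfrak{M}}$ (coming from the Atiyah class) with the zero section over $\mathfrak{M}^{\dag}\times_{\mathfrak{M}}\mathfrak{M}^{\sharp}$, and then observes that the two triangles $\rho^{\dag\ast}\mathbb{T}_{\mathfrak{M}}\to\rho^{\dag\ast}\mathfrak{E}^{\vee}[1]\to\mathbb{T}_{\mathfrak{M}^{\dag}}[1]$ and $\rho^{\sharp\ast}\mathbb{T}_{\mathfrak{M}}\to\rho^{\sharp\ast}\mathfrak{E}\to\mathbb{T}_{\mathfrak{M}^{\sharp}}[1]$ have first arrows adjoint to one and the same class $a(\mathfrak{E})$, yielding canonical equivalences $\Omega_{\mathfrak{M}^{\dag}}[-1]\simeq\mathfrak{Z}\simeq\Omega_{\mathfrak{M}^{\sharp}}[-1]$ with the diagram commuting by construction. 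Incorporating this Atiyah-class zero-locus description (or some other genuinely global definition of $\vartheta$) is the missing ingredient in your proposal; the weight and degree bookkeeping you flag is not where the difficulty lies.
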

\begin{proof}
We first consider the situation of the diagram (\ref{dia:stacs:dag2}), i.e. 
$\mathfrak{M}=[\mathfrak{U}/\mathbb{C}^{\ast}]$ where 
$\mathfrak{U}=\Spec \rR(V\to Y, s)$ is given as in (\ref{frak:U})
with trivial $\mathbb{C}^{\ast}$-action, 
and $\mathfrak{E}=\eE \otimes_{\oO_Y} \oO_{\mathfrak{U}}$
for a two term complex of vector bundles $\eE$ as in (\ref{vect:vV}). 
We set $\eE_0 \cneq (\eE^0)^{\vee}$ and 
$\eE_1 \cneq (\eE^1)^{\vee}$. 
Note that the structure sheaves of $\mathfrak{U}^{\dag}$
and $\mathfrak{U}^{\sharp}$ are Koszul complexes
\begin{align*}
&\oO_{\mathfrak{U}^{\dag}}=S_{\oO_{\mathfrak{U}}}(\mathfrak{E})=
\rR(\eE_0 \times_Y \eE_1 \times_Y V \to \eE_0, (\phi, s)), \\ 
&\oO_{\mathfrak{U}^{\sharp}}=
S_{\oO_{\mathfrak{U}}}(\mathfrak{E}^{\vee}[1])=
\rR(\eE^{-1} \times_Y \eE^0 \times_Y V \to \eE^{-1}, (\phi^{\vee}, s))
\end{align*}
where differentials are induced by
\begin{align*}
\eE^{-1} \oplus V^{\vee} \stackrel{(\phi, s)}{\to}
\eE^0 \oplus \oO_Y \subset \oO_{\eE_0}, \ 
\eE_0 \oplus V^{\vee} \stackrel{(\phi^{\vee}, s)}{\to}
\eE_1 \oplus \oO_Y \subset \oO_{\eE^{-1}}
\end{align*}
respectively. 
By the above descriptions, both of 
$(-1)$-shifted cotangent derived schemes
$\Omega_{\mathfrak{U}^{\dag}}[-1]$, $\Omega_{\mathfrak{U}^{\sharp}}[-1]$
are given by
the derived critical locus of the function
\begin{align}\label{funct:koszul:w}
w \colon \eE_0 \times_Y \eE^{-1} \times_Y V^{\vee} \to \mathbb{C}, \ 
w(x, e, e', v)=\langle \phi|_x(e'), e \rangle +\langle s(x), v\rangle
\end{align}
for $x \in Y, e\in \eE_0|_{x}, e' \in \eE^{-1}|_{x}, v \in V^{\vee}|_{x}$.
Therefore the isomorphism $\vartheta_0$
is given by the composition of isomorphisms 
\begin{align}\label{isom:theta0}
	t_0(\Omega_{\fU^{\dag}}[-1]) \stackrel{\cong}{\to}
	\Crit(w) \stackrel{\cong}{\leftarrow} t_0(\Omega_{\fU^{\sharp}}[-1]). 
	\end{align}
Also both of $\Omega_{\mathfrak{U}^{\dag}/\mathfrak{U}}[-1]$
and $\Omega_{\mathfrak{U}^{\sharp}/\mathfrak{U}}[-1]$ are the 
derived zero locus of 
\begin{align*}
(\phi^{\vee}, \phi, s) \colon 
\eE_0 \times_Y \eE^{-1} \to \eE_1 \times_Y \eE^0 \times_Y V
\end{align*}
and
the left and right maps in (\ref{eq:theta}) 
are induced by 
the projection $\eE_0 \times_Y \eE^{-1} \times_Y V \to 
\eE_0 \times_Y \eE^{-1}$. 
Therefore the diagram (\ref{eq:theta}) commutes. 

In order to globalize the above isomorphism, it is enough 
to show that the isomorphism $\vartheta_0$ is independent of a
presentation of $\fU$ and $\eE$. 
Here we check the latter independence. 
The check for the former independence is similarly discussed. 
Let 
\begin{align}\label{qis:twoterm}
	\eE=(\eE^{-1} \to \eE^0) \to 
	\eE'=({\eE'}^{-1} \to {\eE'}^{0})
	\end{align}
be a quasi-isomorphism of two term complexes of 
vector bundles on $Y$. 
We set 
\begin{align*}
	\fE'=\eE' \otimes_{\oO_Y} \oO_{\fU}, \ 
	{\fU'}^{\dag}=\Spec S_{\oO_{\fU}}(\fE'), \ 
	{\fU'}^{\sharp}=\Spec 
S_{\oO_{\fU}}({\fE'}^{\vee}[1]).
\end{align*}
We also have the function
$w'$ on $\eE_0' \times_Y {\eE'}^{-1} \times_Y V^{\vee}$
as in (\ref{funct:koszul:w}), and isomorphisms 
\begin{align}\label{isom:theta1}
	t_0(\Omega_{{\fU'}^{\dag}}[-1]) \stackrel{\cong}{\to} \Crit(w')
	\stackrel{\cong}{\leftarrow} 
	t_0(\Omega_{{\fU'}^{\sharp}}[-1])
	\end{align}
as in (\ref{isom:theta0}).
On the other hand, the
quasi-isomorphism (\ref{qis:twoterm}) 
induces equivalences 
${\fU'}^{\dag} \stackrel{\sim}{\to} \fU^{\dag}$
and $\fU^{\sharp} \stackrel{\sim}{\to} {\fU'}^{\sharp}$, which induce
isomorphisms
\begin{align}\label{isom:Omega:induced}
	t_0(\Omega_{{\fU'}^{\dag}}[-1]) \stackrel{\cong}{\to}
	t_0(\Omega_{\fU^{\dag}}[-1]), \ 
		t_0(\Omega_{{\fU'}^{\sharp}}[-1]) \stackrel{\cong}{\to}
	t_0(\Omega_{\fU^{\sharp}}[-1]). 
	\end{align}
Via the isomorphisms in (\ref{isom:theta0}), (\ref{isom:theta1}),
the above isomorphisms give two isomorphisms
$\Crit(w') \stackrel{\cong}{\to} \Crit(w)$, and we need to check that 
they are indeed the same isomorphism.  
Namely we claim that there is an isomorphism 
$\gamma \colon \Crit(w') \stackrel{\cong}{\to} \Crit(w)$
such that the following diagram commutes
\begin{align}\label{gamma:commute}
	\xymatrix{
	t_0(\Omega_{{\fU'}^{\dag}}[-1]) \ar[r]^-{\cong} \ar[d]^-{\cong}& 
	\Crit(w') \ar[d]^-{\cong}_-{\gamma}
& \ar[l]_-{\cong} 
t_0(\Omega_{{\fU'}^{\sharp}}[-1]) \ar[d]^-{\cong} \\
	t_0(\Omega_{{\fU}^{\dag}}[-1]) \ar[r]^-{\cong} & 
\Crit(w) 
& \ar[l]_-{\cong} 
t_0(\Omega_{{\fU}^{\sharp}}[-1]).
}
	\end{align}
Here the horizontal arrows are given by (\ref{isom:theta0}), (\ref{isom:theta1}), 
and the left and right vertical arrows are given by (\ref{isom:Omega:induced}). 

The quasi-isomorphism $\eE \to \eE'$
induces the diagram 
\begin{align}\notag
	\xymatrix{
	\eE_0' \times_Y {\eE'}^{-1} \times_Y V^{\vee}
	& \ar[l]_-{h'}
	 \eE_0' \times_Y \eE^{-1} \times_Y V^{\vee}
	\ar[r]^-{h} &
	 \eE_0 \times_Y \eE^{-1} \times_Y V^{\vee}.
	}
	\end{align}
By Lemma~\ref{lem:critical} and Lemma~\ref{lem:beta0},
we have the isomorphism 
\begin{align*}
\xymatrix{
	\gamma \colon 
\Crit(w') & \ar[l]_-{\cong}^-{h'} 
(h')^{-1}(\Crit(w')) \cap h^{-1}(\Crit(w)) 
\ar[r]^-{\cong}_-{h} & \Crit(w),
}
	\end{align*}
which fits into the commutative diagram (\ref{gamma:commute}). 
Therefore we have the independence of $\vartheta_0$ for 
a presentation of $\eE$. 
\end{proof}

We have the isomorphism by Lemma~\ref{lem:equiv:shifted}
\begin{align*}
\vartheta_0 \colon  t_0(\Omega_{\mathfrak{M}^{\dag}}[-1])
 \stackrel{\cong}{\to}
t_0(\Omega_{\mathfrak{M}^{\sharp}}[-1]).
\end{align*}
In particular 
they admit two fiberwise $\mathbb{C}^{\ast}$-actions with respect to 
projections to $t_0(\mathfrak{M}^{\dag})$, $t_0(\mathfrak{M}^{\sharp})$ respectively. 
We have the following proposition: 
\begin{prop}\label{prop:compare:Z}
Let $\zZ \subset t_0(\Omega_{\mathfrak{M}^{\dag}}[-1])$
be a double conical closed substack. 
Under the equivalence $\Phi_{\mathfrak{M}}$ in (\ref{aut:covariant}), 
we have $\Phi_{\mathfrak{M}}(\cC_{\zZ})=\cC_{\vartheta_0(\zZ)}$. 
Therefore we have the equivalence
\begin{align*}
\Phi_{\mathfrak{M}} \colon D^b_{\rm{coh}}(\mathfrak{M}^{\dag})/\cC_{\zZ}
\stackrel{\sim}{\to} D^b_{\rm{coh}}(\mathfrak{M}^{\sharp})/
\cC_{\vartheta_0(\zZ)}. 
\end{align*}
\end{prop}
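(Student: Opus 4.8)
The statement has two assertions: first that $\Phi_{\mathfrak{M}}$ carries $\cC_{\zZ}$ onto $\cC_{\vartheta_0(\zZ)}$, and then the induced equivalence of Verdier quotients, which is immediate once the first is established. So the entire content is the identification of singular supports under linear Koszul duality. My plan is to reduce to the local/affine model and invoke the local comparison, then glue.

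\textbf{Step 1: reduce to the affine case.} Singular support is intrinsic and compatible with pull-back along smooth morphisms (Remark~\ref{rmk:intrinsic}, \cite[Section~7]{MR3300415}), and $\cC_{\zZ}$ on $\mathfrak{M}^{\dag}$ is defined (Definition~\ref{def:CZ2}) by the condition that the pull-back along every smooth chart $\alpha\colon \mathfrak{U}\to\mathfrak{M}^{\mathbb{C}^{\ast}\rig}$ lands in $\cC_{\alpha^{\ast}\zZ}$ for the induced closed subset. Likewise $\Phi_{\mathfrak{M}}$ was constructed in Proposition~\ref{prop:lKoszul} by gluing the local functors $\Phi_{\mathfrak{U}}$ from Lemma~\ref{prop:koszul:U}, which are compatible with restriction along diagrams (\ref{dia:smooth2}). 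Hence it suffices to prove: for $\mathfrak{U}=\Spec\rR(V\to A,s)$ with $\mathfrak{E}=\eE\otimes_{\oO_A}\oO_{\mathfrak{U}}$ a weight-one two-term complex of vector bundles, and for a double conical closed subset $Z\subset t_0(\Omega_{\mathfrak{U}^{\dag}}[-1])$, the local Koszul equivalence $\Phi_{\mathfrak{U}}$ sends $\cC_Z$ to $\cC_{\vartheta_0(Z)}$, where $\vartheta_0$ is the truncation of the equivalence $\vartheta$ constructed in the local part of the proof of Lemma~\ref{lem:equiv:shifted}.

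\textbf{Step 2: the affine comparison via a common matrix-factorization model.} Here I would use the explicit description from the proof of Lemma~\ref{lem:equiv:shifted}: both $\Omega_{\mathfrak{U}^{\dag}}[-1]$ and $\Omega_{\mathfrak{U}^{\sharp}}[-1]$ are the derived critical locus of the \emph{same} function $w$ on $\eE_0\times_A \eE^{-1}\times_A V^{\vee}$, so via Theorem~\ref{thm:knoer} (in its $\mathbb{C}^{\ast}$-equivariant, non-affine-base form, \cite{MR3631231}) both $D^b_{\rm{coh}}([\mathfrak{U}^{\dag}/\mathbb{C}^{\ast}])$ and $D^b_{\rm{coh}}([\mathfrak{U}^{\sharp}/\mathbb{C}^{\ast}])$ are identified with $\mathrm{MF}^{\mathbb{C}^{\ast}\times\mathbb{C}^{\ast}}_{\rm{coh}}$ of $w$ on (a stack quotient of) this space. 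The two fiberwise $\mathbb{C}^{\ast}$-actions correspond to the scalings of $\eE_0$ and of $\eE^{-1}$. By Lemma~\ref{lem:ssuport} (and Proposition~\ref{def:CZ}), the singular support of an object equals the support of $\Hom^{2\ast}(\fF,\fF)$ as a module over the coordinate ring of the ambient space, i.e.\ is read off from the factorization side and is therefore preserved by \emph{any} equivalence that is compatible with the $\oO$-module structure on $\Hom^{2\ast}$ — exactly what Proposition~\ref{def:CZ} established for the Koszul functors. Concretely, I would check that $\Phi_{\mathfrak{U}}$ (after passing to the MF model on both sides) is the identity on the underlying factorization up to the regrading $\xi'$ and the duality $\mathbb{D}$, neither of which changes the support of $\Hom^{2\ast}$ as a module over $\oO$ of the critical locus; and that the identification of the two critical loci is precisely $\vartheta_0$. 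This reduces the claim to matching two descriptions of the same support, which is the ``double conical'' bookkeeping: the $S(V)$-module structure used for the singular support of a $\mathfrak{U}^{\dag}$-object and that used for the corresponding $\mathfrak{U}^{\sharp}$-object are intertwined by $\vartheta_0$ because both come from the multiplication by coordinates of the common ambient $V^{\vee}$-factor, together with the $\eE_0$- resp.\ $\eE^{-1}$-coordinates which match under $\vartheta_0$.

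\textbf{Step 3: globalize and conclude.} Having the local identity $\Phi_{\mathfrak{U}}(\cC_Z)=\cC_{\vartheta_0(Z)}$ compatibly over all charts, and using that $\vartheta$ was built globally in Lemma~\ref{lem:equiv:shifted} so that $\vartheta_0(\zZ)$ is well-defined and double conical on $t_0(\Omega_{\mathfrak{M}^{\sharp}}[-1])$, Definition~\ref{def:CZ2} immediately gives $\Phi_{\mathfrak{M}}(\cC_{\zZ})=\cC_{\vartheta_0(\zZ)}$. The equivalence of Verdier quotients is then formal. I expect the main obstacle to be Step~2: carefully tracking, through the three ingredients $\mathbb{D}_{\mathfrak{U}}$, $\Phi'_{\mathfrak{U}}$, $\xi'$ of the Koszul functor, the $\oO_{\mathrm{Crit}(w)}$-module structure on $\Hom^{2\ast}(\fF,\fF)$ — in particular verifying that the two $\mathbb{C}^{\ast}$-actions (which is exactly why the ``double conical'' hypothesis appears) correspond under $\vartheta_0$ to the two natural gradings on the factorization side, so that conical-ness in both directions is what makes the support data transport correctly. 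The dualizing shift by $\det V^{\vee}[-\dim V]$ and the regrading are harmless for supports, but this must be said explicitly.
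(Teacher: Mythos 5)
Your reduction to the local model and your gluing step are fine, and they follow the same outline as the paper (localize on $\mathfrak{M}^{\mathbb{C}^{\ast}\rig}$, use that singular support is controlled by a module structure on $\Hom^{2\ast}(\fF,\fF)$, use double-conicality, then invoke Definition~\ref{def:CZ2}). The problem is that your Step 2 — the entire content of the proposition — rests on an assertion you do not prove: that $\Phi_{\mathfrak{U}}$, transported through the two Koszul/Kn\"orrer equivalences of Theorem~\ref{thm:knoer} applied to the presentations of $\mathfrak{U}^{\dag}$ over $\eE_0$ and of $\mathfrak{U}^{\sharp}$ over $\eE^{-1}$, ``is the identity on the underlying factorization up to $\xi'$ and $\mathbb{D}$.'' This is a genuine compatibility statement between the Mirkovi\'c--Riche linear Koszul duality (built from the kernel $\kK$, the $\oO_{\mathfrak{U}}$-linear dual, and the regrading) and two different MF-type Koszul dualities, and it is exactly as hard as the proposition itself. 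The abstract facts you lean on do not substitute for it: any equivalence intertwines $\mathrm{Nat}(\id,\id[2\ast])$, so Proposition~\ref{def:CZ} applied to each side gives you \emph{some} identification of the ambient space supporting $\Hom^{2\ast}(\fF,\fF)$, but a priori $\Phi_{\mathfrak{U}}$ could differ from the composite of the two MF equivalences by an autoequivalence that does not respect supports, and then nothing forces the induced map on supports to be $\vartheta_0$. In other words, what must be checked is that the two ring maps $\eE^0\oplus\eE_1\oplus V \to \mathrm{Nat}(\id,\id[\ast])$ defining singular supports on the $\dag$- and $\sharp$-sides correspond under the isomorphism of natural transformations induced by $\Phi_{\mathfrak{U}}$, via the geometric identification $\vartheta_0$ — and this is precisely the step your plan leaves open (you flag it as ``the main obstacle'' but give no argument).

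For comparison, the paper closes this gap without ever passing to matrix factorizations: it uses that $\Phi_{\mathfrak{U}}$ satisfies $\Phi_{\mathfrak{U}}((-)[m](j))\cong\Phi_{\mathfrak{U}}(-)[m+2j](j)$ to produce an isomorphism $\Phi^{\rm HH}_{\mathfrak{U}}\colon \mathrm{HH}^{\ast}(\mathfrak{U}^{\dag}/A)\to\mathrm{HH}^{\ast}(\mathfrak{U}^{\sharp}/A)$ compatible with natural transformations, computes both relative Hochschild cohomologies explicitly by the antidiagonal trick and Koszul resolutions (reducing both to $S(\eE)\otimes S(\eE^{\vee}[-1])\otimes S(V[-2])\otimes\oO_{\mathfrak{U}}$ up to regrading), and verifies that the support-defining maps from $\eE^0\oplus\eE_1\oplus V$ into these cohomologies are literally $(x,y,z)\mapsto x\otimes y\otimes z\otimes 1$ on both sides, i.e.\ the diagram comparing them commutes. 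If you want to keep your MF route, you would have to do an analogous explicit verification (e.g.\ compute the image of the kernel $\kK_{\mathfrak{U}}$ under the MF equivalences, or compare Fourier--Mukai kernels as in Proposition~\ref{def:CZ}); your remarks that $\mathbb{D}_{\mathfrak{U}}$, $\det V^{\vee}[-\dim V]$ and the regrading are harmless for supports are correct but do not by themselves supply that verification.
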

\begin{proof}
The statement is local 
on $\mathfrak{M}^{\mathbb{C}^{\ast} \rig}$, so 
we can assume
the situation of the diagram (\ref{dia:stacs:dag2}). 
Below, we use notation of the
proof of Lemma~\ref{lem:equiv:shifted}. 
By the local computation in \textit{loc.~cit.~}, 
singular supports for objects in 
$D^b_{\rm{coh}}(\mathfrak{U}^{\star})$
with $\star \in \{\dag, \sharp\}$
are closed subsets
in $\eE_0 \times_Y \eE^{-1} \times_Y V^{\vee}$. 
By Lemma~\ref{lem:ssuport}, 
they are determined by natural maps to relative 
Hochschild cohomologies
\begin{align}\label{map:HH}
\eE^0 \oplus \eE_1 \oplus V
\to \mathrm{HH}^{\ast}(\mathfrak{U}^{\dag}/\eE_0), \ 
\eE^0 \oplus \eE_1 \oplus V
\to \mathrm{HH}^{\ast}(\mathfrak{U}^{\sharp}/\eE^{-1})
\end{align}
respectively. 

Let $\Phi_{\mathfrak{U}}$ be the equivalence
\begin{align*}
\Phi_{\mathfrak{U}}=\Phi_{\mathfrak{U}}^{\rm{op}} \circ \mathbb{D}_{\mathfrak{U}^{\dag}}
 \colon D^b_{\rm{coh}}([\mathfrak{U}^{\dag}/\mathbb{C}^{\ast}])
\stackrel{\sim}{\to} 
D^b_{\rm{coh}}([\mathfrak{U}^{\sharp}/\mathbb{C}^{\ast}]). 
\end{align*}
From the construction of the equivalence
$\Phi_{\mathfrak{U}}$, it commutes with the $\mathbb{C}^{\ast}$-weight shift
up to cohomological shift (cf.~\cite[Theorem~1.7.1]{MrRi2}):
we have $\Phi_{\mathfrak{U}}((-)[m](j))=\Phi_{\mathfrak{U}}(-)[m+2j](j)$. 
So the equivalence $\Phi_{\mathfrak{U}}$ identifies the natural 
transforms
\begin{align}\label{koszul:nat}
\mathrm{Nat}_{D^b_{\rm{coh}}([\mathfrak{U}^{\dag}/\mathbb{C}^{\ast}])}(\id, \id[m](j))
\stackrel{\sim}{\to} \mathrm{Nat}_{D^b_{\rm{coh}}([\mathfrak{U}^{\sharp}/\mathbb{C}^{\ast}])}(\id, \id[m+2j](j)). 
\end{align}
On the other hand for $\star \in \{\dag, \sharp\}$, 
we have
\begin{align*}
\mathrm{HH}^{\ast}(\mathfrak{U}^{\star}/Y) &\cneq \Hom^{\ast}_{\mathfrak{U}^{\star}\times_Y \mathfrak{U}^{\star}}
(\Delta_{\ast}\oO_{\mathfrak{U}^{\star}}, \Delta_{\ast}\oO_{\mathfrak{U}^{\star}}) \\
&=\bigoplus_{(m, j) \in \mathbb{Z}^2}
\Hom_{[\mathfrak{U}^{\star}/\mathbb{C}^{\ast}] \times_{[Y/\mathbb{C}^{\ast}]} [\mathfrak{U}^{\star}/\mathbb{C}^{\ast}]}
(\Delta_{\ast}\oO_{\mathfrak{U}^{\star}}, \Delta_{\ast}\oO_{\mathfrak{U}^{\star}}[m](j)).
\end{align*}
Therefore 
as in the proof of Proposition~\ref{prop:koszul:Z}, 
one can lift the direct sum of (\ref{koszul:nat}) 
for all $(m, j)$
to the isomorphism of vector spaces
\begin{align*}
\Phi_{\mathfrak{U}}^{\rm{HH}} \colon 
\mathrm{HH}^{\ast}(\mathfrak{U}^{\dag}/Y)
\stackrel{\cong}{\to}
\mathrm{HH}^{\ast}(\mathfrak{U}^{\sharp}/Y)
\end{align*}
compatible with maps to natural 
transforms 
in (\ref{koszul:nat}). 
Note that $\Phi_{\mathfrak{U}}^{\rm{HH}}$ does not preserve the 
cohomological grading $\ast$. 

By ignoring cohomological grading, 
it is enough to show that the following 
diagram is commutative: 
\begin{align}\label{dia:HH:commute}
\xymatrix{
\eE^0 \oplus \eE_1 \oplus V \ar@{=}[d]
\ar[r] & \mathrm{HH}^{\ast}(\mathfrak{U}^{\dag}/\eE_0) \ar[r]  & 
\mathrm{HH}^{\ast}(\mathfrak{U}^{\dag}/Y) \ar[d]^-{\Phi_{\mathfrak{U}}^{\rm{HH}}} \\
\eE^0 \oplus \eE_1 \oplus V 
\ar[r] & \mathrm{HH}^{\ast}(\mathfrak{U}^{\sharp}/\eE^{-1}) \ar[r] & 
\mathrm{HH}^{\ast}(\mathfrak{U}^{\sharp}/Y). 
}
\end{align}

Here the left horizontal arrows are the maps (\ref{map:HH}), and the 
right horizontal arrows are natural maps via the projections 
$\eE_0 \to Y$, $\eE^{-1} \to Y$. 
The relative Hochschild cohomologies 
$\mathrm{HH}^{\ast}(\mathfrak{U}^{\star}/Y)$
and the map $\Phi_{\mathfrak{U}}^{\rm{HH}}$ can be computed 
by the similar argument as in Proposition~\ref{prop:koszul:Z}. 
Namely using the automorphism
 $(x, y)\mapsto (x+y, x-y)/2$
on $V^{\oplus 2}$ and $\eE^{\oplus 2}$, 
we have equivalences
\begin{align*}
&\mathfrak{U}^{\dag} \times_Y \mathfrak{U}^{\dag}
\stackrel{\sim}{\to}
Y^{\dag} \times_Y Y^{\dag} \times_Y \mathfrak{U}^{\flat} \times_Y
 \mathfrak{U}, \\
 &\mathfrak{U}^{\sharp} \times_Y \mathfrak{U}^{\sharp}
\stackrel{\sim}{\to}
Y^{\sharp} \times_Y Y^{\sharp} \times_Y \mathfrak{U}^{\flat} 
\times_Y \mathfrak{U}
\end{align*}
where $\oO_{\mathfrak{U}^{\flat}}=S(V^{\vee}[1])$ 
with zero differential. 
Under the above equivalences, the 
objects $\Delta_{\ast}\oO_{\mathfrak{U}^{\dag}}$, 
$\Delta_{\ast}\oO_{\mathfrak{U}^{\sharp}}$ correspond 
to $\oO_{Y^{\dag}} \boxtimes \oO_{Y} \boxtimes \oO_Y \boxtimes 
\oO_{\mathfrak{U}}$, 
$\oO_{Y} \boxtimes \oO_{Y^{\sharp}} \boxtimes \oO_Y \boxtimes 
\oO_{\mathfrak{U}}$
respectively. 
Therefore $\mathrm{HH}^{\ast}(\mathfrak{U}^{\dag}/Y)$, 
$\mathrm{HH}^{\ast}(\mathfrak{U}^{\sharp}/Y)$
are computed by cohomologies of the complexes
\begin{align}\label{compute:HH}
&\RHom_{Y^{\dag}}(\oO_{Y^{\dag}}, \oO_{Y^{\dag}}) \dotimes_{\oO_Y}
\RHom_{Y^{\dag}}(\oO_{Y}, \oO_{Y}) \dotimes_{\oO_Y}
\RHom_{\mathfrak{U}^{\flat}}(\oO_Y, \oO_Y) \dotimes_{\oO_Y} \oO_{\mathfrak{U}}, \\
\notag
&\RHom_{Y^{\sharp}}(\oO_{Y}, \oO_{Y}) \dotimes_{\oO_Y}
\RHom_{Y^{\sharp}}(\oO_{Y}^{\sharp}, \oO_{Y}^{\sharp}) \dotimes_{\oO_Y}
\RHom_{\mathfrak{U}^{\flat}}(\oO_Y, \oO_Y) \dotimes_{\oO_Y} \oO_{\mathfrak{U}}
\end{align}
respectively. 
By taking Koszul resolutions 
\begin{align*}
S(\eE[1]) \otimes_{\oO_Y}\oO_{Y^{\dag}} \stackrel{\sim}{\to} \oO_Y, \ 
S(\eE^{\vee}[2]) \otimes_{\oO_Y} \oO_{Y^{\sharp}} \stackrel{\sim}{\to} \oO_Y, \ S(V^{\vee}[2]) \otimes_{\oO_Y}\oO_{\mathfrak{U}^{\flat}} \stackrel{\sim}{\to}\oO_Y
\end{align*}
the complexes (\ref{compute:HH})
are quasi-isomorphic to 
\begin{align}\label{compute:HH2}
&S(\eE) \otimes_{\oO_Y} S(\eE^{\vee}[-1]) \otimes_{\oO_Y} S(V[-2]) \otimes_{\oO_Y}\oO_{\mathfrak{U}}, \\
\notag
&S(\eE[-2]) \otimes_{\oO_Y} S(\eE^{\vee}[1]) \otimes_{\oO_Y} S(V[-2]) \otimes_{\oO_Y}\oO_{\mathfrak{U}}
\end{align}
respectively. 
Through the above identifications, the map 
$\Phi_{\mathfrak{U}}^{\mathrm{HH}}$ is obtained by 
naturally identifying the complexes in (\ref{compute:HH2}) 
ignoring cohomological 
gradings. 

On the other hand from the constructions of 
top horizontal arrows of (\ref{dia:HH:commute})
in Subsection~\ref{subsec:ssuport}, 
it is straightforward to see that 
the compositions of 
left and right horizontal arrows in (\ref{dia:HH:commute})
are 
induced by maps to (\ref{compute:HH2})
given by 
\begin{align*}
\eE^0\oplus 
\eE_1 \oplus V \ni 
(x, y, z) \mapsto 
x \otimes y \otimes z\otimes 1.
\end{align*}
This applies to both of the top and bottom horizontal arrows in
 (\ref{dia:HH:commute}). 
Therefore 
we conclude that the diagram (\ref{dia:HH:commute}) commutes. 
\end{proof}

\section{Window theorem for DT categories}\label{sec:window:DT}
In this section, we
prove window theorem for DT categories
for $(-1)$-shifted cotangent stacks, 
and apply it to prove some conjectures proposed in
the previous sections. 
For a quasi-smooth and QCA derived stack $\fM$
with $\mM=t_0(\fM)$
and $l \in \Pic(\mM)_{\mathbb{R}}$, 
its pull-back to the $(-1)$-shifted cotangent 
determines the $l$-semistable locus
\begin{align*}
	\nN^{l\sss} \subset \nN \cneq t_0(\Omega_{\fM}[-1])
	\end{align*}
which is an open substack of $\nN$. 
We expect the existence of a triangulated subcategory 
$\wW(\fM) \subset \Dbc(\fM)$ such that the composition 
\begin{align}\label{subintro:window}
	\wW(\fM) \hookrightarrow \Dbc(\fM) \twoheadrightarrow 
	\dDT^{\C}(\nN^{l\sss})
	\end{align}
is an equivalence. 
We will develop such a theory in the case that 
$\mM$ admits a good moduli space
$\mM \to M$. More precisely 
we show that, given a symmetric structure $\mathbb{S}$ on $\fM$
there exists a subcategory $\wW(\fM)$
as above such that 
the composition (\ref{subintro:window}) is 
fully-faithful, and an equivalence if 
$l$ is also compatible with $\mathbb{S}$. 
Here we refer to Subsection~\ref{subsec:sym} for the 
notions of symmetric structures and compatibility with them. 

An idea for the construction is as follows. 
Since we have the good moduli space $\mM \to M$, 
the category $\Dbc(\fM)$ is 
obtained as a limit of $\Dbc(\fM_U)$ for 
all \'{e}tale maps $U \to M$ as in 
Theorem~\ref{thm:AHR}, where $\fM_U$ is obtained in 
Proposition~\ref{prop:extend}
so it is of the form $[\fU/G]$ for a 
reductive algebraic group $G$ and a $G$-equivariant tuple 
$(Y, V, s)$. Then we have the 
equivalence by 
Theorem~\ref{thm:knoer}
\begin{align}\label{equiv:knoer:5}
	\Phi \colon \Dbc([\fU/G])
	\stackrel{\sim}{\to}
	\MF_{\coh}^{\C}([V^{\vee}/G], w). 
	\end{align}
We then construct some subcategory 
(called intrinsic window subcategory)
$\wW(\fM_U)$ in $\Dbc([\fU/G])$ 
such that the equivalence (\ref{equiv:knoer:5}) restricts to the 
fully-faithful functor
\begin{align*}
	\Phi \colon \wW(\fM_U) \hookrightarrow
	\wW^l([V^{\vee}/G], w), 
\end{align*}
which is an equivalence if $l$ is compatible with $\bS$. 
Here the right hand side is the 
window subcategory for derived categories of factorizations 
constructed 
in~\cite{MR3327537, MR3895631}. 
We then show that intrinsic window 
subcategory is independent of a 
presentation of $\fM_U$ as $[\fU/G]$, 
so that they glue to 
define the subcategory 
\begin{align*}
	\wW(\fM) \cneq \lim_{U \to M} \wW(\fM_U) \subset \Dbc(\fM). 
	\end{align*}
The above subcategory gives a desired window subcategory. 
The proof of the presentation independence of $\wW(\fM_U)$ 
and its comparison with window subcategory under Koszul duality 
are technically hard parts of this section, which 
will be discussed in Section~\ref{subsec:intrinsic}. 

As applications of window theorem
for DT categories, we prove conjectures in 
the previous sections under some setting. 
The first one is a proof of 
Conjecture~\ref{conj1}, when 
strictly semistable sheaves on $X$
 push-forward to semistable sheaves 
 on $S$ at the wall. The next one is 
 a proof of 
 Conjecture~\ref{conj:DT/PT} when the curve class is reduced. 
 The latter can be also proved using categorified Hall products
 in Section~\ref{sec:cat:hall}. 
 
 The organization of this section is as follows. 
 In Section~\ref{subsec:window:git}, we review the original 
 window theorem for derived categories of factorizations 
 with respect to an action of a reductive algebraic group 
 on a smooth affine scheme, and prove some of its variants. 
 In Section~\ref{subsec:intrinsic}, we 
 introduce intrinsic window subcategories 
 and prove the window theorem for DT categories. 
 In Section~\ref{subsec:app1}, we apply 
 the window theorem for Conjecture~\ref{conj1}. 
 In Section~\ref{subsec:app2}, we apply the window theorem 
 for Conjecture~\ref{conj:DT/PT}.

\subsection{Window theorem for GIT quotient}\label{subsec:window:git}
\subsubsection{Kempf-Ness stratification}\label{subsec:KN}
Let $G$ be a reductive algebraic group, 
with maximal torus $T \subset G$. 
We always denote by $M$ the character lattice of $T$
and $N$ the cocharacter lattice of $T$, i.e. 
\begin{align*}
	M=\Hom_{\mathbb{Z}}(T, \mathbb{C}^{\ast}), \ 
	N=\Hom_{\mathbb{Z}}(\mathbb{C}^{\ast}, T). 
\end{align*}
The subspace $M_{\mathbb{R}}^W \subset M_{\mathbb{R}}$ is 
defined to be the 
Weyl-invariant subspace, which is identified with 
$\Pic(BG)_{\mathbb{R}}$. 
Note that $M$, $N$ are finitely generated free abelian 
groups with a perfect pairing
\begin{align*}
	\langle -, - \rangle \colon M \times N \to \mathbb{Z}. 
\end{align*}

Below we follow the convention of~\cite[Section~2.1]{MR3327537} for 
Kempf-Ness stratification associated with GIT quotients. 
Let $Y$ be a smooth affine variety with a $G$-action. 
For an element 
$l \in 
\Pic([Y/G])_{\mathbb{R}}$, 
we have the open subset of $l$-semistable points
\begin{align}\label{Yssl}
	Y^{l\sss} \subset Y.
\end{align}
By the Hilbert-Mumford criterion, 
$Y^{l\sss}$ is characterized 
by the set of points $y \in Y$ such that 
for any one parameter subgroup $\lambda \colon \mathbb{C}^{\ast} \to G$
such that 
the limit $
z=\lim_{t\to 0}\lambda(t)(y)$
exists in $Y$, we 
have $\wt(l|_{z})\ge 0$. 

We will often take 
$l \in M_{\mathbb{R}}^W=\Pic(BG)_{\mathbb{R}}$
and regard 
it as an element of $\Pic([Y/G])_{\mathbb{R}}$
by the pull-back of the natural morphism
\begin{align*}
	a \colon 
	[Y/G] \to BG.
\end{align*} 
In this case, the condition 
$\wt(l|_{z})\ge 0$ is equivalent to 
$\langle l, \lambda \rangle \ge 0$. 
In some cases we may assume that $l$
is pulled back from $\Pic(BG)_{\mathbb{R}}$
by the following lemma:
\begin{lem}\label{lem:Gchar}
	Let $x \in Y$ be fixed by $G$ and set
	$y=p(x)$ for the quotient morphism 
	$p \colon Y \to Y\ssslash G$. 
	Let $\mu \colon BG \to [Y/G]$ be the map sending 
	a point to $x$ and identity on stabilizer groups. 
	Then 
	for any $l\in \Pic([Y/G])$
	there exists a Zariski open subset 
	$x\in U \subset Y\ssslash G$
	such that $l|_{[p^{-1}(U)/G]}$ is 
	isomorphic to 
	$a^{\ast}\mu^{\ast}(l)|_{[p^{-1}(U)/G]}$. 
\end{lem}
\begin{proof}
	Let us set
	$\lL \cneq l \otimes a^{\ast}\mu^{\ast}(l)^{-1} \in \Pic([Y/G])$.
	Then $\mu^{\ast}\lL$ is a trivial line bundle on $BG$, or in other word
	$H^0(\lL|_{x})^G=\mathbb{C}$. Since $Y$ is affine and $G$ is reductive, 
	the functor $H^0(-)^G$ on $\Coh([Y/G])$ is an exact functor. 
	Therefore by applying the above functor to the surjection 
	$\lL \twoheadrightarrow \lL|_{x}$, 
	we obtain a surjection 
	$H^0(\lL)^G \twoheadrightarrow H^0(\lL|_x)^G$.
	In particular there is $s \in H^0(\lL)^G$ which is non-zero on $x$. 
	It gives a $G$-equivariant 
	map $s \colon \oO_Y \to \lL$, and let $Z \subset Y$ be the 
	zero locus of $s$. Then $Z$ is a $G$-invariant closed subset of $Y$
	which does not contain $x$. 
	However the map $[Y/G] \to Y\ssslash G$ is a good moduli space for 
	$[Y/G]$, so in particular it is universally closed (see~\cite[Theorem~4.16]{MR3237451}). 
	Therefore there exists a Zariski open subset $y\in U \subset Y\ssslash G$
	such that $Z \cap p^{-1}(U)=\emptyset$, 
	which implies that $s$ is an isomorphism on $p^{-1}(U)$. 
\end{proof}

By fixing a Weyl-invariant 
norm $\lvert \ast \rvert$ on $N_{\mathbb{R}}$, 
we have the associated Kempf-Ness (KN) stratification 
\begin{align}\label{KN:strata}
	Y=S_{1} \sqcup S_{2} \sqcup \cdots \sqcup S_N \sqcup Y^{l\sss}. 
\end{align}
Here for each $\alpha$ there exists a 
one parameter subgroup $\lambda_{\alpha} \colon \mathbb{C}^{\ast} \to
T \subset G$, a connected component
$Z_{\alpha}$ of the 
$\lambda_{\alpha}$-fixed part 
of 
$Y \setminus \cup_{\alpha'<\alpha} S_{\alpha'}$
such that 
\begin{align*}
	S_{\alpha}=G \cdot Y_{\alpha}, \ 
	Y_{\alpha}\cneq \{ y \in Y: 
	\lim_{t \to 0}\lambda_{\alpha}(t)(y) \in Z_{\alpha}\}. 
\end{align*}
We call $Z_{\alpha}$ the \textit{center} and $Y_{\alpha}$ the \textit{attracting locus}. 
Moreover by setting the slope to be
\begin{align}\label{slope:mu}
	\mu_{\alpha} \cneq -\frac{
		\wt(l|_{Z_{\alpha}})}{\lvert \lambda_{\alpha} \rvert} \in \mathbb{R}
\end{align}
we have 
the inequalities
$\mu_1>\mu_2>\cdots>0$.
By taking the quotient stacks of the 
stratification (\ref{KN:strata}), we have the 
stratification of the quotient stack $\yY=[Y/G]$
\begin{align}\notag
	\yY=\sS_{1} \sqcup \sS_2 \sqcup \cdots \sqcup \sS_N \sqcup \yY^{l\sss}.
\end{align} 

Using Hilbert-Mumford criterion, 
the notion of semistability can be generalized to 
an arbitrary Artin stack $\yY$
and $l \in \Pic(\yY)_{\mathbb{R}}$
(see~\cite[Definition~1.13]{HalpTheta}). 
Namely 
a point $p \in \yY$ is \textit{$l$-semistable} 
if for any map 
$f \colon [\mathbb{A}^1/\C] \to \yY$
with $f(1) \sim p$, we have 
$\wt(f(0)^{\ast}(l)) \ge 0$. 
Here $\C$ acts on $\mathbb{A}^1$ by weight one. 
The set of $l$-semistable points is denoted by 
\begin{align*}
	\yY^{l\sss} \subset \yY.
\end{align*}
When $\yY=[Y/G]$, then 
$\yY^{l\sss}=[Y^{l\sss}/G]$ 
where $Y^{l\sss}$ is the GIT semistable locus (\ref{Yssl}).

\subsubsection{Semiorthogonal decomposition via KN stratification}
In the setting of Subsection~\ref{subsec:KN}, 
suppose furthermore that 
$Y$ is a smooth affine variety.  
Given $l\in \Pic([Y/G])_{\mathbb{R}}$, 
we have a KN-stratification (\ref{KN:strata})
with one parameter subgroup 
$\lambda_{\alpha} \colon \C \to G$. 
We have the following subgroups in $G$
\begin{align*}
	&	P_{\alpha} \cneq \{ g \in G : 
	\mbox{ there exists } 
	\lim_{t \to 0} \lambda_{\alpha}(t) g \lambda_{\alpha}^{-1}(t) \in G\}, \\
	&	G_{\alpha} \cneq \{ g \in G : \lambda_{\alpha}(t) g \lambda_{\alpha}^{-1}(t)=g\}. 
\end{align*}
We have the following diagram (see~\cite[Definition~2.2]{MR3327537})
\begin{align}\label{dia:YZ}
	\xymatrix{
		[Y_{\alpha}/P_{\alpha}] \ar[r]^-{\cong} \ar[d] & [S_{\alpha}/G] \ar[dl]_{p_{\alpha}} \ar@<-0.3ex>@{^{(}->}[r]^-{q_{\alpha}}
		& \left[\left(Y\setminus \cup_{\alpha'<\alpha} S_{\alpha'}\right)/G \right] \ar[d]_-{w} \\
		[Z_{\alpha}/G_{\alpha}] \ar@<-0.3ex>@{^{(}->}[rru]_-{i_{\alpha}} \ar[rr]_{w|_{Z_{\alpha}}} & & \mathbb{C}. 
	}
\end{align}
Here the left arrow is induced by morphisms $Y_{\alpha} \to Z_{\alpha}$ and 
$P_{\alpha} \to G_{\alpha}$ given by
taking $t\to 0$ limit of the action of $\lambda_{\alpha}(t)$
for $t \in \C$, and $i_{\alpha}, q_{\alpha}$ are induced by the 
embedding $Z_{\alpha} \hookrightarrow Y$, $S_{\alpha} \hookrightarrow Y$ respectively. 
Let $\eta_{\alpha} \in \mathbb{Z}$ be defined by 
\begin{align}\label{eta:alpha}
	\eta_{\alpha} \cneq \wt_{\lambda_{\alpha}}(\det(N_{S_{\alpha}/Y}^{\vee}|_{Z_{\alpha}})).\end{align}
As in Subsection~\ref{subsub:fact}, 
let $\C$ acts on $Y$ which commutes with the $G$-action, 
 and $w \colon Y \to \C$ is a 
$G$-invariant function with $\C$-weight two. 
We will use the following version of window theorem. 
\begin{thm}\label{thm:window}\emph{(\cite{MR3327537, MR3895631})}
	For each $\alpha$, we take $k_{\alpha} \in \mathbb{R}$. 
	
	(i) For each $j \in \mathbb{Z}$, the composition 
	\begin{align*}
		q_{\alpha \ast} p_{\alpha}^{\ast} \colon
		\MF^{\C}_{\coh}([Z_{\alpha}/G_{\alpha}], w|_{Z_{\alpha}})_{\lambda_{\alpha} \mathchar`- \wt= j}
		\to \MF^{\C}_{\coh}([S_{\alpha}/G], w|_{S_{\alpha}})
		\to \MF^{\C}_{\coh}([(Y \setminus \cup_{\alpha'<\alpha}S_{\alpha'})/G], w)
	\end{align*}
	is fully-faithful, whose essential image is denoted by 
	$\Upsilon_{j}^{l, \alpha}$. 
	
	(ii) 
	There exist semiorthogonal decomposition 
	\begin{align}\notag
		\MF_{\coh}^{\C}([(Y \setminus \cup_{\alpha'<\alpha}S_{\alpha'})/G], w)
		=\langle \ldots, \Upsilon_{\lceil k_{\alpha} \rceil -2}^{l, \alpha}, \Upsilon_{\lceil k_{\alpha} \rceil -1}^{l, \alpha},  
		\wW_{k_{\alpha}}^{l, \alpha}, \Upsilon_{\lceil k_{\alpha} \rceil}^{l, \alpha}, \Upsilon_{\lceil k_{\alpha} \rceil +1}^{l, \alpha}, \ldots
		\rangle. 
	\end{align}	
	Here $\wW_{k_{\alpha}}^{l, \alpha}$ consists of 
	factorizations 
	$(\pP, d_{\pP})$ as in 
	(\ref{factorization})
	satisfying that 
	\begin{align}\label{condition:P}
		i_{\alpha}^{\ast}(\pP, d_{\pP}) \in 
		\bigoplus_{j \in [k_{\alpha}, k_{\alpha}+\eta_{\alpha})}
		\mathrm{MF}_{\coh}^{\C}([Z_{\alpha}/G_{\alpha}], w|_{Z_{\alpha}})_{\lambda_{\alpha} \mathchar`- \wt= j}. 
	\end{align}
	
	(iii) The composition functor
	\begin{align*}
		\wW_{k_{\alpha}}^{l, \alpha} \hookrightarrow 
		\MF_{\coh}^{\C}([(Y \setminus \cup_{\alpha'<\alpha}S_{\alpha'})/G], w)
		\twoheadrightarrow 
		\MF_{\coh}^{\C}([(Y \setminus \cup_{\alpha' \le \alpha}S_{\alpha'})/G], w)
	\end{align*}
	is an equivalence 
\end{thm}	
As a consequence of Theorem~\ref{thm:window}, we have the following 
window theorem.
Let
\begin{align*}
	\wW_{k_{\bullet}}^l([Y/G], w) \subset 
	\MF_{\coh}^{\C}([Y/G], w)
	\end{align*}
be the subcategory of objects $(\pP, d_{\pP})$
satisfying the condition (\ref{condition:P}) for all $\alpha$. 
Then the composition functor 
\begin{align*}
	\wW_{k_{\bullet}}^l([Y/G], w) \hookrightarrow 
	\MF_{\coh}^{\C}([Y/G], w)
	\twoheadrightarrow
	\MF_{\coh}^{\C}([Y^{l\sss}/G], w)
	\end{align*}
is an equivalence. 
The above window subcategory depends on a choice of $k_{\bullet}$. 
For $\delta \in \Pic([Y/G])_{\mathbb{R}}$, 
we often use the following special choice
\begin{align}\label{window:special}
	\wW_{\delta}^l([Y/G], w) \cneq 
		\wW_{k_{\bullet}}^l([Y/G], w), \ 
		k_{\alpha}=-\frac{1}{2}\eta_{\alpha}+\wt_{\lambda_{\alpha}}(\delta). 
	\end{align}

We will apply Theorem~\ref{thm:window} for a KN stratification 
of $\Crit(w)$
\begin{align*}
	\Crit(w)=S_1' \sqcup S_2' \sqcup \cdots \sqcup S_N' \sqcup \Crit(w)^{l\sss}
\end{align*}
in the following way. 
After discarding 
KN strata $S_{\alpha} \subset Y$ 
with $\Crit(w) \cap S_{\alpha}=\emptyset$, 
the above filtration is obtained by
restricting a KN filtration (\ref{KN:strata}) 
for $Y$ to $\Crit(w)$. 
Let $\lambda_{\alpha} \colon \C \to G$ be a 
one parameter subgroup for $S_{\alpha}'$
with center $Z_{\alpha}' \subset S_{\alpha}'$. 
We define 
$\overline{Z}_{\alpha} \subset Y$ to 
be the connected component 
of the $\lambda_{\alpha}$-fixed part of $Y$
which contains $Z_{\alpha}'$, 
and $\overline{Y}_{\alpha} \subset Y$ is the set of 
point $y \in Y$ with 
$\lim_{t\to 0}\lambda_{\alpha}(t)y \in \overline{Z}_{\alpha}$. 
Similarly to (\ref{dia:YZ}), 
we have the diagram
\begin{align}\label{dia:closure}
	\xymatrix{
		[\overline{Y}_{\alpha}/P_{\alpha}] \ar[r]^-{\overline{q}_{\alpha}} \ar[d]_-{\overline{p}_{\alpha}} & [Y/G] \\
		[\overline{Z}_{\alpha}/G_{\alpha}] \ar@<-0.3ex>@{^{(}->}[ur]_-{\overline{i}_{\alpha}}. 	
	}
\end{align}
By noting the equivalence (\ref{equiv:crit})
together with $\Crit(w) \cap \overline{Z}_{\alpha}=
\Crit(w|_{\overline{Z}_{\alpha}})$, the result of 
Theorem~\ref{thm:window}
implies the semiorthogonal decomposition
\begin{align}\notag
	\MF_{\coh}^{\C}([(Y \setminus \cup_{\alpha'<\alpha}S'_{\alpha'})/G], w)
	=\langle \ldots, \Upsilon_{\lceil k_{\alpha} \rceil-2}^{l, \alpha}, \Upsilon_{\lceil k_{\alpha} \rceil-1}^{l, \alpha},  
	\wW_{k_{\alpha}}^{l, \alpha}, \Upsilon_{\lceil k_{\alpha} \rceil}^{l, \alpha}, \Upsilon_{\lceil k_{\alpha} \rceil +1}^{l, \alpha}, \ldots
	\rangle
\end{align}	
with equivalences
\begin{align}\label{equiv:bar}
	\overline{q}_{\alpha \ast} \overline{p}_{\alpha}^{\ast} \colon
	\MF^{\C}_{\coh}([(\overline{Z}_{\alpha} \setminus \cup_{\alpha'<\alpha}S'_{\alpha'})/G_{\alpha}], w|_{\overline{Z}_{\alpha}})_{\lambda_{\alpha} \mathchar`- \wt= j}
	\stackrel{\sim}{\to}\Upsilon_j^{l, \alpha}. 
\end{align}
The subcategory $\wW_{k_{\alpha}}^{l, \alpha}$ 
consists of factorizations $(\pP, d_{\pP})$ such that 
\begin{align*}
	\overline{i}_{\alpha}^{\ast}(\pP, d_{\pP})|_{[(\overline{Z}_{\alpha} \setminus 
		\cup_{\alpha'<\alpha}S_{\alpha'}')/G_{\alpha}]} \in 
	\bigoplus_{j \in [k_{\alpha}, k_{\alpha}+\overline{\eta}_{\alpha})}
	\mathrm{MF}_{\coh}^{\C}([(\overline{Z}_{\alpha} \setminus 
	\cup_{\alpha'<\alpha}S_{\alpha'}')/G_{\alpha}], w|_{\overline{Z}_{\alpha}})_{\lambda_{\alpha} \mathchar`- \wt= j}
\end{align*}
where $\overline{\eta}_{\alpha}=\wt_{\lambda_{\alpha}}\det (\mathbb{L}_{\overline{q}_{\alpha}})^{\vee}|_{\overline{Z}_{\alpha}}$, 
and the composition functor
\begin{align*}
	\wW_{k_{\alpha}}^{l, \alpha} \hookrightarrow 
	\MF_{\coh}^{\C}([(Y \setminus \cup_{\alpha'<\alpha}S'_{\alpha'})/G], w)
	\twoheadrightarrow 
	\MF_{\coh}^{\C}([(Y \setminus \cup_{\alpha' \le \alpha}S'_{\alpha'})/G], w)
\end{align*}
is an equivalence.

\subsubsection{Magic window subcategories}
\label{subsec:magic}
Suppose that $Y$ is an affine space, i.e. 
$Y=\mathbb{A}^n$ for some $n$, 
and 
it is a $G$-representation. 
Let us take a decomposition into a direct sum of 
$G$-representations
\begin{align}\label{sym:Y}
	Y=\mathbb{S} \oplus \mathbb{U}
\end{align}
such that $\mathbb{S}$ is a symmetric, i.e. 
$\mathbb{S} \cong \mathbb{S}^{\vee}$ as $G$-representations. 
We call such a decomposition 
$Y=\bS\oplus \bU$ as a 
\textit{symmetric structure} of $Y$, and 
refer to it as $\bS$. 

For each
one parameter subgroup $\lambda \colon \C \to T$ and an 
element 
$\mathbb{L} \in K_0(\mathrm{Rep}(T))=\mathbb{Z}[M]$, 
we define
$\mathbb{L}^{\lambda>0}$ to be the projection of this class 
onto the subspace spanned by weights which pair positively with $\lambda$. 
We define 
$\nabla_{\mathbb{S}} \subset M_{\mathbb{R}}$ to be
\begin{align}\notag
	\nabla_{\mathbb{S}} \cneq \left\{ \chi \in M_{\mathbb{R}} : 
	\langle \chi, \lambda \rangle \in
	\left[-\frac{1}{2}\left\langle 
	\mathbb{L}^{\lambda>0}_{[\mathbb{S}/G]}|_{0}, \lambda \right\rangle, 
	\frac{1}{2}\left\langle\mathbb{L}^{\lambda>0}_{[\mathbb{S}/G]}|_{0}, \lambda
	\right\rangle \right]
	\mbox{ for all } \lambda \colon \mathbb{C}^{\ast} \to T  
	\right\}. 
\end{align}
Here 
$\mathbb{L}_{[\mathbb{S}/G]}|_{0}$ 
is the cotangent complex of the quotient stack 
$[\mathbb{S}/G]$ restricted to 
the origin, whose $K$-theory class is 
\begin{align*}
	\mathbb{L}_{[\mathbb{S}/G]}|_{0}=[\mathbb{S}^{\vee}]-[\mathfrak{g}^{\vee}] 
	\in K_0(\mathrm{Rep}(T)).
\end{align*}
Here $\mathfrak{g}$ is the Lie algebra of $G$, 
and regarded as a $T$-representation by the adjoint 
representation.
If $\lambda=\lambda_{\alpha}$
for the KN stratification (\ref{KN:strata}), 
then we have the 
(in)equalities
\begin{align}\label{id:eta}
\left\langle 
	\mathbb{L}^{\lambda>0}_{[\mathbb{S}/G]}|_{0}, \lambda
	\right\rangle \le \left\langle 
	\mathbb{L}^{\lambda>0}_{[Y/G]}|_{0}, \lambda
		\right\rangle=\eta_{\alpha}.
\end{align}
Here $\eta_{\alpha}$ is defined in (\ref{eta:alpha}), and 
the second identity is due to~\cite[Equation~(4)]{MR3327537}. 

We introduce some conditions for elements in $M_{\mathbb{R}}^W$. 
Let $\overline{\Sigma}_{\bS} \subset M_{\mathbb{R}}$ be 
the convex hull of the $T$-characters of 
$\bigwedge^{\ast}(\bS)$. 
\begin{defi}\label{def:generic}
	For $l, \delta \in M_{\mathbb{R}}^W$, we say 
	\begin{enumerate}
		\item
		$l$ is 
		$\bS$-\textit{generic} if it is contained in the linear span of 
		$\overline{\Sigma}_{\bS}$ 
		but is not parallel to any face of $\overline{\Sigma}_{\bS}$. 
			\item 
		$\delta$ is $l$-\textit{generic} 
		if $\langle \delta, \lambda \rangle \notin \mathbb{Q}$
		for any one parameter subgroup
		$\lambda \colon \C \to T$
		such that $\langle l, \lambda\rangle \neq 0$. 
		\item $l$ is compatible with 
		the symmetric structure $\bS$
		in (\ref{sym:Y}) 
		if we have 
		\begin{align*}
			Y^{l\sss}=\bS^{l\sss} \oplus \bU.
		\end{align*}
	\end{enumerate}
\end{defi}
The genericity condition for (i) is introduced in~\cite[Section~2]{HLKSAM}. 
As for (ii), for example if $l \in M_{\mathbb{Q}}^W$ then 
$\delta=\varepsilon \cdot l$ is $l$-generic 
for any $\varepsilon \in \mathbb{R} \setminus \mathbb{Q}$.
Note that if $\delta$ is $l$-generic, 
then $\langle \delta, \lambda_{\alpha}\rangle \notin \mathbb{Q}$
for any one parameter subgroup 
$\lambda_{\alpha}$ which appears in a KN stratification (\ref{KN:strata}).  
Later we will use the following lemma: 
\begin{lem}\label{rmk:generic}
	For $l_1, l_2 \in M_{\mathbb{R}}^W$, 
	there is an uncountable dense subset $U \subset \mathbb{R}^2$
	such that for $(\alpha_1, \alpha_2) \in U$, 
	$\alpha_1 l_1+\alpha_2 l_2$ is $l_1$-generic 
	and $l_2$-generic. 
\end{lem}
\begin{proof}
	For a one parameter subgroup $\lambda \colon \C \to T$
	such that either $\langle l_1, \lambda \rangle \neq 0$ or 
	$\langle l_2, \lambda \rangle \neq 0$, 
	the set of $(\alpha_1, \alpha_2) \in \mathbb{R}^2$ such that 
	$\alpha_1 \langle l_1, \lambda \rangle +\alpha_2 \langle l_2, \lambda \rangle \notin \mathbb{Q}$
	is a complement of countable number of lines in $\mathbb{R}^2$, so it is uncountable and dense. 
	As such one parameter subgroups $\lambda$
	are also countable many, we obtain the lemma. 
\end{proof}

The magic window subcategory is introduced in~\cite{HLKSAM} in order to give a
stability independent description of window subcategories. 
Here we define its variant as follows: 
\begin{defi}\label{def:mwindow}
For $\delta \in M_{\mathbb{R}}^W$, 
the subcategory (called \textit{magic window over $\bS$})
\begin{align*}
	\wW_{\delta}^{\rm{mag}/\bS}([Y/G], w) \subset 
	\mathrm{MF}_{\coh}^{\mathbb{C}^{\ast}}([Y/G], w)
\end{align*}
is defined to be 
split generated by factorizations 
$(\pP, d_{\pP})$ as in 
(\ref{factorization}),  
such that 
$\pP$ is isomorphic to $W \otimes_{\mathbb{C}}\oO_Y$
for a $(G\times \mathbb{C}^{\ast})$-representation $W$
whose $(T \times \{1\})$-weights are contained in $\delta+\nabla_{\bS}$. 
\end{defi}
The following result
is proved in the proof of~\cite[Proposition~2.6]{KoTo}, 
which is itself based on magic window theorem in~\cite{HLKSAM, MR3698338}. 
Its formal fiber version will be proved in Proposition~\ref{cor:magic}
by the same argument. 
\begin{prop}\emph{(\cite[Proposition~2.6]{KoTo})}\label{thm:HLS}
Let us take 
	$l, \delta \in M_{\mathbb{R}}^W$ such that 
	$\delta$ is $l$-generic.
	Then
	we have the inclusion 
	\begin{align*}
		\wW_{\delta}^{\rm{mag}/\bS}([Y/G], w) \subset \wW^l_{\delta}([Y/G], w)
	\end{align*}
	which is identity 
	if $l$ is $\bS$-generic and compatible with
$\bS$. 
\end{prop}

\subsubsection{Window subcategories for formal completions}
\label{subsec:fib}
Let $Y$ be a smooth affine scheme with an action of a reductive 
algebraic group $G$, and $V\to Y$ a $G$-equivariant vector bundle. 
In this paper, we often work with the formal fiber of the map
\begin{align*}
	[V/G] \to [Y/G] \to Y\ssslash G.
\end{align*}
We will use the following notation. 
For $y \in Y\ssslash G$, 
we denote by $\widehat{Y}_y \ssslash G$ be the formal completion 
of $Y\ssslash G$ at $y$, 
\begin{align*}
	\widehat{Y}_y \ssslash G \cneq \Spec \widehat{\oO}_{Y\ssslash G, y}. 
\end{align*}
Then by setting 
\begin{align*}
	\widehat{Y}_y \cneq Y\times_{Y\ssslash G} \widehat{Y}_y 
	\ssslash G, \ 
	\widehat{V}_y \cneq V \times_{Y\ssslash G} \widehat{Y}_y \ssslash G, 
\end{align*} 
we have the Cartesian squares
\begin{align}\label{form:square}
	\xymatrix{
		[\widehat{V}_y/G] \ar[r] \ar[d] \diasquare &
		[\widehat{Y}_y/G] \ar[r] \ar[d] \diasquare & \widehat{Y}_y \ssslash G
		\ar[d] \\
		[V/G] \ar[r] & [Y/G] \ar[r] & Y\ssslash G. 
	}
\end{align}
The KN stratification of $V$
with respect to the above $G$-action on 
$V$ and $l\in M_{\mathbb{R}}^W$
is restricted to a stratification on $\widehat{V}_y$
\begin{align}\label{KN:induced}
	\widehat{V}_y= \widehat{S}_1 
	\sqcup \widehat{S}_2 \sqcup \cdots \sqcup \widehat{S}_N \sqcup \widehat{V}_y^{l\sss}. 
\end{align}

Suppose furthermore that 
$Y$ is a finite dimensional $G$-representation, and 
$V \to Y$ a $G$-equivariant 
vector bundle. Then the total space of $V$ is a
direct sum of $G$-representations $V|_{0} \oplus Y$
and the projection $V\to Y$ is identified with 
the second projection.  
Below
we take a symmetric structure $\bS$ of $V$, 
\begin{align}\label{sym:V}
	V|_0 \oplus Y=\bS\oplus \bU
\end{align}
and 
prove a version of Theorem~\ref{thm:HLS}
for the formal fiber 
$[\widehat{V}_y/G]$ at 
$y=0 \in Y\ssslash G$. 
Let 
$\C$ acts on the fibers of $V \to Y$
and 
$\widehat{w}_0 \colon [\widehat{V}_0/G] \to \mathbb{C}$
a function of $\C$-weight two. 
Then for each $l,\delta \in M_{\mathbb{R}}^W$, the 
window subcategory
\begin{align}\label{window:formal}
	\wW_{\delta}^l([\widehat{V}_0/G], \widehat{w}_0)
	\subset \mathrm{MF}_{\coh}^{\C}([\widehat{V}_0/G], \widehat{w}_0)
\end{align}
is 
defined in the same way as (\ref{window:special}), 
using the induced 
stratification (\ref{KN:induced}).  
%Here by abuse of notation, we have denoted by $w$ the pull-back of 
%$w \colon [Y/G] \to \mathbb{C}$ to 
%$[\widehat{Y}_0/G]$. 
The magic window subcategory
\begin{align*}
	\wW_{\delta}^{\rm{mag}/\bS}([\widehat{V}_0/G], \widehat{w}_0)
	\subset \mathrm{MF}_{\coh}^{\C}([\widehat{V}_0/G], \widehat{w}_0)
\end{align*}
is also defined to be split generated by 
$(\pP, d_{\pP})$ where $\pP$
is isomorphic to $W \otimes_{\mathbb{C}}\oO_{\widehat{V}_0}$
for a $(G\times \mathbb{C}^{\ast})$-representation $W$
whose $(T \times \{1\})$-weights are contained in $\delta+\nabla_{\bS}$. 
We will use the following variant of Theorem~\ref{thm:HLS}: 
\begin{prop}\label{cor:magic}
Let us take 
	$l, \delta \in M_{\mathbb{R}}^W$
	such that 
	$\delta$ is $l$-generic. 
	Then 
	we have the inclusion 
	\begin{align*}
		\wW_{\delta}^{\rm{mag}/\bS}([\widehat{V}_0/G], \widehat{w}_0) \subset 
		\wW_{\delta}^l([\widehat{V}_0/G], \widehat{w}_0)
	\end{align*}
	which is identity if 
	$l$ is $\bS$-generic and compatible with $\bS$. 
	\end{prop}
\begin{proof}
	The proof is almost same as Proposition~\ref{thm:HLS}, 
	given in~\cite[Proposition~2.6]{KoTo}. 
	Let $\lambda_{\alpha} \colon \C \to T$
	be a one parameter subgroup which appears in a KN 
	stratification of $V$ with respect to the $G$-action on it, 
	and set $\eta_{\alpha}$ as in (\ref{eta:alpha}) for the above KN stratification. 
	By the assumption that $\delta$ is $l$-generic, 
	we have 
	$\langle \delta, \lambda_{\alpha}
	\rangle \pm \eta_{\alpha}/2 \notin \mathbb{Z}$. 
	Together with the
	(in)equality in (\ref{id:eta})
	\begin{align*}
		\left\langle \mathbb{L}^{\lambda_{\alpha}>0}_{[\bS/G]}|_{0}, \lambda_{\alpha} \right\rangle
		\le \left\langle \mathbb{L}^{\lambda_{\alpha}>0}_{[V/G]}|_{0}, \lambda_{\alpha} \right \rangle=\eta_{\alpha}
	\end{align*}
	we have the inclusion  
	\begin{align}\label{inc:window}
		\wW_{\delta}^{\rm{mag}}([\widehat{V}_0/G], \widehat{w}_0) \subset 
		\wW_{\delta}^l([\widehat{V}_0/G], \widehat{w}_0)
	\end{align}
	by the definition of these window subcategories. 
	The composition 
	\begin{align}\label{compose:magic}
		\wW_{\delta}^l([\widehat{V}_0/G], \widehat{w}_0)\hookrightarrow
		\mathrm{MF}_{\coh}^{\mathbb{C}^{\ast}}([\widehat{V}_0/G], \widehat{w}_0)
		\twoheadrightarrow \mathrm{MF}_{\coh}^{\mathbb{C}^{\ast}}
		([\widehat{V}_0^{l\sss}/G], \widehat{w}_0)
	\end{align}
	is an equivalence by Theorem~\ref{thm:window}.
	Therefore the composition
	\begin{align}\label{compose:formal}
		\wW_{\delta}^{\rm{mag}/\bS}([\widehat{V}_0/G], \widehat{w}_0)\hookrightarrow
		\mathrm{MF}_{\coh}^{\mathbb{C}^{\ast}}([\widehat{V}_0/G], \widehat{w}_0)
		\twoheadrightarrow \mathrm{MF}_{\coh}^{\mathbb{C}^{\ast}}
		([\widehat{V}_0^{l\sss}/G], \widehat{w}_0)
	\end{align}
	is fully-faithful. 
	
	Now suppose that $l$ is $\bS$-generic 
	and compatible with $\bS$, where $\bS$
	is the symmetric structure in 
	(\ref{sym:V}). 
	Let $\Delta_{\bS}$ be the set of isomorphism classes of finite 
	dimensional $(G \times \C)$-representations $W$ 
	whose $(T \times \{1\})$-weights are contained in 
	$\delta+\nabla_{\bS}$. 
	Using the $\bS$-genericity of $l$, 
		it is proved in 
	\cite[Proposition~3.11]{HLKSAM} that 
	any vector bundle $W'\otimes \oO_{\bS}$ on $\bS$ 
	for a 
	finite dimensional $(G\times \C)$-representation 
	$W'$
	admits a $(G\times \C)$-equivariant resolution 
	by vector bundles $W \otimes \oO_{\bS}$ 
	for $W \in \Delta_{\bS}$, 
	modulo 
	sheaves on $\bS$ supported on $l$-unstable locus.
	By pulling it back to $V$ via the projection 
	$V \to \bS$ and using the assumption that 
	$l$ is compatible with $\bS$, 
	any vector bundle $W'\otimes \oO_{V}$ for a 
	finite dimensional $(G\times \C)$-representation 
	$W'$
	admits a $(G\times \C)$-equivariant resolution 
	by vector bundles $W \otimes \oO_{V}$ 
	for $W \in \Delta_{\bS}$, 
	modulo 
	sheaves on $V$ supported on $l$-unstable locus.
	By restricting it to $\widehat{V}_0$, the same
	also applies to 
	$W'\otimes \oO_{\widehat{V}_0}$. 
	Therefore $\Dbc([\widehat{V}_0^{l\sss}/(G\times \C)])$ is split 
	generated by 
	vector bundles $W \otimes \oO_{\widehat{V}_0^{l\sss}}$
	for $W \in \Delta_{\bS}$. 
	It follows that for any object $(\pP, d_{\pP})$ in 
	$\mathrm{MF}_{\coh}^{\mathbb{C}^{\ast}}
	([\widehat{V}_0^{l\sss}/G], \widehat{w}_0)$, 
	the underlying $(G\times \C)$-equivariant sheaf $\pP$
	is a direct summand of a bounded complex
	$\wW=W^{\bullet} \otimes \oO_{\widehat{V}_0^{l\sss}}$
	for $W^i \in \Delta_{\bS}$ in the derived category 
	$\Dbc([\widehat{V}_0^{l\sss}/(G\times \C)])$. 
	Note that we have 
	\begin{align*}
		\Ext^{>0}_{[\widehat{V}_0^{l\sss}/(G\times \C)]}(W_1 \otimes \oO_{\widehat{V}_0^{l\sss}}, W_2 \otimes 
		\oO_{\widehat{V}_0^{l\sss})}) \stackrel{\cong}{\to}
		\Ext^{>0}_{[\widehat{V}_0/(G\times \C)]}(W_1 \otimes \oO_{\widehat{V}_0}, W_2 
		\otimes \oO_{\widehat{V}_0})=0
	\end{align*}
	for $W_1, W_2 \in \Delta_{\bS}$.
	Here the first isomorphism follows 
	since the composition (\ref{compose:magic}) 
	without super-potential case is also fully-faithful, and the second identity follows 
	since $\widehat{V}_0$ is affine and $G$ is reductive. 
	Therefore using the resolution property of 
	factorizations (see~\cite[Theorem~3.9]{MR3488035}, \cite[Lemma~4.10]{ADS}), 
	the differential 
	$d_{\pP}$ can be lifted to
	a differential $d_{\wW}$ on 
	the totalization of $\wW$
	so that $(\wW, d_{\wW}) \in 
	\mathrm{MF}_{\coh}^{\mathbb{C}^{\ast}}
	([\widehat{V}_0^{l\sss}/G], \widehat{w}_0)$
	and $(\pP, d_{\pP})$ is a direct summand 
	of $(\wW, d_{\wW})$. 
	Therefore 
	the
	composition (\ref{compose:formal})
	is essentially surjective, 
	so the inclusion (\ref{inc:window}) is identity. 
\end{proof}

The following is an obvious corollary of Proposition~\ref{cor:magic}: 
\begin{cor}\label{corcor:magic}
In the setting of Proposition~\ref{cor:magic}, 
let $l, l', \delta \in M_{\mathbb{R}}^W$ satisfy that 
$\delta$ is $l$-generic, $l'$-generic, and $l$ is $\bS$-generic 
and compatible with $\bS$. 
Then we have the inclusion 
\begin{align*}
\wW_{\delta}^{l}([\widehat{V}_0/G], \widehat{w}_0) \subset 
\wW_{\delta}^{l'}([\widehat{V}_0/G], \widehat{w}_0)	
	\end{align*}
which is identity if $l'$ is also $\bS$-generic and compatible with $\bS$. 
\end{cor}

\subsubsection{KN stratifications for some representations of quivers}\label{KN:quiver}
We will use the following example of KN stratifications 
for moduli spaces of representations of quivers. 
Let 
$Q$ be a quiver 
\begin{align*}
	Q=(V(Q), E(Q), s, t), \ s, t \colon E(Q) \to V(Q)
\end{align*}
where $V(Q)$ is the set of vertices, $E(Q)$ is the set of 
edges and $s, t$ are the maps which correspond to the source and 
target of the edge.
Its dual quiver
$Q^{\vee}$ is defined by 
\begin{align*}
	Q^{\vee}=(V(Q), E(Q), s^{\vee}, t^{\vee}), \ 
	s^{\vee}=t, \ t^{\vee}=s. 
\end{align*}
For a quiver $Q$, we denote by $E_{i, j}$ the set of 
$e \in E(Q)$ such that $s(e)=i$ and $t(e)=j$. 
A quiver $Q$ is called \textit{symmetric}
if $\sharp E_{i, j}=\sharp E_{j, i}$ for all $i, j \in V(Q)$. 

Let $\mathrm{Rep}(Q)$ be the category of finite dimensional 
$Q$-representations. 
We have the map taking the dimension vectors
\begin{align*}
	\mathbf{dim} \colon 
	K(\mathrm{Rep}(Q)) \to 
	\Gamma_Q \cneq \bigoplus_{i\in V(Q)} \mathbb{Z} \cdot \mathbf{e}_i. 
\end{align*}
For $\vec{v}=(v_i)_{i \in V(Q)} \in \Gamma_Q$, 
we set $\lvert \vec{v} \rvert \cneq \sum_{i} v_i$. 

Let us fix $0 \in V(Q)$ and 
take the full subquiver $Q_0 \subset Q$ whose 
vertex set $V(Q_0)$ is $V(Q) \setminus \{0\}$. 
For each $i \in V(Q_0)$, let $V_i$ be a finite dimensional 
vector space with dimension $v_i$. 
For $\vec{v}=(v_i)_{i \in V(Q_0)}$, we set
\begin{align*}
	R^{\dag}(Q_0, \vec{v}) \cneq
	\bigoplus_{e \in E(Q_0)} \Hom(V_{s(e)}, V_{t(e)}) 
	\oplus \bigoplus_{0 \to i} V_i \oplus \bigoplus_{i \to 0}V_i^{\vee}
	\oplus \mathbb{C}^{\sharp(0 \to 0)}. 
\end{align*}
The algebraic group $G=\prod_{i\in V(Q_0)} \GL(V_i)$ acts on 
$R^{\dag}(Q_0, \vec{v})$ by the conjugation. 
The quotient stack 
\begin{align*}
	\mM^{\dag}(Q_0, \vec{v})=
	\left[ R^{\dag}(Q_0, \vec{v})/G \right]
\end{align*}
is the $\C$-rigidified moduli stack of 
$Q$-representations of dimension vector $(1, \vec{v})$, where $1$ is the dimension 
vector at $0$. 

Let us take $\theta=(\theta_i)_{i \in V(Q_0)} \in \mathbb{Z}^{V(Q_0)}$
and set 
\begin{align}\notag
	\chi_{\theta} \colon G \to \C, \ 
	(g_i) \mapsto \prod_{i\in V(Q_0)} \det(g_i)^{\theta_i}. 
\end{align}
We also set $\chi_0 \cneq \chi_{\theta_i=1}$, i.e. 
$\chi_0((g_i)_{i \in V(Q_0)})=\prod_{i \in V(Q_0)} \det(g_i)$. 
Let us fix basis of $V_i$
as $\{e_{i}^{(j)} : 1\le j \le v_i\}$
and take the maximal torus
\begin{align*}
	T=\prod_{i \in V(Q_0)} \prod_{j=1}^{v_i} \C \cdot e_{i}^{(j)\vee}
	\otimes e_i^{(j)} \subset G. 
	\end{align*}
For a one parameter subgroup $\lambda \colon \C \to T$
given by $t \mapsto (t^{\lambda_i^{(j)}})$, we set
\begin{align}\label{norm:lambda}
	\lvert \lambda \rvert \cneq 
	\sqrt{\sum_{i\in V(Q_0)} \sum_{j=1}^{v_i} (\lambda_i^{(j)})^2}
	\end{align}
which determines a Wely-invariant norm on $N_{\mathbb{R}}$. 

We describe the KN stratifications for $\chi_0^{\pm}$
and the norm (\ref{norm:lambda}).  
Let us take a point 
\begin{align}\label{point:x}
	x=((A_e)_{e \in E(Q_0)}, (b_i)_{i\in V(Q_0)}, (c_i)_{i\in V(Q_0)}, u)
	\in R^{\dag}(Q_0, \vec{v}), 
\end{align}
where each elements are 
\begin{align*}
	A_e \in \Hom(V_{s(e)}, V_{t(e)}), \ 
	b_i \in V_i, \ c_i \in V_i^{\vee}, \ u \in \mathbb{C}^{\sharp(0 \to 0)}.
		\end{align*}
	Then $(A_e)_{e \in E(Q_0)}$ determine the 
	$\mathbb{C}[Q_0]$-module structure on 
	$\oplus_{i\in V(Q_0)}V_i$, 
	and $(A_e^{\vee})_{e \in E(Q_0)}$ determine the 
	$\mathbb{C}[Q_0^{\vee}]$-module structure on 
	$\oplus_{i\in V(Q_0)}V_i^{\vee}$.
We define the following sub vector spaces
\begin{align*}
	M_x^+ &\cneq \mathbb{C}[Q_0] \langle 
	b_i : i \in V(Q_0) \rangle \subset \oplus_{i \in V(Q_0)}V_i, \\
	M_x^- &\cneq \mathbb{C}[Q_0^{\vee}] \langle 
	c_i : i \in V(Q_0) \rangle \subset \oplus_{i \in V(Q_0)}V_i^{\vee}.  
\end{align*}
Namely 
$M_x^+$ is the subspace generated by $b_i$ as $\mathbb{C}[Q_0]$-module, 
and $M_x^-$ is the subspace generated by $c_i$ as $\mathbb{C}[Q_0^{\vee}]$-module. 
For $\vec{w}=(w_i)_{i\in V(Q_0)}$ with $w_i \in \mathbb{Z}_{\ge 0}$, 
we set 
\begin{align*}
	S_{\vec{w}}^{\pm} \cneq \{ x \in R^{\dag}(Q_0, \vec{v}) : 
	\mathbf{dim}(M_x^{\pm})=\vec{w}
	\}, \ 
	S_k^{\pm} \cneq \coprod_{\lvert \vec{w} \rvert=k} S_{\vec{w}}^{\pm}. 
\end{align*}

\begin{lem}\label{lem:KN}
	The stratifications 
	\begin{align*}
		R^{\dag}(Q_0, \vec{v})=S_0^{\pm} \sqcup S_1^{\pm} \sqcup \cdots \sqcup S_{\lvert \vec{v} \rvert-1}^{\pm}
		\sqcup S_{\lvert \vec{v} \rvert}^{\pm}
	\end{align*}
	are KN stratifications of $R^{\dag}(Q_0, \vec{v})$
	with respect to $\chi_0^{\pm 1}$ and the norm (\ref{norm:lambda}), where 
	$S_{\lvert \vec{v} \rvert}^{\pm}$ are semistable loci 
	$R^{\dag}(Q_0, \vec{v})^{\chi_0^{\pm 1} \sss}$. 
	The corresponding one parameter 
	subgroup 
	$\lambda_{\vec{w}}^{\pm}$
	for $S_{\vec{w}}^{\pm}$ with 
	$\lvert \vec{w} \rvert< \lvert \vec{v} \rvert$ 
can be taken as 
	\begin{align*}
		\lambda_{\vec{w}}^{\pm} \colon \C \to T, \ 
		t \mapsto (t^{\lambda_i^{(j)}}), \ 
		\lambda_i^{(j)}=\begin{cases}
			\mp 1, & 1\le j \le w_i, \\
			0, & w_i<j\le v_i.
			\end{cases}
		\end{align*}
\end{lem}
\begin{proof}
	We only prove
	the case of $\chi_0$. 
	For a one parameter 
	subgroup $\lambda \colon t \mapsto t^{(\lambda_i^{(j)})}$, 
	the slope in (\ref{slope:mu}) is given by 
	\begin{align*}
		\mu_{\lambda} \cneq -\frac{\sum_{i, j}\lambda_i^{(j)}}
		{\sqrt{\sum_{i, j}(\lambda_i^{(j)})^2}}.
		\end{align*}
	Suppose by induction that
	stratas $S_0^+ \sqcup \cdots \sqcup S_{k-1}^+$
	are chosen and let $\lambda \colon t \mapsto t^{(\lambda_i^{(j)})}$
	be a one parameter subgroup for next strata. 
	We set
	$w_i$ to be the number of $1\le j \le v_i$ such that 
	$\lambda_i^{(j)}=0$. 
	Then for $\vec{w}=(w_i)_{i \in V(Q_0)}$, 
	if $\lvert \vec{w} \rvert <k$ then 
	the $\lambda$-fixed locus is contained 
	in $S_0^+ \sqcup \cdots \sqcup S_{k-1}^+$. 
	Therefore $\lvert \vec{w} \rvert \ge k$. 
	Among such $\lambda$, 
	the slope $\mu_{\lambda}$ takes the maximal value 
	$\sqrt{\lvert\vec{v}\rvert-k}$ only if $\lvert \vec{w} \rvert=k$ and 
	$\lambda_i^{(j)}=c$ for some constant $c<0$ if 
	$\lambda_i^{(j)} \neq 0$. 
	We have the decomposition $V_i=W_i \oplus W_i^{\perp}$, where
	\begin{align*}
		W_i=\bigoplus_{\lambda_i^{(j)} =0}
		\mathbb{C} \cdot e_i^{(j)}, \ 
		W_i^{\perp}=\bigoplus_{\lambda_i^{(j)} \neq 0}
		\mathbb{C} \cdot e_i^{(j)}. 
		\end{align*}
	Then the $\lambda$-fixed locus 
consists of $x$ as in (\ref{point:x})
such that $b_i \in W_i$, $c_i \in W_i^{\vee}$ and 
$A_{ij} \in \Hom(W_i, W_j) \oplus \Hom(W_i^{\perp}, W_j^{\perp})$. 
The attracting locus is then 
$b_i \in W_i$ and $A_{ij}(W_i) \subset W_i$. 
By applying the $G$-action, we
obtain the strata $S_{\vec{w}}^+$. 
\end{proof}

We also have the following lemma: 
\begin{lem}\label{lem:theta:sss}
	For $\theta=(\theta_i)_{i \in V(Q_0)} \in \mathbb{Z}^{V(Q_0)}$ with $\theta_i>0$, we have 
	\begin{align*}
			R^{\dag}(Q_0, \vec{v})^{\chi_{\theta}^{\pm}\sss}
		=R^{\dag}(Q_0, \vec{v})^{\chi_0^{\pm}\sss}.
		\end{align*}
		\end{lem}
	\begin{proof}
			From the correspondence between GIT stability and $\theta$-stability 
		proved in~\cite[Theorem~4.1]{Kin}, 
		the $\chi_{\theta}$-semistable locus in 
		$R^{\dag}(Q_0, \vec{v})$
		corresponds to $Z_{\theta}$-semistable representations, 
		where $(\mathrm{Rep}(Q), Z_{\theta})$ 
		are Bridgeland stability conditions~\cite{MR2373143} with central 
		charges given by 
		\begin{align*}
			Z_{\theta} \colon 
			\Gamma_{Q} \to \mathbb{C}, \ 
			\mathbf{e}_i \mapsto \theta_i+\sqrt{-1} \ (i \neq 0), 
			\mathbf{e}_0 \mapsto \theta_0 +\sqrt{-1}.
		\end{align*}
	Here $\theta_0$ is determined by 
	$\theta_0+\sum_{i \in V(Q_0)} \theta_i v_i=0$. 	
	Namely a $Q$-representation $R$ is $Z_{\theta}$-semistable 
	if and only if for any subrepresentation $R' \subset R$, we have 
	$\arg Z_{\theta}(R') \le \arg Z_{\theta}(R)$. 
	Then by the condition $\theta_i>0$, 
	a $Q$-representation $R$ of dimension vector $(1, \vec{v})$ is 
	$Z_{\theta}$-semistable if and only if there is no 
	non-trivial surjection $R \twoheadrightarrow R''$
	where $R''$ has dimension vector $(0, \ast)$. 
	Indeed if there is such a surjection, then 
	$\arg Z_{\theta}(R'') < \arg Z_{\theta}(R)$ which 
	destabilizes $R$. Conversely if there is no such a surjection, then 
	any subobject $R' \subset R$ has dimension vector $(0, \ast)$
	so $\arg Z_{\theta}(R')< \arg Z_{\theta}(R)$ holds. 
	In other words, 
	 a $Q$-representation of dimension vector $(1, \vec{v})$ is 
	$Z_{\theta}$-semistable if and only if it corresponds to 
	a point $x$ as in (\ref{point:x}) so that 
	$M_x^+=\oplus_{i \in V(Q_0)}V_i$. 
	This is independent of a choice of $\theta_i>0$, so 
	the stability for $\theta$ is 
	equivalent to the stability for $\theta_i=1$. 
	The case of $\chi_{\theta}^{-1}=\chi_{-\theta}$-stability is similarly discussed. 
		\end{proof}
\begin{rmk}\label{rmk:HN}
	Although the lemma implies that semistable locus does not depend on $\theta_i>0$, the 
	KN stratification depends on $\theta$. 
		The KN stratifications in Lemma~\ref{lem:KN} are nothing but Harder-Narasimhan 
	filtrations with respect to $Z_{\theta}$-stability
	for $\theta_i=\pm 1$. 
\end{rmk}

Suppose that we have the following 
condition 
\begin{align}\label{cond:Eij}
	\sharp E_{i, j} -\sharp E_{j, i}\begin{cases}
		=0, & i, j \in V(Q_0), \\
		\ge 0, & i=0, j \in V(Q_0).
		\end{cases}
	\end{align}
In particular, 
$Q_0$ is symmetric. 
	We fix subsets $E_{0, i}' \subset E_{0, i}$ for $i \in V(Q_0)$
such that 
$\sharp E_{0, i}'=\sharp E_{i, 0}$. 
Then we have the symmetric structure 
$R^{\dag}(Q_0, \vec{v})=\mathbb{S} \oplus \mathbb{U}$,
where $\mathbb{S}$ and $\mathbb{U}$ are given by 
\begin{align}\label{sym:SU}
	\mathbb{S} &=
	\bigoplus_{e \in E(Q_0)} \Hom(V_{s(e)}, V_{t(e)}) 
	\oplus \bigoplus_{(0 \to i) \in E_{0, i}'} V_i \oplus \bigoplus_{(i \to 0) \in E_{i, 0}}V_i^{\vee}
	\oplus \mathbb{C}^{\sharp(0 \to 0)}, \\
	\notag
	\mathbb{U} &=\bigoplus_{(0 \to i) \in E_{0, i} \setminus E_{0, i}'} V_i. 
\end{align}
\begin{lem}\label{lem:quiver:generic}
	For $\theta=(\theta_i)_{i\in V(Q_0)} \in \mathbb{Z}^{V(Q_0)}$ with $\theta_i>0$, 
	the $G$-characters $\chi_{\theta}^{\pm}$ are $\bS$-generic, 
	and $\chi_{\theta}^{-}$ is compatible with $\bS$.
	\end{lem}
\begin{proof}
Let $Q' \subset Q$ be a symmetric subquiver 
such that $V(Q')=V(Q)$, and the edges of $Q'$ 
are the same as $Q$
except that the set of edges from $0$ to $i$ is $E_{0, i}'$. 
Then Lemma~\ref{lem:qgeneric} below 
shows that $\chi_{\theta}$ is $\bS$-generic, 
where we apply it for 
the quiver $Q'$ with dimension vector $(1, \vec{v})$
and 
$\theta_0=-\sum_{i\in V(Q_0)}\theta_i v_i$.  
Therefore $\chi_{\theta}^{-1}$ is also $\bS$-generic. 

By Lemma~\ref{lem:KN} and Lemma~\ref{lem:theta:sss},
the $\chi_{\theta}^{-1}$-stability on 
$R^{\dag}(Q_0, \vec{v})$ imposes constraints only on $V_i^{\vee}$ and $\End(V_i)$-factors, 
and does not impose any constraint on $V_i$-factors. 
The same also applies to the $\chi_{\theta}^{-1}$-stability 
on $\bS$. Since $\bS$ is obtained by extracting some of $V_i$-factors
from $R^{\dag}(Q_0, \vec{v})$,  
the condition 
\begin{align*}
	R^{\dag}(Q_0, \vec{v})^{\chi_{\theta}^{-1} \sss}=\bS^{\chi_{\theta}^{-1}\sss} \oplus \bU
\end{align*} 
is satisfied. Therefore $\chi_{\theta}^{-1}$ is compatible with $\bS$. 
\end{proof}

Let $\C$ acts on 
$R^{\dag}(Q_0, \vec{v})$
which commutes with the $G$-action, 
and 
$w \colon R^{\dag}(Q_0, \vec{v}) \to \mathbb{C}$ is a $G$-invariant function
with $\C$-weight two. 
	As an application of Proposition~\ref{thm:HLS}, we have the following: 
\begin{prop}\label{prop:inclu:W}
	For $\theta=(\theta_i)_{i \in V(Q_0)} \in \mathbb{Z}^{V(Q_0)}$
	with $\theta_i>0$
	and $\delta \in M_{\mathbb{R}}^W$ which is 
	$\chi_{\theta}$-generic (e.g. $\delta=\varepsilon \cdot \chi_{\theta}$ with $\varepsilon \in \mathbb{R} \setminus \mathbb{Q}$), 
	we have the inclusion 
	in $\MF_{\coh}^{\C}(\mM^{\dag}(Q_0, \vec{v}), w)$
	\begin{align}\label{inclu:W}
		\wW_{\delta}^{\chi_{\theta}^{-1}}(\mM^{\dag}(Q_0, \vec{v}), w) \subset 
		\wW_{\delta}^{\chi_{\theta}}(\mM^{\dag}(Q_0, \vec{v}), w). 
		\end{align}
	In particular, we have a fully-faithful functor 
	\begin{align*}
		\MF_{\coh}^{\C}(\mM^{\dag}(Q_0, \vec{v})^{\chi_{\theta}^{-1} \sss}, w)
		\hookrightarrow 	\MF_{\coh}^{\C}(\mM^{\dag}(Q_0, \vec{v})^{\chi_{\theta} \sss}, w).
		\end{align*}
	\end{prop}
	\begin{proof}
		By Lemma~\ref{lem:quiver:generic}, 
		we can apply Proposition~\ref{thm:HLS} (also see Corollary~\ref{corcor:magic}) and obtain the inclusion (\ref{inclu:W}).  
The last assertion follows from Theorem~\ref{thm:window}. 
\end{proof}

\begin{rmk}\label{rmk:Z2Z}
	The $\C$-action on $R^{\dag}(Q_0^{\dag}, \vec{v})$ is not essential
	in the above arguments, so 
	we also have $\mathbb{Z}/2\mathbb{Z}$-version of Proposition~\ref{prop:inclu:W}
\begin{align*}
	\wW_{\delta}^{\chi_{\theta}^{-1}, \mathbb{Z}/2\mathbb{Z}}(\mM^{\dag}(Q_0, \vec{v}), w) \subset 
	\wW_{\delta}^{\chi_{\theta}, \mathbb{Z}/2\mathbb{Z}}(\mM^{\dag}(Q_0, \vec{v}), w). 
	\end{align*}
Here both sides are window subcategories in 
$\MF_{\coh}^{\mathbb{Z}/2\mathbb{Z}}(\mM^{\dag}(Q_0, \vec{v}), w)$
defined by the same condition as (\ref{condition:P}) replacing $\C$ with $\mathbb{Z}/2\mathbb{Z}$. 
	Moreover as in Proposition~\ref{cor:magic}, we also have the 
	formal fiber version
	\begin{align*}
	\wW_{\delta}^{\chi_{\theta}^{-1}, \mathbb{Z}/2\mathbb{Z}}(\widehat{\mM}^{\dag}(Q_0, \vec{v})_y, w) \subset 
	\wW_{\delta}^{\chi_{\theta}, \mathbb{Z}/2\mathbb{Z}}(\widehat{\mM}^{\dag}(Q_0, \vec{v})_y, w).
	\end{align*}
Here both sides are window subcategories in 
$\MF_{\coh}^{\mathbb{Z}/2\mathbb{Z}}(\widehat{\mM}^{\dag}(Q_0, \vec{v})_y, w)$
for $y \in R^{\dag}(Q_0, \vec{v})\ssslash G$ 
defined by the same condition (\ref{condition:P}). 
	\end{rmk}

We have used the following lemma. 
\begin{lem}\label{lem:qgeneric}
	Let $Q$ be a symmetric quiver
	whose vertex set, edge set, are 
	denoted by $V(Q)$, $E(Q)$, respectively.
	Let $\vec{v}=(v_i)_{i\in V(Q)}$ 
	be a dimension vector of $Q$, 
	and $\{\theta_i\}_{i\in V(Q)}$ with $\theta_i \in \mathbb{Z}$
	satisfy that 
	\begin{align}\label{theta:v}
		\sum_{i\in V(Q)} \theta_i \cdot v_i=0, \ 
		\sum_{i\in V(Q)} \theta_i \cdot v_i' \neq 0
	\end{align}
	for any $\vec{v}'=(v_i')_{i\in V(Q)}$ such that 
	$0<\vec{v}'<\vec{v}$. 
	Here $0<\vec{v}'$ means $v_i'\ge 0$ for any $i\in V(Q)$
	and $\vec{v}' \neq 0$, 
	and $\vec{v}'<\vec{v}$ means $\vec{v}-\vec{v}'>0$. 
	Let $V_i$ for $i\in V(Q)$ be vector spaces with dimension $v_i$. 
	Then for $G=\prod_{i \in V(Q)} \GL(V_i)$
	and $G'=G/\C$, where $\C \subset G$ is the diagonal torus, 
	the $G'$-character
	\begin{align*}
		\chi_{\theta} \colon 
		G' \to \mathbb{C}^{\ast}, \
		(g_i)_{i\in V(Q)} \mapsto \prod_{i\in V(Q)} \det(g_i)^{\theta_i}
	\end{align*}
	is $\bS$-generic 
	with respect to
	the symmetric 
	$G'$-representation 
	\begin{align}\label{GrepV}
		\bS=
		\bigoplus_{e \in E(Q)} \Hom(V_{s(e)}, V_{t(e)}).
	\end{align}
	Here 
	$G'$ acts on (\ref{GrepV}) by conjugation. 
\end{lem}
\begin{proof}
	The proof is a slight modification of~\cite[Lemma~3.3]{KoTo}. 
	The maximal torus of $G'$ is given by $T'=T/\C$
	where 
	$T=\prod_{i\in V(Q)}T_i$ for the maximal torus 
	$T_i \subset \GL(V_i)$, 
	and 
	the character lattice $M'$ of $M$ is given by the kernel of 
	$M \to \mathbb{Z}$ dual to the diagonal embedding $\C \to T$. 
	Let 
	$\{e_{i}^{(j)} : 1\le j \le v_i\}$
	be a basis of $V_i$. 
	By fixing $i_0 \in V(Q)$ and $1\le k_0 \le v_{i_0}$, 
	we can write $M'_{\mathbb{R}}$ as
	\begin{align*}
		M_{\mathbb{R}}'=\bigoplus_{i \in V(Q)} \bigoplus_{1\le k\le v_i}
		\mathbb{R}(e_{i}^{(k)}-e_{i_0}^{(k_0)}).
	\end{align*}

	Let $\gamma_1, \ldots, \gamma_d \in M'$
	be the $T'$-weights of the
	$G'$-representation 
	(\ref{GrepV}).  
	Then any non-zero $T'$-character $\gamma_j$ 
	is of the form
	$e_{i}^{(k)}-e_{i'}^{(k')}$. 
		By~\cite[Proposition~2.1]{HLKSAM}, 
	the genericity of 
	$\chi_{\theta}$ is equivalent to 
	that for any proper subspace $H\subsetneq M'_{\mathbb{R}}$, there is a 
	one parameter subgroup $\lambda \colon \C \to T'$
	such that $\langle \gamma_j, \lambda \rangle=0$ 
	for any $\gamma_j \in H$
	and $\langle\chi_{\theta}, \lambda \rangle \neq 0$. 
Given $H$ as above, we set $\lambda \colon \C \to T'$ be to be 
	\begin{align*}
		\lambda(t)=(t^{\lambda_{i}^{(k)}})_{i \in V(Q), 1\le k \le v_i}, \ 
		\lambda_{i}^{(k)}=\left\{\begin{array}{cc}
			0, & \mbox{ if }e_{i}^{(k)}-e_{i_0}^{(k_0)} \in H, \\
			1, & \mbox{ if } e_{i}^{(k)}-e_{i_0}^{(k_0)} \notin H.
		\end{array}   \right. 
	\end{align*}
	Then $\langle\gamma_j, \lambda\rangle=0$ for any 
	$\gamma_j \in H$.
	As $\chi_{\theta}=
	\sum_{i, k}\theta_i \cdot (e_{i}^{(k)}-e_{i_0}^{(k_0)})$, we have 
	\begin{align*}
		\langle \chi_{\theta}, \lambda \rangle=
		\sum_{i \in V(Q)}
		\theta_i \cdot \sharp\{ 1\le k\le v_i : 
		e_{i}^{(k)}-e_{i_0}^{(k_0)} \notin H\} \neq 0.
	\end{align*}
	Here the latter inequality follows from (\ref{theta:v}). 
	Therefore $\chi_{\theta}$ is $\bS$-generic. 
\end{proof}

\subsection{Intrinsic window subcategories}\label{subsec:intrinsic}

\subsubsection{Symmetric structures of derived stacks}\label{subsec:sym}
Let $\fM$ be a quasi-smooth and QCA derived stack
such that $\mM=t_0(\fM)$ admits a good moduli 
space $\pi_{\mM} \colon \mM \to M$. 
Recall that for any closed point $x \in \mM$, 
its automorphism group $\Aut(x)$ is a reductive algebraic group
by~\cite[Proposition~12.14]{MR3237451}.  
We introduce the notion of symmetric structures 
for derived stacks. 
\begin{defi}\label{def:symmetric}
	A symmetric structure $\bS$ 
	of $\fM$ is a choice of 
	symmetric structures of 
	$\hH^0(\mathbb{T}_{\fM}|_{x}) \oplus \hH^1(\mathbb{T}_{\fM}|_{x})^{\vee}$
	at each closed point $x \in \mM$, i.e. 
	a direct sum
	of $\Aut(x)$-representations
	\begin{align}\label{sym:M}
		\hH^0(\mathbb{T}_{\fM}|_{x}) \oplus \hH^1(\mathbb{T}_{\fM}|_{x})^{\vee}
		=\bS_x \oplus \bU_x
	\end{align}
	such that $\bS_x$ is a symmetric 
	$\Aut(x)$-representation. 
	
	A derived stack $\fM$ is called symmetric if 
	$\hH^0(\mathbb{T}_{\fM}|_{x}) \oplus \hH^1(\mathbb{T}_{\fM}|_{x})^{\vee}$
	is a symmetric $\Aut(x)$-representation for any closed point $x \in \mM$. 
	In this case, we have 
	a symmetric structure (\ref{sym:M})
	such that $\bU_x=0$, 
	which we call a maximal symmetric structure. 
\end{defi}

Recall that we defined $G$-equivariant tuple $(Y, V, s)$
in Definition~\ref{def:tuple}. 
In this section, we always assume that $G$ is reductive. 
We will use the following lemma on symmetric 
structures.  
\begin{lem}\label{lem:symmetric}
	Let $(Y, V, s)$ be a $G$-equivariant tuple for a reductive 
	$G$, and $\fM=[\fU/G]$ the associated 
	derived stack as in Definition~\ref{def:tuple}. 
			Suppose that 
	a symmetric structure 
	of $\fM$
	is given as in (\ref{sym:M}). 
	Let $x \in [\uU/G]$ be a closed point, 
	and $G_x \subset G$ the stabilizer subgroup of $x$. 
	Then 
	there is a symmetric structure 
	of the $G_x$-representation $T_x Y \oplus V|_{x}^{\vee}$
	of the form 
	\begin{align*}
		T_x Y \oplus V|_{x}^{\vee}\cong 
		(\bS_x \oplus P_x \oplus P_x^{\vee} \oplus \mathfrak{g}/\mathfrak{g}_x) \oplus 
		\bU_x
	\end{align*}
	for some $G_x$-representation $P_x$. 
	Here $\mathfrak{g}$, $\mathfrak{g}_x$ are the Lie algebras of $G$, $G_x$, 
	and $\bS_x \oplus P \oplus P^{\vee} \oplus \mathfrak{g}/\mathfrak{g}_x$
	is the symmetric part. 
\end{lem}
\begin{proof}
	The tangent complex of $\fM$ at $x$ is given by 
	\begin{align*}
		\mathbb{T}_{\fM}|_{x}=(\mathfrak{g} \to  T_x Y \stackrel{ds|_{x}}{\to}
		V|_x). 
	\end{align*}
	Here the kernel of the first map is $\mathfrak{g}_x$. 
	Since $G_x$ is reductive, we have 
	an isomorphism of complexes of $G_x$-representations
	\begin{align*}
		(T_x Y \stackrel{ds|_{x}}{\to}
		V|_x) \cong (\Ker ds|_{x} \stackrel{0}{\to} \Cok ds|_x) \oplus (P_x \stackrel{\id}{\to} P_x)
	\end{align*}
	for some $G_x$-representation $P_x$. 
	We also have a splitting $\Ker ds|_{x} \cong \hH^0(\mathbb{T}_{\fM}|_{x}) \oplus 
	\mathfrak{g}/\mathfrak{g}_x$
	as $G_x$-representations. 
	Then we have 
	\begin{align*}
		T_x Y \oplus V|_{x}^{\vee} \cong 
		\hH^0(\mathbb{T}_{\fM}|_{x}) \oplus \hH^1(\mathbb{T}_{\fM}|_{x})^{\vee}
		\oplus P_x \oplus P_x^{\vee} \oplus \mathfrak{g}/\mathfrak{g}_x. 
	\end{align*}
	Therefore the lemma holds. 
\end{proof}

We next introduce 
the notion of `formal neighborhood theorem'
for quasi-smooth derived stacks with good moduli spaces. 

\begin{defi}\label{def:anthm}
	We say that $\fM$ satisfies
	formal neighborhood theorem if 
	for any closed point $x \in \mM$
	with $y=\pi_{\mM}(x) \in M$, 
	there exists an $\Aut(x)$-equivariant morphism
	\begin{align}\label{kappa}
		\kappa \colon \widehat{\hH^0(\mathbb{T}_{\fM}|_{x})}_0 \to \hH^1(\mathbb{T}_{\fM}|_{x})
	\end{align}
	with $\kappa(0)=0$ 
	such that, by setting $\widehat{\nN}_0 \hookrightarrow 
	\widehat{\hH^0(\mathbb{T}_{\fM}|_{x})}_0$
	to be
	the classical zero locus of $\kappa$, 
	we have commutative isomorphisms
	\begin{align}\label{dia:analytic}
		\xymatrix{
			[\widehat{\nN}_0/\Aut(x)] \ar[r]^-{\cong} \ar[d]  & \mM \times_{M}  \Spec \widehat{\oO}_{M, y}
			\ar[d] \\
			\widehat{\nN}_0 \ssslash \Aut(x) \ar[r]^-{\cong} & \Spec \widehat{\oO}_{M, y}. 
		}
	\end{align} 
	Here the top isomorphism sends $0$ to $x$, and identity on 
	stabilizer groups at these points. 
\end{defi}

\begin{rmk}
	Similarly to (\ref{form:square}), 
	the 
	scheme $\widehat{\hH^0(\mathbb{T}_{\fM}|_{x})}_0$
	is defined to be the formal fiber
	of the 
	morphism 
	\begin{align*}
		\hH^0(\mathbb{T}_{\fM}|_{x}) \to \hH^0(\mathbb{T}_{\fM}|_{x})\ssslash \Aut(x)
	\end{align*}
	at the origin. 
	This should not be confused with the 
	formal completion 
	of $\hH^0(\mathbb{T}_{\fM}|_{x})$
	at the origin. 
\end{rmk}

The following lemma is proved along with 
the similar argument of Proposition~\ref{prop:cdga2}, 
so we omit its proof. 

\begin{lem}\label{lem:analytic}
	Suppose that $\fM$ satisfies formal neighborhood theorem. 
	Then the diagram (\ref{dia:analytic}) can be extended to 
	a Cartesian diagram
	\begin{align}\notag
		\xymatrix{
			[\widehat{\nN}_0/\Aut(x)] \ar[r] \ar@<-0.3ex>@{^{(}->}[d] \diasquare & \mM 
			\ar@<-0.3ex>@{^{(}->}[d] \\
			[\widehat{\mathfrak{N}}_0/\Aut(x)] \ar[r] & \fM. 
		} 
	\end{align}
	Here the vertical arrows are 
	closed immersions given by taking the classical truncations
	and $\widehat{\mathfrak{N}}_0$ is the derived zero locus of (\ref{kappa}). 
\end{lem}

We also introduce some conditions 
for $\mathbb{R}$-line bundles on the classical 
stack $\mM=t_0(\fM)$. 
For a closed point $x \in \mM$, 
we denote by $\mu_x \colon B\Aut(x) \to \mM$
the map sending a point to $x$ and identity 
on the automorphism groups. 
For $l \in \Pic(\mM)_{\mathbb{R}}$, we set 
$l_x \cneq \mu_x^{\ast}l \in \Pic(B\Aut(x))_{\mathbb{R}}$.
Recall the conditions for 
the characters of 
reductive algebraic groups in Definition~\ref{def:generic}.  
\begin{defi}\label{def:stack:gen}
	For $l, \delta \in \Pic(\mM)_{\mathbb{R}}$, we say 
	\begin{enumerate}
		\item 
		$l$ is $\bS$-generic 
		for a symmetric structure $\bS$ given in (\ref{sym:M})
		if for any closed point $x \in \mM$, 
		the $\mathbb{R}$-line bundle 
		$l_x$ on $B\Aut(x)$ 
		is $\bS_x$-generic. 
			
		\item $\delta$ is 
		$l$-generic if $\delta_x$ 
		is $l_x$-generic for any closed point $x \in \mM$. 
		
		\item $l$ is compatible with the symmetric structure
		$\bS$ in (\ref{sym:M}) if 
		$l_x$ is compatible with the symmetric structure 
		(\ref{sym:M}) for
		any closed point $x \in \mM$. 
	\end{enumerate}
\end{defi} 
For an $\mathbb{R}$-line bundle $l \in \Pic(\mM)_{\mathbb{R}}$, 
we have the open substack of $l$-semistable locus 
\begin{align*}
	\nN^{l\sss} \subset \nN=t_0(\Omega_{\fM}[-1]). 
	\end{align*}
Here we have regarded $l$ as an $\mathbb{R}$-line bundle bundle on $\nN$ 
by the pull-back for the projection $\nN \to \mM$. 
We now state the main result in this section: 
\begin{thm}\label{thm:equivalence}
	Let $\fM$ be a quasi-smooth and QCA derived stack
	with a good moduli space 
	$\mM=t_0(\fM) \to M$.  
	Suppose that 
	a symmetric structure $\bS$ of $\fM$ is given as (\ref{sym:M}),
	and $\fM$  
	satisfies formal neighborhood theorem. 
	Let us take $l, \delta \in \Pic(\mM)_{\mathbb{R}}$ 
	such that $\delta$ is $l$-generic. 
	Then there exists a triangulated subcategory 
	$\wW_{\delta}^{\intt/\bS}(\fM) \subset \Dbc(\fM)$
	such that
		the composition
	\begin{align*}
		\Theta_l \colon 
		\wW_{\delta}^{\intt/\bS}(\fM) \hookrightarrow 
		\Dbc(\fM) \to 
		\dDT^{\C}(\nN^{l\sss})
		\end{align*}
	is fully-faithful, 
	which is an equivalence if 
	$l$ is $\bS$-generic and compatible with 
	$\bS$. 
\end{thm}
\begin{proof}
	The proof will be given in Subsection~\ref{subsec:proofthm}
	(see Theorem~\ref{thm:equivalence2}). 
\end{proof}

We have the following corollary of the 
above theorem.  

\begin{cor}\label{cor:equivalence}
	Under the assumption of Theorem~\ref{thm:equivalence}, 
	let us take 
	$l_1, l_2\in \Pic(\mM)_{\mathbb{R}}$
	such that $l_1$ is compatible with the 
	symmetric structure $\bS$. 
	Then 
	there exists a fully-faithful functor 
	\begin{align}\notag
		\Theta_{l_1, l_2} \colon 
		\dDT^{\C}(\nN^{l_1 \sss})
				\hookrightarrow 
				\dDT^{\C}(\nN^{l_2 \sss})
	\end{align}
	which is an equivalence if $l_2$ is also compatible with 
	$\bS$. In particular $\Theta_{l_1, l_2}$ is an equivalence 
	if $\fM$ is symmetric and $l_i$ are generic 
	with respect to the maximal symmetric structure. 
\end{cor}
\begin{proof}
	By Lemma~\ref{rmk:generic}, 
	for each closed point $x \in \mM$ 
	there is an uncountable many $(\varepsilon_1, \varepsilon_2) \in \mathbb{R}^2$
	such that for 
	$\delta=\varepsilon_1 l_1+\varepsilon_2 l_2$, 
	$\delta_x$ is $l_{1, x}$-generic and $l_{2, x}$-generic. 
	Since the set of isomorphism classes of reductive algebraic groups
	$\Aut(x)$ together with their representations
	$\hH^0(T_{\fM}|_{x}) \oplus \hH^1(T_{\fM}|_{x})^{\vee}$ 
	and their symmetric structures 
	is at most countable many, 
	we can take $(\varepsilon_1, \varepsilon_2)$ to be independent of $x$. 
	Then the desired fully-faithful functor $\Theta_{l_1, l_2}$ is 
	given by $\Theta_{l_2}\circ \Theta_{l_1}^{-1}$
	for the above choice of $\delta$, which is an equivalence if $l_2$
	is also compatible with $\bS$. 
	In the case that $\fM$ is symmetric, then each 
	$l_i$ is compatible with the maximal symmetric structure, 
	so $\Theta_{l_1, l_2}$ is an equivalence. 
\end{proof}

\begin{rmk}\label{rmk:commute}
	In the situation of Corollary~\ref{cor:equivalence}, 
	let $\fM' \subset \fM$ be an open immersion 
	with
	$\mM'=t_0(\fM')$, 
	$\nN'=t_0(\Omega_{\fM'}[-1])$. 
	We set 
	${\nN'}^{i} \cneq \nN^{l_i \sss} \times_{\mM} \mM'$. 
If ${\nN'}^{1}={\nN'}^{2}$, then we have the commutative diagram
	\begin{align}\label{commute:M'}
		\xymatrix{
			\dDT^{\C}(\nN^{l_1 \sss})
			\ar[r]^-{\Theta_{l_1, l_2}}
			\ar[d] & 	\dDT^{\C}(\nN^{l_2 \sss}) \ar[d] \\
				\dDT^{\C}({\nN'}^{1}) \ar[r]^-{\sim} & \dDT^{\C}({\nN'}^{2}).
		}
	\end{align}
	Here the vertical arrows are restriction functors, 
	and the bottom arrow is a natural equivalence 
	given by ${\nN'}^{1}={\nN'}^{2}$. 
	The commutative diagram (\ref{commute:M'})
	follows 
	since the compositions
	\begin{align*}
		\wW_{\delta}^{\intt/\bS}(\fM) \hookrightarrow 
		\Dbc(\fM) \to
		\dDT^{\C}(\nN^{l_i \sss}) \to 
		\dDT^{\C}({\nN'}^{i})
		\end{align*}
			are identified if
			${\nN'}^{1}={\nN'}^{2}$. 
\end{rmk}

\subsubsection{Intrinsic window subcategories}
In this subsection, we 
define the intrinsic window subcategory of $\Dbc([\fU/G])$
for a derived stack $[\fU/G]$ as in Definition~\ref{def:tuple}
in terms of 
weight conditions for objects in $\Dbc([\fU/G])$ 
under the push-forward to $[Y/G]$. 

We first prepare some notation. 
Let $\fM$ be a quasi-smooth derived stack
such that $\mM=t_0(\fM)$ admits a good moduli space 
$\mM \to M$. 
Assume that $\fM$ satisfies 
the formal neighborhood theorem (see Definition~\ref{def:anthm}) 
and fix its symmetric structure (\ref{sym:M}). 
For each map $\lambda \colon B\C \to \mM$, 
it induces the map 
of good moduli spaces
$\Spec \mathbb{C} \to M$
whose image is denoted by $y(\lambda) \in M$. 
We define $x(\lambda) \in \mM$ to be the unique 
closed point of the fiber of $\pi_{\mM} \colon \mM \to M$
at $y(\lambda)$. 
Then by the diagram (\ref{dia:analytic}), $\lambda$ factors through 
the map
\begin{align*}
	\lambda \colon B\C \to [\widehat{\nN}_0/\Aut(x(\lambda))]. 
\end{align*}
Note that any $\Aut(x(\lambda))$-representation 
induces a vector bundle on 
$[\widehat{\nN}_0/\Aut(x(\lambda))]$, so 
in particular $\bU_{x(\lambda)}$ in the decomposition (\ref{sym:M}) 
is regarded as a vector bundle on 
$[\widehat{\nN}_0/\Aut(x(\lambda))]$. 
Under the above preparation, we introduce the following definition:
\begin{defi}\label{defi:mu}
	We define $\mu^{\pm}_{\lambda} \in \mathbb{Z}$ to be
	\begin{align*}
		\mu^+_{\lambda} \cneq 
		\wt \det((\lambda^{\ast}\bU_{x(\lambda)})^{\wt >0}), \
		\mu^-_{\lambda} \cneq 
		\wt \det((\lambda^{\ast}\bU_{x(\lambda)})^{\wt <0}). 
	\end{align*}
\end{defi}

\begin{rmk}\label{rmk:mu}
	The integers $\mu^{\pm}_{\lambda}$ are independent of choices of 
	isomorphisms in (\ref{dia:analytic}) by Lemma~\ref{lem:rest0}. 
\end{rmk}

Let $(Y, V, s)$ be a $G$-equivariant tuple 
for a reductive $G$,  
and consider the associated derived stack 
$[\fU/G]$. 
Note that we have the following diagram
\begin{align}\label{dia:afst}
	\xymatrix{
		& \vV \ar@{=}[r] & [V/G]\ar[d] \\
		[\fU/G] \ar@<-0.3ex>@{^{(}->}[r]^-{j} &  \yY \ar@{=}[r] &
		[Y/G] \ar@/_10pt/[u]_s
	}
\end{align}
such that $[\fU/G]$ is the derived zero locus of $s$. 
Below we also fix a symmetric structure $\bS$ of $[\fU/G]$, 
i.e. decompositions 
\begin{align}\label{sym:U}
	\hH^0(\mathbb{T}_{[\fU/G]}|_{x}) \oplus 
	\hH^1(\mathbb{T}_{[\fU/G]}|_{x})^{\vee} =
	\bS_x \oplus \bU_x
\end{align}
of $\Aut(x)$-representations 
for each closed point $x \in [\uU/G]$
such that $\bS_x$ is symmetric. 
The intrinsic window subcategory 
with respect to the above symmetric structure is given 
as follows. 
\begin{defi}\label{defi:iwindow}
	For an element $\delta \in \Pic([\uU/G])_{\mathbb{R}}$, 
	we define 
	\begin{align}\label{def:int}
		\wW^{\rm{int}/\bS}_{\delta}([\fU/G]) \subset \Dbc([\fU/G])
	\end{align}
	to
	be the triangulated subcategory 
	consisting of $\eE \in \Dbc([\fU/G])$
	such that for any morphism
	$\lambda \colon B\mathbb{C}^{\ast} \to [\uU/G]$
	we have 
	\begin{align}\label{cond:int}
		\mathrm{wt}(\lambda^{\ast}j_{\ast}\eE)
		\subset 
		\mathrm{wt}(\lambda^{\ast}\delta)
		+\left[ \frac{1}{2}
		\mathrm{wt} \det ((\lambda^{\ast}\mathbb{L}_{\vV}|_{\yY})^{\rm{wt}<0})-
		\frac{1}{2}\mu_{\lambda}^-, 
		\frac{1}{2}
		\mathrm{wt} \det ((\lambda^{\ast}\mathbb{L}_{\vV}|_{\yY})^{\rm{wt}>0})-
		\frac{1}{2}\mu_{\lambda}^+
		\right]. 
	\end{align}
	Here $\mathbb{L}_{\vV}|_{\yY}$ is the cotangent complex on $\vV$
	restricted to the zero section of $\vV \to \yY$, 
	and we have used the same notation $\lambda$ for the 
	composition $B\C \stackrel{\lambda}{\to} [\uU/G] \stackrel{j}{\hookrightarrow} \yY$.  
\end{defi}

In what follows, for an equivalence of derived stacks 
$\bff \colon [\fU/G] \stackrel{\sim}{\to} [\fU'/G']$, we always
take the symmetric structure $\bS'$ of 
$[\fU'/G']$ induced by 
the symmetric structure 
(\ref{sym:U}) and 
isomorphisms 
$\bff^{\ast}\hH^i(\mathbb{T}_{[\fU'/G']})
\stackrel{\cong}{\to} 
\hH^i(\mathbb{T}_{[\fU/G]})$. 
We have the following 
lemma. 
\begin{lem}\label{lem:obvious}
	Suppose that we have a commutative diagram
	\begin{align}\notag
		\xymatrix{
			[\uU/G] \ar@<-0.3ex>@{^{(}->}[r] \ar[d]^-{\cong}_-{f} &
			[\fU/G] \ar@<-0.3ex>@{^{(}->}[r]^-{j} \ar[d]^-{\sim}_-{\bff} & 
			[Y/G] \ar[d]^-{\cong}_-{f} \ar@/^10pt/[r]^-{s} & [V/G] \ar[l] \ar[d]^-{\cong}_-{g}\\
			[\uU'/G'] \ar@<-0.3ex>@{^{(}->}[r] &
			[\fU'/G'] \ar@<-0.3ex>@{^{(}->}[r]^-{j'} & [Y'/G'] \ar[r] 
			\ar@/_10pt/[r]_-{s'} & [V'/G']. \ar[l]
		}
	\end{align}
	Here $(Y', V', s')$ is a $G'$-equivariant tuple
	for a reductive $G'$, 
	and $[\fU'/G']$ is the associated 
	derived stack in Definition~\ref{def:tuple}. 
	The 
	right vertical arrow is an 
	isomorphism of vector bundles, 
	the equivalence $\bff$ is induced by the 
	right commutative isomorphisms, and the left 
	vertical arrow is the induced isomorphism on 
	classical truncations. 
	
	For $\delta' \in \Pic([\uU'/G'])_{\mathbb{R}}$, 
	we take $\delta=f^{\ast}\delta' \in \Pic([\uU/G])_{\mathbb{R}}$. 
	Then we have the equivalences
	\begin{align*}
		\bff_{\ast} \colon \wW_{\delta}^{\intt/\bS}([\fU/G])\stackrel{\sim}{\to}
		\wW_{\delta'}^{\intt/\bS'}([\fU'/G']), \ 
		\bff^{\ast} \colon \wW_{\delta'}^{\intt/\bS'}([\fU'/G'])\stackrel{\sim}{\to}
		\wW_{\delta}^{\intt/\bS}([\fU/G]). 
	\end{align*}
\end{lem}
\begin{proof}
	The lemma is obvious 
	since the category 
	$\wW^{\intt/\bS}_{\delta}([\fU/G])$ is 
	defined in terms of 
	intrinsic properties of isomorphism
	classes of the diagram of stacks (\ref{dia:afst}). 
\end{proof}

Below we show that the category 
$\wW^{\rm{int}/\bS}_{\delta}([\fU/G])$ is also 
independent of possibly non-isomorphic 
presentation (\ref{dia:afst})
for a fixed $G$.  
Let $(Y', V', s')$ be another 
$G$-equivariant tuple, and 
suppose that we have a morphism 
of $G$-equivariant tuples (see Definition~\ref{def:eq:morphism})
\begin{align}\label{dia:VY0}
	\xymatrix{
		V \ar[r]^-{g} \ar[d] & V' \ar[d] \\
		Y \ar[r]^-{f} \ar@/^10pt/[u]^-{s}
		& Y', \ar@/_10pt/[u]_-{s'}&
	} \ 
	\xymatrix{
		\vV \ar[r]^-{g} \ar[d] & \vV' \ar[d] \\
		\yY \ar[r]^-{f} \ar@/^10pt/[u]^-{s}
		& \yY'. \ar@/_10pt/[u]_-{s'}&
	}
\end{align}
Here the right diagram is obtained from 
the left one by taking the quotients by $G$, 
where $\yY=[Y/G]$, $\yY'=[\yY'/G]$, 
$\vV=[V/G]$ and $\vV'=[V'/G]$. 
We assume that the induced morphism 
of derived stacks is an equivalence
\begin{align}\label{equiv:fU}
	\bff \colon [\fU/G] \stackrel{\sim}{\to} 
	[\fU'/G]
\end{align}
where 
both sides are the associated derived stacks in Definition~\ref{def:tuple}. 
As in the diagram (\ref{diagram:dual}), the 
diagram (\ref{dia:VY0}) also induces the 
following $G$-equivariant commutative diagram
\begin{align}\label{diagram:dual2}
	\xymatrix{
		&  \mathbb{C} & \\
		V^{\vee} \ar[ur]^-{w} \ar[d]_-{p} & f^{\ast}{V'}^{\vee} \ar[l]_-{g} 
		\ar[r]^-{f}  \ar@{}[dr]|\square
		\ar[d]_-{p''}
		\ar[u]^{\overline{w}}
		& {V'}^{\vee} \ar[d]^-{p'} \ar[ul]_-{w'} \\
		Y & Y \ar@{=}[l] \ar[r]^-{f} & Y'. 
	}
\end{align}
Here $w'$ is defined as in (\ref{def:w})
from $(Y', s')$. 
We prepare two lemmas.

\begin{lem}\label{lem:stack:imm}
	Suppose that we have a morphism of $G$-equivariant tuples 
	(\ref{dia:VY0}) which induces 
	an equivalence of derived stacks (\ref{equiv:fU}). 
	Moreover suppose that the morphism $f$ in the diagram (\ref{dia:VY0})
	is a closed 
	immersion. 
	Then for an object $\eE \in \Dbc(\yY)$
	and $\gamma \in \mathbb{R}$, 
	we have the condition 
	\begin{align}\notag
		\mathrm{wt}(\lambda^{\ast}\eE)
		\subset \gamma
		+\left[ \frac{1}{2}
		\mathrm{wt} \det ((\lambda^{\ast}\mathbb{L}_{\vV}|_{\yY})^{\rm{wt}<0}), 
		\frac{1}{2}
		\mathrm{wt} \det ((\lambda^{\ast}\mathbb{L}_{\vV}|_{\yY})^{\rm{wt}>0})
		\right]
	\end{align}
	for any 
	$\lambda \colon B\mathbb{C}^{\ast} \to \yY$
	if and only if 
	$f_{\ast}\eE \in \Dbc(\yY')$ satisfies the condition 
	\begin{align}\notag
		\mathrm{wt}({\lambda'}^{\ast}f_{\ast}\eE)
		\subset 
		\gamma
		+\left[ \frac{1}{2}
		\mathrm{wt} \det (({\lambda'}^{\ast}\mathbb{L}_{\vV'}|_{\yY'})^{\rm{wt}<0}), 
		\frac{1}{2}
		\mathrm{wt} \det (({\lambda'}^{\ast}\mathbb{L}_{\vV'}|_{\yY'})^{\rm{wt}>0})
		\right]
	\end{align}
	for any $\lambda \colon B\C \to \yY$, where 
	$\lambda' =\lambda \circ f \colon 
	B\mathbb{C}^{\ast} \to \yY'$. 
\end{lem}
\begin{proof}
	Since $f$ is a closed immersion, 
	for $E \in \Coh(\yY)$
	we have 
	\begin{align*}
		\hH^{-k}(f^{\ast}f_{\ast}E)
		=\tT or_k^{\oO_{\yY'}}(f_{\ast}E, f_{\ast}\oO_{\yY})
		\cong E \otimes \bigwedge^k N^{\vee}_{\yY/\yY'}.
	\end{align*}
	By the above isomorphism, 
	for any $\eE \in \Dbc(\yY)$
	the object $f^{\ast}f_{\ast}\eE \in \Dbc(\yY)$
	fits into a finite sequence of distinguished triangles
	\begin{align}\label{filt:gr}
		\xymatrix{
			\cdots \ar[r] & \eE^{-2} \ar[rr] & & \eE^{-1} \ar[rr] 
			\ar[dl] & & \eE^0=f^{\ast}f_{\ast}\eE \ar[dl] \\
			& & \pP^{-1}\ar@{.>}[ul]^{[1]} & & \pP^{0} \ar@{.>}[ul]^{[1]}&
		}
	\end{align}
	such that $\pP^{-k} \cong 
	\eE \otimes \bigwedge^k N^{\vee}_{\yY/\yY'}[k]$. 
	%admits a filtration whose associated graded is 
	%\begin{align}\label{filt:gr}
	%\mathrm{gr}(f^{\ast}f_{\ast}\eE)
	%\cong \bigoplus_{k\ge 0} \eE \otimes \bigwedge^k N^{\vee}_{\yY/\yY'}[k]. 
	%\end{align}
	%The conditions (\ref{cond:int1.5}), (\ref{cond:int2}) 
	%are local on the image of 
	%$\lambda$. 
	%Therefore by shrinking $Y$, $Y'$ if necessary, 
	%we may assume that 
	%there is a vector bundle $\vV''\to \yY'$ with a regular section 
	%$s''$ such that $\yY=(s''=0)$, 
	%and an object $\eE \in \Dbc(\yY)$
	%extends to an object $\eE' \in \Dbc(\yY')$ such that $\eE \cong f^{\ast}\eE'$. %Then we have 
	%\begin{align*}
	%f^{\ast}f_{\ast}\eE \cong f^{\ast}f_{\ast}f^{\ast}\eE' 
	%\cong f^{\ast}(\eE' \otimes f_{\ast}\oO_{\yY}) 
	%\cong \eE \otimes f^{\ast}f_{\ast}\oO_{\yY}.  
	%\end{align*}
	%By taking the Koszul resolution of $f_{\ast}\oO_{\yY}$ and noting that 
	%$\vV''|_{\yY}=N_{\yY/\yY'}$, we 
	%obtain the isomorphism
	%\begin{align*}
	%f^{\ast}f_{\ast}\eE=\bigoplus_{k\ge 0} \eE \otimes 
	%\bigwedge^k N_{\yY/\yY'}^{\vee}[-k]. 
	%\end{align*}
	For an object $\fF \in \Dbc(B\C)$, we denote by 
	$\wt^{\max}(\fF) \in \mathbb{Z}$
	(resp.~$\wt^{\min}(\fF) \in \mathbb{Z}$) the 
	maximal (resp.~minimal)
	$\C$-weight of $\hH^{\bullet}(\fF)$. 
	By the distinguished triangles (\ref{filt:gr}), 
	for any map $\lambda \colon B\mathbb{C}^{\ast} \to \yY$
	we have 
	\begin{align}\label{wt:max}
		\wt^{\max}({\lambda'}^{\ast}f_{\ast}\eE)
		=\wt^{\max}(\lambda^{\ast}\eE)+\wt \det
		((\lambda^{\ast}N_{\yY/\yY'}^{\vee})^{\wt>0}).
	\end{align}
	Moreover since the diagram (\ref{dia:VY0})
	induces an equivalence of derived zero loci (\ref{equiv:fU}), by comparing 
	the cotangent complexes of $[\fU/G]$ and $[\fU'/G]$
	we have the identity in $K(B\mathbb{C}^{\ast})$
	\begin{align*}
		\lambda^{\ast}\mathbb{L}_{\yY}-\lambda^{\ast}\vV^{\vee}
		={\lambda'}^{\ast}\mathbb{L}_{\yY'}-{\lambda'}^{\ast}{\vV'}^{\vee}. 
	\end{align*}
	Therefore we have the identities in $K(B\mathbb{C}^{\ast})$
	\begin{align*}
		{\lambda'}^{\ast}\mathbb{L}_{\vV'}|_{\yY'}&=
		{\lambda'}^{\ast}\mathbb{L}_{\yY'}+{\lambda'}^{\ast}{\vV'}^{\vee} \\
		&= \lambda^{\ast}\mathbb{L}_{\yY}+{\lambda'}^{\ast}{\vV'}^{\vee}
		-\lambda^{\ast}\vV^{\vee} +{\lambda'}^{\ast}{\vV'}^{\vee} \\
		&=\lambda^{\ast}(\mathbb{L}_{\yY}+\vV^{\vee})
		+2(\lambda^{\ast}{\vV'}^{\vee}-\lambda^{\ast}\vV^{\vee}) \\
		&=\lambda^{\ast}\mathbb{L}_{\vV}|_{\yY}+2\lambda^{\ast}N_{\yY/\yY'}^{\vee}. 
	\end{align*}
	From (\ref{wt:max}), it follows that 
	\begin{align*}
		\wt^{\max}({\lambda'}^{\ast}f_{\ast}\eE)=\wt^{\max}(\lambda^{\ast}\eE)
		+\frac{1}{2}\wt \det(({\lambda'}^{\ast}\mathbb{L}_{\vV'}|_{\yY'})^{\wt>0})
		-\frac{1}{2}\wt \det((\lambda^{\ast}\mathbb{L}_{\vV}|_{\yY})^{\wt>0}).
	\end{align*}
	Similarly we have 
	\begin{align*}
		\wt^{\min}({\lambda'}^{\ast}f_{\ast}\eE)=\wt^{\min}(\lambda^{\ast}\eE)
		+\frac{1}{2}\wt \det(({\lambda'}^{\ast}\mathbb{L}_{\vV'}|_{\yY'})^{\wt<0})
		-\frac{1}{2}\wt \det((\lambda^{\ast}\mathbb{L}_{\vV}|_{\yY})^{\wt<0}).
	\end{align*}
	The lemma follows from the above two identities. 
\end{proof}

Later we will reduce some statements 
to the case that $f$ is a closed immersion 
using the following lemma: 
\begin{lem}\label{lem:closed}
	Suppose that we have a morphism of $G$-equivariant tuples
	(\ref{dia:VY0}) which induces
	an equivalence of derived stacks (\ref{equiv:fU}). 
	Then there exists
	another $G$-equivariant tuple $(Y'', V'', s'')$
	and morphisms of $G$-equivariant tuples
		\begin{align}\label{dia:YVV}
		\xymatrix{
			V \ar[r]^-{g''} \ar[d] & V'' \ar[d] \\
			Y \inclusion^-{f''} \ar@/^10pt/[u]^-{s}
			& Y'', \ar@/_10pt/[u]_-{s''}&
		} \ 
		\xymatrix{
			V' \ar[r]^-{g'} \ar[d] & V'' \ar[d] \\
			Y' \inclusion^-{f'} \ar@/^10pt/[u]^-{s'}
			& Y'', \ar@/_10pt/[u]_-{s''}&
		}
	\end{align}
satisfying the followings: 
	\begin{enumerate}
		\item The diagrams (\ref{dia:YVV})
		induce equivalences of derived stacks
		\begin{align}\notag
			\bff'' \colon [\fU/G] \stackrel{\sim}{\to}
			[\fU''/G], \
			\bff' \colon [\fU'/G] \stackrel{\sim}{\to}
			[\fU''/G]
		\end{align}
		which commute with 
		the equivalence (\ref{equiv:fU})
		in the $\infty$-category of derived stacks, i.e. 
		$\bff'\circ \bff \sim \bff''$. 
		Here $[\fU''/G]$ is the derived 
		stack associated with $G$-equivariant tuple $(Y'', V'', s'')$.  
		\item 
		The morphisms $f'$, $f''$ are closed immersions. 
	\end{enumerate} 
\end{lem}
\begin{proof}
	The proof will be given in Subsection~\ref{subsec:lem4.9}. 
\end{proof}

The following proposition shows that the intrinsic window 
subcategories are independent of presentations as 
derived critical loci for a fixed $G$. 
\begin{prop}\label{prop:compare}
	Suppose that we have a commutative diagram
	\begin{align}\label{diagram:compare}
		\xymatrix{
			BG \ar[d]^-{\cong}_-{h} &
			\ar[l] [\fU/G] \ar@<-0.3ex>@{^{(}->}[r]^-{j} \ar[d]^-{\sim}_-{\bff} & 
			[Y/G] \ar@/^10pt/[r]^-{s} & [V/G] \ar[l] \\
			BG &
			\ar[l]  [\fU'/G] \ar@<-0.3ex>@{^{(}->}[r]^-{j'} & [Y'/G] 
			\ar@/_10pt/[r]_-{s'} & [V'/G] \ar[l]
		}
	\end{align}
	where $\bff$ is an equivalence of derived stacks
	and $[\fU/G]$, $[\fU'/G]$ are derived zero loci
	of $s$, $s'$, and 
	the left horizontal arrows 
	are
	given by canonical $G$-torsors 
	$\fU \to [\fU/G]$, $\fU' \to [\fU'/G]$. 
	Then by setting $\delta=\bff^{\ast}\delta'$
	for $\delta' \in \Pic([\uU'/G])_{\mathbb{R}}$, 
	we have the equivalences
	\begin{align}\label{equiv:int}
		\bff_{\ast} \colon \wW_{\delta}^{\intt/\bS}([\fU/G])\stackrel{\sim}{\to}
		\wW_{\delta'}^{\intt/\bS'}([\fU'/G]), \ 
		\bff^{\ast} \colon \wW_{\delta'}^{\intt/\bS'}([\fU'/G])\stackrel{\sim}{\to}
		\wW_{\delta}^{\intt/\bS}([\fU/G]). 
	\end{align}
\end{prop}
\begin{proof}
	We first remark that since $\bff$ is an equivalence, 
	the functors 
	\begin{align*}
		\bff_{\ast} \colon \Dbc([\fU/G]) \to \Dbc([\fU'/G]), \ 
		\bff^{\ast} \colon \Dbc([\fU'/G]) \to \Dbc([\fU/G])
	\end{align*}
	are equivalences which are quasi-inverse each other. 
	So we have the left equivalence in (\ref{equiv:int}) if and only if 
	we have the right equivalence in (\ref{equiv:int}). 
	Moreover let 
	\begin{align*}
		\bff \colon [\fU/G] \stackrel{\bff'}{\to} [\fU''/G] \stackrel{\bff''}{\to} [\fU'/G]
	\end{align*}
	be a factorization of $\bff$, i.e. 
	$\bff \sim \bff'' \circ \bff'$, 
	such that $\bff'$, $\bff''$ are equivalences
	of derived stacks. 
	Then 
	if two of three pairs $(\bff_{\ast}, \bff^{\ast})$, $(\bff'_{\ast}, {\bff'}^{\ast})$, 
	$(\bff''_{\ast}, {\bff''}^{\ast})$ satisfy the equivalences (\ref{equiv:int}),
	then the rest of 
	them also satisfies (\ref{equiv:int}). 
	
	By taking the pull-back 
	of the bottom horizontal diagram in 
	(\ref{diagram:compare})
	via $h \colon BG \to BG$, 
	we have the commutative diagram 
	\begin{align}\notag
		\xymatrix{
			BG \ar[d]^-{=}_-{\id} &
			\ar[l] [\fU/G] \ar@<-0.3ex>@{^{(}->}[r]^-{j} \ar[d]^-{\sim}_-{\bff\circ h^{-1}} & 
			[Y/G] \ar@/^10pt/[r]^-{s} & [V/G] \ar[l] \\
			BG \ar[d]_-{h}^-{\cong}&
			\ar[l]  [\fU'/G]\ar[d]_-{h}^-{\sim} \ar@<-0.3ex>@{^{(}->}[r]^-{j'} & [Y'/G] \ar[d]_-{h}^-{\cong}
			\ar@/_10pt/[r]_-{s'} & [V'/G]\ar[d]_-{h}^-{\cong} \ar[l] \\
			BG &
			\ar[l]  [\fU'/G] \ar@<-0.3ex>@{^{(}->}[r]^-{j'} & [Y'/G] 
			\ar@/_10pt/[r]_-{s'} & [V'/G]. \ar[l]
		}
	\end{align}
	Here the $G$-actions 
	on $\fU'$, $Y'$ and $V'$ 
	in the middle horizontal diagram 
	are twisted by $h \in \Aut(G)$. 
	By the above remark together with Lemma~\ref{lem:obvious},
	we may assume that $h=\id$. 
	
	By Lemma~\ref{lem:4.11} below and 
	the first remark in the proof of this proposition, we can assume that 
	$\bff$ fits into a commutative diagram 
	\begin{align}\notag
		\xymatrix{
			[\fU/G] \ar@<-0.3ex>@{^{(}->}[r]^-{j} \ar[d]^-{\sim}_-{\bff} & 
			[Y/G] \ar[d]_-{f} \ar@/^10pt/[r]^-{s} & [V/G] \ar[l] \ar[d] \\
			[\fU'/G] \ar@<-0.3ex>@{^{(}->}[r]^-{j'} & [Y'/G] \ar[r] 
			\ar@/_10pt/[r]_-{s'} & [V'/G]. \ar[l]
		}
	\end{align}
Here the right diagram is induced by a morphism of $G$-equivariant tuples. 
	Then using Lemma~\ref{lem:closed}
	and the above mentioned remark, 
	we may also assume that $f$ is a closed immersion. 
	Then the left equivalence in (\ref{equiv:int}) follows from 
	Lemma~\ref{lem:stack:imm}, noting the presentation independence of 
	$\mu_{\lambda}^{\pm}$. 
	Therefore 
	the proposition holds.  
\end{proof}

We have used the following lemma:
\begin{lem}\label{lem:4.11}
	Suppose that $h=\id$ in the diagram (\ref{diagram:compare}).
	Then there exists a 
	$G$-equivariant tuple $(\widetilde{Y}, \widetilde{V}, \widetilde{s})$
	and morphisms of $G$-equivariant tuples
	\begin{align}\label{dia:equiv}
		\xymatrix{
			V \ar[d] & \widetilde{V} \ar[r] \ar[l] \ar[d] & V' \ar[d] \\
			Y \ar@/^10pt/[u]^-{s} & \widetilde{Y} \ar[r]^-{\widetilde{f}'} 
			\ar[l]_-{\widetilde{f}} \ar@/^10pt/[u]^-{\widetilde{s}} 
			& Y'\ar@/_10pt/[u]_-{s'}
		}
	\end{align}
	such that, by setting $\widetilde{\fU}$ to be 
	the derived zero locus of $\widetilde{s}$, 
	the above diagram induces equivalences
	$\widetilde{\bff} \colon [\widetilde{\fU}/G] \stackrel{\sim}{\to} [\fU/G]$, 
	$\widetilde{\bff}' \colon [\widetilde{\fU}/G] \stackrel{\sim}{\to} [\fU'/G]$
	which commute with $\bff$, i.e. 
	$\bff \circ \widetilde{\bff} \sim \widetilde{\bff}'$. 
\end{lem}
\begin{proof}
	The proof will be given in Subsection~\ref{subsec:proof:4.11}. 
\end{proof}

\subsection{Window theorem for DT categories}
In this section, we compare intrinsic 
window subcategories with the original window subcategories on 
derived categories of factorizations under Koszul duality in 
Theorem~\ref{thm:knoer}, and use 
it to prove window theorem for DT categories.
\subsubsection{Window subcategories under Koszul duality (linear case)}
Let $(Y, V, s)$ be a $G$-equivariant tuple for a reductive $G$, 
and $[\fU/G]$ is the associated derived stack. 
In this subsection, 
we assume that $Y=\mathbb{A}^n$ is a $G$-representation and 
$s(0)=0$. 
Let $\bS$ be a symmetric structure
of $[\fU/G]$ as in (\ref{sym:U}). 
So for each closed point $x \in [\uU/G]$, 
we have 
\begin{align*}
	\hH^0(\mathbb{T}_{[\fU/G]}|_{x}) \oplus 
	\hH^{1}(\mathbb{T}_{[\fU/G]}|_{x})^{\vee} =\mathbb{S}_x \oplus 
	\mathbb{U}_x.  
	\end{align*}
Let $G_x \subset G$ be the stabilizer subgroup of $x$ 
which is also reductive. 
By writing the total space of $V \to Y$ 
as a direct sum of $G$-representations 
$V|_{0} \oplus Y$, 
by Lemma~\ref{lem:symmetric}
we have the induced symmetric structure 
on the total space of $V^{\vee} \to Y$
as $G_x$-representations 
\begin{align}\label{sym:Vdual}
	V|_0^{\vee} \oplus Y \cong (\bS_x \oplus P_x \oplus P_x^{\vee} \oplus \mathfrak{g}/\mathfrak{g}_x) \oplus 
	\bU_x
\end{align}
for some $G_x$-representation $P_x$. 
We denote by 
$\widetilde{\bS}_x=\bS_x \oplus P_x \oplus P^{\vee}_x \oplus \mathfrak{g}/\mathfrak{g}_x$
its symmetric part. 

As in the diagram (\ref{dia:afst}), we set 
$\yY=[Y/G]$ and $\vV=[V/G]$. 
For a one parameter subgroup $\lambda \colon \C \to T_x \cneq T \cap G_x$, we 
use the same notation $\lambda \colon B\C \to \yY$ for the 
corresponding map sending a point to $x$, 
and the map on stabilizer groups is given by $\lambda \colon \C \to T_x$. 
It factors through $\lambda \colon \C \to [\uU/G]$, 
and we have $\mu_{\lambda}^{\pm} \in \mathbb{Z}$ as in Definition~\ref{defi:mu}. 
We have the following lemma: 
\begin{lem}\label{lem:eta}
	For a one parameter subgroup $\lambda \colon \C \to T_x$
	and the corresponding map $\lambda \colon B\C \to \yY$,  
	we have the identities
	\begin{align}\label{id:1}
		&
		\mathrm{wt} \det ((\lambda^{\ast}\mathbb{L}_{\vV}|_{\yY})^{\rm{wt}>0})-\mu_{\lambda}^+
		=\left\langle
		\mathbb{L}^{\lambda>0}_{[\widetilde{\bS}_x/G]}, \lambda 
		\right\rangle
		+ \wt \lambda^{\ast}K_{\yY}, \\
		\label{id:2}
		&
		\mathrm{wt} \det ((\lambda^{\ast}\mathbb{L}_{\vV}|_{\yY})^{\rm{wt}<0})
		-\mu_{\lambda}^-
		=-\left\langle
		\mathbb{L}^{\lambda>0}_{[\widetilde{\bS}_x/G]}, \lambda 
		\right\rangle
		+\wt \lambda^{\ast}K_{\yY}.
	\end{align}
Here (though $\widetilde{S}_x$ is not $G$-representation)
we denoted $\mathbb{L}^{\lambda>0}_{[\widetilde{\bS}_x/G]}
\cneq (\widetilde{\bS}_x^{\vee})^{\lambda>0}-(\mathfrak{g}^{\vee})^{\lambda>0}$
as an element of $K(BT_x)$,  
and $K_{\yY} \cneq \det(\mathbb{L}_{\yY}) \in \Pic(\yY)$. 
\end{lem}
\begin{proof}
	Since $\widetilde{\bS}_x$ is symmetric, we have the identity in 
	$K(BG_x)$
	\begin{align*}
		V|_0^{\vee}+Y-\bU_x=V|_0+Y^{\vee}-\bU_x^{\vee}. 
	\end{align*}
	Therefore for any $\lambda \colon \C \to T_x$
	we have 
	$\langle Y, \lambda \rangle=\langle V|_0+\bU_x, \lambda \rangle$. 
	Using the above identity, we 
	have the identities
	\begin{align*}
		\mathrm{wt} \det ((\lambda^{\ast}\mathbb{L}_{\vV}|_{\yY})^{\rm{wt}>0})
		&=\langle (V|_0^{\vee})^{\lambda >0}+
		(Y^{\vee})^{\lambda >0} -(\mathfrak{g}^{\vee})^{\lambda>0}, \lambda \rangle \\
		&=\langle V|_0^{\lambda >0}+(Y^{\vee})^{\lambda>0} -(\mathfrak{g}^{\vee})^{\lambda>0}, \lambda \rangle-\langle V|_{0}, \lambda \rangle  \\
		&=\langle (\widetilde{\bS}_x^{\vee})^{\lambda>0}-(\mathfrak{g}^{\vee})^{\lambda>0}, \lambda \rangle +\langle (\bU_x^{\vee})^{\lambda>0}, \lambda \rangle 
		-\langle Y, \lambda\rangle +\langle \bU_x, \lambda \rangle \\
		&=\left\langle
		\mathbb{L}^{\lambda>0}_{[\widetilde{\bS}_x/G]}, \lambda 
		\right\rangle
		+ \wt \lambda^{\ast}K_{\yY}+\mu_{\lambda}^+. 
	\end{align*}
	Therefore the identity (\ref{id:1}) holds. 
	The identity (\ref{id:2}) also holds by the same computation. 
\end{proof}

Recall that we have the Koszul duality equivalence 
\begin{align}\label{koszul:window}
	\Phi \colon \Dbc([\fU/G]) \stackrel{\sim}{\to}
	\MF_{\coh}^{\C}([V^{\vee}/G], w)
	\end{align}
from Theorem~\ref{thm:knoer}. 
The symmetric structure (\ref{sym:Vdual}) for $x=0$
gives a symmetric structure $V^{\vee}=\widetilde{\bS}_0 \oplus \bU_0$
of $G$-representations, and 
we have the magic window subcategory over $\widetilde{\bS}_0$
in the RHS of (\ref{koszul:window}) 
as in Definition~\ref{def:mwindow}. 
We also have another window subcategory in the RHS of (\ref{koszul:window})
defined as in (\ref{window:special}). 
So for $\delta, l \in M_{\mathbb{R}}^W$, 
we have subcategories (see Proposition~\ref{thm:HLS})
\begin{align*}
	\wW^{\rm{mag}/\widetilde{\bS}_0}_{\delta}([V^{\vee}/G], w)
	\subset \wW^l_{\delta}([V^{\vee}/G], w)
	\subset 	\MF_{\coh}^{\C}([V^{\vee}/G], w). 
	\end{align*}
On the other hand, 
we have subcategories 
\begin{align*}
		\wW_{\delta}^{\rm{int}/\bS}([\fU/G]) \subset 
	\wW_{\delta}^{\rm{int}/\bS_0}([\fU/G]) \subset \Dbc([\fU/G]).
\end{align*}
Here the intermediate one is 
defined by the condition (\ref{cond:int})
for all $\lambda \colon B\C \to [\uU/G]$ which is mapped to 
$0$ by the composition $[\uU/G] \to \uU\ssslash G$. 
The following proposition gives a comparison of these 
window subcategories 
under the equivalence (\ref{koszul:window}).  
\begin{prop}\label{prop:int}
	We take $l, \delta \in M_{\mathbb{R}}^W$
	such that $\delta$ is $l$-generic. 
	Then under the equivalence $\Phi$ in (\ref{koszul:window}), we have 
	\begin{align}\label{inc:windows}
			&	\Phi(\wW^{\rm{int}/\bS}_{\delta}([\fU/G]))
		\subset \wW^l_{\delta+K_{\yY}/2}([V^{\vee}/G], w), \\
		\label{inc:windows2}
		&\wW^{\rm{mag}/\widetilde{\bS}_0}_{\delta+K_{\yY}/2}([V^{\vee}/G], w) \subset 
			\Phi(\wW^{\rm{int}/\bS_0}_{\delta}([\fU/G])). 
	\end{align}
\end{prop}
\begin{proof}
	We first show the inclusion (\ref{inc:windows}). 
	Let $\lambda=\lambda_{\alpha}
	\colon \C \to T$ be a one parameter subgroup 
	which appears in a KN stratification (\ref{KN:strata}) for the 
	$G$-action on $V^{\vee}$ with respect to $l \in M_{\mathbb{R}}^W$, 
	and $(V^{\vee})^{\lambda} \to Y^{\lambda}$ the 
	restriction of the projection $V \to Y$ to the 
	$\lambda$-fixed loci.  
	Let $\eta_{\alpha}$ be defined as in (\ref{eta:alpha}) 
	for the $G$-action on $V^{\vee}$. 
		By the $l$-genericity of $\delta$, 
	we have
	$\langle \delta+K_{\yY}/2, \lambda\rangle \pm \eta_{\alpha}/2 \notin \mathbb{Z}$. 
	Therefore for an object $\eE \in \wW_{\delta}^{\intt/\bS}([\fU/G])$, 
	it is enough to show that 
	\begin{align}\label{F:restrict}
		\Phi(\eE)|_{(V^{\vee})^{\lambda}}
		\in \bigoplus_{j\in I}\mathrm{MF}_{\coh}^{\C}([(V^{\vee})^{\lambda}/G^{\lambda}], w|_{(V^{\vee})^{\lambda}})_{\lambda \mathchar`- \wt= j}. 
	\end{align} 
	Here $G^{\lambda} \subset G$ is the center of $\lambda$, and 
	$I \subset \mathbb{R}$ is the interval
	\begin{align*}
		I=\left\langle \delta +\frac{K_{\yY}}{2}, \lambda \right\rangle+
		\left[ -\frac{1}{2}\eta_{\alpha}, \frac{1}{2}\eta_{\alpha}   \right]. 
		\end{align*}
		By restricting the left hand side of (\ref{F:restrict}) 
	to the zero section $Y^{\lambda} \hookrightarrow (V^{\vee})^{\lambda}$
	and noting that $\omega|_{Y^{\lambda}}=0$, 
	we obtain the object
	\begin{align*}
		\Phi(\eE)|_{Y^{\lambda}} \in 
		\mathrm{MF}_{\coh}^{\C}([Y^{\lambda}/G^{\lambda}], 0).
	\end{align*}
	By Lemma~\ref{lem:useful}, 
	under the tautological equivalence 
	$\Dbc([Y^{\lambda}/G^{\lambda}]) \stackrel{\sim}{\to}
	\mathrm{MF}_{\coh}^{\C}([Y^{\lambda}/G^{\lambda}], 0)$
	we have $\Phi(\eE)|_{Y^{\lambda}} \cong (j_{\ast}\eE)|_{Y^{\lambda}}$. 
	Then by Lemma~\ref{lem:eta}
and noting that (see (\ref{id:eta}))
		\begin{align*}
		\left \langle \mathbb{L}^{\lambda>0}_{[\widetilde{\bS}_x/G]}|_{0}, \lambda 
		\right \rangle \le 
		\left \langle  \mathbb{L}^{\lambda>0}_{[V^{\vee}/G]}|_{0} , 
		\lambda \right \rangle=\eta_{\alpha}
	\end{align*}
		the condition (\ref{cond:int}) implies that 
	\begin{align}\label{F:restrict2}
		\Phi(\eE)|_{Y^{\lambda}} \in 
		\bigoplus_{j\in I}\mathrm{MF}_{\coh}^{\C}([Y^{\lambda}/G^{\lambda}], 0)_{\lambda \mathchar`- \wt= j}.
	\end{align}
	By comparing (\ref{F:restrict}) with (\ref{F:restrict2}), it is enough 
	to show that the pull-back by 
	the zero section 
	\begin{align*} 
		0_{Y^{\lambda}}^{\ast} \colon  
		\mathrm{MF}_{\coh}^{\C}([(V^{\vee})^{\lambda}/G^{\lambda}], w|_{(V^{\vee})^{\lambda}}) \to 
		\mathrm{MF}_{\coh}^{\C}([Y^{\lambda}/G^{\lambda}], 0)
	\end{align*}
is conservative. 
	This also follows from Lemma~\ref{lem:useful}, 
	since the push-forward $j_{\ast}$ in the diagram (\ref{dia:useful})
is conservative as $j$ is a closed immersion. 
	
	We next show the inclusion (\ref{inc:windows2}). 
	Let us take an object 
	$\pP \in \wW_{\delta+K_{\yY}/2}^{\rm{mag}/\widetilde{\bS}_0}
	([V^{\vee}/G], w)$. 
	By Lemma~\ref{lem:useful} and 
	the definition of the magic window subcategory, 
	the object $j_{\ast}\Phi^{-1}(\pP) \in \Dbc([Y/G])$ is split 
	generated by $W \otimes \oO_Y$ for $G$-representations 
	$W$ whose $T$-weights are contained in $\delta+K_{\yY}/2+\nabla_{\widetilde{\bS}_0}$. 
	It follows that, by Lemma~\ref{lem:eta},
	for any map $\lambda \colon B\C \to [\uU/G]$
	which is mapped to $0$ by $[\uU/G] \to \uU\ssslash G$
	the object $\lambda^{\ast}j_{\ast}\Phi^{-1}(\pP)$
	satisfies the weight condition (\ref{cond:int}). 
	Therefore we have $\Phi^{-1}(\pP) \in \wW_{\delta}^{\intt/\bS_0}([\fU/G])$
	from its definition. 
\end{proof}

\subsubsection{Window subcategories under Koszul duality (formal fiber case)}
We also have the formal fiber version of the results 
in the previous subsections. 
Let $Y$ be a smooth affine scheme with a $G$-action,
where $G$ is a reductive algebraic group. 
We take the formal completion of $Y\ssslash G$ at $y$, 
that is 
$\widehat{Y}_y \ssslash G  \cneq \Spec \widehat{\oO}_{Y\ssslash G, y}$
as in 
Subsection~\ref{subsec:fib}. 
Let $V \to Y$ be a $G$-equivariant vector bundle, 
and 
\begin{align*}
	[\widehat{V}_y/G] \to [\widehat{Y}_y/G]
\end{align*}
the formal fibers at $y$ as in the diagram (\ref{form:square}). 
Let $\widehat{s}_y$ be a section 
of the above vector bundle and 
$[\widehat{\fU_y}/G]$ the derived zero locus of $\widehat{s}_y$. 
Similarly to (\ref{diagram:UA}),
we have the commutative diagram  
\begin{align}\label{diagram:UAK}
	\xymatrix{
		& & [\widehat{V}_y/G] \ar[d] & [\widehat{V}_y^{\vee}/G] \ar[r]^-{\widehat{w}_y} \ar[d] & \mathbb{A}^1 \\
		[\widehat{\uU}_y/G] \ar@<-0.3ex>@{^{(}->}[r] \ar[d]_-{\pi_{\uU}}
		& [\widehat{\fU}_y/G] \ar@<-0.3ex>@{^{(}->}[r]^-{j} \ar[rd]_-{\pi_{\fU}} & [\widehat{Y}_y/G] 
		\ar[d]_-{\pi_{Y}} \ar@/_10pt/[u]_-{\widehat{s}_y} \ar@{=}[r]& [\widehat{Y}_y/G]  \ar@/_10pt/[u]_-{0}
		\ar[d]_-{\pi_Y} & \\
		\widehat{\uU}_y \ssslash G \ar@<-0.3ex>@{^{(}->}[rr]
		& & \widehat{Y}_y \ssslash G \ar@{=}[r] & \widehat{Y}_y\ssslash G. &
	}
\end{align}

The proof of Theorem~\ref{thm:knoer} applies to this 
setting, so we have the equivalence
\begin{align}\label{equiv:K}
	\widehat{\Phi}_y \colon \Dbc([\widehat{\fU}_y/G]) \stackrel{\sim}{\to}
	\mathrm{MF}_{\coh}^{\mathbb{C}^{\ast}}
	([\widehat{V}_y^{\vee}/G], \widehat{w}_y).
\end{align}
Let $x$ be the unique closed point in $[\widehat{\uU}_y/G]$.
Then for a symmetric structure $\bS$ 
of $[\fU/G]$ as in (\ref{sym:U}), 
its restriction to $x$ determines the 
symmetric structure $\bS_x$ of 
$[\widehat{\fU}_y/G]$.  
For $\delta \in \Pic([\uU/G])_{\mathbb{R}}$, 
the intrinsic 
window subcategory
\begin{align}\label{window:K}
	\wW_{\delta}^{\intt/\bS_x}([\widehat{\fU}_y/G])
	\subset \Dbc([\widehat{\fU}_y/G])
\end{align}
is defined similarly to (\ref{def:int}), using 
the closed immersion $[\widehat{\fU}_y/G] \hookrightarrow 
[\widehat{Y}_y/G]$
and the symmetric structure (\ref{sym:U}) at $x$. 
We have the following lemma which 
relates window subcategories and those on formal 
fibers. 
\begin{lem}\label{lem:rest:an}
	Let $s$ be a section of 
	$[V/G] \to [Y/G]$, and 
	$[\fU/G] \hookrightarrow [Y/G]$ its derived zero locus. 
	For each $y \in Y\ssslash G$, 
	let $\widehat{s}_y$ be 
	the section of $[\widehat{V}_y/G] \to [\widehat{Y}_y/G]$
	induced from $s$. 
	Then for an object $\eE \in \Dbc([\fU/G])$, 
	we have 
	\begin{align*}
		\eE \in \wW_{\delta}^{\intt/\bS}([\fU/G]), \quad  
		(resp.~\Phi(\eE) \in \wW^l_{\delta+K_{\yY}/2}
		([V^{\vee}/G], w))
	\end{align*}
	if and only if for any $y \in \uU\ssslash G$ 
	we have
	\begin{align*}
		\widehat{\eE}_y \in \wW_{\delta}^{\intt/\bS_x}([\widehat{\fU}_y/G]), \quad  
		(resp.~\widehat{\Phi}_y(\widehat{\eE}_y) \in \wW^l_{\delta+K_{\yY}/2}([\widehat{V}_y^{\vee}/G], \widehat{w}_y)).
	\end{align*}
	Here $\widehat{\eE}_y$ is the pull-back of $\eE$ to $[\widehat{\fU}_y/G]$, 
	and the pull-backs of $l, \delta, K_{\yY}$ to the formal fibers are 
	also denoted as $l, \delta, K_{\yY}$.  
\end{lem}
\begin{proof}
	By the construction of the equivalence in Theorem~\ref{thm:knoer}, 
	we have the commutative diagram
	\begin{align}\notag
		\xymatrix{
			\Dbc([\fU/G]) \ar[r]^-{\Phi} \ar[d] & 
			\mathrm{MF}_{\coh}^{\mathbb{C}^{\ast}}([V^{\vee}/G], w) \ar[d] \\
			\Dbc([\widehat{\fU}_y/G]) \ar[r]^-{\widehat{\Phi}_y}  & 
			\mathrm{MF}_{\coh}^{\mathbb{C}^{\ast}}([\widehat{V}_y^{\vee}/G], 
			\widehat{w}_y).
		}
	\end{align}
	Here the vertical arrows are pull-back functors. 
	The lemma follows from the above commutative diagram, 
	since the defining conditions 
	of the relevant window subcategories
	are local on $\uU\ssslash G$.  
\end{proof}

Suppose that $Y=\mathbb{A}^n$ is a $G$-representation and 
take 
the formal fibers at $0 \in Y\ssslash G$. 
Let $\widetilde{\bS}_0$ be the symmetric structure 
as in (\ref{sym:Vdual}) for $x=0$. 
In this case, we have the following formal fiber version of 
Proposition~\ref{prop:int}. 
\begin{prop}\label{prop:int2}
	We take $l, \delta \in M_{\mathbb{R}}^W$
	such that $\delta$ is $l$-generic. 
	Then under the equivalence $\widehat{\Phi}_0$ in (\ref{equiv:K}), 
	we have 
	\begin{align*}
		\wW^{\rm{mag}/\widetilde{\bS}_0}_{\delta+K_{\yY}/2}([\widehat{V}_0^{\vee}/G], \widehat{w}_0) \subset 
		\widehat{\Phi}_0(\wW^{\rm{int}/\bS_0}_{\delta}([\widehat{\fU}_0/G]))
		\subset \wW^l_{\delta+K_{\yY}/2}([\widehat{V}_0^{\vee}/G], \widehat{w}_0).
	\end{align*}
	In particular if 
	furthermore $l$ is $\widetilde{\bS}_0$-generic and compatible 
	with $\widetilde{\bS}_0$, 
	then 
	\begin{align*}
		\wW^{\rm{mag}/\widetilde{\bS}_0}_{\delta+K_{\yY}/2}([\widehat{V}_0^{\vee}/G], \widehat{w}_0) =
		\widehat{\Phi}_0(\wW^{\rm{int}/\bS_0}_{\delta}([\widehat{\fU}_0/G]))
		= \wW^l_{\delta+K_{\yY}/2}([\widehat{V}_0^{\vee}/G], \widehat{w}_0).
	\end{align*}
\end{prop}
\begin{proof}
	The argument of Proposition~\ref{prop:int}
	applies verbatim. Here we have a chain of inclusions 
	since 
	$\wW^{\rm{int}/\bS_0}_{\delta}([\widehat{\fU}_0/G])
	=\wW^{\rm{int}/\bS}_{\delta}([\widehat{\fU}_0/G])$
	for the formal fiber case. 
	Then the 
	second statement follows from Proposition~\ref{cor:magic}. 
\end{proof}

We have the following formal fiber version of Proposition~\ref{prop:compare}:
\begin{lem}\label{lem:window1.5}
	Let $Y$, $Y'$ be smooth affine schemes of finite presentation 
	over $\mathbb{C}$ with $G$-actions, 
	and 
	$V\to Y$, $V'\to Y'$ be $G$-equivariant vector bundles. 
	Suppose that we have the following diagram
	for $y \in Y \ssslash G$, $y' \in Y'\ssslash G$
	\begin{align}\label{dia:window1.5}
		\xymatrix{
			[\widehat{\fU}_y/G] \ar@<-0.3ex>@{^{(}->}[r]^-{j} \ar[d]^-{\sim}_-{\widehat{\bff}} & 
			[\widehat{Y}_y/G] \ar@/^10pt/[r]^-{\widehat{s}_y} & [\widehat{V}_y/G] \ar[l] \\
			[\widehat{\fU}_{y'}'/G] \ar@<-0.3ex>@{^{(}->}[r]^-{j'} & [\widehat{Y}_{y'}'/G] 
			\ar@/_10pt/[r]_-{\widehat{s}_{y'}'} & [\widehat{V}_{y'}'/G] \ar[l]
		}
	\end{align}
	where $\widehat{\bff}$ is an equivalence of derived stacks
	and $[\widehat{\fU}_y/G]$, $[\widehat{\fU}_{y'}'/G]$ are derived zero loci
	of the sections $\widehat{s}_y$, $\widehat{s}_{y'}'$, respectively. 
	We assume that 
	$\widehat{s}_y(x)=0$, $\widehat{s}_{y'}'(x')=0$,
	$G=\Aut(x)=\Aut(x')$, 
	where $x, x'$ are unique closed points of 
	$[\widehat{Y}_y/G]$, $	[\widehat{Y}_{y'}/G]$ respectively. 
Then for $\delta' \in \Pic([\widehat{\uU}_{y'}'/G])_{\mathbb{R}}$
and $\delta=\widehat{\bff}^{\ast}\delta'$, we have equivalences
\begin{align*}
	\widehat{\bff}_{\ast} \colon 
	\wW^{\intt/\bS_x}_{\delta}([\widehat{\fU}_y/G]) \stackrel{\sim}{\to}
	\wW^{\intt/\bS_{x'}'}_{\delta'}([\widehat{\fU}_{y'}'/G]), \ 
\widehat{\bff}^{\ast} \colon \wW^{\intt/\bS_{x'}'}_{\delta'}([\widehat{\fU}_{y'}'/G])
\stackrel{\sim}{\to}
\wW^{\intt/\bS_x}_{\delta}([\widehat{\fU}_y/G]).
\end{align*}
\end{lem}
\begin{proof}
	The proof of Proposition~\ref{prop:compare} applies 
	verbatim, using Lemma~\ref{lem:tuple:formal} and  
	Lemma~\ref{lem:tuple:formal2}, 
	instead of Lemma~\ref{lem:4.11} and Lemma~\ref{lem:closed}. 
		One subtle difference is that, in the setting of lemma, 
	we have $\widehat{\bff}(x)=x'$
	and this implies that 
	$\widehat{\bff}$ fits into a left commutative diagram 
	in (\ref{diagram:compare}). 
	Namely, let $\widehat{\bff}(x) \colon BG \to BG$
	be the induced morphism at the closed points. 
	Then the diagram 
	\begin{align}\label{commute:BG}
		\xymatrix{
			[\widehat{\fU}_y/G] \ar[r] \ar[d]_-{\widehat{\bff}} & BG \ar[d]^{\widehat{\bff}(x)} & \\
			[\widehat{\fU}'_{y'}/G] \ar[r] & BG
		}
	\end{align}
	commutes. 
	Here the horizontal arrows are 
	given by canonical $G$-torsors 
	$\widehat{\fU}_y \to [\widehat{\fU}_y/G]$, 
	$\widehat{\fU}_{y'}' \to [\widehat{\fU}_{y'}'/G]$. 
	The commutative diagram (\ref{commute:BG}) follows from 
	Lemma~\ref{lem:rest0} below. 
\end{proof}

We have used the following lemma: 

\begin{lem}\label{lem:rest0}
For morphisms $f, f' \colon [\widehat{\fU}_y/G] \to BG$, 
suppose that $f \circ \mu \cong f' \circ \mu$ 
as morphisms $BG \to BG$, where 
$\mu \colon BG \to [\widehat{\fU}_y/G]$ sends 
the point to $x$ and identity on the stabilizer groups.
Then we have  $f \sim f'$.  
\end{lem}
\begin{proof}
The proof will be given in Subsection~\ref{subsec:lemG}.
\end{proof}

\subsubsection{Window subcategories under Koszul duality (affine case)}
Let $G$ be a reductive algebraic group, 
and $(Y, V, s)$ a $G$-equivariant tuple 
as in Definition~\ref{def:tuple}. 
We consider 
the associated derived stack 
$[\fU/G]$
with a symmetric structure $\bS$ as in 
(\ref{sym:U}). 
In Proposition~\ref{prop:int}, we compared 
window subcategories under Koszul duality when $Y$ 
is a $G$-representation. By applying the results for the formal 
fibers and using \'{e}tale slice theorem, we prove a similar comparison 
result for an affine $Y$. We first prove the following 
proposition: 

\begin{prop}\label{prop:rest:an}
	Let us take 
	$l, \delta \in \Pic([\uU/G])_{\mathbb{R}}$
	such that $\delta$ is $l$-generic, 
	and they are extended to $\mathbb{R}$-line bundles on $[Y/G]$ which 
	we use the same notation $l, \delta$. 
	Then the equivalence $\Phi$ in Theorem~\ref{thm:knoer}
	restricts to the fully-faithful functor  
	\begin{align}\label{id:locglob}
		\Phi \colon 
		\wW_{\delta}^{\intt/\bS}([\fU/G]) \hookrightarrow 
		\wW^l_{\delta+K_{\yY}/2}([V^{\vee}/G], w). 
	\end{align} 
Here we have regarded $\mathbb{R}$-line bundles on $Y$ as $\mathbb{R}$-line bundles on 
$V^{\vee}$ by the pull-back of the projection 
$V^{\vee} \to Y$. 	
\end{prop}
\begin{proof}
	Let us take a closed point $y \in \uU\ssslash G$, 
	and a unique closed point $x \in [\uU/G]$ 
	in the fiber of $[\uU/G] \to \uU\ssslash G$ at $y$.
	We denote by $G_x \subset G$ the 
	stabilizer subgroup of $x$, which is also reductive. 
Below we take a representative of $x$ in $\uU$ and write $x \in \uU$. 
	By Luna's \'{e}tale slice theorem
	for the $G$-action on $Y$
	(see~\cite{MR0342523, AHR}), 
	there is a $G_x$-invariant 
	locally closed subscheme $x \in Z \subset Y$
	and Cartesian diagrams 
	\begin{align}\notag
		\xymatrix{
			[(V, x)/G] \ar[d]_-{} & [(V|_{Z}, x)/G_x] \ar@{}[ld]|\square
			\ar@{}[rd]|\square
			\ar[l] \ar[r] \ar[d]_-{\pi_Z} & [(T_x (V|_{Z}), 0)/G_x] \ar[d]^-{} \\
			[(Y, x)/G] \ar[d]_-{\pi_Y} & [(Z, x)/G_x] \ar@{}[ld]|\square
			\ar@{}[rd]|\square
			\ar[l] \ar[r] \ar[d]_-{\pi_Z} & [(T_x Z, 0)/G_x] \ar[d]^-{\pi_{T}} \\
			(Y \ssslash G, y) & (Z \ssslash G_x, y) 
			\ar[l] \ar[r] & (T_x Z \ssslash G_x, 0). 
		}
	\end{align}
	Here each horizontal arrows are \'{e}tale morphisms, and 
	$T_x Z, T_x(V|_Z)$ are the Zariski tangent spaces of $Z$, 
	$V|_{Z}$ at $x$, where 
	we regard $x$ as a point of $V|_{Z}$ by the zero section 
	of $V|_{Z} \to Z$.   
	Note that $Z$ is smooth since $Y$ is smooth and 
	$[Z/G_x] \to [Y/G]$ is \'{e}tale. 
	Also note that $T_x(V|_{Z}) =V|_x \oplus T_x Z$ as $G_x$-representations. 
	
	By taking the formal fibers of left arrows 
	at $y \in Y\ssslash G$
	and 
	the right arrows at $0 \in T_x Z\ssslash G_x$, 
	we obtain the commutative diagram (see the diagram~(\ref{diagram:UAK}) 
	for the notation)
	\begin{align}\label{diagram:analytic}
		\xymatrix{
			[\widehat{V}_y/G] \ar[r]^-{\cong} \ar[d] 
			&   [(V|_{x} \times \widehat{(T_x Z)}_0)/G_x] \ar[d] \\
			[\widehat{Y}_y/G] \ar[r]^-{\cong} \ar@/^10pt/[u]^-{\widehat{s}_y}
			& [(\widehat{T_x Z})_0/G_x]
			\ar@/_10pt/[u]_{\widehat{t}_0}.
		}
	\end{align}
	Here $\widehat{s}_y$ is induced by the section $s \colon Y \to V$, 
	and $\widehat{t}_0$ is defined by the commutative diagram 
	(\ref{diagram:analytic}). 
	In particular, 
	$[\widehat{\fU}_y/G]$ is
	equivalent to the derived zero locus of $\widehat{t}_0$.
	By Lemma~\ref{lem:rest:an}, 
	in order to show that $\Phi$ restricts to the functor 
	(\ref{id:locglob}), it is enough to show that 
	the functor $\widehat{\Phi}_y$ in (\ref{equiv:K}) restricts to the functor 
		\begin{align}\label{Phiy:rest:form}
	\widehat{\Phi}_y \colon 
	\wW_{\delta}^{\intt/\bS_x}([\widehat{\fU}_y/G]) \to 
	\wW^l_{\delta+K_{\yY}/2}([\widehat{V}_y^{\vee}/G], \widehat{w}_y).
	\end{align}
	Here both sides are the 
window subcategories (\ref{window:K}), (\ref{window:formal}). 
	By the commutative diagram (\ref{diagram:analytic})
	and Lemma~\ref{lem:obvious}, we can reduce the above claim 
	to the corresponding claim in the right hand side of (\ref{diagram:analytic}), 
	which is the case of the formal fiber for a linear representation. 
	Namely let $[\widehat{\fT}_0/G_x]$ be the derived zero locus of $\widehat{t}_0$, 
	and consider the Koszul duality equivalence in Theorem~\ref{thm:knoer}
	\begin{align*}
	\widehat{\Phi}_T \colon 	\Dbc([\widehat{\fT}_0/G_x]) \stackrel{\sim}{\to}
		\MF_{\coh}^{\C}([(V^{\vee}|_{x} \times \widehat{(T_x Z)}_0)/G_x], \widehat{w}_0).
		\end{align*}
	Here $\widehat{w}_0$ is defined from $\widehat{t}_0$ as in (\ref{def:w}). 
	Then Proposition~\ref{prop:int2} implies that 
	$\widehat{\Phi}_T$ restricts to the functor
	\begin{align}\label{window:PhiT}
		\widehat{\Phi}_T \colon 	\wW_{\delta}^{\rm{int}/\bS_x}([\widehat{\fT}_0/G_x]) \to
	\wW_{\delta+K_{\yY}/2}^{l}([(V^{\vee}|_{x} \times \widehat{(T_x Z)}_0)/G_x], \widehat{w}_0).	
		\end{align}
	Here we have regarded $\delta, l, \bS_x, K_{\yY}$ as objects in the right hand side of (\ref{diagram:analytic})
	by the isomorphisms in (\ref{diagram:analytic}). 
	By Lemma~\ref{lem:obvious}, we conclude that
	 $\widehat{\Phi}_y$ restricts 
	to the functor (\ref{Phiy:rest:form}) as desired. 
	Therefore the functor $\Phi$ in Theorem~\ref{thm:knoer} restricts 
	to the functor (\ref{id:locglob}), which is fully-faithful.

\end{proof}

In the next lemma, we show that if 
the Koszul duality equivalence 
restricts to the equivalence of 
window subcategories for some presentation as a derived zero 
locus, then the 
same property also holds for other presentations. 
\begin{lem}\label{lem:window}
	In the situation of Proposition~\ref{prop:compare}, 
	assume that $h=\id$ in the diagram (\ref{diagram:compare}). 
Let $\Phi$, $\Phi'$ be equivalences in Theorem~\ref{thm:knoer}
	applied for $[\fU/G]$, $[\fU'/G]$. 
	We
	take $l, \delta\in M_{\mathbb{R}}^W$ 
	such that $\delta$ is $l$-generic. 
	Then 
	$\Phi$ restricts to the 
	equivalence
	\begin{align*}
		\Phi \colon \wW^{\intt/\bS}_{\delta}([\fU/G]) \stackrel{\sim}{\to} \wW^l_{\delta+K_{\yY}/2}([V^{\vee}/G], w)
	\end{align*}
	if and only if $\Phi'$ restricts to 
	the equivalence
	\begin{align*}
		\Phi' \colon \wW^{\intt/\bS'}_{\delta}([\fU'/G]) \stackrel{\sim}{\to}
		\wW^l_{\delta +K_{\yY'}/2}([{V'}^{\vee}/G], w').
	\end{align*}
\end{lem}
\begin{proof}
	As in the proof of Proposition~\ref{prop:compare}, 
	we may assume that $\bff$ is induced by a $G$-equivariant 
	diagram (\ref{dia:VY0}) such that $f$ is a closed immersion. 
	Then in the notation of the diagram (\ref{diagram:dual2}), 
	we have the commutative diagram by Lemma~\ref{lem:commute1}
	\begin{align}\label{commute:KZ}
		\xymatrix{
			\Dbc([\fU/G])
			\ar[r]^-{\Phi} 
			\ar[d]_-{\bff_{\ast}} &
			\mathrm{MF}_{\coh}^{\mathbb{C}^{\ast}}([V^{\vee}/G], w)
			\ar[d]^-{f_{\ast} g^{\ast}} \\
			\Dbc([\fU'/G]) 
			\ar[r]^-{\Phi'} 
			& \mathrm{MF}_{\coh}^{\mathbb{C}^{\ast}}([{V'}^{\vee}/G], w'). 
		}
	\end{align}
	Together with Proposition~\ref{prop:rest:an}, 
	we have the commutative diagram
	\begin{align}\label{dia:MF:W}
		\xymatrix{
			\wW_{\delta}^{\intt/\bS}([\fU/G])
			\ar@<-0.3ex>@{^{(}->}[r]^-{\Phi} \ar[d]^-{\sim}_-{\bff_{\ast}} & \wW^l_{\delta+K_{\yY}/2}([V^{\vee}/G], w) \ar@<-0.3ex>@{^{(}->}[r]
			\ar@/^20pt/[rr]^-{\mathrm{res}}
			&
			\mathrm{MF}_{\coh}^{\mathbb{C}^{\ast}}([V^{\vee}/G], w) \ar[r] \ar[d]^-{\sim}_-{f_{\ast}g^{\ast}} &
			\mathrm{MF}_{\coh}^{\mathbb{C}^{\ast}}([(V^{\vee})^{l\sss}/G], w) \ar@{.>}[d]_-{\Theta}\\
			\wW_{\delta}^{\intt/\bS'}([\fU'/G])
			\ar@<-0.3ex>@{^{(}->}[r]^-{\Phi'} & \wW^l_{\delta +K_{\yY'}/2}([{V'}^{\vee}/G], w') \ar@<-0.3ex>@{^{(}->}[r] \ar@/_20pt/[rr]_{\mathrm{res}'}&
			\mathrm{MF}_{\coh}^{\mathbb{C}^{\ast}}([{V'}^{\vee}/G], w') \ar[r] &
			\mathrm{MF}_{\coh}^{\mathbb{C}^{\ast}}([({V'}^{\vee})^{l\sss}/G], w'). 
		}
	\end{align}
	We show that there is an equivalence $\Theta$ in the 
	dotted arrow which makes the above diagram commutative. 
	We set
	\begin{align*}
		Z^{l\us}=\Crit(w)\setminus \Crit(w)^{l\sss}, \
		{Z'}^{l\us}=\Crit(w') \setminus \Crit(w')^{l\sss}. 
	\end{align*}
	Since we have $\mathrm{Crit}(w)^{l\sss}=(V^{\vee})^{l\sss} \cap \Crit(w)$, 
	we have the open immersion
	\begin{align}\label{res:W}
		(V^{\vee})^{l\sss} \hookrightarrow 
		V^{\vee} \setminus Z^{l\us}
	\end{align}
	such that we have 
	\begin{align*}
		\Crit(w) \cap (V^{\vee})^{l\sss}
		=\Crit(w) \cap (V^{\vee} \setminus Z^{l\us})
		=\Crit(w)^{l\sss}. 
	\end{align*}
	Since a derived 
	factorization category depends only on an open 
	neighborhood of the critical locus 
	(see (\ref{rest:equiv})), 
	the restriction along the open immersion (\ref{res:W}) 
	gives an equivalence 
	\begin{align}\label{MF:restrict}
		\mathrm{MF}_{\coh}^{\mathbb{C}^{\ast}}([(V^{\vee} \setminus Z^{l\us})/G], w)
		\stackrel{\sim}{\to} \mathrm{MF}_{\coh}^{\mathbb{C}^{\ast}}
		([(V^{\vee})^{l\sss}/G], w). 
	\end{align}
	On the other hand, the equivalence of derived stacks $\bff$ in the diagram 
	(\ref{diagram:compare})
	induces the isomorphism
	\begin{align*}
		[\Crit(w)/G] \stackrel{\cong}{\to} [\Crit(w')/G]
	\end{align*}
	which sends 
	a conical closed substack
	$\zZ^{l\us} \cneq [Z^{l\us}/G]$
	to ${\zZ'}^{l\us} \cneq [{Z'}^{l\us}/G]$. 
	Therefore the equivalence 
	$\bff_{\ast} \colon \Dbc([\fU/G]) \stackrel{\sim}{\to}
	\Dbc([\fU'/G])$ restricts to the equivalence
	\begin{align*}
		\bff_{\ast} \colon \cC_{\zZ^{l\us}} \stackrel{\sim}{\to}
		\cC_{{\zZ'}^{l\us}}.
	\end{align*}
	By Proposition~\ref{prop:koszul:Z}
	and the commutative diagram (\ref{commute:KZ}), 
	the equivalence $f_{\ast}g^{\ast}$ in the diagram (\ref{dia:MF:W})
	restricts to the equivalence
	\begin{align*}
		f_{\ast}g^{\ast} \colon 
		\mathrm{MF}_{\coh}^{\mathbb{C}^{\ast}}([V^{\vee}/G], w)_{\zZ^{l\us}}
		\stackrel{\sim}{\to}\mathrm{MF}_{\coh}^{\mathbb{C}^{\ast}}([{V'}^{\vee}/G], w')_{{\zZ'}^{l\us}}.
	\end{align*}
	By taking the Verdier quotients as in (\ref{quot:MF})
	and using (\ref{MF:restrict}), 
	we obtain the desired equivalence $\Theta$. 
	
	Note that the functors $\mathrm{res}$, $\mathrm{res}'$ in the diagram (\ref{dia:MF:W})
	are equivalences 
	by Theorem~\ref{thm:window}. 
	Using the equivalence $\Theta$, 
	we have the commutative diagram
	\begin{align*}
		\xymatrix{
			\wW_{\delta}^{\intt/\bS}([\fU/G]) \ar@<-0.3ex>@{^{(}->}[r]^-{\Phi} \ar[d]_-{\bff_{\ast}}^-{\sim}
			& \wW^l_{\delta+K_{\yY}/2}([V^{\vee}/G], w) \ar[d]^-{\mathrm{res'}^{-1} \circ \Theta \circ \mathrm{res}}_-{\sim} \\
			\wW_{\delta}^{\intt/\bS'}([\fU'/G]) \ar@<-0.3ex>@{^{(}->}[r]^-{\Phi'} & \wW^l_{\delta+K_{\yY'}/2}([{V'}^{\vee}/G], w'). 
		}
	\end{align*}
	The lemma follows from the above commutative diagram. 
\end{proof}

We also have the following formal fiber version 
of Lemma~\ref{lem:window}
(see (\ref{diagram:UAK}) for the notation of formal fibers):
\begin{lem}\label{lem:window2}
	In the setting of Lemma~\ref{lem:window1.5}, 
	assume that $\widehat{\bff}(x)=\id$ in the diagram (\ref{commute:BG}). 
	We 
	take $l, \delta \in M_{\mathbb{R}}^W$ such that $\delta$ is $l$-generic. 
	Then the functor 
	$\widehat{\Phi}_y$ in (\ref{equiv:K}) restricts to the equivalence  
	\begin{align*}
		\widehat{\Phi}_y \colon 
		\wW^{\intt/\bS_x}_{\delta}([\widehat{\fU}_y/G]) \stackrel{\sim}{\to}
		\wW^l_{\delta+K_{\yY}/2}([\widehat{V}_y^{\vee}/G], \widehat{w}_y)
	\end{align*}
	if and only if $\widehat{\Phi}_{y'}'$ restricts to the 
	equivalence
	\begin{align*}
		\widehat{\Phi}_{y'}' \colon 
		\wW^{\intt/\bS_{x'}'}_{\delta}([\widehat{\fU}_{y'}'/G]) \stackrel{\sim}{\to}
		\wW^l_{\delta +K_{\yY'}/2}([\widehat{V}_{y'}^{'\vee}/G], \widehat{w}_{y'}').\end{align*}
\end{lem}
\begin{proof}
	The proof of Proposition~\ref{prop:rest:an} shows that the functor 
	$\widehat{\Phi}_y$
	restricts to the fully-faithful 
	functor 
	\begin{align*}
		\widehat{\Phi}_y \colon 
	\wW^{\intt/\bS_x}_{\delta}([\widehat{\fU}_y/G]) \hookrightarrow
	\wW^l_{\delta+K_{\yY}/2}([\widehat{V}_y^{\vee}/G], \widehat{w}_y),
	\end{align*}
and the same also applies to $\widehat{\Phi}_{y'}$. 
Therefore the 
argument of Lemma~\ref{lem:window} applies verbatim, 
using Lemma~\ref{lem:window1.5} instead of Proposition~\ref{prop:compare}, 
and also using Lemma~\ref{lem:tuple:formal} and  
Lemma~\ref{lem:tuple:formal2}, 
instead of Lemma~\ref{lem:4.11} and Lemma~\ref{lem:closed}
for the reduction to the case of closed immersion. 
\end{proof}

Using the above argument for formal fibers, 
we prove the following proposition for affine case:
\begin{prop}\label{prop:rest:an2}
	In the setting of Proposition~\ref{prop:rest:an},
	suppose that $[\fU/G]$ satisfies formal neighborhood 
	theorem, $l$ is $\bS$-generic and compatible with $\bS$. 
	Then the functor (\ref{id:locglob}) 
	is an equivalence. 
	\end{prop}
\begin{proof}
	We use the notation in the proof of Proposition~\ref{prop:rest:an}.
	By Lemma~\ref{lem:rest:an}, it is enough to prove that 
	the functor $\widehat{\Phi}_y$ in (\ref{Phiy:rest:form}) 
	is an equivalence 
		\begin{align}\label{Phiy:equiv}
		\widehat{\Phi}_y \colon 
		\wW_{\delta}^{\intt/\bS_x}([\widehat{\fU}_y/G]) \stackrel{\sim}{\to}
		\wW^l_{\delta+K_{\yY}/2}([\widehat{V}_y^{\vee}/G], \widehat{w}_y). 
	\end{align}
		We set  
	\begin{align*}
		Y' \cneq \hH^0(\mathbb{T}_{[\fU/G]}|_{x}), \ 
		V' \cneq \hH^1(\mathbb{T}_{[\fU/G]}|_{x}) \oplus \hH^0(\mathbb{T}_{[\fU/G]}|_{x}).
	\end{align*} 
	Note that $Y'$ is a $G_x$-representation, 
	and $V'$ is regarded as a
	$G_x$-equivariant vector bundle on $Y'$
	by the second projection $V' \to Y'$. 
	As we assume that $[\fU/G]$ 
	satisfies the formal neighborhood theorem, 
	Lemma~\ref{lem:analytic} implies the following: 
	there exists a diagram of formal fibers at $0 \in Y' \ssslash G_x$
	\begin{align}\label{dia:VUY}
		\xymatrix{
			& [\widehat{V}_0'/G_x]\ar[d] \\
			[\widehat{\fU}_0'/G_x] \ar@<-0.3ex>@{^{(}->}[r]^-{} &
			[\widehat{Y}_0'/G_x] \ar@/_10pt/[u]_{\widehat{s}_0'}
		}
	\end{align}
	where $[\widehat{\fU}_0'/G_x]$ is the derived zero locus of the section 
	$\widehat{s}_0'$,
	such that there is an equivalence 
	\begin{align}\label{equiv:analytic}
		[\widehat{\fU}_y/G] \stackrel{\sim}{\to} [\widehat{\fU}_0'/G_x]
	\end{align}
	where $x \in \uU$ corresponds to 
	$0 \in Y'$. 
	By Lemma~\ref{lem:Gchar}, 
	the $\mathbb{R}$-line bundles
	$l|_{\widehat{\uU}_y}, \delta|_{\widehat{\uU}_y} \in \Pic([\widehat{\uU}_y/G])_{\mathbb{R}}$
	correspond to 
	$l_x, \delta_x \in \Pic(BG_x)_{\mathbb{R}}$
	under the equivalence (\ref{equiv:analytic}).
	Note that by the genericity assumption on $l$, $\delta$, 
	the element $l_x$ is $\bS_x$-generic
	and $\delta_x$ is $l_x$-generic. 
	
	Let $\widehat{\Phi}_0'$ be the Koszul duality equivalence 
	in Theorem~\ref{thm:knoer} applied for the diagram (\ref{dia:VUY})
	\begin{align*}
		\widehat{\Phi}_0' \colon \Dbc([\widehat{\fU}_0'/G_x]) \stackrel{\sim}{\to}
		\mathrm{MF}_{\coh}^{\mathbb{C}^{\ast}}([\widehat{V}_0^{'\vee}/G_x], \widehat{w}_0').
	\end{align*} 
	Here $\widehat{w}_0'$ is defined from 
	$\widehat{s}_0'$ in the diagram (\ref{dia:VUY}) as in (\ref{def:w}). 
	By Proposition~\ref{prop:int2}, 
	the genericity condition on $l_x$ and $\delta_x$, and the assumption that 
	$l_x$ is compatible with $\bS_x$, 
	the equivalence $\widehat{\Phi}_0'$ restricts to the equivalence
	\begin{align}\label{Phi0'}
		\widehat{\Phi}_0' \colon 
		\wW_{\delta_x}^{\intt/\bS_x}([\widehat{\fU}_0'/G_x])\stackrel{\sim}{\to}
		\wW^{l_x}_{\delta_x +K_{\yY'}/2}
		([\widehat{V}_0^{'\vee}/G_x], \widehat{w}_0'). 
	\end{align}
	On the other hand, 
	by the equivalence (\ref{equiv:analytic}) and the diagram (\ref{diagram:analytic}), 
	we have an equivalence 
	\begin{align*}
		[\widehat{\fU}_0'/G_x] \sim [\widehat{\mathfrak{T}}_0/G_x],
	\end{align*}
which is $\id \colon BG_x \to BG_x$ on closed points. 
	By Lemma~\ref{lem:window2} and the equivalence (\ref{Phi0'}),
	we conclude that the
	functor (\ref{window:PhiT}) is an equivalence. 
		The
	equivalence (\ref{Phiy:equiv}) now
	follows from the diagram (\ref{diagram:analytic}) and Lemma~\ref{lem:obvious}. 
	\end{proof}

We have the following corollary of the above 
proposition: 

\begin{cor}\label{cor:composition}
	In the situation of Proposition~\ref{prop:rest:an},
	let $\zZ^{l\us} \subset [\Crit(w)/G]$ be 
	the conical closed substack of $l$-unstable points. 
	Then 
	the composition
	\begin{align}\notag
		\wW_{\delta}^{\intt/\bS}([\fU/G]) \hookrightarrow 
		\Dbc([\fU/G]) \twoheadrightarrow 
		\Dbc([\fU/G])/\cC_{\zZ^{l\us}}
	\end{align} 
	is 
	fully-faithful, which is an equivalence if $l$ is 
	$\bS$-generic and compatible 
	with $\bS$. 
\end{cor}
\begin{proof}
	By Proposition~\ref{prop:rest:an}, 
	we have the commutative diagram 
	\begin{align*}
		\xymatrix{
			\wW_{\delta}^{\intt/\bS}([\fU/G]) \inclusion \ar[d]_-{\Phi}
			& \Dbc([\fU/G]) \ar@{>>}[r] \ar[d]_-{\Phi}^-{\sim}
			& \Dbc([\fU/G])/\cC_{\zZ^{l\us}} \ar[d]_-{\Phi}^-{\sim} \\
			\wW^l_{\delta+K_{\yY}/2}([V^{\vee}/G], w) \inclusion & 
			\mathrm{MF}_{\coh}^{\mathbb{C}^{\ast}}([V^{\vee}/G], w])
			\ar@{>>}[r] &
			\mathrm{MF}_{\coh}^{\mathbb{C}^{\ast}}([V^{\vee}/G] \setminus \zZ^{l\us}, 
			w). 
		}
	\end{align*}
	Since have an equivalence (\ref{MF:restrict}), 
	the bottom composition is an equivalence by Theorem~\ref{thm:window}. 
	Therefore the corollary follows from Proposition~\ref{prop:rest:an}. 
\end{proof}

\subsubsection{Proof of window theorem for DT categories}\label{subsec:proofthm}
Finally, we 
give a proof of Theorem~\ref{thm:equivalence}
by taking the limits of  
the results in the previous subsections. 
Let $\fM$ be a quasi-smooth derived stack 
such that $\mM=t_0(\fM)$ admits a good moduli 
space $\mM \to M$
and satisfies the formal neighborhood theorem. 
Let $\bS$ be a symmetric structure of $\fM$ as in (\ref{sym:M}). 
Note that for an \'{e}tale morphism 
$\iota_{\fM} \colon \fM_U \to \fM$ in the diagram (\ref{Cartesian:U}), 
we have the induced symmetric structure $\bS_U$ 
since $\iota_{\fM}$ induces the equivalences of tangent complexes 
at each closed points. 
Using Proposition~\ref{prop:extend}, the definition of 
intrinsic window subcategory 
in $\Dbc(\fM)$ is defined as a globalization of Definition~\ref{defi:window}: 
\begin{defi}\label{defi:intwin}
	For $\delta \in \Pic(\mM)_{\mathbb{R}}$, we 
	define the triangulated subcategory
	\begin{align*}
		\wW_{\delta}^{\intt/\bS}(\fM) \subset \Dbc(\fM)
	\end{align*}
	to be consisting of objects $\eE \in \Dbc(\fM)$ 
	such that for any 
	\'{e}tale morphism $\iota \colon U \to M$ from an affine 
	scheme $U$ which fits into a 
	diagram (\ref{Cartesian:U}),
	we have $\iota_{\fM}^{\ast}\eE \in \wW_{\iota_{\fM}^{\ast}\delta}^{\intt/\bS_U}
	(\fM_U)$. 
\end{defi}

In the following, we show that 
the above intrinsic window subcategory 
gives a desired subcategory in Theorem~\ref{thm:equivalence}.
\begin{thm}\label{thm:equivalence2}
	Let us take $l, \delta \in \Pic(\mM)_{\mathbb{R}}$ 
	such that $l$ is $\delta$ is $l$-generic. 
	Then 
	the composition
	\begin{align}\label{compose:Thetal}
		\Theta_l \colon 
		\wW_{\delta}^{\intt/\bS}(\fM) \hookrightarrow 
		\Dbc(\fM) \to
		\dDT^{\C}(\nN^{l\sss})
		\end{align}
	is fully-faithful, which is an equivalence 
	if $l$ is $\bS$-generic and compatible with $\bS$. 
\end{thm}
\begin{proof}
	Let
	$\dD_{{\rm{\acute{e}t}}/M}$
	be the category of \'{e}tale 
	morphisms $\iota \colon U \to M$ 
	as in Subsection~\ref{subsubsec:gmoduli}, and 
	$\dD'_{{\rm{\acute{e}t}}/M} \subset \dD_{{\rm{\acute{e}t}}/M}$
	the subcategory 
	satisfying 
	the condition in Theorem~\ref{thm:AHR}. 
	For each $(\iota \colon U \to M) \in \dD'_{{\rm{\acute{e}t}}/M}$, 
	we have the induced 
	\'{e}tale morphism 
	$\iota_{\fM} \colon \fM_U \to \fM$ in
	the diagram (\ref{Cartesian:U}). 
	Moreover for any morphism $\rho \colon U' \to U$
	in $\dD'_{{\rm{\acute{e}t}}/M}$, 
	we have the induced \'{e}tale morphism 
	$\rho_{\fM} \colon \fM_{U'} \to \fM_{U}$
	as in the diagram (\ref{dia:rho}). 
	Since 
	we have the \'{e}tale cover
	$\coprod_{(U\stackrel{\iota}{\to}M) \in \dD'_{{\rm{\acute{e}t}}/M}}
	\fM_U \stackrel{\iota_{\fM}}{\to} \fM$
	of $\fM$, 
	we have an equivalence 
	\begin{align}\label{DbM}
		\Dbc(\fM) \stackrel{\sim}{\to} 
		\lim_{(U\stackrel{\iota}{\to} M)\in \dD'_{{\rm{\acute{e}t}}/M}}
		\Dbc(\fM_U). 
	\end{align}
	By Lemma~\ref{lem:pull-back} below, 
	for a morphism $\rho \colon U' \to U$ 
	the pull-back
	$\rho_{\fM}^{\ast}$ restricts to the functor
	\begin{align*}
		\rho_{\fM}^{\ast} \colon \wW^{\intt/\bS_U}_{\iota_{\fM}^{\ast}\delta}(\fM_U)
		\to \wW^{\intt/\bS_{U'}}_{\iota_{\fM}^{'\ast}\delta}
		(\fM_{U'}).
		\end{align*}
	Therefore from (\ref{DbM}) and the definition of 
	$\wW_{\delta}^{\intt/\bS}(\fM)$, 
	the equivalence (\ref{DbM}) restricts 
	to the equivalence 
	\begin{align}\label{equiv:Theta}
		\wW_{\delta}^{\intt/\bS}(\fM) \stackrel{\sim}{\to}
		\lim_{(U\stackrel{\iota}{\to} M)\in \dD'_{{\rm{\acute{e}t}}/M}} \wW_{\iota_{\fM}^{\ast}\delta}^{\intt/\bS_U}(\fM_U). 
	\end{align}
	On the other hand
	the assumption on $\fM$ 
	together with the genericity of $l$, $\delta$ imply 
	that, 
	for each $(U\stackrel{\iota}{\to} M) \in \dD'_{{\rm{\acute{e}t}}/M}$, 
	the derived stack 
	$\fM_U$
	together with $\iota_{\fM}^{\ast}l$, $\iota_{\fM}^{\ast}\delta$
	satisfy the assumption of Proposition~\ref{prop:rest:an}.
	Therefore  
	by Corollary~\ref{cor:composition},  
	the composition 
	\begin{align}\label{compose:WM}
		\wW_{\iota_{\fM}^{\ast}\delta}^{\intt/\bS_U}(\fM_U)
		\hookrightarrow \Dbc(\fM_U) \twoheadrightarrow 
		\Dbc(\fM_U)/\cC_{\iota_{\fM}^{\ast}\zZ_{l\us}}
	\end{align}
	is fully-faithful, and an equivalence if $l$ is 
	compatible with $\bS$.  
	By taking the limit for 
	$(U \stackrel{\iota}{\to} M) \in \dD'_{{\rm{\acute{e}t}}/M}$
	and using the equivalences (\ref{DbM}), (\ref{equiv:Theta}), 
	the composition 
	\begin{align*}
		\wW_{\delta}^{\intt/\bS}(\fM) \hookrightarrow 
		\Dbc(\fM) \to 
		\lim_{(U \stackrel{\iota}{\to} M)\in \dD'_{{\rm{\acute{e}t}}/M}}
		\left(\Dbc(\fM_U)/\cC_{\iota_{\fM}^{\ast}\zZ_{l\us}}\right)
	\end{align*}
	is full-faithful, and an equivalence if $l$ is compatible with $\bS$. 
	Now the above composition functor factors as 
	\begin{align*}
	\wW_{\delta}^{\intt/\bS}(\fM) \to \dDT^{\C}(\nN^{l\sss}) \to 
	\lim_{(U \stackrel{\iota}{\to} M)\in \dD'_{{\rm{\acute{e}t}}/M}}
	\left(\Dbc(\fM_U)/\cC_{\iota_{\fM}^{\ast}\zZ_{l\us}}\right). 
		\end{align*}
	Here the first arrow is the composition functor (\ref{compose:Thetal}). 
	The right arrow is fully-faithful 
	by Lemma~\ref{lem:inter}, and the above composition is fully-faithful, 
	so the first arrow is also fully-faithful. 
	Moreover if $l$ is compatible with $\bS$, then 
	the first arrow is also an equivalence since each arrow is 
	fully-faithful and the composition is an equivalence. 
	\end{proof}

We have used the following lemma:  
\begin{lem}\label{lem:pull-back}
	Let $[\fU/G]$, $[\fU'/G']$ be derived stacks of as in Definition~\ref{def:tuple}
	for reductive $G$, $G'$. 
	Suppose that we have a commutative diagram 
	\begin{align}\label{dia:etale}
		\xymatrix{
			\uU' \ssslash G' \ar[d]^-{f} \diasquare
			& [\uU'/G'] \inclusion \ar[l] \ar[d]^-{f} \diasquare
			& [\fU'/G'] \ar[d]^-{\bff} \\
			\uU\ssslash G & [\uU/G] \inclusion \ar[l] & [\fU/G]
		}
	\end{align}
	where each square is a Cartesian and the vertical arrows 
	are \'{e}tale. 
	Then for $\delta \in \Pic([\uU/G])_{\mathbb{R}}$
	and $\delta'=f^{\ast}\delta \in \Pic([\uU'/G'])_{\mathbb{R}}$, 
	the functor $\bff^{\ast} \colon \Dbc([\fU/G]) \to \Dbc([\fU'/G'])$
	restricts to the functor
	\begin{align*}
		\bff^{\ast} \colon \wW_{\delta}^{\intt/\bS}([\fU/G]) \to 
		\wW_{\delta'}^{\intt/\bS'}([\fU'/G']). 
	\end{align*}
	Here $\bS'$ is induced from $\bS$ by the 
	\'{e}tale morphism $f$. 
\end{lem}
\begin{proof}
	For a closed point $y' \in \uU'\ssslash G'$
	and $y=f(y') \in \uU\ssslash G$, 
	the diagram (\ref{dia:etale}) induces 
	an equivalence 
	\begin{align}\notag
		\widehat{\bff}_y \colon 
		[\widehat{\fU}'_{y'}/G'] \stackrel{\sim}{\to} [\widehat{\fU}_y/G].
	\end{align}
	Here we have used the notation in (\ref{diagram:UAK}). 
	Let $x \in \uU$, $x' \in \uU'$ be closed points
	in the closed orbits of the fibers of 
	$\uU \to \uU \ssslash G$, 
	$\uU' \to \uU'\ssslash G'$
	at $y$, $y'$, respectively. 
	Let $x\in Z \subset Y$, $x' \in Z' \subset Y'$ be \'{e}tale 
	slices 
	as in the proof of 
	Proposition~\ref{prop:rest:an}.
	Then by the diagram (\ref{diagram:analytic}), 
	we have the diagram 
	\begin{align}\notag
		\xymatrix{
			[\widehat{\fU}'_{y'}/G'] \ar@<-0.3ex>@{^{(}->}[r]^-{j'} 
			\ar[d]^-{\sim}_-{\widehat{\bff}_y} & 
			[\widehat{(T_{x'} Z')}_{0}/G'_{x'}]
			\ar@/^20pt/[r]^-{\widehat{t}_0'} & 
			[V'|_{x'} \oplus \widehat{(T_{x'}Z')}_{0}/G'_{x'}] \ar[l] \\
			[\widehat{\fU}_y/G] \ar@<-0.3ex>@{^{(}->}[r]^-{j} & 
			[\widehat{(T_{x} Z)}_{0}/G_{x}]
			\ar@/_20pt/[r]_-{\widehat{t}_0} & 
			[(V|_{x} \oplus \widehat{(T_x Z)}_0)/G_x] \ar[l]
		}
	\end{align}
	such that 
	$[\widehat{\fU}'_{y'}/G']$, 
	$[\widehat{\fU}_y/G]$ are equivalent to derived zero loci 
	of $\widehat{t}_0'$, $\widehat{t}_0$ respectively. 
	
	Let us take an object $\eE \in \wW_{\delta}^{\intt/\bS}([\fU/G])$. 
	Then we have 
	$\widehat{\eE}_y \in \wW_{\delta}^{\intt/\bS_x}([\widehat{\fU}_y/G])$
	by Lemma~\ref{lem:rest:an}. 
	Since $f$ induces the isomorphism 
	$G'_{x'} \stackrel{\cong}{\to} G_x$, 
	by Lemma~\ref{lem:obvious} and 
Lemma~\ref{lem:window1.5}
	we 
	conclude 
	that 
	\begin{align*}
		\widehat{(\bff^{\ast}\eE)}_{y'}=
		\widehat{\bff}_y^{\ast}(\widehat{\eE}_y)
		\in \wW_{\delta'}^{\intt/\bS'_{x'}}([\widehat{\fU}_{y'}'/G']).
	\end{align*}
	Since this holds for any $y' \in \uU'\ssslash G'$, 
	we have 
	$\bff^{\ast}\eE \in \wW_{\delta'}^{\intt/\bS'}([\fU'/G'])$
	by Lemma~\ref{lem:rest:an}. 
\end{proof}

\subsection{Application to equivalences of DT categories for 
one dimensional stable sheaves}\label{subsec:app1}
In this section, we use Corollary~\ref{cor:equivalence} to give
an application to Conjecture~\ref{conj1}. 
\subsubsection{Derived moduli stacks of one dimensional semistable sheaves on surfaces}\label{subsec:stackS}
Let $S$ be a smooth projective surface. 
As in Section~\ref{subsec:catDTsurf}, 
we consider the derived stack 
$\fM_S$ of coherent sheaves on $S$. 
We take a stability condition 
$\sigma=B+iH \in A(S)_{\mathbb{C}}$
and an element $v \in N_{\le 1}(S)$. 
As in (\ref{dstack:surface}), we have the derived open substack
\begin{align*}
	\fM_S^{\sigma}(v) \subset \fM_S
	\end{align*}
consisting of one dimensional 
$\sigma$-semistable sheaves with numerical class 
$v$. 
\begin{lem}\label{lem:stackS}
	Both of derived stacks $\fM_S^{\sigma}(v)$ and $\fM_S^{\sigma}(v)^{\C\rig}$
	are symmetric and satisfy formal neighborhood theorem. 
\end{lem}
\begin{proof}
	A closed point $x \in \mM_S^{\sigma}(v)$ corresponds to a 
	$\sigma$-polystable sheaf $F$ on $S$, which is of the form
	\begin{align}\label{pstable:F}
		F=\bigoplus_{i=1}^m V_i \otimes F_i
	\end{align}
	where each $F_i$ is a $\sigma$-stable sheaf on $S$, 
	$V_i$ is a finite dimensional vector space 
	such that $F_i$ is not isomorphic to $F_j$ for $i\neq j$, 
	and $\mu_{\sigma}(F_i)=\mu_{\sigma}(F_j)$ for all $i$, $j$, 
	where $\mu_{\sigma}$ is defined in (\ref{def:slope}). 
	By the description of the cotangent complex (\ref{perf:obs1}), 
	we have 
	\begin{align}\notag
		\hH^0(\mathbb{T}_{\fM_S^{\sigma}(v)}|_{x}) \oplus \hH^1(\mathbb{T}_{\fM_S^{\sigma}(v)}|_{x})^{\vee} 
		&= \Ext_S^1(F, F) \oplus \Ext_S^2(F, F)^{\vee} \\
		\label{rep:tan}
		&=\bigoplus_{a, b} \Hom(V_a, V_b) \otimes (\Ext_S^1(F_a, F_b) \oplus 
		\Ext_S^2(F_b, F_a)^{\vee}). 
	\end{align}
	The automorphism group of $\mM_S^{\sigma}(v)$ at $x$ is given by  
	\begin{align*}
		\Aut(x)=\Aut(F)=\prod_{i=1}^m \GL(V_i)
	\end{align*}
	and its acts on (\ref{rep:tan}) by the conjugation. 
	The dual representation of (\ref{rep:tan}) is given by 
	\begin{align*}
		\bigoplus_{a, b} \Hom(V_a, V_b) \otimes (\Ext_S^1(F_b, F_a)^{\vee} \oplus 
		\Ext_S^2(F_a, F_b)). 
	\end{align*}
	Therefore in order to show that (\ref{rep:tan}) is a symmetric representation
	of $\Aut(x)$, 
	we need to show that 
	\begin{align}\label{id:ext12}
		\ext_S^1(F_a, F_b)+\ext_S^2(F_b, F_a)=\ext_S^1(F_b, F_a)+\ext_S^2(F_a, F_b). 
	\end{align}
	By the Riemann-Roch theorem and the stability for $F_a$, 
	by writing $[F_a]=(\beta_a, n_a)$ we
	have
	\begin{align*}
		\ext_S^1(F_a, F_b)-\ext_S^2(F_a, F_b)=\delta_{ab}+\beta_a \cdot \beta_b
	\end{align*}
	which is symmetric in $a$ and $b$. 
	Therefore (\ref{id:ext12}) holds, and 
	$\fM_S^{\sigma}(v)$ is symmetric. 
	Similarly (\ref{rep:tan}) is a symmetric 
	$\Aut(x)/\mathbb{C}^{\ast}$-representation, 
	so $\fM_S^{\sigma}(v)^{\C\rig}$ is symmetric. 
	
	The fact that 
	the derived stack $\fM_S^{\sigma}(v)$ satisfies the formal 
	neighborhood theorem follows from Theorem~\ref{thm:formal}. 
	Here note that Theorem~\ref{thm:formal} is formulated for 
	Gieseker stability (i.e. $\sigma=iH$ for an ample divisor $H$), 
	but the same argument applies for 
	one dimensional $\mu_{\sigma}$-semistable sheaves 
	by the existence of the good moduli space for $\mM_S^{\sigma}(v)$
	(see~\cite[Lemma~7.4]{MR3811778}). 
	Then the derived stack $\fM_S^{\sigma}(v)^{\C\rig}$ also satisfies
	the formal neighborhood theorem 
	by taking the $\C$-rigidifications of 
	top isomorphism in the diagram (\ref{dia:fthm}). 
\end{proof}

\begin{rmk}\label{rmk:squiver}
	Let 
	$X=\mathrm{Tot}_S(\omega_S)$
	and
	$i \colon S \hookrightarrow X$ the zero section. 
	The $\Aut(x)$-representation 
	(\ref{rep:tan}) is isomorphic to 
	the conjugate $\Aut(x)$-action on 
	\begin{align*}
		\Ext_X^1(i_{\ast}F, i_{\ast}F)=
		\bigoplus_{e \in E(Q_{E_{\bullet}})}\Hom(V_{s(e)}, V_{t(e)})
	\end{align*}
	where $Q_{E_{\bullet}}$ is the Ext-quiver associated with the 
	collection $E_{\bullet}=(i_{\ast}F_1, \ldots, i_{\ast}F_k)$
	(see Subsection~\ref{subsec:Equiver}). 
	This is because of the isomorphisms
	\begin{align*}
		\Ext_X^1(i_{\ast}F_a, i_{\ast}F_b) &\cong 
		\Ext_S^1(i^{\ast}i_{\ast}F_a, F_b) \\
		&\cong \Ext_S^1(F_a, F_b) \oplus \Hom(F_a, F_b \otimes \omega_S) \\
		&\cong \Ext_S^1(F_a, F_b) \oplus \Ext_S^2(F_b, F_a)^{\vee}. 
	\end{align*}
\end{rmk}

\subsubsection{Line bundles on moduli stacks}

%For an element 
% $\sigma'=B'+\sqrt{-1}H' \in A(S)_{\mathbb{C}}$, 
%suppose that 
%we define the following 
%element $l(\sigma') \in \Pic(\mM_S(v))_{\mathbb{R}}$
%\begin{align*}
%l(\sigma')=(\det \dR p_{\mM\ast} \fF)^{-\beta \cdot H'} \otimes 
%(\det \dR p_{\mM \ast}(\fF \boxtimes \oO_{H'}))^{n-B' \cdot \beta}. 
%\end{align*}
%Here $\fF$ is a universal sheaf (\ref{F:universal})
%on $S \times \mM_{S}(v)$. 
%Note that the above expression only makes sense 
%when $H'$ is an integral class. 
%However we can define the $\mathbb{R}$-line bundle 
%$l(\sigma')$ from the integral classes by linearity. 
%Its pull-back to $\mM_X(v)$, 
%and their restrictions to $\mM_S^{\sigma}(v)$, $\mM_{X, \sigma}(v)$
%are also denoted by $l(\sigma')$. 
%Since $l(\sigma')$ has $\C$-weight zero with respect to the 
%inertia action, it also descends to 
%line bundles $\mM_{S, \sigma}^{\C\rig}(v)$, 
%$\mM_{X, \sigma}^{\C\rig}(v)$ which 
%we also denote by $l(\sigma')$. 

We now define some 
line bundle on $\mM_S(v)$ associated with 
an integral class $\sigma \in A(S)_{\mathbb{C}}$. 
\begin{defi}\label{def:line}
	For an integral class
	$\sigma=B+iH \in A(S)_{\mathbb{C}}$
	such that $H$ is an effective class, 
	we define 
	$l(\sigma) \in \Pic(\mM_S(v))$ by 
	\begin{align*}
		l(\sigma)=(\det \dR p_{\mM\ast} 
		(\fF \boxtimes \oO_S(-B)))^{-\beta \cdot H} \otimes 
		(\det \dR p_{\mM \ast}(\fF \boxtimes \oO_{H}))^{n-B \cdot \beta}. 
	\end{align*}
	Here $\fF$ is a universal sheaf (\ref{F:universal}). 
	Its pull-back to $\mM_X(v)$, 
	and also its restriction to an open 
	substack of $\mM_{X}(v)$
	are also denoted by $l(\sigma)$. 
\end{defi}
The line bundle in Definition~\ref{def:line} descends 
to the line bundle in the $\C$-rigidification: 
\begin{lem}\label{lem:descend}
	The line bundles $l(\sigma)$ on $\mM_{S}(v)$, 
	$\mM_{X}(v)$ descend
	to line bundles on $\mM_{S}^{\C\rig}(v)$, 
	$\mM_{X}^{\C\rig}(v)$.  
\end{lem}
\begin{proof}
	At each point $[F] \in \mM_{S}(v)$, the 
	inertial $\C$-weight of $l(\sigma)|_{[F]}$ is 
	\begin{align*}
		-(\beta \cdot H) \chi(F \otimes \oO_S(-B))
		+(n-B \cdot \beta)\chi(F \otimes \oO_H) &=-(\beta \cdot H)(n-B \cdot \beta)+(n-B \cdot \beta)(\beta \cdot H) \\
		&=0. 
	\end{align*}
	Therefore $l(\sigma)$ descends to $\mM^{\C\rig}_{S}(v)$. 
	The case for $\mM_{X}(v)$ follows from the same argument. 
\end{proof}

Suppose that $\sigma \in A(S)_{\mathbb{C}}$ lies on a 
wall and take $\sigma_{\pm} =B_{\pm}+iH_{\pm}
\in A(S)_{\mathbb{C}}$ which 
lie on its adjacent chambers. 
Note that we have open immersions 
\begin{align*}
	\mM_{X}^{\sigma_{\pm}}(v) \subset \mM_{X}^{\sigma}(v).
\end{align*}
Since each chamber contains dense rational points, 
by taking small deformations of $\sigma_{\pm}$ and rescaling 
we may assume 
that $B_{\pm}, H_{\pm}$ are integral 
and $H_{\pm}$ are effective 
without changing $\mM_{X}^{\sigma_{\pm}}(v)$. 
Then
by Definition~\ref{def:line}, 
we have the line bundles $l(\sigma_{\pm})$ on $\mM_S^{\sigma}(v)$, 
$\mM_{X}^{\sigma}(v)$. 
By Lemma~\ref{lem:descend}, they descend to 
line bundles on $\mM_{S}^{\sigma}(v)^{\C\rig}$, 
$\mM_{X}^{\sigma}(v)^{\C\rig}$, which 
we also denote by $l(\sigma_{\pm})$. 

The following lemma shows that 
the open substacks 
$\mM_{X}^{\sigma_{\pm}}(v) \subset \mM_{X}^{\sigma}(v)$
coincide with $l(\sigma_{\pm})$-semistable loci. 
\begin{lem}\label{lem:chamber}
	We have the identity of open substacks in $\mM_{X}^{\sigma}(v)$, 
	\begin{align}\label{id:Mstable}
		\mM_{X}^{\sigma}(v)^{l(\sigma_{\pm}) \sss}=
		\mM_{X}^{\sigma_{\pm}}(v). 
	\end{align}
\end{lem}
\begin{proof}
	It is enough to prove the identity (\ref{id:Mstable}) on each fiber of the 
	good moduli space morphism $\pi_{\mM_X} \colon \mM_{X}^{\sigma}(v) \to M_{X}^{\sigma}(v)$. 
	Let $y \in M_{X}^{\sigma}(v)$ corresponds to a $\sigma$-polystable sheaf on 
	$X$ of the form
	\begin{align}\label{pstable:X}
		E=\bigoplus_{i=1}^m V_i \otimes E_i. 
	\end{align}
	Here each $V_i$ is a finite dimensional vector space and
	$\{E_1, \ldots, E_m\}$ are mutually non-isomorphic $\sigma$-stable sheaves. 
	Let $Q_{E_{\bullet}}$ be the Ext-quiver
	associated with the collection $(E_1, \ldots, E_m)$
	(see Subsection~\ref{subsec:Equiver}).  
	Then the fiber of $\pi_{\mM_X}$ at $y$ is 
	the closed substack of 
	the nilpotent $Q_{E^{\bullet}}$-representations 
	with dimension vector $(\dim V_i)_{1\le i\le m}$
	(see Subsection~\ref{subsec:Equiver})
	\begin{align}\label{qmoduli}
		\left[\left\{ \oplus_{\begin{subarray}{c}
				(i\to j) \in Q_{E_{\bullet}}
		\end{subarray}}
		\Hom(V_i, V_j) \right\}^{\rm{nil} }
		/G \right]. 
	\end{align}
	Here $G=\prod_{i=1}^m \GL(V_i)$
	and the subscript $`\rm{nil}'$ means nilpotent 
	$Q_{E_{\bullet}}$-representations. 
	Let us write $[\pi_{\ast}E_i]=(\beta_i, n_i)$
	in $N_{\le 1}(S)$. 
	We define the following 
	group homomorphisms $W_{\pm}$
	\begin{align*}
		W_{\pm} \colon K(Q_{E_{\bullet}}) \stackrel{\mathbf{dim}}{\to} 
		\bigoplus_{i=1}^m \mathbb{Z} \cdot \mathbf{e}_i \to \mathbb{C}. 
	\end{align*}
	Here the first arrow is taking the dimension vector, 
	and the second arrow is given by
	\begin{align*}
		\mathbf{e}_i \mapsto -n_i+B_{\pm} \cdot \beta_i +(H_{\pm} \cdot \beta_i)\sqrt{-1}
		\in \mathbb{C}. 
	\end{align*}
	Then $W_{\pm}$ determine Bridgeland stability conditions~\cite{MR2373143}
	on the abelian category of 
	finite dimensional $Q_{E_{\bullet}}$-representations:  
	a finite dimensional $Q_{E_{\bullet}}$-representation $R$
	is $W_{\pm}$-(semi)stable if for any non-zero 
	subrepresentation $R' \subsetneq R$, we have 
	\begin{align*}
		\arg W_{\pm}(R')<(\le) \arg W_{\pm}(R)
	\end{align*}
	in $(0, \pi]$. 
	By~\cite[Lemma~7.8]{MR3811778}, 
	the 
	intersection $\pi_{\mM_X}^{-1}(p) \cap \mM_{X}^{\sigma_{\pm}}(v)$
	corresponds to 
	$W_{\pm}$-semistable $Q_{E_{\bullet}}$-representations 
	inside the stack (\ref{qmoduli}). 
	An easy calculation shows that a $Q_{E_{\bullet}}$-representation $R$
	of dimension vector $(\dim V_i)_{1\le i\le m}$
	is $W_{\pm}$-(semi)stable if and only if 
	for any non-zero subrepresentation $R' \subsetneq R$,
	we have 
	\begin{align*}
		\theta_{\pm}(R') \cneq \sum_{i=1}^m 
		\theta_{\pm, i} \cdot r_i>(\ge) 0 =\theta^{\pm}(R).
	\end{align*}
	Here $(r_i)_{1\le i\le m}$ is the dimension vector of $R'$
	and $\theta_{\pm, i} \in \mathbb{Z}$ is given by 
	\begin{align}\label{theta:pm}
		\theta_{\pm, i}=(B_{\pm} \cdot \beta_i-n_i) \cdot (H_{\pm} \cdot 
		\beta)+(n-B_{\pm} \cdot \beta) \cdot (H_{\pm} \cdot \beta_i). 
	\end{align}
	By the relation of $\theta$-stability and GIT stability 
	proved by King~\cite[Theorem~4.1]{Kin},
	the $\theta_{\pm}$-(semi)stable loci in (\ref{qmoduli}) 
	correspond to 
	GIT (semi)stable loci with respect to the characters
	\begin{align}\label{G:character}
		G \to \mathbb{C}^{\ast}, \ (g_i)_{1\le i\le m}
		\mapsto \prod_{i=1}^m \det(g_i)^{\theta_{\pm, i}}. 
	\end{align} 
	
	On the other hand, 
	let $x \in \mM_{X}^{\sigma}(v)$ be the closed point 
	corresponding to the polystable sheaf (\ref{pstable:X}). 
	Then we have $G=\Aut(x)$. 
	In the notation of Definition~\ref{def:stack:gen}, 
	the pull-backs 
	$l(\sigma_{\pm})_x \in \Pic(BG)$ 
	are described as 
	\begin{align*}
		l(\sigma_{\pm})_x
		&=\det \dR \Gamma\left( \bigoplus_{i=1}^m V_i
		\otimes \pi_{\ast}E_i \otimes \oO_S(-B_{\pm})\right)^{-H_{\pm} \cdot \beta}
		\otimes \det \dR \Gamma\left(\bigoplus_{i=1}^m V_i\otimes \pi_{\ast}E_i \otimes
		\oO_{H_{\pm}}\right)^{n-B_{\pm} \cdot \beta} \\
		&=\bigotimes_{i=1}^m 
		(\det V_i)^{-(H_{\pm} \cdot \beta) \cdot \chi(\pi_{\ast}E_i \otimes \oO_S(-B_{\pm}))}
		\otimes \bigotimes_{i=1}^m (\det V_i)^{(H_{\pm}\cdot \beta_i)\cdot (n-B_{\pm} \cdot \beta)} \\
		&=\bigotimes_{i=1}^m \det(V_i)^{\theta_{\pm, i}}. 
	\end{align*}
	Therefore $l(\sigma_{\pm})_x$ are induced 
	by the $G$-characters (\ref{G:character}). 
	Together with using Lemma~\ref{lem:Gchar}, 
	the line bundles 
	$l(\sigma_{\pm})$ on $\mM_{X}^{\sigma}(v)$
	are
	induced by the $G$-characters 
	(\ref{G:character})
	on the fiber $\pi_{\mM_X}^{-1}(p)$, 
	so the identity (\ref{id:Mstable}) holds on 
	$\pi_{\mM_X}^{-1}(p)$. 
\end{proof}

Since the derived stacks $\fM_S^{\sigma}(v)$, 
$\fM_S^{\sigma}(v)^{\C\rig}$ 
are symmetric by Lemma~\ref{lem:stackS}, 
we take their maximal symmetric structures $\bS$
as in Definition~\ref{def:symmetric}. 
The following lemma shows that the line bundles $l(\sigma_{\pm})$ satisfy the 
genericity condition in Definition~\ref{def:stack:gen}. 

\begin{lem}\label{lem:lgeneric}
	The line bundles 
	$l(\sigma_{\pm}) \in \Pic(\mM^{\sigma}_{S}(v)^{\C\rig})$ are $\bS$-generic.
\end{lem}
\begin{proof} 
	Let us take a closed point $x \in \mM^{\sigma}_{S}(v)^{\C\rig}$
	corresponding to a polystable sheaf (\ref{pstable:F}). 
	Then $G' \cneq \Aut(x)=G/\C$ where 
	$\C \subset G=\prod_{i=1}^m \GL(V_i)$ is the diagonal torus. 
	From the proof of Lemma~\ref{lem:chamber}, 
	the element $l(\sigma_{\pm})_x \in \Pic(B\Aut(x))$
	corresponds 
	to a $G'$-character of the form (\ref{G:character}), 
	where
	$\theta_{\pm}$ are given as in (\ref{theta:pm})
	for $[F_i]=(\beta_i, n_i)$. 
	Here note that the $G$-character (\ref{G:character}) descends 
	to the $G'$-character since it restricts to the trivial 
	character on the diagonal torus $\C \subset G$
	(see Lemma~\ref{lem:descend}). 
	By the assumption that $\sigma_{\pm}$
	do not lie on walls, we have  
	$\theta_{\pm}(\vec{v}') \neq 0$
	for any $0<\vec{v}'<\vec{v}$
	where $\vec{v}=(\dim V_i)_{1\le i\le m}$. 
	Then the lemma follows from Remark~\ref{rmk:squiver} and 
	Lemma~\ref{lem:qgeneric}.  
\end{proof}

\subsubsection{Equivalences of DT categories for one dimensional 
stable sheaves}
Applying Corollary~\ref{cor:equivalence}, we have the following 
result which gives an evidence of Conjecture~\ref{conj1}.  
\begin{thm}\label{thm:surface}
	Let $\sigma \in A(S)_{\mathbb{C}}$ lies on a wall 
	with respect to $v \in N_{\le 1}(S)$
	and $\sigma_{\pm} \in A(S)_{\mathbb{C}}$ lie on 
	its adjacent chambers. 
	Moreover assume that 
	\begin{align}\label{assume:sss}
		\mM_{X}^{\sigma}(v) \subset 
		\pi_{\ast}^{-1}(\mM_S^{\sigma}(v)). 
	\end{align}
	Then there exists an equivalence
	\begin{align}\notag
		\dDT^{\C}(M_{X}^{\sigma_{+}}(v))\stackrel{\sim}{\to}
		\dDT^{\C}(M_{X}^{\sigma_{-}}(v)). 
	\end{align}
\end{thm}
\begin{proof}
The condition (\ref{assume:sss}) implies that 
$\mM_X^{\sigma}(v)=\pi_{\ast}^{-1}(\mM_S^{\sigma}(v))$
by Lemma~\ref{lem:open:Sss}. 
Therefore 
by Lemma~\ref{lem:chamber}, 
we have 
\begin{align*}
	M_X^{\sigma_{\pm}}(v)=t_0(\Omega_{\fM_S^{\sigma}(v)^{\C\rig}}[-1])^{l(\sigma_{\pm}) \sss}. 
	\end{align*}
Therefore the theorem is a consequence of Lemma~\ref{lem:stackS}, 
Lemma~\ref{lem:lgeneric} and Corollary~\ref{cor:equivalence}. 
\end{proof}

If we impose some further assumption, the 
result of Theorem~\ref{thm:surface} is described 
in terms of 
derived moduli spaces of stable sheaves on $S$. 
\begin{cor}\label{cor:preserve}
	Under the assumption of Theorem~\ref{thm:surface}, 
	suppose furthermore that 
	the following condition also holds: 
	\begin{align}\label{assume:preserve}
		\mM_{X}^{\sigma_{\pm}}(v) \subset 
		\pi_{\ast}^{-1}(\mM_S^{\sigma_{\pm}}(v)). 
	\end{align}
	Then we have an equivalence 
	\begin{align}\label{equiv:MS}
		\Theta_{\sigma_{+}, \sigma_-} \colon 
		\Dbc(\fM_S^{\sigma_{+}}(v)^{\C\rig})
		\stackrel{\sim}{\to}
		\Dbc(\fM_S^{\sigma_-}(v)^{\C\rig})
	\end{align}
	such that we have the commutative diagram 
	\begin{align}\label{commute:M'3}
		\xymatrix{
			\Dbc(\fM_S^{\sigma_{+}}(v)^{\C\rig}) 
			\ar[r]^-{\Theta_{\sigma_{+}, \sigma^-}}_-{\sim}
			\ar[d] & 
			\Dbc(\fM_S^{\sigma_-}(v)^{\C\rig}) \ar[d] \\
			\Dbc(\fM_S^{\sigma\st}(v)^{\C\rig}) \ar[r]^-{\id}_-{\sim} &
			\Dbc(\fM_S^{\sigma\st}(v)^{\C\rig}).
		}
	\end{align}
	Here the vertical arrows are restriction functors. 
\end{cor}
\begin{proof}
	If the inclusion (\ref{assume:preserve}) holds, 
	then it 
	is identity by Lemma~\ref{lem:open:Sss},
	and
	we have 
	equivalences
	\begin{align}\notag
			\dDT^{\C}(M_{X}^{\sigma_{\pm}}(v))
			\stackrel{\sim}{\to}
		\Dbc(\fM_S^{\sigma_{\pm}}(v)^{\C\rig})	
	\end{align}
	by Lemma~\ref{lem:ss:dt}. 
		Therefore by Theorem~\ref{thm:surface}, we have the 
	equivalence (\ref{equiv:MS}). 
	The commutative diagram (\ref{commute:M'3}) follows from Remark~\ref{rmk:commute}. 
\end{proof}

\subsubsection{Examples and applications}\label{subsec:exam}
In this subsection, we give several examples where 
the conditions (\ref{assume:sss}), (\ref{assume:preserve}) hold so that 
we can apply Corollary~\ref{cor:preserve}. 
We first consider the case of 
$v=(\beta, n) \in N_{\le 1}(S)$ such that $\beta$ is a reduced
class. 
Note that $v$ is primitive for any $n$. 
\begin{lem}\label{lem:reduced}
	For $v=(\beta, n) \in N_{\le 1}(S)$, 
	suppose that $\beta$ is reduced. Then 
	the condition (\ref{assume:sss}) is satisfied for any
	$\sigma\in A(S)_{\mathbb{C}}$. 
\end{lem}
\begin{proof}
	Note that giving a compactly supported coherent sheaf on 
	$X$ is equivalent to giving a pair $(F, \theta)$, 
	where $F \in \Coh(S)$ and $\theta \in \Hom(F, F\otimes \omega_S)$. 
	The push-forward $\pi_{\ast}$ sends such a pair $(F, \theta)$ to $F$. 
	
	Let $E \in \Coh_{\le 1}(X)$ be a $\sigma$-semistable sheaf
	which 
	corresponds to a pair $(F, \theta)$
	such that $[F]=(\beta, n)$. 
	Suppose that $F$ is not $\sigma$-semistable.  
	Then there exists an exact sequence 
	$0 \to F' \to F \to F'' \to 0$ in $\Coh(S)$
	such that $F'$, $F''$ are pure one dimensional 
	sheaves and $\mu_{\sigma}(F')>\mu_{\sigma}(F'')$. 
	We consider the following diagram
	\begin{align}\label{dia:reduced}
		\xymatrix{
			0\ar[r] & F' \ar[r] \ar@{.>}[d]& F \ar[r] \ar[d]^-{\theta} 
			& F'' \ar[r] \ar@{.>}[d] 
			& 0 \\
			0 \ar[r] & F' \otimes \omega_S \ar[r] & F\otimes \omega_S \ar[r] & F'' \otimes 
			\omega_S \ar[r] & 0. 
		}
	\end{align}
	By the assumption, the sheaf $F$ is scheme theoretically 
	supported on a reduced divisor on $S$. 
	Therefore the supports of $F'$ and $F''$ do not have common 
	irreducible components, hence
	we have $\Hom(F', F'' \otimes \omega_S)=0$. 
	Then there exist dotted arrows in the diagram (\ref{dia:reduced})
	which makes the diagram (\ref{dia:reduced}) commutative. 
	However then the diagram (\ref{dia:reduced}) destabilizes $E$, 
	which is a contradiction. 
\end{proof}
By the above lemma, 
in the reduced case
the condition (\ref{assume:sss}) is satisfied 
for any $\sigma\in A(S)_{\mathbb{C}}$.  
Therefore by Corollary~\ref{cor:preserve}, we have the following: 

\begin{cor}\label{cor:reduced}
	Let $S$ be a smooth projective surface, and 
	take $v=(\beta, n) \in N_{\le 1}(S)$ such that $\beta$
	is a reduced class. Then for any 
	$\sigma_{\pm}\in A(S)_{\mathbb{C}}$
	which do not lie walls, we have an equivalence
	\begin{align*}
		\Dbc(\fM_S^{\sigma_{+}}(v)^{\C\rig})
		\stackrel{\sim}{\to}
		\Dbc(\fM_S^{\sigma_-}(v)^{\C\rig}).
	\end{align*}
\end{cor}

We also have the following lemma 
for the conditions (\ref{assume:sss}), (\ref{assume:preserve})
to hold: 
\begin{lem}\label{lem:wS}
	Suppose that 
	$c_1(\omega_S) \in \mathbb{R}_{\le 0} \cdot H$
	for some $H \in A(S)_{\mathbb{R}}$. 
	Then 
	for $\sigma=iH$, 
	both of the conditions (\ref{assume:sss}), 
	(\ref{assume:preserve})
	hold for any primitive $v$. 
\end{lem}
\begin{proof}
	We follow the proof of Lemma~\ref{lem:reduced}.
	As for the condition (\ref{assume:sss}), let 
	$E \in \Coh_{\le 1}(X)$ be a 
	$\sigma$-semistable sheaf
	which 
	corresponds to a pair $(F, \theta)$
	such that $[F]=(\beta, n)$. 
	Suppose that $F$ is not $\sigma$-semistable.  
	Then by taking Harder-Narasimhan filtration in $\sigma$-stability, 
	there exists an exact sequence 
	\begin{align}\label{exact:F}
		0 \to F' \to F \to F'' \to 0
	\end{align}
	in $\Coh(S)$
	such that 
	$\mu_\sigma^{\rm{min}}(F')>\mu_\sigma^{\rm{max}}(F'')$. 
	Here $\mu_\sigma^{\rm{max}}(-)$ (resp.~$\mu_\sigma^{\rm{min}}(-)$) is the 
	maximal (resp.~minimal)
	slope among 
	the Harder-Narasimhan factors of $(-)$.
	By the assumption $c_1(\omega_S) \in \mathbb{R}_{\le 0} \cdot H$, 
	taking the tensor product with $\omega_S$ preserves
	$\sigma$-stability. 
	Therefore we have 
	\begin{align*}
		\mu_\sigma^{\rm{min}}(F')>
		\mu_{\sigma}^{\rm{max}}(F'') \ge 
		\mu_\sigma^{\rm{max}}(F'' \otimes \omega_S),
	\end{align*}
	hence the vanishing $\Hom(F', F'' \otimes \omega_S)=0$
	holds. Then we obtain the diagram (\ref{dia:reduced}), 
	and a contradiction. 
	
	As for the condition (\ref{assume:preserve}), the same argument 
	as above applies to the case of $c_1(\omega_S)=0$. 
	So we assume that $c_1(\omega_S) \in \mathbb{R}_{<0} \cdot H$. 
	Suppose that $E=(F, \theta)$ is $\sigma_{\pm}$-semistable 
	but $F$ is not $\sigma_{\pm}$-semistable. 
	Then similarly to above, we have an exact sequence (\ref{exact:F})
	such that $\mu_{\sigma_{\pm}}^{\rm{min}}(F')>\mu_{\sigma_{\pm}}^{\rm{max}}(F'')$. 
	Since $E$ is $\sigma$-semistable, 
	$F$ is also $\sigma$-semistable by the above argument, therefore 
	$F'$ and $F''$ are also $\sigma$-semistable with the same $\mu_{\sigma}$-slope. 
	Then $F'' \otimes \omega_S$ is also $\sigma$-semistable
	with 
	\begin{align*}
		\mu_{\sigma}(F')
		\ge \mu_{\sigma}(F'')
		>
		\mu_{\sigma}(F'' \otimes \omega_S),
	\end{align*}
	hence $\Hom(F', F'' \otimes \omega_S)=0$ holds. Similarly to 
	above, we obtain a contradiction. 
\end{proof}

A surface $S$ with $c_1(\omega_S)=0$ is classified 
into four types: K3 surface, abelian surface, Enriques surface and 
bielliptic surface. 
In the above case, 
by Corollary~\ref{cor:preserve} and Lemma~\ref{lem:wS} we have the following: 
\begin{cor}\label{cor:triv}
	Suppose that $S$ is a smooth projective surface 
	satisfying
	$c_1(\omega_S)=0$ in $H^2(S, \mathbb{R})$. 
	Then 
	for any 
	primitive $v \in N_{\le 1}(S)$ and 
	$\sigma_{\pm}\in A(S)_{\mathbb{C}}$
	which do not lie on walls, there exists an equivalence
	\begin{align}\label{equiv:triv}
		\Dbc(\fM_S^{\sigma_{+}}(v)^{\C\rig})
		\stackrel{\sim}{\to}
		\Dbc(\fM_S^{\sigma^-}(v)^{\C\rig}).
	\end{align}
\end{cor}
%begin{proof}
%Under the assumption that $c_1(\omega_S)=0$, 
%the condition (\ref{cond:reduced}) holds for any $h \in A(S)_{\mathbb{R}}$. 
%Indeed by following the proof of Lemma~\ref{lem:reduced}, 
%we have the vanishing $\Hom(F', F'' \otimes \omega_S)=0$
%since both of $F'$, $F'' \otimes \omega_S$ are $\mu_{\sigma}$-semistable 
%sheaves with $\mu_{\sigma}(F')>\mu_{\sigma}(F'' \otimes \omega_S)$. 
%Therefore we have a desired equivalence from Corollary~\ref{cor:preserve}. 
%\end{proof}
\begin{rmk}\label{rmk:K3}
	In the case that $S$ is a K3 surface,  
	the classical truncations
	\begin{align*}
		M_{S}^{\sigma_{\pm}}(v) \cneq t_0(\fM_{S}^{\sigma_{\pm}}(v))
	\end{align*}
	are holomorphic symplectic manifolds (\cite{Mu2, MR3194493}). 
	A derived equivalence
	\begin{align}\label{equiv:hol}
		\Dbc(M_{S}^{\sigma_{+}}(v)) \stackrel{\sim}{\to} 
		\Dbc(M_{S}^{\sigma_{-}}(v))
	\end{align}
	is a special case of Halpern-Leistner's result~\cite{HalpK3, HalpK32}.
	Since the closed immersion 
	$M_{S}^{\sigma_{\pm}}(v) \hookrightarrow \fM_{S}^{\sigma_{\pm}}(v)$
	is not an equivalence of derived stacks, 
	the equivalence (\ref{equiv:triv}) does not directly imply 
	(\ref{equiv:hol}). 
	This is caused by 
	the existence of surjections 
	$\Ext^2_S(E, E) \twoheadrightarrow \mathbb{C}$ for any 
	coherent sheaf $E$ on $S$. 
	In order to obtain an equivalence (\ref{equiv:hol}), we 
	replace $\fM_{S}^{\C\rig}(v)$ with 
	another derived stack $\fM_S^{\mathrm{rig}}(v)$
	obtained from $\fM_S^{\C\rig}(v)$
	by getting rid of the 
	above 
	one dimensional obstruction space from $\fM_{S}^{\C\rig}(v)$
	as in~\cite[Proposition~3.4.7]{HalpK32}.
	Then 
	we can recover the Halpern-Leistner's 
	equivalence (\ref{equiv:hol}) from the argument of Theorem~\ref{thm:surface}. 
\end{rmk} 

In the case of Enriques surface, Sacca~\cite{Sacca}
proved the following:
\begin{thm}\emph{(\cite{Sacca})}\label{thm:Sacca}
	Let $S$ be a general Enriques surface 
	and $\lvert C \rvert$ be a linear system 
	on it
	which contains an irreducible 
	divisor $C \subset S$ with 
	arithmetic genus $g\ge 2$. 
	We take $v=([C], n) \in \mathrm{NS}(S) \oplus \mathbb{Z}$ such that 
	$n\neq 0$ and 
	$([C], 2n)$ is coprime.  
	Then for generic
	$H_{\pm} \in A(S)_{\mathbb{R}}$, 
	the moduli spaces  
	$M_{S}^{H_{\pm}}(v)$  
	are smooth $(\beta^2+1)$-dimensional birational 
	Calabi-Yau manifolds. 
\end{thm}

By Corollary~\ref{cor:triv}, we have an equivalence of 
Sacca's Calabi-Yau manifolds for different polarizations: 
\begin{cor}\label{cor:Enriques}
	For a general Enriques surface $S$, let $v \in N_{\le 1}(S)$ 
	be as in Theorem~\ref{thm:Sacca}. 
	Then for generic $H_{\pm} \in A(S)_{\mathbb{R}}$, there exists
	an equivalence
	\begin{align}\label{equiv:Sacca}
		\Dbc(M_{S}^{H_{+}}(v)) \stackrel{\sim}{\to}
		\Dbc(M_{S}^{H_{-}}(v)). 
	\end{align}
\end{cor}
\begin{proof}
	Let $H \in A(S)_{\mathbb{R}}$ lies on a wall, and $H_{\pm}$ lie on 
	its adjacent chambers. 
	It is enough to show an equivalence (\ref{equiv:Sacca}) for 
	such $H_{\pm}$. 
	It is proved in~\cite[Lemma~2.5]{Sacca} that 
	there is no obstruction space for 
	$H_{\pm}$-stable sheaf $E$
	such that $[E] \in M_{S}^{H_{\pm}}(v)$, i.e. 
	$\Ext^2(E, E)=0$. Therefore for $\sigma_{\pm}=iH_{\pm}$, 
	the closed immersions
	\begin{align*}
		M_{S}^{H_{\pm}}(v)=
		t_0(\fM_{S}^{\sigma_{\pm}}(v)^{\C\rig}) \hookrightarrow 
		\fM_{S}^{\sigma_{\pm}}(v)^{\C\rig}
	\end{align*}
	are equivalences.  
	Therefore the corollary follows from Corollary~\ref{cor:triv}. 
	%Let $\Theta_{\sigma_{+}, \sigma^-}$ be the equivalence (\ref{equiv:triv})
	%for $\sigma_{\pm}=\sqrt{-1}H_{\pm}$, 
	%and take a closed point $x \in N_{S, H_{+}}(v)$. 
	%Since $\oO_x$ is indecomposable, the object 
	%$\Theta_{\sigma_{+}, \sigma^-}(\oO_x)$ 
	%is supported on one of the components 
	%of $\fM_{S, \sigma^-}^{\C\rig}(v)$. 
	%By the commutative diagram (\ref{commute:M'3}), 
	%for a general point $x$
	%we have $\Theta_{\sigma_{+}, \sigma^-}(\oO_x) \cong \oO_y$
	%for some $y \in N_{S, H^-}(v)$.
	%Therefore $\Theta_{\sigma_{+}, \sigma^-}(\oO_x)$
	%is supported on $N_{S, H^-}(v)$ for any point $x$. 
	%Let us take the composition
	%\begin{align}\label{equiv:N}
	%\Dbc(N_{S, H_{+}}(v)) \hookrightarrow 
	%\Dbc(\fM_{S, \sigma_{+}}^{\C\rig}(v)) \stackrel{\Theta_{\sigma_{+}, \sigma^-}}{\to}%\Dbc(\fM_{S, \sigma^-}) \twoheadrightarrow 
	%\Dbc(N_{S, H^-}(v)). 
	%\end{align}
	%Then the above composition is fully-faithful 
	%for skyscraper sheaves $\oO_x$ for $x \in N_{S, H_{+}}(v)$. 
	%As $N_{S, H_{\pm}}(v)$ are Calabi-Yau manfiolds, 
	%the functor (\ref{equiv:N}) is an equivalence by~\cite[Theorem~1.1]{Br2}. 
\end{proof}

Let $S$ be a del-Pezzo surface, 
i.e. $-K_S$ is ample. 
In this case, we have the following: 

\begin{cor}\label{cor:delPezzo}
	Let $S$ be a del-Pezzo surface. 
	Then for any primitive $v\in N_{\le 1}(S)$
	and generic perturbations $\sigma_{\pm}$ of $-iK_S$, 
	we have a derived 
	equivalence of smooth projective varieties
	\begin{align*}
		\Dbc(M_{S}^{\sigma_{+}}(v)) \stackrel{\sim}{\to} 
		\Dbc(M_{S}^{\sigma_{-}}(v)). 
	\end{align*}
\end{cor}
\begin{proof}
	The moduli stack $\mM_S^{\sigma}(v)$ is a smooth stack 
	without obstruction space, i.e. for any Gieseker 
	$-K_S$-semistable sheaf $E$ on $S$
	with $[E]=v$, we have 
	\begin{align*}
		\Ext^2(E, E)=\Hom(E, E\otimes \omega_S)^{\vee}=0.
	\end{align*}
	Therefore 
	for generic perturbations $\sigma_{\pm}$ of $-iK_S$, 
	the moduli 
	spaces $M_{S}^{\sigma_{\pm}}(v)$ are smooth projective varieties 
	such that the closed immersions
	\begin{align*}
		M_{S}^{\sigma_{\pm}}(v) =t_0(\fM_{S}^{\sigma_{\pm}}(v)^{\C\rig}) \hookrightarrow 
		\fM_{S}^{\sigma_{\pm}}(v)^{\C\rig}
	\end{align*}
	are equivalences.  
	Therefore the corollary follows from 
	Corollary~\ref{cor:preserve}
	and Lemma~\ref{lem:wS}. 
\end{proof}

\subsection{Application to categorical MNOP/PT correspondence}\label{subsec:app2}
Here we apply the result of Corollary~\ref{cor:equivalence}
to prove Conjecture~\ref{conj:DT/PT}
for reduced curve classes. 
Below we use the notation of Section~\ref{subsec:catDTPT}. 
\subsubsection{The moduli stack of semistable objects on MNOP/PT wall}
Let $S$ be a smooth projective surface, and 
consider the moduli stacks of 
pairs $\mM_S^{\dag}(v)$, 
moduli stacks of D0-D2-D6 bound states 
$\mM_X^{\dag}(v)$ considered in Subsection~\ref{subsec:pairs}, Subsection~\ref{subsec:moduliD026} 
respectively. 
For $v=(\beta, n) \in N_{\le 1}(S)$, 
we consider open substacks
\begin{align}\notag
\tT_n(S, \beta) \subset \mM_S^{\dag}(\beta, n), \ 
\tT_n(X, \beta) \subset \mM_X^{\dag}(\beta, n)
\end{align}
corresponding to pairs $(\oO_S \stackrel{\xi}{\to} F)$
such that $\Cok(\xi)$ is at most zero dimensional, 
objects $\eE \in \aA_X$ such that $\hH^1(\eE)$
is at most zero dimensional, respectively. 
Note that the latter moduli stack $\tT_n(X, \beta)$ is considered in (\ref{stack:Tn})
 as the stack of 
semistable objects on the MNOP/PT wall. 
Similarly to Lemma~\ref{lem:shiftP}, 
we have the open immersion 
\begin{align}\label{inc:sharp}
	(\pi_{\ast}^{\dag})^{-1}(\tT_n(S, \beta)) \subset \tT_n(X, \beta). 
	\end{align}
We define some line bundles on 
the above moduli stacks. 
\begin{defi}
	We define $l_I, l_P \in \Pic(\mM_S^{\dag})$ to be
	\begin{align*}
		l_I \cneq \det (\dR p_{\mM^{\dag}\ast}\fF), \ 
		I_P \cneq \det (\dR p_{\mM^{\dag}\ast}\fF)^{-1}. 
	\end{align*}
Here $\oO_{S \times \mM_S^{\dag}} \to \fF$ be the universal pair. 
	The restrictions of $l_I$, $l_P$ to $\tT_n(S, \beta)$ and 
	pull-backs via 
	$\pi_{\ast}^{\dag} \colon \tT_n(X, \beta) \to \mM_S^{\dag}$
	are also denoted by $l_I$, $l_P$. 
\end{defi}
We have the following proposition: 
\begin{prop}\label{prop:DTPT:line}
	For $l_I, l_P \in \Pic(\tT_n(X, \beta))$, 
	we have the identities
	\begin{align}\label{id:IP}
		I_n(X, \beta)=\tT_n(X, \beta)^{l_I \sss}, \ 
		P_n(X, \beta)=\tT_n(X, \beta)^{l_P \sss}. 
	\end{align}
\end{prop}
\begin{proof}
	It is enough to show the identities (\ref{id:IP}) on the fibers of
	good moduli space morphisms
	$\pi_{\tT} \colon \tT_n(X, \beta)
	\to T_n(X, \beta)$. 
	A closed point $y\in T_n(X, \beta)$ corresponds 
	to a polystable object on MNOP/PT wall (see~\cite[Appendix~B]{Toddbir})
	\begin{align}\label{pstable:wall}
		I_C \oplus \bigoplus_{i=1}^m V_i \otimes \oO_{p_i}[-1]. 
	\end{align}
	Here $I_C \subset \oO_{\overline{X}}$ 
	is the ideal sheaf of a Cohen-Macaulay curve 
	$C \subset X$, 
	$p_1, \ldots, p_m \in X$ are distinct points 
	and $V_i$ is a finite dimensional vector space. 
	Let $Q$ be the Ext-quiver associated 
	with the collection $\{I_C, \oO_{p_1}[-1], \ldots, \oO_{p_m}[-1]\}$. 
	Then the vertex set $V(Q)$ is $\{0, 1, \ldots, m\}$, 
	and the edge set $E(Q)$ is given by 
	\begin{align*}
		\sharp(0 \to i)=\hom(I_C, \oO_{p_i}), \ 
		\sharp(i \to 0)=\ext^2(\oO_{p_i}, I_C), \ 
		\sharp(0 \to 0)=\ext^1(I_C, I_C)
	\end{align*}
	and also for $1\le i, j \le m$
	\begin{align*}
		\sharp(i \to j)=
		\left\{  \begin{array}{ll}
			3, & (i=j) \\
			0, & (i\neq j). 
		\end{array}
		\right.
	\end{align*}
By the argument in Subsection~\ref{subsec:Equiver}, 
	we have the closed immersion 
	\begin{align}\label{emb:MNOP/PT}
		\pi_{\tT}^{-1}(y) 
		\hookrightarrow 
		\left[\left\{(V_i)^{\sharp(0\to i)} \oplus 
		(V_i^{\vee})^{\sharp(i \to 0)} 
		\oplus \bigoplus_{i=1}^m \End(V_i)^{\oplus 3}
		\oplus \mathbb{C}^{\sharp(0 \to 0)}
		\right\}^{\rm{nil}} /G\right]. 
	\end{align}
	Here the RHS is the moduli stack of 
	nilpotent $Q$-representations
	with dimension vector $(1, \{\dim V_i\}_{1\le i\le k})$.
	The algebraic group 
	$G=\prod_{i=1}^m\GL(V_i)$ acts on 
	$V_i$, $V_i^{\vee}$ in a standard way, 
	on $\End(V_i)$ by the conjugation and on 
	$\mathbb{C}^{\sharp(0 \to 0)}$ trivially. 
	
	Let $x \in \tT_n(X, \beta)$ be a closed point 
	corresponding to the object (\ref{pstable:wall}).
	Then we have $G=\Aut(x)$ and 
	\begin{align*}
		(l_I)_{x}=\det \dR \Gamma\left(\oO_C \oplus \bigoplus_{i=1}^m 
		V_i \otimes \oO_{p_i}\right) 
		\cong \bigotimes_{i=1}^m \det V_i
	\end{align*} 
	as an element of $\Pic(BG)$, i.e. 
	it is the 
	determinant character
	\begin{align}\label{det:char}
		\chi_0 \colon G \to \C,
		(g_i)_{1\le i\le m} \mapsto \prod_{i=1}^m \det(g_i).
	\end{align}
	
	Let $e_i$ for $1\le i\le m$ be the simple 
	$Q$-representation corresponding to the vertex $i$, 
	and 
	$Q_{0} \subset Q$ be the full 
	subquiver 
	whose vertex set is $\{1, \ldots, m\}$. 
	By Lemma~\ref{lem:KN},  
	a $Q$-representation $R$
	with dimension vector $(1, \{\dim V_i\}_{1\le i\le m})$
	is $\chi_0$-(semi)stable 
	if and only if 
	the images of $\mathbb{C} \to V_i$ for 
	$(0 \to i)$ generate
	$\oplus_{i=1}^m V_i$ as a $\mathbb{C}[Q_{0}]$-module.
	This is equivalent to that  
	$\Hom(R, e_i)=0$ for all $1\le i\le m$. 
	On the other hand, the fiber $\pi_{\tT}^{-1}(y)$ parametrizes 
	objects in the extension closure 
	\begin{align}\label{I:closure}
			\langle I_C, \oO_{p_1}[-1], \ldots, \oO_{p_m}[-1]\rangle_{\rm{ex}}.
	\end{align}
Under the embedding (\ref{emb:MNOP/PT}), an 
object $\eE$ in (\ref{I:closure}) corresponds to 
a $Q$-representation $R$ with $\Hom(R, e_i)=0$ if and only if 
	$\Hom(\eE, \oO_{p_i}[-1])=0$. 
	The object
	$\eE$ is represented by a pair $(\oO_{\overline{X}} \stackrel{\xi}{\to}E)$
	for a generically surjective $\xi$
	by Lemma~\ref{lem:MNOP/PTwall} , 
	and the condition $\Hom(\eE, \oO_{p_i}[-1])=0$ for all $i$ 
	is equivalent to that $\xi$ is surjective. 
	Therefore the identity for $I_n(X, \beta)$
	holds on $\pi_{\tT}^{-1}(y)$. 
	
	The identity for $P_n(X, \beta)$ is similar. 
	By Lemma~\ref{lem:KN}, 
	a $Q$-representation $R$
	with dimension vector $(1, \{\dim V_i\}_{1\le i\le m})$
	is $\chi_0^{-1}$-(semi)stable 
	if and only if 
	the images of duals of 
	$V_i \to \mathbb{C}$ for 
	$(i \to 0)$ generate
	$\oplus_{i=1}^m V_i^{\vee}$ as a $\mathbb{C}[Q_{0}]$-module, 
	which is equivalent to that  
	$\Hom(e_i, R)=0$ for all $1\le i\le m$. 
	Under the embedding (\ref{emb:MNOP/PT}), an object $\eE$ in (\ref{I:closure})
	corresponds to a $Q$-representation $R$ with $\Hom(e_i, R)=0$ if and only if 
	$\Hom(\oO_{p_i}[-1], \eE)=0$. 
	As $\eE$ is represented by a pair $(\oO_{\overline{X}} \stackrel{\xi}{\to}E)$
	for a generically surjective $\xi$, 
	the above condition is equivalent to that $E$ is
	pure. Therefore the identity for $P_n(X, \beta)$
	holds on $\pi_{\tT}^{-1}(y)$. 
\end{proof}

We have the derived open substack
\begin{align}\notag
\fT_n(S, \beta) \subset \fM_S^{\dag}(\beta, n)
\end{align}
whose classical truncation is $\tT_n(S, \beta)$. 
In the following proposition, we show the 
existence of a symmetric structure on $\fT_n(S, \beta)$
such that $l_P$ is compatible with it. 
\begin{prop}\label{prop:PTS}
	There exists a symmetric structure $\bS$ on 
	$\fT_n(S, \beta)$ such that 
	$l_I, l_P$ are $\bS$-generic 
	and $l_P$ is compatible with $\bS$. 
\end{prop}
\begin{proof}
	A closed point $x$ of the stack $T_n(S, \beta)$ corresponds to 
	a direct sum of pairs
	\begin{align*}
		(\oO_S \to F)=(\oO_S \twoheadrightarrow \oO_C) \oplus 
		\bigoplus_{i=1}^m V_i \otimes (0 \to \oO_{p_i}). 
	\end{align*}
	Here $C \subset S$ is a Cohen-Macaulay curve, 
	$p_1, \ldots, p_m \in S$ are distinct points and $V_i$ is a 
	finite dimensional vector space. 
	Then $\Aut(x)=\prod_{i=1}^m \GL(V_i)$. 
	From the description of the cotangent complex (\ref{perf:obs2}), 
	we have 
	\begin{align*}
		\mathbb{T}_{\fT_n(S, \beta)}|_{x}
		=\RHom\left(\oO_S(-C) \oplus \bigoplus_{i=1}^m (V_i \otimes \oO_{p_i})[-1], 
		\oO_C \oplus \bigoplus_{i=1}^m V_i \otimes \oO_{p_i} \right).
	\end{align*}
	By the above, 
	an easy computation shows that 
	\begin{align}\label{tan:sharp}
		&\hH^0(\mathbb{T}_{\fT_n(S, \beta)}|_{x})
		\oplus \hH^1(\mathbb{T}_{\fT_n(S, \beta)}|_{x})^{\vee} \\
		\notag&=\left(\bigoplus_{i=1}^m 
		\Ext_S^2(\oO_{p_i}, \oO_C)^{\vee} \oplus H^0(\oO_S(C)|_{p_i})\right)
		\otimes V_i 
		\oplus \left( \bigoplus_{i=1}^m \Ext_S^1(\oO_{p_i}, \oO_C)
		\right) \otimes V_i^{\vee} \\
		\notag& \quad \oplus \bigoplus_{i=1}^m \End(V_i)^{\oplus 3}
		\oplus H^0(\oO_C(C)) \oplus H^1(\oO_C(C))^{\vee}. 
	\end{align}
	We take the the decomposition $\bS_x \oplus \bU_x$ 
	of (\ref{tan:sharp})
	to be 
	\begin{align*}
		\bS_x&=\left(\bigoplus_{i=1}^m
		\Ext_S^2(\oO_{p_i}, \oO_C)^{\vee})\right)
		\otimes V_i 
		\oplus \left( \bigoplus_{i=1}^m \Ext_S^1(\oO_{p_i}, \oO_C)
		\right) \otimes V_i^{\vee} \\
		& \quad \oplus \bigoplus_{i=1}^m \End(V_i)^{\oplus 3}
		\oplus H^0(\oO_C(C)) \oplus H^1(\oO_C(C))^{\vee}, \\
		\bU_x&=\bigoplus_{i=1}^m 
		H^0(\oO_S(C)|_{p_i})
		\otimes V_i.
	\end{align*}
	As $C$ is Cohen-Macaulay
	we have $\Hom(\oO_{p_i}, \oO_C)=0$, 
	so  
	the Riemann-Roch theorem shows that 
	$\ext_S^1(\oO_{p_i}, \oO_C)=\ext_S^2(\oO_{p_i}, \oO_C)$.
	Therefore $\bS_x$ is a symmetric $\Aut(x)$-representation, 
	and the decomposition $\bS_x \oplus \bU_x$ of (\ref{tan:sharp})
	gives its symmetric structure. 
	
	Similarly to Remark~\ref{rmk:squiver}, 
	the $\Aut(x)$-representation (\ref{tan:sharp})
	is obtained as the space of $Q$-representations 
	with dimension vector $\{1, \{\dim V_i\}_{1\le i\le m}\}$, where 
	$Q$ is the Ext-quiver associated with 
	the collection 
	\begin{align*}
		\{I_{i(C)}, i_{\ast}\oO_{p_1}[-1], \ldots, i_{\ast}\oO_{p_m}[-1]\}.
	\end{align*} 
	Here $i \colon S \hookrightarrow X$ is the zero section. 
	Therefore from the proof of Proposition~\ref{prop:DTPT:line}, the 
	$(l_I)_x$-stability 
	on (\ref{tan:sharp}) is
	given by the determinant character
	$\chi_{0}$ in (\ref{det:char}), and 
	$(l_P)_x$ is given by $\chi_0^{-1}$. 
	By Lemma~\ref{lem:quiver:generic}, these characters are 
	$\bS_x$-generic, and $(l_P)_x$ is compatible with 
	$\bS_x$. 
		Therefore $l_I$, $l_P$ are $\bS$-generic, and $l_P$ is 
		compatible with $\bS$. 
\end{proof}

\subsubsection{Proof of categorical MNOP/PT correspondence}
In the case of reduced curve class, we have the following lemma: 
\begin{lem}\label{lem:red:PT}
	If $\beta$ is a reduced class, then 
	the inclusion (\ref{inc:sharp}) is the identity. 
	So we have 
	\begin{align}\label{id:TS=TX}
		\tT_n(X, \beta)=t_0(\Omega_{\fT_n(S, \beta)}[-1]). 
		\end{align}
\end{lem}
\begin{proof}
	By Lemma~\ref{lem:MNOP/PTwall}, a $\mathbb{C}$-valued point 
	in the right hand side of (\ref{inc:sharp})
	corresponds to a pair $(s \colon \oO_{\overline{X}} \to F)$
	such that $\Cok(s)$ is at most zero dimensional. 
	If $\beta$ is reduced, then $s$ is non-zero at each generic point of the support of $F$. 
	Therefore  
	$\pi_{\ast}s \colon \oO_S \to \pi_{\ast}F$ is non-zero at each 
	generic point of the support of $\pi_{\ast}F$. 
	Since the fundamental one cycle of $\pi_{\ast}F$ is reduced, this 
	implies that $\Cok(\pi_{\ast}s)$ is at most zero dimensional. 
\end{proof}
As an application of Corollary~\ref{cor:equivalence}, 
we give the following application to Conjecture~\ref{conj:DT/PT}:  
\begin{thm}\label{thm:catDTPT}
	Suppose that $\beta$ is a reduced class. 
	Then there exists a fully-faithful functor
	\begin{align*}
		\dDT^{\C}(P_n(X, \beta)) \hookrightarrow 
		\dDT^{\C}(I_n(X, \beta)). 
	\end{align*}
\end{thm}
\begin{proof}
	By Lemma~\ref{lem:red:PT} and Lemma~\ref{lem:replace0}, we can take 
a quasi-compact derived open substack in 
Definition~\ref{cat:DTPT}
 to be
	\begin{align*}
		\fM_S^{\dag}(\beta, n)_{\rm{qc}}=\fT_n(S, \beta). 
	\end{align*}
By (\ref{id:TS=TX}) and Proposition~\ref{prop:DTPT:line}, we have 
\begin{align*}
	I_n(X, \beta)=t_0(\Omega_{\fT_n(S, \beta)}[-1])^{l_I \sss}, \ 
		P_n(X, \beta)=t_0(\Omega_{\fT_n(S, \beta)}[-1])^{l_P \sss}. 
	\end{align*}
	As we mentioned before $\tT_n(S, \beta)$ admits a good moduli 
	space, and $\fT_n(S, \beta)$ satisfies formal neighborhood 
	theorem by Lemma~\ref{lem:PS:formal} (by applying it for $t=\infty$). 
	Then 
	by Proposition~\ref{prop:PTS}, 
	we can apply
	Corollary~\ref{cor:equivalence}
	to conclude the theorem. 
\end{proof}

\begin{rmk}\label{rmk:MNOP/PT}
	For a stable pair $(\oO_X \to E)$ on $X$, 
	if $E$ has a reduced support then it push-forwards to 
	a stable pair $(\oO_S \to \pi_{\ast}E)$ on $S$. 
	Therefore $P_n(X, \beta)=t_0(\Omega_{\fP_n(S, \beta)}[-1])$
	for a reduced class $\beta$, 
		where $\mathfrak{P}_n(S, \beta)$ is the derived 
	moduli space of stable pairs on $S$. 
Therefore 
	we have an equivalence by Lemma~\ref{lem:replace0}
	\begin{align*}
		\dDT^{\C}(P_n(X, \beta))
		\stackrel{\sim}{\to}
		\Dbc(\mathfrak{P}_n(S, \beta)).
	\end{align*}
	On the other hand, 
	a surjection $\oO_X \twoheadrightarrow F$ does not
	push-forward
	to a surjection $\oO_S \twoheadrightarrow \pi_{\ast}F$ in general, 
	even if
	the support of $F$ is reduced. 
	So for a reduced class $\beta$, the $\C$-equivariant MNOP category 
	$\dDT^{\C}(I_n(X, \beta))$ is not necessary 
	equivalent to $\Dbc(\mathfrak{I}_n(S, \beta))$, 
	where $\mathfrak{I}_n(S, \beta)$ is the derived moduli 
	space of closed subschemes in $S$.  
\end{rmk}

\section{Semiorthogonal decompositions via categorified Hall products}\label{sec:cat:hall}
In this section, we use categorified Hall products for 
DT categories to give an approach toward 
Conjectures~\ref{conj:DT/PT}, \ref{conj2}. 
The two dimensional categorified Hall algebras are introduced by 
Porta-Sala~\cite{PoSa} in order to categorify 
cohomological Hall algebras for surfaces introduced by Kapranov-Vasserot~\cite{KaVa2}. 
For a smooth projective surface $S$, 
the Porta-Sala categorified Hall product 
is defined by pull-back/push-forward 
diagram associated with the derived moduli stack $\fM_S^{\ext}(v_{\bullet})$
of short exact sequences
\begin{align*}
	0 \to F_1 \to F_3 \to F_2 \to 0, \ [F_i]=v_i.
	\end{align*}
It is equipped with evaluation morphisms 
$\ev_i \colon \fM_S^{\ext}(v_{\bullet}) \to \fM_S(v_i)$, 
so we have a correspondence diagram 
\begin{align*}
	\xymatrix{
\fM_S^{\ext}(v_{\bullet}) \ar[r]^-{\ev_3} 
\ar[d]_-{(\ev_1, \ev_2)} & \fM_S(v_3) \\
\fM_S(v_1) \times \fM_S(v_2).  &	
}
	\end{align*}
The Porta-Sala categorified Hall product is given by the functor 
\begin{align*}
	\ev_{3\ast}(\ev_1, \ev_2)^{\ast} \colon 
	\Dbc(\fM_S(v_1)) \times \Dbc(\fM_S(v_2))
	\to \Dbc(\fM_S(v_3)). 
	\end{align*}

In a separate paper~\cite{THtype}, we proved that 
Porta-Sala categorified Hall product is compatible with 
singular supports, so that it descends to 
the categorified Hall product on DT categories
for the local surface $X=\mathrm{Tot}_S(\omega_S)$
\begin{align*}
	\dDT^{\C}(\mM_X^{\sigma}(v_1)) \times \dDT^{\C}(\mM_X^{\sigma}(v_2))
	\to \dDT^{\C}(\mM_X^{\sigma}(v_3)).
	\end{align*}
A similar construction yields an action of 
DT categories for one or zero dimensional semistable sheaves on $X$
to those for stable D0-D2-D6 bound states on $X$.  
Here we explain these actions 
specializing to MNOP/PT wall. 
Let $\tT_n(X, \beta)$ be the moduli stack of 
semistable objects on the MNOP/PT wall, and $\mM_i(X)$ be the 
moduli stack of zero dimensional sheaves on $X$ with length $i$. 
Then we will see that there is a left/right action of DT categories for 
$\mM_i(X)$ to those for $\tT_n(X, \beta)$:
\begin{align*}
\bigoplus_{i\ge 0} \dDT^{\C}(\mM_i(X)) \acts
\bigoplus_{n \in \mathbb{Z}} \dDT^{\C}(\tT_n(X, \beta))  \lacts
\bigoplus_{i \ge 0} \dDT^{\C}(\mM_i(X)). 
	\end{align*}
The above actions descend to those on left/right actions on 
MNOP/PT categories, and in particular the right actions on PT categories 
are studied in~\cite{THtype} in detail. 
On the other hand, the above actions
do not descend to right/left actions on MNOP/PT categories, 
which we focus in this section. 

Our main conjecture in Conjecture~\ref{conj:SOD}
specialized to the MNOP/PT wall 
yields a conjectural semiorthogonal decomposition of $\dDT^{\C}(\tT_n(X, \beta))$
described in Theorem~\ref{conj:intro:dtpt}, which we will prove 
when $\beta$ is reduced.  
Although the proof works only for reduced $\beta$, 
this approach may give an ansatz toward Conjecture~\ref{conj:DT/PT}
for any curve class. In particular it gives another proof of Theorem~\ref{thm:catDTPT}. 
Our main conjecture in Conjecture~\ref{conj:SOD} extends 
the above construction to further wall-crossing of 
D0-D2-D6 bound states, which 
gives an ansatz toward Conjecture~\ref{conj2}. 
By proving the above conjecture, we will settle 
Conjecture~\ref{conj2} when $\beta$ is a reduced class
and $t>m(\beta)$ where $m(\beta)$ only depends on $\beta$. 

The organization of this section is as follows. 
In Section~\ref{subsec:catHall}, we recall categorified Hall products on 
surfaces and local surfaces. 
In Section~\ref{subsec:mainconj}, we formulate the main conjecture on 
semiorthogonal decompositions via categorified Hall products. 
In Section~\ref{subsec:proof}, we prove our conjectures under the assumption 
that the semistability on $X$ is preserved under push-forward to $S$
at the wall.

\subsection{Categorified Hall products}\label{subsec:catHall}
\subsubsection{Derived moduli stacks of extensions}
Let $S$ be a smooth projective surface. 
We define the derived moduli stack 
of exact sequences of coherent sheaves on $S$, 
following~\cite[Section~3]{PoSa}.
It is given by the derived Artin stack 
\begin{align}\notag
	\mathfrak{M}_S^{\rm{ext}} \colon dAff^{op} \to SSets
\end{align}
which sends an affine derived scheme 
$T$ to the $\infty$-groupoid of fiber 
sequences of 
perfect complexes on $T \times S$, 
\begin{align}\label{seq:F}
	\ffF_1 \to \ffF_3 \to \ffF_2
\end{align}
whose restrictions to $t_0(T) \times S$ are 
flat families of exact sequences of 
coherent sheaves on $S$ over $t_0(T)$. 
The classical truncation of $\fM_S^{\ext}$ is denoted by 
$\mM_S^{\ext} \cneq t_0(\fM_S^{\ext})$. 
We have the decompositions into open and closed substacks
\begin{align*}
	\fM_S^{\ext}=\coprod_{v_{\bullet}=(v_1, v_2)} 
	\fM_S^{\ext}(v_{\bullet}), \ 
	\mM_S^{\ext}=\coprod_{v_{\bullet}=(v_1, v_2)} \mM_S^{\ext}(v_{\bullet})
\end{align*}
where each component corresponds to 
exact sequences $0\to F_1 \to F_3 \to F_2 \to 0$
with $[F_i]=v_i$ for $i=1, 2$. 

By sending a sequence (\ref{seq:F}) to $\ffF_i$, 
we have the evaluation morphisms
\begin{align*}
	\mathrm{ev}_i \colon \mathfrak{M}_S^{\rm{ext}}(v_{\bullet})
	\to \mathfrak{M}_S(v_i), \ 1\le i\le 3 
\end{align*}
where we set $v_3=v_1+v_2$. 
Below we use the following diagram
\begin{align}\label{dia:extS}
	\xymatrix{
		\fM_S^{\ext}(v_{\bullet}) \ar[r]^-{\ev_3} \ar[d]_-{(\ev_1, \ev_2)} & \fM_S(v_3)
		\\
		\fM_S(v_1) \times \fM_S(v_2). & 
	}
\end{align}
The morphism $(\ev_1, \ev_2)$ is quasi-smooth (see~\cite[Corollary~3.10]{PoSa}), 
so we have the well-defined functor
\begin{align*}
	(\ev_1, \ev_2)^{\ast} \colon 
	\Dbc(\mathfrak{M}_S(v_1) \times \mathfrak{M}_S(v_2)) \to 
	\Dbc(\mathfrak{M}_S^{\ext}(v_{\bullet})). 
\end{align*}
The
horizontal morphism in (\ref{dia:extS})
is a proper morphism, so we have the well-defined functor
\begin{align*}
	\ev_{3\ast} \colon \Dbc(\mathfrak{M}_S^{\ext}(v_{\bullet})) \to 
	\Dbc(\mathfrak{M}_S(v_3)). 
\end{align*}
The 
Porta-Sala categorified Hall product is given by 
the composition~\cite{PoSa}
\begin{align}\label{cat:COHA}
	\Dbc(\mathfrak{M}_S(v_1)) \times 
	\Dbc(\mathfrak{M}_S(v_2)) 
	\stackrel{\boxtimes}{\to}
	\Dbc(\mathfrak{M}_S(v_1) \times \mathfrak{M}_S(v_2))
	\stackrel{\ev_{3\ast}(\ev_1, \ev_2)^{\ast}}{\longrightarrow}
	\Dbc(\mathfrak{M}_S(v_3))
\end{align}
which categorifies
cohomological Hall algebra on surfaces by Kapranov-Vasserot~\cite{KaVa2}.

Similarly we consider the 
classical Artin stack
of short exact sequences of 
compactly supported coherent sheaves on $X$.
It is given by the 2-functor
\begin{align*}
	\mM_X ^{\ext} \colon  
	Aff^{\rm{op}} \to Groupoid
\end{align*}
whose $T$-valued points for $T \in Aff$
form the groupoid of 
exact sequences of coherent sheaves on $X \times T$, 
\begin{align*}
	0 \to \eE_1 \to \eE_3 \to \eE_2 \to 0, \ 
	\eE_i \in \mM_X(T).
\end{align*}
We have the decomposition into open and closed substacks
\begin{align*}
	\mM_X^{\ext}=\coprod_{v_{\bullet}=(v_1, v_2)}
	\mM_X^{\ext}(v_{\bullet})
\end{align*}
where each component corresponds to 
exact sequences $0 \to E_1 \to E_3 \to E_2 \to 0$
with $[\pi_{\ast}E_i]=v_i$. 
We have the evaluation morphisms
\begin{align*}
	\ev_{i}^X \colon \mM_X^{\ext}(v_{\bullet}) \to \mM_X(v_i), \ 
	E_{\bullet} \mapsto E_i
\end{align*}
and obtain the diagram
\begin{align}\label{dia:extX}
	\xymatrix{
		\mM_X^{\ext}(v_{\bullet})
		\ar[r]^-{\ev_3^X} \ar[d]_-{(\ev_1^X, \ev_2^X)} & \mM_X(v_3) \\
		\mM_X(v_1) \times \mM_X(v_2). &
	}
\end{align}
We also have the morphism given by the push-forward along 
$\pi \colon X \to S$
\begin{align}\notag
	\pi_{\ast} \colon \mM_X^{\ext}(v_{\bullet})
	\to \mM_S^{\ext}(v_{\bullet}), \ 
	E_{\bullet} \mapsto \pi_{\ast}E_{\bullet}.
\end{align}
Here note that $\pi_{\ast}$ preserves the 
exact sequences of coherent sheaves 
as $\pi$ is affine.

\subsubsection{Categorified Hall algebras for local surfaces}
Below we fix an ample divisor $H$ on $S$. 
For $(\beta, n) \in N_{\le 1}(S)$, 
we use the following notation for moduli stacks of semistable sheaves
(see Section~\ref{subsec:catDTsurf})
\begin{align*}
	\mM_n^H(X, \beta) \cneq \mM_X^{\sigma=iH}(\beta, n), \ 
	\fM_n^H(S, \beta) \cneq \fM_S^{\sigma=iH}(\beta, n). 
	\end{align*}
For each 
$t \in \mathbb{Q} \cup \{\infty\}$, we set
\begin{align*}
	N_{\le 1}(S)_{t} \cneq 
	\{ v\in N_{\le 1}(S) : \mu_H(v)=t\}. 
\end{align*}
Here $\mu_H(v)=n/(H \cdot \beta)$ for $v=(\beta, n)$, which is $\infty$
if $H \cdot \beta=0$. 
The following result in~\cite{THtype}
shows that the categorified Hall products (\ref{cat:COHA})
descend to those on DT categories. 
\begin{thm}\label{thm:COHA}\emph{(\cite[Theorem~3.5]{THtype})}
	For $v_{\bullet}=(v_1, v_2) \in N_{\le 1}(S)_{t}^{\times 2}$
	with $v_3=v_1+v_2$, $v_i=(\beta_i, n_i)$, 
	the functor (\ref{cat:COHA}) descends to the functor
	\begin{align}\notag
		\mathcal{DT}^{\mathbb{C}^{\ast}}(\mM_{n_1}^H(X, \beta_1))
		\times 
		\mathcal{DT}^{\mathbb{C}^{\ast}}(\mM_{n_2}^H(X, \beta_2))
		\to \mathcal{DT}^{\mathbb{C}^{\ast}}(\mM_{n_3}^H(X, \beta_3)). 
	\end{align}
\end{thm}
\begin{proof}
	For the reader's convenience, we 
	 give an outline of the proof in~\cite[Theorem~3.5]{THtype}. 
	Let $\ev$ be the morphism 
	\begin{align*}
		\ev \cneq (\ev_1, \ev_2, \ev_3) \colon 
		\fM_S^{\ext} \to \fM_S(v_1) \times \fM_S(v_2) \times \fM_S(v_3). 
		\end{align*}
	Then we have the following diagram by (\ref{diagram:fN})
	\begin{align*}
		\xymatrix{
		t_0(\Omega_{\ev}[-2]) \ar[r] \ar[d] & t_0(\Omega_{\fM_S(v_3)}[-1]) \\
		t_0(\Omega_{\fM_S(v_1)}[-1])
		\times t_0(\Omega_{\fM_S(v_2)}[-1]). 
	}
		\end{align*}
	Then one can show (see~\cite[Proposition~3.1]{THtype}) that the above diagram is isomorphic 
	to the diagram (\ref{dia:extX}). 
	Moreover from the definition of $H$-stability, 
	the diagram (\ref{dia:extX}) restricts to the diagram
	\begin{align}\notag
		\xymatrix{
			(\ev_3^X)^{-1}(\mM_{n_3}^H(X, \beta_3))
			\ar[r]^-{\ev_3^X} \ar[d]_-{(\ev_1^X, \ev_2^X)} & \mM_{n_3}^H(X, \beta_3) \\
		\mM_{n_1}^H(X, \beta_1) \times \mM_{n_2}^H(X, \beta_2). &
		}
	\end{align}
	Therefore the claim follows from Proposition~\ref{prop:sendC}.
	\end{proof}

\subsubsection{Derived moduli stacks of extensions of pairs}
Let $\fM_S^{\dag}$ be the derived moduli stack of pairs 
in Subsection~\ref{subsec:pairs}. 
We define the derived stack 
$\fM_S^{\ext, \dag}$ by the 
Cartesian square
\begin{align*}
	\xymatrix{
		\fM_S^{\ext, \dag} \ar@{}[rd]|\square
		\ar[r] \ar[d]_-{(\ev_1^{\dag}, \ev_2^{\dag})} 
		& \fM_S^{\ext} \ar[d]^-{(\ev_1, \ev_2)} \\
		\fM_S^{\dag} \times \fM_S \ar[r]^-{(\rho^{\dag}, \id)} & 
		\fM_S \times \fM_S. 
	}
\end{align*}
For $T \in dAff$, the $T$-valued points of $\fM_S^{\dag}$ form the 
$\infty$-groupoid of diagrams
\begin{align}\label{diagram:dag}
	\xymatrix{
		\oO_{S \times T} \ar[d]_-{\xi} &  &  \\
		\ffF_1 \ar[r] & \ffF_3 \ar[r] & \ffF_2.
	}
\end{align}
Here the bottom sequence is a $T$-valued point of 
$\fM_S^{\ext}$. 
Let us take $v_{\bullet}=(v_1, v_2) \in N_{\le 1}(S)^{\times 2}$
and $v_3=v_1+v_2$. 
We have the open and closed derived substack
\begin{align*}
	\fM_S^{\ext, \dag}(v_{\bullet}) \subset \fM_S^{\ext, \dag}
\end{align*}
corresponding to the diagram
(\ref{diagram:dag}) such that 
the bottom sequence is a $T$-valued point of $\fM_S^{\ext}(v_{\bullet})$. 
By~\cite[Lemma~4.3]{THtype}, 
the derived stack $\fM_S^{\ext, \dag}(v_{\bullet})$ is quasi-smooth. 
We have the diagram
\begin{align}\label{dia:MSdag}
	\xymatrix{
		\fM_S^{\ext, \dag}(v_{\bullet}) \ar[r]^-{\ev_3^{\dag}} \ar[d]_-{(\ev_1^{\dag}, \ev_2^{\dag})}
		& \fM_S^{\dag}(v_3) \\
		\fM_S^{\dag}(v_1) \times \fM_S(v_2). 
	}
\end{align}
Here $\ev_3^{\dag}$ 
is obtained by sending 
a diagram (\ref{diagram:dag})
to the composition 
$\oO_{S \times T} \stackrel{\xi}{\to} \ffF_1 \to \ffF_3$. 
Note that the vertical arrow is quasi-smooth
and the horizontal arrow is proper (see~\cite[Lemma~4.4]{THtype}). 
Therefore 
the diagram (\ref{dia:MSdag}) 
induces the functor
\begin{align}\label{FM:extP}
	\ev_{3\ast}^{\dag}(\ev_1^{\dag}, \ev_2^{\dag})^{\ast}
	\colon \Dbc(\fM_S^{\dag}(v_1) \times \fM_S(v_2))
	\to \Dbc(\fM_S^{\dag}(v_3)). 
\end{align}

Similarly let $v_{\bullet}'=(v_2, v_1)$ and 
define the derived stack 
$\fM_S^{\ext, \ddag}(v_{\bullet}')$ by the 
Cartesian square
\begin{align*}
	\xymatrix{
		\fM_S^{\ext, \ddag}(v_{\bullet}') \ar[r]^-{\ev_3^{\ddag}} \ar[d] 
		\ar@{}[rd]|\square
		& \fM_S^{\dag}(v_3) \ar[d]^-{\rho^{\dag}} \\
		\fM_S^{\ext}(v_{\bullet}') \ar[r]^-{\ev_3} & \fM_S(v_3). 
	}
\end{align*}
For $T \in dAff$, the $T$-valued points of $\fM_S^{\dag}$ form the 
$\infty$-groupoid of diagrams
\begin{align}\label{diagram:ddag}
	\xymatrix{
		& \oO_{S \times T}\ar[d]^-{\xi}  &  \\
		\ffF_2 \ar[r] & \ffF_3 \ar[r] & \ffF_1.
	}
\end{align}
Here the bottom sequence is a $T$-valued point of 
$\fM_S^{\ext}(v_{\bullet}')$. 
We have the following diagram
\begin{align}\label{diagram:ddag2}
	\xymatrix{
		\fM_S^{\ext, \ddag}(v_{\bullet}) \ar[r]^{\ev_3^{\ddag}} 
		\ar[d]_{(\ev_2^{\ddag}, \ev_1^{\ddag})} & \fM_S^{\dag}(v_3) \\
		\fM_S(v_2) \times \fM_S^{\dag}(v_1). & 
	}
\end{align}
Here $\ev_2^{\ddag}$ sends a diagram (\ref{diagram:ddag}) to $\ffF_2$, 
and $\ev_1^{\ddag}$ sends a diagram (\ref{diagram:ddag})
to the composition 
$\oO_{S \times T} \stackrel{\xi}{\to} \ffF_3 \to \ffF_1$. 
Note that $\ev_3^{\ddag}$ is proper by definition. 
By~\cite[Lemma~7.1]{THtype}, 
the morphism $(\ev_2^{\ddag}, \ev_1^{\ddag})$ is quasi-smooth, 
in particular the derived stack 
$\fM_S^{\ext, \ddag}(v_{\bullet}')$ is quasi-smooth. 
Therefore the diagram (\ref{diagram:ddag2}) induces the functor
\begin{align}\label{FM:extP2}
	\ev_{3\ast}^{\ddag}(\ev_2^{\ddag}, \ev_1^{\ddag})^{\ast}
	\colon \Dbc(\fM_S(v_2) \times \fM_S^{\dag}(v_1))
	\to \Dbc(\fM_S^{\dag}(v_3)). 
\end{align}

\subsubsection{Moduli stacks of extensions in $\aA_X$}
Let $\mM_X^{\dag}$ be the moduli stack of D0-D2-D6 bound states 
in Subsection~\ref{subsec:moduliD026}. 
We
take $v_{\bullet}=(v_1, v_2) \in N_{\le 1}(S)^{\times 2}$
with $v_3=v_1+v_2$, 
and define the classical 
stack
\begin{align*}
	\mM_X^{\ext, \dag}(v_{\bullet}) \colon Aff^{op} \to Groupoid
\end{align*}
by sending $T \in Aff$ to the groupoid 
of distinguished triangles
\begin{align*}
	\eE_1 \stackrel{j}{\to} \eE_3 \to \eE_2[-1]
\end{align*}
where $(\eE_i, \lambda_i)$ for $i=1, 3$ are $T$-valued points of 
$\mM_X^{\dag}(v_i)$, 
$\eE_2$ is a $T$-valued point of $\mM_X(v_2)$
and $j$ commutes with trivializations $\lambda_i$ at $S_{\infty}$. 
We also have the evaluation morphisms
\begin{align}\label{dia:Xext:dag}
	\xymatrix{
		\mM_X^{\ext, \dag}(v_{\bullet}) 
		\ar[r]^-{\ev_3^{X, \dag}} \ar[d]_-{(\ev_1^{X, \dag}, \ev_2^{X, \dag})} 
		& \mM_X^{\dag}(v_3) \\
		\mM_X^{\dag}(v_1) \times \mM_X(v_2) & 
	}
\end{align}
where $\ev_i^{X, \dag}$ sends
$\eE_{\bullet}$ to $\eE_i$. 

For $v_{\bullet}'=(v_2, v_1)$, we 
also define the classical 
stack
\begin{align*}
	\mM_X^{\ext, \ddag}(v_{\bullet}') \colon Aff^{op} \to Groupoid
\end{align*}
by sending $T \in Aff$ to the groupoid 
of distinguished triangles
\begin{align*}
	\eE_2[-1] \to \eE_3 \stackrel{j'}{\to} \eE_1
\end{align*}
where $(\eE_i, \lambda_i)$ for $i=1, 3$ are $T$-valued points of 
$\mM_X^{\dag}(v_i)$, 
$\eE_2$ is a $T$-valued point of $\mM_X(v_2)$
and $j'$ commutes with trivializations $\lambda_i$ 
at $S_{\infty}$.  
We also have the evaluation morphisms
\begin{align}\label{dia:Xext:dagA}
	\xymatrix{
		\mM_X^{\ext, \ddag}(v_{\bullet}') 
		\ar[r]^-{\ev_3^{X, \ddag}} \ar[d]_-{(\ev_2^{X, \ddag}, \ev_1^{X, \ddag})} 
		& \mM_X^{\dag}(v_3) \\
		\mM_X(v_2) \times \mM_X^{\dag}(v_1) & 
	}
\end{align}
where $\ev_i^{X, \ddag}$ sends
$\eE_{\bullet}$ to $\eE_i$. 
For substacks $\zZ_1 \subset \mM_X^{\dag}(v_1)$ and 
$\zZ_2 \subset \mM_X(v_2)$, from the diagrams (\ref{dia:Xext:dag}), (\ref{dia:Xext:dagA})
we define 
\begin{align}\label{ast:X}
	&\zZ_1 \ast \zZ_2 \cneq \ev_3^{X, \dag}(\ev_1^{X, \dag}, \ev_2^{X, \dag})^{-1}(\zZ_1 \times \zZ_2), \\
	\notag
	&\zZ_2 \ast \zZ_1 \cneq \ev_3^{X, \ddag}(\ev_2^{X, \ddag}, \ev_1^{X, \ddag})^{-1}(\zZ_2 \times \zZ_1). 
\end{align}
Note that they are closed substacks in $\mM_X^{\dag}(v_3)$, $\mM_X^{\ddag}(v_3)$
if $\zZ_i$ are closed substacks, 
since $\ev_3^{X, \dag}$, $\ev_3^{X, \ddag}$ are proper. 

The above stacks of extensions in $\aA_X$
are realized as $(-2)$-shifted conormal stacks as follows. 
Let $\ev^{\dag}$, $\ev^{\ddag}$ be the following morphisms in the diagrams (\ref{dia:MSdag}), (\ref{diagram:ddag2})
\begin{align*}
	&\ev^{\dag}=(\ev_1^{\dag}, \ev_2^{\dag}, \ev_3^{\dag}) \colon 
	\fM_S^{\ext, \dag}(v_{\bullet}) \to 
	\fM_S^{\dag}(v_1) \times \fM_S(v_2) \times \fM_S^{\dag}(v_3), \\
	&\ev^{\ddag}=(\ev_1^{\ddag}, \ev_2^{\ddag}, \ev_3^{\ddag}) \colon 
	\fM_S^{\ext, \ddag}(v_{\bullet}) \to 
	\fM_S^{\dag}(v_1) \times \fM_S(v_2) \times \fM_S^{\dag}(v_3).
\end{align*}
Then similarly to Theorem~\ref{thm:COHA}, 
the diagrams (\ref{dia:Xext:dag}), (\ref{dia:Xext:dagA}) are isomorphic to the diagrams 
\begin{align*}
		&\xymatrix{
		t_0(\Omega_{\ev^{\dag}}[-2]) \ar[r] \ar[d] & t_0(\Omega_{\fM^{\dag}_S(v_3)}[-1]) \\
		t_0(\Omega_{\fM^{\dag}_S(v_1)}[-1])
		\times t_0(\Omega_{\fM_S(v_2)}[-1]), 
	}  \\
	&\xymatrix{
	t_0(\Omega_{\ev^{\ddag}}[-2]) \ar[r] \ar[d] & t_0(\Omega_{\fM^{\dag}_S(v_3)}[-1]) \\
	t_0(\Omega_{\fM_S(v_2)}[-1])
	\times t_0(\Omega_{\fM_S^{\dag}(v_1)}[-1]),
}
	\end{align*}
respectively (see~\cite[Proposition~4.7]{THtype}). 
Therefore similarly to Theorem~\ref{thm:COHA}, 
Proposition~\ref{prop:sendC} yields the following: 
\begin{prop}\label{prop:inducedF}
	\emph{(\cite[Corollary~4.8]{THtype})}
	Let 
	$\mM_X^{\dag}(v_i)^{\circ} \subset \mM_X(v_i)$
	for $i=1, 3$ and 
	$\mM_X(v_2)^{\circ} \subset \mM_X(v_2)$ be open 
	substacks of finite type. We regard them as open substacks 
	in $t_0(\Omega_{\fM_S^{\dag}(v_i)}[-1])$, 
	$t_0(\Omega_{\fM_S(v_2)}[-1])$ by 
	the isomorphisms (\ref{isom:dag}), (\ref{isom:etaM}) respectively. 
	
	(i) Suppose that the diagram (\ref{dia:Xext:dag})
	restricts to the diagram
	\begin{align}\label{dia:Xext:dag2}
		\xymatrix{
			(\ev_3^{X, \dag})^{-1}(\mM_X^{\dag}(v_3)^{\circ})
			\ar[r]^-{\ev_3^{X, \dag}} \ar[d]_-{(\ev_1^{X, \dag}, \ev_2^{X, \dag})} 
			& \mM_X^{\dag}(v_3)^{\circ} \\
			\mM_X^{\dag}(v_1)^{\circ} \times \mM_X(v_2)^{\circ}. & 
		}
	\end{align}
	Then the functor (\ref{FM:extP}) descends to the functor 
	\begin{align*}
		\dDT^{\C}(\mM_X^{\dag}(v_1)^{\circ})
		\otimes \dDT^{\C}(\mM_X(v_2)^{\circ}) \to \dDT^{\C}(\mM_X^{\dag}(v_3)^{\circ}). 
	\end{align*}
	(ii) Suppose that the diagram (\ref{dia:Xext:dagA})
	restricts to the diagram
	\begin{align}\notag
		\xymatrix{
			(\ev_3^{X, \ddag})^{-1}(\mM_X^{\ddag}(v_3)^{\circ})
			\ar[r]^-{\ev_3^{X, \ddag}} \ar[d]_-{(\ev_2^{X, \ddag}, \ev_1^{X, \ddag})} 
			& \mM_X^{\ddag}(v_3)^{\circ} \\
			\mM_X(v_2)^{\circ} \times \mM_X^{\dag}(v_1)^{\circ}. & 
		}
	\end{align}
	Then the functor (\ref{FM:extP2}) descends to the functor 
	\begin{align*}
		\dDT^{\C}(\mM_X(v_2)^{\circ})
		\otimes \dDT^{\C}(\mM_X^{\dag}(v_1)^{\circ}) \to \dDT^{\C}(\mM_X^{\dag}(v_3)^{\circ}). 
	\end{align*}
\end{prop}

The following is an analogue of Theorem~\ref{thm:COHA}
for moduli stacks of pairs, which is essentially 
proved in~\cite{THtype}. 
\begin{thm}\label{thm:PTfunct}
	We take $(v_1, v_2) \in N_{\le 1}(S)^{\times 2}$
	for $v_i=(\beta_i, n_i)$
	such that $v_2 \in N_{\le 1}(S)_t$. 
	
	(i) The functor (\ref{FM:extP}) 
	descends to the functor 
	\begin{align}\label{func:DTind}
		\dDT^{\C}(\pP_{n_1}^t(X, \beta_1)) \otimes 
		\dDT^{\C}(\mM^H_{n_2}(X, \beta_2)) 
		\to \dDT^{\C}(\pP_{n_1+n_2}^t(X, \beta_1+\beta_2)).
	\end{align}
	(ii) The functor (\ref{FM:extP2}) 
	descends to the functor 
	\begin{align}
		\label{funct:DTind2}
		\dDT^{\C}(\mM^H_{n_2}(X, \beta_2)) \otimes 
		\dDT^{\C}(\pP_{n_1}^t(X, \beta_1)) 
		\to \dDT^{\C}(\pP_{n_1+n_2}^t(X, \beta_1+\beta_2)).
	\end{align}
\end{thm}
\begin{proof}
	By 
	Lemma~\ref{lem:mutstab} below, 
	the diagrams (\ref{dia:Xext:dag}), 
	(\ref{dia:Xext:dag2}) restrict 
	to the diagrams  
	\begin{align}\label{dia:Xext:P}	
		\xymatrix{
			(\ev_3^{X, \dag})^{-1}(\pP_{n_1+n_2}^t(X, \beta_1+\beta_2))
			\ar[r]^-{\ev_3^{X, \dag}} \ar[d]_-{(\ev_1^{X, \dag}, \ev_2^{X, \dag})} 
			& \pP_{n_1+n_2}^t(X, \beta_1+\beta_2) \\
			\pP_{n_1}^t(X, \beta_1) \times \mM_{n_2}^H(X, \beta_2), & 
		} \\
		\notag
		\xymatrix{
			(\ev_3^{X, \ddag})^{-1}(\pP_{n_1+n_2}^t(X, \beta_1+\beta_2))
			\ar[r]^-{\ev_3^{X, \ddag}} \ar[d]_-{(\ev_2^{X, \ddag}, \ev_1^{X, \ddag})} 
			& \pP_{n_1+n_2}^t(X, \beta_1+\beta_2) \\
			\mM_{n_2}^H(X, \beta_2) \times	\pP_{n_1}^t(X, \beta_1). & 
		}
	\end{align}	
	Then we have the desired functors by Proposition~\ref{prop:inducedF}. 
\end{proof}

We have used the following lemma, which is obvious 
from the definition of $\mu_t^{\dag}$-stability
\begin{lem}\label{lem:mutstab}
	(i) Let $0 \to E_1 \to E_3 \to F_2[-1] \to 0$ be an exact sequence in 
	$\aA_X$ such that $\rank(E_1)=\rank(E_3)=1$ and 
	$F_2 \in \Coh_{\le 1}(X)$ satisfies $\mu_H(F_2)=t$. 
	Then $E_3$ is $\mu_t^{\dag}$-semistable if and only if 
	$E_1$ is $\mu_t^{\dag}$-semistable and $F_2$ is $\mu_H$-semistable. 
	
	(ii) 	Let $0 \to F_2[-1] \to E_3 \to E_1 \to 0$ be an exact sequence in 
	$\aA_X$ such that $\rank(E_1)=\rank(E_3)=1$ and 
	$F_2 \in \Coh_{\le 1}(X)$ satisfies $\mu_H(F_2)=t$. 
	Then $E_3$ is $\mu_t^{\dag}$-semistable if and only if 
	$E_1$ is $\mu_t^{\dag}$-semistable and $F_2$ is $\mu_H$-semistable. 
\end{lem}

\subsection{Conjectural semiorthogonal decompositions
via categorified Hall products}\label{subsec:mainconj}
\subsubsection{Stratifications of $\pP_n^t(X, \beta)$}
In the situation of Subsection~\ref{subsec:catDT:stableD026}, 
let us take $t \in \mathbb{R}_{>0} \cup \{\infty\}$.
Then for $t_{\pm}=t\pm \varepsilon$ for $0<\varepsilon \ll 1$, 
Conjectures~\ref{conj:DT/PT}, \ref{conj2} 
claim the existence of a fully-faithful 
functor 
	\begin{align*}
		\dDT^{\C}(P_{n}^{t_-}(X, \beta)) \hookrightarrow 
		\dDT^{\C}(P_n^{t_+}(X, \beta)). 
	\end{align*}
We refine the above conjecture 
using the categorified Hall products for 
DT categories. 
For $(\beta, n) \in N_{\le 1}(S)$ and $t \in \mathbb{R} \cup \{\infty\}$, 
we define $\mathbf{D}_{(\beta, n)}^t$ to be the 
set of decompositions of $(\beta, n)$
\begin{align*}
	\mathbf{D}_{(\beta, n)}^t \cneq \left\{(\beta, n)=(\beta_1, n_1)+(\beta_2, n_2) : (\beta_2, n_2) \in N_{\le 1}(S)_t, 
	\pP_{n_1}^{t}(X, \beta_1) \times \mM_{n_2}^H(X, \beta_2) \neq \emptyset \right\}.
\end{align*}
There is a partial order on 
$\mathbf{D}^t_{(\beta, n)}$ given by 
\begin{align}\label{porder}
	(\beta_1', n_1')+(\beta_2', n_2') > (\beta_1, n_1)+(\beta_2, n_2)
\end{align}
if there is a decomposition 
$(\beta_1, n_1)=(\beta_1'', n_1'')+(\beta_2'', n_2'')$ in 
$\mathbf{D}_{(\beta_1, n_1)}^t$ such that
\begin{align*}
	(\beta_1', n_1')=(\beta_1'', n_1''), \ 
	(\beta_2', n_2')=(\beta_2, n_2)+(\beta_2'', n_2''). 
\end{align*}
It defines the decomposition of $\mathbf{D}_{(\beta, n)}^t$
\begin{align*}
	\mathbf{D}_{(\beta, n)}^t=
	\mathbf{D}_{(\beta, n)}^{t, 1} \amalg
	\cdots 
	\amalg \mathbf{D}_{(\beta, n)}^{t, N}
\end{align*}
defined inductive so that
\begin{align*}
	\mathbf{D}_{(\beta, n)}^{t, i}
	\subset \mathbf{D}_{(\beta, n)}^t                                                                                                                                                                                                                                                        \setminus \bigcup_{j<i} \mathbf{D}_{(\beta, n)}^{t, j}
\end{align*}
is the set of maximal elements with respect to the partial order 
(\ref{porder}).
In the following, we set 
\begin{align*}
	\mathbf{D}_{(\beta, n)}^{t, \le l} \cneq 
	\mathbf{D}_{(\beta, n)}^{t, 1} \amalg
	\mathbf{D}_{(\beta, n)}^{t, 2} \amalg \cdots 
	\amalg \mathbf{D}_{(\beta, n)}^{t, l}. 
\end{align*}
For each $1\le l \le N$, we
define 
\begin{align*}
	\zZ^{t_{+}\us}_{l} &\cneq	\bigcup_{\begin{subarray}{c}(\beta, n)=(\beta_1, n_1)+(\beta_2, n_2) \\
			\in \mathbf{D}_{(\beta, n)}^{t, \le l}
	\end{subarray}}
	\pP_{n_1}^t(X, \beta_1) \ast \mM_{n_2}^H(X, \beta_2), \\
	\zZ^{t_-\us}_{l} &\cneq	\bigcup_{\begin{subarray}{c}(\beta, n)=(\beta_1, n_1)+(\beta_2, n_2) \\
			\in \mathbf{D}_{(\beta, n)}^{t, \le l}
	\end{subarray}}
	\mM_{n_2}^H(X, \beta_2) \ast
	\pP_{n_1}^t(X, \beta_1).	
\end{align*}
Here $\ast$ is defined in (\ref{ast:X}). 
Note that the substacks 
$\zZ^{t_{\pm}\us}_{l}$ are closed substacks 
in $\pP_n^t(X, \beta)$. 
We have the following lemma. 
\begin{lem}\label{lem:strata}
	We have the following identities
	\begin{align}\label{Z:disjoint}
		\zZ^{t_{+}\us}_{l}		&=\coprod_{\begin{subarray}{c}(\beta, n)=(\beta_1, n_1)+(\beta_2, n_2) \\
				\in \mathbf{D}_{(\beta, n)}^{t, \le l}
		\end{subarray}} \pP_{n_1}^{t_{+}}(X, \beta_1) \ast 
		\mM_{n_2}^H(X, \beta_2),
		\\
		\notag
		\zZ^{t_{-}\us}_{l}	&=\coprod_{\begin{subarray}{c}(\beta, n)=(\beta_1, n_1)+(\beta_2, n_2) \\
				\in \mathbf{D}_{(\beta, n)}^{t, \le l}
		\end{subarray}}\mM_{n_2}^H(X, \beta_2) \ast
		\pP_{n_1}^{t_{-}}(X, \beta_1).
	\end{align}
\end{lem}
\begin{proof}
	By the Harder-Narasimhan filtrations in 
	$\mu_{t_{\pm}}^{\dag}$-stability, 
	we have 
	\begin{align}\label{HN:strata}
		\pP_n^t(X, \beta)&=\coprod_{\begin{subarray}{c}(\beta, n)=(\beta_1, n_1)+(\beta_2, n_2) \\
				\in \mathbf{D}_{(\beta, n)}^{t}\end{subarray}}
		\pP_{n_1}^{t_{+}}(X, \beta_1) \ast \mM_{n_2}^H(X, \beta_2) \\
		\notag
		&=\coprod_{\begin{subarray}{c}(\beta, n)=(\beta_1, n_1)+(\beta_2, n_2) \\
				\in \mathbf{D}_{(\beta, n)}^{t}\end{subarray}}
		\mM_{n_2}^H(X, \beta_2) \ast
		\pP_{n_1}^{t_{-}}(X, \beta_1). 
	\end{align}
	In particular the RHS in (\ref{Z:disjoint}) are disjoint unions. 
	Since $\pP_{n}^{t_{\pm}}(X, \beta) \subset \pP_n^t(X, \beta)$, 
	the RHS in (\ref{Z:disjoint}) are contained in the LHS. 
	On the other hand applying (\ref{HN:strata}) for $(\beta_1, n_1)$, 
	we see that the LHS is contained in the RHS. 
\end{proof}

\begin{lem}\label{lem:exP}
	Let $(\beta, n)=(\beta_1, n_1)+(\beta_2, n_2)$ be a decomposition in 
	$\mathbf{D}_{(\beta, n)}^{t, l}$. 
	Then in the diagrams (\ref{dia:Xext:P}), we have 
	\begin{align}\label{ev-1:P}
		&(\ev_3^{X, \dag})^{-1}\left(\pP_{n}^t(X, \beta) \setminus \zZ_{l-1}^{t_{+}\us}\right)
		=(\ev_1^{X, \dag}, \ev_2^{X, \dag})^{-1}
		\left(\pP_{n_1}^{t_+}(X, \beta_1) \times \mM_{n_2}^H(X, \beta_2)
		\right), \\
		\notag
		&(\ev_3^{X, \ddag})^{-1}\left(\pP_{n}^t(X, \beta) \setminus \zZ_{l-1}^{t_{-}\us}\right)
		=(\ev_2^{X, \ddag}, \ev_1^{X, \ddag})^{-1}
		\left(\mM_{n_2}^H(X, \beta_2) \times \pP_{n_1}^{t_-}(X, \beta_1) 
		\right).
	\end{align}
\end{lem}
\begin{proof}
	We only prove the first identity. 
	Let us take an exact sequence 
	in $\aA_X$
	\begin{align}\label{exact:E1F2}
		0 \to E_1 \to E \to F_2[-1] \to 0
	\end{align}
	such that
	$\cl(E_1)=(1, \beta_1, n_1)$ and $F_2 \in \Coh_{\le 1}(X)$ with 
	$\cl(F_2[-1])=(0, \beta_2, n_2)$. 
	If $E$ corresponds to a point in 
	$\pP_n^t(X, \beta) \setminus \zZ^{t_{+}\us}_{l-1}$, then 
	$E_1$ is $\mu_{t_+}^{\dag}$-semistable. 
	Indeed if otherwise, there is an exact sequence 
	\begin{align}\label{exact:E1F2'}
		0 \to E_1'' \to E_1 \to F_2''[-1] \to 0
	\end{align}
	in $\aA_X$ such that $E_1''$ is $\mu_{t_+}^{\dag}$-semistable and 
	$0 \neq F_2'' \in \Coh_{\le 1}(X)$ 
	satisfies $\cl(F_2''[-1])=(0, \beta_2'', n_2'')$, where 
	$(\beta_2'', n_2'') \in N_{\le 1}(S)_t$. 
	Let $(\beta_1', n_1')=(\beta_1'', n_1'')$
	and $(\beta_2', n_2')=(\beta_2, n_2)+(\beta_2'', n_2'')$. 
	Then we have 
	\begin{align*}
		(\beta_1', n_1')+(\beta_2', n_2') >(\beta_1, n_1)+(\beta_2, n_2), 
	\end{align*}
	so the LHS is contained in $\mathbf{D}_{(\beta, n)}^{t, \le l-1}$. 
	By combining exact sequences (\ref{exact:E1F2}), (\ref{exact:E1F2'}), 
	we obtain the exact sequences in $\aA_X$ and $\Coh_{\le 1}(X)$
	\begin{align*}
		0 \to E_1' \to E \to F_2'[-1] \to 0, \ 
		0 \to F_2 \to F_2' \to F_2'' \to 0
	\end{align*}
	where $E_1'=E_1''$ so that $\cl(E_1')=(1, \beta_1', n_1')$ 
	and $\cl(F_2'[-1])=(0, \beta_2', n_2')$. 
	This contradicts to 
	the assumption that 
	$E$ does not correspond to a point in $\zZ^{t_{+}\us}_{l-1}$. 
	Therefore the LHS of (\ref{ev-1:P}) is contained in the RHS. 
	
	Conversely suppose that an exact sequence (\ref{exact:E1F2}) is given 
	such that $E_1$ is $\mu_{t_+}$-semistable and $F_2$ is $\mu_H$-semistable. 
	Suppose by contradiction that $E \in \zZ^{t_{+}\us}_{l-1}$. 
	Since (\ref{exact:E1F2}) is a Hadar-Narasimhan filtration in $\mu_{t_+}$-stability, 
	the uniqueness of Harder-Narasimhan filtration together with Lemma~\ref{lem:strata} imply that 
	$(\beta, n)=(\beta_1, n_1)+(\beta_2, n_2)$ is a decomposition in $\mathbf{D}_{(\beta, n)}^{t, \le l-1}$.
	This is a contradiction, so the RHS of (\ref{ev-1:P}) is contained in the LHS. 
\end{proof}

\subsubsection{Conjectures}

For each $1\le l \le N$, 
let $(\beta, n)=(\beta_1, n_1)+(\beta_2, n_2)$ be a decomposition in 
$\mathbf{D}_{(\beta, n)}^{t, l}$.
By Proposition~\ref{prop:inducedF}
and Lemma~\ref{lem:exP}, 
the functors (\ref{func:DTind}), (\ref{funct:DTind2}) descend to functors
\begin{align}\label{descend}
	&\ast \colon 
	\dDT^{\C}(\pP_{n_1}^{t_+}(X, \beta_1)) \otimes 
	\dDT^{\C}(\mM_{n_2}^H(X, \beta_2))
	\to \dDT^{\C}\left(\pP^t_{n}(X, \beta) \setminus \zZ^{t_{+}\us}_{l-1} \right), \\
	\notag	&\ast \colon \dDT^{\C}(\mM_{n_2}^H(X, \beta_2))
	\otimes \dDT^{\C}(\pP_{n_1}^{t_-}(X, \beta_1))
	\to \dDT^{\C}\left(\pP^t_{n}(X, \beta) \setminus \zZ^{t_{-}\us}_{l-1} \right). 
\end{align}

%	\begin{align}\notag
%		\xymatrix{
%			(\ev_3^{X, \dag})^{-1}\left(\pP_{n}^t(X, \beta) \setminus %\zZ^{t_{+}\us}_{l-1}\right)
%			\ar[r]^-{\ev_3^{X, \dag}} \ar[d]_-{(\ev_1^{X, \dag}, \ev_2^{X, %\dag})} 
%			& \pP_{n}^t(X, \beta)\setminus \zZ^{t_{+}\us}_{l-1} \\
%			\pP_{n_1}^{t_+}(X, \beta_1) \times \mM_{n_2}^H(X, \beta_2). & 
%		}.
%	\end{align}
%A similar argument also shows that the bottom diagram of (\ref{dia:Xext:P}) restricts to the diagram  
%\begin{align}
%		\notag
%		\xymatrix{
%			(\ev_3^{X, \ddag})^{-1}\left(\pP_{n}^t(X, \beta)\setminus %\zZ^{t_{-}\us}_{l-1} \right)
%			\ar[r]^-{\ev_3^{X, \ddag}} \ar[d]_-{(\ev_2^{X, \ddag}, \ev_1^{X, %\ddag})} 
%			& \pP_{n}^t(X, \beta)\setminus \zZ^{t_{-}\us}_{l-1} \\
%			\mM_{n_2}^H(X, \beta_2) \times	\pP_{n_1}^{t_-}(X, \beta_1). & 
%		}
%	\end{align}
By Remark~\ref{rmk:DTtwist}, we have the decomposition 
into $j$-twisted parts
\begin{align*}
	\dDT^{\C}(\mM_{n}^H(X, \beta))
	=\bigoplus_{j \in \mathbb{Z}}\dDT^{\C}(\mM_{n}^H(X, \beta))_j. 
	\end{align*}
We denote by $i_j$, $\mathrm{pr}_j$ the
inclusion, projection with respect to the above decomposition 
\begin{align*}
	\xymatrix{
		\dDT^{\C}(\mM_{n}^H(X, \beta))_j
\ar@<0.5ex>[r]^-{i_j} & \ar@<0.5ex>[l]^-{\mathrm{pr}_j}
	\dDT^{\C}(\mM_{n}^H(X, \beta)).
}
\end{align*}
We propose the following conjecture. 
\begin{conj}\label{conj:SOD:l}
	For each $1\le l \le N$
	let $\mathbf{d} \in \mathbf{D}_{(\beta, n)}^{t, l}$ be 
	a decomposition $(\beta, n)=(\beta_1, n_1)+(\beta_2, n_2)$, and 
	take $k^{\pm}(\mathbf{d}) \in \mathbb{R}$. 
	Then for  
	$j \in \mathbb{Z}$, the
	functors (\ref{descend}) composed with $i_j$
	\begin{align}\label{funct:rest}
		&\ast_j \cneq \ast \circ i_j \colon 
		\dDT^{\C}(\pP_{n_1}^{t_+}(X, \beta_1)) \otimes 
		\dDT^{\C}(\mM_{n_2}^H(X, \beta_2))_j
		\to \dDT^{\C}\left(\pP^t_{n}(X, \beta) \setminus \zZ^{t_{+}\us}_{l-1} \right), \\
		\notag
		&\ast_j \cneq \ast \circ i_j \colon \dDT^{\C}(\mM_{n_2}^H(X, \beta_2))_j
		\otimes \dDT^{\C}(\pP_{n_1}^{t_-}(X, \beta_1))
		\to \dDT^{\C}\left(\pP^t_{n}(X, \beta) \setminus \zZ^{t_{-}\us}_{l-1} \right)
	\end{align}
	are fully-faithful. 
	By setting $\Upsilon^{\pm, \mathbf{d}}_j$ to be the essential images of the above functors, 
	we have the semiorthogonal decompositions
	\begin{align}\label{conj:sodv}
		\dDT^{\C}\left(\pP^t_{n}(X, \beta) \setminus \zZ^{t_{+}\us}_{l-1} \right)
		&=\left\langle 	 \bigoplus_{\mathbf{d} \in \mathbf{D}_{(\beta, n)}^{t, l}}\Upsilon^{+, \mathbf{d}}_{>k^+(\mathbf{d})}, 
		\wW^{+}_{l, k^+}, 	\bigoplus_{\mathbf{d} \in \mathbf{D}_{(\beta, n)}^{t, l}}\Upsilon^{+, \mathbf{d}}_{ \le k^+(\mathbf{d})}
		\right\rangle, \\	
		\notag
		\dDT^{\C}\left(\pP^t_{n}(X, \beta) \setminus \zZ^{t_{-}\us}_{l-1} \right)
			&=\left\langle 	 \bigoplus_{\mathbf{d} \in \mathbf{D}_{(\beta, n)}^{t, l}}\Upsilon^{-, \mathbf{d}}_{<k^-(\mathbf{d})}, 
		\wW^{-}_{l, k^-}, 	\bigoplus_{\mathbf{d} \in \mathbf{D}_{(\beta, n)}^{t, l}}\Upsilon^{-, \mathbf{d}}_{ \ge k^-(\mathbf{d})}
		\right\rangle	  
	\end{align}
	together with semiorthogonal decompositions 
	\begin{align}\label{sod:Phi}
		&\Upsilon^{+, \mathbf{d}}_{>k^+(\mathbf{d})}
		=\langle \ldots, \Upsilon^{+, \mathbf{d}}_{\lfloor k^+(\mathbf{d}) \rfloor +2}, \Upsilon^{+, \mathbf{d}}_{\lfloor k^+(\mathbf{d}) \rfloor +1} \rangle, \ 
		\Upsilon^{+, \mathbf{d}}_{\le k^+(\mathbf{d})}
		=\langle \Upsilon^{+, \mathbf{d}}_{\lfloor k^+(\mathbf{d}) \rfloor}, \Upsilon^{+, \mathbf{d}}_{\lfloor k^+(\mathbf{d}) \rfloor -1}, \ldots \rangle, \\
		\notag	&\Upsilon^{-, \mathbf{d}}_{<k^-(\mathbf{d})}
		=\langle \ldots, \Upsilon^{-, \mathbf{d}}_{\lceil k^-(\mathbf{d}) \rceil -2}, \Upsilon^{-, \mathbf{d}}_{\lceil k^-(\mathbf{d}) \rceil -1} \rangle, \ 
		\Upsilon^-_{\ge k^-(\mathbf{d})}
		=\langle \Upsilon^{-, \mathbf{d}}_{\lceil k^-(\mathbf{d}) \rceil}, \Upsilon^{-, \mathbf{d}}_{\lceil k^-(\mathbf{d}) \rceil +1}, \ldots \rangle
	\end{align}
	such that the composition functors
	\begin{align}\label{conj:compose}
		\wW^{\pm}_{l, k^{\pm}} \hookrightarrow 
		\dDT^{\C}\left(\pP^t_{n}(X, \beta) \setminus \zZ^{t_{\pm}\us}_{l-1} \right)
		\twoheadrightarrow 
		\dDT^{\C}\left(\pP^t_{n}(X, \beta) \setminus \zZ^{t_{\pm}\us}_{l} \right)
	\end{align}
	are equivalences. 
\end{conj}

%It fits into the commutative diagram
%\begin{align*}
%\xymatrix{
%P_n^t(X, \beta) 
%\ar@<-0.3ex>@{^{(}->}[r]
%\ar[d]_-{\pi_{\ast}} & t_0(\Omega_{\fM_S^{\dag}(\beta, n)}[-1]) \ar[d] 
%\ar@<-0.3ex>@{^{(}->}[r] & \Omega_{\fM_S^{\dag}(\beta, n)}[-1] \ar[d] 
%\\
%\mM_S^{\dag}(\beta, n) \ar@{=}[r] & \mM_S^{\dag}(\beta, n)
%\ar@<-0.3ex>@{^{(}->}[r] & \fM_S^{\dag}(\beta, n). 
%} 
%\end{align*}

The following conjecture 
holds if Conjecture~\ref{conj:SOD:l} holds for all $1\le l \le N$.

\begin{conj}\label{conj:SOD}
	Let $k^{\pm} \colon \mathbf{D}_{(\beta, n)}^t \to \mathbb{R}$ be maps. 
	There exist semiorthogonal decompositions
	\begin{align*}
		\mathcal{DT}^{\C}(\pP_n^t(X, \beta))=
		\left\langle \bigoplus_{\mathbf{d} \in \mathbf{D}_{(\beta, n)}^{t, 1}}
		\Upsilon^{+, \mathbf{d}}_{>k^+(\mathbf{d})}, \bigoplus_{\mathbf{d} \in \mathbf{D}_{(\beta, n)}^{t, 2}}
		\Upsilon^{+, \mathbf{d}}_{>k^+(\mathbf{d})}, \ldots, \bigoplus_{\mathbf{d} \in \mathbf{D}_{(\beta, n)}^{t, N}}
		\Upsilon^{+, \mathbf{d}}_{>k^+(\mathbf{d})}, \wW^{+}_{k^+}, \right. \\
		\left.\bigoplus_{\mathbf{d} \in \mathbf{D}_{(\beta, n)}^{t, N}}
		\Upsilon^{+, \mathbf{d}}_{\le k^+(\mathbf{d})}, \bigoplus_{\mathbf{d} \in \mathbf{D}_{(\beta, n)}^{t, N-1}}
		\Upsilon^{+, \mathbf{d}}_{\le k^+(\mathbf{d})}, \ldots, \bigoplus_{\mathbf{d} \in \mathbf{D}_{(\beta, n)}^{t, 1}}
		\Upsilon^{+, \mathbf{d}}_{\le k^+(\mathbf{d})}
		\right\rangle \\
		=\left\langle \bigoplus_{\mathbf{d} \in \mathbf{D}_{(\beta, n)}^{t, 1}}
		\Upsilon^{-, \mathbf{d}}_{<k^-(\mathbf{d})}, \bigoplus_{\mathbf{d} \in \mathbf{D}_{(\beta, n)}^{t, 2}}
		\Upsilon^{-, \mathbf{d}}_{<k^-(\mathbf{d})}, \ldots, \bigoplus_{\mathbf{d} \in \mathbf{D}_{(\beta, n)}^{t, N}}
		\Upsilon^{-, \mathbf{d}}_{<k^-(\mathbf{d})},
		\wW^{-}_{k^-}, \right. \\
		\left.\bigoplus_{\mathbf{d} \in \mathbf{D}_{(\beta, n)}^{t, N}}
		\Upsilon^{-, \mathbf{d}}_{\ge k^-(\mathbf{d})}, \bigoplus_{\mathbf{d} \in \mathbf{D}_{(\beta, n)}^{t, N-1}}
		\Upsilon^{-, \mathbf{d}}_{\ge k^-(\mathbf{d})}, \ldots, \bigoplus_{\mathbf{d} \in \mathbf{D}_{(\beta, n)}^{t, 1}}
		\Upsilon^{-, \mathbf{d}}_{\ge k^-(\mathbf{d})}
		\right\rangle		
	\end{align*}
	with semiorthogonal decomposition as in (\ref{sod:Phi}) and equivalences induced by categorified 
	Hall products 
	\begin{align*}
		&\ast_j \colon \mathcal{DT}^{\C}(P_{n_1}^{t_+}(X, \beta_1))
		\otimes \mathcal{DT}^{\C}(\mM_{n_2}^H(X, \beta_2))_{j}
		\stackrel{\sim}{\to}
		\Upsilon^{+, \mathbf{d}}_{j}, \\
		&\ast_j \colon \mathcal{DT}^{\C}(\mM_{n_2}^H(X, \beta_2))_{j}
		\otimes \mathcal{DT}^{\C}(P_{n_1}^{t_-}(X, \beta_1)) \stackrel{\sim}{\to}
		\Upsilon^{-, \mathbf{d}}_{j}	
	\end{align*}
	such that the composition functors
	\begin{align}\notag
		\wW^{\pm}_{k^{\pm}}
		\hookrightarrow \mathcal{DT}^{\C}(\pP_n^t(X, \beta))
		\to \mathcal{DT}^{\C}(P_n^{t_{\pm}}(X, \beta))
	\end{align}
	are equivalences. 
\end{conj}

In addition to the above conjecture, we also propose 
the following conjecture
which in particular implies Conjecture~\ref{conj2}. 
\begin{conj}\label{conj:FF}
	In the situation of Conjecture~\ref{conj:SOD}, 
	we set 
	\begin{align}\label{kpm}
		k^+(\mathbf{d})=\frac{1}{2}(n_2+\beta_1 \beta_2), \ 
		k^-(\mathbf{d})=-\frac{1}{2}\beta_1 \beta_2		
	\end{align}
where $\mathbf{d}$ is a decomposition $(\beta, n)=(\beta_1, n_1)+(\beta_2, n_2)$. 
Then 
	we have 
	$\wW^{-}_{k^-} \subset \wW^{+}_{k^+}$. 
	In particular, we have a fully-faithful functor 
	\begin{align*}
		\mathcal{DT}^{\C}(P_n^{t_-}(X, \beta))
		\hookrightarrow 
		\mathcal{DT}^{\C}(P_n^{t_+}(X, \beta)).
	\end{align*}
\end{conj}

We say that $t \in \mathbb{R}$ is a \textit{simple wall}
if $N=1$. In this case, we also propose the 
description of semiorthogonal complements of the fully-faithful 
embedding in Conjecture~\ref{conj:FF}. 

\begin{conj}\label{conj:simple}
	If $t \in \mathbb{R}_{>0}$ is a simple wall, 
	there exists a semiorthogonal decomposition 
	\begin{align}\label{SOD:simple}
		&\mathcal{DT}^{\C}(P_n^{t_+}(X, \beta)) \\
		\notag	&=\left \langle  \bigoplus_{\mathbf{d} \in \mathbf{D}_{(\beta, n)}^{t, 1}}
		\Upsilon^{-, \mathbf{d}}_{[-\frac{1}{2}\beta_1 \beta_2-\frac{1}{2}n_2, 
			-\frac{1}{2}\beta_1 \beta_2 )}, \mathcal{DT}^{\C}(P_n^{t_-}(X, \beta)), 
		\bigoplus_{\mathbf{d} \in \mathbf{D}_{(\beta, n)}^{t, 1}}
		\Upsilon^{-, \mathbf{d}}_{[-\frac{1}{2}\beta_1 \beta_2, 
			-\frac{1}{2}\beta_1 \beta_2+\frac{1}{2}n_2 )}
		\right \rangle. 	
	\end{align}
	Here $\mathbf{d}$ is a decomposition $(\beta, n)=(\beta_1, n_1)+(\beta_2, n_2)$ 
	in $\mathbf{D}_{(\beta, n)}^{t, 1}$, and 
	$\Upsilon_{[a, b)}^{-, \mathbf{d}}$
	admits semiorthogonal decomposition 
	\begin{align*}
		\Upsilon_{[a, b)}^{-, \mathbf{d}}=\left\langle \Upsilon_{\lceil a \rceil}^{-, \mathbf{d}}, \Upsilon_{\lceil a \rceil +1}^{-, \mathbf{d}}, 
		\ldots, \Upsilon_{\lceil b \rceil -1}^{-, \mathbf{d}} \right\rangle. 
	\end{align*} 
\end{conj}

\begin{rmk}\label{rmk:t=infty}
	The above Conjecture~\ref{conj:SOD}, Conjecture~\ref{conj:FF}
	for $t=\infty$ implies the semiorthogonal 
	decomposition in Theorem~\ref{conj:intro:dtpt} in the introduction. 
	Indeed let $n(\beta) \in \mathbb{Z}$ be as in (\ref{nbeta}).
	Then $\mathbf{D}_{(\beta, n)}^{\infty, l}$ consists of one element
	\begin{align}\notag
		\mathbf{D}_{(\beta, n)}^{\infty, l}=
		\left\{ (\beta, n)=(\beta, n(\beta)+l-1)+(0, n-n(\beta)-l+1)  \right\}. 
	\end{align} 
	In particular $t=\infty$ is a simple wall if and only if $n=n(\beta)+1$. 
	Note that 
	\begin{align*}
		P_n(X, \beta)=I_n(X, \beta), \ 
		n \le n(\beta),
		\end{align*}
	indeed they are empty for $n<n(\beta)$ by the definition of $n(\beta)$. 
		For $t=\infty$ and $n=n(\beta)+1$, 
	the semiorthogonal decomposition (\ref{SOD:simple})
	is 
		\begin{align}\notag
		\dDT^{\C}(I_n(X, \beta))
		=\langle 	\dDT^{\C}(P_n(X, \beta)), \dDT^{\C}(\mM_1(X))_0 \otimes 
		\dDT^{\C}(P_{n-1}(X, \beta))  \rangle,
	\end{align}
	where $\dDT^{\C}(\mM_1(X))_0$ is equivalent to $\Dbc(S)$. 
\end{rmk}

\subsubsection{The formally local descriptions of $\pP_n^t(X, \beta)$}
Let $\sS_l^{t_{\pm}}$ be substacks in $\pP_n^t(X, \beta)$ 
defined by 
\begin{align*}
	\sS_l^{t_{\pm}} \cneq 
	\zZ^{t_{\pm}\us}_{l} \setminus 
	\zZ^{t_{\pm}\us}_{l-1} \subset \pP_n^t(X, \beta). 
\end{align*}
Then we have the stratifications 
\begin{align}\label{strata:Pt}
	\pP_n^t(X, \beta)=\sS_{1}^{t_{\pm}} \sqcup 
	\sS_2^{t_{\pm}} \sqcup \cdots \sqcup \sS_N^{t_{\pm}} \sqcup \pP_n^{t_{\pm}}(X, \beta).
\end{align}
Here we interpret the above stratification in terms of KN stratifications 
formally locally on good moduli spaces of $\pP_n^t(X, \beta)$. 

Let us consider the good moduli space morphism
\begin{align*}
	\pi_{\pP} \colon 
	\pP_n^t(X, \beta) \to P_n^t(X, \beta). 
\end{align*}
For a closed point $p \in P_n^t(X, \beta)$, we consider 
its formal fiber
\begin{align*}
	\xymatrix{
		\widehat{\pP}_n^t(X, \beta)_p \ar[r] \ar[d] \diasquare
		& \pP_n^t(X, \beta) \ar[d] \\
		\Spec \widehat{\oO}_{P_n(X, \beta), p} \ar[r] & 
		P_n^t(X, \beta). 
	}
\end{align*}
As in~\cite{MR3811778} (also see~\cite[Theorem~9.11]{Toddbir} and Section~\ref{subsec:formal}) we can describe the 
above formal fiber
in terms of Ext-quivers with super-potentials 
associated with polystable objects
as follows. 
A closed point $p$ is represented by a $\mu_t^{\dag}$-polystable object of the form
\begin{align}\label{eE:polystable}
	\eE=\eE_0 \oplus \bigoplus_{i=1}^k W_i \otimes E_i[-1]
\end{align}
where $\eE_0$ is a rank one $\mu_t^{\dag}$-stable object in $\aA_X$, 
$(E_1, \ldots, E_k)$ are mutually non-isomorphic $\mu_H$-stable sheaves
in $\Coh_{\le 1}(X)$ with $\mu_H(E_i)=t$ and $W_i$ is a finite dimensional vector space. 
Let $Q_{\eE}$ be the Ext-quiver
associated with the collection 
\begin{align*}
	(\eE_0, E_1[-1], \ldots, E_k[-1]), 
\end{align*}
i.e. 
the set of vertices is $\{0, 1, \ldots, k\}$
and the number of arrows from $i$ to $j$ is $\ext^1(E_i, E_j)$ where 
we have set $E_0=\eE_0[1]$. 
Let $Q_{\eE, 0} \subset Q_{\eE}$
be the full sub quiver as in Subsection~\ref{KN:quiver}
whose vertex set is $\{1, \ldots, k\}$, which is 
the Ext-quiver for the collection $(E_1, \ldots, E_k)$. 
Let $\vec{w}=(\dim W_i)_{1\le i\le k}$ be the dimension 
vector for $Q_{\eE, 0}$, and set 
$G_{\eE}=\prod_{i=1}^k \GL(W_i)$. 
As in Subsection~\ref{KN:quiver}, we 
have the moduli stack of $Q_{\eE}$-representations 
with dimension vector $(1, \vec{w})$, 
given by the quotient stack together with its good moduli space
\begin{align}\label{gmoduli:R}
	\left[ R^{\dag}(Q_{\eE, 0}, \vec{w})/G_{\eE}   \right] \to 
	R^{\dag}(Q_{\eE, 0}, \vec{w}) \ssslash G_{\eE}. 
\end{align}
Then 
by~\cite[Theorem~9.11]{Toddbir},
there is a $G_{\eE}$-equivariant function 
on the formal fiber of (\ref{gmoduli:R}) at $0$
\begin{align*}
	w_{\eE} \colon \widehat{R}^{\dag}(Q_{\eE, 0}, \vec{w})_0
	\to \mathbb{C}
\end{align*}
such that we have the following commutative isomorphisms
\begin{align}\label{PT:formal}
	\xymatrix{
		\left[ \mathrm{Crit}(w_{\eE})/G_{\eE} \right]	\ar[r]^-{\cong} 
		\ar[d] & \widehat{\pP}_n^t(X, \beta)_p \ar[d] \\
		\mathrm{Crit}(w_{\eE}) \ssslash G_{\eE}
		\ar[r]^-{\cong} &
		\Spec \widehat{\oO}_{P_n(X, \beta), p}.
	}
\end{align}
We have the following lemma: 

\begin{lem}\label{lem:Pstrata}
	The stratifications (\ref{strata:Pt})
	pulled back via the top arrow in (\ref{PT:formal}) is the 
	KN stratification on $R^{\dag}(Q_{\eE, 0}, \vec{w})$
	restricted to $\Crit(w_{\eE})$ 
	with respect to 
	characters $\chi_0^{\pm}$, where $\chi_0$ is the determinant 
	character 
	\begin{align}\notag
		\chi_0 \colon G_{\eE}=\prod_{i=1}^k \GL(W_i) \to \C, \ 
		(g_i)_{1\le i \le k} \mapsto \prod_{i=1}^k \det (g_i).
	\end{align}
\end{lem}
\begin{proof}
	We only prove the lemma for $t_+$. 
	We first note that, on the formal fiber at $p \in P_n^t(X, \beta)$
	we can restrict decompositions $(\beta, n)=(\beta_1, n_1)+(\beta_2, n_2)$
	in $\mathbf{D}_{(\beta, n)}^{t}$ such that $(\beta_2, n_2)$ is 
	of the form 
	\begin{align}\label{restrict:beta}
		(\beta_2, n_2)=\sum_{i=1}^k w_{i}' \cdot [\pi_{\ast}E_i], \ 
		0 \le w_i' \le \dim W_i. 
	\end{align}
	Therefore after discarding strata which 
	do not intersect with $\widehat{\pP}_n^t(X, \beta)_p$
	and renumbering, 
	the stratification (\ref{strata:Pt}) restricts to the filtration 
	\begin{align*}
		\widehat{\pP}_n^t(X, \beta)_p=\sS_{0, p}^{t_{+}} \sqcup 
		\sS_{1, p}^{t_{+}} \sqcup \cdots \sqcup \sS_{\lvert\vec{w} \rvert-1, p}^{t_{+}} 
		\sqcup \widehat{\pP}_n^{t_{+}}(X, \beta)_p.
	\end{align*}
	Here $\cup_{i\le l} \sS_{i, p}^{t_{\pm}}$ 
	correspond to $\mu_t^{\dag}$-semistable object $\eE'$ 
	which admits an exact sequence
	\begin{align*}
		0 \to \eE_1' \to \eE' \to E_2'[-1] \to 0, \ 
		E_2' \in \Coh_{\le 1}(X)
	\end{align*}
	where $(\beta_2, n_2)=[\pi_{\ast}E_2']$
	is of the form 
	(\ref{restrict:beta}) such that 
	$\sum_{i=1}^k w_{i}' \ge \lvert \vec{w} \rvert -l$. 
	%		Let $\mathbf{D}_{\vec{w}}$ be set of decompositions 
	%$\vec{w}=\vec{w}_1+\vec{w}_2$ with 
	%$\vec{w}_i \in \Gamma_{Q_{\eE}}$, 
	%$\vec{w}_1 \ge 0$ and $\vec{w}_2>0$.  
	%Here 
	%for $\vec{w}=(w_i)_{0\le i\le k}$, we 
	%write $\vec{w} \ge 0$ if all $w_i \ge 0$, 
	%and $\vec{w}>0$ if $\vec{w} \ge 0$ and $\vec{w} \neq 0$. 
	%		Similarly to $\mathbf{D}_{(\beta, n)}^t$, 
	%	there is a partial order on $\mathbf{D}_{\vec{w}}$
	%	given by 
	%	\begin{align*}
	%		\vec{w}_1'+\vec{w}_2'>\vec{w}_1+\vec{w}_2
	%		\end{align*}
	%%	in $\mathbf{D}_{\vec{w}_1}$ such that 
	%%	$\vec{w}_2'=\vec{w}_2+\vec{w}_2''$. 
	%	It defines the decomposition 
	%	\begin{align}
	%		\mathbf{D}_{\vec{w}}= \mathbf{D}_{\vec{w}}^0 \sqcup \cdots \sqcup \mathbf{D}_{\vec{w}}^{\lvert \vec{w} \rvert-1}
	%%	defined inductively so that 
	%$\mathbf{D}_{\vec{w}}^i$ is the set of maximal 
	%elements in $\mathbf{D}_{\vec{w}} \setminus \cup_{j<i} \mathbf{D}_{\vec{w}}^j$
	%	with respect to the above partial order. 
	%Then $\mathbf{D}_{\vec{w}}^i$ is the set of 
	%decompositions $\vec{w}=\vec{w}_1+\vec{w}_2$ with $\lvert \vec{w}_1 \rvert=i$. 
	%Let $S_k^{\pm} \subset R^{\dag}(Q_{\eE, 0}, \vec{w})$
	%be a strata given in Lemma~\ref{lem:KN}. 
	%Then $\cup_{i\le k}S_i^{\pm}$ consists of $Q_{\eE}$-representations $T$
	%of dimension vector $\vec{w}$ which admits an exact sequence of the form
	
	Under the top isomorphism (\ref{PT:formal}), 
	the above $\eE'$ corresponds to a $Q_{\eE}$-representation 
	$T$ which admits an exact sequence 
	\begin{align*}
		0 \to T_1 \to T \to T_2 \to 0
	\end{align*}
	such that $T_2$ is a $Q_{\eE, 0}$-representation 
	with $\lvert \mathbf{dim}(T_2) \rvert \ge \lvert \vec{w} \rvert -l$. 
	A $Q_{\eE}$-representation $T$ admits an exact sequence 
	as above if and only if it lies in 
	$\cup_{i\le l}S_{i}^+$, 
	where 
	$S_i^{+} \subset R^{\dag}(Q_{\eE, 0}, \vec{w})$
	is a KN strata with respect to $\chi_0$ given in Lemma~\ref{lem:KN}. 
	Therefore the lemma holds. 
\end{proof}

\subsection{Proofs of Conjectures~\ref{conj:SOD:l}, \ref{conj:SOD}, \ref{conj:FF}, \ref{conj:simple}}\label{subsec:proof}
In this section, we prove the conjectures in the previous section 
under Assumption~\ref{assum}. 
\subsubsection{Assumption}
Let us take $t \in \mathbb{R}_{>0} \cup \{\infty\}$ and
$(\beta, n) \in N_{\le 1}(S)$. 
Below we impose the following assumption: 
\begin{assum}\label{assum}
	For each decomposition $(\beta, n)=(\beta_1, n_1)+(\beta_2, n_2)$
	in $\mathbf{D}_{(\beta, n)}^{t}$, we assume that the morphisms
	(\ref{mor:p0dag}), (\ref{pi:ast})
	restrict to the morphisms
	\begin{align}\label{assum:push}
		\pi_{\ast}^{\dag} \colon 
		\pP_{n_1}^t(X, \beta_1) \to \pP_{n_1}^t(S, \beta_1), \ 
		\pi_{\ast} \colon \mM_{n_2}^{H\st}(X, \beta_2) \to 
		\mM_{n_2}^{H\st}(S, \beta_2). 
	\end{align}
\end{assum}
By Lemma~\ref{lem:shiftP} and Lemma~\ref{lem:open:Sss}, the 
above assumption implies that 
\begin{align*}
	\pP_{n_1}^t(X, \beta)=t_0(\Omega_{\fP_{n_1}^t(S, \beta_1)}[-1]), \ 
	\mM_{n_2}^H(X, \beta_2)=t_0(\Omega_{\fM_{n_2}^H(S, \beta_2)}). 
\end{align*}
Therefore we have equivalences
\begin{align}\label{shift:MP}
	\dDT^{\C}(\pP_{n_1}^t(X, \beta_1))
	\stackrel{\sim}{\to}\Dbc(\fP_{n_1}^t(S, \beta_1)), \ 
	\dDT^{\C}(\mM_{n_2}^H(X, \beta_2)) \stackrel{\sim}{\to}
	\Dbc(\fM_{n_2}^{H}(S, \beta_2)). 
\end{align}
Moreover as we have open immersions $P_{n_1}^{t_{\pm}}(X, \beta_2) 
\subset \pP_{n_1}^t(X, \beta_1)$, we have 
\begin{align}\label{assume:DTcat}
	\dDT^{\C}(P_{n_1}^{t_{\pm}}(X, \beta_1)) \stackrel{\sim}{\to}
	\Dbc(\fP_{n_1}^t(S, \beta_1))/\cC_{\zZ^{t_{\pm}\us}},
\end{align}
where $\zZ^{t_{\pm}\us}$ is the complement of 
$P_{n_1}^{t_{\pm}}(X, \beta_1)$ in 
$\pP_{n_1}^{t}(X, \beta_1)$.
\begin{rmk}
	In Assumption~\ref{assum}, we don't impose the 
	first condition in (\ref{assum:push}) for $t=t_{\pm}$, 
	so (\ref{assume:DTcat}) are not necessary equivalent to 
	$\dDT^{\C}(\fP_{n_1}^{t_{\pm}}(S, \beta_1))$. 
\end{rmk}
Note that the second condition in (\ref{assum:push}) is 
satisfied if $\beta$ is a reduced class by Lemma~\ref{lem:reduced}. 
We discuss the cases where the first condition in (\ref{assum:push})
is satisfied.  
We prepare the following two lemmas:  
\begin{lem}\label{lem:vanish}
	For each effective class $\beta \in \NS(S)$, 
	there exist $m(\beta) \in \mathbb{Q}$ such that 
	for any $H$-semistable sheaf $E \in \Coh_{\le 1}(X)$
	with $l(\pi_{\ast}E) \le \beta$ and $\mu_H(E)>m(\beta)$, we have 
	$H^{>0}(X, E)=0$. 	
\end{lem}
\begin{proof}
	For each $0\le i<\beta \cdot H$ and $k\in \mathbb{Z}$, we have the 
	isomorphism of stacks
	\begin{align*}
		\otimes \oO_X(k \pi^{\ast}H) \colon 
		\mM_i^H(X, \beta) \stackrel{\cong}{\to}
		\mM_{i+kH \cdot \beta}^H(X, \beta). 
	\end{align*}
	Since $\mM_i^H(X, \beta)$ is of finite type, 
	there exist $k(i) \in \mathbb{Z}$ such that 
	$H^{>0}(X, E \otimes \oO_X(k \pi^{\ast}H))=0$ for $k\ge k(i)$
	and $[E] \in \mM_i^H(X, \beta)$. 
	By setting 
	\begin{align*}
		m'(\beta) \cneq 
		\mathrm{max}
		\left\{ k(i)+\frac{i}{\beta \cdot H} : 0\le i < \beta \cdot H \right\}, \
		m(\beta) \cneq \mathrm{max}\left\{ m'(\beta') : 0<\beta' \le \beta \right\}
	\end{align*}
	the lemma holds. 
\end{proof}

\begin{lem}\label{lem:push}
	For an effective reduced class $\beta \in \NS(S)$, 
	let $m(\beta) \in \mathbb{Q}$ be defined 
	in Lemma~\ref{lem:vanish}. 
	Then for $t \in (m(\beta), \infty ]$, 
	the morphism (\ref{mor:p0dag}) restricts to the morphism 
	\begin{align*}
		\pi_{\ast}^{\dag} \colon	\pP_n^t(X, \beta) \to \pP_n^t(S, \beta). 
	\end{align*}
\end{lem}
\begin{proof}
	Let $\eE \in \aA_X$ be a rank one $\mu_t^{\dag}$-semistable object
	with $\cl(\eE)=(1, \beta, n)$. 
	Then we have the exact sequence 
	\begin{align*}
		0 \to \hH^0(\eE) \to \eE \to \hH^1(\eE)[-1] \to 0
	\end{align*}
	in $\aA_X$, where $\hH^1(\eE) \in \Coh_{\le 1}(X)$. 
	The $\mu_t^{\dag}$-stability of $\eE$ implies that 
	any Harder-Narasimhan factor $T$ of $\hH^1(\eE)$ with respect to 
	$\mu_H$-stability  
	satisfies $\mu_H(T) \ge t$. 
	From the definition of $m(\beta)$, we have 
	$H^1(X, \hH^1(\eE))=0$. 
	On the other hand as $\hH^0(\eE)$ is a rank one torsion free
	sheaf, it is the ideal sheaf $I_C$ for a compactly supported 
	closed subscheme $C \subset X$ with $\dim C \le 1$. The composition
	\begin{align*}
		\hH^1(\eE)[-2] \to \hH^0(\eE)=I_C \hookrightarrow \oO_{\overline{X}}
	\end{align*}
	vanishes by the Serre duality and the vanishing 
	$H^1(X, \hH^1(\eE))=0$. 
	So the morphism $\hH^0(\eE) \hookrightarrow \oO_X$
	factors through $\eE \to \oO_{\overline{X}}$. By taking the cones, 
	we obtain exact sequences in $\aA_X$ and $\Coh_{\le 1}(X)$
	\begin{align*}
		0 \to F[-1] \to \eE \to \oO_{\overline{X}} \to 0, \ 
		0 \to \oO_C \to F \to \hH^1(\eE) \to 0.
	\end{align*}
	Therefore $\eE$ is isomorphic to the two term complex
	$(s \colon \oO_{\overline{X}} \to F)$, and the morphism 
	$\pi_{\ast}^{\dag}$ in (\ref{mor:p0dag}) 
	sends it to $(\pi_{\ast}s \colon \oO_S \to \pi_{\ast}F)$. 
	
    Below we show that $(\pi_{\ast}s \colon \oO_S \to \pi_{\ast}F)$ is $\mu_t^{\dag}$-semistable
    in $\aA_S$. 
    Namely for any exact sequence in $\aA_S$
    \begin{align*}
    	0 \to (W_1 \otimes \oO_S \to F_1) \to 
    	(\oO_S \to \pi_{\ast}F) \to (W_2 \otimes \oO_S \to F_2) \to 0
    	\end{align*}
    we show the inequality 
    \begin{align}\label{ineq:IAS}
    	\mu_t^{\dag}(I_1) \le t \le \mu_t^{\dag}(I_2), \ 
    	I_i \cneq (W_i \otimes \oO_S \to F_i). 
    	\end{align}
    Note that $(W_1, W_2)$ is either $(0, \mathbb{C})$ or $(\mathbb{C}, 0)$. 
    	By the $\mu_t^{\dag}$-stability of $\eE$,
	we see that any Harder-Narasimhan factor $F'$ of $F$
	with respect to $\mu_H$-stability 
	satisfies $\mu_H(F') \le t$.  
	As $\beta$ is reduced, $\pi_{\ast}F'$ is also $\mu_H$-semistable
	with $\mu_H(\pi_{\ast}F') \le t$ by Lemma~\ref{lem:reduced}.
	Therefore (\ref{ineq:IAS})
	holds when $(W_1, W_2)=(0, \mathbb{C})$. 
		Also by pushing forward the exact sequence 
	$\oO_{\overline{X}} \to F \to \Cok(s) \to 0$, we 
	obtain the exact sequence
	\begin{align*}
		\oO_{S} \to \pi_{\ast}F \to \pi_{\ast}\Cok(s) \to 0. 
		\end{align*}
	Therefore we have the surjection 
	$\Cok(\pi_{\ast}s) \twoheadrightarrow \pi_{\ast}\Cok(s)$. 
	We take the exact sequence 
	\begin{align*}
		0 \to Q \to \Cok(\pi_{\ast}s) \to \pi_{\ast}\Cok(s) \to 0.
	\end{align*}
Since $\beta$ is reduced, 
 both of $\pi_{\ast}\Cok(s)$ and $\Cok(\pi_{\ast}s)$ has the same 
one dimensional reduced supports, so 
$Q$ must be zero dimensional. 
	As $\Cok(s)=\hH^1(\eE)$, it follows that any 
	Harder-Narasimhan factor $T'$ of $\Cok(\pi_{\ast}s)$ with respect to $\mu_H$-stability 
	satisfies $\mu_H(T') \ge t$. 	
	Therefore (\ref{ineq:IAS})
	holds when $(W_1, W_2)=(\mathbb{C}, 0)$. 
\end{proof}

For example, Assumption~\ref{assum} holds in the following cases. 
\begin{exam}\label{exam1}
	Let $\beta$ be a reduced curve class 
	and take $t \in (m(\beta), \infty)$. 
	Then the assumption (\ref{assum}) is satisfied by Lemma~\ref{lem:push}. 
	In this case, the first condition (\ref{assum:push}) 
	also holds for $t=t_{\pm}$
	so we have equivalences
	\begin{align*}
		\dDT^{\C}(P_{n_1}^{t_{\pm}}(X, \beta)) \stackrel{\sim}{\to}
	\Dbc(\fP^{t_{\pm}}_{n_1}(S, \beta_1)).
	\end{align*}
\end{exam}

\begin{exam}\label{exam2}
	Let $\beta$ be a reduced curve class 
	and take $t=\infty$. 
	Then the assumption (\ref{assum}) is satisfied by Lemma~\ref{lem:push}. 
	In this case, the first condition 
	(\ref{assum:push}) is satisfied for $t=t_{-}$, but not satisfied 
	for $t=t_{+}$. So we have 
	\begin{align*}
		\dDT^{\C}(I_n(X, \beta)) \neq 
		\Dbc(\fI_n(S, \beta)), \ 
		\dDT^{\C}(P_n(X, \beta)) \stackrel{\sim}{\to}
		\Dbc(\fP_n(S, \beta))		
		\end{align*}
	where $\fI_n(S, \beta)$ is the derived Hilbert scheme as in Remark~\ref{rmk:MNOP/PT}. 
\end{exam}

\begin{exam}\label{exam3}
	Let $\beta$ be an irreducible curve class. 
	In this case, there is only one wall $t=n/(H\cdot \beta)$,  
	and the assumption (\ref{assum}) is satisfied at this wall. 
	In this case, the first condition (\ref{assum:push})
	is satisfied for $t=t_{+}$, but not satisfied for $t=t_{-}$. 
	So we have 
	\begin{align*}
		\dDT^{\C}(P_n^{t_+}(X, \beta))\stackrel{\sim}{\to}
		\Dbc(\fP_n(S, \beta)), \ 
		\dDT^{\C}(P_n^{t_-}(X, \beta)) \neq 
		\Dbc(\fP_n^{t_-}(S, \beta)). 
		\end{align*}
	Indeed we have $\fP_n^{t_-}(S, \beta)=\emptyset$, but 
	$\dDT^{\C}(P_n^{t_-}(X, \beta))$ is not necessary zero
	(see Section~\ref{subsec:duality}). 
\end{exam}

\subsubsection{The formal local descriptions of $\pP_n^t(X, \beta)$ over
	$P_n^t(S, \beta)$}
Below we suppose that Assumption~\ref{assum} holds. 
Then by Lemma~\ref{lem:shiftP},  
we have the following commutative diagram
\begin{align}\label{dia:PXS}
	\xymatrix{
		t_0(\Omega_{\fP_n^t(S, \beta)}[-1]) \ar[r]^-{\cong} \ar[d]
		& \pP_n^t(X, \beta) \ar[r] \ar[rd] \ar[d]_-{\pi_{\ast}^{\dag}} & P_n^t(X, \beta) 
		\ar[d]_-{\pi_{\ast}^{\dag}} \\
		\pP_n^t(S, \beta) \ar@{=}[r] & \pP_n^t(S, \beta) \ar[r] & P_n^t(S, \beta). 	
	}
\end{align}
Here the right horizontal arrows are good moduli space morphisms, and the right 
vertical arrow is the induced morphism on good moduli spaces. 
By Lemma~\ref{lem:PS:formal}, 
the derived stack $\fP_n^t(S, \beta)$ satisfies the formal neighborhood 
theorem. 
Together with Lemma~\ref{lem:analytic}, we have the 
formal local description of $\fP_n^t(S, \beta)$
over the good moduli space as we discuss below. 

Let us take a closed point $p \in P_n^t(S, \beta)$, which corresponds to a 
$\mu_t^{\dag}$-polystable object in $\aA_S$
\begin{align}\label{pstable}
	I=(\oO_S \to F)=	(\oO_S \to F_0) \oplus \bigoplus_{i=1}^k V_i \otimes (0 \to F_i)
\end{align}
where $V_i$ is a finite dimensional vector space,
$I_0=(\oO_S \to F_0)$ is $\mu_t^{\dag}$-stable 
and $F_1, \ldots, F_k$ are mutually non-isomorphic
$\mu_H$-stable sheaves in $\Coh_{\le 1}(S)$ with $\mu_H(F_i)=t$ for $1\le i\le k$. 
The automorphism group of $I$ is 
\begin{align*}
	G \cneq \mathrm{Aut}(I)=\prod_{i=1}^k \GL(V_i). 
\end{align*}
By (\ref{perf:obs2}), the 
tangent complex of $\mathfrak{P}_n^t(S, \beta)$ at $I$ is given by 
$\RHom(I, F)$, which is a 
complex of $G$-representations. 
The $G$-action is determined by the decomposition
\begin{align*}
	\RHom(I, F)=&
	\RHom(I_0, F_0) \oplus \bigoplus_{i=1}^k \RHom(I_0, F_i)\otimes V_i \\
	&\oplus \bigoplus_{i=1}^k \RHom(F_i, F_0)[1]\otimes V_i^{\vee} \oplus 
	\bigoplus_{i, j} \RHom(F_i, F_j)[1] \otimes \Hom(V_i, V_j). 
\end{align*}
and the natural actions of $G$ to $V_i$. 
Let $\widehat{\Hom(I, F)_0}$ be the formal fiber 
of $\Hom(I, F) \to \Hom(I, F)\ssslash G$ at $0$ (see Subsection~\ref{subsec:fib} for the notation 
of the formal fiber). 
Let $\kappa$ be a $G$-equivariant Kuranishi map
\begin{align}\label{K:kappa}
	\kappa \colon \widehat{\Hom(I, F)_0} \to \Hom^1(I, F)
\end{align} 
and $\widehat{\mathfrak{N}}_0$ its derived zero locus. 
Then the derived stack $\mathfrak{P}^t_n(S, \beta)$ along the 
good moduli space morphism $\pP^t_n(S, \beta) \to P_n^t(S, \beta)$
is equivalent to the quotient derived stack 
\begin{align*}
	[\widehat{\mathfrak{N}}_0/G] \hookrightarrow 
	[\widehat{\Hom(I, F)_0}/G].
\end{align*}

Let $w$ be the function 
\begin{align*}
	w \colon \widehat{\Hom(I, F)}_0 \oplus 
	\Hom^1(I, F)^{\vee} \to \mathbb{C}
\end{align*}
determined from $\kappa$ by the construction (\ref{def:w}). 
Then from the diagram (\ref{dia:PXS}), 
we see that 
\begin{align}\label{fiber:p}
	t_0\left(\Omega_{[\widehat{\fN}_0/G]}[-1] \right)=\left[\Crit(w)/G \right]
\end{align}
is isomorphic to the formal fiber of 
the morphism $\pP_n^t(X, \beta) \to P_n^t(S, \beta)$ at $p$ in the diagram 
(\ref{dia:PXS}). 
\begin{rmk}\label{rmk:IF}
	The $G$-representation $\Hom(I, F) \oplus \Hom^1(I, F)^{\vee}$ is explicitly written as 
	\begin{align}\notag
		&\Hom(I_0, F_0) \oplus \Hom^1(I_0, F_0)^{\vee}
		\oplus \bigoplus_{i=1}^k \left( \Hom(I_0, F_i) \oplus \Ext^2(F_i, F_0)^{\vee}  \right)
		\otimes V_i \\
		\notag&\oplus \bigoplus_{i=1}^k \left( \Hom^1(I_0, F_i)^{\vee} \oplus \Ext^1(F_i, F_0)  \right) \otimes 
		V_i^{\vee}
		\oplus \bigoplus_{i, j}
		\left(\Ext^1(F_i, F_j) \oplus \Ext^1(F_j, F_i)^{\vee}  \right) \otimes \Hom(V_i, V_j). 
	\end{align}	
	Similarly to Remark~\ref{rmk:squiver}, the above 
	$G$-representation is the 
	space of representations of the 
	Ext-quiver associated 
	with the collection 
	\begin{align*}
		\{(\oO_X \to i_{\ast}F_0), i_{\ast}F_1[-1], \ldots, i_{\ast}F_k[-1]\}
	\end{align*}
	on the 3-fold $X=\mathrm{Tot}_S(\omega_S)$, with dimension 
	vector $(1, \{\dim V_i\}_{1\le i\le k})$. 
	Here $i \colon S \hookrightarrow X$ is the zero section.  
\end{rmk}	
By restricting 
$P_n^{t_{\pm}}(X, \beta)$ and 
$\zZ_{l}^{t_{\pm}\us}$ to the formal fiber (\ref{fiber:p}), we 
have open/closed 
substacks
\begin{align*}
	\left[\Crit(w)^{t_{\pm} \sss}/G \right] \subset 
	\left[\Crit(w)/G \right]
	\supset	\widehat{\zZ}_l^{t_{\pm} \us}. 
\end{align*}
We also set $\widehat{\sS}_l^{t_{\pm}} \cneq 
\widehat{\zZ}_l^{t_{\pm} \us} \setminus \widehat{\zZ}_{l-1}^{t_{\pm} \us}$. 
\begin{lem}\label{lem:KNS}
	The stratifications
	\begin{align*}
		\left[\Crit(w)/G \right]=
		\widehat{\sS}_1^{t_{\pm}} \sqcup \widehat{\sS}_2^{t_{\pm}} \sqcup \cdots 
		\sqcup \widehat{\sS}_N^{t_{\pm}} \sqcup \left[ \Crit(w)^{t_{\pm}\sss}/G \right]	
	\end{align*}
	are KN stratifications 
	with respect to $\chi_0^{\pm} \colon G \to \C$, 
	where $\chi_0$ is the determinant character 
\begin{align*}
	\chi_0 \colon G=\prod_{i=1}^k \GL(V_i) \to \C, \ (g_i)_{1\le i\le k} \mapsto 
	\prod_{i=1}^k \det(g_i). 
	\end{align*}
\end{lem}
\begin{proof}
	A closed point of the stack 
	$[\Crit(w)/G]$ corresponds to an object $\eE$ of the form 
	(\ref{eE:polystable}) such
	that $\pi_{\ast}\eE$ is $S$-equivalent to (\ref{pstable}). 
	By the first condition in (\ref{assum:push}), the 
	object $\pi_{\ast}\eE_0$ is $\mu_t^{\dag}$-semistable, so 
	it is $S$-equivalent to the object in $\aA_S$ of the form
	\begin{align*}
		(\oO_S \to F_0') \oplus \bigoplus_{i=1}^k V_i' \otimes (0 \to F_i),
		\end{align*}
	where $I_0'=(\oO_S \to F_0')$ is $\mu_{t}^{\dag}$-stable. 
	The second condition in (\ref{assum:push}) implies that each 
	$\pi_{\ast}E_i$ is $\mu_H$-stable. 
	Therefore by comparing the multiplicity of $F_i$, we have 
	\begin{align*}
		V_i =W_i' \oplus \bigoplus_{\pi_{\ast}E_j \cong F_i}W_j. 
		\end{align*}
		The map 
		$\Aut(\eE) \stackrel{\pi_{\ast}}{\to} \Aut(\pi_{\ast}\eE) 
		\subset \Aut(I)$
		is the 
		inclusion $G_{\eE} \hookrightarrow G$ given by 
	\begin{align*}
		\prod \GL(W_i) \to \prod \GL(V_i), \ 
		(g_i) \mapsto \left(\id_{W_i'} \oplus \bigoplus_{\pi_{\ast}E_j \cong F_i}g_j \right). 
	\end{align*}
	Therefore 
	the composition
$G_{\eE} \hookrightarrow G \stackrel{\chi_0}{\to} \C$
		coincides with the determinant character for $G_{\eE}$
	in Lemma~\ref{lem:Pstrata}.  
	It follows that the claim holds for 
	each fiber of the map 
	$[\Crit(w)/G] \to \Crit(w)\ssslash G$
	by Lemma~\ref{lem:Pstrata}, so the lemma follows.  
\end{proof}

\subsubsection{The formal local descriptions of the stack of exact
	sequences}
Let $(\beta, n)=(\beta_1, n_1)+(\beta_2, n_2)$ be 
a decomposition in $\mathbf{D}_{(\beta, n)}^{t}$. 
For $v_i=(\beta_i, n_i)$, we have the open substacks
in the diagrams (\ref{dia:MSdag}), (\ref{diagram:ddag2})
\begin{align*}
	&\fM_{S}^{\ext, \dag, t}(v_{\bullet}) 
	\cneq (\ev_3^{\dag})^{-1}(\mathfrak{P}_n^t(S, \beta)) 
	\subset 	\fM_{S}^{\ext, \dag}(v_{\bullet}), \\
	&\fM_{S}^{\ext, \ddag, t}(v_{\bullet}) 
	\cneq (\ev_3^{\ddag})^{-1}(\mathfrak{P}_n^t(S, \beta)) 
	\subset 	\fM_{S}^{\ext, \ddag}(v_{\bullet}).
\end{align*}
Similarly to (\ref{dia:Xext:P}), 
the diagrams (\ref{dia:MSdag}), (\ref{diagram:ddag2}) restrict to the diagrams
\begin{align}\label{dia:MSdagP}
	\xymatrix{
		\fM_S^{\ext, \dag, t}(v_{\bullet}) \ar[r]^-{\ev_3^{\dag}} \ar[d]_-{(\ev_1^{\dag}, \ev_2^{\dag})}
		& \mathfrak{P}_n^t(S, \beta) \\
		\mathfrak{P}_{n_1}^t(S, \beta_1) \times \mathfrak{M}_{n_2}^H(S, \beta_2),   
	}
	\xymatrix{
		\fM_S^{\ext, \dag, t}(v_{\bullet}) \ar[r]^-{\ev_3^{\ddag}} \ar[d]_-{(\ev_2^{\ddag}, \ev_1^{\ddag})}
		& \mathfrak{P}_n^t(S, \beta) \\
		\mathfrak{M}_{n_2}^H(S, \beta_2) \times	\mathfrak{P}_{n_1}^t(S, \beta_1) .    
	}
\end{align}
By taking the classical truncations
and good moduli spaces, we obtain the following commutative 
diagrams
\begin{align}\label{dia:MSdagP2}
	\xymatrix{
		\mM_S^{\ext, \dag, t}(v_{\bullet}) \ar[r]^-{\ev_3^{\dag}} \ar[d]_-{(\ev_1^{\dag}, \ev_2^{\dag})}
		& \pP_n^t(S, \beta) \ar[dd] \\
		\pP_{n_1}^t(S, \beta_1) \times \mM_{n_2}^H(S, \beta_2) \ar[d]  & 
		\\
		P_{n_1}^t(S, \beta_1) \times M_{n_2}^H(S, \beta_2) \ar[r]^-{\oplus} & P_n^t(S, \beta), 
	}
	\xymatrix{
		\mM_S^{\ext, \ddag, t}(v_{\bullet}) \ar[r]^-{\ev_3^{\ddag}} \ar[d]_-{(\ev_2^{\ddag}, \ev_1^{\ddag})}
		& \pP_n^t(S, \beta) \ar[dd] \\
		\mM_{n_2}^H(S, \beta_2) \times	\pP_{n_1}^t(S, \beta_1) \ar[d]  & 
		\\
		M_{n_2}^H(S, \beta_2) \times	P_{n_1}^t(S, \beta_1) \ar[r]^-{\oplus} & P_n^t(S, \beta). 	
	}
\end{align}
Here the bottom arrows are given by taking the direct sums of polystable objects, and 
the left bottom vertical arrow, the right vertical arrows are good moduli space morphisms.

Any point $(p^{(1)}, p^{(2)})$ of the bottom arrows in (\ref{dia:MSdagP2}) at $p \in P_n^t(S, \beta)$
corresponds to a direct sum decomposition of the object $I$ in (\ref{pstable})
\begin{align}\label{decom:I}
	I=I_0 \oplus  \bigoplus_{i=1}^k V_i^{(1)} \otimes (0 \to F_i)
	\oplus \bigoplus_{i=1}^k V_i^{(2)} \otimes (0 \to F_i). 
\end{align}
Here $V_i=V_i^{(1)}\oplus V_i^{(2)}$ and 
\begin{align*}
	I^{(1)}=(\oO_S \to F^{(1)}) &\cneq I_0 \oplus  \bigoplus_{i=1}^k V_i^{(1)} \otimes (0 \to F_i)
	\in P_{n_1}^t(S, \beta_1), \\
	F^{(2)} & \cneq \bigoplus_{i=1}^k V_i^{(2)} \otimes F_i 
	\in M_{n_2}^H(S, \beta_2),
\end{align*}
such that $p^{(1)}$ corresponds to $I^{(1)}$ and $p^{(2)}$ corresponds to $F^{(2)}$. 
We have the decomposition
\begin{align*}
	\RHom(I, F)=
	\RHom(I^{(1)}, F^{(1)}) &\oplus \RHom(I^{(1)}, F^{(2)}) \\
	&\oplus \RHom(F^{(2)}, F^{(1)})[1] \oplus 
	\RHom(F^{(2)}, F^{(2)})[1]. 
\end{align*}
Let $G^{(1)}$, $G^{(2)}$ be 
\begin{align*}
	G^{(1)}=\Aut(I^{(1)})=
	\prod_{i=1}^k \GL(V_i^{(1)}), \ 
	G^{(2)}=\Aut(F^{(2)})=\prod_{i=1}^k \GL(V_i^{(2)}). 
\end{align*}
Let $T^{(ij)}$ and $O^{(ij)}$ be defined by 
\begin{align*}
	&T^{(11)}=\Hom(I^{(1)}, F^{(1)}), \ 
	T^{(12)}=\Hom(I^{(1)}, F^{(2)}), \\ 
	&T^{(21)}=\Ext^1(F^{(2)}, F^{(1)}), \ 
	T^{(22)}=\Ext^1(F^{(2)}, F^{(2)}), \\
	&O^{(11)}=\Hom^1(I^{(1)}, F^{(1)}), \ 
	O^{(12)}=\Hom^1(I^{(1)}, F^{(2)}), \\ 
	&O^{(21)}=\Ext^2(F^{(2)}, F^{(1)}), \ 
	O^{(22)}=\Ext^2(F^{(2)}, F^{(2)}).
\end{align*}
We also write $T^{(i)}=T^{(ii)}$, $O^{(i)}=O^{(ii)}$ for 
simplicity. 
We have $G^{(i)}$-equivariant Kuranishi maps for $i=1, 2$
\begin{align*}
	\kappa^{(i)} \colon 
	\widehat{T}^{(i)}_0 \to O^{(i)}. 
\end{align*}
Let 
$\widehat{\mathfrak{N}}_0^{(1)}$, $\widehat{\mathfrak{N}}_0^{(2)}$
be derived zero loci of 
$\kappa^{(1)}$, $\kappa^{(2)}$ respectively. 
By Lemma~\ref{lem:analytic}, 
the product of quotient derived stacks
\begin{align*}
	[\widehat{\mathfrak{N}}_0^{(1)}/G^{(1)}] \times 
	[\widehat{\mathfrak{N}}_0^{(2)}/G^{(2)}] 
\end{align*}
is equivalent to the 
derived stack 
$\mathfrak{P}_{n_1}^t(S, \beta_1) \times 
\mathfrak{M}_{n_2}^H(S, \beta_2)$ 
along the formal fiber of the 
left bottom arrow of (\ref{dia:MSdagP2}) at 
$(p^{(1)}, p^{(2)})$. 
We have the function 
\begin{align*}
	w^{(i)} \colon \widehat{T}^{(i)}_0 \oplus O^{(i)\vee} \to \mathbb{C}
\end{align*}
determined by $\kappa^{(i)}$ by the construction (\ref{def:w}). 

We next give formal local description of the stack of exact 
sequences. 

\begin{lem}\label{lem:That}
	We have the following 
	\begin{align}\label{subsp}
		&\widehat{\Hom(I, F)}_0 \cap 
		\left( \bigoplus_{i\ge j}T^{(ij)}\right)
		=\widehat{T}^{(1)}_0 \oplus 
		\widehat{T}^{(2)}_0 \oplus T^{(21)}, \\ 
		&\notag
		\widehat{\Hom(I, F)}_0 \cap 
		\left( \bigoplus_{i\le j}T^{(ij)}\right)
		=\widehat{T}^{(2)}_0 \oplus 
		\widehat{T}^{(1)}_0 \oplus T^{(12)}. 
	\end{align}
\end{lem}
\begin{proof}
	We only prove the first identity. 
	Let us consider the composition
	\begin{align}\label{compose1}
		\bigoplus_{i\ge j}T^{(ij)} \hookrightarrow
		\Hom(I, F) \twoheadrightarrow
		\Hom(I, F)\ssslash G. 
	\end{align}
	Since any point in the LHS is specialized to a point
	in $T^{(1)} \oplus T^{(2)}$ by the action of the one parameter 
	subgroup 
	\begin{align*}
		\C \to G^{(1)} \times G^{(2)} \subset G, \ 
		t \mapsto (\id, t^{-1}\id)
	\end{align*}
	the composition (\ref{compose1}) factors through 
	\begin{align}\label{compose2}
		\bigoplus_{i\ge j}T^{(ij)} 
		\twoheadrightarrow 
		\bigoplus_{i=1}^2 T^{(i)} 
		\to 
		\Hom(I, F)\ssslash G. 			
	\end{align}
	The right arrow further factors as 
	\begin{align}\label{compose3}
		\bigoplus_{i=1}^2 T^{(i)} 
		\twoheadrightarrow T^{(1)}\ssslash G^{(1)} \times 
		T^{(2)} \ssslash G^{(2)} \to 
		\Hom(I, F)\ssslash G
	\end{align}
	The right morphism is finite by~\cite[Lemma~2.1]{MeRe}, whose preimage 
	of $0$ is $(0, 0)$. 
	Therefore the formal fiber of the composition (\ref{compose3}) at $0$
	is $\widehat{T}^{(1)}_0 \oplus \widehat{T}^{(2)}_0$. 
	By taking the formal fiber of (\ref{compose2}) at $0$, 
	we conclude that the formal fiber of (\ref{compose1}) is 
	given by the RHS of (\ref{subsp}). 
\end{proof}

By Lemma~\ref{lem:That}
and the fact that the Kuranishi map $\kappa$ in (\ref{K:kappa})
is $G$-equivariant, 
it 
restricts to morphisms
\begin{align}\notag
	&	\kappa^{\ext} \colon 
	\widehat{T}^{(1)}_0
	\oplus \widehat{T}^{(2)}_0
	\oplus T^{(21)}	\to 
	\bigoplus_{i\ge j}
	O^{(ij)}, \\
	&\notag	{\kappa'}^{\ext} \colon 
	\widehat{T}^{(2)}_0
	\oplus \widehat{T}^{(1)}_0
	\oplus T^{(12)}	\to 
	\bigoplus_{i\le j}
	O^{(ij)}. 
	%	\widehat{\Hom(I^{(1)}, F^{(1)})}_0
	%&\oplus \widehat{\Ext^1(F^{(2)}, F^{(2)})}_0
	%\oplus \Ext^1(F^{(2)}, F^{(1)}) \\
	%\notag
	% &\to 
	%\Hom^1(I^{(1)}, F^{(1)}) \oplus \Ext^2(F^{(2)}, F^{(2)})
	%\oplus \Ext^2(F^{(2)}, F^{(1)}). 
\end{align}
Let $\widehat{\mathfrak{N}}_0^{\ext}$, $\widehat{\fN}_0^{'\ext}$
be derived zero locus of $\kappa^{\ext}$, ${\kappa'}^{\ext}$ respectively. 
We have closed immersions
\begin{align*}
	\widehat{\mathfrak{N}}_0^{\ext} \hookrightarrow 
	\widehat{\mathfrak{N}}_0, \ 
	\widehat{\mathfrak{N}}_0^{'\ext} \hookrightarrow 
	\widehat{\mathfrak{N}}_0
\end{align*}
which correspond to deformations of $I$ which preserve
subobjects $I^{(1)} \subset I$, $(0 \to F^{(2)}) \subset I$
respectively. 
The deformations preserving 
these subobjects are identified 
by the actions of 
the subgroups
\begin{align*}
	G^{\ext} \cneq 
	\{g \in G : g(I^{(1)}) \subset I^{(1)}\}, \ 
	{G'}^{\ext} \cneq 
	\{g \in G : g(0 \to F^{(2)}) \subset (0 \to F^{(2)})\}
	\end{align*}
respectively. 
They are explicitly written as 
\begin{align*}
	&G^{\ext}=
	\prod_{i=1}^k \GL(V_i^{(1)}) \times 
	\prod_{i=1}^k \GL(V_i^{(2)}) 
	\times \prod_{i=1}^k \Hom(V_i^{(2)}, V_i^{(1)}), \\
	&{G'}^{\ext}=
	\prod_{i=1}^k \GL(V_i^{(2)}) \times 
	\prod_{i=1}^k \GL(V_i^{(1)}) 
	\times \prod_{i=1}^k \Hom(V_i^{(1)}, V_i^{(2)}). 
\end{align*}
Therefore
the derived stacks $\fM_S^{\ext, \dag, t}(v_{\bullet})$, 
$\fM_S^{\ext, \ddag, t}(v_{\bullet})$ 
along the formal 
fiber of the composition of the left arrows in (\ref{dia:MSdagP2}) at $(p^{(1)}, p^{(2)})$
are equivalent to the derived stacks
$[\widehat{\mathfrak{N}}_0^{\ext}/G^{\ext}]$, 
$[\widehat{\mathfrak{N}}_0^{\ext}/{G'}^{\ext}]$
respectively. 

By the above arguments, the diagrams (\ref{dia:MSdagP})
along the formal fibers at $p \in P_n^t(S, \beta)$ are
equivalent to the following diagrams
\begin{align}\label{dia:ext:N}
	\xymatrix{
		[\widehat{\mathfrak{N}}_0^{\ext}/G^{\ext}]	
		\ar[r]^-{\ev_3^{\dag}} \ar[d]_-{(\ev_1^{\dag}, \ev_2^{\dag})}
		& 	[\widehat{\mathfrak{N}}_0/G] \\
		[\widehat{\mathfrak{N}}^{(1)}_0/G^{(1)}]
		\times [\widehat{\mathfrak{N}}^{(2)}_0/G^{(2)}], 
		& 
	}
	\xymatrix{
		[\widehat{\mathfrak{N}}_0^{'\ext}/{G'}^{\ext}]	
		\ar[r]^-{\ev_3^{\ddag}} \ar[d]_-{(\ev_2^{\ddag}, \ev_1^{\ddag})}
		& 	[\widehat{\mathfrak{N}}_0/G] \\
		[\widehat{\mathfrak{N}}^{(2)}_0/G^{(2)}]
		\times [\widehat{\mathfrak{N}}^{(1)}_0/G^{(1)}]. 
		& 
	}
\end{align}
Here the horizontal arrows are induced by the inclusions of (\ref{subsp})
into $\widehat{\Hom(I, F)}_0$, 
and the vertical arrows are induced by the projections
from $\oplus_{i\ge j}T^{(ij)}$, $\oplus_{i\le j}T^{(ij)}$ onto 
$T^{(1)} \oplus T^{(2)}$, $T^{(2)} \oplus T^{(1)}$ respectively. 
Let $w^{\ext}$ and ${w'}^{\ext}$ be the functions
\begin{align*}
	w^{\ext} \colon \widehat{T}^{(1)}_0 \oplus 
	\widehat{T}^{(2)}_0 \oplus T^{(21)} \oplus 
	\bigoplus_{i\ge j}O^{(ij)\vee} \to \mathbb{C}, \quad
	{w'}^{\ext} \colon \widehat{T}^{(1)}_0 \oplus 
	\widehat{T}^{(2)}_0 \oplus T^{(12)} \oplus 
	\bigoplus_{i\le j}O^{(ij)\vee} \to \mathbb{C}
\end{align*}
determined from $\kappa^{\ext}$, ${\kappa'}^{\ext}$
by the construction (\ref{def:w}).
We have the following commutative diagrams
\begin{align}\label{dia:wext}
	\xymatrix{
		&   \left[ (\widehat{\Hom(I, F)}_0 \oplus 
		\Hom^1(I, F)^{\vee}) /G  \right]
		\ar@/^130pt/[ddd]_-{w} \\		
		\left[\left(\widehat{T}^{(1)}_0 \oplus 
		\widehat{T}^{(2)}_0 \oplus T^{(21)} \oplus  \bigoplus
		_{i\le j} O^{(ij)\vee} \right)/G^{\ext} \right]	\ar@/_110pt/[dd]_-{q_1} 
		\ar[r]_-{r_1} \ar[d]_-{r_2} \ar[ru]^-{q_2}\diasquare & \left[\left(\widehat{T}^{(1)}_0 \oplus 
		\widehat{T}^{(2)}_0 \oplus T^{(21)} \oplus 
		\Hom^1(I, F)^{\vee} \right)/G^{\ext} \right] \ar[u]_-{f_2} \ar[d]^-{g_2}  \\
		\left[\left(\widehat{T}^{(1)}_0 \oplus 
		\widehat{T}^{(2)}_0 \oplus T^{(21)} \oplus
		O^{(1)\vee} \oplus O^{(2)\vee}
		\right)/G^{\ext} \right]	
		\ar[r]_-{g_1} \ar[d]_-{f_1} & \left[\left(\widehat{T}^{(1)}_0 \oplus 
		\widehat{T}^{(2)}_0 \oplus T^{(21)} \oplus 
		\bigoplus_{i\ge j}O^{(ij)\vee}
		\right)/G^{\ext} \right] \ar[d]^-{w^{\ext}} \\
		\left[(\widehat{T}^{(1)}_0 \oplus O^{(1)\vee})/G^{(1)}  \right]
		\times 
		\left[(\widehat{T}^{(2)}_0 \oplus O^{(2)\vee})/G^{(2)}  \right] 
		\ar[r]_-{w^{(1)}+w^{(2)}} 
		& \mathbb{C}. 
	} 
\end{align} 
\begin{align}\notag
	\xymatrix{
		&   \left[ (\widehat{\Hom(I, F)}_0 \oplus 
		\Hom^1(I, F)^{\vee}) /G  \right]
		\ar@/^130pt/[ddd]_-{w'} \\		
		\left[\left(\widehat{T}^{(2)}_0 \oplus 
		\widehat{T}^{(1)}_0 \oplus T^{(12)} \oplus  \bigoplus
		_{i\ge j} O^{(ij)\vee} \right)/{G'}^{\ext} \right]	\ar@/_110pt/[dd]_-{q_1'} 
		\ar[r]_-{r_1'} \ar[d]_-{r_2'} \ar[ru]^-{q_2'}\diasquare & \left[\left(\widehat{T}^{(2)}_0 \oplus 
		\widehat{T}^{(1)}_0 \oplus T^{(12)} \oplus 
		\Hom^1(I, F)^{\vee} \right)/{G'}^{\ext} \right] \ar[u]_-{f_2'} \ar[d]^-{g_2'}  \\
		\left[\left(\widehat{T}^{(2)}_0 \oplus 
		\widehat{T}^{(1)}_0 \oplus T^{(12)} \oplus
		O^{(2)\vee} \oplus O^{(1)\vee}
		\right)/{G'}^{\ext} \right]	
		\ar[r]_-{g_1'} \ar[d]_-{f_1'} & \left[\left(\widehat{T}^{(2)}_0 \oplus 
		\widehat{T}^{(1)}_0 \oplus T^{(12)} \oplus 
		\bigoplus_{i\le j}O^{(ij)\vee}
		\right)/{G'}^{\ext} \right] \ar[d]^-{{w'}^{\ext}} \\
		\left[(\widehat{T}^{(2)}_0 \oplus O^{(2)\vee})/G^{(2)}  \right] 
		\times 
		\left[(\widehat{T}^{(1)}_0 \oplus O^{(1)\vee})/G^{(1)}  \right]
		\ar[r]_-{w^{(2)}+w^{(1)}} 
		& \mathbb{C}. 
	} 
\end{align} 
Here $g_1, r_1, f_2, q_2, g_1', r_1', f_2', q_2'$ are induced by embeddings into direct summands, 
and $f_1, g_2, r_2, q_1, f_1', g_2', r_2', q_1'$ are induced by projections. 
In order to simplify the notation, we write
the above diagrams as 
\begin{align}\label{dia:wext:simple}
	\xymatrix{	
		\xX_6	\ar@/_20pt/[dd]_-{q_1} 
		\ar[r]_-{r_1} \ar[d]_-{r_2} \ar@/^20pt/[rr]^-{q_2}\diasquare & 
		\xX_2 \ar[r]_-{f_2} \ar[d]^-{g_2} & \xX_1 \ar[dd]^-{w}  \\
		\xX_4	
		\ar[r]_-{g_1} \ar[d]_-{f_1} & \xX_3 \ar[dr]^-{w^{\ext}} & \\
		\xX_5^{(1)} \times \xX_5^{(2)}
		\ar[rr]_-{w^{(1)}+w^{(2)}}  & 
		& \mathbb{C},
	} 
	\quad
	\xymatrix{	
		\xX_6'	\ar@/_20pt/[dd]_-{q_1'} 
		\ar[r]_-{r_1'} \ar[d]_-{r_2'} \ar@/^20pt/[rr]^-{q_2'}\diasquare & 
		\xX_2' \ar[r]_-{f_2'} \ar[d]^-{g_2'} & \xX_1 \ar[dd]^-{w}  \\
		\xX_4'	
		\ar[r]_-{g_1'} \ar[d]_-{f_1'} & \xX_3' \ar[dr]^-{{w'}^{\ext}} & \\
		\xX_5^{(2)} \times \xX_5^{(1)}
		\ar[rr]_-{w^{(2)}+w^{(1)}}  & 
		& \mathbb{C}.
	} 
\end{align} 
e.g. $\xX_1=[(\widehat{\Hom}(I, F)_0 \oplus \Hom^1(I, F)^{\vee})/G]$, etc. 
%We note that from Proposition~\ref{prop:-2} and (\ref{ev:-2}) that 
%the diagram 
%\begin{align*}
%	\xymatrix{
%		\left[
%\left( q_1^{-1}(\Crit(w^{(1)}+w^{(2)})) \cap (g_2 \circ %r_1)^{-1}(\Crit(w^{\ext})) \cap q_2^{-1}(\Crit(w))
%\right)/G^{\ext}
%\right]	\ar[r]^-{q_2} \ar[d]_-{q_1} & [\Crit(w)/G] \\
%[\Crit(w^{(1)})/G^{(1)}] \times [\Crit(w^{(2)})/G^{(2)}],
%}
%\end{align*}
%is isomorphic to the formal fibers of the diagram (\ref{dia:Xext:P}) at $p \in %P_n^t(S, \beta)$. 
We will also write $w_i \colon \xX_i \to \mathbb{C}$, $w_i' \colon \xX_i' \to \mathbb{C}$ for the induced 
functions on the above diagrams for $i=2, 4, 6$. 

Let $\lambda$ be the one parameter subgroup
\begin{align}\label{lambda}
	\lambda \colon \C \to G^{(1)} \times G^{(2)}, \ 
	t \mapsto (\id, t \cdot \id). 
\end{align}
Below we also regard $\lambda$ as one parameter subgroups 
for $G^{\ext}$, ${G'}^{\ext}$ and $G$ as 
they contain $G^{(1)} \times G^{(2)}$ as a subgroup. 
Note that the diagrams 
\begin{align}\label{dia:X}
	\xymatrix{
		\xX_6 \ar[r]^-{q_2} \ar[d]_-{q_1} & \xX_1 \\
		\xX_5^{(1)} \times \xX_5^{(2)}, &	
	}
	\quad 
	\xymatrix{
		\xX_6' \ar[r]^-{q_2'} \ar[d]_-{q_1'} & \xX_1 \\
		\xX_5^{(2)} \times \xX_5^{(1)} &	
	}
\end{align}
are identified with the diagram (\ref{dia:closure})
for $[Y/G]=\xX_1$ and $\lambda_{\alpha}=\lambda^{-1}, \lambda$
respectively. 
Let $\widehat{\ev}^{\dag}$, $\widehat{\ev}^{\ddag}$ be the morphisms 
from the diagrams (\ref{dia:ext:N}),
\begin{align*}
	\widehat{\ev}^{\dag} &\cneq
	 (\ev_1^{\dag}, \ev_2^{\dag}, \ev_3^{\dag}) \colon 
	[\widehat{\fN}_0^{\ext}/G^{\ext}] \to 
	[\widehat{\fN}_0^{(1)}/G^{(1)}] \times 
		[\widehat{\fN}_0^{(2)}/G^{(1)}] \times 
			[\widehat{\fN}_0/G], \\
			\widehat{\ev}^{\ddag} 
			&\cneq (\ev_1^{\ddag}, \ev_2^{\ddag}, \ev_3^{\ddag}) \colon 
			[\widehat{\fN}_0^{'\ext}/G^{\ext}] \to 
			[\widehat{\fN}_0^{(1)}/G^{(1)}] \times 
			[\widehat{\fN}_0^{(2)}/G^{(1)}] \times 
			[\widehat{\fN}_0/G]. 
	\end{align*} 
As in the proof of Proposition~\ref{prop:ind:R}
(see (\ref{ev:-2})), 
we have 
\begin{align*}
	&t_0(\Omega_{\widehat{\ev}^{\dag}}[-2])
	=(q_1^{-1}([\Crit(w^{(1)}+w^{(2)})/G^{(1)} \times G^{(2)}]) \cap 
	q_2^{-1}([\Crit(w)/G]))\times_{\xX_3} [\Crit(w^{\ext})/G^{\ext}], \\
	&t_0(\Omega_{\widehat{\ev}^{\ddag}}[-2])
	=({q_1'}^{-1}([\Crit(w^{(2)}+w^{(1)})/G^{(2)} \times G^{(1)}]) \cap 
	{q_2'}^{-1}([\Crit(w)/G])\times_{\xX_3'} [\Crit({w'}^{\ext})/{G'}^{\ext}).
	\end{align*}
In particular, 
the diagrams (\ref{dia:X}) restrict to diagrams 
	\begin{align}\label{dia:X2}
	&	\xymatrix{
			t_0(\Omega_{\widehat{\ev}^{\dag}}[-2]) \ar[r]^-{q_2} \ar[d]_-{q_1} & [\Crit(w)/G] \\
			[\Crit(w^{(1)})/G^{(1)}]\times [\Crit(w^{(2)})/G^{(2)}], &	
		} \\
	\notag	&		\xymatrix{
			t_0(\Omega_{\widehat{\ev}^{\ddag}}[-2])	 \ar[r]^-{q_2'} \ar[d]_-{q_1'} & [\Crit(w)/G] \\
			[\Crit(w^{(2)})/G^{(2)}]\times [\Crit(w^{(1)})/G^{(1)}],  
		}
	\end{align}
which give formal local descriptions of the diagrams (\ref{dia:Xext:P})
at $p\in P_n^t(S, \beta)$.

\subsubsection{Adjoint functors of categorified Hall products}
Let us consider the ind-completions of the functors (\ref{descend}) 
\begin{align}\label{descend:ind}
	&\ast \colon 
	\Ind \left(\dDT^{\C}(\pP_{n_1}^{t_+}(X, \beta_1)) \otimes 
	\dDT^{\C}(\mM_{n_2}^H(X, \beta_2)) \right)
	\to \Ind \dDT^{\C}\left(\pP^t_{n}(X, \beta) \setminus \zZ^{t_{+}\us}_{l-1} \right), \\
	\notag	&\ast \colon \Ind \left(\dDT^{\C}(\mM_{n_2}^H(X, \beta_2))
	\otimes \dDT^{\C}(\pP_{n_1}^{t_-}(X, \beta_1)) \right)
	\to \Ind \dDT^{\C}\left(\pP^t_{n}(X, \beta) \setminus \zZ^{t_{-}\us}_{l-1} \right). 
\end{align}

We show that they admit right adjoints under Assumption~\ref{assum}. 
\begin{lem}\label{lem:adjoint:ind}
	Under Assumption~\ref{assum}, 
	the functors (\ref{descend:ind}) admit
	right adjoints
	\begin{align*}
		&\ast^R \colon 
		\Ind \dDT^{\C}\left(\pP^t_{n}(X, \beta) \setminus \zZ^{t_{+}\us}_{l-1} \right) \to 
		\Ind\left(\dDT^{\C}(\pP_{n_1}^{t_+}(X, \beta_1)) \otimes 
		\dDT^{\C}(\mM_{n_2}^H(X, \beta_2))\right), \\
		\notag	&\ast^R \colon
		\Ind\dDT^{\C}\left(\pP^t_{n}(X, \beta) \setminus \zZ^{t_{-}\us}_{l-1} \right) \to 
		\Ind \left(\dDT^{\C}(\mM_{n_2}^H(X, \beta_2))
		\otimes \dDT^{\C}(\pP_{n_1}^{t_-}(X, \beta_1)) \right). 
	\end{align*}
\end{lem}
\begin{proof}
	We only prove the statement for $t_+$. 
	By the equivalences (\ref{shift:MP}), under the assumption the 
	functor (\ref{descend}) is a descendant 
	of the functor from the diagram (\ref{dia:MSdagP})
	\begin{align}\label{descendent:star}
		\ev_{3\ast}^{\dag}(\ev_1^{\dag}, \ev_2^{\dag})^{\ast}
		\colon \Dbc(\fP_{n_1}^t(S, \beta_1))
		\otimes \Dbc(\fM_{n_2}^H(S, \beta_2))
		\to \Dbc(\fP_{n}(S, \beta)). 
	\end{align}
	Its ind-completion admits a right adjoint functor (see Subsection~\ref{subsub:QCA})
	\begin{align}\notag
		(\ev_1^{\dag}, \ev_2^{\dag})_{\ast}^{\ind}(\ev_3^{\dag})^!
		\colon 
		\Ind \Dbc(\fP_n(S, \beta)) \to 
		\Ind \left(
		\Dbc(\fP_{n_1}^t(S, \beta_1))
		\otimes \Dbc(\fM_{n_2}^H(S, \beta_2)) \right). 
	\end{align}
	We see that the above functor restricts to the functor 
	\begin{align}\label{send:CZ}
		(\ev_1^{\dag}, \ev_2^{\dag})_{\ast}^{\ind}(\ev_3^{\dag})^! \colon 	
		\Ind \cC_{\zZ^{t_{+}\us}_{l-1}}
		\to \Ind \left( \cC_{\zZ^{t_{+}\us (1)} \times \mM_{n_2}^H(X, \beta_2)} \right). 
	\end{align}
Here we have set
\begin{align*}
	\zZ^{t_+\us (1)}=\pP_{n_1}^{t}(X, \beta_1)
	\setminus \pP_{n_1}^{t_+}(X, \beta_1).
	\end{align*}
	Indeed it is enough to show the above claim formally locally 
	on $P_n^t(S, \beta)$ in the diagram (\ref{dia:MSdagP2}), 
	i.e. the functor in the diagram (\ref{dia:ext:N})
	\begin{align}\label{formal:ev0}
		(\ev_1^{\dag}, \ev_2^{\dag})_{\ast}^{\ind}(\ev_3^{\dag})^! \colon 	
		\Ind \Dbc([\widehat{\fN}_0/G]) \to 
		\Ind \Dbc([\widehat{\fN}_0^{(1)}/G^{(1)}] \times [\widehat{\fN}_0^{(2)}/G^{(2)}])
	\end{align}
restricts to the functor 
	\begin{align}\label{formal:ev02}
	(\ev_1^{\dag}, \ev_2^{\dag})_{\ast}^{\ind}(\ev_3^{\dag})^! \colon 	
	\Ind \cC_{\widehat{\zZ}^{t_{+}\us}_{l-1}}
	\to \Ind \left( \cC_{\widehat{\zZ}^{t_{+}\us(1)} \times [\Crit(w^{(2)})/G^{(2)}]} \right). 
\end{align}
Here $\widehat{\zZ}^{t_+\us(1)} \subset [\Crit(w^{(1)})/G^{(1)}]$ is the 
pull-back of $\zZ^{t_+\us (1)}$ to $[\Crit(w^{(1)})/G^{(1)}]$. 
	By Lemma~\ref{lem:exP}, 
	we have the identity in the diagram (\ref{dia:X2})
	\begin{align*}
		q_1^{-1}(\widehat{\zZ}^{t_{+}\us (1)} \times [\Crit(w^{(2)})/G^{(2)}])=q_2^{-1}(\widehat{\zZ}^{t_{+}\us}_{l-1}). 
		\end{align*}
	Since the Cartesian diagram in (\ref{dia:wext}) is derived Cartesian 
	and $f_2$ is proper, 
	we can apply Proposition~\ref{prop:ind:R}
	to conclude that the functor (\ref{formal:ev0})
	restricts to the functor (\ref{formal:ev02}).  
	
	By Theorem~\ref{thm:compact},
	both sides in (\ref{send:CZ}) are compactly generated with 
	compact objects $\cC_{\zZ^{t_{+}\us}_{l-1}}$, 
	$\cC_{\zZ^{t_{+}\us (1)} \times \mM_{n_2}^H(X, \beta_2)}$
	respectively. 
	Therefore by  
	the equivalences in Proposition~\ref{lem:indDT}, the desired right 
	adjoint $\ast^R$ is obtained by taking the 
	Verdier quotients of both sides in (\ref{descendent:star}) by the subcategories 
	in (\ref{send:CZ}). 
\end{proof}

\begin{lem}\label{lem:radjoint}
	In the setting of Lemma~\ref{lem:adjoint:ind},
	the functors $\ast^R$ in Lemma~\ref{lem:adjoint:ind} composed with 
	the projection
	\begin{align*}
		\mathrm{pr}_j \colon 
		\Ind \dDT^{\C}(\mM_{n_2}^H(X, \beta_2)) \to 
		\Ind \dDT^{\C}(\mM_{n_2}^H(X, \beta_2))_{j}
	\end{align*} 
	restrict to the functors
	\begin{align}\notag
		&\ast^R_j =\mathrm{pr}_j \circ \ast^R 
		\colon \dDT^{\C}\left(\pP^t_{n}(X, \beta) \setminus \zZ^{t_{+}\us}_{l-1} \right) \to 
		\dDT^{\C}(\pP_{n_1}^{t_+}(X, \beta_1)) \otimes 
		\dDT^{\C}(\mM_{n_2}^H(X, \beta_2))_j, \\
		\notag
		&\ast^R_j =\mathrm{pr}_j \circ \ast^R \colon \dDT^{\C}\left(\pP^t_{n}(X, \beta) \setminus \zZ^{t_{-}\us}_{l-1} \right) \to
		\dDT^{\C}(\mM_{n_2}^H(X, \beta_2))_j
		\otimes \dDT^{\C}(\pP_{n_1}^{t_-}(X, \beta_1))
	\end{align}
	which give right adjoint functors of the functors (\ref{funct:rest}). 
\end{lem}
\begin{proof}
	We only prove the lemma for $t_+$. 
	For an object $(-)$ in the LHS, we need to prove 
	that the object $\ast^R_j(-) \cneq \mathrm{pr}_j \ast^R(-)$, which 
	is a priori an object in the ind-completion of the RHS by Lemma~\ref{lem:adjoint:ind}, 
	is indeed an object in the RHS. 
	It is enough to prove this formally locally on each point of $P_n^t(S, \beta)$
	in the diagram (\ref{dia:MSdagP2}). 
	Let us take a closed point $p\in P_n^t(S, \beta)$ corresponding 
	to the polystable object (\ref{pstable}).
	Then formally locally around $p$, the functor $\ast_j^R$ is a descendant of 
	the functor (\ref{formal:ev0}) composed with 
	the projection 
	\begin{align*}
		\mathrm{pr}_j \colon 
		\Ind\Dbc([\widehat{\fN}_0^{(1)}/G^{(1)}] \times [\widehat{\fN}_0^{(2)}/G^{(2)}]) \to
		\Ind\Dbc([\widehat{\fN}_0^{(1)}/G^{(1)}] \times [\widehat{\fN}_0^{(2)}/G^{(2)}])_{\lambda \mathchar`- \wt= j}.
		\end{align*}
		Here $\lambda$ is the one parameter subgroup (\ref{lambda}). 
	Therefore it is enough to show that the composition of (\ref{formal:ev0}) with the above 
	projection restricts to the functor
	\begin{align}\notag
		\mathrm{pr}_j(\ev_1^{\dag}, \ev_2^{\dag})_{\ast}^{\ind}(\ev_3^{\dag})^! \colon 	
		\Dbc([\widehat{\fN}_0/G]) \to 
		\Dbc([\widehat{\fN}_0^{(1)}/G^{(1)}] \times [\widehat{\fN}_0^{(2)}/G^{(2)}])_{\lambda \mathchar`- \wt= j}. 
	\end{align}

	Below we use the notation of the diagram (\ref{dia:wext:simple}). 
	By Lemma~\ref{lem:commute1} and Lemma~\ref{lem:commute2}, 
	we are reduced to showing that
	the composition of the functors
	\begin{align*}
	&\MF_{\qcoh}^{\C}(\xX_1, w) \stackrel{f_2^!}{\to} \MF_{\qcoh}^{\C}(\xX_2, w_2) \stackrel{g_{2\ast}}{\to}	
	\MF_{\qcoh}^{\C}(\xX_3, w^{\ext}) 
	\stackrel{g_1^{\ast}}{\to} \MF_{\qcoh}^{\C}(\xX_4, w_4) \\
	&\stackrel{f_{1\ast}}{\to} 
		\MF_{\qcoh}^{\C}(\xX_5^{(1)} \times \xX_5^{(2)}, 
	w^{(1)}+w^{(2)}  ) \stackrel{\mathrm{pr}_j}{\to}
		\MF_{\qcoh}^{\C}(\xX_5^{(1)} \times \xX_5^{(2)}, 
	w^{(1)}+w^{(2)})_{\lambda \mathchar`- \wt= j}	
		\end{align*}
	restricts to the functor
	\begin{align*}
		\MF_{\coh}^{\C}(\xX_1, w) \to \MF_{\coh}^{\C}(\xX_5^{(1)} \times \xX_5^{(2)}, 
		w^{(1)}+w^{(2)})_{\lambda \mathchar`- \wt= j}.
		\end{align*}
	By the derived base change, the above composition functor is equivalent to the 
	following composition 
	\begin{align*}
		&\MF_{\qcoh}^{\C}(\xX_1, w) \stackrel{f_2^!}{\to} \MF_{\qcoh}^{\C}(\xX_2, w_2) \stackrel{r_1^{\ast}}{\to}	
		\MF_{\qcoh}^{\C}(\xX_6, w_6) \\
		&\stackrel{q_{1\ast}}{\to} 
		\MF_{\qcoh}^{\C}(\xX_5^{(1)} \times \xX_5^{(2)}, 
		w^{(1)}+w^{(2)}  ) \stackrel{\mathrm{pr}_j}{\to}
		\MF_{\qcoh}^{\C}(\xX_5^{(1)} \times \xX_5^{(2)}, 
		w^{(1)}+w^{(2)})_{\lambda \mathchar`- \wt= j}. 	
	\end{align*}
	Since $f_2$ is a representable morphism of smooth stacks, 
	it is quasi-smooth and $f_2^!$ is given by 
	$f_2^!(-)=f_2^{\ast}(-) \otimes \omega_{f_2}$. 
	Therefore $r_1^{\ast}f_2^!$
	gives the functor 
	\begin{align*}
		r_1^{\ast} f_2^! \colon 
		\MF_{\coh}^{\C}(\xX_1, w) \to \MF_{\coh}^{\C}(\xX_6, w_6). 
		\end{align*}
		It is enough to show that the functor $\mathrm{pr}_j q_{1\ast}$
		gives the functor 
		\begin{align}\label{funct:pr}
		\mathrm{pr}_j q_{1\ast} \colon 
		\MF_{\coh}^{\C}(\xX_6, w_6) \to 
		\MF_{\coh}^{\C}(\xX_5^{(1)} \times \xX_5^{(2)}, w^{(1)}+w^{(2)})_{\lambda \mathchar`- \wt= j}.
	\end{align}
		The morphism $q_1$ factors as 
	\begin{align*}
		q_1 \colon \xX_6 \stackrel{q_1'}{\to} \xX_7 \cneq 
		\left[\left(\widehat{T}^{(1)}_0 \oplus
		\widehat{T}^{(2)}_0 \oplus 
		O^{(1)\vee} \oplus O^{(2)\vee}\right)/G^{\ext}
		\right] 
		\stackrel{q_1''}{\to} \xX_5^{(1)} \times \xX_5^{(2)}
	\end{align*}
	where $G^{\ext}$ acts on $(\widehat{T}^{(1)}_0 \oplus
	\widehat{T}^{(2)}_0 \oplus 
	O^{(1)\vee} \oplus O^{(2)\vee})$
	through the projection $G^{\ext} \twoheadrightarrow G^{(1)} \times G^{(2)}$.
	Since $T^{(21)} \oplus O^{(12)\vee}$ is of weight $-1$
	with respect to 
	$\lambda$, 
	by Lemma~\ref{lem:stack:push}
	the push-forward $q_{1\ast}'$ restricts to the functor 
	\begin{align*}
		q_{1\ast}' \colon 
		\MF_{\coh}^{\C}(\xX_6, w_6) \to 
		\MF_{\coh}^{\C}(\xX_7, w_7)_{\lambda \mathchar`- \rm{below}},
	\end{align*}
where $w_7=q_1''^{\ast}(w^{(1)}+w^{(2)})$. 
	Since 
	$q_{1\ast}''$ is a gerbe over a finite dimensional 
	linear space 
	$\oplus_{i}\Hom(V_i^{(2)}, V_i^{(1)})$, 
	the functor $q_{1\ast}''$ gives 
	\begin{align*}
		q_{1\ast}'' \colon 
		\MF_{\coh}^{\C}(\xX_7, w_7) \to 
		\MF_{\coh}^{\C}(\xX_5^{(1)} \times \xX_5^{(2)}, w^{(1)}+w^{(2)}). 
	\end{align*}
	Therefore $q_{1\ast}=q_{1\ast}'' \circ q_{1\ast}'$ 
	restricts to the functor 
	\begin{align*}
		q_{1\ast} \colon 
		\MF_{\coh}^{\C}(\xX_6, w_6) \to \MF_{\coh}^{\C}(\xX_5^{(1)} \times \xX_5^{(2)}, 
		w^{(1)}+w^{(2)})_{\lambda \mathchar`- \rm{below}},
	\end{align*}
	which concludes that $\mathrm{pr}_j q_{1\ast}$ gives
	the functor 
	 (\ref{funct:pr}).
\end{proof}

\begin{lem}\label{lem:radjoint2}
	Under Assumption~\ref{assum}, 
	the functors
	(\ref{funct:rest})
	are fully-faithful. 
\end{lem}
\begin{proof}
	We only prove the lemma for $t_+$. 
	By Lemma~\ref{lem:radjoint}, 
	it is enough to show that the natural transform
	\begin{align*}
		\id \to \ast_j^R \circ \ast_j
	\end{align*}
	is an isomorphism. 
	It is enough to this formally locally on $P_n^t(S, \beta)$. 
	So we are reduced to showing that the functor 
	\begin{align*}
		(\ev_3^{\dag})_{\ast} 
		(\ev_1^{\dag}, \ev_2^{\dag})^{\ast} \colon 
		\Dbc([\widehat{\fN}_0^{(1)} \times \widehat{\fN}_0^{(2)}/G^{(1)}
		\times G^{(2)}])_{\lambda \mathchar`- \wt= j}&/\cC_{\widehat{\zZ}_1^{t_+\us (1)} \times [\Crit(w^{(2)})/G^{(2)}]} \\
		&\to \Dbc([\widehat{\fN}_0/G])/\cC_{\widehat{\zZ}^{t_+\us}_{l-1}}
	\end{align*}
	from the diagram (\ref{dia:ext:N}) is fully-faithful.
	Let us consider the following composition functor
	\begin{align*}
		\MF_{\coh}^{\C}(\xX_5^{(1)} \times \xX_5^{(2)}, w^{(1)}+w^{(2)})_{\lambda \mathchar`- \wt= j} &\stackrel{f_1^{\ast}}{\to}
		\MF_{\coh}^{\C}(\xX_4, w_4) \stackrel{g_{1!}}{\to} \MF_{\coh}^{\C}(\xX_3, w^{\ext}) \\
		 &\stackrel{g_2^{\ast}}{\to}
		\MF_{\coh}^{\C}(\xX_2, w_2) \stackrel{f_{2\ast}}{\to}\MF_{\coh}^{\C}(\xX_1, w).
		\end{align*}
		By Lemma~\ref{lem:commute1} and Lemma~\ref{lem:commute3}, 
		it is enough to show that the descendant of the 
		above composition functor 
	\begin{align}\label{ff:mf:loc}
		f_{2\ast}g_2^{\ast}	g_{1 !} f_1^{\ast} \colon 
		\MF_{\coh}^{\C}((\xX_5^{(1)} \setminus \widehat{\zZ}^{t_+\us (1)}) \times 
		\xX_5^{(2)}, w^{(1)}+w^{(2)})_{\lambda \mathchar`- \wt= j}
		\to \MF_{\coh}^{\C}(\xX_1 \setminus \widehat{\zZ}^{t_+\us}_{l-1}, w)
	\end{align}
is fully-faithful. 
	Note that we have 
	\begin{align*}
		g_{1!}(-)=g_{1\ast}(- \otimes \det O^{(21)\vee}[-\dim O^{(21)}]). 
	\end{align*}
	Since the $G^{\ext}$-action on $O^{(21)}$ factors 
	through $G^{\ext} \twoheadrightarrow G^{(1)} \times G^{(2)}$, 
	the $G^{\ext}$-character 
	$\det O^{(21)\vee}$
	is written as $f_1^{\ast}\det O^{(21)\vee}$ where 
	we regard 
	$\det O^{(21)\vee}$ as a $(G^{(1)} \times G^{(2)})$-character. 
	Therefore we have 
	\begin{align*}
		f_{2\ast}g_2^{\ast}	g_{1 !} f_1^{\ast}(-)
		\cong q_{2\ast}q_1^{\ast}(- \otimes 
		\det O^{(21)\vee}[-\dim O^{(21)}]). 	
	\end{align*}
	Here the first functor is an equivalence 
	\begin{align}\label{otimes:O}
		\otimes \det O^{(21)\vee}[-\dim O^{(21)}] \colon 
		&\MF_{\coh}^{\C}((\xX_5^{(1)} \setminus \widehat{\zZ}^{t_+\us (1)}) \times 
		\xX_5^{(2)}, w^{(1)}+w^{(2)})_{\lambda \mathchar`- \wt= j} \\
		\notag
		&\stackrel{\sim}{\to}
		\MF_{\coh}^{\C}((\xX_5^{(1)} \setminus \widehat{\zZ}^{t_+\us (1)}) \times 
		\xX_5^{(2)}, w^{(1)}+w^{(2)})_{\lambda \mathchar`- \wt= j+\dim O^{(21)}}
	\end{align}
	By Lemma~\ref{lem:KNS}, the substack 
	$\widehat{\sS}_l^{t_{+}} \subset [\Crit(w)/G]\setminus 
	\widehat{\zZ}_{l-1}^{t_{+}\us}$ is a KN strata
	for the determinant character $\chi_0 \colon G \to \C$, 
	with 
	associated one parameter subgroup 
	$\lambda^{-1}$. 
	The center is given by 
	\begin{align*}
		\left[ \Crit(w)^{\lambda}/G^{\lambda} \right]
		\setminus \widehat{\zZ}_{l-1}^{t_+\us}=
		\left(\left[ \Crit(w^{(1)})/G^{(1)} \right] \setminus 
		\widehat{\zZ}^{t_+\us (1)}
		\right) \times [\Crit(w^{(2)})/G^{(2)}]
	\end{align*}
	by Lemma~\ref{lem:exP}. Moreover as we already mentioned, 
	the left diagram in (\ref{dia:X}) is identified with the diagram 
	(\ref{dia:closure})
	for $\lambda_{\alpha}=\lambda^{-1}$. 
	Therefore from the equivalence (\ref{equiv:bar}), 
	we conclude that 
	the functor 
	\begin{align}\label{funct:formal:loc}
		q_{2\ast}q^{\ast}_1 \colon 
		\MF_{\coh}^{\C}((\xX_5^{(1)} \setminus 
		\widehat{\zZ}^{t_+\us (1)}) \times \xX_5^{(2)}, 
		w^{(1)}+w^{(2)})_{\lambda \mathchar`- \wt= j+\dim O^{(21)}}
		\to \MF_{\coh}^{\C}(\xX_1 \setminus \widehat{\zZ}^{t_+\us}_{l-1})
	\end{align}
	is fully-faithful. Therefore the functor (\ref{ff:mf:loc}) is fully-faithful. 
\end{proof}
We denote by 
\begin{align*}
	\Upsilon_j^{\pm, \mathbf{d}} \subset \dDT^{\C}(\pP_n^t(X, \beta) \setminus \zZ_{l-1}^{t_{\pm}\us})
\end{align*}
the essential images of the fully-faithful functors (\ref{funct:rest}). 
\begin{lem}\label{lem:orthognoal}
	We have 
	\begin{align*}
		\Hom(\Upsilon_j^{+, \mathbf{d}}, \Upsilon_{j'}^{+, \mathbf{d}})=0 \ (j<j'), \ 
		\Hom(\Upsilon_j^{-, \mathbf{d}}, \Upsilon_{j'}^{-, \mathbf{d}})=0 \ (j>j'). 
	\end{align*}
\end{lem}
\begin{proof}
	We only prove the first case. 
	It is enough to show that 
	$\ast_j^R \circ \ast_{j'} \cong 0$ for $j<j'$. 
	As in the proof of Lemma~\ref{lem:radjoint}, it is enough to prove
	this formally locally on $P_n^t(S, \beta)$. 
	Note that the LHS of (\ref{funct:formal:loc})
	have weight $-j-\dim O^{(21)}$ with respect to $\lambda^{-1}$. 
	Therefore by Theorem~\ref{thm:window}, 
	the essential images of the functors (\ref{funct:formal:loc}) are 
	semiorthogonal for $j<j'$, so the lemma follows. 
\end{proof}

\begin{lem}\label{lem:ladjoint}
	The functors (\ref{funct:rest}) admit left adjoints 
	\begin{align*}
		&\ast_j^L \colon 
		 \dDT^{\C}\left(\pP^t_{n}(X, \beta) \setminus \zZ^{t_{+}\us}_{l-1} \right) \to 
		\left(\dDT^{\C}(\pP_{n_1}^{t_+}(X, \beta_1)) \otimes 
		\dDT^{\C}(\mM_{n_2}^H(X, \beta_2))\right), \\
		\notag	&\ast_j^L \colon
		\dDT^{\C}\left(\pP^t_{n}(X, \beta) \setminus \zZ^{t_{-}\us}_{l-1} \right) \to 
		\left(\dDT^{\C}(\mM_{n_2}^H(X, \beta_2))
		\otimes \dDT^{\C}(\pP_{n_1}^{t_-}(X, \beta_1)) \right). 
	\end{align*}	
\end{lem}
\begin{proof}
	We only prove the lemma for $t_+$. 
	In order to simplify the notation, we write the left diagram of (\ref{dia:MSdagP})
	as
	\begin{align*}
		\xymatrix{
			\fM_3 & \ar[l]_-{(\ev_1^{\dag}, \ev_2^{\dag})} 
			\fM_2 \ar[r]^-{\ev_3^{\dag}} & \fM_1. 
		}
	\end{align*}
	We denote by $\mathbb{D}_i$ the Serre duality 
	equivalence for $\Dbc(\fM_i)$ in (\ref{Serre}). 
	%In the diagram (\ref{dia:MSdagP}), 
	%let $\mathbb{D}_1$, $\mathbb{D}_2$ and $\mathbb{D}_3$ be 
	%the Serre dualizing functors
	%\begin{align*}
	%	&\mathbb{D}_1 \colon \Dbc(\fP_n^t(S, \beta))
	%	\stackrel{\sim}{\to} \Dbc(\fP_n^t(S, \beta))^{\rm{op}}, \\
	%	&\mathbb{D}_2 \colon \Dbc(\fM_S^{\ext, \dag, t}(v_{\bullet}))
	%%	&\mathbb{D}_3 \colon 
	%	\Dbc(\fP_{n_1}^t(S, \beta_1) \times \fM_{n_2}^H(S, \beta_2))
	%	\stackrel{\sim}{\to}
	%	\Dbc(\fP_{n_1}^t(S, \beta_1) \times \fM_{n_2}^H(S, \beta_2))^{\rm{op}}.
	%					\end{align*}
	We have the following 
	adjoint pairs for the functors between 
	$\Ind \Dbc(\fM_1)$ and $\Ind \Dbc(\fM_3)_j$,
	\begin{align}\notag
		\mathrm{pr}_j 
		\mathbb{D}_{3}
		(\ev_1^{\dag}, \ev_2^{\dag})_{\ast}(\ev_3^{\dag})^{!}
		\mathbb{D}_{1} \dashv 
		\mathbb{D}_1 \ev_{3\ast}^{\dag}(\ev_1^{\dag}, \ev_2^{\dag})^{\ast} \mathbb{D}_3 i_j. 
	\end{align}
Here the above functors are given by 
\begin{align*}
	\xymatrix{
\Ind \Dbc(\fM_1)	
\ar@<0.5ex>[r]^-{\mathbb{D}_1} &
\ar@<0.5ex>[l]^-{\mathbb{D}_1}
\Ind \Dbc(\fM_1)^{\rm{op}} \ar@<0.5ex>[r]^-{(\ev_3^{\dag})^{!}} & 
\ar@<0.5ex>[l]^-{\ev_{3\ast}^{\dag}}
\Ind \Dbc(\fM_2)^{\rm{op}}  \\ 
\ar@<0.5ex>[r]^-{(\ev_1^{\dag}, \ev_2^{\dag})_{\ast}}&\ar@<0.5ex>[l]^-{(\ev_1, \ev_2)^{\ast}}
\Ind \Dbc(\fM_3)^{\rm{op}} \ar@<0.5ex>[r]^-{\mathbb{D}_3} & 
\ar@<0.5ex>[l]^-{\mathbb{D}_3}
\Ind\Dbc(\fM_3) \ar@<0.5ex>[r]^-{\mathrm{pr}_j} & 
\ar@<0.5ex>[l]^-{i_j}
\Ind\Dbc(\fM_3)_j. 
}
	\end{align*}
	Since $\ev_3^{\dag}$ is proper, 
	using~\cite[Corollary~9.5.9]{MR3136100}
	we have 
	\begin{align*}
		\mathbb{D}_1 \ev_{3\ast}^{\dag}(\ev_1^{\dag}, \ev_2^{\dag})^{\ast} \mathbb{D}_3 i_j
		\cong 	
		\ev_{3\ast}^{\dag}\mathbb{D}_2(\ev_1^{\dag}, \ev_2^{\dag})^{\ast} \mathbb{D}_3 i_j 
		\cong \ev_{3\ast}^{\dag}(\ev_1^{\dag}, \ev_2^{\dag})^{!}i_j.
	\end{align*}
	As $(\ev_1^{\dag}, \ev_2^{\dag})$ is quasi-smooth 
	and the relative 
	dualizing complex 
	$\omega_{(\ev_1^{\dag}, \ev_2^{\dag})}$ is of the form 
	$(\ev_1^{\dag}, \ev_2^{\dag})^{\ast}\lL[k]$ for a line bundle 
	$\lL$ on $\fM_3$ and $k\in \mathbb{Z}$, 
	we conclude that 
	\begin{align*}
		\mathbb{D}_1 \ev_{3\ast}^{\dag}(\ev_1^{\dag}, \ev_2^{\dag})^{\ast} \mathbb{D}_3 i_j
		\cong 	\ev_{3\ast}^{\dag}(\ev_1^{\dag}, \ev_2^{\dag})^{\ast}
		\otimes \lL \circ i_j[k]. 
	\end{align*}
	The composition $\otimes \lL \circ i_j$ is isomorphic to 
	$i_{j+\wt(\lL)} \circ \otimes \lL$ for 
	the equivalence 
	\begin{align*}
		\otimes \lL \colon 
		\Ind\Dbc(\fM_3)_j
		\stackrel{\sim}{\to}
		\Ind\Dbc(\fM_3)_{j+\wt(\lL)}.
	\end{align*}
	Therefore we have the following adjoint pairs 
	for the functors between $\Ind\Dbc(\fM_1)$ and $\Ind\Dbc(\fM_3)_{j}$
	\begin{align}\label{ladjoint:ast}
		\otimes \lL \circ \mathrm{pr}_{j-\wt(\lL)}
		\mathbb{D}_{3}
		(\ev_1^{\dag}, \ev_2^{\dag})_{\ast}(\ev_3^{\dag})^{!}
		\mathbb{D}_{1} \dashv 
		\ev_{3\ast}^{\dag}(\ev_1^{\dag}, \ev_2^{\dag})^{\ast} i_j. 
	\end{align}
	Since the dualizing functors and tensor products with 
	line bundle preserve 
	singular supports and compact objects, 
	as in the proof of Lemma~\ref{lem:adjoint:ind}
	the LHS of (\ref{ladjoint:ast}) induces the functor
	\begin{align*}
		\ast^L_j \colon \dDT^{\C}\left(\pP^t_{n}(X, \beta) \setminus \zZ^{t_{+}\us}_{l-1} \right) \to 
		\dDT^{\C}(\pP_{n_1}^{t_+}(X, \beta_1)) \otimes 
		\dDT^{\C}(\mM_{n_2}^H(X, \beta_2))_j
	\end{align*}
	giving a left adjoint of the functor (\ref{funct:rest}). 
\end{proof}
\subsubsection{Proofs of Conjectures}
Let $\ast_j^R$, $\ast_j^L$ be the 
right and left adjoints functors of $\ast_j$
given in Lemma~\ref{lem:radjoint}, Lemma~\ref{lem:ladjoint}. 
We define the following subcategories
\begin{align*}
	&\wW_{k^+(\mathbf{d})}^+
	\cneq \bigcap_{j\le k^+(\mathbf{d})} \Ker(\ast_j^R) 
	\cap \bigcap_{j>k^+(\mathbf{d})} \Ker(\ast_j^L)
	\subset
	\dDT^{\C}\left(\pP_n^t(X, \beta)\setminus \zZ_{l-1}^{t_+\us}\right), \\
	&\wW_{k^-(\mathbf{d})}^-
	\cneq \bigcap_{j\ge k^-(\mathbf{d})} \Ker(\ast_j^R) 
	\cap \bigcap_{j<k^-(\mathbf{d})} \Ker(\ast_j^L)
	\subset
	\dDT^{\C}\left(\pP_n^t(X, \beta)\setminus \zZ_{l-1}^{t_-\us}\right). 
\end{align*}

\begin{lem}\label{lem:sod:P}
	There exist semiorthogonal decompositions of the form 
	\begin{align}\notag
		\dDT^{\C}\left(\pP_n^t(X, \beta)\setminus \zZ_{l-1}^{t_+\us}\right)
		&=\left\langle \ldots, 
		\Upsilon_{\lfloor k^+(\mathbf{d}) \rfloor +2}^{+, \mathbf{d}}, \Upsilon_{\lfloor k^+(\mathbf{d}) \rfloor +1}^{+, \mathbf{d}}, 
		\wW_{k^+(\mathbf{d})}^+, \Upsilon_{\lfloor k^+(\mathbf{d}) \rfloor}^{+, \mathbf{d}}, 
		\Upsilon_{\lfloor k^+(\mathbf{d}) \rfloor -1}^{+, \mathbf{d}}, \ldots  \right\rangle, \\
		\notag
		\dDT^{\C}\left(\pP_n^t(X, \beta)\setminus \zZ_{l-1}^{t_-\us}\right)
		&=\left\langle \ldots, 
		\Upsilon_{\lceil k^-(\mathbf{d}) \rceil -2}^{-, \mathbf{d}}, \Upsilon_{\lceil k^-(\mathbf{d}) \rceil -1}^{-, \mathbf{d}}, 
		\wW_{k^-(\mathbf{d})}^-, \Upsilon_{\lceil k^-(\mathbf{d}) \rceil}^{-, \mathbf{d}}, 
		\Upsilon_{\lceil k^-(\mathbf{d}) \rceil+1}^{-, \mathbf{d}}, \ldots  \right\rangle. 	
	\end{align}
\end{lem}
\begin{proof}
	We only prove the lemma for $t_+$.  
	Note that the RHS is semiorthogonal by Lemma~\ref{lem:orthognoal} and the definition of 
	$\wW_{k^+(\mathbf{d})}^+$. 
	It is enough to show that any object in the LHS is 
	obtained as successive extensions of objects 
	in the RHS. 
	
		For an object $\eE$ in the LHS, 
	the proof of Lemma~\ref{lem:radjoint} shows that 
	$\ast_j^R(\eE)=0$ for $j \ll 0$ formally locally 
	at $p \in P_n^t(S, \beta)$. 	
	Since $\pP_n^t(S, \beta) \to P_n^t(S, \beta)$ is universally closed, 
	there exists a Zariski open neighborhood of 
	$p \in P_n^t(S, \beta)$ on which 
	$\ast_j^R(\eE)=0$ for $j\ll 0$. 
	As $P_n^t(S, \beta)$ is of finite type, 
	we conclude that $\ast_j^R(\eE)=0$ for $j\ll 0$. 
	A similar argument shows that 
	$\ast_j^L(\eE)=0$ for $j \gg 0$. 
	
	Suppose that $\eE$ is not an object in 
	$\wW_{k^+(\mathbf{d})}^+$. 
	Then there is 
	$j_1 \le k^+(\mathbf{d})$ such that 
	$\ast_j^R(\eE)=0$ for $j<j_1$ and 
	$\ast_{j_1}^R(\eE) \neq 0$, 
	or 
	there is $j_1' >k^+(\mathbf{d})$
	such that $\ast_j^L(\eE)=0$ for $j>j_1'$ and 
	$\ast_{j_1'}^L(\eE)\neq 0$. 
	Below we assume the former case. 
	The latter case is similarly discussed. 
	We have the distinguished triangle 
	$	\ast_{j_1}\ast_{j_1}^R(\eE)
	\to \eE \to \eE_1$, 
	where 
	$\ast_{j}^R(\eE_1)=0$ for $j\le j_1$. 
	Repeating the above constructions for $\eE_1$, 
	we have a distinguished triangle
	\begin{align*}
		\eE_2  \to \eE \to \eE_3, \ 
		\eE_2 \in \langle \Upsilon_{\lfloor k^+(\mathbf{d}) \rfloor}^{+, \mathbf{d}}, \ldots, \Upsilon_{j_1}^{+, \mathbf{d}}\rangle, \ 
		\eE_3 \in \bigcap_{j\le k^+(\mathbf{d})} \Ker(\ast_j^R). 
	\end{align*}
	If $\ast_j^L(\eE_3)=0$ for all $j>k^+(\mathbf{d})$, then 
	$\eE_3 \in \wW_{k^+(\mathbf{d})}^+$. 
	Otherwise there is $j_1'>k^+(\mathbf{d})$ such that 
	$\ast_j^L(\eE_3) \neq 0$ for $j>j_1'$ and 
	$\ast_{j_1'}^L(\eE_3) \neq 0$. 
	Similarly to above, 
	we have the distinguished triangle 
	$\eE_4 \to \eE_3 \to \ast_{j_1'} \ast_{j_1'}^L(\eE)$
	such that  
	$\ast_j^L(\eE_4)=0$ for $j\ge j_1'$. 
	We also have $\ast_j^R(\eE_4)=0$ for $j\le k^+(\mathbf{d})$
	by applying $\ast_j^R$ to the above triangle
	and using Lemma~\ref{lem:orthognoal}. 
	By repeating the above construction for $\eE_4$, 
	we obtain the distinguished triangle 
	\begin{align*}
		\eE_5 \to \eE_3 \to \eE_6, \ 
		\eE_6 \in \langle \Upsilon_{j_1'}^{+, \mathbf{d}}, \ldots, \Upsilon_{\lfloor k^+(\mathbf{d}) \rfloor +1}^{+, \mathbf{d}}\rangle, \ 
		\eE_5 \in \wW_{k^+(\mathbf{d})}^+. 
	\end{align*}
	Therefore we obtain the desired semiorthogonal decomposition. 
\end{proof}

Using the above preparations, we prove 
Conjectures~\ref{conj:SOD:l}, \ref{conj:SOD}. 

\begin{thm}\label{thm:pfconj}
	Under Assumption~\ref{assum}, 
	Conjecture~\ref{conj:SOD:l} and 
	Conjecture~\ref{conj:SOD} hold. 
\end{thm}
\begin{proof}
	It is enough to prove Conjecture~\ref{conj:SOD:l}. 
	Below we only prove Conjecture~\ref{conj:SOD:l} for $t_+$
	under Assumption~\ref{assum}.  
	For a decomposition $\mathbf{d} \in \mathbf{D}_{(\beta, n)}^{t, l}$
	denoted as $(\beta, n)=(\beta_1, n_1)+(\beta_2, n_2)$,  
	we already proved that the functors (\ref{funct:rest}) 
	are fully-faithful 
	in Lemma~\ref{lem:radjoint2}, whose essential images fit 
	into the semiorthogonal decomposition in Lemma~\ref{lem:sod:P}. 
	Let $\mathbf{d}' \in \mathbf{D}_{(\beta, n)}^{t, l}$ be another decomposition
	$(\beta, n)=(\beta_1', n_1')+(\beta_2', n_2')$. 
	Note that objects in $\Upsilon_j^{+, \mathbf{d}}$, $\Upsilon_j^{+, \mathbf{d}'}$ have singular supports 
	contained in 
	\begin{align*}
		(\pP_{n_1}^{t_+}(X, \beta_1) \ast \mM_{n_2}^H(X, \beta_2)) \cup \zZ_{l-1}^{t_+\us}, \ 
		(\pP_{n_1'}^{t_+}(X, \beta_1') \ast \mM_{n_2'}^H(X, \beta_2'))\cup \zZ_{l-1}^{t_+\us}
		\end{align*}
	respectively. Since they are disjoint outside 
	$\zZ_{l-1}^{t_+\us}$, by Lemma~\ref{lem:orth:N}
	$\Upsilon_j^{\pm, \mathbf{d}}$ is orthogonal to 
	$\Upsilon_{j'}^{\pm, \mathbf{d}'}$.
	Therefore from Lemma~\ref{lem:sod:P}, we 
	obtain the semiorthogonal decomposition (\ref{conj:sodv}). 
	
	Below we show that the composition functor (\ref{conj:compose})
	is an equivalence. 
	In order to simplify the argument, 
	we assume that $\mathbf{D}_{(\beta, n)}^{t, l}$ consists 
	of only $\mathbf{d}$.  
	So we show that the composition 
	\begin{align}\label{compose:Wd}
		\wW_{k^+(\mathbf{d})}^+
	\hookrightarrow 
	\dDT^{\C}\left(\pP_n^t(X, \beta)\setminus \zZ_{l-1}^{t_+\us}\right)
	\twoheadrightarrow 
	\dDT^{\C}\left(\pP_n^t(X, \beta)\setminus \zZ_{l}^{t_+\us}\right)	
		\end{align}
	is an equivalence. 	
	As objects in $\Upsilon_j^{+, \mathbf{d}}$ have singular supports 
	contained in $\zZ_{l}^{t_+\us}$, 
	the composition 
	\begin{align*}
		\Upsilon_j^{+, \mathbf{d}} \hookrightarrow 
		\dDT^{\C}\left(\pP_n^t(X, \beta)\setminus \zZ_{l-1}^{t_+\us}\right)
		\twoheadrightarrow 
		\dDT^{\C}\left(\pP_n^t(X, \beta)\setminus \zZ_{l}^{t_+\us}\right)
	\end{align*}
	is zero. Therefore the functor (\ref{compose:Wd}) is essentially surjective 
	from the semiorthogonal decomposition 
	in Lemma~\ref{lem:sod:P}. 
	
	It remains to show that the functor (\ref{compose:Wd}) is fully-faithful. 
	Let $\iota \colon U \to P_n^t(S, \beta)$ be an etale morphism as in Theorem~\ref{thm:AHR}, and 
	take the following Cartesian diagrams as in (\ref{Cartesian:U})
	\begin{align*}
		\xymatrix{
			\mathfrak{P}_U \ar[d]_-{\iota_{\fP}} \diasquare & \ar@<0.3ex>@{_{(}->}[l] \pP_U \ar[r]\ar[d] \diasquare & U \ar[d]_-{\iota} \\
			\fP_n^t(S, \beta) & \ar@<0.3ex>@{_{(}->}[l]  \pP_n^t(S, \beta) \ar[r] & P_n^t(S, \beta).  
		}
	\end{align*}
	By Lemma~\ref{lem:inter}, we 
	have the fully-faithful functor 
	\begin{align*}
		\dDT^{\C}\left(\pP_n^t(X, \beta)\setminus \zZ_{l-1}^{t_+\us}\right)
		\hookrightarrow 
		\lim_{U \stackrel{\iota}{\to} P_n^t(S, \beta)}
		\left(\Dbc(\fP_U)/\cC_{\iota_{\fP}^{\ast}\zZ_{l-1}^{t_+\us}}\right). 
	\end{align*}
	Let $\eE_1, \eE_2$ be an object in $\wW_{k^+(\mathbf{d})}^+$. 
	By the above fully-faithful functor, it is enough to show 
	that the natural morphism 
	\begin{align}\label{nat:func}
		\Hom_{\Dbc(\fP_U)/\cC_{\iota_{\fP}^{\ast}\zZ_{l-1}^{t_+\us}}}(\eE_1|_{\fP_{U}}, 
		\eE_2|_{\fP_{U}})
		\to \Hom_{\Dbc(\fP_U)/\cC_{\iota_{\fP}^{\ast}\zZ_{l}^{t_+\us}}}(\eE_1|_{\fP_{U}}, 
		\eE_2|_{\fP_{U}})	
	\end{align}
	is an isomorphism. 
	The above morphism is regarded as a morphism in $D_{\qcoh}(U)$, 
	so it is enough to show the above isomorphism 
	formally locally at any point in $U$. 
	Similarly to Lemma~\ref{lem:radjoint}, Lemma~\ref{lem:radjoint2}, we prove 
	the corresponding claim for derived categories of factorizations via 
	Koszul duality. 
	
	For each $p\in P_n^t(S, \beta)$
	corresponding to the object (\ref{pstable}) and a decomposition (\ref{decom:I}), 
	we use the notation of the diagram (\ref{dia:wext:simple}). 
	Let $\widehat{\Upsilon}^+_j$ be
	the essential image of the functor (\ref{ff:mf:loc}). 
	Since the functor (\ref{ff:mf:loc}) is the composition of (\ref{otimes:O}) and (\ref{funct:formal:loc}), 
	by Theorem~\ref{thm:window} we have the semiorthogonal decomposition 
	\begin{align}\label{SOD:l}
		\MF^{\C}_{\coh}(\xX_1 \setminus \widehat{\zZ}_{l-1}^{t_+\us}, w)
		=\langle \ldots, \widehat{\Upsilon}^+_{\lfloor k^+(\mathbf{d}) \rfloor+2}, 
		\widehat{\Upsilon}^+_{\lfloor k^+(\mathbf{d}) \rfloor+1}, 
		\widehat{\wW}_{k^{+}(\mathbf{d})}^{+}, 	\widehat{\Upsilon}^+_{\lfloor k^+(\mathbf{d}) \rfloor}, 
		\widehat{\Upsilon}^+_{\lfloor k^+(\mathbf{d}) \rfloor-1}, \ldots 
		\rangle. 
	\end{align}
	such that the composition functor 
	\begin{align*}
		\widehat{\wW}_{k^{+}(\mathbf{d})}^{+} \hookrightarrow 
		\MF^{\C}_{\coh}(\xX_1 \setminus \widehat{\zZ}_{l-1}^{t_+\us}, w)
		\to 
		\MF^{\C}_{\coh}(\xX_1 \setminus \widehat{\zZ}_{l}^{t_+\us}, w) 
	\end{align*}
	is an equivalence. Since $\ast_j^R$ and $\ast_j^L$ commute with 
	base change, 
	each object $\eE_i$ restricted to the formal fiber at $p \in P_n^t(S, \beta)$
	corresponds to an object in $\widehat{\wW}_{k^+(\mathbf{d})}^+$
	under the equivalence (\ref{equiv:MF})
	\begin{align*}
		\Dbc([\widehat{\fN}_0/G])/\cC_{ \widehat{\zZ}_{l-1}^{t_+\us}} \stackrel{\sim}{\to}
		\MF^{\C}_{\coh}(\xX_1 \setminus \widehat{\zZ}_{l-1}^{t_+\us}, w).
		\end{align*}
	Therefore we conclude 
	the formal local isomorphism of (\ref{nat:func}). 
\end{proof}

We next prove Conjecture~\ref{conj:FF}
\begin{thm}\label{thm:conj2}
	Under Assumption~\ref{assum}, 
	Conjecture~\ref{conj:FF} holds. 
\end{thm}
\begin{proof}
	For each point $p \in P_n^t(S, \beta)$
	corresponding to a polystable object $I$ as in (\ref{pstable})
	and a decomposition (\ref{decom:I}), 
	we consider the diagram (\ref{dia:wext:simple})
	as before. 
	Let $\lambda$ be the one parameter subgroup (\ref{lambda}). 
	We set 
	\begin{align*}
		\eta^+ &\cneq \wt_{\lambda^{-1}}(\det \mathbb{L}_{q_2}^{\vee}|_{0})
		=\hom(I^{(1)}, F^{(2)})+\ext^2(F^{(2)}, F^{(1)})-\hom^{-1}(I^{(1)}, F^{(2)}), \\ 
		\eta^- &\cneq \wt_{\lambda}(\det \mathbb{L}_{q_2'}^{\vee}|_{0})
		=\ext^1(F^{(2)}, F^{(1)})+\hom^1(I^{(1)}, F^{(2)})-\hom(F^{(2)}, F^{(1)}). 
	\end{align*}
	We also denote by 
	\begin{align*}
		i^+ \colon (\xX_5^{(1)} \setminus \widehat{\zZ}^{t_+\us (1)}) \times \xX_5^{(2)} \to 
		\xX_1, \ 
		i^- \colon \xX_5^{(2)} \times (\xX_5^{(1)} \setminus  \widehat{\zZ}^{t_-\us (1)})
		\to \xX_1	
	\end{align*}
	the inclusions into the 
	$\lambda$-fixed parts. 
	We denote by 
	\begin{align}\label{window:hat}
		\widehat{\wW}_{k^{\pm}}^{\pm} \subset \MF^{\C}_{\coh}(\xX_1, w)
	\end{align}
	the subcategory of objects $\pP$ such that 
	for any decomposition (\ref{decom:I}) of $I$ we have 
	\begin{align}\label{wt:cond}
		&\wt_{\lambda^{-1}}(i^{+\ast}\pP) \subset [-k^+(\mathbf{d})-\dim O^{(21)}, -k^+(\mathbf{d})-\dim O^{(21)}+\eta^+), \\
		\notag
		&\wt_{\lambda}(i^{-\ast}\pP) \subset [k^-(\mathbf{d})-\dim O^{(12)}, k^-(\mathbf{d})-\dim O^{(12)}+\eta^-)
	\end{align}
	respectively. 
	Here $\mathbf{d}$ denotes the decomposition 
	$(\beta, n)=(\beta_1, n_1)+(\beta_2, n_2)$ 
	with $\cl(I^{(1)})=(1, \beta_1, n_1)$
	and $[F^{(2)}]=(\beta_2, n_2)$. 
	We set $k^{\pm}(\mathbf{d})$ to be 
	\begin{align*}
		k^+(\mathbf{d})&=\frac{1}{2}\chi(I^{(1)}, F^{(2)})=\frac{1}{2}(n_2+\beta_1 \beta_2), \\
		k^-(\mathbf{d}) &=\frac{1}{2}\chi(F^{(2)}, F^{(1)})=-\frac{1}{2}\beta_1 \beta_2.
	\end{align*}
	Then we have 
	\begin{align*}
		k^+(\mathbf{d})+\dim O^{(21)}&=
		\frac{1}{2}\hom(I^{(1)}, F^{(2)})-\frac{1}{2}\hom^{-1}(I^{(1)}, F^{(2)})
		-\frac{1}{2}\hom^{1}(I^{(1)}, F^{(2)})+\ext^2(F^{(2)}, F^{(1)}) \\
		&=\frac{1}{2}\eta^{+}
		+\frac{1}{2}(\ext^2(F^{(2)}, F^{(1)})-\hom^1(I^{(1)}, F^{(2)})) \\
		&=\frac{1}{2}\eta^+ +\frac{1}{2}\langle \delta_0, \lambda^{-1} \rangle. 
	\end{align*}
	Here $\delta_0$ is the following $G$-character 
	\begin{align*}
		\delta_0 \cneq \det \Hom^1(I, F)=\bigotimes_{i, j}
		\det O^{(ij)}
	\end{align*}
	which satisfies that 
	\begin{align*}
		\langle \delta_0, \lambda \rangle =\hom^1(I^{(1)}, F^{(2)})-\ext^2(F^{(2)}, F^{(1)}). 
	\end{align*}
	Therefore the first condition 
	in (\ref{wt:cond}) is equivalent to 
	\begin{align}\label{wt:cond2}
		\wt_{\lambda^{-1}}(i^{+\ast}\pP) \subset
		\left[-\frac{1}{2}\eta^+ -\frac{1}{2}\langle 
		\delta_0, \lambda^{-1} \rangle, 
		\frac{1}{2}\eta^+ -\frac{1}{2}\langle
		\delta_0, \lambda^{-1} \rangle\right).
	\end{align}
	A similar computation shows that the second condition in (\ref{wt:cond}) 
	is equivalent to 
	\begin{align}\label{wt:cond2.5}
		\wt_{\lambda}(i^{-\ast}\pP) \subset
		\left[-\frac{1}{2}\eta^- -\frac{1}{2}\langle 
		\delta_0, \lambda \rangle, \frac{1}{2}\eta^- -\frac{1}{2}\langle 
		\delta_0, \lambda \rangle\right). 
	\end{align}

	By Theorem~\ref{thm:window} and an argument of Theorem~\ref{thm:pfconj}, 
	an object $\eE \in \dDT^{\C}(\pP_n^t(X, \beta))$ lies in 
	$\wW_{k^{\pm}}^{\pm}$ if and only if 
	for any point $p \in P_n^t(S, \beta)$ 
	the restriction of $\eE$ to the formal fiber at $p$ 
	corresponds to an object in $\widehat{\wW}^{\pm}_{k^{\pm}}$ under the 
	Koszul duality in Theorem~\ref{thm:knoer},
		\begin{align*}
		\Dbc([\widehat{\fN}_0/G]) \stackrel{\sim}{\to}
		\MF^{\C}_{\coh}(\xX_1, w).
	\end{align*}	
	Therefore it is enough to show the inclusion 
	\begin{align}\label{inc:W}
		\widehat{\wW}_{k^-}^- \subset \widehat{\wW}_{k^+}^+. 
	\end{align}
	Note that we have 
	\begin{align*}
		\hom(I_0, F_i)+\ext^2(F_i, F_0)-\hom^1(I_0, F_i) -\ext^1(F_i, F_0)
		&=\chi(I_0, F_i)+\chi(F_i, F_0) \\
		&=\chi(F_i)>0.
	\end{align*}
Here the last inequality follows from 
$\chi(F_i)=t \cdot (H \cdot [F_i]) >0$.  
The above inequality implies that, by Remark~\ref{rmk:IF}, the multiplicity of 
$V_i$ in the $G$-representation $\Hom(I, F)$ is strictly bigger 
that of $V_i^{\vee}$. 
So $\Hom(I, F)$ is the space of quiver representations
satisfying the condition in (\ref{cond:Eij}). 
Moreover by Lemma~\ref{lem:KNS}, 
the subcategories in (\ref{inc:W})
are window subcategories with respect to $\chi_0^{\pm}$. 
Therefore we can try to apply Proposition~\ref{prop:inclu:W} 
to conclude the inclusion (\ref{wt:cond2}). 

However there is a slight issue, as $\delta_0$ is not 
necessary $\chi_{0}$-generic. 
Instead we discuss by perturbing $\delta_0$ as follows. 
Let $0<\varepsilon \ll 1$ be an irrational number, 
and set $\delta_0'=\delta_0+\varepsilon \chi_{0}$. 
Then $\delta_0'$ is $\chi_{0}$-generic. 
Since $\langle \chi_0, \lambda \rangle >0$, 
the condition (\ref{wt:cond2.5}) is equivalent to 
\begin{align*}
		\wt_{\lambda}(i^{-\ast}\pP) \subset
\left[-\frac{1}{2}\eta^- -\frac{1}{2}\langle 
\delta_0', \lambda \rangle, \frac{1}{2}\eta^- -\frac{1}{2}\langle 
\delta_0', \lambda \rangle\right]. 
\end{align*}
Moreover since 
we have 
\begin{align}\label{eta:diff}
	\eta^+-\eta^- &= \chi(I^{(1)}, F^{(2)})+\chi(F^{(2)}, F^{(1)}) \\
\notag	&=\chi(F^{(2)}) =n_2>0, 
\end{align}
we have 
\begin{align*}
\left[-\frac{1}{2}\eta^- -\frac{1}{2}\langle 
\delta_0', \lambda^{-1} \rangle, \frac{1}{2}\eta^- -\frac{1}{2}\langle 
\delta_0', \lambda^{-1} \rangle\right]
\subset 
\left[-\frac{1}{2}\eta^+ -\frac{1}{2}\langle 
\delta_0, \lambda^{-1} \rangle, \frac{1}{2}\eta^+ -\frac{1}{2}\langle 
\delta_0, \lambda^{-1} \rangle\right). 
\end{align*}
Therefore we can directly apply the argument of Proposition~\ref{cor:magic}
(also see Corollary~\ref{corcor:magic})
	to show 
	that the LHS of (\ref{inc:W}) 
	is the magic window over $\bS$ with respect to $\delta_0'$, where 
	$\bS$ coincides with the symmetric structure of $\Hom(I, F)$ defined as in 
	(\ref{sym:SU}), and the RHS of (\ref{inc:W})
	contains this magic window so 
	that 
	the inclusion (\ref{inc:W}) holds. 
\end{proof}

Finally we prove Conjecture~\ref{conj:simple}. 
\begin{thm}\label{thm:simple}
	Under Assumption~\ref{assum}, 
	Conjecture~\ref{conj:simple} holds. 	
\end{thm}
\begin{proof}
	For simplicity, we assume that $\mathbf{D}_{(\beta, n)}^{t, 1}$ consists of one element $\mathbf{d}$, 
	denoted as $(\beta, n)=(\beta_1, n_1)+(\beta_2, n_2)$. 
	We take $k^{\pm}(\mathbf{d})$ as in (\ref{kpm}). 
	It is enough to show the following identity in 
	$\dDT^{\C}(\pP_n^t(X, \beta))$
	\begin{align}\label{SOD:simples}
		\wW_{k^+}^+=
		\left \langle  
		\Upsilon^{-, \mathbf{d}}_{[-\frac{1}{2}\beta_1 \beta_2-\frac{1}{2}n_2, 
			-\frac{1}{2}\beta_1 \beta_2 )}, \wW_{k^-}^-, 
		\Upsilon^{-, \mathbf{d}}_{[-\frac{1}{2}\beta_1 \beta_2, 
			-\frac{1}{2}\beta_1 \beta_2+\frac{1}{2}n_2 )}
		\right \rangle. 	
	\end{align}	
	Let us take a point $p\in P_n^t(S, \beta)$ corresponding to
	a polystable object (\ref{pstable}). If it is not stable, 
	then the simple wall condition implies that $k=1$ and $\dim V_1=1$, 
	so that $G=\C$. 
	The semiorthogonal decomposition (\ref{SOD:l}) in this case is
	\begin{align*}
		\MF^{\C}_{\coh}(\xX_1, w)
		=\langle \ldots, \widehat{\Upsilon}^+_{\lfloor k^+(\mathbf{d}) \rfloor+2}, 
		\widehat{\Upsilon}^+_{\lfloor k^+(\mathbf{d}) \rfloor+1}, 
		\widehat{\wW}_{k^{+}}^{+}, 	\widehat{\Upsilon}^+_{\lfloor k^+(\mathbf{d}) \rfloor}, 
		\widehat{\Upsilon}^+_{\lfloor k^+(\mathbf{d}) \rfloor-1}, \ldots 
		\rangle
	\end{align*}
	where $\widehat{\wW}_{k^+}^+$ is the same as (\ref{window:hat}). 
	We have the similar semiorthogonal decomposition for $t_-$-part
	\begin{align*}
		\MF^{\C}_{\coh}(\xX_1, w)
		=\langle \ldots, \widehat{\Upsilon}^-_{\lceil k^-(\mathbf{d}) \rceil-2}, 
		\widehat{\Upsilon}^-_{\lceil k^-(\mathbf{d}) \rceil-1}, 
		\widehat{\wW}_{k^{-}}^{-}, 	\widehat{\Upsilon}^-_{\lceil k^-(\mathbf{d}) \rceil}, 
		\widehat{\Upsilon}^-_{\lceil k^-(\mathbf{d}) \rceil+1}, \ldots 
		\rangle. 
	\end{align*}
	Here $\widehat{\Upsilon}^-_{j}$ is the essential image of the fully-faithful functor 
	from the right diagram of (\ref{dia:wext:simple})
	\begin{align}\notag
		f'_{2\ast}{g_2'}^{\ast}g'_{1 !} {f_1'}^{\ast} \colon 
		\MF_{\coh}^{\C}(\xX_5^{(2)}\times 
		\xX_5^{(1)}, w^{(2)}+w^{(1)})_{\lambda \mathchar`- \wt= j}
		\to \MF_{\coh}^{\C}(\xX_1, w),
	\end{align}
	where similarly to the proof Lemma~\ref{lem:radjoint} the above functor is written as 
	\begin{align*}
		f'_{2\ast}{g_2'}^{\ast}	g'_{1 !} {f_1'}^{\ast}(-)
		\cong q'_{2\ast}{q_1'}^{\ast}(- \otimes 
		\det O^{(12)\vee}[-\dim O^{(12)}]). 	
	\end{align*}
	Note that the condition (\ref{wt:cond2})
	is equivalent to 
	\begin{align*}
		\wt_{\lambda}(i^{+\ast}\pP) \subset
		\left(-\frac{1}{2}\eta^+ -\frac{1}{2}\langle 
		\delta_0, \lambda \rangle, 
		\frac{1}{2}\eta^+ -\frac{1}{2}\langle 
		\delta_0, \lambda \rangle\right].
	\end{align*}
By (\ref{eta:diff}), 
	comparing with the
	condition (\ref{wt:cond2.5})
	the following identity is well-known in~\cite{MR3327537, MR3895631}
	(also see~\cite[Theorem~5.3]{KoTo}) for variation of GIT quotients for $\C$-actions
	\begin{align*}
		\widehat{\wW}_{k^+}^+=
		\langle 	\widehat{\Upsilon}^-_{\lceil k^-(\mathbf{d})-n_2/2 \rceil}, \ldots
		\widehat{\Upsilon}^-_{\lceil k^-(\mathbf{d}) \rceil-1}, 
		\widehat{\wW}_{k^{-}}^{-}, 	\widehat{\Upsilon}^-_{\lfloor k^-(\mathbf{d}) \rfloor}, \ldots, 
		\widehat{\Upsilon}^-_{\lfloor k^-(\mathbf{d})+n_2/2 \rfloor}
		\rangle.
	\end{align*}
	This means that the identity 
	(\ref{SOD:simples}) holds formally locally at $p\in P_n^t(S, \beta)$. 
	Since this holds for any $p$, the identity
	 (\ref{SOD:simples}) holds. 
\end{proof}

\section{Some auxiliary results}\label{sec:technical}
In this section, we prove several technical results which 
were postponed in previous sections. 
\subsection{Comparisons of DT categories}
\label{sec:compare}
Here we give comparisons of DT categories in Definition~\ref{defi:catDT}
and Definition~\ref{def:DThat}. 
Below we use the following: 
\begin{thm}\label{thm:quotient}\emph{(\cite[Theorem~7.2.1]{MR2681709})}
	Let $\dD$ be a triangulated category and 
	$\dD' \subset \dD$ a thick triangulated subcategory 
	closed under taking small coproducts. 
	Suppose that $\dD$ and $\dD'$ are compactly generated. 
	Then $\dD/\dD'$ is also compactly generated, 
	and we have the fully-faithful functor 
	$\dD^{\rm{cp}}/{\dD'}^{\rm{cp}} \hookrightarrow (\dD/\dD')^{\rm{cp}}$
	with dense image. 
	Here $\dD^{\rm{cp}} \subset \dD$ is the subcategory of 
	compact objects. 
\end{thm}
\subsubsection{Proof of Proposition~\ref{lem:indDT}}\label{subsec:eqDT}
\begin{proof} 
	Applying Theorem~\ref{thm:quotient}
	for the subcategory 
	$\Ind \cC_{\zZ}\subset \Ind \Dbc(\fM)$
	and using the assumption on the compact generation of $\Ind \cC_{\zZ}$
	and Theorem~\ref{thm:QCA}, 
	we have the fully-faithful functor
	\begin{align*}
		\Dbc(\fM)/\cC_{\zZ}\hookrightarrow \left(
		\Ind \Dbc(\fM)/\Ind \cC_{\zZ}
		\right)^{\rm{cp}}
	\end{align*}
	with dense image. Moreover 
	$\Ind \Dbc(\fM)/\Ind \cC_{\zZ}$ is compactly 
	generated, so we have the equivalence
	\begin{align}\label{equiv:Ind}
			\Ind \left( \Dbc(\fM)/\cC_{\zZ} \right)
			\stackrel{\sim}{\to}
		\Ind \Dbc(\fM)/\Ind \cC_{\zZ}.
		\end{align}
	
	Applying the equivalence (\ref{equiv:Ind})
	for each smooth morphism $\alpha \colon \fU \to \fM$, 
	we have the localization sequence of triangulated categories
	\begin{align}\label{ex:onU}
		\Ind(\cC_{\alpha^{\ast}\zZ}) \stackrel{i_{\fU}}{\to} 
		\Ind(\Dbc(\fU)) \stackrel{j_{\fU}}{\to}
		\Ind\left(\Dbc(\fU)/\cC_{\alpha^{\ast}\zZ}\right). 
	\end{align}
	By~\cite[Section~4.3]{MR3300415}, 
	the functor $i_{\fU}$ admits a right adjoint 
	$i_{\fU}^R$.
	Therefore there also exists a right adjoint $j_{\fU}^R$ of $j_{\fU}$
	such that for each $\eE \in \Ind(\Dbc(\fU))$ we have the 
	exact triangle
	\begin{align*}
		i_{\fU} \circ i_{\fU}^R(\eE) \to \eE \to j_{\fU}^R \circ j_{\fU}(\eE).
	\end{align*}
	By the above triangle, we have $j_{\fU} \circ j_{\fU}^R \cong \id$. 
	On the other hand by taking the limit of the sequence (\ref{ex:onU}), 
	we obtain the sequence
	\begin{align}\label{exact:ad}
		\Ind \cC_{\zZ} \to \Ind \Dbc(\fM) \stackrel{j}{\to}
		\lim_{\fU\stackrel{\alpha}{\to}\fM}
		\Ind\left(\Dbc(\fU)/\cC_{\alpha^{\ast}\zZ}\right).
	\end{align}
	Since $j_{\fU}^R$ is functorial for $\fU$, 
	the functor $j$ also admits a right adjoint $j^R$
	by taking the limit of $j_{\fU}^R$ 
	such that $j\circ j^R \cong \id$. 
	Also by the construction of $j$, 
	we have that $\Ker(j)=\Ind \cC_{\zZ}$. 
	By~\cite[Lemma~3.4]{Hperf}, 
	the above properties of the sequence (\ref{exact:ad})
	implies that 
	the sequence (\ref{exact:ad}) is a localization 
	sequence, i.e. 
	we have the equivalence
	\begin{align*}
		\Ind \Dbc(\fM)/\Ind \cC_{\zZ} \stackrel{\sim}{\to}
		\lim_{\fU\stackrel{\alpha}{\to}\fM}
		\Ind\left(\Dbc(\fU)/\cC_{\alpha^{\ast}\zZ}\right).
	\end{align*}
	We have the commutative diagram
	\begin{align*}
		\xymatrix{
			\Dbc(\fM)/\cC_{\zZ} \ar[r]\ar[d] & 
			\lim_{\fU \stackrel{\alpha}{\to}\fM}
			\left(\Dbc(\fU)/\cC_{\alpha^{\ast}\zZ} \right) \ar[d] \\
			\left(\Ind \Dbc(\fM)/\Ind \cC_{\zZ} \right)^{\rm{cp}}
			\ar[r]^-{\sim} &
			\left(\lim_{\fU \stackrel{\alpha}{\to}\fM}
			\Ind\left(
			\Dbc(\fU)/\cC_{\alpha^{\ast}\zZ}\right) \right)^{\rm{cp}}.
		}
	\end{align*}
	Here the left vertical arrow is fully-faithful with dense image, 
	the right vertical arrow 
	exists by Lemma~\ref{lem:c:inc}
	and it is fully-faithful.
	Therefore the top horizontal arrow is also full-faithful with dense image. 
\end{proof}

We 
have used the following lemma: 
\begin{lem}\label{lem:c:inc}
	Any object in the subcategory
	\begin{align}\label{compact:lim}
		\lim_{\fU \stackrel{\alpha}{\to}\fM}
		\left(\Dbc(\fU)/\cC_{\alpha^{\ast}\zZ} \right)
		\subset 
		\lim_{\fU \stackrel{\alpha}{\to}\fM}
		\Ind\left(
		\Dbc(\fU)/\cC_{\alpha^{\ast}\zZ}\right) 
	\end{align}
	is a compact object. 
\end{lem}
\begin{proof}
	The lemma is proved for $\zZ=\emptyset$ 
	in~\cite[Proposition~3.4.2 (b)]{MR3037900}. 
	Since the action of $D_{\rm{qcoh}}(\fU)$ on 
	$\Ind (\Dbc(\fU))$ by taking tensor products 
	preserves
	$\Ind (\cC_{\alpha^{\ast}\zZ})$
	(see~\cite[Lemma~4.2.2]{MR3300415}), the
	quotient category 
	$\Ind(\Dbc(\fU))/\Ind(\cC_{\alpha^{\ast}\zZ})=
	\Ind(\Dbc(\fU)/\cC_{\alpha^{\ast}\zZ})$ is a module over 
	$D_{\rm{qcoh}}(\fU)$. 
	Therefore the proof of~\cite[Proposition~3.4.2 (b)]{MR3037900} 
	applies verbatim. 
	Here we give the argument for the reader's convenience. 
	
	Let $\eE_1$ be an object in the LHS of (\ref{compact:lim}), and $\eE_2$ an object 
	in the RHS of (\ref{compact:lim}). 
	For each smooth morphism $\alpha \colon \fU \to \fM$, 
	we have the objects 
	$\alpha^{\ast}\eE_1 \in \Dbc(\fU)/\cC_{\alpha^{\ast}\zZ}$
	and $\alpha^{\ast}\eE_2 \in \Ind \left( \Dbc(\fU)/\cC_{\alpha^{\ast}\zZ} \right)$. 
	We have its inner Hom
	\begin{align*}
		\hH om(\alpha^{\ast}\eE_1, \alpha^{\ast}\eE_2) \in D_{\qcoh}(\fU),
		\end{align*}
	determined by the property that for any $\fF \in D_{\qcoh}(\fU)$ we have 
	\begin{align*}
		\Hom_{D_{\qcoh}(\fU)}(\fF, 	\hH om(\alpha^{\ast}\eE_1, \alpha^{\ast}\eE_2))
		\cong \Hom_{\Ind \left( \Dbc(\fU)/\cC_{\alpha^{\ast}\zZ} \right)}
		(\fF \otimes \alpha^{\ast}\eE_1, \alpha^{\ast}\eE_2). 
		\end{align*}
	Since $\alpha^{\ast}\eE_1$ is a compact object in 
	$\Ind(\Dbc(\fU)/\cC_{\alpha^{\ast}\zZ})$, the assignment 
	$\eE_2 \mapsto \hH om(\alpha^{\ast}\eE_1, \alpha^{\ast}\eE_2)$ 
	preserves colimit. Moreover for 
	each smooth morphism $\rho \colon \fU' \to \fU$ as in 
	the diagram (\ref{dia:smooth}), 
	we have the natural morphism 
	\begin{align*}
		\rho^{\ast} \hH om(\alpha^{\ast}\eE_1, \alpha^{\ast}\eE_2) 
		\to \hH om({\alpha'}^{\ast}\eE_1, {\alpha'}^{\ast}\eE_2)
		\end{align*}
	which is an isomorphism by the argument of~\cite[Lemma~3.4.4]{MR3037900}. 
	Therefore we have the object
	\begin{align*}
	\hH om(\eE_1, \eE_2) =\{\hH om(\alpha^{\ast}\eE_1, \alpha^{\ast}\eE_2)\}_{\fU \stackrel{\alpha}{\to} \fM}
	 \in D_{\qcoh}(\fM). 
		\end{align*}
	Moreover the assignment 
	$\eE_2 \mapsto \hH om(\eE_1, \eE_2)$ 
	preserves colimit. 
	We have 
	\begin{align*}
		\Hom(\eE_1, \eE_2)=
		\Gamma(\fM, \hH om(\eE_1, \eE_2)). 
		\end{align*}
	As $\fM$ is QCA, 
	the functor $\Gamma(\fM, -)$ preserves colimit by~\cite[Theorem~1.4.2]{MR3037900}. 
	Therefore the assignment 
	$\eE_2 \mapsto \Hom(\eE_1, \eE_2)$ preserves colimit, 
	which implies that $\eE_1$ is a compact object
	in the RHS of (\ref{compact:lim}). 
\end{proof}
In general, it is not known whether $\Ind \cC_{\zZ}$ is compactly 
generated or not (see~\cite[Remark~8.12]{MR3300415}). 
This is the case for a derived stack in Definition~\ref{def:tuple} by 
Lemma~\ref{lem:gcomplete}, so we obtain the following: 

\begin{cor}\label{cor:UGlimit}
	Let $\fM=[\fU/G]$ be a derived stack as in Definition~\ref{def:tuple}, 
	and $\zZ \subset t_0(\Omega_{\fM}[-1])$ a conical 
	closed substack. 
	Then the natural functor
	\begin{align*}
		\Dbc([\fU/G])/\cC_{\zZ} \to
		\lim_{\fU'\stackrel{\alpha'}{\to} [\fU/G]}
		\left(\Dbc(\fU')/\cC_{{\alpha'}^{\ast}\zZ}\right)
	\end{align*}
	is fully-faithful with dense image. 
	Here $\fU'$ is an affine derived scheme of the form (\ref{frak:U}),
	and $\alpha'$ is a smooth morphism. 
\end{cor}
\begin{proof}
	The corollary follows from Lemma~\ref{lem:gcomplete} and 
	Proposition~\ref{lem:indDT}. 
\end{proof}

\subsubsection{Restrictions to open substacks}
Let $\fM$ be a quasi-smooth and QCA derived stack 
with classical truncation $\mM=t_0(\fM)$. 
Let $\wW \subset \mM$ be a closed 
substack, and take the  
derived open substack
$\mathfrak{M}_{\circ} \subset \mathfrak{M}
$
whose truncation is $\mM \setminus \wW$. 
We define 
$\Ind D^{b}_{\rm{coh}}(\mathfrak{M})_{\wW}$ to be the kernel 
of the restriction functor
\begin{align}\notag
	\Ind D^{b}_{\rm{coh}}(\mathfrak{M})_{\wW} \cneq 
	\Ker \left( \Ind D^{b}_{\rm{coh}}(\mathfrak{M}) \to 
	\Ind D^{b}_{\rm{coh}}(\mathfrak{M}_{\circ})   \right),
\end{align}
and set $\Dbc(\fM)_{\wW} \cneq \Ind \Dbc(\fM)_{\wW} \cap \Dbc(\fM)$. 
Let $p_0 \colon \nN=t_0(\Omega_{\fM}[-1]) \to \mM$ be the projection 
and set $\zZ=p_0^{-1}(\wW)$. 
By~\cite[Corollary~4.1.2]{MR3300415}, 
we have 
\begin{align}\label{eq:IndCW}
	\Ind \cC_{\zZ}=\Ind \Dbc(\fM)_{\wW}, \ 
	\cC_{\zZ}=\Dbc(\fM)_{\wW},
	\end{align}
	as subcategories in $\Ind \Dbc(\fM)$. 
	We have the following generalization of Theorem~\ref{thm:QCA}: 
\begin{lem}\label{lem:compactW}
	The category $\Ind \Dbc(\fM)_{\wW}$ is compactly generated 
	with compact objects $\Dbc(\fM)_{\wW}$. 
	\end{lem}
\begin{proof}
Using the fact that $\Coh(\mM)_{\wW} \cneq \Ind \Dbc(\fM)_{\wW} \cap \Coh(\mM)$
is the heart of a t-structure on $\Dbc(\fM)_{\wW}$, 
the same argument of~\cite[Proposition~3.5.1]{MR3037900}
shows that $\Coh(\mM)_{\wW}$ generates 
$\Ind \Dbc(\fM)_{\wW}$. 
Then the arguments of~\cite[Theorem~3.3.5]{MR3037900}
apply 
verbatim to show that $\Ind \Dbc(\fM)_{\wW}$ is 
compactly generated with compact objects
$\Dbc(\fM)_{\wW}$. 	
We give the argument for the completeness. 

We show that $\Coh(\mM)_{\wW}$ generates
$\Ind \Dbc(\fM)_{\wW}$. 
Since $\Ind \Dbc(\fM)_{\wW}$ is generated by the image of 
$\iota_{\ast} \colon 
\Ind \Dbc(\mM)_{\wW} \to \Ind \Dbc(\fM)_{\wW}$ where 
$\iota \colon \mM \hookrightarrow \fM$ is the
natural closed immersion,
we can assume that $\fM=\mM$. 
By the similar argument, we can also assume that $\mM$ is reduced. 
Then as $\mM$ is QCA, it admits a finite stratification $\mM_i$ 
such that each strata $\mM_i$ is smooth. 
Let $\wW_i=\wW \cap \mM_i$. 
For each strata $\mM_i$, 
the category $\Dbc(\mM_i)_{\wW_i}$
is compactly generated with compact objects 
$\Dbc(\mM_i)_{\wW_i}$ by~\cite[Corollary~9.2.7, 9.2.8]{MR3300415}. 
Since any object in the latter category is a successive extension of objects in 
$\Coh(\mM_i)_{\wW_i}$ up to shift, 
the assertion holds on $\mM_i$. 
Therefore by induction, it is enough to show the following:
for a closed substack $\mM_1 \subset \mM$ and 
$\mM_2 \cneq \mM \setminus \mM_1$, 
if $\Ind \Dbc(\mM_i)_{\wW_i}$
is generated by $\Coh(\mM_i)_{\wW_i}$, 
then $\Ind \Dbc(\mM)_{\wW}$ is generated by $\Coh(\mM)_{\wW}$. 

For $\fF \in \Ind \Dbc(\mM)_{\wW}$, suppose that 
$\Hom(\eE, \fF)=0$ for all $\eE \in \Coh(\mM)_{\wW}$. 
We need to show that $\fF=0$. 
Let $j_i \colon \mM_i \to \mM$ be the inclusion. 
We have the distinguished triangle
\begin{align*}
	j_{1\ast}^{\ind}j_1^!(\fF) \to \fF \to j_{2\ast}^{\ind}j_2^{\ind \ast}\fF. 
	\end{align*}
For any $\fF' \in \Coh(\mM_1)_{\wW_1}$, we have 
\begin{align*}
	\Hom_{\mM_1}(\fF', j_1^!(\fF))=\Hom_{\mM}(j_{1\ast}^{\ind}\fF', \fF)=0
	\end{align*}
since $j_{1\ast}^{\ind}\fF' \in \Coh(\mM)_{\wW}$. 
Therefore $j_1^!(\fF)=0$, so the natural map 
$\fF \to j_{2\ast}^{\ind} j_2^{\ind \ast}\fF$
is an isomorphism. 
In particular for every $\eE \in \Ind \Dbc(\fM)_{\wW}$, we have 
the isomorphism
\begin{align*}
	\Hom_{\mM}(\eE, \fF) \stackrel{\cong}{\to} \Hom_{\mM_2}(j_2^{\ind\ast}\eE, j_2^{\ind\ast}\fF). 
	\end{align*}
Since $j_{2}^{\ast} \colon \Coh(\mM)_{\wW} \to \Coh(\mM_2)_{\wW_2}$ is essentially 
surjective (see the argument 
of~\cite[Corollary~15.5]{MR1771927}), we also conclude that $j_{2}^{\ind\ast}\fF=0$, 
therefore $\fF=0$. 

It remains to show that compact objects of $\Ind \Dbc(\fM)_{\wW}$ 
coincide with $\Dbc(\fM)_{\wW}$. Since 
the inclusion $\Ind \Dbc(\fM)_{\wW} \hookrightarrow \Ind \Dbc(\fM)$
admits a continuous right adjoint (see (\ref{singular:adjoint})), we have 
\begin{align*}
	(\Ind \Dbc(\fM)_{\wW})^{\rm{cp}}=(\Ind \Dbc(\fM))^{\rm{cp}} \cap \Ind \Dbc(\fM)_{\wW}. 
	\end{align*}
Therefore the claim holds from Theorem~\ref{thm:QCA}. 
	\end{proof}

\begin{rmk}
	For a conical closed substack $\zZ \subset t_0(\Omega_{\fM}[-1])$, 
	the subcategory $\Ind \cC_{\zZ} \cap \Coh(\mM) \subset \cC_{\zZ}$ is not
	necessary
	the heart of a t-structure on $\cC_{\zZ}$ in general, so the above 
	argument does not apply 
	to the compact generation of $\Ind \cC_{\zZ}$. 
	In Subsection~\ref{subsub:cpt} we will show the compact 
	generation of $\Ind \cC_{\zZ}$ when $\mM$ admits a good moduli space. 
\end{rmk}

\begin{lem}\label{lem:localization}
	The sequence of triangulated categories
	\begin{align}\notag
		D^{b}_{\rm{coh}}(\mathfrak{M})_{\wW} \to
		D^{b}_{\rm{coh}}(\mathfrak{M}) \to 
		D^{b}_{\rm{coh}}(\mathfrak{M}_{\circ})
	\end{align}
	is a localization sequence. 
	\end{lem}
\begin{proof}
	By~\cite[Section~4.1]{MR3136100}, 
	the sequence of triangulated categories 
	\begin{align}\notag
		\Ind \Dbc(\fM)_{\wW} \to \Ind \Dbc(\fM) \to 
		\Ind \Dbc(\fM_{\circ})
	\end{align}	
	is a localization sequence. 
	Therefore by~Theorem~\ref{thm:quotient}, Theorem~\ref{thm:QCA} 
	and Lemma~\ref{lem:compactW}, we have the induced 
	fully-faithful 
	functor
	\begin{align}\label{quotient:surj}
		D^b_{\rm{coh}}(\mathfrak{M})/D_{\rm{coh}}^b(\mathfrak{M})_{\wW}
		\hookrightarrow D^b_{\rm{coh}}(\mathfrak{M}_{\circ}). 
	\end{align}
	It is enough to show that the above 
	functor is essentially surjective. 
	Note that cohomology sheaves of an object in the RHS of (\ref{quotient:surj})
	is a coherent sheaf on the classical stack $\mM \setminus \wW$. 
Also 
	any coherent sheaf on the classical stack $\mM \setminus \wW$
		extends to a coherent sheaf on $\mM$
	(see~\cite[Corollary~15.5]{MR1771927}). 
	Since the LHS of (\ref{quotient:surj}) is a triangulated subcategory, 
	it follows that 
	the restriction functor in (\ref{quotient:surj}) is essentially surjective. 
\end{proof}

Let 
us take a conical closed substack $\zZ \subset \nN=t_0(\Omega_{\fM}[-1])$, 
and use notation in Subsection~\ref{subsub:replace}. 
We have the following lemma: 
\begin{lem}\label{lem:exact:Z}
	We have the localization sequence
	\begin{align*}
		D^b_{\rm{coh}}(\mathfrak{M})_{\wW}/\cC_{\zZ \cap p_0^{-1}(\wW)} \to 
		\mathcal{DT}^{\mathbb{C}^{\ast}}(\nN^{\rm{ss}}) \to 
		\mathcal{DT}^{\mathbb{C}^{\ast}}(\nN^{\rm{ss}}_{\circ}). 
	\end{align*}
\end{lem}
\begin{proof}
	We have the following commutative diagram
	\begin{align*}
		\xymatrix{
			D_{\rm{coh}}^b(\mathfrak{M})_{\wW} \ar@<-0.3ex>@{^{(}->}[r]  & 
			D_{\rm{coh}}^b(\mathfrak{M}) \ar@{->>}[r]
			& D^b_{\rm{coh}}(\mathfrak{M}_{\circ}) \\
			\cC_{\zZ \cap p_0^{-1}(\wW)} \ar@<-0.3ex>@{^{(}->}[r]
			\ar@<-0.3ex>@{^{(}->}[u] & \cC_{\zZ} \ar@{->>}[r] 
			\ar@<-0.3ex>@{^{(}->}[u] & \cC_{\zZ_{\circ}}.
			\ar@<-0.3ex>@{^{(}->}[u] 
		}
	\end{align*}
	Here each horizontal sequence is a localization sequence by Lemma~\ref{lem:localization}. 
	By taking quotients, we obtain the desired localization sequence. 
\end{proof}

When $\zZ=p_0^{-1}(\wW)$
for the projection $p_0 \colon \nN \to \mM$, 
we have a simpler description of 
the quotient category (\ref{quot:D}) by the following lemma: 

\begin{lem}\label{lem:dstack:support}
	Suppose that $\zZ=p_0^{-1}(\wW)$. 
	Then the 
	restriction functors give equivalences
	\begin{align}\notag
		\mathcal{DT}^{\mathbb{C}^{\ast}}(\nN^{\rm{ss}})
		\stackrel{\sim}{\to} \wdDT(\nN^{\rm{ss}}) \stackrel{\sim}{\to}
		D^b_{\rm{coh}}(\mathfrak{M}_{\circ}). 
	\end{align}
\end{lem}
\begin{proof}
The equivalence $\dDT^{\C}(\nN^{\rm{ss}}) \stackrel{\sim}{\to} \Dbc(\fM_{\circ})$ follows 
	from the identity $\cC_{\zZ}=\Dbc(\fM)_{\wW}$ in (\ref{eq:IndCW})  
	together with 
	Lemma~\ref{lem:localization}. 
	In particular $\dDT^{\C}(\nN^{\rm{ss}})$ is idempotent complete. 
	Then the equivalence $\dDT^{\C}(\nN^{\rm{ss}}) \stackrel{\sim}{\to}
	\wdDT^{\C}(\nN^{\rm{ss}})$
	follows from Proposition~\ref{lem:indDT} and Lemma~\ref{lem:compactW}. 
\end{proof}

We next consider the case
that $p_0^{-1}(\wW) \subset \zZ$. 
In this case, the open immersion (\ref{open:N}) is an isomorphism. 
\begin{lem}\label{lem:replace}
	When $p_0^{-1}(\wW) \subset \zZ$,  
	the restriction functors
	give equivalences
	\begin{align*}
		\mathcal{DT}^{\mathbb{C}^{\ast}}
		(\nN^{\rm{ss}}) \stackrel{\sim}{\to}
		\mathcal{DT}^{\mathbb{C}^{\ast}}
		(\nN_{\circ}^{\rm{ss}}), \ 
		\wdDT^{\C}
		(\nN^{\rm{ss}}) \stackrel{\sim}{\to}
		\wdDT^{\C}
		(\nN_{\circ}^{\rm{ss}})
	\end{align*}
\end{lem}
\begin{proof}
	The assumption together with  
	Lemma~\ref{lem:dstack:support}
	imply that 
	\begin{align*}
		\cC_{\zZ \cap p_0^{-1}(\wW)}=\cC_{p_0^{-1}(\wW)}=D^b_{\rm{coh}}(\mathfrak{M})_{\wW}. 
	\end{align*}
	Therefore the first equivalence holds from Lemma~\ref{lem:exact:Z}. 
	
	For a smooth morphism $\alpha \colon \fU \to \fM$,
	by setting $\fU_{\circ}\cneq \fU \setminus \alpha^{-1}(\wW)$, the first equivalence 
	gives an equivalence 
	\begin{align*}
		\Dbc(\fU)/\cC_{\alpha^{\ast}\zZ} \stackrel{\sim}{\to}
		\Dbc(\fU_{\circ})/\cC_{\alpha^{\ast}\zZ_{\circ}}.
	\end{align*}
	Therefore the second equivalence holds by taking the limit. 
\end{proof}
\subsection{Compact generation of $\Ind \cC_{\zZ}$}
Let $\fM$ be a quasi-smooth QCA derived stack over $\mathbb{C}$ 
whose classical truncation $\mM=t_0(\fM)$ admits a good moduli space
$\pi_{\mM} \colon \mM \to M$. 
Here we prove the compact
generation of $\Ind \cC_{\zZ}$ for a conical 
closed substack $\zZ \subset t_0(\Omega_{\fM}[-1])$. 
We note that when $\zZ=\mM \subset t_0(\Omega_{\fM}[-1])$ is the zero section, 
then 
\begin{align*}
	\Ind \cC_{\mM}=D_{\qcoh}(\fM)
\end{align*}
and its compact generation is proved in~\cite[Theorem~A]{Hperf}, 
as $\fM$ is s-global type by Theorem~\ref{thm:AHR}. 
Indeed we will see that the same argument applies almost 
verbatim to show the compact 
generating of $\Ind \cC_{\zZ}$.  
\subsubsection{Presheaves of triangulated categories}
Let $\mM=t_0(\fM)$ be the classical truncation of $\fM$, and 
assume that it admits a good moduli space
\begin{align*}
	\pi_{\mM} \colon \mM \to M. 
\end{align*}
Let $\dD_{{\rm{\acute{e}t}}/M}$ be the category of 
\'{e}tale morphisms $\iota \colon U \to M$
as in Subsection~\ref{subsubsec:gmoduli}. 
Then we have the associated diagram (\ref{Cartesian:U}), so 
in particular a derived stack $\fM_U$ together with 
a morphism $\iota_{\fM} \colon \fM_U \to \fM$. 
For a conical closed substack $\zZ \subset t_0(\Omega_{\fM}[-1])$
and $(\iota \colon U \to M) \in \dD_{{\rm{\acute{e}t}}/M}$, 
we set 
\begin{align*}
	\tT_{\zZ}(U) \cneq \Ind \cC_{\iota_{\fM}^{\ast}\zZ} \subset \Ind \Dbc(\fM_U). 
\end{align*}
For a morphism $\rho \colon U' \to U$ in $\dD_{{\rm{\acute{e}t}}/M}$, 
from the diagram (\ref{dia:rho})
we have adjoint pairs
\begin{align*}
	\xymatrix{
		\tT_{\zZ}(U) \ar@<0.5ex>[r]^{\rho^{\ast}} & \tT_{\zZ}(U') \ar@<0.5ex>[l]^{\rho_{\ast}}
	}, \quad \rho^{\ast} \cneq \rho_{\fM}^{\ind\ast}, 
\rho_{\ast} \cneq \rho_{\fM\ast}^{\ind}, \quad \rho^{\ast} \dashv \rho_{\ast}. 
\end{align*}
Therefore $U \mapsto \tT_{\zZ}(U)$ is a $\dD_{{\rm{\acute{e}t}}/M}$-presheaf of 
triangulated categories with adjoints
in the sense of~\cite[Section~5]{Hperf}. 
For an open immersion $U_{\circ} \subset U$, 
we set
\begin{align*}
	\tT_{\zZ}(U)_{U\setminus U^{\circ}} &\cneq 
	\Ker(\tT_{\zZ}(U) \to \tT_{\zZ}(U_{\circ})) \\
	&=\Ind \cC_{\iota_{\fM}^{\ast}\zZ \cup p_U^{\ast}(\mM_U \setminus \mM_{U_{\circ}})}
\end{align*}
where $p_U \colon t_0(\Omega_{\fM_U}[-1]) \to \mM_U$ is the projection. 

\begin{prop}\label{prop:sheaf}
	The presheaf of triangulated categories $\tT_{\zZ}$ satisfies the 
	following properties. 
	
	(i) For $(\iota \colon U \to M) \in \dD_{{\rm{\acute{e}t}}/M}$, 
	the category $\tT_{\zZ}(U)$ is closed under small 
	coproduct. 
	
	(ii) For any morphism $\rho \colon U' \to U$ in $\dD_{{\rm{\acute{e}t}}/M}$, 
	the right adjoint $\rho_{\ast}$ preserves small coproducts. 
	
	(iii) For every Cartesian square in $\dD_{{\rm{\acute{e}t}}/M}$
	\begin{align*}
		\xymatrix{
			V' \ar[r]^-{g'} \ar[d]_-{\rho_V} \diasquare & U' \ar[d]^-{\rho} \\
			V \ar[r]^-{g} & U
		}
	\end{align*}
	the natural transformation 
	$g^{\ast}\rho_{\ast} \to \rho_{V\ast}(g')^{\ast}$ is an isomorphism. 
	
	(iv) For any open immersion $U_{\circ} \subset U$
	and an \'{e}tale neighborhood $\rho \colon U' \to U$ of $U \setminus U_{\circ}$, 
	the pull-back $\rho^{\ast}$ induces an equivalence 
	$\rho^{\ast} \colon \tT_{\zZ}(U)_{U \setminus U_{\circ}} \stackrel{\sim}{\to}
	\tT_{\zZ}(U')_{U' \setminus U_{\circ}'}$. 
	
	(v) For every finite faithfully flat morphism 
	$\rho \colon U' \to U$, the functor 
	$\rho_{\ast} \colon \tT_{\zZ}(U') \to \tT_{\zZ}(U)$
	admits a right adjoint $\rho^{!}$ that preserves
	small coproducts, conservative, and commutes with pull-back along 
	open immersions. 
\end{prop}
\begin{proof}
	(i) For any quasi-smooth affine derived scheme $\fU$, 
	the category $\Ind \Dbc(\fU)$ is co-complete, and the 
	subcategory of fixed singular supports are closed under direct 
	sums as it is a left orthogonal of some subcategory in $\Ind \Dbc(\fU)$
	(see~\cite[Section~3.1.4]{MR3300415}). 
	Then (i) follows from the definition of $\Ind \cC_{\zZ}$ as a limit (\ref{lim:ind}). 
	
	(ii) Since $\rho_{\fM}$ is representable, the functor $\rho_{\ast}$ is continuous 
	by~\cite[Proposition~3.1.1]{MR3136100}.
	
	(iii) The desired isomorphism is proved in~\cite[Proposition~3.2.2]{MR3701352}. 
	
	(iv)  Let $\wW=\mM_U \setminus \mM_{U_{\circ}}$
	and $\wW'=\mM_{U'} \setminus \mM_{U'_{\circ}}$. 
	It is enough to show that the adjoint pairs 
	\begin{align}\label{adjoint:U}
		\xymatrix{
			\Ind \Dbc(\fM_U)_{\wW}
			\ar@<0.5ex>[r]^{\rho_{\fM}^{\ind\ast}} & \Ind \Dbc(\fM_{U'})_{\wW'} 
			\ar@<0.5ex>[l]^{\rho_{\fM \ast}^{\ind}}
		}
	\end{align}
	are equivalences, since both 
	functors preserve subcategories 
	$\tT_{\zZ}(U)_{U \setminus U_{\circ}}$ and $\tT_{\zZ}(U')_{U' \setminus U_{\circ}'}$. 
	It is proved in~\cite[Proposition~4.2, (1) $\Rightarrow$ (4)]{HaMay} that 
	the adjoint pairs 
	\begin{align}\notag
		\xymatrix{
			D_{\qcoh}(\fM_U)_{\wW}
			\ar@<0.5ex>[r]^{\rho_{\fM}^{\ast}} & D_{\qcoh}(\fM_{U'})_{\wW'} 
			\ar@<0.5ex>[l]^{\rho_{\fM \ast}}
		}
	\end{align}
	are equivalences. 
	As $\rho$ is an \'{e}tale neighborhood of $U \setminus U_{\circ}$, the 
	above functors restrict to 
	equivalences between 
	$\Dbc(\fM_U)_{\wW}$ and 
	$\Dbc(\fM_{U'})_{\wW'}$. 
	As both sides in (\ref{adjoint:U})
	are obtained as ind-completions of 
	$\Dbc(\fM_U)_{\wW}$ and 
	$\Dbc(\fM_{U'})_{\wW'}$ by Lemma~\ref{lem:compactW},  
	the functors in (\ref{adjoint:U}) are equivalences. 
	
	(v) As $\rho$ is proper, the right adjoint $\rho_{\fM}^!$ is continuous (see~\cite[Section~3.3.7]{MR3136100}). 
	Since $\rho$ is \'{e}tale, 
	we have $\rho_{\fM}^{!}(-)=\rho_{\fM}^{\ast}(-) \otimes \omega_{\rho}$, which is 
	obviously 
	conservative and commutes with pull-back along open immersions. 	
\end{proof}

\subsubsection{Compact generation of $\Ind \cC_{\zZ}$}\label{subsub:cpt}
In the setting of the previous subsection, we have the following: 
\begin{thm}\label{thm:compact}
	Let $\fM$ be a quasi-smooth and QCA derived stack
	such that $\mM=t_0(\fM)$ admits a good moduli space 
	$\pi_{\mM} \colon \mM \to M$. Then 
	the category $\Ind \cC_{\zZ}$ is compactly generated with 
	compact objects $\cC_{\zZ}$. 
\end{thm}
\begin{proof}
	We note that the conditions in Proposition~\ref{prop:sheaf}
	are exactly the assumptions imposed in~\cite[Theorem~6.9]{Hperf}, 
	so the compact generation of $\Ind \cC_{\zZ}$ follows 
	from the argument of \textit{loc.~cit.~} almost 
	verbatim. 
	More precisely, let $\dD_{{\rm{\acute{e}t}}/M}^{\rm{cp}} \subset \dD_{{\rm{\acute{e}t}}/M}$
	be the subcategory of $\iota \colon U \to M$ such that 
	$\tT_{\zZ}(U)$ is compactly generated. 
	If $\iota \colon U \to M$ satisfies the condition in Theorem~\ref{thm:AHR}, 
	then $\fM_U$ is of the form $[\fU/G]$ as in Proposition~\ref{prop:extend}, 
	so $\tT_{\zZ}(U)$ is compactly generated by Lemma~\ref{lem:gcomplete}. 
	Therefore there is a separated, quasi-finite and faithfully-flat 
	morphism $\iota \colon U \to M$
	such that $(\iota \colon U \to M) \in \dD_{{\rm{\acute{e}t}}/M}^{\rm{cp}}$. 
	By applying~\cite[Theorem~6.1]{HaMay},
	we have $(\id \colon M \to M) \in \dD_{{\rm{\acute{e}t}}/M}^{\rm{cp}}$
	if we have the followings: 
	
	(D1) For any morphism $\rho \colon U' \to U$ in $\dD_{{\rm{\acute{e}t}}/M}$, 
	if $(U \to M) \in \dD_{{\rm{\acute{e}t}}/M}^{\rm{cp}}$, 
	then $(U' \to M) \in \dD_{{\rm{\acute{e}t}}/M}^{\rm{cp}}$. 
	
	(D2) For any finite 
	morphism $\rho \colon U' \to U$ in $\dD_{{\rm{\acute{e}t}}/M}$, 
	if $(U' \to M) \in \dD_{{\rm{\acute{e}t}}/M}^{\rm{cp}}$, 
	then $(U \to M) \in \dD_{{\rm{\acute{e}t}}/M}^{\rm{cp}}$. 
	
	(D3) For any open immersion $U_{\circ} \subset U$
	and an \'{e}tale neighborhood $\rho \colon U' \to U$ of $U \setminus U_{\circ}$, if $(U_{\circ} \to M)$, $(U_{\circ}' \to M)$ 
	and $(U' \to M)$ are objects in $\dD_{{\rm{\acute{e}t}}/M}^{\rm{cp}}$, 
	then $(U \to M) \in \dD_{{\rm{\acute{e}t}}/M}^{\rm{cp}}$. 
	
	The claim for (D1) holds
	from~\cite[Example~3.11]{Hperf}, 
	since 
	$\rho_{\ast} \colon \tT_{\zZ}(U') \to \tT_{\zZ}(U)$ is 
	continuous and conservative. 
	Indeed $\rho_{\fM}^{\ast}\tT_{\zZ}(U)^{\rm{cp}}$ are compact 
	objects and generate
	$\tT_{\zZ}(U')$. 
	The claim for (D2) follows from~\cite[Proposition~6.9]{Hperf}, 
	as we have properties (i)-(v) in Proposition~\ref{prop:sheaf}. 
	Indeed $\rho_{\ast}(\tT_{\zZ}(U'))$ are compact objects
	and generate $\tT_{\zZ}(U)$. 
	The claim for (D3) follows from~\cite[Proposition~6.8]{Hperf}
	by setting $V=\emptyset$ in~\textit{loc.~cit.~}. 
	
	The above argument shows that $\Ind \cC_{\zZ}$ is compactly generated. 
	It remains to show that 
	the subcategory of compact objects
	$(\Ind \cC_{\zZ})^{\rm{cp}}$ coincide with $\cC_{\zZ}$. 	
	Since $\Dbc(\fM)=(\Ind \Dbc(\fM))^{\rm{cp}}$
	by Theorem~\ref{thm:QCA}, 
	we have 
	\begin{align*}
		\cC_{\zZ} \subset (\Ind \Dbc(\fM))^{\rm{cp}}
		\cap \Ind \cC_{\zZ} \subset (\Ind \cC_{\zZ})^{\rm{cp}}. 
	\end{align*} 
	As for the converse direction, let us take 
	$\eE \in  (\Ind \cC_{\zZ})^{\rm{cp}}$ 
	and a smooth morphism $\alpha \colon \fU \to \fM$ for 
	a quasi-smooth affine derived scheme $\fU$. 
	It is enough to show that $\alpha^{\ast}\eE \in \cC_{\alpha^{\ast}\zZ}$.
	Since $\alpha^{\ind}_{\ast} \colon \Ind \Dbc(\fU) \to \Ind \Dbc(\fM)$
	is continuous, 
	its restriction 
	$\alpha^{\ind}_{\ast} \colon 
	\Ind \cC_{\alpha^{\ast}\zZ} \to \Ind \cC_{\zZ}$ is 
	also continuous. 
	Therefore $\alpha^{\ast}\eE \in (\Ind \cC_{\alpha^{\ast}\zZ})^{\rm{cp}}
	=\cC_{\alpha^{\ast}\zZ}$, where the latter identity 
	follows from Lemma~\ref{lem:gcomplete}. 
\end{proof}

We keep the setting of Theorem~\ref{thm:compact}.
As in the proof of Theorem~\ref{thm:equivalence2}, 
we denote by $\dD'_{{\rm{\acute{e}t}}/M} \subset \dD_{{\rm{\acute{e}t}}/M}$ the 
subcategory of 
\'{e}tale morphisms $\iota \colon U \to M$ 
satisfying the condition in Theorem~\ref{thm:AHR}. 
For a morphism $\rho \colon U' \to U$ in $\dD'_{{\rm{\acute{e}t}}/M}$, 
we have the Cartesian diagrams (\ref{dia:rho}). 
In particular, we have the pull-back functor 
\begin{align*}
	\rho_{\fM}^{\ast} \colon 
	\Dbc(\fM_U)/\cC_{\iota_{\fM}^{\ast}\zZ} \to \Dbc(\fM_{U'})/\cC_{\iota_{\fM}^{'\ast}\zZ}
\end{align*}
so that we can take their limit. 
\begin{lem}\label{lem:inter}
	We have a fully-faithful functor
	\begin{align}\label{DT:inter}
		\dDT^{\C}(\nN^{\rm{ss}}) \hookrightarrow 
		\lim_{(U \stackrel{\iota}{\to}M)\in \dD'_{{\rm{\acute{e}t}}/M}}
		\left( \Dbc(\fM_U)/\cC_{\iota_{\fM}^{\ast}\zZ} \right). 
	\end{align}
\end{lem}
\begin{proof}
	Using the result of Theorem~\ref{thm:compact}, 
	the proof is pararell to that of Proposition~\ref{lem:indDT}. 
	As in the proof of Proposition~\ref{lem:indDT}, 
	for each $(U \stackrel{\iota}{\to}M) \in \dD'_{{\rm{\acute{e}t}}/M}$
	the sequence 
	\begin{align}\label{localizing:MU}
		\Ind \cC_{\iota_{\fM}^{\ast}\zZ} \to 
		\Ind \Dbc(\fM_U) \to 
		\Ind\left( \Dbc(\fM_U)/\cC_{\iota_{\fM}^{\ast}\zZ}  \right)
		\end{align}
	is a localization sequence 
	such that the right arrow admits a functorial right adjoint. 
As the union of $\iota_{\fM} \colon \fM_U \to \fM$ is an 
\'{e}tale cover of $\fM$, we have the equivalences
\begin{align*}
	\Ind \Dbc(\fM)
	\stackrel{\sim}{\to}	
		\lim_{(U \stackrel{\iota}{\to}M)\in \dD'_{{\rm{\acute{e}t}}/M}}
		\Ind \Dbc(\fM_U), \ 
		\Ind \cC_{\zZ}
		\stackrel{\sim}{\to}	
		\lim_{(U \stackrel{\iota}{\to}M)\in \dD'_{{\rm{\acute{e}t}}/M}}
		\Ind \cC_{\iota_{\fM}^{\ast}\zZ}.
	\end{align*}
Therefore by taking the limit of 
(\ref{localizing:MU}) for all $U \to M$
in $\dD'_{{\rm{\acute{e}t}}/M}$, we obtain the 
localizing sequence
\begin{align*}
	\Ind \cC_{\zZ} \to \Ind \Dbc(\fM) \to 
		\lim_{(U \stackrel{\iota}{\to}M)\in \dD'_{{\rm{\acute{e}t}}/M}}
		\Ind\left( \Dbc(\fM_U)/\cC_{\iota_{\fM}^{\ast}\zZ}  \right). 
	\end{align*}
By Theorem~\ref{thm:quotient} and Theorem~\ref{thm:compact}, 
we have the fully-faithful functor 
\begin{align*}
	\Dbc(\fM)/\cC_{\zZ}\hookrightarrow 
	\lim_{(U \stackrel{\iota}{\to}M)\in \dD'_{{\rm{\acute{e}t}}/M}}
\Ind\left( \Dbc(\fM_U)/\cC_{\iota_{\fM}^{\ast}\zZ}  \right). 
	\end{align*}
We have the natural functors
\begin{align*}
	\Dbc(\fM)/\cC_{\zZ}
	\to \lim_{(U \stackrel{\iota}{\to}M)\in \dD'_{{\rm{\acute{e}t}}/M}}
	(\Dbc(\fM_U)/\cC_{\iota_{\fM}^{\ast}\zZ})
	\to \lim_{(U \stackrel{\iota}{\to}M)\in \dD'_{{\rm{\acute{e}t}}/M}}
	\Ind\left( \Dbc(\fM_U)/\cC_{\iota_{\fM}^{\ast}\zZ}  \right).
	\end{align*}
Since the above composition and the second arrow are fully-faithful, 
the first arrow is also fully-faithful. 
\end{proof}

\subsection{Some technical lemmas in derived algebraic geometry}
\subsubsection{Equivariant affine derived schemes}
Let $G$ be a reductive algebraic group. 
We denote by 
$cdga^G$ 
the $\infty$-category of $G$-equivariant 
cdga's
consisting of non-positive degrees. 
Here a $G$-equivariant cdga is a cdga 
which admits a $G$-action and satisfies obvious 
compatibility with differentials and products. 
The $\infty$-category of 
$G$-equivariant affine derived schemes
is defined by
\begin{align*}
	dAff^G \cneq W^{-1}(cdga^G)^{\rm{op}}.
\end{align*}
Here $W^{-1}(-)$ means $\infty$-categorical localization by 
weak homotopy equivalences (cf.~\cite[Section~2.2]{MR3285853}). 
Let $dSt/BG$ be the $\infty$-category of derived stacks over $BG$. 
We have the natural $\infty$-functor
\begin{align*}
	\Pi \colon dAff^G \to dSt/BG, \ 
	\fU \mapsto [\fU/G]. 
\end{align*}
We use the following two propositions:
\begin{prop}\label{prop:cdga1}
	Let $\fM$ be a quasi-smooth derived stack over $BG$
	such that $t_0(\fM)=[\uU/G]$ for an affine scheme $\uU$ 
	with $G$-action. 
	Then there exists a 
	$G$-equivariant 
	tuple $(Y, V, s)$
	as in Definition~\ref{def:tuple}
	and an equivalence 
	$\fM \sim \Pi(\fU)$ 
	where $\fU=\Spec \rR(V \to Y, s)$ is an object in 
	$dAff^G$. 
\end{prop}
\begin{proof}
	Note that we have a sequence of square-zero extensions
	\begin{align*}
		\mM=t_0(\mathfrak{M}) \hookrightarrow
		t_{\le1}(\mathfrak{M}) \hookrightarrow t_{\le 2}(\mathfrak{M})
		\hookrightarrow
		\cdots
	\end{align*}
	such that $t_{\le n}(\fM) \hookrightarrow \fM$ is an equivalence 
	for $n\gg 0$. 
	Let us take distinguished triangles
	\begin{align*}
		\iI_n \to \oO_{t_{\le n+1}(\fM)} \to \oO_{t_{\le n}(\fM)}. 
	\end{align*}
	Then the square zero extension 
	$t_{\le n}(\fM) \hookrightarrow t_{\le n+1}(\fM)$
	in $dSt/BG$ corresponds 
	to an element in 
	(see~\cite[Section~1]{MR3701353})
	\begin{align}\label{AC:hom}
		\Hom_{t_{\le n}(\fM)}(\mathbb{L}_{t_{\le n}(\fM)/BG}, \iI_n[1]). 
	\end{align}
	Suppose that $t_{\le n}(\fM)$ is equivalent to 
	$\Pi(\fU_{\le n})$ for some $\fU_{\le n} \in dAff^G$. 
	Then as $G$ is reductive, 
	the Hom space in (\ref{AC:hom}) is isomorphic to 
	$\Hom_{\fU_{\le n}}(\mathbb{L}_{\fU_{\le n}}, \iI_n[1])^G$. 
	An element of the above 
	corresponds to a square zero extension 
	$\fU_{\le n} \hookrightarrow \fU_{\le n+1}$
	in $dAff^G$. Therefore $t_{\le n+1}(\fM)$ is equivalent to 
	$\Pi(\fU_{\le n+1})$ for some $\fU_{\le n+1} \in dAff^G$. 
	By the induction, we see that 
	$\fM$ is equivalent to $[\fU/G]$ for some $\fU \in dAff^G$. 
	
	We are left to prove that $\fU$ is equivalent in $dAff^G$ 
	to an affine 
	derived scheme 
	$\Spec \rR(V\to Y, s)$ associated with a $G$-equivariant 
	tuple $(Y, V, s)$ as in Definition~\ref{def:tuple}. 
	We prove this by following an argument of~\cite[Theorem~4.1]{BBJ}. 
	Similarly to \textit{loc.~cit.~}, below we will not fix a particular model 
	for the $G$-equivariant cdga $\oO_{\fU}$, and regard $\fU$ as an
	object 
	in the $\infty$-category $dAff^G$.
	Also any 
	map $\fU \to \fU'$ is regarded as a morphism in 
	the $\infty$-category $dAff^G$. 
	
	As $G$ is reductive, we can find a $G$-equivariant closed embedding 
	$\uU \hookrightarrow Y$ for some smooth affine scheme $Y$ with a 
	$G$-action, 
	and let $I \subset \oO_Y$ be the ideal sheaf which defines $\uU$. 
	As $\fU$ is quasi-smooth, the natural morphism 
	of cotangent complexes 
	$\phi \colon 
	\mathbb{L}_{\mathfrak{U}}|_{\uU} \to \tau_{\ge -1} \mathbb{L}_{\uU}$
	is a $G$-equivariant 
	perfect obstruction theory on $\uU$ (see~\cite{BF}),
	i.e. 
	$\phi$ is a morphism in 
	the derived category of $G$-equivariant coherent sheaves on $\uU$, 
	$\hH^0(\phi)$ is an isomorphism and $\hH^{-1}(\phi)$ is 
	a surjection. 
	Since $\uU$ and $Y$ are affine, 
	the morphism $\phi$ is 
	represented by 
	a morphism of complexes of $G$-equivariant sheaves on $\uU$
	\begin{align*}
		\xymatrix{
			V^{\vee}|_{\uU} \ar[r] \ar[d] & \Omega_Y|_{\uU} \ar@{=}[d] \\
			I/I^2 \ar[r] & \Omega_Y|_{\uU}. 
		}
	\end{align*}
	Here $V \to Y$ is a $G$-equivariant vector bundle, 
	and the left arrow $V^{\vee}|_{\uU} \to I/I^2$ is a surjection. 
	One can lift the surjection $V^{\vee}|_{\uU} \to I/I^2$
	to a $G$-equivariant map $s \colon V^{\vee} \to I$, which we can assume to be surjective 
	by shrinking $Y$ if necessary. 
	Since $Y$ is smooth, we can lift the closed immersion 
	$\uU \hookrightarrow Y$ to 
	a map $j \colon \mathfrak{U} \to Y$ in $dAff^G$. Then the diagram
	\begin{align}\notag
		\xymatrix{
			\mathfrak{U} \ar[r]^-{j} \ar[d]_-{j}  & Y \ar[d]^-{0} \\
			Y \ar[r]^-{s} & V 
		}
	\end{align}
	is commutative in $dAff^G$ up to equivalence. 
	Therefore the above diagram induces a map 
	$\fU \to \Spec \rR(V \to Y, s)$ in $dAff^G$. 
	The above map induces the isomorphism on the classical 
	truncation, and the induced map on cotangent complexes 
	is a quasi-isomorphism by the construction. 
	Therefore $\fU$ is equivalent to
	$\Spec \rR(V \to Y, s)$
	in $dAff^G$. 
\end{proof}

\begin{prop}\label{prop:cdga2}
	For $\fU, \fU' \in dAff^G$, let 
	$f \colon \Pi(\fU) \to \Pi(\fU')$ be a morphism in 
	$dSt/BG$. 
	Then there exists a morphism 
	$\widetilde{f} \colon \fU \to \fU'$ in $dAff^G$
	such that $f \sim \Pi(\widetilde{f})$. 
\end{prop}
\begin{proof}
	The proof is similar to Proposition~\ref{prop:cdga1} using deformation 
	argument. 
	We set
	\begin{align*}
		f_{\le n} \cneq f|_{t_{\le n}(\Pi(\fU))} \colon 
		t_{\le n}(\Pi(\fU)) \to \Pi(\fU'). 
	\end{align*}
	For $n=0$, 
	$f_{\le 0}$ factors through 
	a morphism of classical Artin stacks 
	$[t_0(\fU)/G] \to [t_0(\fU')/G]$ over $BG$. 
	By pulling back via $\Spec \mathbb{C} \to BG$, 
	it lifts to a $G$-equivariant morphisms of 
	affine schemes $t_0(\fU) \to t_0(\fU')$. 
	By setting $\widetilde{f}_{\le 0}$ to be the composition of 
	the above morphism with $t_0(\fU') \hookrightarrow \fU'$, 
	we have $f_{\le 0} \sim \Pi(\widetilde{f}_{\le 0})$. 
	
	Suppose that 
	there exists $\widetilde{f}_{\le n} \colon t_{\le n}(\fU) \to \fU'$ in 
	$dAff^G$ such that 
	$f_{\le n} \sim \Pi(\widetilde{f}_{\le n})$. 
	We take the distinguished triangle 
	\begin{align*}
		\jJ_n \to \oO_{t_{\le n+1}(\Pi(\fU))} \to \oO_{t_{\le n}(\Pi(\fU))}. 
	\end{align*}
	Note that $\jJ_n$ is an object in $\Dbc(\Pi(\uU))$ for $\uU=t_0(\fU)$
	push-forward along with the closed immersion 
	$\Pi(\uU) \hookrightarrow \Pi(\fU_{\le n+1})$. 
	We set $S_{n}^i$, $T_{n}^i$ to be 
	\begin{align*}
		S_n^i \cneq \Hom_{\uU}(\widetilde{f}_{\le 0}^{\ast}(\mathbb{L}_{\fU'}|_{\uU'}),  
		p^{\ast}\jJ_n[i])^G, \ 
		T_n^i \cneq \Hom_{[\uU/G]}(f_{\le 0}^{\ast}(\mathbb{L}_{\Pi(\fU')/BG}
		|_{\Pi(\uU')}), \jJ_n[i]). 
	\end{align*}
	Here $\uU'=t_0(\fU')$ and $p \colon \uU \to [\uU/G]$.
	Since $G$ is reductive, we have 
	natural isomorphisms 
	\begin{align}\label{isom:ST}
		S_{n}^i \stackrel{\cong}{\to} T_n^i.
	\end{align}
	The obstruction of extending 
	$\widetilde{f}_{\le n}$ to $\widetilde{f}_{\le n+1}$ in $dAff^G$
	lies in 
	$S_{n}^1$, which corresponds to 
	the obstruction in $T_n^1$ 
	of extending $f_{\le n}$ to $f_{\le n+1}$
	in $dSt/BG$
	under the isomorphism (\ref{isom:ST}). 
	As $f_{\le n}$ is extended to $f_{\le n+1}$ by the assumption, 
	the above obstruction vanishes so that 
	there exists an extension $\widetilde{f}_{\le n+1}$ of $\widetilde{f}_{\le n}$ in $dAff^G$. 
	An extension of $\widetilde{f}_{\le n}$ to $\widetilde{f}_{\le n+1}$
	in $dAff^G$
	is classified by $S_n^0$, 
	while an extension of $f_{\le n}$ to $f_{\le n+1}$
	in $dSt/BG$
	is classified by $T_n^0$. 
	Therefore by the isomorphism (\ref{isom:ST}) there exists
	an extension $\widetilde{f}_{\le n+1}$ such that $\Pi(\widetilde{f}_{\le n+1}) \sim f_{\le n+1}$. 
	By the induction, we obtain 
	$\widetilde{f} \colon \fU \to \fU'$ in $dAff^G$ such that $\Pi(\widetilde{f}) \sim f$. 
\end{proof}

\subsubsection{Proof of Proposition~\ref{prop:extend}}\label{subsub:extend}
\begin{proof}
	Since the category of \'{e}tale morphisms to 
	$\mM$ is equivalent to that of 
	\'{e}tale morphisms to 
	$\fM$, there is a unique derived stack $\fM_U$ which fits into 
	the Cartesian diagram (\ref{Cartesian:U}). 
	Then by Proposition~\ref{prop:cdga1}, 
	the derived stack $\fM_U$ is equivalent to 
	$[\fU/G]$ 
	associated with a $G$-equivariant 
	tuple $(Y, V, s)$. 
	
	The final statement follows from Lemma~\ref{lem:Gchar}. 
	Indeed by shrinking $U$ if necessary, the $\mathbb{R}$-line bundle 
	$\iota_{\fM}^{\ast}(l)$ on $[\uU/G]$ is 
	pulled back  
	via $[\uU/G] \to BG$, 
	so can be 
	extended to $[Y/G]$ by taking the pull-back via 
	$[Y/G] \to BG$. 
\end{proof}

\subsubsection{Proof of Lemma~\ref{lem:closed}}\label{subsec:lem4.9}
\begin{proof}
	We take a $G$-equivariant
	closed immersion $\iota \colon Y \hookrightarrow W$
	for a smooth affine $\mathbb{C}$-scheme
	with a $G$-action. 
	We choose such $W$ which admits 
	a $G$-equivariant vector bundle $W' \to W$ with a $G$-invariant 
	regular 
	section $t \colon W \to W'$ such that its zero locus is a
	single reduced point $\{w_0\}$. 
	For example as 
	$Y$ is of finite presentation, we 
	can take $W$ to be a finite dimensional $G$-representation 
	and $W'=W \times W \to W$ with section given by the diagonal. 
	We set 
	\begin{align*}
		Y''=W \times Y', \ 
		V''=W' \times V'
	\end{align*} 
	and regard $V''$ as a $G$-equivariant vector bundle 
	on $Y''$ by the projection
	$V'' \to Y''$. 
	We have the following $G$-equivariant 
	commutative diagram
	\begin{align}\label{diagram:VYY2}
		\xymatrix{
			V \ar@{.>}[r]_-{g''} \ar[d]  \ar@/^10pt/[rr]^-{g}
			&V'' \ar[r] \ar[d] & V' \ar[d]  \inclusion^-{g'} & V'' \ar[d]  \\
			Y \ar[r]^-{f''} \ar@/^10pt/[u]^-{s} \ar@/_10pt/[rr]_-{f}
			& Y'' \ar[r]^-{p} \ar@/_10pt/[u]_-{s''}& Y' \ar@/_10pt/[u]_-{s'} 
			\inclusion_-{f'} & Y''\ar@/_10pt/[u]_-{s''}. 
		}
	\end{align}
	Here the middle horizontal arrows are projections, 
	and 
	\begin{align*}
		f''(y)=(\iota(y), f(y)), \ 
		s''(w, y')=(t(w), s'(y')), \ f'(y')=(w_0, y'), \ 
		g'(v')=(t(w_0), v').
	\end{align*}
	Note that $f'$, $f''$ are closed immersions. 
	
	Let $\fU''$ be the derived zero locus of $s''$. 
	By the constructions of $(Y'', V'', s'')$, 
	the projection $p$ 
	and the closed immersion $f'$ 
	in the diagram (\ref{diagram:VYY2})
	induce
	equivalences 
	\begin{align}\label{equiv:p}
		\bff''' \colon [\fU''/G]
		\stackrel{\sim}{\to} [\fU'/G], \ 
		\bff' \colon [\fU'/G] \stackrel{\sim}{\to} [\fU''/G]
	\end{align}
	which are 
	inverse each other in the $\infty$-category $dSt/BG$. 
	Since we assumed that the diagram (\ref{dia:VY0}) induces an equivalence 
	$[\fU/G] \stackrel{\sim}{\to} [\fU'/G]$, 
	by composing with (\ref{equiv:p}) we obtain 
	an equivalence 
	$[\fU/G] \sim [\fU''/G]$
	in $dSt/BG$. 
	By Proposition~\ref{prop:cdga2}, 
	the equivalence $[\fU/G] \sim [\fU''/G]$
	lifts to an equivalence 
	$\fU \sim \fU''$ in $dAff^G$. 
	Now we have the morphisms 
	$\fU \hookrightarrow Y \stackrel{f''}{\to} Y''$
	in $dAff^G$, 
	and as $\oO_{\fU''}$ is cofibrant over $\oO_{Y''}$ 
	in $cdga^G$, 
	the equivalence $\fU \sim \fU''$ is 
	given by an actual morphism of $G$-equivariant cdga's, i.e. 
	we have a $G$-equivariant 
	vector bundle morphism $g''$ in the dotted arrow in 
	(\ref{diagram:VYY2}) 
	which induces an equivalence 
	$\fU \stackrel{\sim}{\to} \fU''$. 
	Then it induces an equivalence 
	$\bff'' \colon [\fU/G] \stackrel{\sim}{\to}[\fU''/G]$
	as desired. 
\end{proof}

\subsubsection{Proof of Lemma~\ref{lem:4.11}}\label{subsec:proof:4.11}
\begin{proof}
	By Proposition~\ref{prop:cdga2}, 
	the equivalence $\bff$ lifts to an equivalence 
	$\bff^{\sharp} \colon \fU \stackrel{\sim}{\to} \fU'$
	in $dAff^G$. We then construct the diagram (\ref{dia:equiv}) 
	following the argument of~\cite[Theorem~4.2]{BBJ}. 
	By the equivalence $\bff^{\sharp} \colon \fU \stackrel{\sim}{\to}\fU'$
	together with 
	closed immersions 
	$\fU \hookrightarrow Y$, $\fU' \hookrightarrow Y'$,
	we have the $G$-equivariant 
	closed immersion 
	$\fU \hookrightarrow Y \times Y'$. 
	Then similarly to the proof Proposition~\ref{prop:cdga1}, 
	there exist a $G$-invariant affine open subset 
	$\widetilde{Y} \subset Y \times Y'$ which contains 
	$\uU$, a $G$-equivariant 
	vector bundle $\widetilde{V} \to \widetilde{Y}$
	with a $G$-invariant section $\widetilde{s}$,
	such that $\fU \hookrightarrow \widetilde{Y}$ factors
	through an equivalence 
	$\fU \stackrel{\sim}{\to} \widetilde{\fU}$
	in $dAff^G$, where $\widetilde{\fU}$ is the derived zero 
	locus of $\widetilde{s}$. 
	Let us consider the composition 
	$\widetilde{\fU} \hookrightarrow \widetilde{Y} \to Y$, 
	where the latter map is the projection. 
	Since $\oO_{\fU}$ is cofibrant over $\oO_{Y}$
	in $cdga^G$,  
	the above map factors through 
	an equivalence 
	$\widetilde{\fU} \stackrel{\sim}{\to} \fU$ in $dAff^G$, which is 
	induced by an actual morphism of $G$-equivariant cdga's. 
	So we obtain a left diagram in (\ref{dia:equiv})
	and an equivalence $\widetilde{\bff}$. 
	The existence of a right diagram in (\ref{dia:equiv}) and $\widetilde{\bff}'$ 
	also follow similarly. 
\end{proof}

\subsubsection{The case of formal fibers}\label{subsub:casefib}
Here we discuss 
the formal fiber version of Lemma~\ref{lem:closed}
and Lemma~\ref{lem:4.11}, which is used in 
Lemma~\ref{lem:window1.5}, Lemma~\ref{lem:window}. 
We consider the setting of Lemma~\ref{lem:window1.5}, so that 
we have the equivalence of derived 
stacks $\widehat{\bff}$ in the diagram (\ref{dia:window1.5}). 
We assume that $\widehat{\bff}$ commutes with 
maps to $BG$ given by canonical $G$-torsors. 
We have the formal fiber version of Lemma~\ref{lem:4.11}: 
\begin{lem}\label{lem:tuple:formal}
There exist $G$-equivariant commutative 
	diagram
	\begin{align}\label{dia:ffcase}
	\xymatrix{
		\widehat{V}_y \ar[d] & \widetilde{V} \ar[r] \ar[l] \ar[d] &
		\widehat{V}'_{y'} \ar[d] \\
		\widehat{Y}_y \ar@/^10pt/[u]^-{\widehat{s}_y} & \widetilde{Y} \ar[r]^-{\widetilde{f}'} 
		\ar[l]_-{\widetilde{f}} \ar@/^10pt/[u]^-{\widetilde{s}} 
		& \widehat{Y}'_{y'}\ar@/_10pt/[u]_-{\widehat{s}'_{y'}}
	}
\end{align}
	such that, by setting $\widetilde{\fU}$ to be 
the derived zero locus of $\widetilde{s}$, 
the above diagram induces equivalences
$\widetilde{\bff} \colon [\widetilde{\fU}/G] \stackrel{\sim}{\to} [\widehat{\fU}_y/G]$, 
$\widetilde{\bff}' \colon [\widetilde{\fU}/G] \stackrel{\sim}{\to}
 [\widehat{\fU}_{y'}'/G]$
which commute with $\widehat{\bff}$, i.e. 
$\bff \circ \widetilde{\bff} \sim \widetilde{\bff}'$. 
Here $\widetilde{Y}$ is a $G$-invariant open subset of $\widehat{Y}_y 
\times \widehat{Y}'_{y'}$
which contains $(x, x')$, $\widetilde{V} \to \widetilde{Y}$ is a 
$G$-equivariant vector bundle and $\widetilde{s}$ is a $G$-invariant section. 
	\end{lem}
\begin{proof}
	The proof is obvious from the proof of Lemma~\ref{lem:4.11}. 
	\end{proof}
We also have the formal fiber version Lemma~\ref{lem:closed}: 
\begin{lem}\label{lem:tuple:formal2}
	In the setting of Lemma~\ref{lem:tuple:formal}, 
we have 
$G$-equivariant commutative diagrams
\begin{align}\label{dia:ffcase2}
		\xymatrix{
		\widetilde{V} \ar[r]^-{g''} \ar[d] & V'' \ar[d] \\
		\widetilde{Y} \inclusion^-{f''} \ar@/^10pt/[u]^-{s}
		& Y'', \ar@/_10pt/[u]_-{s''}&
	} \ 
	\xymatrix{
		\widehat{V}_y \ar[r]^-{g'} \ar[d] & V'' \ar[d] \\
		\widehat{Y}_y \inclusion^-{f'} \ar@/^10pt/[u]^-{s'}
		& Y'', \ar@/_10pt/[u]_-{s''}&
	}
	\end{align}
where $Y''=\widetilde{Y} \times \widehat{Y}_y$, 
$V'' \to Y''$ is a $G$-equivariant vector bundle, $s''$ is a $G$-invariant 
section, 
satisfying the followings: 
\begin{enumerate}
	\item The diagrams (\ref{dia:ffcase2})
	induce equivalences of derived stacks
	\begin{align}\notag
		\bff'' \colon [\widetilde{\fU}/G] \stackrel{\sim}{\to}
		[\fU''/G], \
		\bff' \colon [\widehat{\fU}_y/G] \stackrel{\sim}{\to}
		[\fU''/G]
	\end{align}
	such that 
	$\bff'\circ \widehat{\bff} \sim \bff''$. 
	Here $[\fU''/G]$ is the derived 
	zero locus of $s'' \colon [Y''/G] \to [V''/G]$. 
	\item 
	The morphisms $f'$, $f''$ are closed immersions. 
\end{enumerate} 
Moreover the same statement holds for the right square of (\ref{dia:ffcase}). 
	\end{lem}
\begin{proof}
	The same argument of the proof of Lemma~\ref{lem:closed} applies. 
	In this case, we can also take 
	$Y''=\widetilde{Y} \times \widehat{Y}_y$ 
	as we show in the following. 
	From the proof of Lemma~\ref{lem:closed}, 
	we first take a $G$-equivariant closed immersion 
	$\widetilde{Y} \hookrightarrow W$
	with a vector bundle $W' \to W$ and a regular section $t$
	whose zero locus is a reduced single point. 
	In the situation of the lemma, 
	we can take $W=\widetilde{Y}$. 
	Indeed the condition $G=\Aut(x)$
	and \'{e}tale slice theorem
	implies that $\widehat{Y}_y$ is isomorphic to the formal 
	fiber of $T_{x}Y \to T_x Y \ssslash G$
	at $0$ (see the proof of Proposition~\ref{prop:rest:an}), 
	so we can assume that $Y$ is a $G$-representation and $x=0$. 
	The same also applies to $Y'$. 
	Then we have the trivial vector bundle $(Y \times Y') \times (Y \times Y')
	\to (Y \times Y')$ by the second projection, with a regular 
	section given 
	by the diagonal whose zero locus is $(0, 0)$. 
	By restricting the above vector bundle to the formal 
	fiber $\widehat{Y}_y \times \widehat{Y}'_{y'} \to Y \times Y'$
	and its $G$-invariant open subset $\widetilde{Y}$, we obtain 
	the $G$-equivariant vector bundle $W' \to \widetilde{Y}$ with a regular section 
	whose zero locus is a reduced point $(0, 0)$. 
	So we can take $W=\widetilde{Y}$. 
	Then the argument of the proof of Lemma~\ref{lem:closed}
	implies that we can take $Y''=\widetilde{Y} \times \widehat{Y}_y$. 
		\end{proof}

\subsubsection{Proof of Lemma~\ref{lem:rest0}}\label{subsec:lemG}
\begin{proof}
	Let
	$\mathfrak{m} \subset \oO_{\widehat{\uU}_y}$ be the maximal ideal 
	of $x$ and $\uU^{[n]} \subset \widehat{\uU}_y$ the 
	closed subscheme defined by 
	$\mathfrak{m}^n$. 
	We claim that the compatible isomorphisms 
	\begin{align}\label{isom:fgn}
		f|_{[\uU^{[n]}/G]} \cong f'|_{[\uU^{[n]}/G]}
	\end{align}
	exist by the induction on $n$. 
	The $n=1$ case follows from $f \circ \mu \cong f'\circ \mu$. 
	Suppose that 
	the claim holds for $n$. 
	By a standard deformation theory of morphisms of stacks, 
	the set of possible extensions of $f|_{[\fU^{[n]}/G]}$ to 
	$[\fU^{[n+1]}/G]$ is a torsor over 
	\begin{align*}
		\Hom_{[\uU^{[n]}/G]}(f|_{[\uU^{[n]}/G]}^{\ast}\mathbb{L}_{BG}, 
		\mathfrak{m}^n/\mathfrak{m}^{n+1})=0
	\end{align*}
	since $\mathbb{L}_{BG}=\mathfrak{g}^{\vee}[-1]$, $\uU^{[n]}$ is affine and $G$ is reductive. Therefore the isomorphism (\ref{isom:fgn}) lifts to an 
	isomorphism for $n+1$, so the claim holds. 
	
	Since $\widehat{\uU}_y\ssslash G$ is complete local, 
	by~\cite[Corollary~3.6]{AHR}
	a morphism $[\widehat{\uU}_y/G] \to BG$ is determined by its 
	restriction to $\varinjlim \left[\uU^{[n]}/G \right]$. 
	Therefore we conclude that $f|_{[\widehat{\uU}_y/G]} \cong f'|_{[\widehat{\uU}_y/G]}$.
	We then apply the same deformation argument above
	for the square zero extensions 
	\begin{align*}
		[\widehat{\uU}_y/G]=t_0([\widehat{\fU}_y/G]) 
		\hookrightarrow t_{\le 1}([\widehat{\fU}_y/G])
		\hookrightarrow t_{\le 2}(]\widehat{\fU}_y/G]) \hookrightarrow \cdots
	\end{align*}
	and conclude that $f \sim f'$. 
\end{proof}

\subsection{Formal neighborhood theorem}\label{subsec:formal}
In this section, we prove the formal neighborhood theorem for 
moduli stacks of semistable sheaves on smooth projective varieties. 
A similar result is proved in~\cite{MR3811778} analytic locally on the 
good moduli spaces, using gauge theory argument. 
Instead of gauge theory, we use formal GAGA theorem~\cite{FGAGA, AHR2}
to give a purely algebraic proof for the formal neighborhood theorem. 
\subsubsection{Formal GAGA for good moduli spaces}\label{subsec:FGAGA}
Let $\xX$ be a classical Artin stack with affine diagonal, 
and 
\begin{align*}
	\pi \colon \xX \to \Spec R
\end{align*}
its good moduli space.
We assume that $R$ is a complete local Noetherian ring
with maximal ideal $\mathfrak{m} \subset R$, and 
$\pi$ is of finite type.  
Let $\iI \subset \oO_{\xX}$ be the ideal sheaf 
generated by the pull-back of $\mathfrak{m}$. 
We define the closed substack 
$\xX_n \subset \xX$ 
to be defined by the ideal $\iI^n$. 
We have the 
formal stack $\varinjlim \xX_n$. 
The formal GAGA theorem for $\xX$ is stated as follows: 

\begin{thm}\label{thm:FGAGA}\emph{(\cite[Theorem~1.1]{FGAGA}, \cite[Corollary~1.7]{AHR2})}
	The restriction functor
	\begin{align*}
		\Coh(\xX) \to \Coh(\varinjlim \xX_n)
	\end{align*}
	is an equivalence of categories. 
\end{thm}

\subsubsection{Ext-quivers associated with simple collections}\label{subsec:Equiver}
Let $Z$ be a smooth projective variety over $\mathbb{C}$. 
A collection of coherent sheaves $(E_1, \ldots, E_m)$ on $Z$ is 
called a \textit{simple collection} 
if $\Hom(E_i, E_j)=\mathbb{C} \cdot \delta_{ij}$. 
The Ext-quiver $Q_{E_{\bullet}}$ associated with 
the collection $(E_1, \ldots, E_m)$
is defined as follows: 
the vertex set $V(Q_{E_{\bullet}})$ and the 
edge set $E(Q_{E_{\bullet}})$ are given by 
\begin{align*}
	V(Q_{E_{\bullet}})=\{1, \ldots, m\}, \ 
	E(Q_{E_{\bullet}})=\bigcup_{1\le i, j \le m}E_{i, j}. 
\end{align*}
Here 
$E_{i, j}$ the set of edges from $i$ to $j$
with $\sharp E_{i, j}=\ext^1(E_i, E_j)$. 
Let $\mathbf{E}_{i, j}$ be the $\mathbb{C}$-vector space 
spanned by $E_{i, j}$. 
We set
\begin{align}\notag
	E_{i, j}^{\vee} \cneq \{ e^{\vee} : 
	e \in E_{i, j}\} \subset \mathbf{E}_{i, j}^{\vee}. 
\end{align}
Here for $e \in E_{i, j}$, 
the element $e^{\vee} \in \mathbf{E}_{i, j}^{\vee}$
is defined by the condition
$e^{\vee}(e)=1$
and $e^{\vee}(e')=0 $
for any $e\neq e' \in E_{i, j}$, i.e. 
$E_{i, j}^{\vee}$ is the dual basis of $E_{i, j}$.

By setting 
$\overline{E}=\oplus_{i=1}^m E_i$, 
we have linear maps 
\begin{align}\label{mn:Eb}
	m_n \colon \Ext^1(\overline{E}, \overline{E})^{\otimes n} \to \Ext^2(\overline{E}, \overline{E})
\end{align}
given by a minimal $A_{\infty}$-structure of the 
dg-algebra $\RHom(\overline{E}, \overline{E})$. 
The 
map (\ref{mn:Eb})
only consists of the direct sum factors of the form
(see~\cite[Section~5.1]{MR3811778})
\begin{align}\notag
	m_n \colon 
	\Ext^1(E_{\psi(1)}, E_{\psi(2)}) \otimes 
	&\Ext^1(E_{\psi(2)}, E_{\psi(3)}) \otimes
	\cdots \\
	\cdots \otimes 
	&\Ext^1(E_{\psi(n)}, E_{\psi(n+1)})  
	\label{factor}
	\to \Ext^2(E_{\psi(1)}, E_{\psi(n+1)}).
\end{align}
Here $\psi$ is a map 
$\psi \colon \{1, \ldots, n+1\} \to \{1, \ldots, m\}$, 
and the above $\{m_n\}_{n\ge 2}$ 
give a minimal $A_{\infty}$-category structure 
on the dg-category generated by $(E_1, \ldots, E_m)$. 
By taking the dual and the products of (\ref{factor})
for all $n\ge 2$, we obtain 
the linear map
\begin{align*}
	\mathbf{m}^{\vee} \cneq \prod_{n\ge 2} m_n^{\vee} \colon 
	\Ext^2(\overline{E}, \overline{E})^{\vee} \to 
	\prod_{n\ge 2} \bigoplus_{\begin{subarray}{c}
			\{1, \ldots, n+1 \} \\ \stackrel{\psi}{\to}  
			\{1, \ldots, m\}
	\end{subarray}}
	&\Ext^1(E_{\psi(1)}, E_{\psi(2)})^{\vee} \otimes \cdots \\
	&\cdots \otimes 
	\Ext^1(E_{\psi(n)}, E_{\psi(n+1)})^{\vee}. 
\end{align*}
Note that an element of the RHS is 
an element of the completed 
path algebra $\mathbb{C}\lkakko Q_{E_{\bullet}} \rkakko$.  
Let $\mathbf{I} \subset \mathbb{C}\lkakko Q_{E_{\bullet}} \rkakko$
be the topological closure of the ideal generated by the 
image of $\mathbf{m}^{\vee}$. 
By~\cite[Section~6.4]{MR3811778}, 
the quotient algebra
\begin{align*}
	A\cneq \mathbb{C}\lkakko Q_{E_{\bullet}} \rkakko/\mathbf{I}
\end{align*}
is a pro-representable hull for the NC
deformation functor associated with the collection $(E_1, \ldots, E_m)$.

Let $E \in \Coh(X)$ be given by 
\begin{align*}
	E=\bigoplus_{i=1}^m V_i \otimes E_i
\end{align*}
for finite dimensional vector spaces $V_i$. 
Then 
\begin{align*}
	\left[
	\Ext^1(E, E)/\Aut(E)
	\right]=
	\left[ \bigoplus_{(i\to j) \in Q_{E_{\bullet}}} 
	\Hom(V_i, V_j)/\prod_{i=1}^m \GL(V_i)
	\right]
\end{align*}
is the moduli stack of $Q_{E_{\bullet}}$-representations with 
dimension vector $\vec{v}=(\dim V_i)_{1\le i\le m}$. 
Here $\Aut(E)$ acts on $\Ext^1(E, E)$ by the conjugation. 
The fiber of the morphism to the good moduli space
\begin{align}\label{ext:good}
	\left[\Ext^1(E, E)/\Aut(E)\right] \to \Ext^1(E, E)\ssslash \Aut(E)
\end{align}
at the origin consists of 
nilpotent $Q_{E_{\bullet}}$-representations. 
More precisely, let $I \subset \oO_{\Ext^1(E, E)}$
be the ideal sheaf which defines the fiber (\ref{ext:good})
at the origin, and 
$T_{n} \hookrightarrow \Ext^1(E, E)$ the closed 
subscheme defined 
by $I^n$. 
Then the formal stack 
$\varinjlim \left[T_n/\Aut(E)\right]$ 
represents the 2-functor
\begin{align}\label{M:quiver}
	\mM_{Q_{E_{\bullet}}}^{\rm{nil}}(\vec{v})\colon 
	Aff^{op} \to Groupoid
\end{align}
which sends an affine $\mathbb{C}$-scheme 
$T$ to the groupoid of data
\begin{align}\label{quiver:data}
	\{(\vV_i, \phi_e)\}_{i\in V(Q_{E_{\bullet}}), e \in E(Q_{E_{\bullet}})}, \ 
	\phi_e \colon \vV_{s(e)} \to \vV_{t(e)}.
\end{align}
Here $\vV_i$ is a vector bundle 
on $T$ with rank $\dim V_i$, 
$(s(e), t(e))=(i, j)$ for $e \in E_{i, j}$, 
and $\phi_e$ are $\oO_T$-module homomorphisms
whose sufficiently large number of compositions 
are zero. 

For an element 
\begin{align*}
	u=(u_e)_{e \in E(Q_{E_{\bullet}})} \in \Ext^1(E, E), \ 
	u_e \colon V_{s(e)} \to V_{t(e)},
\end{align*}
we consider the following $\Ext^2(E, E)$-valued
$\Aut(E)$-equivariant formal 
function
\begin{align}\notag
	&\kappa(u)=\sum_{\begin{subarray}{c}
			n\ge 2, \\
			\{1, \ldots, n+1\} \stackrel{\psi}{\to}
			\{1, \ldots, m\}
	\end{subarray}}
	\sum_{e_i \in E_{\psi(i), \psi(i+1)}} 
	m_n(e_1^{\vee}, \ldots, e_n^{\vee}) \cdot 
	u_{e_n} \circ \cdots \circ u_{e_2} \circ u_{e_1}.
\end{align}
The above sum is a finite sum if $u$ corresponds to a nilpotent 
$Q_{E_{\bullet}}$-representation. 
Therefore 
the restriction $\kappa|_{T_n}$ determines the 
$\Aut(E)$-equivariant algebraic map
\begin{align*}
	\kappa_n \cneq 
	\kappa|_{T_n} \colon T_n \to \Ext^2(E, E). 
\end{align*}
By the above arguments, we obtain sections of 
vector bundles over $T_n$
\begin{align*}
	\xymatrix{
		\left[(\Ext^2(E, E) \times T_n)/\Aut(E)\right] \ar[r]
		& \ar@/_15pt/[l]_{\kappa_n}
		\left[T_n/\Aut(E)\right].
	} 
\end{align*}
Let $\nN_n \hookrightarrow T_n$ be the closed subscheme defined by 
$\kappa_n=0$. 
Then the formal stack $\varinjlim \left[\nN_n/\Aut(E) \right]$
represents the sub 2-functor of 
(\ref{M:quiver}), 
\begin{align}\notag
	\mM^{\rm{nil}}_{(Q_{E_{\bullet}}, \mathbf{I})}(\vec{v}) \subset 
	\mM^{\rm{nil}}_{Q_{E_{\bullet}}}(\vec{v})
\end{align}
consisting of groupoids 
(\ref{quiver:data})
such that $\{\phi_e\}_{e\in E(Q_{E_{\bullet}})}$ satisfy the relation 
$\mathbf{I} \subset \mathbb{C}\lkakko Q_{E_{\bullet}} \rkakko$. 

By Theorem~\ref{thm:FGAGA}, 
the system of 
sections $\{\kappa_n\}_{n\ge 1}$ above 
uniquely lifts to 
a section of a vector bundle on $\left[\widehat{\Ext^1(E, E)}_0/\Aut(E)\right]$
\begin{align}\notag
	\xymatrix{
		\left[(\Ext^2(E, E) \times \widehat{\Ext^1(E, E)}_0)/\Aut(E)\right] \ar[r] & \ar@/_20pt/[l]_{\kappa}
		\left[\widehat{\Ext^1(E, E)}_0/\Aut(E)\right].
	} 
\end{align}
Here as
in Subsection~\ref{subsec:fib}, 
$\widehat{\Ext^1(E, E)}_0$
is the formal fiber at the origin 
$0 \in \Ext^1(E, E)\ssslash \Aut(E)$. 
Let $\widehat{\nN}_0 \subset \widehat{\Ext^1(E, E)}_0$ be the 
closed subscheme defined by $\kappa=0$. 
We have the quotient stack with good moduli space
\begin{align}\label{good:U0}
	[\widehat{\nN}_0/\Aut(E)] \to \widehat{\nN}_0 \ssslash \Aut(E).
\end{align} 
The good moduli space $\widehat{\nN}_0 \ssslash \Aut(E)$ 
is a closed subscheme of $\widehat{\Ext^1(E, E)}_0 \ssslash \Aut(E)$, 
hence written as $\Spec R$ for a complete local Noetherian ring $R$. 
The formal stack $\varinjlim \left[\nN_n/\Aut(E) \right]$ is obtained 
by taking the colimit of 
thickened fibers of the morphism (\ref{good:U0}) at the origin 
as in Subsection~\ref{subsec:FGAGA}.

\subsubsection{Moduli stacks of semistable sheaves}
For an ample divisor $H$ on $Z$
and $v \in H^{2\ast}(Z, \mathbb{Q})$, 
we denote by $\mM_{Z, H}(v)$ the moduli stack of 
Gieseker $H$-semistable sheaves $E$ on $Z$ with 
$\ch(E)=v$. 
By the GIT construction of the moduli stack $\mM_{Z, H}(v)$
(see~\cite{MR1450870}), 
it admits a good moduli space
(see~\cite[Example~8.7]{MR3237451})
\begin{align*}
	\pi \colon 
	\mM_{Z, H}(v) \to M_{Z, H}(v). 
\end{align*}
The good moduli space $M_{Z, H}(v)$
is a projective scheme which parametrizes 
$H$-polystable sheaves, i.e. a
closed point $y \in M_{Z, H}(v)$ corresponds to 
a direct sum
\begin{align}\label{polystable:append}
	E=\bigoplus_{i=1}^m V_i \otimes E_i
\end{align}
where each $E_i$ is a Gieseker $H$-stable sheaf, 
$E_i$ is not isomorphic to $E_j$ for $i\neq j$, 
and $E_i$ has the same reduced Hilbert polynomial with $E_j$. 
Note that $(E_1, \ldots, E_m)$ is a simple collection. 
Let $R=\widehat{\oO}_{M_{Z, H}(v), y}$, 
which is a  
complete local Noetherian ring, and set 
\begin{align}\label{mstack:complete}
	\widehat{\mM}_{Z, H}(v)_y
	\cneq \mM_{Z, H}(v) \times_{M_{Z, H}(v)} \Spec R
	\to \Spec R. 
\end{align}
Note that the above map is a good 
moduli space morphism of $\widehat{\mM}_{Z, H}(v)_y$, 
since the good moduli morphism is preserved by the base change
(see~\cite[Proposition~4.7]{MR3237451}). 

Let $\left[\widehat{\nN}_0/\Aut(E)\right]$ be the 
quotient stack constructed in (\ref{good:U0})
associated with the above collection 
$(E_1, \ldots, E_m)$
for the polystable sheaf (\ref{polystable:append}). 
The following result gives the formal neighborhood theorem 
for Gieseker moduli spaces. 
\begin{thm}\label{thm:formal}
	We have the commutative isomorphisms
	\begin{align}\label{dia:fthm}
		\xymatrix{
			\left[\widehat{\nN}_0/\Aut(E)\right]
			\ar[r]^-{\cong} \ar[d]  & \widehat{\mM}_{Z, H}(v)_y
			\ar[d] \\
			\widehat{\nN}_0 \ssslash \Aut(E) \ar[r]^-{\cong} & \Spec R. 
		}
	\end{align}
\end{thm}
\begin{proof}
	Note that the closed fiber of the morphism (\ref{mstack:complete}) 
	corresponds to $H$-semistable sheaves
	which are $S$-equivalent to $E$, 
	so in particular they are objects 
	in $\langle E_1, \ldots, E_m \rangle_{\rm{ex}}$. 
	Here $\langle -\rangle_{\rm{ex}}$ is the extension 
	closure of $(-)$. 
	Let $\iI \subset \oO_{\widehat{\mM}_{Z, H}(v)_y}$ 
	be the ideal sheaf which is the pull-back of the 
	maximal ideal $\mathfrak{m} \subset R$ 
	by the morphism (\ref{mstack:complete}). 
	Let $\mM_n \hookrightarrow \widehat{\mM}_{Z, H}(v)_y$
	be the closed substack defined by $\iI^n$. 
	Then the formal stack 
	$\varinjlim \mM_n$ represents the functor
	\begin{align*}
		\mM^{E}_{Z, H}(v) \colon 
		Aff^{op} \to Groupoid
	\end{align*}
	sending an affine $\mathbb{C}$-scheme 
	$T$
	to the groupoid of flat families of 
	coherent sheaves in $\langle E_1, \ldots, E_m\rangle_{\rm{ex}}$. 
	
	By~\cite[Corollary~6.7]{MR3811778},
	we have an equivalence of categories
	\begin{align*}
		\Phi \colon \modu_{\rm{nil}}(A) \stackrel{\sim}{\to}
		\langle E_1, \ldots, E_m \rangle_{\rm{ex}}. 
	\end{align*} 
	Here the left hand side is the abelian category of 
	finitely generated 
	nilpotent right $A$-modules. 
	The above functor is given by
	\begin{align*}
		\Phi(M)=M \otimes_A \eE, \ 
		\eE \in \Coh(\oO_X \widehat{\otimes}A)
	\end{align*}
	where $\eE$ is 
	the universal object 
	of NC deformation theory associated with the 
	collection $(E_1, \ldots, E_m)$. 
	Therefore the functor $\Phi$ gives an isomorphism of 2-functors
	\begin{align*}
		\Phi \colon 
		\mM_{(Q_{E_{\bullet}}, \mathbf{I})}^{\rm{nil}}(\vec{v}) \stackrel{\cong}{\to}
		\mM_{Z, H}^{E}(v). 
	\end{align*}
	Therefore the functor $\Phi$ also induces the isomorphism of 
	formal stacks
	\begin{align}\notag
		\Phi \colon 
		\varinjlim \left[\nN_n/\Aut(E)\right] \stackrel{\cong}{\to}
		\varinjlim \mM_n. 
	\end{align}
	Then applying 
	Theorem~\ref{thm:FGAGA}, 
	we have equivalences
	\begin{align}\label{dia:GAGA}
		\xymatrix{
			\Coh\left(\widehat{\mM}_{Z, H}(v)_y\right) \ar[r]^-{\sim} \ar[d]^-{\sim} 
			& \Coh\left(\left[\widehat{\nN}_0/\Aut(E)\right]\right) \ar[d]^-{\sim} \\
			\Coh\left(\varinjlim \mM_n\right) \ar[r]^-{\sim}_-{\Phi^{\ast}} &
			\Coh\left(\varinjlim \left[\nN_n/\Aut(E)\right]\right). 
		}
	\end{align}
	Here the vertical arrows are restriction functors and 
	the top arrow is defined by the above commutative diagram. 
	The top arrow sends 
	$\oO_{\widehat{\mM}_{Z, H}(v)_y}$ to 
	$\oO_{\left[\widehat{\nN}_0/\Aut(E)\right]}$
	and 
	preserves the tensor products, as 
	these properties are satisfied in the bottom arrow. 
	Therefore by the Tannaka duality for Artin stacks 
	(see~\cite[Theorem~1.1]{AHR3}), 
	there exists an unique isomorphism of stacks 
	\begin{align*}
		\Psi \colon \left[\widehat{\nN}_0/\Aut(E)\right] \stackrel{\cong}{\to}
		\widehat{\mM}_{Z, H}(v)_y
	\end{align*}
	such that the top arrow of (\ref{dia:GAGA})
	is isomorphic to $\Psi^{\ast}$. 
	Therefore we obtain the top isomorphism in the diagram (\ref{dia:fthm}). 
	The bottom isomorphism in the diagram (\ref{dia:fthm}) follows from 
	the uniqueness of good moduli spaces. 
\end{proof}

The case of moduli stacks of pairs is similarly proved as follows. 
\begin{lem}\label{lem:PS:formal}
	The derived stack $\fP_n^t(S, \beta)$ in (\ref{dmoduli:PS}) satisfies 
	formal neighborhood theorem. 
	\end{lem}
\begin{proof}
	The same argument of Theorem~\ref{thm:formal} applies by replacing 
	$\Coh(Z)$ with the abelian category $\aA_S$. 
	The only thing to check is that 
	the cotangent complex as deformations of pairs
	is the same as that as objects in $\aA_S$. 
	Namely 	
	for a pair $J=(\oO_S \to F) \in \aA_S$, let 
	$I=(\oO_S \to F) \in \Dbc(S)$ be the associated 
	object where $\oO_S$ is located in degree zero. 
	We show that there is a distinguished triangle
	\begin{align}\label{dist:pair1}
		\RHom_S(I, F) \to 
		\RHom_{\aA_S}(J, J)[1] \to \mathbb{C}[1]. 
		\end{align}
	So in particular, 
	we have isomorphisms $\Hom_S^i(I, F) \stackrel{\cong}{\to}
	\Ext_{\aA_S}^{i+1}(J, J)$ for $i=0, 1$. 
	From the exact sequence 
	$0 \to (0 \to F) \to J \to (\oO_S \to 0) \to 0$, 
	in $\aA_S$, we have the distinguished triangle
	\begin{align}\label{dist:pair2}
		\RHom_{\aA_S}(J, 0 \to F) \to 
		\RHom_{\aA_S}(J, J) \to \RHom_{\aA_S}(J, \oO_S \to 0). 
		\end{align}
	 We have the  adjoint pair of functors
	 \begin{align*}
	 	\xymatrix{
	 		\Dbc(S)
	 		\ar@<0.5ex>[r]^-{i} &
	 		\ar@<0.5ex>[l]^-{j}
	 		D^b(\aA_S)
	 	}, \
 i(F)=(0 \to F), \ 
 	j(W \otimes \oO_S \stackrel{s}{\to}F)=\mathrm{Cone}(s)
 	\end{align*}
 such that $j \dashv i$. 
 	 We also have the adjoint pair 
 	 \begin{align*}
 \xymatrix{
 	\Dbc(\mathbb{C})
 	\ar@<0.5ex>[r]^-{i'} &
 	\ar@<0.5ex>[l]^-{j'}
 	D^b(\aA_S)
 }, i'(W)=(W \otimes \oO_S \to 0), \ 
j'(W \otimes \oO_S \to F)=W
\end{align*}
such that $j'\dashv i'$. 
 Therefore the distinguished triangle (\ref{dist:pair2}) implies (\ref{dist:pair1}). 
	\end{proof}

\bibliographystyle{amsalpha}
\bibliography{math}

\vspace{5mm}

Kavli Institute for the Physics and 
Mathematics of the Universe (WPI), University of Tokyo,
5-1-5 Kashiwanoha, Kashiwa, 277-8583, Japan.

\textit{E-mail address}: yukinobu.toda@ipmu.jp

\end{document}